\documentclass[11pt]{amsart}

\usepackage{amssymb, amscd, txfonts}
\usepackage{graphicx}


\numberwithin{equation}{section}

\sloppy

\newtheorem{theorem}{Theorem}[section]
\newtheorem{proposition}[theorem]{Proposition}
\newtheorem{lemma}[theorem]{Lemma}
\newtheorem{corollary}[theorem]{Corollary}

\theoremstyle{definition}

\newtheorem{example}[theorem]{Example}

\newtheorem{condition}[theorem]{Condition}

\theoremstyle{remark}
\newtheorem{remark}[theorem]{Remark}
\newtheorem{claim}[theorem]{Claim}


\newcommand{\Z}{\mathbb{Z}}
\newcommand{\Q}{\mathbb{Q}}

\newcommand{\C}{\mathbb{C}}
\newcommand{\F}{\mathbb{F}}

\newcommand{\proj}{{\mathbb P}}

\newcommand{\moduli}{\mathcal{M}_{r,a,\delta}}
\newcommand{\cover}{\widetilde{\mathcal{M}}_{r,a,\delta}}
\newcommand{\cove}{\widetilde{\mathcal{M}}}
\newcommand{\lift}{\widetilde{\mathcal{P}}}
\newcommand{\SL}{{\rm SL}}
\newcommand{\GL}{{\rm GL}}
\newcommand{\PGL}{{\rm PGL}}
\newcommand{\cohomology}{H^{2}(X, \mathbb{Z})}
\newcommand{\Oline}{\mathcal{O}_{{\mathbb P}^{1}}}
\newcommand{\Oplane}{\mathcal{O}_{{\mathbb P}^{2}}}
\newcommand{\Ospace}{\mathcal{O}_{{\mathbb P}^{3}}}

\newcommand{\OHirze}{\mathcal{O}_{\mathbb{F}_{n}}}
\newcommand{\OQ}{\mathcal{O}_{Q}}
\newcommand{\sheaf}{\mathcal{O}}
\newcommand{\Hom}{{\rm Hom}}
\newcommand{\Pic}{{\rm Pic}}
\newcommand{\Or}{{\rm O}}

\DeclareMathOperator{\aut}{Aut}

\begin{document}

\title[]{Rationality of the moduli spaces of 2-elementary $K3$ surfaces}
\author[]{Shouhei Ma}
\thanks{Supported by Grant-in-Aid for JSPS fellows [21-978] and Grant-in-Aid for Scientific Research (S), No 22224001.} 
\address{Graduate~School~of~Mathematical~Sciences, the~University~of~Tokyo, Tokyo 153-8914, Japan}
\address{Graduate~School~of~Mathematics, Nagoya~University, Nagoya 464-8604, Japan}
\email{ma@math.nagoya-u.ac.jp}
\subjclass[2000]{Primary 14J28, Secondary 14L30, 14H45, 14J26, 14G35}
\keywords{K3 surface, non-symplectic involution, rationality of moduli space, del Pezzo surface, trigonal curve} 
\maketitle 

\begin{abstract}
$K3$ surfaces with non-symplectic involution are classified by 
open sets of seventy-five arithmetic quotients of type IV. 
We prove that those moduli spaces are rational except two classical cases. 
\end{abstract}

\maketitle


\section{Introduction}

$K3$ surfaces with non-symplectic involution are basic objects in the study of $K3$ surfaces. 
They connect $K3$ surfaces with rational surfaces, Enriques surfaces, and low genus curves. 
In this article we address the rationality problem for the moduli spaces of $K3$ surfaces with non-symplectic involution. 

To be more precise, let $X$ be a complex $K3$ surface with an involution $\iota$. 
When $\iota$ acts nontrivially on $H^0(K_X)$, $\iota$ is called \textit{non-symplectic}, 
and the pair $(X, \iota)$ is called a \textit{2-elementary $K3$ surface}. 
By Nikulin \cite{Ni2}, the deformation type of $(X, \iota)$ is determined by its \textit{main invariant} 
$(r, a, \delta)$, a triplet of integers associated to the lattice of $\iota$-invariant cycles. 
He classified all main invariants of 2-elementary $K3$ surface, 
which turned out to be seventy-five in number (see Figure \ref{Nikulin table}). 
For each main invariant $(r, a, \delta)$, let $L_-$ be an even lattice of signature $(2, 20-r)$ 
whose discriminant form is 2-elementary of length $a$ and parity $\delta$. 
Yoshikawa \cite{Yo1}, \cite{Yo3} showed that 
the moduli space ${\moduli}$ of 2-elementary $K3$ surfaces of type $(r, a, \delta)$ is 
the complement of a Heegner divisor in the arithmetic quotient defined by 
the orthogonal group ${\rm O}(L_-)$ of $L_-$. 
In particular, ${\moduli}$ is irreducible of dimension $20-r$. 

Our main result is the following. 

\begin{theorem}\label{main}
The moduli space ${\moduli}$ of 2-elementary $K3$ surfaces of type $(r, a, \delta)$ is rational, 
possibly except when $(r, a, \delta)=(1, 1, 1)$ and $(2, 2, 0)$. 
\end{theorem}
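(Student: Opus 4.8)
The plan is to convert the transcendental description of $\moduli$ into a question about quotients of linear systems by algebraic groups, and then to verify rationality case by case over Nikulin's list with the standard rationality machinery.

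\emph{Step 1: from $K3$ surfaces to pairs of a rational surface and a curve.} If the involution of a $2$-elementary $K3$ surface $(X,\iota)$ has a non-empty fixed curve, then $Y=X/\iota$ is a smooth rational surface, the branch locus $B\subset Y$ (the image of $X^{\iota}$) is a smooth member of $|-2K_{Y}|$, and $X$ is the double cover of $Y$ branched along $B$; the only invariant for which $Y$ is not rational is $(10,8,0)$, the Enriques case, whose moduli space is already known to be rational. So, away from that case, $\moduli$ is birational to the moduli space $\lift$ of pairs $(Y,B)$, and $(r,a,\delta)$ is recovered from $(Y,B)$: the Picard number of $Y$ is $r$, whence $K_{Y}^{2}=10-r$ and the minimal model of $Y$ is $\proj^{2}$ or a Hirzebruch surface; the components of $B$ which are images of isolated rational curves in $X^{\iota}$ number $(r-a)/2$ and are $(-4)$-curves on $Y$; and $\delta$ records the divisibility of $B$ in $H^{2}(Y,\Z)$. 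Running through the list, each $\lift$ acquires one of a few explicit birational models: $Y=\proj^{2}$ with $B$ a plane sextic; for $a=r$, a del~Pezzo surface of degree $10-r$ (a rational elliptic surface when $r=10$) with $B\in|-2K_{Y}|$; or, otherwise, a blow-up of a Hirzebruch surface $\F_{n}$ at points forced onto a section or fibres, with $B$ the union of the strict transform of a trigonal, or hyperelliptic, curve and of those sections and fibres — this is where the trigonal curves of the title enter.

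\emph{Step 2: presentation as a group quotient.} In each model $\lift$ is birational to $\proj(V)/G$, where $G$ is the automorphism group of the ambient rational surface with its marked points and $V\subset H^{0}(Y,-2K_{Y})$ is the space of admissible branch curves; equivalently one works on $\proj^{2}$ or on a minimal surface, so that $G$ is $\PGL_{3}$, a parabolic subgroup of $\PGL_{3}$, $\PGL_{2}\times\PGL_{2}$, or $\aut(\F_{n})$, acting on a configuration of at most eight points on $\proj^{2}$ together with a linear system of curves through them. The decisive point is that, once the configuration is remembered, $G$ acts generically freely on $\proj(V)$ — with exactly two exceptions, $(1,1,1)$ and $(2,2,0)$, where there are no marked points and one is left with $\proj^{27}/\PGL_{3}$, respectively $\proj^{24}/(\PGL_{2}\times\PGL_{2}\rtimes\Z/2)$.

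\emph{Step 3: rationality of the quotients.} For the generically free cases I would combine the no-name lemma (replacing the quotient of $(\text{configurations})\times\proj(V)$ by that of the configuration space times an affine space); the rationality of $(\proj^{2})^{k}/\PGL_{3}$ for $k\ge4$ and of $(\proj^{1})^{k}/\PGL_{2}$; for del~Pezzo surfaces of degrees $6$ and $7$, the rationality of the quotient of a projective space by the connected solvable group $\aut(Y)^{\circ}$ (peel off the unipotent radical by the no-name lemma, then a two-dimensional torus) followed by the residual finite group; and, for the Hirzebruch and parabolic models, the rationality of $\proj(V)/G$ obtained by stripping the unipotent radical of $G$ and handling the remaining essentially reductive part with Katsylo's theorem, which reduces the rationality of $\proj(W)/H$ to that of $W/H$ — this is, in substance, the argument behind the rationality of the moduli of hyperelliptic and of trigonal curves. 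The quintic del~Pezzo surface, whose configuration space is a single $\PGL_{3}$-orbit, leaves a residual $S_{5}$ acting on $H^{0}(-2K)$, disposed of via its explicit module structure.

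\emph{Main obstacle and the two exceptions.} The principal difficulty is bookkeeping: for each remaining invariant one must determine precisely which rational surface and which configuration of curves arises — especially the cases where $B$ is very reducible, so that $Y$ is a blow-up of a Hirzebruch surface at points constrained to lie on fixed curves, and the cases where $G$ fails to act generically freely on the full linear system, where one must first descend to a sublinear system or to a quotient group. The two invariants $(1,1,1)$ and $(2,2,0)$ are exactly the models that escape every such reduction: $(1,1,1)$ is the moduli of double planes branched along a sextic, i.e.\ of degree-$2$ $K3$ surfaces, and $(2,2,0)$ is the moduli of double covers of $\F_{0}=\proj^{1}\times\proj^{1}$ branched along a curve of bidegree $(4,4)$, i.e.\ of $U(2)$-polarized $K3$ surfaces; the rationality of $\proj^{27}/\PGL_{3}$ and of $\proj^{24}/(\PGL_{2}\times\PGL_{2}\rtimes\Z/2)$ are well-known open problems, so these two cases are set aside.
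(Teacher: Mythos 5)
Your outline reproduces the paper's general scheme for the majority of invariants (double cover of a DPN pair, present $\moduli$ as $\proj(V)/G$ over $\proj^2$ or $\F_n$, then Miyata/Katsylo/no-name), but it has two genuine gaps that the actual proof must, and does, confront. First, you assert that once the point configuration is remembered the period map $U/G\dashrightarrow\moduli$ is birational; this is precisely what is \emph{not} automatic. The double-cover construction gives a dominant map, but its degree measures how many inequivalent contractions $X/\iota\to Y$ send the branch locus into the chosen family, and for each invariant one must compute this degree. The paper does so by lifting to the cover $\cover=\mathcal{F}(\widetilde{{\rm O}}(L_-))$, labelling the singular points and components of $B$ to produce a lattice marking, invoking the Torelli theorem to show the lifted map is generically injective, and then comparing $|{\rm O}(D_{L_-})|$ with the degree of the labelled cover $\widetilde{U}/G\to U/{\aut}(Y)$ (the recipe of \S\ref{ssec: recipe}). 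Without this step your reduction to invariant theory is unjustified, and indeed for some invariants no birational period map of this simple type exists at all.

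Second, your claim that the only cases escaping the reduction are $(1,1,1)$ and $(2,2,0)$ is false, and this is where the hardest work lies. For $g=1$ with $11\le r\le 14$, $\delta=1$, and for $(15,7,1)$, the natural period map from sextics has degree $>1$; the paper shows it is the quotient by a Weyl group $W_d$ acting equivariantly on a universal family of marked del Pezzo surfaces, and rationality is extracted only after analyzing that action (passing through Geiser/Bertini involutions, bitangents of quartics, etc.). For $\mathcal{M}_{10,8,0}$, $\mathcal{M}_{14,6,0}$, $\mathcal{M}_{10,10,1}$, $\mathcal{M}_{12,10,1}$, $\mathcal{M}_{13,9,1}$, $\mathcal{M}_{14,8,1}$ no geometric period map is used at all: one applies the Dolgachev--Kond\=o lattice trick, replacing $L_-$ by $L_-^{\vee}(2)$ or by the even part of an odd lattice, so that $\moduli$ becomes isomorphic to a finite cover or Heegner divisor of some other $\mathcal{M}_{r',a',\delta'}$ already proved rational. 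None of these mechanisms appears in your proposal, so the argument as written does not cover roughly a dozen of the seventy-five invariants. (A small factual slip as well: the Enriques case is $(10,10,0)$, where $X^{\iota}=\emptyset$; for $(10,8,0)$ the fixed locus is two elliptic curves and the quotient is still a rational surface.)
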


The remaining spaces $\mathcal{M}_{1,1,1}$ and $\mathcal{M}_{2,2,0}$ are respectively birational to 
the moduli spaces of plane sextics and of bidegree $(4, 4)$ curves on ${\proj}^1\times{\proj}^1$. 
The rationality questions for them are rather classical, and remain open. 

A few ${\moduli}$ have been known to be rational: 
$\mathcal{M}_{10,10,0}$ is the moduli of Enriques surfaces and is rational by Kond\=o \cite{Ko}; 
he also proved that $\mathcal{M}_{10,2,0}$ is rational by identifying it with 
the moduli of certain trigonal curves \cite{Ko}; 
$\mathcal{M}_{5,5,1}$ is birational to the moduli of genus six curves (see \cite{SB2}, \cite{A-K}), 
and hence rational by Shepherd-Barron \cite{SB2}; 
Dolgachev and Kond\=o \cite{D-K} recently proved the rationality of $\mathcal{M}_{11,11,1}$, 
which is the moduli of Coble surfaces. 
Now Theorem \ref{main} shows that these results are not of sporadic nature. 
In other terms, the arithmetic quotients defined by ${\rm O}(L_-)$ 
for the 2-elementary primitive sublattices $L_-$ of the $K3$ lattice are mostly rational.

In \cite{Ma} we proved that all ${\moduli}$ are unirational. 
The approach there was to use some isogenies between ${\moduli}$ to reduce the problem to fewer main invariants.  
In fact, in \cite{Ma} we studied certain covers of ${\moduli}$ rather than ${\moduli}$ themselves. 
But the rationality problem is much more subtle, 
requiring individual treatment and delicate analysis. 
Thus in this article we study the spaces ${\moduli}$ one by one.

Although our approach is case-by-case, the schemes of proofs are common for most cases, 
which we shall try to explain here. 
We first find a \textit{birational} period map 
\begin{equation}\label{period map intro}
\mathcal{P}: U/G \dashrightarrow {\moduli}
\end{equation}
from the quotient space $U/G$ of an explicit parameter space $U$ by an algebraic group $G$. 
Then we prove that $U/G$ is rational as a problem in invariant theory (cf. \cite{Do}, \cite{P-V}). 
The space $U$ parametrizes (possibly singular) $-2K_Y$-curves $B$ on some smooth rational surfaces $Y$, 
and the map $\mathcal{P}$ is defined by 
taking the desingularizations of the double covers of $Y$ branched along $B$. 
Thus our approach for 2-elementary $K3$ surfaces is essentially from quotient surfaces and branch curves. 

The fact that $\mathcal{P}$ is birational means that 
our construction is ``canonical" for the generic member of ${\moduli}$. 
The verifications of this property mostly fall into the following two patterns: 
$(\alpha)$ when the curves $B$ are smooth, so that the desingularization process does not take place, 
it is just a consequence of the equivalence between (variety, involution) and (quotient, branch). 
This simple approach is taken when $r=a$ ($Y$ are del Pezzo) and when $(r, a, \delta)=(10, 8, 1)$. 
$(\beta)$ For many other $(r, a, \delta)$, the surfaces $Y$ are ${\proj}^2$ or Hirzebruch surfaces ${\F}_n$, 
the curves $B$ have simple singularities of prescribed type, and the group $G$ is ${\aut}(Y)$. 
For this type of period map we can calculate its degree in a systematic manner. 
A recipe of such calculation is presented in \S \ref{ssec: recipe}. 
By finding a period map of degree $1$ of this type, 
our rationality problem is reduced to invariant theory over ${\proj}^2$ or ${\F}_n$. 

It should be noted, however, that 
the existence of birational period map as above, especially of type $(\beta)$, 
is a priori not clear. 
Indeed, when $r+a=20$, $6\leq a\leq9$, $\delta=1$ and when $(r, a, \delta)=(15, 7, 1)$, 
we give up the above two approaches and study a \textit{non-birational} period map 
$U/G\dashrightarrow{\moduli}$ of type $(\beta)$. 
Using the recipe in \S \ref{ssec: recipe}, 
we show that it is a quotient map by a Weyl group, and then analyze that Weyl group action to find a canonical construction.  
The proof for these cases would be more advanced than the above cases $(\alpha)$, $(\beta)$. 
On the other hand, when $r+a\leq18$, some of our birational period maps seem related to 
Nakayama's minimal model process for log del Pezzo surfaces of index $2$ \cite{Na}, 
which may explain their birationality. 

Recall that for most $(r, a, \delta)$ 
the fixed locus $X^{\iota}$ of $(X, \iota)\in{\moduli}$ 
is the union of a genus $g=11-2^{-1}(r+a)$ curve $C^g$ and some other $(-2)$-curves (\cite{Ni2}). 
As an application of our birational period maps, we will determine the generic structure of the fixed curve map 
\begin{equation}
F : {\moduli}\to\mathcal{M}_g, \qquad (X, \iota)\mapsto C^g, 
\end{equation}
when $g\geq3$. 
The point is that $F$ is the composition of 
the inverse period map $\mathcal{P}^{-1}\colon{\moduli}\dashrightarrow U/G$ 
with the map $U/G\to\mathcal{M}_g$ that associates to a $-2K_Y$-curve its component of maximal genus.   
We will find that the latter map identifies $U/G$ with 
a natural fibration over a sublocus of $\mathcal{M}_g$ defined in terms of special line bundles and points. 
Thus, via $F$, ${\moduli}$ is in a nice relation to $\mathcal{M}_g$. 
This generalizes the descriptions of $\mathcal{M}_{10,2,0}$ and $\mathcal{M}_{5,5,1}$ referred above. 

Finally, we should explain that this article has been developed in two-steps. 
In an earlier version of this article, we could not prove the rationality of 
$\mathcal{M}_{10,10,1}$, $\mathcal{M}_{12,10,1}$, $\mathcal{M}_{13,9,1}$, $\mathcal{M}_{14,8,1}$, 
(and $\mathcal{M}_{11,11,1}$) by the above methods. 
After that version had circulated, Dolgachev and Kond\=o \cite{D-K} settled the case of $\mathcal{M}_{11,11,1}$ 
using a lattice-theoretic trick. 
Then the referee and Kond\=o independently suggested that 
a similar technique is also applicable for $\mathcal{M}_{12,10,1}$, which is produced in the present article. 
Afterwards, we developed the method of Dolgachev-Kond\=o further and proved the rationality of 
$\mathcal{M}_{10,10,1}$, $\mathcal{M}_{13,9,1}$, and $\mathcal{M}_{14,8,1}$. 
In view of this development, we shall treat those four cases separately in the Appendix.

The rest of the article is organized as follows. 
\S\ref{sec:invariant theory} and \S\ref{Sec:Hirze} are preliminaries on invariant theory. 
In \S\ref{sec: 2-ele K3}, after reviewing basic theory of 2-elementary $K3$ surfaces, 
we explain how to calculate the degrees of certain period maps. 
The proof of Theorem \ref{main} begins with \S\ref{sec:k=0}, 
where we treat the case $r=a$ using del Pezzo surfaces. 
The case $r>a$ is divided according to the genus $g$ of main fixed curves. 
This division policy comes from our observation on the relation of ${\moduli}$ to $\mathcal{M}_g$. 
In \S\ref{sec: g>6, k>0} we treat the case $g\geq7$ where trigonal curves of fixed Maroni invariant are central. 
Section $7\leq n\leq11$ is for the case $g=13-n$. 
In \S\ref{sec:g=1 (1)} we treat the case $g=1$, $11\leq r\leq14$, $\delta=1$, 
where del Pezzo surfaces appear again in more nontrivial way. 
We study in \S \ref{sec:g=1 (2)} the rest cases with $g=1$, $r\geq14$, 
and in \S\ref{sec:g=0} the case $g=0$, $r\geq15$. 
Finally, we treat the cases $r+a=22$, $12\leq r\leq14$ and $(r, a, \delta)=(10, 10, 1)$ in the Appendix.


\section{Rationality of quotient variety}\label{sec:invariant theory}

Let $U$ be an irreducible variety acted on by an algebraic group $G$. 
Throughout this article we shall denote by $U/G$ a \textit{rational quotient}, 
namely a variety whose function field is isomorphic to the invariant subfield ${\C}(U)^G$ of 
the function field ${\C}(U)$ of $U$. 
As ${\C}(U)^G$ is finitely generated over ${\C}$, a rational quotient always exists (cf. \cite{P-V}) 
and is unique up to birational equivalence. 
The inclusion ${\C}(U)^G \subset {\C}(U)$ induces a quotient map $\pi\colon U\dashrightarrow U/G$. 
Every $G$-invariant rational map $\varphi\colon U\to V$ is factorized as $\varphi=\psi\circ \pi$ 
by a unique rational map $\psi\colon U/G \dashrightarrow V$. 
We are interested in the problem when $U/G$ is rational. 
We recall some results and techniques following \cite{Do} and \cite{P-V}. 

\begin{theorem}[Miyata \cite{Miy}]\label{Miyata}
If $G$ is a connected solvable group and $U$ is a linear representation of $G$, 
the quotient $U/G$ is rational. 
\end{theorem}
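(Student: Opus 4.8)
The plan is to induct on $\dim U$ by slicing off one coordinate at a time, using the fact that a connected solvable group is built from a unipotent normal subgroup and a torus via a chain of normal subgroups with one-dimensional (additive or multiplicative) successive quotients. More concretely, I would first reduce to the case where $G$ acts on $U = \mathbb{A}^n$ linearly, and then filter $U$ by a flag of $G$-invariant linear subspaces $0 = U_0 \subset U_1 \subset \cdots \subset U_n = U$ with $\dim U_i = i$; such a flag exists by the Lie--Kolchin theorem, since $G$ is connected and solvable. Dualizing, one gets a chain of $G$-equivariant surjections $\mathbb{C}[U] = \mathbb{C}[x_1,\dots,x_n] \twoheadrightarrow \cdots$, where each $x_i$ spans a $G$-semi-invariant line modulo $(x_1,\dots,x_{i-1})$; that is, $g\cdot x_i \equiv \chi_i(g) x_i$ modulo lower-index variables, for a character $\chi_i$ of $G$.

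The key step is then a ``rationality of twisted function fields'' argument carried out one variable at a time. Set $K_i = \mathbb{C}(x_1,\dots,x_i)^G$. I would show $K_i$ is rational over $K_{i-1}$ of transcendence degree either $0$ or $1$, so that by induction $\mathbb{C}(U)^G = K_n$ is rational over $\mathbb{C}$. For the inductive step one works over the field $F = \mathbb{C}(x_1,\dots,x_{i-1})$, on which $G$ acts; $x_i$ is then a variable over $F$ on which $G$ acts by an ``affine'' transformation $x_i \mapsto \chi_i(g)\, x_i + b_i(g)$ with $b_i(g) \in F$. The two cases are: (i) if this action is genuinely affine (translation part nonzero for some $g$), one finds an element of $F(x_i)$ of the form $x_i + c$ or a difference of two conjugates that is $G$-invariant and transcendental over $F^G$, and then $F(x_i)^G = F^G(\text{that element})$ by a degree/Galois-descent count; (ii) if the action is purely by the character $\chi_i$, then $F(x_i)^G$ is generated over $F^G$ by $x_i^{m}$ times a suitable element of $F$ trivializing the character (or by $x_i$ itself if $\chi_i$ is trivial on the stabilizer), again purely transcendental or algebraic-then-absorbed. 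In either case $F(x_i)^G$ is rational over $F^G$, which is what we need.

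The main obstacle is the bookkeeping in case (ii): one must be sure that the character $\chi_i$, restricted to the subgroup of $G$ acting effectively on the relevant slice, can be ``cancelled'' using invariants already built from $x_1,\dots,x_{i-1}$, so that no nontrivial character obstruction survives; this is where connectedness of $G$ (hence divisibility properties of its character group, and triviality of $H^1$ with the relevant coefficients) is used. I would handle this by choosing, at each stage, a monomial $x_1^{a_1}\cdots x_{i-1}^{a_{i-1}}$ whose semi-invariance character equals $\chi_i^{-1}$ — possible because the characters $\chi_1,\dots,\chi_{i-1}$ generate a finite-index subgroup of the character lattice once $G$ is connected solvable and the representation is faithful enough — and replace $x_i$ by its product with that monomial to reduce to the trivial-character situation. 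Once every step is reduced to adjoining a single transcendental (or nothing), rationality of $\mathbb{C}(U)^G$ over $\mathbb{C}$ follows formally, completing the proof.
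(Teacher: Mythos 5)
The paper does not prove this statement: it is quoted from Miyata's article \cite{Miy} and used as a black box, so the only thing to compare against is Miyata's original argument --- whose skeleton (Lie--Kolchin triangularization of the dual representation, then adjoining one coordinate at a time and tracking the invariant field) your proposal does reproduce correctly. The gap is in the one-variable inductive step, where both of your cases assert more than is true. In case (i) you claim that whenever the translation part is nonzero one can find an invariant of the form $x_i+c$ (or a difference of conjugates) that is transcendental over $F^G$; but for $G=\C$ acting on $\C^2$ by $t\cdot(x_1,x_2)=(x_1,\,x_2+tx_1)$ the step $F=\C(x_1)\subset F(x_2)$ is genuinely affine, yet $F(x_2)^G=F$ and no such element exists (an invariant $x_2+c(x_1)$ would force $tx_1=0$; equivalently the cocycle $t\mapsto -tx_1$ would have to be a coboundary for the trivial action on $F$, which it is not). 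In case (ii) the claim that $\chi_1,\dots,\chi_{i-1}$ generate a finite-index subgroup of the character lattice is false even for faithful representations: for $G=\C^{\times}$ acting by $t\cdot(x_1,x_2)=(x_1,tx_2)$, the character $\chi_1$ is trivial and no monomial in $x_1$ has character $\chi_2^{-1}$. In both examples the theorem of course still holds, because the invariant field simply fails to grow at that step; the ``obstruction'' you try to cancel is not an obstruction at all, and connectedness is used only for Lie--Kolchin, not for any divisibility property of the character group.

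The correct engine, which subsumes both of your cases with no cocycle vanishing and no character cancellation, is Miyata's lemma: if $G$ acts on $F(x)$ by automorphisms preserving the polynomial ring $F[x]$ (equivalently $g\cdot x=\chi(g)x+b(g)$ with $b(g)\in F$), then either $F[x]^G=F^G$, in which case $F(x)^G=F^G$, or else $F(x)^G=F^G(f)$ where $f\in F[x]^G$ is any invariant of minimal positive degree in $x$. Its proof is a division-with-remainder argument in $F[x]$ (an invariant rational function is a ratio of two semi-invariant polynomials with the same multiplier, and reduction modulo $f$ lowers degree), not a Galois-descent count. If you replace your two cases by this single lemma, the induction you set up closes and yields exactly Miyata's proof.
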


\begin{theorem}[Katsylo, Bogomolov \cite{Ka1.5} \cite{B-K}]\label{Katsylo}
If $U$ is a linear representation of $G={\SL}_2\times ({\C}^{\times})^k$, 
the quotient $U/G$ is rational. 
\end{theorem}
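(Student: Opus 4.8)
The strategy is to reduce everything to Miyata's theorem (Theorem~\ref{Miyata}) via the method of relative sections (cf.\ \cite{P-V}). Write $G=\SL_2\times T$ with $T=(\C^\times)^k$ and decompose the module as $V=\bigoplus_j(V_{d_j}\otimes\chi_j)$, where $V_d=\mathrm{Sym}^d(\C^2)$ is the space of binary forms of degree $d$ and $\chi_j\in\hat T$. Since rationality of the quotient is a birational invariant, I am free to discard $G$-invariant closed subsets throughout. Note first that a Borel subgroup $B\subset\SL_2$ together with $T$ forms a connected solvable group $B\times T$; thus, by Miyata, the quotient of \emph{any} linear $(B\times T)$-module is rational. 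Consequently it suffices to produce a relative section $S\subset V$ for the $G$-action which is a linear subspace and whose normalizer $N_G(S)=\{g\in G:g\cdot S=S\}$ is contained in a conjugate of $B\times T$: such data gives $\C(V)^G\cong\C(S)^{N_G(S)}$, hence $V/G\sim S/N_G(S)$, which is then rational by Miyata applied to the linear $N_G(S)$-module $S$.

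The easy case is when some summand is the twisted standard module, $d_j=1$. Fix $p=[1:0]\in\proj^1$ and let $S\subset V$ be the subspace of tuples whose $j$-th component vanishes at $p$. A nonzero binary linear form has a unique root, so $G\cdot S$ is dense in $V$, and an element of $G$ preserving $S$ must fix $p$, i.e.\ lie in $B\times T$; since $\mathrm{codim}_V S=1=\dim G-\dim(B\times T)$, $S$ is a genuine relative section and Miyata finishes. (If instead all $d_j=0$ then $\SL_2$ acts trivially and $V/G=V/T$ is directly covered by Miyata.) The same ``prescribed root'' section fails once every $d_j\ge2$: a form of degree $\ge2$ generically has several distinct roots, and an element of $G$ can carry one of them to $p$ while the tuple, caught through another root, still lands in $S$, so the normalizer condition breaks down. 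This is where the argument becomes substantial: one must invoke the method of covariants, attaching to a generic binary form (or tuple) in an $\SL_2$-equivariant way auxiliary forms of smaller degree (Hessians, Jacobians, transvectants) or configurations of points on $\proj^1$ that the group can normalize, thereby building a valid section by induction on the degrees; when all $d_j$ are even, $\SL_2$ acts through $\PGL_2\cong\mathrm{SO}_3$ and one brings in the rationality results for $\PGL_2$-quotients.

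The main obstacle is precisely this degree-$\ge2$ analysis: one must show that for \emph{every} admissible list of degrees --- not least the small modules $V_2,V_3,V_4,\dots$ and the degenerate configurations where there is too little room to peel off a summand and keep a dense orbit --- a relative section with connected solvable normalizer, or else a direct rational parametrization via classical binary invariant theory (the $j$-line for $V_4$, and so on), exists. No single step here is deep; it is the exhaustiveness of the case list and the uniform construction of covariant-based sections that carry the weight, and this is the technical core supplied by Katsylo and Bogomolov--Katsylo \cite{Ka1.5},\cite{B-K}. The torus factor plays no essential role: it is carried along passively through the character twists $\chi_j$ and contributes only to the solvable normalizer at the end.
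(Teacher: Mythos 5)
The paper does not actually prove this statement: it is quoted from the literature, with only the remark that it is ``for the most part a consequence of Katsylo's theorem in \cite{Ka1.5}'', that the exception there was settled by \cite{B-K}, together with a pointer to \cite{P-V} Theorem 2.12. So the only thing your write-up can be compared against is that citation. Measured against it, your proposal is an accurate annotated version of the same move: the relative-section-plus-Miyata framework is the standard one; your treatment of the cases where some summand has degree $0$ or $1$ is correct (the verification that $Gs\cap S=N_G(S)\,s$ for generic $s\in S$, via uniqueness of the root of a nonzero linear form, is exactly what is needed to get $\C(V)^G\cong\C(S)^{N_G(S)}$); and you locate the difficulty in the right place.

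As a self-contained proof, however, the proposal has a gap that coincides with essentially the entire content of the theorem: the case in which every $d_j\ge2$ is the generic one, and for it you produce no section and no reduction --- only a description of the flavour of the covariant constructions in \cite{Ka1.5} and \cite{B-K}, followed by a citation. Your two easy cases do not feed into it; there is no induction for which they serve as a base. Since the paper itself does nothing more than cite these references, this does not put you at odds with the paper, but you should present the argument as a proof-by-citation rather than a proof. Two smaller points. First, ``$N_G(S)$ contained in a conjugate of $B\times T$'' is not by itself enough to invoke Theorem \ref{Miyata}, which requires the acting group to be \emph{connected} and solvable; a subgroup of $B\times T$ can be disconnected. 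In your $d_j=1$ case you in fact compute $N_G(S)=B\times T$ exactly, which is why the step is valid, but the general criterion should be stated as ``$N_G(S)$ connected and solvable'' (or a finite extension thereof handled separately). Second, the torus is less passive than you suggest: $(\C(V)^{\SL_2})^T$ is a torus quotient of a possibly non-linear model, so the $\SL_2\times(\C^\times)^k$ statement does not follow formally from the $\SL_2$ one; building $T$ into the section and its normalizer from the start, as you implicitly do, is the correct fix and is why the combined statement is recorded in \cite{P-V} as a theorem in its own right.
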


Theorem \ref{Katsylo} is for the most part a consequence of Katsylo's theorem in \cite{Ka1.5}. 
The exception in \cite{Ka1.5} was settled by \cite{B-K}. 
See also \cite{P-V} Theorem 2.12. 

The $G$-action on $U$ is \textit{almost transitive} if $U$ contains an open orbit. 
The following is a special case of the slice method (\cite{Ka1}, \cite{Do}). 

\begin{proposition}[slice method]\label{slice}
Let $f:U\to V$ be a $G$-equivariant dominant morphism with almost transitive $G$-action on $V$. 
If $G_v\subset G$ is the stabilizer group of a general point $v \in V$, 
then $U/G$ is birational to $f^{-1}(v)/G_v$. 
\end{proposition}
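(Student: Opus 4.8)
The plan is to produce a birational map in each direction between $U/G$ and $f^{-1}(v)/G_v$ by tracking function fields, exploiting that the open orbit $G\cdot v\subset V$ is an honest homogeneous space $G/G_v$ over which $f$ restricts. First I would replace $V$ by the open orbit $O = G\cdot v \cong G/G_v$; since $f$ is dominant and $O$ is open in $V$, the preimage $U_O = f^{-1}(O)$ is a $G$-invariant dense open subset of $U$, so $U_O/G$ and $U/G$ have the same function field and it suffices to treat $f\colon U_O\to O$. Now the point is that $f$ is a fiber bundle (in the birational/Zariski-generic sense) with fiber $f^{-1}(v)$ and structure given by the $G_v$-action: the natural map $G\times_{G_v} f^{-1}(v)\to U_O$, $(g,x)\mapsto g\cdot x$, is a $G$-equivariant birational isomorphism onto $U_O$, because $G$ acts transitively on the base $O$ and the stabilizer of the fiber over $v$ is exactly $G_v$ acting on $f^{-1}(v)$.

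The key computation is then purely on function fields. From the identification $U_O\sim_{\mathrm{bir}} G\times_{G_v} f^{-1}(v)$ one gets
\[
\C(U_O)^G \;\cong\; \C\bigl(G\times_{G_v} f^{-1}(v)\bigr)^G \;\cong\; \C\bigl(f^{-1}(v)\bigr)^{G_v},
\]
where the last isomorphism is the standard fact that $G$-invariant rational functions on an associated bundle $G\times_{G_v} Z$ correspond to $G_v$-invariant rational functions on $Z$ (a $G$-invariant function is determined by its restriction to the fiber $\{e\}\times Z$, and that restriction is visibly $G_v$-invariant; conversely any $G_v$-invariant function on $Z$ extends uniquely). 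Since $\C(U)^G = \C(U_O)^G$, this gives $\C(U)^G\cong\C(f^{-1}(v))^{G_v}$, i.e. $U/G$ is birational to $f^{-1}(v)/G_v$, which is the assertion.

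I expect the main obstacle to be the step asserting that $G\times_{G_v} f^{-1}(v)\to U_O$ is birational, i.e. making precise ``$f$ is generically a $G_v$-bundle over $G/G_v$''. Over a general (reduced, characteristic zero) base this is standard, but to be careful one should note: the map is clearly $G$-equivariant and dominant; injectivity on a dense open set follows because if $g\cdot x = g'\cdot x'$ with $x,x'\in f^{-1}(v)$ then $g^{-1}g'$ fixes $v$, hence lies in $G_v$, so $(g,x)$ and $(g',x')$ agree in the balanced product; and dominance plus this generic injectivity gives a birational isomorphism by generic flatness / Zariski's main theorem in characteristic zero. One subtlety worth a remark is that $G\to G/G_v$ need only admit a rational section (which always exists over $\C$ by the theorem of Rosenlicht), and a rational section is all that is needed to trivialize the bundle birationally; this is why the conclusion is only up to birational equivalence, consistent with the convention for $U/G$ fixed at the start of \S\ref{sec:invariant theory}.
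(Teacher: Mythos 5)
Your argument is correct and is essentially the same as the paper's: the paper's proof consists of the single remark that $f^{-1}(v)$ is a slice of $U$ with stabilizer $G_v$ in the sense of \cite{Do}, and your identification of $U$ generically with the associated bundle $G\times_{G_v}f^{-1}(v)$ (using a rational section of $G\to G/G_v$) followed by the computation $\C(U)^G\cong\C(f^{-1}(v))^{G_v}$ is precisely the standard argument that this citation encodes.
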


\begin{proof}
Indeed, the fiber $f^{-1}(v)$ is a slice of $U$ in the sense of \cite{Do} with the stabilizer $G_v$. 
\end{proof}

The $G$-action on $U$ is \textit{almost free} if a general point of $U$ has trivial stabilizer. 

\begin{proposition}[no-name method]\label{no-name 1}
If $E\to U$ is a $G$-linearized vector bundle with almost free $G$-action on $U$, 
then the induced map $E/G \dashrightarrow U/G$ is birationally equivalent to
a vector bundle over $U/G$. 
\end{proposition}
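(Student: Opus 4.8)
The plan is to reduce to a genuinely free action and then descend the bundle along the quotient torsor. First I would invoke a theorem of Rosenlicht (cf.\ \cite{P-V}) to choose a $G$-invariant dense open subset $U_{0}\subset U$ admitting a geometric quotient $\pi\colon U_{0}\to B$; by construction ${\C}(B)\cong{\C}(U)^{G}$, so $B$ is birational to $U/G$. Since the action is almost free, I would then shrink $U_{0}$ (and correspondingly $B$) so that $G$ acts freely on $U_{0}$ and $\pi$ becomes a $G$-torsor, hence in particular faithfully flat.

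Next, put $E_{0}:=E|_{U_{0}}$, a $G$-linearized vector bundle over $U_{0}$. The key point is that a $G$-linearization of $E_{0}$ is exactly a descent datum for $E_{0}$ relative to the faithfully flat morphism $\pi$. By faithfully flat descent, $E_{0}$ therefore descends to a vector bundle $\overline{E}\to B$ of the same rank, together with a $G$-equivariant isomorphism $E_{0}\cong\pi^{\ast}\overline{E}$. Passing to quotients yields $E_{0}/G\cong(\pi^{\ast}\overline{E})/G\cong\overline{E}$, compatibly with the projections to $B$ (the latter being induced by $E_{0}\to U_{0}$).

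Finally I would conclude by birational comparison: $E_{0}/G$ is birational to $E/G$ and $B=U_{0}/G$ is birational to $U/G$, so $E/G$ is birationally equivalent to the vector bundle $\overline{E}$ over $U/G$, and under this identification the induced map $E/G\dashrightarrow U/G$ is the bundle projection. In particular, since any vector bundle is Zariski-locally trivial, $E/G$ is rational whenever $U/G$ is.

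The step I expect to be the main obstacle is the first one: upgrading ``almost free with a geometric quotient on a dense open'' to ``$\pi$ is honestly a $G$-torsor over a dense open'', because it is only then that the $G$-linearization is literally an effective descent datum. This is standard --- it follows from generic flatness together with the observation that the canonical morphism $G\times U_{0}\to U_{0}\times_{B}U_{0}$, which is an isomorphism over the generic point by freeness, remains one over a dense open --- but it is the one step that is not purely formal; everything after it is routine descent theory.
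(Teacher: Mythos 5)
Your argument is correct and is exactly the descent-theoretic proof that the paper itself invokes (it notes the reductive case follows from descent for principal $G$-bundles and delegates the general case to \cite{C-G-R}, Lemma 4.4), so you are filling in the proof the paper only cites. The one non-formal step you single out --- upgrading an almost free action to a genuine $G$-torsor over a dense open of the Rosenlicht quotient --- is indeed the crux, and your justification is sound over ${\C}$: after shrinking so that everything is smooth, the bijective morphism $G\times U_0\to U_0\times_B U_0$ is birational in characteristic zero and hence an isomorphism onto the normal target by Zariski's Main Theorem, after which fppf descent of quasi-coherent sheaves applies verbatim.
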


When $G$ is reductive, the no-name lemma is a consequence of the descent theory for principal $G$-bundle. 
A proof for general $G$ is given in \cite{C-G-R} Lemma 4.4. 
We will use the no-name method also in the following form. 

\begin{proposition} \label{no-name 2}
Let $E\to U$ be a $G$-linearized vector bundle and let $G_0= \{ g\in G, g|_U=\textrm{id} \}$. 
Suppose that $(i)$ $\overline{G}=G/G_0$ acts almost freely on $U$, 
$(ii)$ $G_0$ acts on $E$ by a scalar multiplication $\alpha : G_0 \to {\C}^{\times}$, 
and $(iii)$ there exists a $G$-linearized line bundle $L\to U$ on which $G_0$ acts by $\alpha$. 
Then ${\proj}E/G$ is birational to ${\proj}^N\times (U/G)$. 
\end{proposition}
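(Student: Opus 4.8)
The plan is to reduce Proposition \ref{no-name 2} to the no-name method (Proposition \ref{no-name 1}) applied to the $\overline{G}$-action on $U$, by first twisting $E$ so that $G_0$ acts trivially on it, and then descending from $G$ to $\overline{G}$.

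First I would replace $E$ by the tensor product $E' = E\otimes L^{\vee}$. Since $G_0$ acts on $E$ by the character $\alpha$ (hypothesis $(ii)$) and on $L$ by the same character $\alpha$ (hypothesis $(iii)$), it acts on $E' = E\otimes L^{\vee}$ trivially; moreover, $E'$ is still $G$-linearized since $E$ and $L$ are. Crucially, tensoring by a line bundle does not change the projectivization: ${\proj}E' \cong {\proj}E$ as $G$-varieties over $U$, so it suffices to show ${\proj}E'/G$ is birational to ${\proj}^N\times(U/G)$. Now $G_0$ acts trivially on all of $E'$ (both on the base $U$, by definition of $G_0$, and on the fibers, by the previous step), so the $G$-action on $E'$ descends to a $\overline{G} = G/G_0$ action on $E'$, and $E'\to U$ is a $\overline{G}$-linearized vector bundle. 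Likewise ${\C}(E')^G = {\C}(E')^{\overline{G}}$ and ${\C}(U)^G = {\C}(U)^{\overline{G}}$, so all the rational quotients in sight are unchanged by passing from $G$ to $\overline{G}$.

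Next, apply the no-name method. By hypothesis $(i)$, $\overline{G}$ acts almost freely on $U$, so Proposition \ref{no-name 1} applies to the $\overline{G}$-linearized vector bundle $E'\to U$: the induced map $E'/\overline{G}\dashrightarrow U/\overline{G}$ is birationally equivalent to a vector bundle, say of rank $N+1$, over $U/\overline{G} = U/G$. Projectivizing fiberwise, ${\proj}E'/\overline{G}\dashrightarrow U/\overline{G}$ is then birationally a ${\proj}^N$-bundle over $U/G$. The final step is to observe that this projective bundle is in fact birationally trivial: a ${\proj}^N$-bundle that arises as the projectivization of a vector bundle always has a rational section (for instance, the section given by a sub-line-bundle, or simply because a vector bundle over an irreducible variety is Zariski-locally trivial), hence it is birational to ${\proj}^N\times (U/G)$. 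Combining this with ${\proj}E'/G = {\proj}E'/\overline{G}$ and ${\proj}E/G = {\proj}E'/G$ gives the claim.

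The only genuinely delicate point is the bookkeeping in the first paragraph: one must check that $E\otimes L^{\vee}$ carries a genuine $G$-linearization on which $G_0$ acts trivially, so that the action really does factor through $\overline{G}$ — here it is essential that the \emph{same} character $\alpha$ appears in $(ii)$ and $(iii)$, and that $G_0$ is normal in $G$ (which holds since it is the kernel of the action on $U$). Everything else is a direct invocation of Proposition \ref{no-name 1} together with the elementary fact that projectivized vector bundles are birationally trivial; no serious computation is required.
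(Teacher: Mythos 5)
Your proof is correct and follows essentially the same route as the paper's: twist $E$ by $L^{-1}$ to make the $G_0$-action trivial, apply Proposition \ref{no-name 1} to the resulting $\overline{G}$-linearized bundle over the almost free base, and use the canonical identification ${\proj}(E\otimes L^{-1})={\proj}E$. The only point the paper makes explicit that you leave implicit is that the quotient map $E\otimes L^{-1}\dashrightarrow (E\otimes L^{-1})/\overline{G}$ is linear on the fibers (cf.\ \cite{C-G-R}), which is exactly what licenses your step of ``projectivizing fiberwise.''
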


\begin{proof}
Apply Proposition \ref{no-name 1} to the $\overline{G}$-linearized vector bundle $E\otimes L^{-1}$. 
We have a canonical identification ${\proj}(E\otimes L^{-1})={\proj}E$. 
Notice that the quotient map $E\otimes L^{-1} \dashrightarrow (E\otimes L^{-1})/\overline{G}$ 
is linear on the fibers (cf. \cite{C-G-R}). 
\end{proof}


\section{Automorphism action on Hirzebruch surfaces}\label{Sec:Hirze}

We prepare miscellaneous results concerning automorphism action on Hirzebruch surfaces. 
For $n\geq0$ let $\mathcal{E}_n$ be the vector bundle ${\Oline}(n)\oplus {\Oline}$ over ${\proj}^1$.  
The projectivization ${\F}_n={\proj}\mathcal{E}_n$ is a Hirzebruch surface.  
Our convention is that a point of ${\F}_n$ represents a line in a fiber of $\mathcal{E}_n$. 
Then the section ${\proj}{\Oline}(n)\subset {\F}_n$ is a $(-n)$-curve which we denote by $\Sigma$. 
Let $\pi : {\F}_n \to {\proj}^1$ be the natural projection. 
If $F$ is a $\pi$-fiber, we shall denote the line bundle ${\OHirze}(a(\Sigma+nF)+bF)$ by $L_{a,b}$. 
The Picard group ${\Pic}({\F}_n)$ consists of the bundles $L_{a,b}$, $a, b\in{\Z}$. 
For example, the section ${\proj}{\Oline}\subset {\F}_n$ belongs to $|L_{1,0}|$; 
the fiber $F$ belongs to $|L_{0,1}|$; 
the canonical bundle $K_{{\F}_n}$ is isomorphic to $L_{-2,n-2}$.  
We have $(L_{a,b}.F)=a$ and $(L_{a,b}.\Sigma)=b$. 

\textit{Except for \S\ref{ssec:trigonal}, we assume $n>0$ in this section.} 
Under this assumption, the action of ${\aut}({\F}_n)$ preserves $\pi$ and $\Sigma$. 
Consequently, we have the exact sequence 
\begin{equation}\label{Aut(Hir)}
1 \to R \to {\aut}({\F}_n) \to {\aut}(\Sigma) \to 1 
\end{equation} 
where $R={\aut}(\mathcal{E}_n)/{\C}^{\times}$. 
Via the given splitting $\mathcal{E}_n={\Oline}(n)\oplus {\Oline}$, 
we may identify 
\begin{equation}\label{desrcibe R}
R = \left\{ g_{\alpha, s}=\begin{pmatrix} \alpha &    s  \\
                                             0    &    1  \end{pmatrix}, \; \; \alpha\in{\C}^{\times}, s\in {\Hom}({\Oline}, {\Oline}(n)) \right\} .
\end{equation}                                              
Thus we have $R \simeq {\C}^{\times}\ltimes H^0({\Oline}(n))$. 
When $n$ is even, $\mathcal{E}_n$ admits a ${\PGL}_2$-linearization via that of ${\Oline}(n)$, 
so that the sequence $(\ref{Aut(Hir)})$ splits.

\subsection{Linearization of line bundles}\label{ssec:linearization}

Let $\widetilde{G}={\SL}_2\ltimes R$ where 
${\SL}_2$ acts on the component $H^0({\Oline}(n))$ of $R$ in the natural way. 
Via the identification \eqref{desrcibe R} and the ${\SL}_2$-linearization of ${\Oline}(n)$, 
the bundle $\mathcal{E}_n$ is $\widetilde{G}$-linearized. 
This induces a surjective homomorphism $\widetilde{G}\to {\aut}({\F}_n)$, 
whose kernel is ${\Z}/2{\Z}$ generated by 
$\rho=(-1, g_{-1, 0})$ when $n$ is odd, and by $\sigma=(-1, g_{1, 0})$ when $n$ is even. 

\begin{lemma}\label{cover linearization Hirze}
Every line bundle on ${\F}_n$ is $\widetilde{G}$-linearized. 
\end{lemma}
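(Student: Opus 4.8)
The plan is to reduce the lemma to the linearization of just two line bundles. Since the tensor product and the dual of $\widetilde{G}$-linearized line bundles carry canonical $\widetilde{G}$-linearizations, and ${\Pic}({\F}_n)={\Z}L_{1,0}\oplus{\Z}L_{0,1}$ is free of rank $2$, it is enough to put a $\widetilde{G}$-linearization on each member of a generating pair. I would take the fiber class $L_{0,1}=\OHirze(F)=\pi^{\ast}\Oline(1)$ and the class $\OHirze(\Sigma)=L_{1,-n}$ of the tautological line subbundle; because $L_{1,-n}=L_{1,0}\otimes L_{0,1}^{-n}$, these two generate ${\Pic}({\F}_n)$.

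For $L_{0,1}$ I would first note that $\pi\colon{\F}_n\to{\proj}^1$ is $\widetilde{G}$-equivariant: the $\widetilde{G}$-action on ${\F}_n$ factors through ${\aut}({\F}_n)$, which preserves $\pi$ since $n>0$, and the induced action on ${\proj}^1\simeq\Sigma$ is the one obtained from $\widetilde{G}\twoheadrightarrow{\SL}_2$ via ${\aut}({\F}_n)\to{\aut}(\Sigma)$. As $\Oline(1)$ — the dual of the tautological subsheaf $\Oline(-1)\hookrightarrow\Oline\otimes{\C}^2$ — is ${\SL}_2$-linearized, its pullback $L_{0,1}=\pi^{\ast}\Oline(1)$ is $\widetilde{G}$-linearized.

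For $L_{1,-n}$ I would use the very construction of the homomorphism $\widetilde{G}\to{\aut}({\F}_n)$ in \S\ref{ssec:linearization}: it comes from a $\widetilde{G}$-linearization of $\mathcal{E}_n$, so $\pi^{\ast}\mathcal{E}_n$ is $\widetilde{G}$-linearized compatibly with the action on ${\F}_n={\proj}\mathcal{E}_n$. The tautological subbundle $T\subset\pi^{\ast}\mathcal{E}_n$ — fiberwise, the line of $(\mathcal{E}_n)_{\pi(x)}$ that the point $x$ represents — is carried to itself by $\widetilde{G}$, essentially by definition of how a linearization of $\mathcal{E}_n$ produces the action on ${\proj}\mathcal{E}_n$; hence $T$ is a $\widetilde{G}$-subbundle, and $T^{\vee}=\OHirze(\Sigma)=L_{1,-n}$ is $\widetilde{G}$-linearized. (One could avoid identifying $T^{\vee}$ with an $L_{a,b}$ altogether by invoking the standard fact ${\Pic}({\proj}\mathcal{E}_n)=\pi^{\ast}{\Pic}({\proj}^1)\oplus{\Z}[T^{\vee}]$.) Combining the two cases, every line bundle on ${\F}_n$ is $\widetilde{G}$-linearized.

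The only step with genuine content — the rest being formal Picard-group bookkeeping — is checking that the tautological subbundle $T$ is $\widetilde{G}$-invariant, i.e. that the action of $\widetilde{G}$ on ${\F}_n$ coming from the chosen linearization of $\mathcal{E}_n$ really does preserve it. This unwinds at once from the functoriality of the construction ${\proj}(-)$, but it is the one place where one must look inside the definition rather than merely manipulate line bundles. An alternative, softer route would be to observe that $\widetilde{G}={\SL}_2\ltimes R$ satisfies ${\Pic}(\widetilde{G})={\Pic}({\SL}_2\times{\C}^{\times})=0$ (its unipotent radical $H^0(\Oline(n))$ contributes nothing) and then quote the standard criterion that over a normal variety acted on by a connected group of trivial Picard group every line bundle is linearizable; but the explicit two-generator argument is cleaner and self-contained, so that is what I would write up.
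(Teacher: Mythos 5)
Your proposal is correct and follows essentially the same route as the paper: linearize $L_{0,1}=\pi^{\ast}\Oline(1)$ via the ${\SL}_2$-linearization of $\Oline(1)$, linearize $\OHirze(\Sigma)=L_{1,-n}$ as the dual of the tautological bundle using the $\widetilde{G}$-linearization of $\mathcal{E}_n$, and conclude since these generate ${\Pic}({\F}_n)$. The only difference is that you spell out the Picard-group bookkeeping and the $\widetilde{G}$-invariance of the tautological subbundle, which the paper leaves implicit.
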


\begin{proof}
The ${\SL}_2$-linearization of ${\Oline}(1)$ induces 
a $\widetilde{G}$-linearization of $\pi^{\ast}{\Oline}(1)=L_{0,1}$. 
Moreover, ${\OHirze}(\Sigma)$ is the dual of the tautological bundle over ${\F}_n$, 
which is the blow-up of $\mathcal{E}_n$ along the zero section. 
Hence ${\OHirze}(\Sigma)=L_{1,-n}$ is $\widetilde{G}$-linearized. 
\end{proof}

\begin{proposition}\label{linearization Hirze}
When $n$ is odd, every line bundle on ${\F}_n$ is ${\aut}({\F}_n)$-linearized. 
When $n$ is even, the bundle $L_{a, b}$ admits an ${\aut}({\F}_n)$-linearization if $b$ is even. 
\end{proposition}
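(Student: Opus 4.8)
The plan is to obtain both assertions by descending the $\widetilde{G}$-linearizations of Lemma~\ref{cover linearization Hirze} along the surjection $\widetilde{G}\to{\aut}({\F}_n)$, whose kernel $K\simeq{\Z}/2{\Z}$ is generated by $\rho=(-1,g_{-1,0})$ when $n$ is odd and by $\sigma=(-1,g_{1,0})$ when $n$ is even. The first thing I would record is the descent criterion: if a line bundle $L$ on ${\F}_n$ carries a $\widetilde{G}$-linearization on which the generator of $K$ acts as the identity of the total space, then --- since $K$ acts trivially on ${\F}_n$ itself --- the $\widetilde{G}$-action factors through $\widetilde{G}/K={\aut}({\F}_n)$ and yields an ${\aut}({\F}_n)$-linearization of $L$. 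Since the generator of $K$ is an involution fixing the base, it acts on $L$ by a scalar $\varepsilon\in\{\pm1\}$ (an automorphism of $L$ over $\mathrm{id}_{{\F}_n}$ is multiplication by a global unit, hence by a nonzero constant as ${\F}_n$ is complete), so the only thing to arrange is $\varepsilon=1$ for a suitable $\widetilde{G}$-linearization.

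For $n$ odd I would use the character $\chi\colon\widetilde{G}\to{\C}^{\times}$ sending $(A,g_{\alpha,s})$ to $\alpha$, which is a homomorphism because the group law multiplies the $\alpha$-coordinate, and which satisfies $\chi(\rho)=-1$. Given any line bundle $L$, take the $\widetilde{G}$-linearization supplied by Lemma~\ref{cover linearization Hirze}, read off the scalar $\varepsilon\in\{\pm1\}$ by which $\rho$ acts, and --- if $\varepsilon=-1$ --- replace this linearization by its twist by $\chi$; then $\rho$ acts by $\varepsilon\,\chi(\rho)=1$, and the descent criterion gives the first assertion.

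For $n$ even the character trick fails, since $\chi(\sigma)=\alpha(g_{1,0})=1$, and in fact every character of $\widetilde{G}$ is trivial on $\sigma$ because it must kill the perfect subgroup ${\SL}_2$ and is therefore a power of $\chi$; so here I would compute the $\sigma$-action on the explicit linearization directly. Since $n$ is even, $-1\in{\SL}_2$ acts on ${\Oline}(1)$ by $-1$, hence on ${\Oline}(n)$ by $(-1)^n=1$, while $g_{1,0}$ is the identity of $R$; therefore $\sigma$ acts trivially on $\mathcal{E}_n={\Oline}(n)\oplus{\Oline}$, hence on the blow-up of $\mathcal{E}_n$ along the zero section, which is the total space of the tautological bundle of ${\F}_n$, and therefore trivially on its dual ${\OHirze}(\Sigma)=L_{1,-n}$. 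On the other hand $\sigma$ acts on $L_{0,1}=\pi^{\ast}{\Oline}(1)$ through $-1\in{\SL}_2$, that is, by the scalar $-1$. Equipping $L_{a,b}$ with the linearization $L_{1,-n}^{\otimes a}\otimes L_{0,1}^{\otimes(an+b)}$ coming from the proof of Lemma~\ref{cover linearization Hirze}, I find that $\sigma$ acts on $L_{a,b}$ by $(-1)^{an+b}=(-1)^{b}$, again using that $n$ is even. When $b$ is even this equals $1$, so the descent criterion produces an ${\aut}({\F}_n)$-linearization of $L_{a,b}$, which is the second assertion.

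The one step I expect to require real care is this sign computation for $n$ even: it rests on tracking how $\sigma$ passes through the tautological-bundle presentation of ${\OHirze}(\Sigma)$ and through the ${\SL}_2$-linearization of ${\Oline}(1)$, together with the observation that for $n$ even no twist by a character of $\widetilde{G}$ can remove a $-1$ on $\sigma$. The $n$ odd case, by contrast, becomes immediate once the character $\chi$ with $\chi(\rho)=-1$ has been exhibited.
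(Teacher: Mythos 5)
Your proof is correct and follows essentially the same route as the paper: descend the $\widetilde{G}$-linearizations of Lemma~\ref{cover linearization Hirze} along $\widetilde{G}\to{\aut}({\F}_n)$, observing that the kernel generator acts trivially on $\mathcal{E}_n$ (hence on ${\OHirze}(\Sigma)$) and using the character $(\gamma,g_{\alpha,s})\mapsto\alpha$ to kill the sign of $\rho$ in the odd case. The only cosmetic difference is in the even case, where the paper invokes the ${\PGL}_2$-linearization of $L_{0,-2}=\pi^{\ast}K_{{\proj}^1}$ while you compute the sign $(-1)^{an+b}=(-1)^b$ on $L_{1,-n}^{\otimes a}\otimes L_{0,1}^{\otimes(an+b)}$ directly; these amount to the same thing.
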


\begin{proof}
In the $\widetilde{G}$-linearization of $\mathcal{E}_n$, 
the element $\sigma$ (resp. $\rho$) acts trivially when $n$ is even (resp. odd). 
Hence ${\OHirze}(\Sigma)$ is always ${\aut}({\F}_n)$-linearized. 
Also $L_{0,-2}=\pi^{\ast}K_{{\proj}^1}$ is ${\aut}({\F}_n)$-linearized via the ${\PGL}_2$-linearization of $K_{{\proj}^1}$. 
This proves the assertion for even $n$. 
The homomorphism $\widetilde{G}\to{\C}^{\times}$ defined by $(\gamma, g_{\alpha, s})\mapsto\alpha$ 
induces a $1$-dimensional representation $V$ of $\widetilde{G}$ on which $\rho$ acts by $-1$. 
Hence the bundle $L_{0,1}\simeq L_{0,1}\otimes V$ is ${\aut}({\F}_n)$-linearized for odd $n$. 
\end{proof}

\subsection{Some linear systems}\label{ssec:action on linear system}

We study the ${\aut}({\F}_n)$-action on the linear systems 
$|L_{0,1}|$, $|L_{1,0}|$, $|L_{1,1}|$, and $|L_{2,0}|$. 
Recall that we are assuming $n>0$. 

\begin{proposition}\label{stabilizer of fiber}
The group ${\aut}({\F}_n)$ acts on the linear system $|L_{0,1}|$ transitively, 
and the stabilizer of a point of $|L_{0,1}|$ is connected and solvable. 
\end{proposition}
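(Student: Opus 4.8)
The plan is to reduce everything to the standard action of $\aut(\Sigma)\cong\PGL_2$ on ${\proj}^1$ via the exact sequence \eqref{Aut(Hir)}. First I would identify the pencil $|L_{0,1}|$ with the base of $\pi$: since $L_{0,1}=\pi^{\ast}\Oline(1)$ and $\pi_{\ast}\sheaf_{{\F}_n}=\Oline$, the projection formula gives $H^0({\F}_n, L_{0,1})\cong H^0({\proj}^1, \Oline(1))$, so the members of $|L_{0,1}|$ are precisely the $\pi$-fibers $F_p=\pi^{-1}(p)$, $p\in{\proj}^1$, and $|L_{0,1}|\cong{\proj}^1$ canonically. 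Because $n>0$, the group $\aut({\F}_n)$ preserves $\pi$, hence it permutes the fibers through the induced action on the base, that is, through the surjection $\aut({\F}_n)\to\aut(\Sigma)\cong\PGL_2$ of \eqref{Aut(Hir)}, with $\PGL_2$ acting on ${\proj}^1$ in the usual way. Since this homomorphism is surjective and $\PGL_2$ is transitive on ${\proj}^1$, the action of $\aut({\F}_n)$ on $|L_{0,1}|$ is transitive.

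For the stabilizer assertion, fix a fiber $F_p\in|L_{0,1}|$ and let $B\subset\PGL_2$ be the stabilizer of $p\in{\proj}^1$, a Borel subgroup isomorphic to ${\C}^{\times}\ltimes{\C}$, which is connected and solvable. The stabilizer of $F_p$ in $\aut({\F}_n)$ is exactly the preimage of $B$ under the surjection above, so restricting \eqref{Aut(Hir)} produces an exact sequence
\[
1\longrightarrow R\longrightarrow \aut({\F}_n)_{F_p}\longrightarrow B\longrightarrow 1 .
\]
By the description \eqref{desrcibe R}, $R\cong{\C}^{\times}\ltimes H^0(\Oline(n))$ is an extension of ${\C}^{\times}$ by the vector group $H^0(\Oline(n))$, hence connected and solvable. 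An extension of one connected solvable group by another is again connected and solvable (connectedness of the total group follows from that of the kernel and the quotient, solvability from the derived series), so $\aut({\F}_n)_{F_p}$ is connected and solvable.

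I do not expect an essential difficulty in this argument. The only points requiring a little care are pinning down the identification $|L_{0,1}|\cong{\proj}^1$ so that the $\aut({\F}_n)$-action is manifestly the one induced on the base of $\pi$, and verifying that $R$, being of the shape ${\C}^{\times}\ltimes(\text{vector group})$, is solvable; once these are in place, the closure of the class of connected solvable groups under extension by connected solvable groups finishes the proof.
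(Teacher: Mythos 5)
Your proof is correct and follows essentially the same route as the paper: the paper identifies $|L_{0,1}|$ with $\Sigma$ via $F\mapsto F\cap\Sigma$ (canonically the same as your identification with the base of $\pi$), restricts the sequence \eqref{Aut(Hir)} to the stabilizer to get $1\to R\to G_p\to{\aut}(\Sigma,p)\to1$, and concludes exactly as you do from the connectedness and solvability of $R$ and of ${\aut}(\Sigma,p)\simeq{\C}^{\times}\ltimes{\C}$. No gaps.
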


\begin{proof}
We have a canonical identification $|L_{0,1}| \simeq \Sigma$ given by $F\mapsto F\cap \Sigma$. 
The sequence $(\ref{Aut(Hir)})$ restricted to the stabilizer $G_p$ of a $p \in \Sigma$ gives the exact sequence 
\begin{equation*}
1 \to R \to G_p \to {\aut}(\Sigma, p) \to 1, 
\end{equation*} 
where ${\aut}(\Sigma, p)$ is the stabilizer of $p$ in ${\aut}(\Sigma)$. 
Since both $R$ and ${\aut}(\Sigma, p)\simeq {\C}^{\times}\ltimes{\C}$ are 
connected and solvable, so is $G_p$. 
\end{proof}

Every smooth divisor $H$ in $|L_{1,0}|$ is a section of $\pi$ disjoint from $\Sigma$. 
Hence it corresponds to another splitting $\mathcal{E}_n \simeq {\Oline}(n)\oplus{\Oline}$ 
for which the image of ${\proj}{\Oline}$ is $H$. 
 
\begin{proposition}\label{stabilizer of section} 
The group ${\aut}({\F}_n)$ acts transitively on the open set $U\subset |L_{1,0}|$ of smooth divisors. 
The sequence \eqref{Aut(Hir)} restricted to the stabilizer $G_H\subset{\aut}({\F}_n)$ of an $H\in U$ 
gives the exact sequence 
\begin{equation}\label{stab section}
1 \to {\C}^{\times} \to G_H \to {\aut}(\Sigma) \to 1. 
\end{equation} 
\end{proposition}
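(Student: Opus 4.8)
The plan is to realize the data of an $H\in U$ as a choice of splitting of $\mathcal{E}_n$, and then read off the stabilizer from the description \eqref{desrcibe R} of $R$ together with the sequence \eqref{Aut(Hir)}. For transitivity, I would first argue that any two smooth members $H, H'\in|L_{1,0}|$ can be moved onto each other: as noted just before the statement, each smooth $H$ is a section of $\pi$ disjoint from $\Sigma$, hence corresponds to a splitting $\mathcal{E}_n\simeq\Oline(n)\oplus\Oline$ whose $\proj\Oline$-summand has image $H$. The subgroup $R=\aut(\mathcal{E}_n)/\C^\times$ acts transitively on the set of such splittings: concretely, a section disjoint from $\Sigma$ is the graph of a homomorphism $\Oline\to\Oline(n)$, i.e. of some $s\in H^0(\Oline(n))=\Hom(\Oline,\Oline(n))$, and the element $g_{1,s}\in R$ from \eqref{desrcibe R} carries the standard section $H_0=\proj\Oline$ to that graph. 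Since $R$ already acts transitively on $U$ (even fixing a point of $\Sigma$), a fortiori $\aut(\F_n)$ does, and $U$ is a single orbit.

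For the sequence \eqref{stab section}, fix $H=H_0=\proj\Oline\in U$. Intersecting \eqref{Aut(Hir)} with $G_{H_0}$ gives
\begin{equation*}
1 \to R\cap G_{H_0} \to G_{H_0} \to \im\bigl(G_{H_0}\to\aut(\Sigma)\bigr) \to 1,
\end{equation*}
so two points need checking: that $R\cap G_{H_0}=\C^\times$, and that $G_{H_0}\to\aut(\Sigma)$ is surjective. For the first, an element $g_{\alpha,s}\in R$ preserves $H_0$ (the graph of $s=0$) if and only if it preserves the sub-line-bundle $\Oline\subset\mathcal{E}_n$, which by the matrix form in \eqref{desrcibe R} forces $s=0$; the remaining elements $g_{\alpha,0}$, $\alpha\in\C^\times$, form exactly the diagonal $\C^\times$. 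For surjectivity onto $\aut(\Sigma)=\PGL_2$, I would use that when $n$ is even the sequence \eqref{Aut(Hir)} splits via the $\PGL_2$-linearization of $\mathcal{E}_n$, and the splitting image visibly preserves both summands hence fixes $H_0$; when $n$ is odd one instead lifts $\gamma\in\PGL_2$ using the $\SL_2$-linearization of $\Oline(n)$ on the split bundle $\Oline(n)\oplus\Oline$, which again preserves $H_0$, the central $-1\in\SL_2$ acting as $g_{(-1)^n,0}=g_{-1,0}\in\C^\times$ and so dying in $\aut(\F_n)$. Either way every element of $\aut(\Sigma)$ lifts into $G_{H_0}$, giving the displayed exact sequence \eqref{stab section}.

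The main obstacle I anticipate is pinning down $R\cap G_{H_0}$ cleanly, i.e. verifying that the only automorphisms of $\mathcal{E}_n$ (modulo scalars) fixing \emph{both} sub-bundles $\Oline(n)$ and $\Oline$ are the scalars $g_{\alpha,0}$ — one must be careful that there is no extra $\Hom(\Oline(n),\Oline)=H^0(\Oline(-n))=0$ contribution, which is where $n>0$ is used — and, dually, that a section in $|L_{1,0}|$ disjoint from $\Sigma$ is genuinely the graph of an element of $H^0(\Oline(n))$ rather than something more exotic. Both are short once one fixes the splitting, so the proof is essentially bookkeeping with \eqref{Aut(Hir)} and \eqref{desrcibe R}; no invariant theory is needed here.
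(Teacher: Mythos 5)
Your proof is correct and follows the same route as the paper's: $R$ acts transitively on $U$ because any two splittings of $\mathcal{E}_n$ are ${\aut}(\mathcal{E}_n)$-equivalent, $G_H\cap R=\{ g_{\alpha,0}\}\simeq{\C}^{\times}$, and surjectivity onto ${\aut}(\Sigma)$ comes from the ${\SL}_2$-linearization of ${\Oline}(n)\oplus{\Oline}$. (One small wording slip: for odd $n$ the central $-1\in{\SL}_2$ acts as $g_{-1,0}$, which is \emph{not} trivial in ${\aut}({\F}_n)$ but does lie in the kernel ${\C}^{\times}$ of $G_H\to{\aut}(\Sigma)$ — which is all your argument needs.)
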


\begin{proof}
In fact, the subgroup $R\subset {\aut}({\F}_n)$ acts transitively on $U$ because 
any two splittings $\mathcal{E}_n \simeq {\Oline}(n)\oplus{\Oline}$ are ${\aut}(\mathcal{E}_n)$-equivalent. 
When $H$ corresponds to the original splitting of $\mathcal{E}_n$, 
we have $G_H\cap R=\{ g_{\alpha, 0}, \alpha\in{\C}^{\times}\}$.  
The homomorphism $G_H \to {\aut}(\Sigma)$ is surjective thanks to 
the ${\SL}_2$-linearization of ${\Oline}(n)\oplus{\Oline}$.  
\end{proof}

\begin{proposition}\label{L_{1,1}}
The group ${\aut}({\F}_n)$ acts on $|L_{1,1}|$ almost transitively 
with the stabilizer of a general point being connected and solvable. 
\end{proposition}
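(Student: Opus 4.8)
The plan is to exhibit an explicit divisor $D\in|L_{1,1}|$ whose orbit under $\aut(\F_n)$ is open, and to identify its stabilizer. First I would note that a general member of $|L_{1,1}|$ meets the fiber class $F$ in degree $1$ and the section class $\Sigma$ in degree $1$; writing $|L_{1,1}|=|L_{1,0}+F|$, the general $D$ is an irreducible curve that is a section of $\pi$ distinct from $\Sigma$ together with one additional fiber "absorbed," or more precisely a section of self-intersection $n+2$. Since by Proposition \ref{stabilizer of section} the group $\aut(\F_n)$ acts transitively on the smooth divisors $H\in|L_{1,0}|$, I would use the slice method (Proposition \ref{slice}) with the $\aut(\F_n)$-equivariant rational map $|L_{1,1}|\dashrightarrow|L_{1,0}|$ sending $D$ to the unique divisor in $|L_{1,0}|$ linearly equivalent to $D-F$ (equivalently, to the negative part of $D$ after stripping the base locus of $|D-\Sigma|$, or via the projection coming from $H^0(L_{1,1})\to H^0(L_{1,1}|_{\Sigma})$): this map is dominant and $\aut(\F_n)$ acts almost transitively (indeed transitively on an open set) on $|L_{1,0}|$. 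Hence $|L_{1,1}|/\aut(\F_n)$ is birational to $|L_{1,1}-L_{1,0}|/G_H=|L_{0,1}|/G_H$ where $G_H$ is the stabilizer of a general $H\in|L_{1,0}|$, fitting in the exact sequence \eqref{stab section}.

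Next I would analyze the $G_H$-action on $|L_{0,1}|\simeq\Sigma\simeq\proj^1$. By \eqref{stab section}, $G_H$ surjects onto $\aut(\Sigma)=\PGL_2$ with kernel $\C^\times$ acting trivially on $\Sigma$, so $G_H$ acts on $|L_{0,1}|=\proj^1$ through $\PGL_2$, which is transitive; the stabilizer of a point $p\in\Sigma$ is the preimage in $G_H$ of $\aut(\Sigma,p)\simeq\C^\times\ltimes\C$, an extension of a connected solvable group by the central $\C^\times$, hence connected and solvable. Combining, the stabilizer in $\aut(\F_n)$ of a general $D\in|L_{1,1}|$ is connected and solvable, and the single orbit is open, which is exactly almost transitivity.

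The main obstacle I anticipate is making the equivariant map $|L_{1,1}|\dashrightarrow|L_{1,0}|$ precise and checking it is genuinely dominant with the general fiber reduced to the expected linear system $|L_{0,1}|$: one must verify that for general $D\in|L_{1,1}|$ the linear system $|D-\Sigma|$ has a unique member (so that subtracting the fixed part lands canonically in $|L_{1,0}|$), equivalently that $h^0(L_{0,1})=2$ and the restriction $H^0(L_{1,1})\to H^0(L_{1,1}\otimes\sheaf_H)$ has the right rank, and then that the fiber over $H$ is $|\sheaf_{\F_n}(D)\otimes\sheaf(-H)|=|L_{0,1}|$ with $G_H$ acting as described. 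Alternatively, to avoid this bookkeeping, I would instead directly write down coordinates: realize $\F_n$ with $\Sigma=\{$section$\}$ and a general $H\in|L_{1,0}|$ as the complementary section, take $D=H+F_0$ for a general fiber $F_0$, and compute that $G_H$ (which by \eqref{stab section} contains the $\C^\times$-scaling and a copy of $\PGL_2$ acting on the base) moves $F_0$ through all fibers with connected solvable stabilizer $\aut(\Sigma,p)$ lifted to $G_H$. Either route reduces everything to the transitivity of $\PGL_2$ on $\proj^1$ and the solvability computation already used in Proposition \ref{stabilizer of fiber}.
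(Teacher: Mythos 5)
There is a genuine gap, and it sits at the heart of both routes you propose. A general member $D$ of $|L_{1,1}|$ is an \emph{irreducible} section of $\pi$ with $D^2=n+2$, meeting $\Sigma$ in a single point; it admits no decomposition $D=H+F$ with $H\in|L_{1,0}|$ and $F$ a fiber. Consequently the map $|L_{1,1}|\dashrightarrow|L_{1,0}|$ you want to feed into the slice method is not defined at the general point: ``the unique divisor in $|L_{1,0}|$ linearly equivalent to $D-F$'' does not exist, since $\dim|L_{1,0}|=n+1>0$, and $|D-\Sigma|=|L_{0,n+1}|$ is a base-point-free pullback from ${\proj}^1$ with no distinguished member. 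The only natural equivariant projection available, the one induced by $H^0(L_{1,1})\to H^0(L_{1,1}|_{\Sigma})$, lands in ${\proj}H^0(L_{1,1}|_{\Sigma})\simeq\Sigma\simeq|L_{0,1}|$ (it records the point $D\cap\Sigma$), not in $|L_{1,0}|$. If you use that map instead, the slice over $p\in\Sigma$ is the $(n+2)$-dimensional linear system ${\proj}V_p$ of curves through $p$, and the whole difficulty of the proposition reappears there: one must show that the $(n+4)$-dimensional stabilizer $G_p$ has a dense orbit in ${\proj}V_p$. The paper does exactly this by proving that $R\simeq{\C}^{\times}\ltimes H^0({\Oline}(n))$ acts \emph{almost freely} on $|L_{1,1}|$ (a $g\in R$ preserving a smooth section $D$ fixes it pointwise, hence fixes the $n+1$ points $H\cap D$ of a general $H\in|L_{1,0}|$, forcing $g(H)=H$ since $(L_{1,0}.L_{1,0})=n$, hence $g={\rm id}$), so that the $(n+2)$-dimensional group $R\subset G_p$ already sweeps out ${\proj}V_p$; this step has no counterpart in your proposal.

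Your ``concrete'' fallback has the same defect in a more visible form: $D=H+F_0$ is a \emph{reducible} member of $|L_{1,1}|$, and the locus of such divisors is the image of $|L_{1,0}|\times|L_{0,1}|$, of dimension $n+2$, a proper closed ${\aut}({\F}_n)$-invariant subvariety of the $(n+3)$-dimensional $|L_{1,1}|$. Its orbit can therefore never be dense, so this computes neither almost transitivity nor the stabilizer of a general point; indeed the stabilizer of $H+F_0$ is $3$-dimensional, whereas the correct stabilizer of a general $D$ is the $2$-dimensional connected solvable group ${\aut}(\Sigma,p)\simeq{\C}^{\times}\ltimes{\C}$, into which the paper embeds it via $G\to{\aut}(\Sigma)$ using $G\cap R=\{{\rm id}\}$. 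The final reduction to ``${\PGL}_2$ acting on ${\proj}^1$'' is thus carried out in the wrong place; the essential missing input is the almost-free action of $R$ on $|L_{1,1}|$.
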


\begin{proof}
We first show that the subgroup $R\subset{\aut}({\F}_n)$ acts on $|L_{1,1}|$ almost freely. 
Indeed, if an element $g\in R$ preserves (and hence fixes) a smooth $D\in|L_{1,1}|$,  
then for a general $H\in|L_{1,0}|$ the $n+1$ points $H\cap D$ are fixed by $g$. 
Since $(L_{1,0}.L_{1,0})=n$, then $g$ fixes $H$ and so must be trivial. 
The group $R$ preserves the sub linear system ${\proj}V_p\subset|L_{1,1}|$ 
of curves passing through each $p\in\Sigma$. 
Since ${\dim}\, R={\dim}\,{\proj}V_p=n+2$, 
the $R$-action on ${\proj}V_p$ is also almost transitive. 
Thus ${\aut}({\F}_n)$ acts on $|L_{1,1}|$ almost transitively. 
Let $G\subset{\aut}({\F}_n)$ be the stabilizer of a general $D\in|L_{1,1}|$. 
Since $G\cap R=\{ {\rm id}\}$, the natural homomorphism $G\to {\aut}(\Sigma)$ is injective. 
Its image is contained in the stabilizer ${\aut}(\Sigma, p)$ of the point $p=D\cap\Sigma$. 
Then the embedding $G\to{\aut}(\Sigma, p)$ is open because ${\dim}\, G=2$. 
Therefore $G$ is connected and solvable. 
\end{proof}

Every smooth divisor in $|L_{2,0}|$ is a hyperelliptic curve of genus $n-1$ whose $g^1_2$ is given by the restriction of $\pi$. 

\begin{proposition}\label{HE moduli}
If $U\subset |L_{2,0}|$ is the open set of smooth divisors,  
a geometric quotient $U/{\aut}({\F}_n)$ exists and is isomorphic to 
the moduli space $\mathcal{H}_{n-1}$ of hyperelliptic curves of genus $n-1$. 
\end{proposition}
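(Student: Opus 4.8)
The plan is to identify the two quotients via the standard equivalence between smooth divisors in $|L_{2,0}|$ on $\F_n$ and genus $(n-1)$ hyperelliptic curves together with a choice of $g^1_2$ (which for a hyperelliptic curve is unique), and then to verify that the $\aut(\F_n)$-orbits are precisely the isomorphism classes. First I would make the observation in the sentence preceding the proposition precise: a smooth $C\in|L_{2,0}|$ maps $2:1$ onto $\Sigma\simeq\proj^1$ via $\pi|_C$, and since $(L_{2,0}.F)=2$ this is a degree-$2$ map, exhibiting $C$ as a hyperelliptic curve of genus $n-1$ (the genus follows from adjunction: $2g-2=(C.(C+K_{\F_n}))=(L_{2,0}.(L_{2,0}+L_{-2,n-2}))=(L_{2,0}.L_{0,n-2})=2(n-2)$, so $g=n-1$). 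Conversely, given a genus $(n-1)$ hyperelliptic curve $C$ with its hyperelliptic pencil $\phi\colon C\to\proj^1$, the direct image $\phi_*\sheaf_C$ splits as $\Oline\oplus\Oline(-m)$ for some $m$, and a line-bundle computation (the branch divisor has degree $2n$, and $\sheaf_C$ sits in $0\to\phi_*\sheaf_C\to \pi_*(\text{something})$) forces $m=n$; hence $C$ embeds into $\proj(\Oline\oplus\Oline(n))=\F_n$ as a member of $|L_{2,0}|$. This sets up a bijection between $U$ and the set of pairs (hyperelliptic curve, $g^1_2$), but since the $g^1_2$ on a hyperelliptic curve is unique, it is really a bijection with isomorphism classes of genus $(n-1)$ hyperelliptic curves, once we quotient by the ambiguity in the embedding.

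The next step is to pin down that ambiguity as exactly $\aut(\F_n)$. Two embeddings of the same $C$ as divisors in $|L_{2,0}|$ differ by an automorphism of $\F_n$ commuting with the two hyperelliptic structures, i.e. compatible with $\pi$; since (by the discussion after \eqref{Aut(Hir)}, as $n>0$) all of $\aut(\F_n)$ preserves $\pi$, the relevant group is all of $\aut(\F_n)$. Concretely, I would argue: if $C,C'\in U$ are abstractly isomorphic, an isomorphism $C\xrightarrow{\sim}C'$ carries the hyperelliptic pencil to the hyperelliptic pencil, hence descends to an automorphism of the base $\proj^1$; lifting this, together with the freedom to rescale the defining section, produces $g\in\aut(\F_n)$ with $g(C)=C'$. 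Thus the fibers of $U\to\mathcal{H}_{n-1}$ are exactly the $\aut(\F_n)$-orbits.

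Having a bijective morphism on orbit spaces is not quite enough for a \emph{geometric} quotient, so the final step is to produce the quotient as a scheme and check its properties. The clean way is to exhibit $\mathcal{H}_{n-1}$ itself, together with the natural map $U\to\mathcal{H}_{n-1}$ sending a divisor to the isomorphism class of the curve it defines, as a geometric quotient: one checks that this map is surjective, open, its fibers are the orbits (done above), and it is a categorical quotient because $\mathcal{H}_{n-1}$ carries the expected coarse moduli universal property and any $\aut(\F_n)$-invariant morphism from $U$ factors through the moduli functor. Alternatively, and perhaps more in the spirit of the surrounding invariant theory, one can use that $\mathcal{H}_{n-1}$ is known to be an affine variety (it is a quotient of the space of $2n$ points on $\proj^1$ — the branch locus — by $\PGL_2$, via classical binary-form invariant theory), and identify $\C(U)^{\aut(\F_n)}$ with its function field by tracking the branch divisor: the map $U\to (\proj^1)^{(2n)}/\PGL_2$ sending $C$ to its branch points is $\aut(\F_n)$-invariant and generically injective on orbits. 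I expect the main obstacle to be precisely this last point — upgrading the orbit-space bijection to a genuine geometric quotient, i.e. verifying the separatedness/openness and the universal property rather than merely a field isomorphism — since the coarser statement "$U/\aut(\F_n)$ is birational to $\mathcal{H}_{n-1}$" is comparatively immediate from the branch-locus construction.
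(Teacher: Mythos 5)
Your proposal is correct and follows essentially the same route as the paper: identify smooth $L_{2,0}$-divisors with genus $n-1$ hyperelliptic curves via the canonical embedding $C\subset{\proj}(\pi_{\ast}\mathcal{O}_C)^{\vee}\simeq{\F}_n$ (using $\pi_{\ast}\mathcal{O}_C\simeq{\Oline}\oplus{\Oline}(-n)$), deduce that the fibers of $U\to\mathcal{H}_{n-1}$ are exactly the ${\aut}({\F}_n)$-orbits, and then upgrade to a geometric quotient. For the last step, which you correctly flag as the only delicate point, the paper simply invokes \cite{GIT} Proposition 0.2, using the normality of $\mathcal{H}_{n-1}$ together with the fact that the fibers are orbits.
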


\begin{proof}
Recall that $\mathcal{H}_{n-1}$ is a normal irreducible variety. 
We have a natural ${\aut}({\F}_n)$-invariant morphism $\psi\colon U\to\mathcal{H}_{n-1}$.  
In order to show that $\psi$ is surjective, 
let $C$ be a hyperelliptic curve of genus $n-1$ and let $\pi\colon C\to {\proj}^1$ be its $g^1_2$. 
The curve $C$ is naturally embedded in ${\proj}(\pi_{\ast}\mathcal{O}_C)^{\vee}$ over ${\proj}^1$ 
by the evaluation map. 
Let $\pi_{\ast}\mathcal{O}_C = \mathcal{L}_+\oplus\mathcal{L}_-$ be the decomposition 
with respect to the hyperelliptic involution $\iota$, where $\iota$ acts on $\mathcal{L}_{\pm}$ by $\pm1$. 
It is clear that $\mathcal{L}_+\simeq{\Oline}$, 
and a cohomology calculation shows that $\mathcal{L}_-\simeq{\Oline}(-n)$. 
Thus ${\proj}(\pi_{\ast}\mathcal{O}_C)^{\vee}$ is isomorphic to ${\F}_n$. 
Then $C$ belongs to $|L_{2,b}|$ for some $b\geq0$, 
and the genus formula shows that $b=0$. 
Therefore $\psi$ is surjective. 
Conversely, for a $C\in U$ 
the natural embedding $C\subset{\proj}(\pi_{\ast}\mathcal{O}_C)^{\vee}$ extends to 
an isomorphism ${\F}_n\to{\proj}(\pi_{\ast}\mathcal{O}_C)^{\vee}$ 
(e.g., by the evaluation at $C$ via the canonical identifications 
${\proj}(\pi_{\ast}\mathcal{O}_C)^{\vee}\simeq{\proj}(\pi_{\ast}\mathcal{O}_C)$ and 
${\F}_n\simeq{\proj}\mathcal{E}_n^{\vee}$). 
This implies that the $\psi$-fibers are ${\aut}({\F}_n)$-orbits. 
Now our assertion follows from \cite{GIT} Proposition 0.2.
\end{proof}

By the proof, the hyperelliptic involution of a smooth $C\in|L_{2,0}|$ uniquely extends to an involution $\iota_C$ of ${\F}_n$. 
Concretely, for each $\pi$-fiber $F$ consider the involution of $F$ which fixes the point $F\cap\Sigma$ 
and exchanges the two points $F\cap C$ (or fixes $F\cap C$ when $F$ is tangent to $C$). 
This defines $\iota_C$. 

\begin{corollary}\label{stab HE}
When $n\geq3$, the stabilizer in ${\aut}({\F}_n)$ of a general $C\in |L_{2,0}|$ is $\langle\iota_C\rangle$. 
\end{corollary}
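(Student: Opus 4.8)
The plan is to use Proposition~\ref{HE moduli} together with a dimension count on the moduli of hyperelliptic curves. We already know from that proposition that $U/{\aut}({\F}_n)$ is the geometric quotient isomorphic to $\mathcal{H}_{n-1}$, and that the $\psi$-fibers are exactly the ${\aut}({\F}_n)$-orbits. Therefore the stabilizer $G_C\subset{\aut}({\F}_n)$ of a general $C\in|L_{2,0}|$ is a finite group, and in fact $|G_C|$ equals the order of the generic fiber of the natural map from the parameter space to $\mathcal{H}_{n-1}$ — equivalently, by comparing dimensions and using that $\psi$ is the quotient map, we have $\dim U = \dim{\aut}({\F}_n) + \dim\mathcal{H}_{n-1}$, and the stabilizer of a general point is the \emph{automorphism group} of the generic hyperelliptic curve as it sits over ${\proj}^1$ via its $g^1_2$.

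First I would recall that for $n\geq3$, i.e.\ genus $n-1\geq2$, a general hyperelliptic curve $C$ has automorphism group exactly $\langle\iota\rangle\simeq{\Z}/2{\Z}$, where $\iota$ is the hyperelliptic involution; this is classical. Next, since the $g^1_2$ on a hyperelliptic curve is unique, every automorphism of $C$ descends to an automorphism of ${\proj}^1$ compatible with $\pi$, and conversely an automorphism of ${\F}_n$ preserving $C$ restricts to an automorphism of $C$. Combined with the remark preceding the corollary — that the hyperelliptic involution extends to the involution $\iota_C$ of ${\F}_n$ constructed fiberwise — we get an injection $G_C\hookrightarrow{\aut}(C)$ and a section $\langle\iota\rangle\to G_C$ given by $\iota\mapsto\iota_C$.

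Then I would argue the two maps are mutually inverse for general $C$: the composition $G_C\hookrightarrow{\aut}(C)=\langle\iota\rangle$ is surjective because $\iota_C$ maps to $\iota$, and it is injective because an automorphism of ${\F}_n$ that fixes $C$ pointwise, hence is the identity on the general $\pi$-fiber at its two points $F\cap C$ and on $\Sigma$ at $F\cap\Sigma$, must be the identity (three fixed points on a general fiber ${\proj}^1$, or alternatively: it lies in the kernel of ${\aut}({\F}_n)\to{\aut}(\Sigma)$, so is of the form $g_{\alpha,s}$, and fixing the $n+1$ points of $H\cap C$ for $H\in|L_{1,0}|$ general forces it to be trivial, as in the proof of Proposition~\ref{L_{1,1}}). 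Hence $G_C=\langle\iota_C\rangle$.

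The main obstacle is making precise the identification of $G_C$ with ${\aut}(C)$ rather than merely a subgroup — that is, checking that \emph{every} automorphism of the general $C$ genuinely extends to ${\F}_n$, not just $\iota$. This is where uniqueness of the $g^1_2$ is essential: any $\phi\in{\aut}(C)$ preserves the hyperelliptic pencil, so sends the fibers of $\pi$ to fibers of $\pi$, hence acts on the base ${\proj}^1$ and lifts (after choosing the action on $\pi_*\mathcal{O}_C=\mathcal{L}_+\oplus\mathcal{L}_-$ equivariantly, using that the two summands are the unique $\iota$-eigenspaces and are canonically $\mathcal{O}_{{\proj}^1}$ and $\mathcal{O}_{{\proj}^1}(-n)$) to an automorphism of ${\proj}(\pi_*\mathcal{O}_C)^\vee\simeq{\F}_n$ carrying $C$ to $C$. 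Given the classical fact ${\aut}(C)=\langle\iota\rangle$ for general $C$, this forces $|G_C|\leq 2$, and the explicit $\iota_C$ shows $|G_C|\geq2$, completing the proof.
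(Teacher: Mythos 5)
Your proof is correct and follows essentially the same route as the paper: identify the restriction $G_C\hookrightarrow{\aut}(C)$ as injective (an automorphism of ${\F}_n$ fixing $C$ pointwise fixes the three points $F\cap C$ and $F\cap\Sigma$ of a general fiber, hence is trivial), invoke that a general hyperelliptic curve of genus $n-1\geq2$ has only the hyperelliptic involution, and note that $\iota_C$ realizes it. The opening dimension count and the closing discussion of extending arbitrary automorphisms of $C$ to ${\F}_n$ are superfluous — the injection into ${\aut}(C)=\langle\iota\rangle$ together with $\iota_C\in G_C$ already forces $G_C=\langle\iota_C\rangle$ — but they introduce no error.
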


\begin{proof}
By Proposition \ref{HE moduli} $C$ has no automorphism other than its hyperelliptic involution. 
Any automorphism of ${\F}_n$ acting trivially on $C$ must be trivial because 
it fixes three points of general $\pi$-fibers. 
\end{proof}

\subsection{Trigonal curves}\label{ssec:trigonal} 

\textit{We allow $n\geq0$ in this subsection.} 
It is known that trigonal curves are naturally related to Hirzebruch surfaces. 
We recall here some basic facts (cf. \cite{A-C-G-H}, \cite{M-Sc}). 
A canonically embedded trigonal curve $C\subset {\proj}^{g-1}$ of genus $g\geq5$ 
is contained in a unique rational normal scroll, that is, 
the image of a Hirzebruch surface ${\F}_n$ by a linear system $|L_{1,m}|$. 
The scroll is swept out by the lines spanned by the fibers of the trigonal map $C\to {\proj}^1$ (which is unique),   
and may also be cut out by the quadrics containing $C$. 
The number $n$ is the \textit{scroll invariant} of $C$, 
and the number $m$ is the \textit{Maroni invariant} of $C$. 
If we regard $C$ as a curve on ${\F}_n$, 
the system $|L_{1,m}|$ gives the canonical system of $C$, 
and $|L_{0,1}|$ gives the $g^1_3$ of $C$. 
Then $C$ belongs to $|L_{3,b}|$ for $b=m-n+2$. 
The genus formula derives the relation $g=3n+2b-2$. 
Conversely, for a smooth curve $C$ on ${\F}_n$ with $C\in|L_{3,b}|$, 
the linear system $|L_{1,m}|$ with $m=b+n-2$ is identified with $|K_C|$ by restriction, 
and the curve $\phi_{L_{1,m}}(C)$ in $|L_{1,m}|^{\vee}$ is a canonical model of $C$  
which is contained in the scroll $\phi_{L_{1,m}}({\F}_n)$. 

For $g\geq5$ we denote by $\mathcal{T}_{g,n}\subset\mathcal{M}_g$ 
the locus of trigonal curves of scroll invariant $n$ in the moduli space of genus $g$ curves. 
In some literature, $\mathcal{T}_{g,n}$ is called a \textit{Maroni locus}. 
General trigonal curves have scroll invariant $0$ or $1$ depending on whether $g$ is even or odd. 
The above facts infer the following. 

\begin{proposition}\label{moduli of trigonal}
For $g\geq5$ and $n\geq0$ 
the space $\mathcal{T}_{g,n}$ is naturally birational to a rational quotient $|L_{3,b}|/G$, 
where $2b=g-3n+2$ and $G$ is the identity component of ${\aut}({\F}_n)$. 
\end{proposition}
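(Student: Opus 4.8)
The plan is to realize $\mathcal{T}_{g,n}$ as a rational quotient by exhibiting a dominant, generically injective map from $|L_{3,b}|/G$ to $\mathcal{T}_{g,n}$, where $G$ is the identity component of $\aut(\F_n)$. The geometric input is already assembled in the preceding discussion: for a smooth $C\in|L_{3,b}|$ with $2b=g-3n+2$, the genus formula gives $g=3n+2b-2$, the restriction of $|L_{0,1}|$ is a $g^1_3$, and the restriction of $|L_{1,m}|$ with $m=b+n-2$ is the canonical system, embedding $C$ in the scroll $\phi_{L_{1,m}}(\F_n)$ as a canonically embedded trigonal curve of scroll invariant $n$. So there is a natural morphism $\psi\colon U\to\mathcal{T}_{g,n}$ from the open locus $U\subset|L_{3,b}|$ of smooth curves, clearly $\aut(\F_n)$-invariant (automorphisms of $\F_n$ carry a curve to a projectively equivalent, hence isomorphic, curve). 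First I would check $\psi$ is well-defined and dominant: dominance follows because a general trigonal curve of scroll invariant $n$, canonically embedded, lies on its unique scroll, which is the image of $\F_n$, so pulling back we recover a curve in some $|L_{3,b'}|$; matching the $g^1_3$ with $|L_{0,1}|$ and computing genus forces $b'=b$. This is the same mechanism as in the proof of Proposition \ref{HE moduli}, with the scroll playing the role of the ruled surface.

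Next I would identify the fibers of $\psi$ with $G$-orbits. The key point is that the scroll containing a canonically embedded trigonal curve is \emph{unique} (cut out by the quadrics through $C$, or swept by the trisecant lines of the $g^1_3$), so any isomorphism $C_1\xrightarrow{\sim}C_2$ between two such curves extends to an isomorphism of the ambient scrolls, hence to an isomorphism $\F_n\to\F_n$ carrying $C_1$ to $C_2$. Such an isomorphism lies in $\aut(\F_n)$; since (for $n>0$) $\aut(\F_n)$ has $\aut(\Sigma)\simeq\PGL_2$ as its group of components via \eqref{Aut(Hir)}, and the trigonal map is respected, one must argue that after composing with an element of $G$ one can assume the map is in the identity component — but in fact this is automatic because $\aut(\F_n)$ is connected when $n>0$ (the sequence \eqref{Aut(Hir)} has connected kernel $R$ and connected quotient $\aut(\Sigma)$, since $\aut(\proj^1)=\PGL_2$ is connected), so $G=\aut(\F_n)$ in that range. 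For $n=0$ one has $\aut(\F_0)=\PGL_2\times\PGL_2$ together with the swap of rulings; here the two rulings are distinguished (the trigonal $g^1_3$ versus the other), so the swap does not preserve the relevant structure and again the isomorphism lands in the identity component $G=\PGL_2\times\PGL_2$. Thus the $\psi$-fibers are exactly the $G$-orbits on $U$.

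With dominance and the orbit description in hand, I would invoke the same formal principle used for Proposition \ref{HE moduli}: $\psi\colon U\to\mathcal{T}_{g,n}$ is a $G$-invariant surjection onto a variety whose fibers are single orbits, so $\mathcal{T}_{g,n}$ is the image of a rational quotient; equivalently $\C(\mathcal{T}_{g,n})=\C(U)^G=\C(|L_{3,b}|)^G$, which is precisely the assertion that $\mathcal{T}_{g,n}$ is birational to the rational quotient $|L_{3,b}|/G$. One should note that a genuine \emph{geometric} quotient need not exist here (unlike the hyperelliptic case), so I would phrase the conclusion at the level of function fields / birational equivalence, which is all the statement claims.

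The main obstacle is the extension-and-uniqueness step in the middle paragraph: one must be careful that for a \emph{general} smooth $C\in|L_{3,b}|$ the curve has no exotic automorphisms beyond those induced by $\aut(\F_n)$, and that no two distinct $G$-orbits map to the same point of $\mathcal{M}_g$. Both reduce to the uniqueness of the rational normal scroll through a canonically embedded trigonal curve of genus $g\ge5$, a classical fact (\cite{A-C-G-H}, \cite{M-Sc}); granting it, an abstract isomorphism of curves is forced to extend to the scrolls, and the connectedness of $\aut(\F_n)$ (for $n>0$) or the distinguishability of the two rulings (for $n=0$) pins the extension down inside $G$. The degenerate small cases $g=5,6$ or small $n$ would need a quick separate check that the scroll is genuinely $2$-dimensional (not all of $\proj^{g-1}$ or lower-dimensional), but these are handled by the stated constraint $g\ge5$ together with $2b=g-3n+2\ge0$.
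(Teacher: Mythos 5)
Your proposal is correct and follows essentially the same route the paper intends: the paper gives no proof beyond the remark that ``the above facts infer the following,'' and your argument is precisely the standard fleshing-out of those facts (uniqueness of the scroll and of the $g^1_3$ for $g\geq5$, extension of an abstract isomorphism to the ambient ${\F}_n$, connectedness of ${\aut}({\F}_n)$ for $n>0$ and the distinguishability of the two rulings for $n=0$). The only stray remark is that possible extra automorphisms of a general $C$ are not actually an obstruction here --- they only produce stabilizers for the $G$-action, which do not affect the rational quotient; what matters is exactly the orbit description you prove.
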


\subsection{A coordinate system}\label{ssec:coordinate}

The  Hirzebruch surface ${\F}_n$ has a natural coordinate system. 
Let $[X,Y]$ be the homogeneous coordinate of ${\proj}^1$.   
Let $V_0=\{ Y\ne0\}$ and $V_1=\{ X\ne0\}$ be open sets of ${\proj}^1$. 
We fix the original splitting $\mathcal{E}_n={\Oline}(n)\oplus{\Oline}$ and 
denote $H={\proj}{\Oline}\subset{\F}_n$. 
An open covering $\{ U_i\}_{i=1}^{4}$ of ${\F}_n$ is defined by 
\begin{eqnarray*}
U_1=\pi^{-1}(V_0)-H, & &   U_2=\pi^{-1}(V_1)-H, \\ 
U_3=\pi^{-1}(V_0)-\Sigma,    & &   U_4=\pi^{-1}(V_1)-\Sigma.  
\end{eqnarray*} 
Let $\mathbf{1}\in H^0({\Oline})$ be the section given by the constant function $1$. 
Let $s_0, s_1\in H^0({\Oline}(n))$ be the sections given by $s_0=Y^n$ and $s_1=X^n$ respectively. 
Note that $s_1=(Y^{-1}X)^n s_0$ where we regard $Y^{-1}X \in H^0(\mathcal{O}_{V_0\cap V_1})$. 
We shall use $(\mathbf{1}, s_i)$ as a local frame of $\mathcal{E}_n$ over $V_i$. 
Then we have an isomorphism $U_1\to{\C}^2$ (resp. $U_3\to{\C}^2$) given by 
\begin{equation*}
([X,Y], \; {\C}(a\mathbf{1}+bs_0)) \mapsto (Y^{-1}X, b^{-1}a) \qquad 
(\textrm{resp.} \: \: \mapsto (Y^{-1}X, a^{-1}b)),  
\end{equation*}
and an isomorphism $U_2\to{\C}^2$ (resp. $U_4\to{\C}^2$) given by 
\begin{equation*}
([X,Y], \; {\C}(a\mathbf{1}+cs_1)) \mapsto (X^{-1}Y, c^{-1}a) \qquad 
(\textrm{resp.} \: \: \mapsto (X^{-1}Y, a^{-1}c)). 
\end{equation*}
Thus the open sets $U_i \simeq {\C}^2$ have coordinates $(x_i, y_i)$ glued by 
\begin{equation*}
x_1=x_3=x_2^{-1}=x_4^{-1}, 
\end{equation*}
\begin{equation*}
y_3=y_1^{-1}, \quad y_4=y_2^{-1}, \quad y_2=x_1^ny_1, \quad y_4=x_3^{-n}y_3.
\end{equation*}
The $(-n)$-curve $\Sigma$ is defined by the equation $y_1=y_2=0$. 

Let us describe some of the ${\aut}({\F}_n)$-action on ${\F}_n$ in terms of these coordinates. 
The elements $g_{\alpha, s}\in R$ leave $U_3$ invariant. 
If $s\in H^0({\Oline}(n))$ is written as $s=\sum_{i=0}^{n}\lambda_iX^iY^{n-i}$, 
then $g_{\alpha, s}$ acts by 
\begin{equation}\label{$R$-action in coordinate}
g_{\alpha, s} : U_3\ni(x_3, y_3) \mapsto (x_3, \alpha y_3 + \sum_{i=0}^{n}\lambda_ix_3^i)\in U_3. 
\end{equation}
The group ${\GL}_2$ acts on ${\F}_n$ via the ${\GL}_2$-linearization of $\mathcal{E}_n$. 
Then the elements 
$h_{\beta}=\begin{pmatrix} \beta &    0  \\
                                               0   &    1  \end{pmatrix}$ 
and   
$j=\begin{pmatrix} 0  &  1  \\
                              1  &  0  \end{pmatrix}$ 
of ${\GL}_2$ act respectively by                                                                                                        
\begin{equation}\label{linearization coordinate 1}
h_{\beta} : U_3\ni(x_3, y_3) \mapsto (\beta x_3, y_3)\in U_3, 
\end{equation}
\begin{equation}\label{linearization coordinate 2}
j : U_3\ni(x_3, y_3) \mapsto (x_3, y_3)\in U_4. 
\end{equation}
In \eqref{linearization coordinate 2}, the second $(x_3, y_3)$ is the coordinate in $U_4$. 
This action is also regarded as the rational transformation 
$U_3\ni(x_3, y_3) \mapsto (x_3^{-1}, x_3^{-n}y_3)\in U_3$. 

We describe the linear systems $|L_{a,b}|$ in terms of these coordinates. 
We may assume $a, b\geq0$, 
which is equivalent to the condition that $|L_{a,b}|$ has no fixed component. 
If $C\in|L_{a,b}|$,  
then $C\cap U_1$ is a curve on $U_1$ and thus defined by the equation 
$F(x_1, y_1)=0$ for a polynomial $F(x_1, y_1)$. 

\begin{proposition}\label{def eqn in Hirze}
For $a, b\geq0$ the linear system $|L_{a,b}|$ is identified by restriction to $U_1$ with 
the projectivization of the vector space 
$\{ \sum_{i=0}^{a}f_i(x_1)y_1^i, \: {\rm deg}f_i\leq b+in \}$ 
of polynomials of $x_1, y_1$.   
\end{proposition}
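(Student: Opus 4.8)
The plan is to compute the line bundle $L_{a,b}={\OHirze}(a(\Sigma+nF)+bF)$ explicitly in the coordinate system of \S\ref{ssec:coordinate}, and then read off the restriction to the big affine chart $U_1\simeq{\C}^2$. Recall from that section that $|L_{0,1}|$ consists of $\pi$-fibers, $|L_{1,-n}|=|{\OHirze}(\Sigma)|$, and $H\in|L_{1,0}|$ is the section $\{y_3=0\}=\{y_1=\infty\}$, while $\Sigma=\{y_1=0\}$. First I would fix homogeneous generators: for $|L_{0,1}|$ the sections $X,Y$ pulled back from ${\proj}^1$, which in the $U_1$-chart (where $x_1=Y^{-1}X$) restrict to $x_1$ and $1$; for $|L_{1,0}|$ one gets a section vanishing on $H$ but nowhere on $U_1$, so it restricts to the constant $1$, while a section vanishing on $\Sigma$ restricts to $y_1$ and lies in $|L_{1,-n}|\otimes|L_{0,n}|$-twist — more precisely, over $U_1$ the divisor $\Sigma$ has local equation $y_1=0$ and $H$ has none, so the "fiber coordinate" $y_1$ is a rational section of ${\OHirze}(\Sigma)$ with a pole of order... here I must be careful about the precise twisting.

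The cleanest route: use the tautological description. A section of $L_{a,b}=\pi^{\ast}{\Oline}(b)\otimes({\OHirze}(\Sigma)\otimes\pi^{\ast}{\Oline}(n))^{\otimes a}$ — wait, one checks $L_{a,b}={\OHirze}(a\Sigma)\otimes\pi^{\ast}{\Oline}(an+b)$ from the definition $L_{a,b}={\OHirze}(a\Sigma+(an+b)F)$. Now $\pi_{\ast}({\OHirze}(a\Sigma))\cong{\rm Sym}^a\mathcal{E}_n^{\vee}$ (with our convention that points of ${\F}_n$ are lines in fibers of $\mathcal{E}_n$, so $\sheaf_{{\F}_n}(1)$ in the sense of $\proj\mathcal{E}_n$ is ${\OHirze}(\Sigma)$ and $\pi_{\ast}\sheaf(a)={\rm Sym}^a\mathcal{E}_n^{\vee}$; the dual appears because of the line-vs-quotient convention — I will state whichever sign makes $h^0(L_{1,0})=n+2$ come out right, matching Proposition \ref{stabilizer of section}). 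Hence by the projection formula
\begin{equation*}
H^0({\F}_n, L_{a,b})=H^0\bigl({\proj}^1, {\rm Sym}^a(\mathcal{E}_n^{\vee})\otimes{\Oline}(an+b)\bigr)
=\bigoplus_{i=0}^{a}H^0\bigl({\proj}^1,{\Oline}(b+in)\bigr),
\end{equation*}
using $\mathcal{E}_n^{\vee}={\Oline}(-n)\oplus{\Oline}$ so that ${\rm Sym}^a\mathcal{E}_n^{\vee}=\bigoplus_{i=0}^a{\Oline}(-in)$. The summand indexed by $i$ is spanned by $y_1^i\cdot(\text{monomials of degree}\le b+in\text{ in }x_1)$, which is exactly the asserted vector space $\{\sum_{i=0}^a f_i(x_1)y_1^i:\deg f_i\le b+in\}$. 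So I would (i) pin down the convention, (ii) invoke the projection formula and the splitting of ${\rm Sym}^a\mathcal{E}_n^{\vee}$, and (iii) match the monomial basis with the $U_1$-coordinates via the frame $(\mathbf 1,s_0)$ and the identifications $x_1=Y^{-1}X$, $y_1=b^{-1}a$ recorded in \S\ref{ssec:coordinate}, where the element of ${\rm Sym}^a\mathcal{E}_n^{\vee}$ dual to $(a\mathbf 1+bs_0)^{\otimes?}$ evaluates to the monomial $b^{-1}a=y_1$ raised to the appropriate power.

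The main obstacle is bookkeeping of the "line versus quotient" convention and the associated dualization: getting the twist ${\Oline}(+in)$ versus ${\Oline}(-in)$ right in ${\rm Sym}^a\mathcal{E}_n^{\vee}$, so that the degree bound reads $\deg f_i\le b+in$ rather than $\le b-in$ or $\le b+(a-i)n$. I would sanity-check this against the known small cases already used in the paper: for $L_{0,1}$ it must give $\langle x_1, 1\rangle$ (the two constants times $y_1^0$, reproducing $|L_{0,1}|\simeq\Sigma\simeq{\proj}^1$); for $L_{1,0}$ it must give $\{f_0(x_1)+f_1(x_1)y_1:\deg f_0\le 0,\ \deg f_1\le n\}$, an $(n+2)$-dimensional space matching Proposition \ref{stabilizer of section} and the description of smooth members as sections disjoint from $\Sigma$ (the leading coefficient of $f_1$ nonvanishing); and for $K_{{\F}_n}=L_{-2,n-2}$ the "$a,b\ge0$" hypothesis correctly excludes it, consistent with the remark that $a,b\ge0$ is equivalent to $|L_{a,b}|$ having no fixed component. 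Once the convention is fixed, the rest is the routine projection-formula computation, so I would not belabor it in the write-up.
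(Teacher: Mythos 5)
Your proof is correct, but it takes a genuinely different route from the one in the paper. You compute $H^0(L_{a,b})$ globally via the projection formula, $\pi_{\ast}{\OHirze}(a\Sigma)\simeq{\rm Sym}^a\mathcal{E}_n^{\vee}$, the splitting ${\rm Sym}^a\mathcal{E}_n^{\vee}=\bigoplus_i{\Oline}(-in)$, and then match the summands with the monomials $y_1^i$ via the frame $(\mathbf{1},s_0)$. The paper instead argues entirely in the coordinates of \S\ref{ssec:coordinate}: it expands a defining polynomial $F(x_1,y_1)=\sum_i f_i(x_1)y_1^i$ of a general member $C$, reads off $d=(C.F)=a$ and ${\rm deg}\,f_0=(C.\Sigma)=b$ from intersection numbers, obtains the bound ${\rm deg}\,f_i\leq b+in$ by demanding that the transformed equation $x_2^bF(x_2^{-1},x_2^ny_2)$ be a polynomial on $U_2$, and then closes the gap between the resulting inclusion and an equality by the dimension count $h^0(L_{a,b})=\chi(L_{a,b})=\frac{1}{2}(a+1)(an+2b+2)$. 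Your approach buys both the inclusion and the equality in one step and identifies \emph{which} summand carries which degree bound, at the cost of the line-versus-quotient bookkeeping you flag; the paper's approach is more elementary and reuses the explicit glueing data already set up, at the cost of a separate Riemann--Roch computation. Two minor points on your write-up: as literally written, tensoring the summand ${\Oline}(-in)$ of ${\rm Sym}^a\mathcal{E}_n^{\vee}$ with ${\Oline}(an+b)$ gives ${\Oline}(b+(a-i)n)$, not ${\Oline}(b+in)$ --- the multiset of summands is the same, but to pin the bound ${\rm deg}\,f_i\leq b+in$ on the coefficient of $y_1^i$ you must note that $y_1^i$ corresponds to the monomial $(\mathbf{1}^{\vee})^{i}(s_0^{\vee})^{a-i}$, whose twist is $(\mathcal{O}(n)^{\vee})^{\otimes(a-i)}\otimes{\Oline}(an+b)={\Oline}(b+in)$; and the hedge about the sign convention should be resolved explicitly (the paper's statement that ${\OHirze}(\Sigma)$ is the dual of the tautological bundle fixes $\pi_{\ast}{\OHirze}(a\Sigma)={\rm Sym}^a\mathcal{E}_n^{\vee}$), rather than left to a consistency check. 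Your sanity checks against $|L_{0,1}|$, $|L_{1,0}|$, and $K_{{\F}_n}$ are the right ones and do confirm the convention.
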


\begin{proof}
Let $F(x_1, y_1)=0$ be an equation of $C|_{U_1}$ for a general $C\in|L_{a,b}|$. 
Expanding $F(x_1, y_1)=\sum_{i=0}^{d}f_i(x_1)y_1^i$, 
we have $d=(C.F)=a$ and ${\rm deg}f_0=(C.\Sigma)=b$. 
By substitution, the curve $C|_{U_2}$ on $U_2$ is defined by  
$x_2^kF(x_2^{-1}, x_2^ny_2)=0$ for some $k$. 
Putting $y_2=0$, we know that $k=b$. 
Since $x_2^bF(x_2^{-1}, x_2^ny_2)$ should be a polynomial of $x_2, y_2$, 
we must have ${\rm deg}f_i\leq b+in$. 
Then the equality $h^0(L_{a,b})= \chi(L_{a,b})=\frac{1}{2}(a+1)(an+2b+2)$ concludes the proof. 
\end{proof}

A defining polynomial $\sum_{i=0}^{a}f_i(x_1)y_1^i$ in $U_1$ for a curve $C\in |L_{a,b}|$ 
is transformed 
into $\sum_{i=0}^{a}f_i(x_2^{-1})x_2^{b+in}y_2^i$ in $U_2$,  
into $\sum_{i=0}^{a}f_i(x_3)y_3^{a-i}$ in $U_3$,  and                                   
into $\sum_{i=0}^{a}f_i(x_4^{-1})x_4^{b+in}y_4^{a-i}$ in $U_4$.


\section{2-elementary $K3$ surface}\label{sec: 2-ele K3}

\subsection{2-elementary $K3$ surface}\label{ssec: 2-ele K3}

We review basic theory of 2-elementary $K3$ surfaces following \cite{A-N} and \cite{Yo1}. 
Let $(X, \iota)$ be a 2-elementary $K3$ surface, namely 
$X$ is a complex $K3$ surface and $\iota$ is a non-symplectic involution on $X$. 
The presence of $\iota$ implies that $X$ is algebraic. 
We denote by $L_{\pm}(X, \iota)\subset{\cohomology}$ 
the lattice of cohomology classes $l$ with $\iota^{\ast}l=\pm l$. 
Then $L_+(X, \iota)$ is the orthogonal complement of $L_-(X, \iota)$ 
and contained in the N\'eron-Severi lattice $NS_X$. 
If $r$ is the rank of $L_+(X, \iota)$, 
then $L_+(X, \iota)$ and $L_-(X, \iota)$ have signature $(1, r-1)$ and $(2, 20-r)$ respectively. 
Let $L_{\pm}(X, \iota)^{\vee}$ be the dual lattice of $L_{\pm}(X, \iota)$. 
The discriminant form of $L_{\pm}(X, \iota)$ is the finite quadratic form $(D_{L_{\pm}}, q_{\pm})$ where  
$D_{L_{\pm}}=L_{\pm}(X, \iota)^{\vee}/L_{\pm}(X, \iota)$ and   
$q_{\pm}:D_{L_{\pm}}\to{\Q}/2{\Z}$ is induced by the quadratic form on $L_{\pm}(X, \iota)^{\vee}$. 
We have a canonical isometry $(D_{L_+}, q_+)\simeq(D_{L_-}, -q_-)$. 
The abelian group $D_{L_+}$ is 2-elementary, namely $D_{L_+}\simeq({\Z}/2{\Z})^a$ for some $a\geq0$. 
The parity $\delta$ of $q_+$ is defined by 
$\delta=0$ if $q_+(D_{L_+})\subset {\Z}$, 
and $\delta=1$ otherwise. 
The triplet $(r, a, \delta)$ is called the \textit{main invariant} of the lattice $L_+(X, \iota)$, 
and also of the 2-elementary $K3$ surface $(X, \iota)$. 
By \cite{Ni1}, the isometry class of $L_{\pm}(X, \iota)$ is uniquely determined by $(r, a, \delta)$. 

\begin{proposition}[Nikulin \cite{Ni2}]\label{compute main inv}
Let $X^{\iota}\subset X$ be the fixed locus of $\iota$. 

$({\rm i})$ If $(r, a, \delta)=(10, 10, 0)$, then $X^{\iota} = \emptyset$. 

$({\rm ii})$ If $(r, a, \delta)=(10, 8, 0)$, then $X^{\iota}$ is a union of two elliptic curves. 

$({\rm iii})$ In other cases, $X^{\iota}$ is decomposed as 
$X^{\iota} = C^g \sqcup E_{1} \sqcup \cdots \sqcup E_{k}$ 
such that $C^g$ is a genus $g$ curve and $E_{1}, \cdots, E_{k}$ are rational curves with 
\begin{equation}\label{compute r and a}
g=11-\frac{r+a}{2}, \qquad k=\frac{r-a}{2}. 
\end{equation}
One has $\delta=0$ if and only if the class of $X^{\iota}$ is divisible by $2$ in $L_+(X, \iota)$. 
\end{proposition}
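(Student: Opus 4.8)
The plan is to recover everything from the topological fixed-point data of the non-symplectic involution $\iota$, using the holomorphic Lefschetz fixed point formula together with the topological Lefschetz formula, and then to read off the relation with $(r,a,\delta)$ from lattice theory. First I would note that $X^{\iota}$ is a smooth curve: since $\iota$ is an involution of a surface acting by $-1$ on $H^0(K_X)$, at a fixed point the action on the tangent space has eigenvalues whose product is $-1$, so it is $\mathrm{diag}(1,-1)$ and the fixed locus is a smooth $1$-dimensional submanifold, i.e.\ a disjoint union of smooth curves $C_1\sqcup\cdots\sqcup C_m$ of genera $g_1,\dots,g_m$. The holomorphic Lefschetz formula applied to $\iota$, using $H^0(\mathcal{O}_X)={\C}$, $H^1=0$, $H^2=H^0(K_X)$ on which $\iota$ acts by $-1$, gives $1+1=2=\sum_j (1-g_j)$ after evaluating the normal-bundle contribution of each $C_j$ (the normal bundle is the anti-invariant eigenbundle). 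Hence $\sum_j g_j = m-2$; in particular at most one component has positive genus once one shows the others are rational, and the total Euler number bookkeeping $\chi(X^{\iota})=\chi_{\mathrm{top}}$ via the topological Lefschetz formula $\mathrm{tr}(\iota^{\ast}|H^{\ast})=\chi(X^{\iota})$ pins down $\chi(X^{\iota})=2+\mathrm{tr}(\iota^{\ast}|H^2(X,{\Q}))$.

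Next I would translate the eigenvalue data of $\iota^{\ast}$ on $H^2(X,{\Z})$ into the invariants $r$ and $a$. The invariant lattice $L_+=L_+(X,\iota)$ has rank $r$ and $L_-$ has rank $22-r$, so $\mathrm{tr}(\iota^{\ast}|H^2) = r-(22-r)=2r-22$, whence $\chi(X^{\iota})=2r-20$. Writing $X^{\iota}=C^g\sqcup E_1\sqcup\cdots\sqcup E_k$ with the $E_i$ rational (to be justified as above from $\sum g_j=m-2$ applied once we know there is a unique positive-genus component — a connectedness/genus argument, or a direct appeal to Nikulin), we get $\chi(X^{\iota})=(2-2g)+2k=2r-20$. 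To get the second equation I would use the discriminant data: by \cite{Ni1}, $L_+\cong\langle 2\rangle^{?}\oplus\cdots$ is determined by $(r,a,\delta)$, and a second topological identity — most cleanly, the computation of the action on $H^{\ast}$ with ${\Z}/2$-coefficients, or equivalently $\dim_{{\F}_2}(L_+/2L_+)^{\perp}$-type counts via Smith theory — yields $\sum_j (2g_j+\text{(number of components)}-1)$-type expressions that combine to $g+k = 11-a$ when combined with $g-k = 11-r$. Solving the linear system $g+k=11-a$, $g-k=11-r$ gives exactly \eqref{compute r and a}. The degenerate cases $(10,10,0)$ with $\chi(X^{\iota})=0$ and no positive-genus curve forcing $X^{\iota}=\emptyset$, and $(10,8,0)$ with $\chi=0$ realized by two elliptic curves, come out of the same formulas with $g=1,k=0$ or $g=0,k=0$.

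For the last assertion I would argue directly on the class $[X^{\iota}]\in L_+(X,\iota)$. The anti-invariant $(-1)$-eigenspace $L_-\otimes{\F}_2$ and the invariant part interact through the discriminant: one shows $[X^{\iota}]$ is characterized mod $2$ as the ``characteristic element'' of $L_+$ in the sense that $([X^{\iota}].x)\equiv (x.x)\pmod 2$ for all $x\in L_+$ — this is a standard consequence of the fact that for an anti-invariant class the self-intersection parity is governed by whether $\iota$ acts freely, combined with the projection formula. Then $[X^{\iota}]\in 2L_+(X,\iota)$ if and only if the Wu class vanishes mod $2$, which by definition of the discriminant form is exactly the condition $q_+(D_{L_+})\subset{\Z}$, i.e.\ $\delta=0$.

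The main obstacle will be the step separating genera — proving that exactly one component $C^g$ carries all the genus while the remaining $k$ are rational (rather than, say, two curves of genus $g_1,g_2\ge 1$) — and, relatedly, extracting the \emph{second} independent numerical relation $g+k=11-a$, since a single Lefschetz computation only gives $g-k$. This requires either a genuinely finer invariant (the ${\F}_2$-Smith-sequence count, which sees the length $a$ of the discriminant group) or an appeal to the explicit classification of $L_+(X,\iota)$ by $(r,a,\delta)$ from \cite{Ni1} together with realizability; I expect to lean on the latter, which is how Nikulin proceeds, so the honest statement is that this proposition is essentially a repackaging of Nikulin's classification plus Lefschetz, and the ``proof'' is a pointer to \cite{Ni2} with the Lefschetz bookkeeping made explicit.
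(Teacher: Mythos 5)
The paper offers no proof of this statement at all: it is quoted verbatim from Nikulin \cite{Ni2}, so there is no in-text argument to compare yours against. Your overall architecture does match what Nikulin actually does --- one numerical relation from the topological Lefschetz formula, a second from the $\F_2$-Smith theory of the involution (which is where the length $a$ of the discriminant group enters), and the $\delta$-criterion from the characteristic-element interpretation of $[X^{\iota}]$ --- and you are right that the honest proof is a pointer to \cite{Ni2}.

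However, your holomorphic Lefschetz step is wrong, and the error is not cosmetic. The holomorphic Lefschetz number of $\iota$ is $1-0+(-1)=0$ (the sum is alternating, and $\iota$ acts by $-1$ on $H^2(\mathcal{O}_X)\simeq H^0(K_X)^{\vee}$), and each fixed curve $C_j$ contributes $\frac{1}{4}\bigl((2-2g_j)+\deg N_{C_j/X}\bigr)=0$ because adjunction on a $K3$ forces $\deg N_{C_j/X}=2g_j-2$. So the formula reads $0=0$ and carries no information; in particular your claimed identity $\sum_j(1-g_j)=2$, i.e.\ $\sum_j g_j=m-2$, is false --- case (ii) of the very proposition you are proving (two elliptic curves: $\sum g_j=2$, $m-2=0$) refutes it, as does the generic case $X^{\iota}=C^g\sqcup E_1\sqcup\cdots\sqcup E_k$, where it would force $g=k-1$. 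The two genuine numerical inputs are the ones you list afterwards: $\chi(X^{\iota})=2r-20$ gives $g-k=11-r$, and $\sum_i\dim_{\F_2}H^i(X^{\iota};\F_2)=24-2a$ gives $g+k=11-a$. For the step you flag as the main obstacle --- that at most one component has positive genus --- you do not need Nikulin's classification: a component $C_j$ has $g_j\geq 1$ iff $C_j^2\geq 0$ (again by adjunction), and the Hodge index theorem on the hyperbolic lattice $L_+(X,\iota)\subset NS_X$ forbids two disjoint curves of non-negative square unless both are isotropic and proportional, which is exactly the elliptic-pencil case $(10,8,0)$. With that substitution, and with the $\delta$-criterion still deferred to \cite{Ni1}, \cite{Ni2}, your outline becomes a correct account of the standard proof.
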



\begin{theorem}[Nikulin \cite{Ni2}]\label{classify 2-ele K3}
The deformation type of a 2-elementary $K3$ surface $(X, \iota)$ is determined by 
its main invariant $(r, a, \delta)$. 
All possible main invariants of 2-elementary $K3$ surfaces are shown on the following Figure \ref{Nikulin table} 
(which is identical to the table in \cite{A-N} page 31). 
\begin{figure}[h]
\centerline{\includegraphics[width=9.5cm]{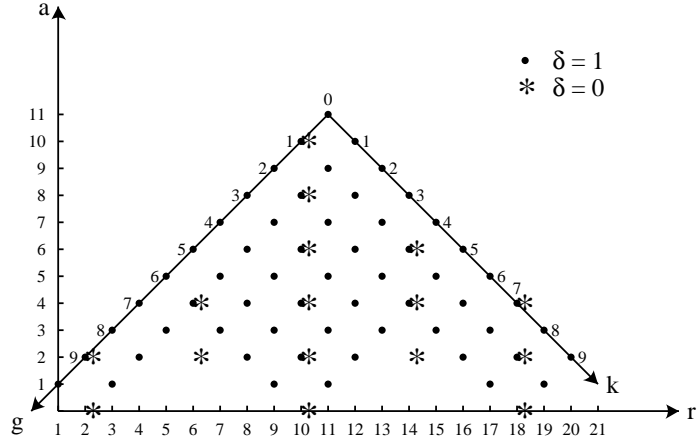}} 
\caption{Distribution of main invariants $(r,a,\delta)$} 
\label{Nikulin table}
\end{figure}
\end{theorem}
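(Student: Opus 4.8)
The plan is to split the statement into two parts: (a) the deformation type of a $2$-elementary $K3$ surface $(X,\iota)$ is determined by the triple $(r,a,\delta)$, and (b) the admissible triples are exactly those of Figure~\ref{Nikulin table}. For (a), recall that $L_+=L_+(X,\iota)$ is a primitive sublattice of the $K3$ lattice $\Lambda=H^2(X,\Z)\cong U^{\oplus 3}\oplus E_8(-1)^{\oplus 2}$ with orthogonal complement $L_-=L_-(X,\iota)$, and that $\iota^{\ast}$ equals $\mathrm{id}$ on $L_+$ and $-\mathrm{id}$ on $L_-$. We are already given that the isometry class of $(L_+,L_-)$ depends only on $(r,a,\delta)$; by Nikulin's analysis of $2$-elementary lattices in \cite{Ni1} the primitive embedding $L_+\hookrightarrow\Lambda$ is moreover unique up to an isometry of $\Lambda$. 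Hence for two $2$-elementary $K3$ surfaces $(X_1,\iota_1)$, $(X_2,\iota_2)$ with the same main invariant we may choose markings $\phi_i\colon H^2(X_i,\Z)\xrightarrow{\sim}\Lambda$ conjugating $\iota_i^{\ast}$ to one fixed model isometry $\theta=\mathrm{id}_{L_+}\oplus(-\mathrm{id}_{L_-})$, and --- after adjusting by $-\mathrm{id}$ or reflections if needed --- so that the periods $\phi_i(H^{2,0}(X_i))$ lie in the same connected component $\Omega$ of the type IV domain $\{[\omega]\in\proj(L_-\otimes\C)\mid(\omega,\omega)=0,\ (\omega,\bar\omega)>0\}$. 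Joining the two periods by a path in $\Omega$ that avoids the hyperplanes $\delta^{\perp}$ for $(-2)$-vectors $\delta\in L_-$, surjectivity of the $K3$ period map together with the global Torelli theorem shows each point of the path is the period of a $2$-elementary $K3$ surface on which $\theta$ is realized by a non-symplectic involution, and the universal family of marked $K3$ surfaces then produces a deformation connecting $(X_1,\iota_1)$ to $(X_2,\iota_2)$. Conversely $(r,a,\delta)$ is a deformation invariant because over a connected family of such pairs the invariant classes form a local system, so $L_+$ is locally constant.

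For (b) one enumerates the lattices that can occur and checks each is realized. If $(X,\iota)$ has main invariant $(r,a,\delta)$ then $L_+$ is an even $2$-elementary lattice of signature $(1,r-1)$, length $a$, parity $\delta$, while $L_-$ is an even $2$-elementary lattice of signature $(2,20-r)$ with discriminant form $-q_+$; conversely such a pair of lattices must exist and $L_+$ must embed primitively in $\Lambda$ with complement $L_-$. Since $\Lambda$ is even unimodular of signature $(3,19)$, the mere existence of $L_-$ supplies such an embedding, so the constraint reduces to Nikulin's existence criterion for $2$-elementary lattices (\cite{Ni1}) applied to $L_+$ and to $L_-$, which is a set of explicit congruences and inequalities in $(r,a,\delta)$. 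Realizability of an admissible lattice is then immediate: take a very general $\omega\in\Omega$ and let $X$ be the associated $K3$ surface, so that $NS_X=L_+$; the isometry $\theta$ is a Hodge isometry of $H^2(X,\Z)$ which restricts to the identity on $NS_X$, hence preserves the ample cone, so by Torelli $\theta=\iota^{\ast}$ for a unique automorphism $\iota$, which is non-symplectic ($\iota^{\ast}$ acts by $-1$ on $H^{2,0}$), satisfies $\iota^2=\mathrm{id}$, and gives a $2$-elementary $K3$ surface of type $(r,a,\delta)$. Carrying out the finite search yields the $75$ triples of Figure~\ref{Nikulin table}, with the fixed-locus formulas of Proposition~\ref{compute main inv} as a consistency check: $g=11-(r+a)/2\ge 0$ forces $r+a\le 22$ and $k=(r-a)/2\ge 0$ forces $r\ge a$.

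The step I expect to be the main obstacle is the combinatorics in (b): turning Nikulin's existence conditions into the precise list and verifying it is exhaustive and free of spurious entries, which needs a careful case analysis of $(r,a,\delta)$ modulo small powers of $2$ together with the signature and embedding constraints. The realization direction is conceptually routine once one observes $\theta|_{NS_X}=\mathrm{id}$, but one must be sure $NS_X$ is exactly $L_+$ for the generic period --- which is precisely the genericity that later manifests as the ``complement of a Heegner divisor'' description of \cite{Yo1}.
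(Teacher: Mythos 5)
The paper does not prove Theorem \ref{classify 2-ele K3}: it is quoted as a result of Nikulin \cite{Ni2}, with the table of seventy-five triples reproduced from \cite{A-N}, so there is no internal argument to compare yours against. Your outline is essentially Nikulin's own strategy, and part (a) is sound as a sketch: uniqueness of the pair $(L_+,L_-)$ and of the primitive embedding $L_+\hookrightarrow\Lambda$ from \cite{Ni1}, a path in $\Omega_{L_-}$ avoiding the countably many $(-2)$-hyperplanes (whose complement is connected, each hyperplane having real codimension two), and the global Torelli theorem to propagate the involution along the path. Two small cautions there. First, $-\mathrm{id}$ acts trivially on the projectivized domain and so does \emph{not} exchange the two components of $\Omega_{L_-}$; to align components you need an isometry of $L_-$ reversing the orientation of positive-definite $2$-planes that extends by the identity on $L_+$ (such exist, as the paper itself uses in \S\ref{ssec: recipe}). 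Second, the claim that $\theta$ sends ample classes to ample classes must hold at \emph{every} point of the path, not only at very general periods; the argument is that any effective $(-2)$-class $d$ has nonzero component $d_+$, so $(\theta d,h)=(d,h)>0$ for an ample $h\in L_+$, once the markings are chosen compatibly with a fixed chamber of the positive cone of $L_+$.

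The genuine gap is in (b). The content of the theorem is the explicit list, and ``carrying out the finite search'' is precisely where the work lies: one must translate Nikulin's existence conditions for even $2$-elementary lattices of prescribed signature, length and parity into congruences and inequalities on $(r,a,\delta)$, impose them simultaneously on $L_+$ (hyperbolic) and on $L_-$ (signature $(2,20-r)$, complementary discriminant form), and verify that the surviving triples are exactly those of Figure \ref{Nikulin table}, with no spurious or missing entries. You flag this honestly but do not do it, so as written the proposal establishes the first sentence of the theorem and the realizability of every admissible triple, but not that the admissible triples are the seventy-five displayed. Given that the paper itself defers to \cite{Ni2} for exactly this enumeration, stopping there is defensible, but it should be presented as a citation rather than a proof.
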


A moduli space of 2-elementary $K3$ surfaces of type $(r, a, \delta)$ is constructed as follows. 
Let $L_-$ be an even lattice of signature $(2, 20-r)$ 
whose discriminant form is 2-elementary of length $a$ and parity $\delta$. 
The orthogonal group ${\Or}(L_-)$ of $L_-$ acts on the domain 
\begin{equation}\label{eqn:period domain}
\Omega_{L_-}=\{ {\C}\omega \in {\proj}(L_-\otimes{\C}) \; | \; (\omega, \omega)=0, (\omega, \overline{\omega})>0 \}. 
\end{equation}
The quotient space $\mathcal{F}({\Or}(L_-))={\Or}(L_-)\backslash\Omega_{L_-}$ turns out to be 
an irreducible, normal, quasi-projective variety of dimension $20-r$. 
The complex analytic divisor $\sum \delta^{\perp} \subset \Omega_{L_-}$, 
where $\delta$ are $(-2)$-vectors in $L_-$, 
is the inverse image of an algebraic divisor $\mathcal{D} \subset \mathcal{F}({\Or}(L_-))$. 
We set 
\begin{equation*}
{\moduli}=\mathcal{F}({\Or}(L_-))-\mathcal{D}. 
\end{equation*} 

For a 2-elementary $K3$ surface $(X, \iota)$ of type $(r, a, \delta)$  
we have an isometry $\Phi \colon L_-(X, \iota) \to L_-$. 
Then $\Phi(H^{2,0}(X))$ is contained in $\Omega_{L_-}$. 
The period of $(X, \iota)$ is defined by  
\begin{equation*}
\mathcal{P}(X, \iota) = [\Phi(H^{2,0}(X))] \in {\moduli}, 
\end{equation*}
which is independent of the choice of $\Phi$. 

\begin{theorem}[Yoshikawa \cite{Yo1}, \cite{Yo3}]\label{thm: moduli}
The variety ${\moduli}$ is a moduli space of 2-elementary $K3$ surfaces of type $(r, a, \delta)$ in the sense that,  
for a family $(\frak{X}\to U, \iota)$ of such 2-elementary $K3$ surfaces  
the period map $U \to {\moduli}$, $u \mapsto \mathcal{P}(\frak{X}_{u}, \iota _{u})$, 
is a morphism of varieties, 
and via the period mapping the points of ${\moduli}$ bijectively correspond to 
the isomorphism classes of such 2-elementary $K3$ surfaces. 
\end{theorem}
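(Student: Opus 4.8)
The plan is to reduce the statement to the Torelli theorem for $K3$ surfaces, together with Nikulin's theory of lattice embeddings and discriminant forms. Two assertions must be established: that on any family $(\frak{X}\to U,\iota)$ of such $2$-elementary $K3$ surfaces the period map is a morphism, and that the periods realize a bijection between $\moduli$ and the set of isomorphism classes. For the first, observe that $R^2\pi_{\ast}\Z$ underlies a variation of Hodge structure on $U$, and since $\iota$ acts fiberwise its $(-1)$-eigenspaces assemble into a local subsystem with fibers $L_-(\frak{X}_u,\iota_u)$; over a simply connected chart, a flat trivialization identifying this subsystem with the constant system $L_-$ produces a holomorphic lift $U\to\Omega_{L_-}$ of the period, so the induced map $U\to\mathcal{F}(\Or(L_-))$ is holomorphic, hence a morphism of quasi-projective varieties. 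It remains to check that the period actually lands in $\moduli$, i.e.\ avoids $\mathcal{D}$: if some $(-2)$-vector $\delta\in L_-(X,\iota)$ were orthogonal to $H^{2,0}(X)$, then $\delta\in NS_X$, so by Riemann--Roch one of $\pm\delta$ is the class of an effective curve $C$; but then $\iota(C)$ is effective with class $\iota^{\ast}[C]=-[C]$, and $[C]+\iota^{\ast}[C]=0$ forces $[C]=0$, a contradiction.

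For injectivity, suppose $(X,\iota)$ and $(X',\iota')$ have the same period. Adjusting the markings by an element of $\Or(L_-)$ yields a Hodge isometry $L_-(X',\iota')\to L_-(X,\iota)$. By Nikulin's uniqueness theorem the invariant lattices $L_+(X,\iota)$ and $L_+(X',\iota')$ are abstractly isometric, and the canonical identifications $(D_{L_+},q_+)\simeq(D_{L_-},-q_-)$ let one glue an isometry of the $L_+$'s to the above isometry along the common discriminant form, producing a Hodge isometry $\phi\colon H^2(X',\Z)\to H^2(X,\Z)$ of the full (unimodular) lattices which by construction intertwines $\iota'^{\ast}$ and $\iota^{\ast}$. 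After modifying $\phi$ by $\pm 1$ and by reflections in $(-2)$-classes one arranges it to be \emph{effective}, i.e.\ to carry the K\"ahler cone of $X'$ into that of $X$; the strong Torelli theorem then gives an isomorphism $f\colon X\to X'$ with $f^{\ast}=\phi$. Since $(f^{-1}\circ\iota'\circ f)^{\ast}=\phi^{-1}\circ\iota'^{\ast}\circ\phi=\iota^{\ast}$ and an automorphism of a $K3$ surface inducing the identity on $H^2$ is the identity, $f$ conjugates $\iota$ to $\iota'$.

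For surjectivity, take $\C\omega\in\Omega_{L_-}$ off $\mathcal{D}$, fix a primitive embedding of $L_+\oplus L_-$ into the $K3$ lattice $\Lambda$ (which exists by Nikulin, since $L_-$ is realized as the orthogonal complement of the primitively embedded $L_+$), and equip $\Lambda$ with the weight-two Hodge structure having $H^{2,0}=\C\omega$. Surjectivity of the $K3$ period map gives a marked $K3$ surface $X$ with this Hodge structure, and one may take the marking effective. The involution $\tau$ of $\Lambda$ which is $+1$ on $L_+$ and $-1$ on $L_-$ is a Hodge isometry because $\omega\in L_-\otimes\C$; making it effective (again using the absence of a $(-2)$-class of $L_-$ perpendicular to $\omega$) it equals $\iota^{\ast}$ for an involution $\iota$ of $X$, which is non-symplectic since $\tau$ acts by $-1$ on $H^{2,0}$. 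Primitivity of $L_-$ in $\Lambda$ forces $\Lambda^{\tau=\pm 1}=L_{\pm}$, so $(X,\iota)$ has main invariant $(r,a,\delta)$ and period $\C\omega$. Combined with the morphism property and injectivity, this yields the asserted bijection, and the morphism statement for families then also shows that the inverse bijection is compatible with families in the stated sense.

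The delicate point, in both the injectivity and the surjectivity steps, is the passage to an effective isometry: one must spend the remaining freedom in $\phi$ (respectively, in the choice of marking) to bring the K\"ahler cones into agreement while keeping $L_+$ and $L_-$ as the two eigenlattices — equivalently, while keeping the relevant $(-2)$-reflections off the anti-invariant lattice — and it is exactly here that excising the Heegner divisor $\mathcal{D}$ is indispensable, since a $(-2)$-curve meeting the anti-invariant directions would obstruct the extension of the involution. One must also verify that no $(-2)$-classes outside the chosen $L_+\oplus L_-$ interfere; this is handled by the same genericity that defines $\mathcal{D}$ together with the sign behavior of $\iota^{\ast}$, but checking it carefully is the part of the argument requiring the most attention.
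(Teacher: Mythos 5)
The paper does not prove this statement: it is quoted from Yoshikawa \cite{Yo1}, \cite{Yo3}, so there is no internal argument to compare against. Your proposal reconstructs the standard Torelli-theoretic proof, which is how the cited source proceeds, and the architecture is right: holomorphy of the period on a family, avoidance of $\mathcal{D}$ via effectivity of $(-2)$-classes and the sign of $\iota^{\ast}$, gluing of eigenlattice isometries through $(D_{L_+},q_+)\simeq(D_{L_-},-q_-)$ (which tacitly uses Nikulin's surjectivity of ${\Or}(L_+)\to{\Or}(D_{L_+})$), and the strong Torelli theorem.

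Two steps need more than the gesture you give them, and they are exactly the ones you flag as delicate. First, in both the injectivity and the surjectivity arguments you propose to ``modify by reflections in $(-2)$-classes to arrange effectivity''; but post-composing with an arbitrary Weyl-group element destroys the intertwining with $\iota^{\ast}$ (and in the surjectivity step need not return an involution, and moves the eigenlattices). The correct mechanism is: since the period avoids $\mathcal{D}$, no $(-2)$-vector of $NS_X$ lies in $L_-(X,\iota)$, so every wall $\delta^{\perp}$ meets the positive cone of $L_+(X,\iota)\otimes\R$ in a proper hyperplane; hence some chamber of the positive cone of $NS_X$ has an interior point fixed by the lattice involution, that chamber is preserved, and one conjugates by the unique Weyl element carrying it to the K\"ahler chamber. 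Simple transitivity of the Weyl group on chambers is what forces the correcting element to commute with $\iota^{\ast}$ in the injectivity step and keeps the corrected $\tau$ an involution with eigenlattices isometric to $L_{\pm}$ in the surjectivity step; without this the argument as written can fail. Second, ``holomorphic, hence a morphism of quasi-projective varieties'' is not automatic: one needs Borel's extension theorem for holomorphic maps into arithmetic quotients (the paper invokes \cite{Bo} for precisely this purpose in \S\ref{ssec: recipe}). With these two points supplied, your outline is a faithful account of the standard proof.
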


Let $\mathcal{M}_g$ be the moduli of genus $g$ curves. 
When $(r, a, \delta)\ne(10, 10, 0), (10, 8, 0)$, 
setting $g=11-2^{-1}(r+a)$, 
we have the \textit{fixed curve map} 
\begin{equation}\label{fixed curve map}
F : {\moduli} \to \mathcal{M}_g, \quad (X, \iota)\mapsto C^g, 
\end{equation}
where $C^g$ is the genus $g$ component of $X^{\iota}$.  
In this article we will determine the generic structure of $F$ in terms of $\mathcal{M}_g$ for $g\geq3$. 
For example, one will find that  

\begin{itemize}
\item $F$ is generically injective for $r\leq5$ and for $8\leq r\leq12$, $a\leq2$.  

\item The members of $F({\moduli})$ have Clifford index $\leq2$. 
         If $k>0$ in addition, they have Clifford index $\leq1$. 

\item When $r=2+4n$, we have $\delta=0$ if and only if the generic member of $F({\moduli})$ 
         possesses a theta characteristic of projective dimension $3-n$.  
\end{itemize}


\subsection{DPN pair}\label{ssec: right resol}

We shall explain a generalized double cover construction of 2-elementary $K3$ surfaces. 
Recall from \cite{A-N} that a \textit{DPN pair} is a pair $(Y, B)$ of a smooth rational surface $Y$ 
and a bi-anticanonical curve $B\in|\!-\!2K_Y|$ with only A-D-E singularities. 
When $B$ is smooth, $(Y, B)$ is called a \textit{right DPN pair}. 
2-elementary $K3$ surfaces $(X, \iota)$ with $X^{\iota}\ne \emptyset$ are 
in canonical correspondence with right DPN pairs: 
for such an $(X, \iota)$ the quotient $Y=X/\iota$ is a smooth rational surface, 
and the branch curve $B$ of the quotient map $X\to Y$ is a $-2K_Y$-curve isomorphic to $X^{\iota}$. 
Conversely, for a right DPN pair $(Y, B)$ the double cover $f\colon X\to Y$ branched along $B$ is a $K3$ surface, 
with the covering transformation being a non-symplectic involution. 
From $B$ one knows the invariant $(r, a)$ of $X$ via Proposition \ref{compute main inv}. 
Also one has $r=\rho(Y)$. 
The lattice $L_+(X, \iota)$ is generated by the sublattice $f^{\ast}NS_Y$ and 
the classes of components of $X^{\iota}$ (cf. \cite{Ma}). 
By \cite{A-N}, if $B=\sum_{i=0}^{k}B_i$ is the irreducible decomposition of $B$, 
then $(X, \iota)$ has parity $\delta=0$ if and only if 
$\sum_{i=0}^{k}(-1)^{n_i}B_i \in 4NS_Y$ for some $n_i \in \{0, 1\}$. 

Let $(Y, B)$ be a DPN pair. 
A \textit{right resolution} of $(Y, B)$ is a triplet $(Y', B', \pi)$ such that $(Y', B')$ is a right DPN pair 
and $\pi\colon Y'\to Y$ is a birational morphism with $\pi(B')=B$. 
When $Y$ is obvious from the context, we also call it a right resolution of $B$. 
A right resolution exists and is unique up to isomorphism. 
It may be constructed explicitly as follows (\cite{A-N}). 
Let 
\begin{equation}\label{process of right resol} 
\cdots \stackrel{\pi_{i+1}}{\to} (Y_{i}, B_{i}) \stackrel{\pi_{i}}{\to} (Y_{i-1}, B_{i-1}) \stackrel{\pi_{i-1}}{\to} \cdots 
\stackrel{\pi_{1}}{\to} (Y_0, B_0)=(Y, B)  
\end{equation} 
be the blow-ups defined inductively by 
$\pi_{i+1}:Y_{i+1}\to Y_i$ being the blow-up at the singular points of $B_i$,   
and $B_{i+1}=\widetilde{B}_i + \sum_{p}E_{p}$ where $\widetilde{B}_i$ is the strict transform of $B_i$ and  
$E_p$ are the $(-1)$-curves over the triple points $p$ of $B_i$.  
Each $(Y_i, B_i)$ is also a DPN pair. 
This process will terminate and we finally obtain a right DPN pair $(Y', B')=(Y_N, B_N)$. 
Let $p$ be a singular point of $B$.  
According to the type of singularity, 
the dual graph of the curves on $Y'$ contracted to $p$ is as follows.

\begin{picture}(100,25)\label{graph A_n}
	\thicklines	
	\put(10,-5){{\large $A_n$}}	
	\put(60,0){\circle{8}}
	\put(80,0){\circle*{8}}
	\put(100,0){\circle*{8}}
	\put(140,0){\circle*{8}}	
	\put(64,0){\line(1,0){12}}
	\put(84,0){\line(1,0){12}}
	\put(100,-5){\makebox(40,10){$\centerdot \: \centerdot \: \centerdot  $}}	
	\put(64,-11){1}
	\put(84,-11){2}
	\put(104,-11){3}
	\put(144,-11){$[\frac{n+1}{2}]$}
\end{picture}

\begin{picture}(100,45)\label{graph D_2n}
	\thicklines	
	\put(10,-5){{\large $D_{2n}$}}	
	\put(64,16){\circle{8}}
	\put(64,-16){\circle{8}}
     \put(80,0){\circle{8}}           \put(80,0){\circle{4}}
	\put(100,0){\circle{8}}
	\put(140,0){\circle{8}}        \put(140,0){\circle{4}}
     \put(160,0){\circle{8}}        
	\put(77,3){\line(-1,1){10}}
	\put(77,-3){\line(-1,-1){10}}
	\put(84,0){\line(1,0){12}}
	\put(100,-5){\makebox(40,10){$\centerdot \: \centerdot \: \centerdot $}}
	\put(144,0){\line(1,0){12}}	
	\put(70,15){1}
	\put(70,-25){2}
	\put(84,-11){3}
	\put(104,-11){4}
	\put(164,-11){$2n$}		
\end{picture}

\begin{picture}(100,60)\label{graph D_2n+1}
	\thicklines	
	\put(10,-5){{\large $D_{2n+1}$}}	
	\put(64,16){\circle*{8}}
        \put(80,0){\circle{8}}           
	\put(100,0){\circle{8}}        \put(100,0){\circle{4}}
	\put(120,0){\circle{8}}        
        \put(160,0){\circle{8}}        \put(160,0){\circle{4}}
        \put(180,0){\circle{8}}    
	\put(77,3){\line(-1,1){10}}
	\put(84,0){\line(1,0){12}}
	\put(104,0){\line(1,0){12}}
        \put(120,-5){\makebox(40,10){$\centerdot \: \centerdot \: \centerdot $}}
	\put(164,0){\line(1,0){12}}	
	\put(70,15){1}
	\put(84,-11){2}
	\put(104,-11){3}
	\put(124,-11){4}
        \put(184,-11){$2n$}	
\end{picture}

\begin{picture}(100,50)\label{graph E_6}
	\thicklines
	\put(10,-5){{\large $E_6$}}	
	\put(60,0){\circle*{8}}
	\put(80,0){\circle*{8}}
	\put(100,0){\circle{8}}
	\put(100,20){\circle{8}}           \put(100,20){\circle{4}}	
	\put(64,0){\line(1,0){12}}
	\put(84,0){\line(1,0){12}}
	\put(100,4){\line(0,1){12}}			
	\put(64,-11){1}
	\put(84,-11){2}
	\put(104,-11){3}
	\put(104,11){4}
\end{picture}

\begin{picture}(100,45)\label{graph E_7}
	\thicklines
	\put(10,-5){{\large $E_7$}}	
	\put(60,0){\circle{8}}
	\put(80,0){\circle{8}}        \put(80,0){\circle{4}}
	\put(100,0){\circle{8}}
	\put(120,0){\circle{8}}        \put(120,0){\circle{4}}
     \put(120,20){\circle{8}}           
	\put(140,0){\circle{8}}        
     \put(160,0){\circle{8}}        \put(160,0){\circle{4}}
	\put(64,0){\line(1,0){12}}
	\put(84,0){\line(1,0){12}}
	\put(104,0){\line(1,0){12}}
	\put(120,4){\line(0,1){12}}	
	\put(124,0){\line(1,0){12}}
	\put(144,0){\line(1,0){12}}		
	\put(64,-11){1}
	\put(84,-11){2}
	\put(104,-11){3}
	\put(124,-11){4}
	\put(124,11){5}
	\put(144,-11){6}
	\put(164,-11){7}
\end{picture}

\begin{picture}(100,50)\label{graph E_8}
	\thicklines	
	\put(10,-5){{\large $E_8$}}	
	\put(60,0){\circle{8}}        \put(60,0){\circle{4}}
	\put(80,0){\circle{8}}
	\put(100,0){\circle{8}}        \put(100,0){\circle{4}}
	\put(120,0){\circle{8}}
	\put(140,0){\circle{8}}        \put(140,0){\circle{4}}
     \put(140,20){\circle{8}}           
	\put(160,0){\circle{8}}        
     \put(180,0){\circle{8}}        \put(180,0){\circle{4}}
	\put(64,0){\line(1,0){12}}
	\put(84,0){\line(1,0){12}}
	\put(104,0){\line(1,0){12}}
	\put(124,0){\line(1,0){12}}
	\put(140,4){\line(0,1){12}}	
	\put(144,0){\line(1,0){12}}
	\put(164,0){\line(1,0){12}}		
	\put(64,-11){1}
	\put(84,-11){2}
	\put(104,-11){3}
	\put(124,-11){4}
	\put(144,-11){5}
	\put(144,11){6}
        \put(164,-11){7}
        \put(184,-11){8}
\end{picture}

\begin{figure}[h]
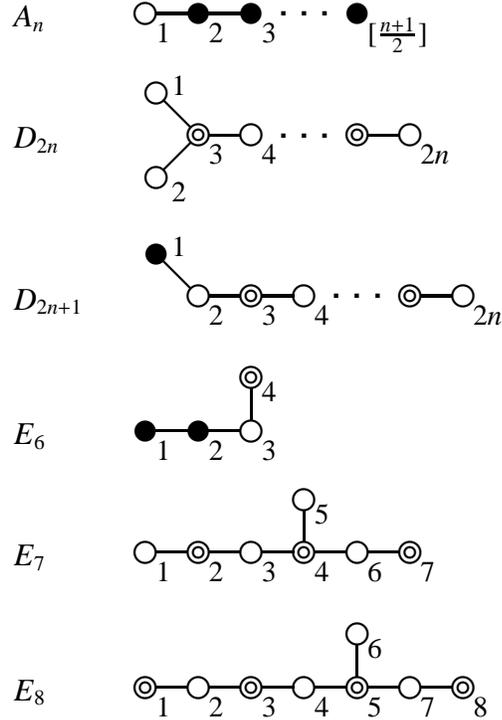

\caption{Dual graphs of exceptional curves}
\label{dual graph}
\end{figure}


Here 
black vertices represent $(-2)$-curves, 
white vertices represent $(-1)$-curves, 
and double circles represent $(-4)$-curves. 
The $(-4)$-curves are components of $B'$, while the $(-2)$-curves are disjoint from $B'$. 
The $(-1)$-curves intersect with $B'$ transversely at two points unless $p$ is $A_{2n}$-type; 
when $p$ is an $A_{2n}$-point, the $(-1)$-curve is tangent to $B'$ at one point. 
The labeling for the vertices will be used later. 
Note that identification of the dual graph of the curves with the above abstract graph 
is not unique when $p$ is $D_{2n}$-type. 
In that case, such an identification is obtained after one distinguishes the three branches 
(resp. two tangential branches) of $B$ at $p$ when $n=2$ (resp. $n>2$).

Let $(Y, B)$ be a DPN pair with a right resolution $(Y', B', \pi)$. 
Taking the double cover of $Y'$ branched over $B'$, 
we associate a 2-elementary $K3$ surface $(X, \iota)$ to $(Y, B)$. 
The composition $X\to Y$ of the quotient map $X\to Y'$ and the blow-down $\pi$ 
is called the \textit{right covering map} for $(Y, B)$. 
Note that $(X, \iota)$ is also the minimal desingularization of the double cover of $Y$ branched over $B$. 
By the above dual graphs, 
the invariant $(r, a)$ of $(X, \iota)$ is calculated in terms of $(Y, B)$ as follows. 
Let $a_n$ (resp. $d_n$, $e_n$) be the number of singularities of $B$ of type $A_n$ (resp. $D_n$, $E_n$). 
Then $r=\rho(Y')$ is given by 
\begin{equation}\label{compute r by A-D-E}
\rho(Y) + \sum_{l\geq1} l \ (a_{2l-1}+a_{2l}) + \sum_{m\geq2}2m(d_{2m}+d_{2m+1}) + 4e_6 + 7e_7 + 8e_8. 
\end{equation}
If $k_0$ is the number of components of $B$, the number $k+1$ of components of $X^{\iota}\simeq B'$ is given by 
\begin{equation}\label{compute k by A-D-E}
k_0+\sum_{m\geq2}(m-1)(d_{2m}+d_{2m+1}) + e_6 + 3e_7 + 4e_8. 
\end{equation}
The genus $g$ of the main component of $X^{\iota}$ is the maximal geometric genus of components of $B$. 
For the parity $\delta$ we will later use the following criteria. 

\begin{lemma}\label{delta=1}
Let $(X, \iota)$ and $(Y, B)$ be as above. 
Then $(X, \iota)$ has parity $\delta=1$ if we have distinct irreducible components $B_i$ of $B$ 
with either of the following conditions. 

$(1)$ $B_i\cap B_j$ contains a node of $B$ for every $1\leq i, j\leq3$.  

$(2)$ $B_1\cap B_2$ contains a node and a $D_4$-point of $B$. 

$(3)$ $B_1\cap B_2$ contains a node and a $D_{2n}$-point of $B$, 
in the latter of which $B_1$ and $B_2$ share a tangent direction. 

$(4)$ $B_1$ has a node which is also a node of $B$. 
\end{lemma}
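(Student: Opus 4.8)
The plan is to translate the parity criterion of Alexeev–Nikulin recorded above—namely that $(X,\iota)$ has $\delta=0$ if and only if $\sum_{i}(-1)^{n_i}B_i \in 4NS_Y$ for some choice of signs $n_i\in\{0,1\}$—into a divisibility obstruction on the right resolution $(Y',B',\pi)$, and to contradict it in each of the four configurations by exhibiting a curve class that detects the residue mod $4$. Concretely, suppose $\delta=0$ and fix signs $n_i$ realizing $\xi:=\sum_{i=0}^{k}(-1)^{n_i}B_i \in 4NS_Y$. Passing to the right resolution, the branch curve $B'\simeq X^{\iota}$ has irreducible decomposition $B'=\sum_j B'_j$ where the $B'_j$ are the strict transforms of the $B_i$ together with the $(-4)$-curves introduced over the $D_n$-, $E_n$-points; by the analysis of Alexeev–Nikulin the corresponding statement over $Y'$ is that $\sum_j(-1)^{m_j}B'_j\in 4NS_{Y'}$ for suitable signs $m_j$, and the signs on the strict transforms are forced to be the $n_i$. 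The strategy in each case is: intersect this relation with a well-chosen curve or $(-2)$-class on $Y'$, and compute the intersection number mod $4$.

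First I would handle case $(1)$, which is the model situation. If $B_1,B_2,B_3$ are distinct components pairwise meeting in nodes of $B$, then on $Y'$ their strict transforms $B'_1,B'_2,B'_3$ still pairwise meet transversally (nodes are unaffected by the right-resolution blow-ups, which only touch triple points and worse). Intersecting $\xi':=\sum_j(-1)^{m_j}B'_j$ with $B'_1$ and using $(B'_1{}^2)$ even (every component of a right DPN branch curve has even self-intersection, since $B'\in|-2K_{Y'}|$ and adjunction forces $2p_a(B'_1)-2=(B'_1{}^2)+(B'_1.B'_1-2K_{Y'})$—more simply, $(B'_1.B'_1)$ has the same parity as $(B'_1.2K_{Y'})$ which is even) one gets $(\xi'.B'_1)\equiv (-1)^{m_2}(B'_1.B'_2)+(-1)^{m_3}(B'_1.B'_3)\pmod 2$, hence $(\xi'.B'_1)$ is the sum of two odd numbers plus an even number... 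I would instead intersect with the class $B'_1-B'_2$ or with $B'_1+B'_2+B'_3$ and chase parities mod $4$: the three mutual intersection points contribute, and no assignment of signs $(-1)^{m_i}$ can make all three "coherent" modulo $4$. This is the combinatorial heart; the point is exactly the classical fact that a theta-characteristic-type obstruction is nonzero when three curves form a "triangle."

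For cases $(2)$–$(4)$ I would reduce to case $(1)$ by examining the $(-4)$-curves produced in the right resolution. At a $D_4$-point (resp. $D_{2n}$-point with $B_1,B_2$ sharing a tangent, resp. the node of $B_1$ that is a node of $B$), the dual graphs in Figure \ref{dual graph} show that a $(-4)$-curve $\Theta$ appears as a component of $B'$, and one reads off from the graph which strict transforms it meets: in case $(2)$ the curve $\Theta$ over the $D_4$-point together with $B'_1$ and $B'_2$ forms a triangle of components of $B'$ (each pair meeting in a node), so case $(1)$ applied on $Y'$ finishes it; case $(3)$ is analogous, with the $D_{2n}$ graph supplying the $(-4)$-curve that closes the triangle with $B'_1,B'_2$; in case $(4)$, resolving the node of $B_1$ that is also a node of $B$ separates the two branches of $B_1$ into distinct components $B'_1{}',B'_1{}''$ of $B'$ that meet the exceptional $(-1)$-curve—actually one must follow the $A_1$ resolution carefully—and one again produces a triangle, or directly a relation forcing a node-type obstruction. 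In all cases the final contradiction is the same mod-$4$ computation as in $(1)$.

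The main obstacle I anticipate is bookkeeping the signs $m_j$ on the exceptional $(-4)$-curves: the Alexeev–Nikulin criterion is stated on $Y$, so one must either argue that it lifts verbatim to $Y'$ (with $4NS_Y\hookrightarrow 4NS_{Y'}$ and the new $(-4)$-curves getting their own free signs) or re-derive the $\delta$ criterion directly from the $L_+$-divisibility of $[X^\iota]$ via Proposition \ref{compute main inv}(iii), pulling back through $f^*\colon NS_{Y'}\to L_+(X,\iota)$. I would take the latter route: $\delta=0$ iff $[B']=[X^\iota]$ is $2$-divisible in $L_+$, equivalently (since $f^*$ doubles intersection forms) a certain class is $2$-divisible in $NS_{Y'}$ after accounting for the half-classes coming from the $(-2)$-curves over the $D_n$/$E_n$ points; then the mod-$4$ obstruction is intrinsic and the triangle configurations obstruct it. The delicate point is making precise which lattice the divisibility lives in, but once that is fixed the four cases are uniform.
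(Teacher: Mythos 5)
Your overall strategy---translate $\delta$ into the Alexeev--Nikulin mod-$4$ condition $\sum_i(-1)^{n_i}B_i\in 4NS_{Y}$ on the right resolution and obstruct it by a sign-assignment (two-coloring) argument---is a viable route and genuinely different from the paper's. But the central computation as you describe it rests on a false picture of the right resolution. You assert that ``nodes are unaffected by the right-resolution blow-ups, which only touch triple points and worse,'' so that the strict transforms $\widetilde B_1,\widetilde B_2,\widetilde B_3$ still pairwise meet on $Y'$. This is wrong: the right resolution blows up \emph{every} singular point of $B$, nodes included (otherwise $B'$ would not be smooth), and the irreducible components of the smooth curve $B'\in|-2K_{Y'}|$ are pairwise \emph{disjoint}. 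Consequently there is no ``triangle of components of $B'$'' in any of the cases $(1)$--$(3)$, and your proposed intersections of $\xi'$ with $B_1'$, $B_1'-B_2'$, or $B_1'+B_2'+B_3'$ cannot detect anything. (Your parity claim is also unfounded even in your own setup: ``$B_1\cap B_2$ contains a node'' does not make $(B_1.B_2)$ odd.) The repair is to pair $\xi'=\sum_j(-1)^{m_j}B_j'$ with the exceptional $(-1)$-curve $E_p$ over each node $p\in B_i\cap B_j$: by Figure \ref{dual graph}, $E_p$ meets $B'$ transversely in exactly one point of $\widetilde B_i$ and one of $\widetilde B_j$, so $(\xi'.E_p)=(-1)^{m_i}+(-1)^{m_j}\in\{0,\pm2\}$ must lie in $4\Z$, forcing $m_i\neq m_j$; a triangle of such constraints is not $2$-colorable. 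Case $(4)$ fails for the same reason in your write-up: blowing up a node of the irreducible curve $B_1$ does \emph{not} split $\widetilde B_1$ into two components; rather $E_p$ meets $\widetilde B_1$ twice, giving $(\xi'.E_p)=\pm2\notin 4\Z$ directly. Your reductions of $(2)$ and $(3)$ to $(1)$ via the $(-4)$-curves would likewise have to be rerouted through the $(-1)$-curves of the $D_{2n}$-graph (or, as the paper does, through a single intermediate blow-up of the $D$-point, which genuinely recreates a triangle of \emph{nodes} before passing to the right resolution).

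For comparison, the paper avoids the mod-$4$ bookkeeping entirely by working upstairs on $X$: it exhibits an explicit element of $L_+(X,\iota)^{\vee}$ with non-integral norm, namely $D=\tfrac12(C_1+C_2+C_3)$ in case $(1)$ (where $C_i$ are the disjoint $(-2)$-curves over the three nodes, so $(D,D)=-3/2$) and $\tfrac12 C$ in case $(4)$ (norm $-1/2$); membership in $L_+^{\vee}$ is checked against the generators of $L_+$ from Lemma \ref{L_+ and A-D-E}, and cases $(2)$, $(3)$ reduce to $(1)$ by blowing up the $D$-point. That argument is shorter and sidesteps the sign ambiguities you correctly worry about in your last paragraph; your approach, once corrected, has the merit of making the ``bipartiteness of the node-incidence graph'' criterion for $\delta=0$ explicit, but as written the key geometric premise is false and none of the four cases is actually carried through.
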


\begin{proof}
$(1)$ Let $p_i$ be a node of $B$ contained in $B_j\cap B_k$ with $\{ i, j, k\}=\{1, 2, 3\}$. 
Let $C_i\subset X$ be the $(-2)$-curve over $p_i$. 
It suffices to show that the ${\Q}$-cycle $D=2^{-1}\sum_{i=1}^{3}C_i$ belongs to $L_+(X, \iota)^{\vee}$. 
If $F_i$ is the component of $X^{\iota}$ over $B_i$, 
then $(D.F_i)=1$ for every $1\leq i\leq3$. 
We have $(D.F)=0$ for other components $F$ of $X^{\iota}$. 
Since $2D$ is the pullback of a divisor on $X/\iota$, 
$D$ has integral intersection pairing with the pullbacks of divisors on $X/\iota$.  
This proves the assertion. 

$(2)$ Let $Y''$ be the blow-up of $Y$ at the $D_4$-point, 
$E\subset Y''$ the exceptional curve, and  
$\widetilde{B}_i$ (resp. $\widetilde{B}$) the strict transform of $B_i$ (resp. $B$) in $Y''$. 
Then one may apply the case $(1)$ to the DPN pair $(Y'', \widetilde{B}+E)$ and the components 
$\widetilde{B}_1, \widetilde{B}_2, E$ of $\widetilde{B}+E$. 

$(3)$ Blow-up $Y$ at the $D_{2n}$-singularity and use the induction on $n$: 
the assertion is reduced to the case $(2)$. 

$(4)$ If $C\subset X$ is the $(-2)$-curve over that node, we have $2^{-1}C\in L_+(X, \iota)^{\vee}$. 
\end{proof}

In certain cases, 
the right covering map $f\colon X\to Y$ may be recovered from a line bundle on $X$. 

\begin{lemma}[cf. \cite{Ma}]\label{covering map & LB}
Let $(X, \iota)$ and $(Y, B)$ be as above and suppose that $Y$ is either 
${\proj}^2$ or  ${\proj}^1\times{\proj}^1$ or ${\F}_n$ with $n>0$.  
Let $L\in{\rm Pic}(Y)$ be ${\Oplane}(1)$, $\mathcal{O}_{{\proj}^1\times{\proj}^1}(1, 1)$, $L_{1,0}$ 
for respective case. 
Then the map $f^{\ast}\colon |L| \to |f^{\ast}L|$ is isomorphic. 
\end{lemma}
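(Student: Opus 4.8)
The claim is that for $Y = {\proj}^2$, ${\proj}^1\times{\proj}^1$, or ${\F}_n$ ($n>0$), with $L$ the indicated ample (or nef-and-big) line bundle and $f\colon X\to Y$ the right covering map, the pullback $f^{\ast}\colon |L| \to |f^{\ast}L|$ is an isomorphism of linear systems. Since $f$ is a dominant morphism, $f^{\ast}$ is automatically injective on global sections, so the content is the surjectivity of $f^{\ast}\colon H^0(Y,L)\to H^0(X, f^{\ast}L)$, i.e.\ that $f^{\ast}L$ acquires no new sections on $X$. The plan is to compute $h^0(X, f^{\ast}L)$ via the projection formula and push-forward of $f$, reducing everything to cohomology on $Y$.

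First I would recall that $f$ factors as the blow-down $\pi\colon Y'\to Y$ composed with the double cover $g\colon X\to Y'$ branched along the smooth curve $B'\in|-2K_{Y'}|$. For the double cover one has $g_{\ast}\mathcal{O}_X = \mathcal{O}_{Y'}\oplus\mathcal{O}_{Y'}(K_{Y'})$, the $(\pm1)$-eigensheaf decomposition under $\iota$. Hence by the projection formula
\begin{equation*}
g_{\ast}(g^{\ast}\pi^{\ast}L) \;=\; \pi^{\ast}L \,\oplus\, \pi^{\ast}L\otimes\mathcal{O}_{Y'}(K_{Y'}),
\end{equation*}
so that $H^0(X, f^{\ast}L) = H^0(Y', \pi^{\ast}L)\oplus H^0(Y', \pi^{\ast}L\otimes K_{Y'})$. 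The first summand equals $H^0(Y,L)$ because $\pi$ is a birational morphism between smooth surfaces and $L$ is locally free on $Y$ (push-forward of structure sheaves along a blow-down of a rational surface is the structure sheaf, and one applies the projection formula again). So it remains to show the ``anti-invariant'' summand $H^0(Y', \pi^{\ast}L\otimes K_{Y'})$ vanishes.

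For that I would push down to $Y$: since $R^i\pi_{\ast}\mathcal{O}_{Y'}=0$ for $i>0$ and $\pi_{\ast}\mathcal{O}_{Y'}=\mathcal{O}_Y$, but $K_{Y'}=\pi^{\ast}K_Y+(\text{effective exceptional})$, one has $\pi_{\ast}(\pi^{\ast}L\otimes K_{Y'}) \subseteq L\otimes K_Y$ (in fact equality, or one just uses the inclusion $H^0(Y',\pi^{\ast}L\otimes K_{Y'})\hookrightarrow H^0(Y', \pi^{\ast}(L\otimes K_Y)) = H^0(Y, L\otimes K_Y)$ since exceptional divisors are effective). Then it suffices to check $H^0(Y, L\otimes K_Y)=0$ in each of the three cases. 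For $Y={\proj}^2$: $L\otimes K_Y = \mathcal{O}_{{\proj}^2}(1-3)=\mathcal{O}_{{\proj}^2}(-2)$, no sections. For $Y={\proj}^1\times{\proj}^1$: $L\otimes K_Y = \mathcal{O}(1,1)\otimes\mathcal{O}(-2,-2)=\mathcal{O}(-1,-1)$, no sections. For $Y={\F}_n$ with $n>0$: $L\otimes K_Y = L_{1,0}\otimes L_{-2,n-2}=L_{-1,n-2}$, and since $(L_{-1,n-2}.F)=-1<0$ while $F$ moves in a base-point-free pencil, any section would vanish on every fiber, forcing it to be zero — so $H^0=0$. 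This disposes of the anti-invariant part and completes the proof.

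The only mildly delicate point — the ``main obstacle,'' though it is routine — is the bookkeeping with the exceptional divisors of $\pi$: one must be sure that $K_{Y'}$ contributes \emph{only} non-negative exceptional correction to $\pi^{\ast}K_Y$, so that no extra sections sneak in when passing from $Y'$ back to $Y$. This is automatic because $\pi$ is a composition of blow-ups of smooth surfaces at (possibly infinitely near) points, under which the relative canonical divisor is effective; alternatively one can observe directly that $(-4)$-curves and $(-2)$-curves appearing in the right-resolution graphs are all $\pi$-exceptional with the stated self-intersections, and a standard discrepancy computation gives $K_{Y'}=\pi^{\ast}K_Y+E$ with $E$ effective. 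With that in hand the three numerical checks above finish the argument. I would also remark that the same computation shows the map is an isomorphism of the full projective spaces $|L|\cong|f^{\ast}L|$, and in particular $f^{\ast}$ is $\aut$-equivariantly an isomorphism when one restricts to automorphisms of $Y$ lifting to $X$, which is the form in which the lemma is applied later.
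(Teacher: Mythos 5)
Your proof is correct, but it follows a genuinely different route from the paper's. The paper works entirely on the $K3$ surface $X$: it observes that $f^{\ast}L$ is nef with $(f^{\ast}L)^2=2(L.L)>0$, so that $h^0(f^{\ast}L)=\chi(f^{\ast}L)=2+(L.L)$ by Riemann--Roch (using the standard vanishing $h^1=h^2=0$ for nef and big bundles on a $K3$), and then matches this against $h^0(L)=2+(L.L)$ in each of the three cases; injectivity of $f^{\ast}$ does the rest. You instead stay on the quotient side: the eigensheaf decomposition $g_{\ast}\mathcal{O}_X=\mathcal{O}_{Y'}\oplus\omega_{Y'}$ plus the projection formula reduce surjectivity of $f^{\ast}$ to the vanishing of the anti-invariant piece $H^0(Y',\pi^{\ast}L\otimes K_{Y'})$, which via $\pi_{\ast}\omega_{Y'}=\omega_Y$ becomes the elementary computation $H^0(Y,L\otimes K_Y)=0$ in each case. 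Your approach is more elementary in that it bypasses the $K3$-specific vanishing theorem, and it isolates exactly where new sections could come from (the $(-1)$-eigenspace); the paper's is shorter and generalizes immediately to any $L$ with $h^0(L)=2+(L.L)$ and $f^{\ast}L$ nef and big. One small imprecision: your parenthetical alternative ``$H^0(Y',\pi^{\ast}L\otimes K_{Y'})\hookrightarrow H^0(Y',\pi^{\ast}(L\otimes K_Y))$ since exceptional divisors are effective'' has the inclusion of line bundles pointing the wrong way ($K_{Y'}=\pi^{\ast}K_Y+E$ with $E$ \emph{effective} makes $\pi^{\ast}L\otimes K_{Y'}$ the \emph{larger} bundle); but this does not matter, since your primary justification $\pi_{\ast}(\pi^{\ast}L\otimes K_{Y'})=L\otimes K_Y$ (push-forward of the canonical sheaf under a birational morphism of smooth surfaces, or equivalently Hartogs across the exceptional points) is correct and suffices.
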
 

\begin{proof}
The bundle $f^{\ast}L$ is nef of degree $2(L.L)>0$ so that 
$h^0(f^{\ast}L)=\chi(f^{\ast}L)=2+(L.L)$. 
Hence the assertion follows from the coincidence $h^0(L)=2+(L.L)$. 
\end{proof}

By this lemma, the morphism $\phi_{f^{\ast}L}\colon X \to |f^{\ast}L|^{\vee}$ is 
identified with the composition $\phi_L\circ f \colon X \to Y \to |L|^{\vee}$. 
The morphism $\phi_L$ is an embedding when $Y={\proj}^2$ or ${\proj}^1\times{\proj}^1$. 
If $Y={\F}_n$ with $n>0$, $\phi_L$ contracts the $(-n)$-curve $\Sigma$. 
Therefore, for $Y\ne{\F}_1$, 
one may recover the morphism $f\colon X\to Y$ from the bundle $f^{\ast}L$  
by desingularizing the surface $\phi_{f^{\ast}L}(X)=\phi_L(Y)$. 
For $Y={\F}_1$, $\phi_{L}\colon{\F}_1\to{\proj}^2$ is the blow-down of the $(-1)$-curve $\Sigma$, 
and one may recover $f$ from $f^{\ast}L$ if one could identify the point $\phi_L(\Sigma)$ in ${\proj}^2$.

In the rest of this subsection 
we prepare some auxiliary results under the following assumption 
(which is necessary if one wants to obtain general members of ${\moduli}$).  

\begin{condition}\label{genericity assumption}
The singularities of $B$ are only of type $A_1, D_{2n}, E_7, E_8$.
\end{condition} 

For such a singularity $p$ of $B$, 
the irreducible curves on $Y'$ contracted to $p$ are only $(-4)$-curves and $(-1)$-curves transverse to $B'$.
The reduced preimages of those curves in $X$ are $\iota$-invariant $(-2)$-curves, 
and their dual graph is the Dynkin graph of same type as the singularity of $p$. 
We denote by $\Lambda_p \subset L_+(X, \iota)$ the generated root lattice. 
Let $B=\sum_{i=1}^{l}B_i$ be the irreducible decomposition of $B$, 
and let $F_i$ be the component of $X^{\iota}$ with $f(F_i)=B_i$. 

\begin{lemma}\label{L_+ and A-D-E}
The lattice $L_+(X, \iota)$ is generated by the sublattice $f^{\ast}NS_Y\oplus(\oplus_p \Lambda_p)$, 
where $p$ are the singularities of $B$, and the classes of $F_i$, $1\leq i\leq l$. 
\end{lemma}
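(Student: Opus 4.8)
The plan is to pass through the right resolution $(Y',B',\pi)$ and push everything down to $Y$. Write $g\colon X\to Y'$ for the double cover branched along $B'$, so that the right covering map is $f=\pi\circ g\colon X\to Y$. Since $(Y',B')$ is a right DPN pair, the description recalled in \S\ref{ssec: 2-ele K3} (following \cite{Ma}) applies: $L_+(X,\iota)$ is generated by $g^{\ast}NS_{Y'}$ together with the classes of the components of $X^{\iota}$, which correspond bijectively to those of $B'$. So it is enough to rewrite these two pieces of data in terms of $f^{\ast}NS_Y$, the $\Lambda_p$, and the $F_i$.

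First I would decompose $g^{\ast}NS_{Y'}$. Since $\pi$ is a composition of blow-ups whose centres all lie over the singular points of $B$, the group $NS_{Y'}$ is generated by $\pi^{\ast}NS_Y$ and the irreducible $\pi$-exceptional curves, so $g^{\ast}NS_{Y'}$ is generated by $f^{\ast}NS_Y=g^{\ast}\pi^{\ast}NS_Y$ and the classes $g^{\ast}C$, where $C$ runs over the $\pi$-exceptional curves. Under Condition \ref{genericity assumption}, and by the discussion of the dual graphs preceding the lemma, each such $C$ lies over a singular point $p$ of $B$ and is either a $(-1)$-curve meeting $B'$ transversally in two points, in which case $g^{\ast}C=g^{-1}(C)$ is a single reduced $\iota$-invariant $(-2)$-curve, or a $(-4)$-curve contained in $B'$, in which case $g^{\ast}C=2\widetilde{C}$ where $\widetilde{C}$ is the $(-2)$-curve over $C$. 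On the other hand the components of $B'$, hence of $X^{\iota}$, are the strict transforms $\widetilde{B}_i$ — whose preimages are the $F_i$ — together with the $(-4)$-curves over the points $p$, whose preimages are the curves $\widetilde{C}$. Combining the two pieces, $L_+(X,\iota)$ is generated by $f^{\ast}NS_Y$, by the $F_i$, and, for each $p$, by the classes $g^{\ast}C$ of the $(-1)$-curves over $p$ together with the classes $\widetilde{C}$ of the $(-2)$-curves over the $(-4)$-curves over $p$ (the coefficient $2$ in $g^{\ast}C=2\widetilde{C}$ being harmless, since $\widetilde{C}$ already occurs among the components of $X^{\iota}$). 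But these last classes, for a fixed $p$, form by definition a set of generators of the root lattice $\Lambda_p$. Hence $L_+(X,\iota)$ is generated by $f^{\ast}NS_Y+\sum_p\Lambda_p$ and the $F_i$.

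It remains to see that $f^{\ast}NS_Y+\sum_p\Lambda_p$ is the orthogonal direct sum $f^{\ast}NS_Y\oplus(\oplus_p\Lambda_p)$. Each $\Lambda_p$ is generated by curves contracted by $f$, so the projection formula gives $\Lambda_p\perp f^{\ast}NS_Y$; and for $p\ne p'$ the curves generating $\Lambda_p$ and $\Lambda_{p'}$ lie over distinct points of $Y$, hence are disjoint, so $\Lambda_p\perp\Lambda_{p'}$. Since each $\Lambda_p$ is a negative-definite root lattice while $f^{\ast}NS_Y$ contains a class of positive square, a class lying in $f^{\ast}NS_Y\cap(\oplus_p\Lambda_p)$ is isotropic and lies in the negative-definite lattice $\oplus_p\Lambda_p$, hence vanishes; the same argument shows that the $\Lambda_p$ are in direct sum with one another. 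I expect the one point demanding care to be the bookkeeping in the previous paragraph: one must make sure that, although $g^{\ast}NS_{Y'}$ only produces $2\widetilde{C}$ for a $(-4)$-curve $C$, the class $\widetilde{C}$ itself is recovered with unit coefficient from the components of $X^{\iota}$, so that the sublattice obtained over each $p$ is exactly $\Lambda_p$ and not a proper finite-index subgroup of it.
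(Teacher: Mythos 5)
Your proof is correct and follows essentially the same route as the paper's: pass to the right resolution $(Y',B')$, invoke the fact that $L_+(X,\iota)$ is generated by $(f')^{\ast}NS_{Y'}$ together with the components of $X^{\iota}$, and then observe that $(f')^{\ast}NS_{Y'}\subset f^{\ast}NS_Y\oplus(\oplus_p\Lambda_p)$ while the components of $X^{\iota}$ other than the $F_i$ already lie in $\oplus_p\Lambda_p$. You simply spell out the details (the behaviour of $g^{\ast}$ on $(-1)$- and $(-4)$-curves, the unit-coefficient recovery of $\widetilde{C}$, and the directness of the sum) that the paper leaves implicit.
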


\begin{proof}
Let $f'\colon X\to Y'$ be the quotient map by $\iota$. 
By the construction of $\Lambda_{p}$, 
the lattice $(f')^{\ast}NS_{Y'}$ is  contained in $f^{\ast}NS_Y\oplus(\oplus_p \Lambda_p)$. 
Also the components of $X^{\iota}$ other than $F_1,\cdots, F_l$ are contracted by $f$ to the triple points of $B$, 
so that their classes are contained in $\oplus_p \Lambda_p$. 
\end{proof}


We shall construct an ample divisor class on $X$ using the objects in Lemma \ref{L_+ and A-D-E}. 
Let $p$ be a triple point of $B$. 
Choose an identification of the dual graph of the exceptional curves over $p$ 
with an abstract graph presented in Figure \ref{dual graph}. 
Via the labeling given for the latter, 
we denote by $\{ E_{p,i} \}_i$ the $(-2)$-curves generating $\Lambda_p$.
Then we define a divisor $D_p$ on $X$ by 
$D_p=E_{p,1}+E_{p,2}+\sum_{i=4}^{2n}10^{i-4}E_{p,i}$ when $p$ is $D_{2n}$-type; 
$D_p=\sum_{i=3}^{5}10^{i-3}E_{p,i}+E_{p,6}$ when $p$ is $E_7$-type; 
and $D_p=\sum_{i=2}^{6}10^{i-2}E_{p,i}+E_{p,7}$ when $p$ is $E_8$-type. 
This divisor is independent of the choice of an identification of the graphs. 

\begin{lemma}\label{ample class}
For an arbitrary ample class $H\in NS_Y$ 
the divisor class 
\begin{equation}\label{eqn: ample class}
10^{30}f^{\ast}H + 10^{20}\sum_{i=1}^{l}F_i + \sum_pD_p, 
\end{equation}
where $p$ are the triple points of $B$, is an $\iota$-invariant ample class on $X$. 
\end{lemma}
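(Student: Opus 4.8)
The plan is to verify ampleness via the Nakai–Moishezon criterion, exploiting the fact that the divisor in \eqref{eqn: ample class} is engineered to have strictly positive intersection with every curve class appearing on $X$. First I would record that $X$ is a $K3$ surface, so $NS_X$ is generated (over $\Q$, hence it suffices to test positivity of intersections) by the following curves: the pullbacks $f^{\ast}(\text{curve on } Y)$, the components $F_i$ of $X^\iota$ with $f(F_i)=B_i$, and the $(-2)$-curves $E_{p,i}$ over the triple points $p$ of $B$; indeed, by Lemma \ref{L_+ and A-D-E} these span $L_+(X,\iota)$, and $L_-(X,\iota)$ contains no effective classes since it is negative definite on its $(0,2)+(2,0)$ complement — more precisely any effective curve on $X$ lies in $NS_X$ and is $\iota$-invariant up to the involution, so after symmetrizing its class lands in $L_+(X,\iota)$. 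So it is enough to check $(\text{\eqref{eqn: ample class}} . C)>0$ for $C$ ranging over an effective generating set of $L_+(X,\iota)$, together with $(\text{\eqref{eqn: ample class}}^2)>0$, and $\iota$-invariance is clear since $f^\ast H$, the $F_i$, and each $D_p$ are $\iota$-invariant.

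The core of the argument is a bookkeeping of intersection numbers, organized by the ``base-$10$'' weighting. I would split the test curves into three groups. (a) For $C=f^\ast C'$ with $C'$ a curve on $Y$: the dominant term $10^{30}(f^\ast H . f^\ast C') = 2\cdot 10^{30}(H.C')>0$ swamps the $F_i$-terms (each $(f^\ast C'.F_i)=(C'.B_i)\geq 0$ contributes at most $O(10^{20})\cdot(\text{bounded})$) and the $D_p$-terms, which are bounded since $f^\ast C'$ meets the exceptional $(-2)$-curves $E_{p,i}$ in a bounded number of points depending only on the A-D-E types occurring. (b) For $C=F_j$: here $(f^\ast H.F_j) = (H.B_j)>0$ already makes the leading term positive, and again it dominates; note $(F_j.F_i)$ and $(F_j.E_{p,i})$ are bounded. (c) For $C=E_{q,j}$ a $(-2)$-curve over a triple point $q$: now $(f^\ast H.E_{q,j})=0$ and $(F_i.E_{q,j})=0$ by Lemma \ref{L_+ and A-D-E} and the fact that the $F_i$ are the strict transforms of the $B_i$, so the entire intersection is $(D_q.E_{q,j}) + \sum_{p\neq q}(D_p.E_{q,j}) = (D_q.E_{q,j})$, the cross terms vanishing because exceptional curves over distinct points are disjoint. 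So everything reduces to the purely lattice-theoretic claim that $(D_q.E_{q,j})>0$ for each $j$ and each admissible type of $q$.

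That last claim — positivity of $(D_q . E_{q,j})$ for all $j$ — is the step I expect to be the main obstacle, though it is a finite check. The divisors $D_q$ were defined precisely so that in the Dynkin-diagram intersection matrix, the coefficient $10^{i}$ on $E_{q,i}$ beats the $-2$ self-intersection contribution and the $+1$ contributions from the two (or three, at the branch node of $D_{2n}$) neighbours, because a node $i$ has at most a bounded number of neighbours and the neighbouring coefficients are either $1$ or differ from $10^{i}$ by a factor of $10$. Concretely, for the linear chain one gets $(D_q.E_{q,i}) = -2\cdot 10^{i-c} + 10^{i-1-c} + 10^{i+1-c} = 10^{i-1-c}(10 - 2 + 1/10 + \cdots)>0$ at interior vertices, and the endpoint vertices (labelled $1,2$ for $D_{2n}$, $6$ for $E_7$, $7$ for $E_8$) have small coefficient and only one high-weight neighbour, which again gives a positive value; the trivalent vertex of $D_{2n}$ is handled the same way since it has the two weight-$1$ leaves plus one weighted neighbour. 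I would carry this out case by case for $D_{2n}$, $E_7$, $E_8$ (Condition \ref{genericity assumption} excludes the other types, and note $A_1$-points contribute no such $D_p$), each reducing to a one-line inequality among powers of $10$. Having checked (a)–(c), Nakai–Moishezon on the surface $X$ completes the proof.
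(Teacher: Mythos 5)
Your overall strategy --- Nakai--Moishezon with the hierarchy of weights $10^{30}\gg 10^{20}\gg 10^{i}$ doing the work --- is in the same spirit as the paper's proof, which observes that \eqref{eqn: ample class} is the pullback of a class $L$ on $Y'$ and applies Nakai to $L$ there; your cases (a) and (b) are essentially sound. But case (c), which you correctly identify as the crux, contains a genuine error. The vanishing $(F_i.E_{q,j})=0$ is false: by \S\ref{ssec: right resol} each $(-1)$-curve on $Y'$ over a triple point meets $B'$ transversely at two points, and whenever such a curve has fewer than two $(-4)$-curve neighbours in the graph of Figure \ref{dual graph}, at least one of those two points lies on a strict transform $\widetilde{B}_i$, so its preimage $E_{q,j}$ meets the corresponding $F_i$ (Lemma \ref{L_+ and A-D-E} asserts no orthogonality between the $F_i$ and $\Lambda_q$). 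Correspondingly, your reduction to ``$(D_q.E_{q,j})>0$ for all $j$'' is invalid, and that inequality actually fails: for a $D_{2n}$-point the two fork vertices give $(D_q.E_{q,1})=-2$, since their unique neighbour, vertex $3$, is omitted from $D_q$; for $E_8$, vertex $6$ gives $(D_q.E_{q,6})=-2\cdot 10^{4}+10^{3}<0$; for $E_7$, vertex $1$ gives $0$. In each of these cases positivity of $\bigl(\eqref{eqn: ample class}.E_{q,j}\bigr)$ is rescued precisely by the term $10^{20}\sum_i(F_i.E_{q,j})\geq 10^{20}$ that you set to zero; the two mistakes are not independent, and the ``finite check'' you describe would come out negative if carried out.

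There is a second omission of the same kind: you never test the $(-2)$-curves lying over the $A_1$-points of $B$ (you only note that nodes contribute no $D_p$). For such a curve $E$ the first and third terms of \eqref{eqn: ample class} contribute $0$, so under your vanishing claim the total would be $0$ and ampleness would fail; in fact $(\sum_iF_i.E)=2$, which is exactly what makes the class positive there. The correct dichotomy, which the coefficients of $D_p$ are engineered to realize, is: either $E_{q,j}$ is disjoint from $\sum_iF_i$ (the $(-4)$-curve preimages, and those $(-1)$-curve preimages whose two intersections with $B'$ both lie on exceptional $(-4)$-curves), in which case one checks $(D_q.E_{q,j})>0$ vertex by vertex, or $E_{q,j}$ meets $\sum_iF_i$, in which case $10^{20}$ dominates the bounded, possibly negative, contribution of $D_q$. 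Two smaller points: testing positivity only on a generating set of $L_+(X,\iota)$ is not a valid form of Nakai (your symmetrization $C\mapsto C+\iota C$ is the right reduction to $\iota$-invariant effective classes, and is in effect what the paper does by descending to $Y'$), and $\bigl(\eqref{eqn: ample class}^2\bigr)>0$ is asserted but never verified, though it is immediate from the dominant term.
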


\begin{proof}
The class $(\ref{eqn: ample class})$ is the pullback of a divisor class $L$ on $Y'$. 
Note the bounds $\sum_{p}{\rm rk}(\Lambda_p)\leq20$ and $l\leq10$ and  
apply the Nakai criterion to $L$. 
\end{proof}

By Lemmas \ref{L_+ and A-D-E} and \ref{ample class} 
we have a basis and a polarization of the lattice $L_+(X, \iota)$ defined explicitly in terms of $(Y, B)$.


\subsection{Degree of period map}\label{ssec: recipe}

Let $Y$ be one of the following rational surfaces: 
\begin{equation*}
{\proj}^2, \qquad {\proj}^1\times{\proj}^1, \qquad {\F}_n \: \: \: (1\leq n\leq4). 
\end{equation*}
Suppose that we are given an irreducible, ${\aut}(Y)$-invariant locus  
\begin{equation*}
U\subset |\!-\!2K_Y|
\end{equation*} 
such that 
$({\rm i})$ every member $B_u\in U$ satisfies Condition \ref{genericity assumption},  
$({\rm ii})$ the singularities of $B_u$ of each type form an etale cover of $U$, and  
$({\rm iii})$ the number of components of $B_u$ is constant. 
Then the 2-elementary $K3$ surfaces associated to the DPN pairs $(Y, B_u)$ have constant main invariant $(r, a, \delta)$, 
and we obtain a period map $p\colon U\to {\moduli}$. 
Since $p$ is ${\aut}(Y)$-invariant, it descends to a rational map 
\begin{equation*}
\mathcal{P}:U/{\aut}(Y)\dashrightarrow {\moduli}. 
\end{equation*}
Here $U/{\aut}(Y)$ stands for a rational quotient (see \S \ref{sec:invariant theory}). 
In this section we try to explain how to calculate the degree of $\mathcal{P}$. 
Such calculations have been done in some cases (e.g., \cite{M-S}, \cite{Ko}, \cite{A-K}), 
and the method to be explained is more or less a systemization of them. 
Roughly speaking, ${\rm deg}(\mathcal{P})$ expresses for a general $(X, \iota)\in{\moduli}$ 
how many contractions $X/\iota \to Y$ exist by which the branch of $X \to X/\iota$ is mapped to a member of $U$. 

Below we exhibit a recipe of calculation supplemented by few typical examples. 
The recipe will be applied in the rest of the article to about fifty main invariants $(r, a, \delta)$. 
Since the materials are diverse, it seems hard to formulate those calculations into some general proposition. 
Instead, we shall give sufficient instruction for each $(r, a, \delta)$ 
and then leave the detail to the reader by referring to the recipe below, 
which we believe is not difficult to master.

We use a certain cover of ${\moduli}$. 
Let $L_-$ be the 2-elementary lattice of signature $(2, 20-r)$ used in the definition of ${\moduli}$, and let 
$\widetilde{{\Or}}(L_-)$ be the group of isometries of $L_-$ which act trivially on the discriminant group $D_{L_-}$. 
Let ${\cover}$ be the arithmetic quotient 
$\mathcal{F}(\widetilde{{\Or}}(L_-)) = \widetilde{{\Or}}(L_-)\backslash\Omega_{L_-}$,  
which is irreducible for $\widetilde{{\Or}}(L_-)$ has an element exchanging the two components of $\Omega_{L_-}$. 
The natural projection 
\begin{equation*}
\pi:{\cover}\dashrightarrow{\moduli}
\end{equation*} 
is a Galois covering. 
The Galois group is the orthogonal group ${\Or}(D_{L_-})$ of the discriminant form,
for the homomorphism ${\Or}(L_-)\to{\Or}(D_{L_-})$ is surjective (\cite{Ni1}) 
and $-1\in{\Or}(L_-)$ acts trivially on $D_{L_-}$. 
In particular, we have 
\begin{equation*}
{\rm deg}(\pi) = |{\Or}(D_{L_-})|. 
\end{equation*}
Since $D_{L_-}$ is 2-elementary, 
one may calculate $|{\Or}(D_{L_-})|$ by using \cite{M-S} Corollary 2.4 and Lemma 2.5. 
We shall use standard notation for orthogonal/symplectic groups in characteristic $2$: 
${\Or}^+(2n, 2)$, ${\Or}^-(2n, 2)$, and ${\rm Sp}(2n, 2)$ (see \cite{M-S}, \cite{D-O}). 

The cover ${\cover}$ is birationally a moduli of 2-elementary $K3$ surfaces with lattice-marking. 
We fix a primitive embedding $L_-\subset\Lambda_{K3}$ where $\Lambda_{K3}=U^3\oplus E_8^2$,  
an even hyperbolic 2-elementary lattice $L_+$ of main invariant $(r, a, \delta)$, 
and an isometry $L_+\simeq(L_-)^{\perp}\cap\Lambda_{K3}$. 
Suppose that we have a 2-elementary $K3$ surface $(X, \iota)\in{\moduli}$ 
and a lattice isometry $j\colon L_+\to L_+(X, \iota)$. 
By Nikulin's theory \cite{Ni1}, $j$ can be extended to an isometry $\Phi\colon\Lambda_{K3}\to H^2(X, {\Z})$. 
Then $\Phi^{-1}(H^{2,0}(X))$ belongs to $\Omega_{L_-}$, 
and we define the period of $((X, \iota), j)$ by $[\Phi^{-1}(H^{2,0}(X))]\in{\cover}$. 
Since the restriction $\Phi|_{L_+}$ is fixed, this definition does not depend on the choice of $\Phi$.  
Two such objects $((X, \iota), j)$ and $((X', \iota'), j')$ have the same period in ${\cover}$ if and only if 
there exists a Hodge isometry $\Psi\colon H^2(X, {\Z})\to H^2(X', {\Z})$ with $\Psi\circ j=j'$. 
The open set of ${\cover}$ over ${\moduli}$ parametrizes such periods of lattice-marked 2-elementary $K3$ surfaces. 

In order to calculate ${\deg}(\mathcal{P})$, 
we construct a generically injective lift of $\mathcal{P}$, 
\begin{equation*}
{\lift}: \widetilde{U}/G \dashrightarrow {\cover}, 
\end{equation*}
where $\widetilde{U}$ is a certain cover of $U$ and $G$ is the identity component of ${\aut}(Y)$. 
Then we compare the degree of the projection $\widetilde{U}/G\to U/{\aut}(Y)$ with $|{\Or}(D_{L_-})|$. 
Here is a more precise procedure. 

\begin{enumerate}
  \item  We define a cover $\widetilde{U}\to U$ in which $\widetilde{U}$ parametrizes pairs $(B_u, \mu)$ 
             where $B_u\in U$ and $\mu$ is  a ``reasonable" labeling of 
             the singularities, the branches at the $D_{2n}$-singularities, and the components of $B_u$. 
  \item  Let $(X, \iota)=p(B_u)$ be the 2-elementary $K3$ surface associated to $(Y, B_u)$. 
            By Lemma \ref{L_+ and A-D-E}, the labeling $\mu$ and a natural basis of $NS_Y$ 
            induce a lattice marking $j$ of $L_+(X, \iota)$. 
            Actually, Lemma \ref{L_+ and A-D-E} implies an appropriate definition of the reference lattice $L_+$, 
            and then $j$ should be obtained naturally. 
            Considering the period of $((X, \iota), j)$ as defined above, 
            we obtain a lift $\tilde{p}\colon\widetilde{U}\to {\cover}$ of $p$. 
            By Borel's extension theorem (\cite{Bo}), $\tilde{p}$ is a morphism of varieties.  
  \item  One observes that the group $G$ acts on $\widetilde{U}$ and that $\tilde{p}$ is $G$-invariant. 
            Hence $\tilde{p}$ induces a rational map ${\lift}:\widetilde{U}/G\dashrightarrow{\cover}$, 
            which is a lift of $\mathcal{P}$. 
  \item  We prove that ${\lift}$ is generically injective. 
            For that it suffices to show that the $\tilde{p}$-fibers are $G$-orbits. 
            If $\tilde{p}(B_u, \mu)=\tilde{p}(B_{u'}, \mu')$,  
            we have a Hodge isometry $\Phi\colon H^2(X', {\Z})\to H^2(X, {\Z})$ with $\Phi\circ j'=j$ 
            for the associated $((X, \iota), j)$ and $((X', \iota'), j')$. 
            By Lemma \ref{ample class}, $\Phi$ turns out to preserve the ample cones. 
            Then we obtain an isomorphism $\varphi\colon X\to X'$ with $\varphi^{\ast}=\Phi$ by the Torelli theorem. 
            Let $f\colon X\to Y$, $f'\colon X'\to Y$ be the right covering maps
            and let $L\in {\rm Pic}(Y)$ be the bundle as in Lemma \ref{covering map & LB}. 
            Since $\varphi^{\ast}(f'^{\ast}L)=\Phi(f'^{\ast}L)=f^{\ast}L$, by Lemma \ref{covering map & LB} 
            we obtain an automorphism $\psi$ of $Y$ with $\psi\circ f=f'\circ \varphi$. 
            This will imply that $\psi(B_u, \mu)=(B_{u'}, \mu')$. 
            Since $\psi$ acts trivially on $NS_Y$, we have $\psi\in G$. 
  \item  Now assume that ${\dim}(U/{\aut}(Y))=20-r$. 
            Since ${\cover}$ is irreducible, then ${\lift}$ is birational. 
            Therefore ${\rm deg}(\mathcal{P})$ is equal to 
            $|{\Or}(D_{L_-})|$ divided by the degree of $\widetilde{U}/G\to U/{\aut}(Y)$. 
            The latter may be calculated geometrically.         
\end{enumerate}

In the above recipe the construction of $\widetilde{U}$ and ${\lift}$ is left rather ambiguous. 
It could be formulated generally using monodromies for the universal curve over $U$. 
But in order to give an effective account, we find it better to describe it by typical examples. 
(When $Y={\proj}^2, {\proj}^1\times{\proj}^1$, the idea can also be found in \cite{Ma}.) 

\begin{example}\label{ex:1}
We consider curves on $Q={\proj}^1\times{\proj}^1$. 
Let $U\subset|\mathcal{O}_Q(3, 3)|\times|\mathcal{O}_Q(1, 1)|$ be the open set of pairs $(C, H)$ 
such that $C$ and $H$ are smooth and transverse to each other. 
The space $U$ parametrizes the nodal $-2K_Q$-curves $C+H$, 
and we obtain a period map $\mathcal{P}\colon U/{\aut}(Q)\dashrightarrow\mathcal{M}_{8,6,1}$. 
In this case, the cover $\widetilde{U}$ should be 
\begin{equation*}
\widetilde{U} = \{  (C, H, p_1, \cdots, p_6) \in U\times Q^6, \: \{p_i\}_{i=1}^{6}=C\cap H \}, 
\end{equation*}
which parametrizes the curves $C+H$ endowed with a labeling of its six nodes $C\cap H$. 
The projection $\widetilde{U}\to U$ is an $\frak{S}_6$-covering. 

We shall prepare the reference lattice $L_+$. 
Let $\{ e_1, \cdots, e_6 \}$ be a natural orthogonal basis of the lattice $A_1^6$ 
and let $\{ u, v\}$ be a natural basis of the lattice $U(2)$ with $(u, u)=(v, v)=0$, $(u, v)=2$.  
We define the vectors $f_1, f_2\in (U(2)\oplus A_1^6)^{\vee}$ by 
$2f_1=3(u+v)-\sum_{i=1}^{6}e_i$ and $2f_2=u+v-\sum_{i=1}^{6}e_i$. 
Then the overlattice 
$L_+=\langle U(2)\oplus A_1^6, f_1, f_2 \rangle$ 
is even and 2-elementary of main invariant $(8, 6, 1)$. 

For a $(C, H, \cdots, p_6)\in \widetilde{U}$,  
we let $(X, \iota)=\mathcal{P}(C, H)$ and $f\colon X\to Q$ be the right covering map. 
The fixed curve $X^\iota$ is decomposed as $X^{\iota}=F_1+F_2$ such that $f(F_1)=C$ and $f(F_2)=H$. 
Then we define an embedding $j\colon L_+\to L_+(X, \iota)$ of lattices by 
$u\mapsto [f^{\ast}\mathcal{O}_Q(1,0)]$, $v\mapsto [f^{\ast}\mathcal{O}_Q(0,1)]$, 
$e_i\mapsto [f^{-1}(p_i)]$, and $f_i\mapsto [F_i]$. 
By Lemma \ref{L_+ and A-D-E} we have $j(L_+)=L_+(X, \iota)$. 
This gives a lattice-marked 2-elementary $K3$ surface $((X, \iota), j)$, 
and we obtain a period map $\tilde{p}\colon\widetilde{U}\to{\cove}_{8,6,1}$. 

The morphism $\tilde{p}$ is \textit{not} ${\aut}(Q)$-invariant 
for ${\aut}(Q)$ may exchange $\mathcal{O}_Q(1,0)$ and $\mathcal{O}_Q(0,1)$. 
Rather $\tilde{p}$ is invariant under the identity component $G=({\PGL}_2)^2$ of ${\aut}(Q)$. 
We prove that the $\tilde{p}$-fibers are $G$-orbits. 
If two points $(C, H,\cdots, p_6)$ and $(C', H',\cdots, p_6')$ of $\widetilde{U}$ have the same $\tilde{p}$-period, 
we have a Hodge isometry $\Phi\colon H^2(X', {\Z})\to H^2(X, {\Z})$ with $\Phi\circ j'=j$ 
for the associated $((X, \iota), j)$ and $((X', \iota'), j')$. 
In particular, $\Phi$ maps $[(f')^{\ast}\mathcal{O}_Q(a,b)], [(f')^{-1}(p_i)], [F_j']$  
respectively to $[f^{\ast}\mathcal{O}_Q(a,b)], [f^{-1}(p_i)], [F_j]$. 
By Lemma \ref{ample class} $\Phi$ preserves the ample cones. 
Hence by the Torelli theorem we obtain an isomorphism $\varphi\colon X\to X'$ with $\varphi^{\ast}=\Phi$. 
Then we have an automorphism $\psi$ of $Q$ with $\psi\circ f=f'\circ\varphi$ by Lemma \ref{covering map & LB}. 
Considering the branch loci of $f$ and $f'$, we have $\psi(C+H)=C'+H'$. 
Also we have $\psi(p_i)=p_i'$ because $\varphi(f^{-1}(p_i))=(f')^{-1}(p_i')$.  
Since $\psi$ leaves the two rulings of $Q$ invariant, we have $\psi\in G$. 
This concludes that the $\tilde{p}$-fibers are $G$-orbits. 
In view of the equality ${\dim}(\widetilde{U}/G)=12$,  
$\tilde{p}$ induces a birational lift 
${\lift}\colon\widetilde{U}/G\dashrightarrow{\cove}_{8,6,1}$ of $\mathcal{P}$. 
 
Since $L_+\simeq U\oplus A_1^6$, we have 
$|{\Or}(D_{L_+})|=2\cdot|{\rm Sp}(4, 2)|=2\cdot6!$ by \cite{M-S}. 
On the other hand, the projection $\widetilde{U}/G\to U/{\aut}(Q)$ has degree 
$[{\aut}(Q)\colon G]\cdot|\frak{S}_6|=2\cdot6!$ because ${\aut}(Q)$ acts on $U$ almost freely. 
Therefore $\mathcal{P}$ is birational. 
\end{example}

\begin{example}\label{ex:2}
We consider curves on ${\F}_1$. 
Keeping the notation of \S \ref{Sec:Hirze}, 
we let $U\subset |L_{3,2}|\times|L_{0,1}|$ be the locus of pairs $(C, F)$ such that 
$C$ is smooth, transverse to $F$ and $\Sigma$ respectively, and passes through the point $p=F\cap\Sigma$. 
The locus $U$ parametrizes the $-2K_{{\F}_1}$-curves $C+F+\Sigma$, 
whose singularities are the $D_4$-point $p$ and the three nodes $C\cap(F+\Sigma)\backslash p$. 
Hence we obtain a period map $\mathcal{P}\colon U/{\aut}({\F}_1)\dashrightarrow \mathcal{M}_{9,3,1}$. 
In this case, $\widetilde{U}$ should be the double cover  
\begin{equation*}
\widetilde{U} = \{ \: (C, F, p_1, p_2)\in U\times ({\F}_1)^2, \: \: \{p_1, p_2\} = C\cap F\backslash\Sigma \: \} . 
\end{equation*}
In $\widetilde{U}$ only the two nodes $C\cap F\backslash p$ are labelled, 
and the rest two singularities are left unmarked. 
However, the node $C\cap\Sigma\backslash p$ is distinguished from $C\cap F\backslash p$  
by the irreducible decomposition of $C+F+\Sigma$, 
and the $D_4$-point $p$ is evidently distinguished from the nodes. 
Thus $\widetilde{U}$ actually parametrizes the curves $C+F+\Sigma$ endowed with 
a complete and reasonable labeling of the singularities. 
Also the components of $C+F+\Sigma$ are distinguished by their classes in $NS_{{\F}_1}$, 
and this distinguishes the three branches of $C+F+\Sigma$ at $p$. 

Let us prepare the lattice $L_+$. 
Let $\{ h, e\}$ and $\{ e_1, e_2, e_3\}$ be natural basis of 
the lattices $\langle 2\rangle\oplus A_1$ and $A_1^3$ respectively. 
We denote the root basis of the $D_4$-lattice by $\{ f_4, e_5, e_6, e_7\}$ where 
$(f_4, e_i)=1$ and $(e_i, e_j)=-2\delta_{ij}$. 
We put $e_4=2f_4+\sum_{i=5}^{7}e_i$. 
Then we define the vectors $f_i\in(\langle 2\rangle\oplus A_1^4\oplus D_4)^{\vee}$ by 
$2f_1=3h+2(h-e)-\sum_{i=1}^{5}e_i$, 
$2f_2=h-e-(e_1+e_2+e_4+e_6)$, and 
$2f_3=e-(e_3+e_4+e_7)$. 
The overlattice 
$L_+ = \langle \langle 2\rangle\oplus A_1^4\oplus D_4, \{ f_i\}_{i=1}^3 \rangle$ 
is even and 2-elementary of main invariant $(9, 3, 1)$. 

For $(C, F, p_1, p_2)\in \widetilde{U}$, 
let $(X, \iota)=\mathcal{P}(C, F)$ and $f\colon X\to{\F}_1$ be the right covering map. 
We denote $p=F\cap\Sigma$ as above. 
On $X$ we define line bundles and $(-2)$-curves by 
$H=f^{\ast}L_{1,0}$, $E=f^{\ast}L_{1,-1}$, 
$E_i=f^{-1}(p_i)$ for $i\leq2$, and $E_3=f^{-1}(C\cap\Sigma\backslash p)$. 
By \S \ref{ssec: right resol}, 
the four $(-2)$-curves on $X$ contracted by $f$ to $p$ form a Dynkin graph of type $D_4$. 
We denote by $E_5, E_6, E_7$ those over the infinitely near points of $p$ given by $C, F, \Sigma$ respectively. 
The remaining one, denoted by $F_4$, is a component of $X^{\iota}$. 
We have an embedding $i\colon \langle 2\rangle\oplus A_1^4\oplus D_4 \to L_+(X, \iota)$  
by $h\mapsto[H], e\to[E], e_i\mapsto[E_i]$, and $f_4\mapsto [F_4]$. 
The fixed curve $X^{\iota}$ is decomposed as $X^{\iota}=\sum_{j=1}^{4}F_j$ such that 
$f(F_1)=C$, $f(F_2)=F$, and $f(F_3)=\Sigma$. 
Then $i$ extends to an isometry $j\colon L_+\to L_+(X, \iota)$ by sending $f_j\mapsto [F_j]$. 
Thus we associate a lattice-marked 2-elementary $K3$ surface $((X, \iota), j)$, 
which defines a period map $\tilde{p}\colon\widetilde{U}\to{\cove}_{9,3,1}$. 
 
Since ${\aut}({\F}_1)$ acts trivially on $NS_{{\F}_1}$, 
the morphism $\tilde{p}$ is ${\aut}({\F}_1)$-invariant. 
We show that the $\tilde{p}$-fibers are ${\aut}({\F}_1)$-orbits. 
Indeed, if $\tilde{p}(C, F, p_1, p_2)=\tilde{p}(C',F', p_1', p_2')$, 
we will obtain an isomorphism $\varphi\colon X\to X'$ with $\varphi^{\ast}\circ j'=j$ 
for the associated $((X, \iota), j)$ and $((X', \iota'), j')$, 
as in the previous example.  
Let $f\colon X\to{\F}_1$ and $f'\colon X'\to{\F}_1$ be the respective right covering maps. 
We fix a contraction $\pi\colon{\F}_1\to{\proj}^2$ of $\Sigma$. 
Since $\varphi^{\ast}(f')^{\ast}L_{1,0}\simeq f^{\ast}L_{1,0}$, 
by Lemma \ref{covering map & LB} we obtain an automorphism $\psi$ of ${\proj}^2$ 
such that $\psi\circ\pi\circ f = \pi\circ f'\circ\varphi$. 
The point is that $\psi$ fixes $\pi(\Sigma)$, 
which is the unique $D_6$-singularity of the branch curves of both $\pi\circ f$ and $\pi\circ f'$. 
Therefore $\psi$ lifts to an automorphism $\bar{\psi}$ of ${\F}_1$ 
with $\bar{\psi}\circ f = f'\circ\varphi$. 
The rest of the proof is similar to the previous example. 
Since ${\dim}(U/{\aut}({\F}_1)) = 11$,  
$\tilde{p}$ descends to a birational lift 
${\lift}\colon\widetilde{U}/{\aut}({\F}_1) \dashrightarrow {\cove}_{9,3,1}$ of $\mathcal{P}$. 

The projection $\widetilde{U}/{\aut}({\F}_1) \to U/{\aut}({\F}_1)$ is a double covering. 
On the other hand, since $L_+\simeq U(2)\oplus E_7$,  we have ${\Or}(D_{L_+})\simeq\frak{S}_2$. 
Therefore $\mathcal{P}$ is birational.  
\end{example}

\begin{example}\label{ex:3}
We consider curves on ${\F}_2$. 
We keep the notation in \S \ref{Sec:Hirze}. 
Let $U\subset|L_{3,0}|\times|L_{0,2}|$ be the open locus of pairs $(C, D)$ such that 
$C$ and $D$ are smooth and transverse to each other. 
We consider the $-2K_{{\F}_2}$-curves $C+D+\Sigma$, 
which have the eight nodes $(C+\Sigma)\cap D$ and no other singularity. 
The associated 2-elementary $K3$ surfaces $(X, \iota)$ have parity $\delta=0$ 
because the strict transforms of $C-D+\Sigma$ in $X/\iota$ belong to $4NS_{X/\iota}$. 
Thus we obtain a period map $\mathcal{P}\colon U/{\aut}({\F}_2)\dashrightarrow\mathcal{M}_{10,4,0}$. 
In this case, we should label the six nodes $C\cap D$ taking into account the irreducible decomposition of $D$. 
Let $\widetilde{U}\subset U\times({\F}_2)^3\times({\F}_2)^3$ be the locus of those 
$(C, D, p_{1+}, p_{2+},\cdots, p_{3-})$ such that 
$C\cap D=\{ p_{1+}, p_{2+},\cdots, p_{3-} \}$ and that 
$\{ p_{1+}, p_{2+}, p_{3+}\}$ belong to a same component of $D$. 
Accordingly, we may distinguish the two components of $D$ as $D=D_++D_-$ such that 
$D_{\pm}$ is the component through $p_{i\pm}$. 
The rest two nodes of $C+D+\Sigma$, $\Sigma\cap D$, are automatically labelled as $p_{4\pm}=\Sigma\cap D_{\pm}$. 
In this way the nodes and components of $C+D+\Sigma$ are labelled compatibly. 
The natural projection $\widetilde{U}\to U$ is an $\frak{S}_2\ltimes(\frak{S}_3)^2$-covering. 

We prepare the reference lattice $L_+$ as follows. 
Let $\{ u, v\}$ and $\{ e_{i+}\}_{i=1}^{4}\cup\{ e_{i-}\}_{i=1}^{4}$ be natural basis of 
the lattices $U(2)$ and $A_1^8=A_1^4\oplus A_1^4$ respectively. 
We define vectors $f_1, f_2, f_+, f_- \in (U(2)\oplus A_1^8)^{\vee}$ by 
$2f_1=3(u+v)-\sum_{i=1}^{3}(e_{i+}+e_{i-})$, 
$2f_2=u-v-e_{4+}-e_{4-}$, and 
$2f_{\pm}=v-\sum_{i=1}^{4}e_{i\pm}$. 
Then the overlattice $L_+=\langle U(2)\oplus A_1^8, f_1, f_2, f_+, f_-\rangle$ is even and 
2-elementary of main invariant $(10, 4, 0)$. 

For $(C, D,\cdots, p_{3-})\in\widetilde{U}$, 
let $(X, \iota)=\mathcal{P}(C, D)$ and $f\colon X\to{\F}_2$ be the natural projection. 
The fixed curve $X^{\iota}$ is decomposed as $X^{\iota}=F_1+F_2+F_++F_-$ 
such that $f(F_1)=C$, $f(F_2)=\Sigma$, and $f(F_{\pm})=D_{\pm}$. 
The labelling $(p_{1+},\cdots, p_{3-})$ induces a lattice-marking $j\colon L_+\to L_+(X, \iota)$ by 
$a(u+v)+bv\mapsto[f^{\ast}L_{a,b}]$, 
$e_{i\pm}\mapsto[f^{-1}(p_{i\pm})]$ for $1\leq i\leq4$, 
$f_j\mapsto[F_j]$ for $j=1, 2$, and $f_{\pm}\mapsto[F_{\pm}]$. 
Thus we obtain a lift ${\lift}\colon\widetilde{U}/{\aut}({\F}_2)\to{\cove}_{10,4,0}$ of $\mathcal{P}$. 
One may recover the morphism $f$ from the class $j(u+v)$ by Lemma \ref{covering map & LB} 
(desingularize the quadratic cone), 
the points $p_{i\pm}$ from $j(e_{i\pm})$, and the curve $C+D+\Sigma$ from $f$ as the branch locus. 
As before, these imply that ${\lift}$ is generically injective. 
Since $\widetilde{U}/{\aut}({\F}_2)$ has the same dimension $10$ as ${\cove}_{10,4,0}$, 
${\lift}$ is actually birational. 

Since $L_+\simeq U\oplus D_4^2$, we have $|{\Or}(D_{L_+})|=|{\Or}^+(4, 2)|=72$ by \cite{M-S}. 
On the other hand, the projection $\widetilde{U}/{\aut}({\F}_2)\to U/{\aut}({\F}_2)$ has degree $2\cdot(3!)^2$. 
This concludes that $\mathcal{P}$ has degree $1$. 
\end{example}

\begin{example}\label{ex:4}
Let $U\subset|{\Oplane}(4)|\times|{\Oplane}(1)|^2$ be the locus of triplets $(C, L_1, L_2)$ such that 
$C$ is smooth, $L_1$ is tangent to $C$ with multiplicity $4$, 
and $L_2$ is transverse to $C$ and passes through $q=C\cap L_1$. 
The sextics $C+L_1+L_2$ have a $D_{10}$-singularity at $q$, three nodes at $C\cap L_2\backslash q$, 
and no other singularity. 
The associated 2-elementary $K3$ surfaces have invariant $(g, k)=(3, 6)$, 
and thus we obtain a period map $\mathcal{P}\colon U/{\PGL}_3\dashrightarrow\mathcal{M}_{14,2,0}$. 
In this case we label the three nodes $C\cap L_2\backslash q$ 
by considering the locus $\widetilde{U}\subset U\times({\proj}^2)^3$ of those $(C, L_1, L_2, p_1, p_2, p_3)$ 
such that $C\cap L_2\backslash q=\{ p_1, p_2, p_3\}$. 
The rest singularity $q$ of $C+L_1+L_2$ is apparently distinguished from those nodes. 
Moreover, the two lines $L_1, L_2$ are distinguished by their intersection with $C$, 
and the three branches of $C+L_1+L_2$ at $q$ are distinguished by the irreducible decomposition of $C+L_1+L_2$. 
Thus, by $\widetilde{U}$ the relevant datum for $C+L_1+L_2$ are completely labelled. 

We prepare the reference lattice $L_+$. 
Let $h$ and $\{ e_1, e_2, e_3\}$ be natural basis of the lattices $\langle2\rangle$ and $A_1^3$ respectively. 
Let $\{d_1, d_2, f_3, d_4,\cdots, f_9, d_{10}\}$ be the root basis of the $D_{10}$-lattice  
whose numbering corresponds to the one given in the graph in Figure \ref{dual graph}.  
We define $d_{2i+1}$, $1\leq i\leq4$, inductively by 
 $d_{2i+1}=2f_{2i+1}+d_{2i+2}+d_{2i}+d_{2i-1}$. 
Then $\{ d_i\}_{i=1}^{10}$ form an orthogonal basis of a sublattice of $D_{10}$ isometric to $A_1^{10}$. 
Now we define vectors $f_0, f_1, f_2\in (\langle2\rangle\oplus A_1^3\oplus D_{10})^{\vee}$ by 
$2f_0=4h-\sum_{i=0}^{4}d_{2i+1}-\sum_{i=1}^{3}e_i$, 
$2f_1=h-d_2-\sum_{i=1}^{4}d_{2i+1}$, and 
$2f_2=h-d_9-d_{10}-\sum_{i=1}^{3}e_i$. 
The overlattice 
$L_+=\langle \langle2\rangle\oplus A_1^3\oplus D_{10}, f_0, f_1, f_2\rangle$ 
is even and 2-elementary of main invariant $(14, 2, 0)$. 

For $(C, L_1,\cdots, p_3)\in\widetilde{U}$, let $(X, \iota)=\mathcal{P}(C+L_1+L_2)$ and 
$f\colon X\to{\proj}^2$ be the natural projection. 
Let $\Lambda_q$ be the sublattice of $L_+(X, \iota)$ 
generated by the $(-2)$-curves contracted by $f$ to the $D_{10}$-point $q$. 
Assigning $e_1$ and $e_2$ respectively to the branches of $C$ and $L_1$ at $q$, 
we have a canonical isometry $D_{10}\to\Lambda_q$ which maps the root basis to the classes of $(-2)$-curves. 
Then the labelling $(p_1, p_2, p_3)$ determines an embedding 
$i\colon\langle2\rangle\oplus A_1^3\oplus D_{10}\hookrightarrow L_+(X, \iota)$ 
by $h\mapsto[f^{\ast}{\Oplane}(1)]$ and $e_i\mapsto[f^{-1}(p_i)]$. 
The fixed curve $X^{\iota}$ is decomposed as $X^{\iota}=\sum_{i=0}^{2}F_i+\sum_{j=1}^{4}F_{2j+1}$ 
such that $f(F_0)=C$, $f(F_i)=L_i$ for $i=1, 2$, and $j(f_{2j+1})=[F_{2j+1}]$ for $1\leq j\leq4$. 
The assignment $f_i\mapsto[F_i]$ extends the embedding $i$ to an isometry $j\colon L_+\to L_+(X, \iota)$. 
Considering the period of $((X, \iota), j)$, we obtain a lift 
${\lift}\colon \widetilde{U}/{\PGL}_3\dashrightarrow {\cove}_{14,2,0}$ of $\mathcal{P}$. 
In this construction, one may recover the morphism $f$ from the class $j(h)$ by Lemma \ref{covering map & LB}, 
the points $p_i$ from $j(e_i)$, and the sextic $C+L_1+L_2$ as the branch curve of $f$. 
Checking that ${\dim}(\widetilde{U}/{\PGL}_3)=6$, 
we deduce that ${\lift}$ is birational as before. 
   
Since $L_+\simeq U\oplus E_8\oplus D_4$, we have ${\Or}(D_{L_+})\simeq \frak{S}_3$.  
Hence the two projections ${\cove}_{14,2,0}\to\mathcal{M}_{14,2,0}$ and 
$\widetilde{U}/{\PGL}_3\to U/{\PGL}_3$ are both $\frak{S}_3$-coverings,  
so that $\mathcal{P}$ is birational. 
\end{example}



\section{The case $k=0$}\label{sec:k=0}

In this section we prove that the spaces $\mathcal{M}_{r,r,1}$ with $2\leq r\leq9$ are rational. 
The quotient $Y=X/\iota$ of a general $(X, \iota)\in\mathcal{M}_{r,r,1}$ is 
a del Pezzo surface of degree $10-r$.  
Let $\mathcal{M}_{DP}(d)$ be the moduli of del Pezzo surfaces of degree $d$ 
(we exclude ${\proj}^1\times{\proj}^1$). 
By the correspondence between 2-elementary $K3$ surfaces and right DPN pairs, 
we have a fibration $\mathcal{M}_{r,r,1}\dashrightarrow\mathcal{M}_{DP}(10-r)$ 
whose fiber over $Y$ is birational to $|\!-\!2K_Y|/{\aut}(Y)$. 
When $r\leq5$, $Y$ has no moduli so that $\mathcal{M}_{r,r,1}\sim|\!-\!2K_Y|/{\aut}(Y)$. 
The rationality of $\mathcal{M}_{r,r,1}$ is reduced to the ${\aut}(Y)$-representation $H^0(-2K_Y)$.  
By contrast, when $r\geq6$, $\mathcal{M}_{DP}(10-r)$ has positive dimension and ${\aut}(Y)$ is a small finite group. 
We then analyze the fibration $\mathcal{M}_{r,r,1}\dashrightarrow\mathcal{M}_{DP}(10-r)$ 
or the fixed curve map $\mathcal{M}_{r,r,1}\to\mathcal{M}_{11-r}$ to deduce the rationality.


\subsection{$\mathcal{M}_{2,2,1}$ and one-nodal sextics}\label{sec:(2,2,1)}

Let $Y$ be the blow-up of ${\proj}^2$ at one point $p\in {\proj}^2$. 
By associating to a smooth curve $B\in |\!-\!2K_Y|$ the double cover of $Y$ branched over $B$, 
we have a birational map 
\begin{equation*}
\mathcal{P} : |\!-\!2K_Y|/{\aut}(Y) \dashrightarrow \mathcal{M}_{2,2,1}. 
\end{equation*}
Indeed, $\mathcal{P}$ is generically injective 
by the correspondence between 2-elementary $K3$ surfaces and right DPN pairs, 
and is dominant because $|\!-\!2K_Y|/{\aut}(Y)$ has dimension $18$. 
One may identify $|\!-\!2K_Y|$ with the linear system of plane sextics singular at $p$, 
and ${\aut}(Y)$ with the stabilizer of $p$ in ${\PGL}_3$.  

\begin{proposition}\label{(2,2,1)}
The quotient $|\!-\!2K_Y|/{\aut}(Y)$ is rational. 
Therefore $\mathcal{M}_{2,2,1}$ is rational. 
\end{proposition}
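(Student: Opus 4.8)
The plan is to prove directly that the rational quotient $|\!-\!2K_Y|/\aut(Y)$ is rational, stripping off $\aut(Y)$ in two stages. Identify $Y$ with $\F_1$; then $|\!-\!2K_Y|=\proj(V)$ with $V=H^0(Y,-2K_Y)$ the $25$-dimensional space of plane sextics having a double point at $p$, and, since every line bundle on $\F_1$ is $\aut(\F_1)$-linearized (Proposition \ref{linearization Hirze}), $V$ is a genuine linear representation of $\aut(Y)=\mathrm{Stab}_{\PGL_3}(p)$. Restriction of automorphisms to the $(-1)$-curve $\Sigma$ of $Y\to\proj^2$ gives the exact sequence $1\to R\to\aut(Y)\to\PGL_2\to1$, where $R=\C^\times\ltimes\C^2$ is the connected solvable group of \S\ref{Sec:Hirze}.

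First I would remove the quotient $\PGL_2$ by the slice method. The tangent-cone map $\tau\colon\proj(V)\dashrightarrow|\sheaf_{\Sigma}(2)|\cong\proj^2$, sending a sextic to the degree-two divisor cut out on $\Sigma$ by its tangent directions at $p$, is a dominant, $\aut(Y)$-equivariant linear projection; $R$ acts trivially on the target, while $\PGL_2$ acts on $\proj^2=\mathrm{Sym}^2\proj^1$ with the reduced divisors forming an open orbit. By Proposition \ref{slice}, $|\!-\!2K_Y|/\aut(Y)$ is birational to $\tau^{-1}(v_0)/G_{v_0}$ for a general $v_0\in\proj^2$. The cone over $\tau^{-1}(v_0)$ is the $G_{v_0}$-invariant subspace $W\subset V$ of sextics whose degree-two part at $p$ is proportional to $v_0$, and $\tau^{-1}(v_0)$ is $\proj(W)$ with the base locus of $\tau$ removed; the stabilizer $G_{v_0}$ is an extension of the Weyl involution $\sigma$ (swapping the two tangent lines of $v_0$) by the connected solvable group $R\rtimes T$, where $T\cong\C^\times$ is the diagonal torus of $\PGL_2$ fixing $v_0$.

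By Miyata's theorem (Theorem \ref{Miyata}), applied to $W$ regarded as a linear representation of the connected solvable group $(R\rtimes T)\times\C^\times$ (the last factor acting by scalars), the quotient $\proj(W)/(R\rtimes T)$, hence $\tau^{-1}(v_0)/(R\rtimes T)$, is rational. The remaining step — passing to the further quotient by the residual involution $\sigma$ — is, I expect, the main obstacle, since a $\Z/2$-quotient of a rational variety need not be rational. I would treat it by running the Miyata/no-name argument (Proposition \ref{no-name 1}) through a filtration of $W$ by subspaces that is at once $(R\rtimes T)$-stable and $\sigma$-stable, so that each successive affine-bundle quotient carries a compatible $\sigma$-action, and by arranging the final stage so that $\sigma$ acts as a linear involution on an affine space, whose quotient is evidently rational. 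This would show that $|\!-\!2K_Y|/\aut(Y)$ is rational, and since $\mathcal{P}$ is birational (as noted above), $\mathcal{M}_{2,2,1}$ is rational as well.
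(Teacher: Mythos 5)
Your first reduction coincides with the paper's: slice the $\aut(Y)$-action over $|\mathcal{O}_{\Sigma}(2)|$ via restriction to the exceptional curve, arriving at a quotient of a $22$-dimensional linear system by the stabilizer $G_{v_0}$ of a reduced divisor $p_1+p_2$, a solvable but \emph{disconnected} group. The gap is exactly where you place it, in the residual involution $\sigma$, and your proposed treatment is a plan rather than a proof. Miyata's theorem as stated (Theorem \ref{Miyata}) requires the group to be connected: Lie--Kolchin supplies the triangularizing flag only in that case, and since $\sigma$ interchanges the two tangent directions (hence the two fibers $F_1,F_2$), it is not clear that a flag in your $W$ can be chosen simultaneously $(R\rtimes T)$-stable and $\sigma$-stable. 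Even granting such a flag, the Miyata construction realizes the quotient as a tower of affine bundles whose trivializations involve choices, and making those choices $\sigma$-equivariantly is a further unaddressed problem; if instead one quotients first by $R\rtimes T$, the induced action of $\sigma$ on the resulting $18$-dimensional rational variety is merely some birational involution, and ``arranging the final stage so that $\sigma$ acts as a linear involution on an affine space'' is precisely the nontrivial content, for which no construction is given. A $\Z/2$-quotient of an $18$-dimensional rational variety is a priori only unirational, so nothing cheap closes this.

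The paper circumvents the involution by a different second step. It maps the slice $\proj V$ further, $G_{v_0}$-equivariantly, to $W'=|\mathcal{O}_{F_1}(3)|\times|\mathcal{O}_{F_2}(3)|$ by $B\mapsto(B|_{F_1}-p_1,\,B|_{F_2}-p_2)$, where $F_i$ is the fiber through $p_i$. The fibers of this map are open subsets of linear subspaces, so the map is birationally the projectivization of a $G_{v_0}$-linearized vector bundle over $W'$; crucially, the \emph{full} group $G_{v_0}$, involution included, acts almost freely on $W'$ (the two configurations $p_1+D_1$ and $p_2+D_2$ are generically not projectively equivalent, which kills $\sigma$ generically). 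The no-name method (Proposition \ref{no-name 1}) then gives $\proj V/G_{v_0}\sim\proj^{16}\times(W'/G_{v_0})$, and $W'/G_{v_0}$ has dimension $2$, hence is rational by Castelnuovo. To salvage your approach you would need an analogous device: a second equivariant fibration on whose base the involution acts almost freely, pushing the $\sigma$-problem down to dimension at most two.
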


\begin{proof}
Let $\Sigma\subset Y$ be the $(-1)$-curve. 
Let $\varphi\colon|\!-\!2K_Y| \dashrightarrow |\mathcal{O}_{\Sigma}(2)|$ be 
the ${\aut}(Y)$-equivariant map defined by $B \to B|_{\Sigma}$. 
The group ${\aut}(Y)$ acts on $|\mathcal{O}_{\Sigma}(2)|$ almost transitively. 
For two disinct points $p_1, p_2 \in \Sigma$, 
the fiber $\varphi^{-1}(p_1+p_2)$ is an open set of the linear system ${\proj}V$ of 
$-2K_Y$-curves through $p_1$ and $p_2$. 
If $G\subset{\aut}(Y)$ is the stabilizer of $p_1+p_2$, 
by the slice method \ref{slice} we have $|\!-\!2K_Y|/{\aut}(Y)\sim{\proj}V/G$. 

Let $F_i\subset Y$ be the strict transform of the line passing through $p$ with tangent $p_i$. 
Set $W=|\mathcal{O}_{F_1}(3)| \times |\mathcal{O}_{F_2}(3)|$ and 
consider the $G$-equivariant map 
\begin{equation}
\psi : {\proj}V \dashrightarrow W, \quad B \mapsto (B|_{F_1}-p_1, B|_{F_2}-p_2). 
\end{equation}
The fiber $\psi^{-1}(D_1, D_2)$ over a general $(D_1, D_2)\in W$ 
is an open set of a linear subspace of ${\proj}V$. 
Since $-2K_Y$ is ${\aut}(Y)$-linearized, $G$ acts on $V$ so that 
$\psi$ is $G$-birational to the projectivization of a $G$-linearized vector bundle over an open set of $W$. 
The group $G$ acts on $W$ almost freely. 
Indeed, for a general $(D_1, D_2)\in W$ the four points $p_1+D_1$ on $F_1\simeq{\proj}^1$ 
are not projectively equivalent to the four points $p_2+D_2$ on $F_2$, 
and any nontrivial $g\in{\PGL}_2$ with $g(p_1+D_1)=p_1+D_1$ does not fix $p_1$. 
Clearly an automorphism of $Y$ acting trivially on $F_1+F_2$ must be trivial. 
Thus we may apply the no-name method to see that 
${\proj}V/G\sim{\proj}^{16}\times(W/G)$. 
Since ${\dim}(W/G)=2$, $W/G$ is rational. 
\end{proof}

Note that the natural moduli map $f\colon|\!-\!2K_Y|/{\aut}(Y) \dashrightarrow \mathcal{M}_9$ is generically injective, 
for the normalization of a one-nodal plane sextic has only one $g_6^2$, 
the restriction of $|{\Oplane}(1)|$ (see \cite{A-C-G-H} Appendix A.20). 
The composition $f\circ\mathcal{P}^{-1}$ is nothing but the fixed curve map 
$\mathcal{M}_{2,2,1}\to\mathcal{M}_9$. 
Therefore  

\begin{corollary}\label{fixed curve (2,2,1)}
The fixed curve map $\mathcal{M}_{2,2,1}\to\mathcal{M}_9$ is generically injective 
with a generic image being the locus of 
non-hyperelliptic, non-trigonal, non-bielliptic curves possessing $g_6^2$.  
\end{corollary}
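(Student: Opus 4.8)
The plan is to combine three inputs: (1) the birational period map $\mathcal{P}\colon|\!-\!2K_Y|/{\aut}(Y)\dashrightarrow\mathcal{M}_{2,2,1}$ of Proposition \ref{(2,2,1)}, (2) the observation that under this map the fixed curve map $\mathcal{M}_{2,2,1}\to\mathcal{M}_9$ becomes the natural moduli map $f\colon|\!-\!2K_Y|/{\aut}(Y)\dashrightarrow\mathcal{M}_9$ sending $B$ to $B$ itself (since for a right DPN pair $(Y,B)$ the branch curve $B$ is isomorphic to $X^\iota$, and here $X^\iota$ is irreducible of genus $9$ because $(r,a,\delta)=(2,2,1)$ forces $g=9$ and $k=0$), and (3) the fact already recorded in the text that $f$ is generically injective, via the appendix result that a one-nodal plane sextic has a unique $g^2_6$. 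So the only substantive task is to identify the generic image, i.e. to characterize which smooth genus $9$ curves arise as normalizations of one-nodal plane sextics on this particular $Y$.

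First I would spell out the geometry: a curve $B\in|\!-\!2K_Y|$ with $Y$ the blow-up of $\proj^2$ at $p$ corresponds to a plane sextic with a node (or worse) at $p$; for generic $B$ the singularity is an ordinary node, and the normalization $C$ is a smooth genus $9$ curve. The embedding $C\hookrightarrow\proj^2$ is given by a base-point-free $g^2_6$ on $C$, namely the restriction of $|{\Oplane}(1)|$, such that the map is birational onto a sextic with exactly one node. So I would argue: $C\in f(|\!-\!2K_Y|/{\aut}(Y))$ generically iff $C$ carries a $g^2_6$ whose associated plane model is a one-nodal sextic. Then I would invoke the classification of $g^2_6$'s on a genus $9$ curve and Plücker-type / adjunction arguments (Appendix A of \cite{A-C-G-H}) to translate ``the $g^2_6$ gives a one-nodal sextic'' into intrinsic terms: the $g^2_6$ must be very ample off a single pair of points, and conversely a smooth genus $9$ curve has such a $g^2_6$ precisely when it is not hyperelliptic, not trigonal and not bielliptic. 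The exclusion of these three classes is the heart of the matter: a $g^2_6$ on a genus $9$ curve, if it exists and is not birational-onto-a-nodal-sextic, forces the curve to be one of these special types (a hyperelliptic or trigonal or bielliptic curve has a $g^2_6$ computed from its low-degree pencil, which produces a more degenerate plane model), and conversely any curve avoiding these has its $g^2_6$ behaving generically.

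The main obstacle is this classification step: showing that the locus of genus $9$ curves possessing a $g^2_6$ that are non-hyperelliptic, non-trigonal and non-bielliptic coincides exactly — up to the usual ``generic'' caveat — with the image of $f$. One direction (image $\subseteq$ that locus) is a dimension/degeneration check on the plane model; the other direction (every such curve is realized) requires knowing that a $g^2_6$ on such a curve is automatically birationally very ample with a single node, which is where Appendix A.20 of \cite{A-C-G-H} and a Castelnuovo/Clifford-type bound on genus $9$ curves in $\proj^2$ do the work. I would also need to check that the $18$-dimensional source maps generically finitely (indeed injectively) onto its image, which is already granted, and that the image is constructible of the expected dimension. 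Once the locus is pinned down, the corollary follows by transport along $\mathcal{P}$.
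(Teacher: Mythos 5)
Your proposal is correct and follows essentially the same route as the paper: identify the fixed curve map with $f\circ\mathcal{P}^{-1}$, get generic injectivity from the uniqueness of the $g^2_6$ on the normalization of a one-nodal sextic, and characterize the image via the classification of $g^2_6$'s on genus $9$ curves from \cite{A-C-G-H} Appendix A. The paper simply states this in two sentences with the citation doing the work you describe in your "main obstacle" paragraph, so there is no substantive difference.
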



\subsection{$\mathcal{M}_{3,3,1}$ and two-nodal sextics}\label{sec:(3,3,1)}

Let $Y$ be the blow-up of ${\proj}^2$ at two distinct points $p_1, p_2$. 
As in \S \ref{sec:(2,2,1)}, we have a natural birational map 
$|\!-\!2K_Y|/{\aut}(Y) \dashrightarrow \mathcal{M}_{3,3,1}$ by the double cover construction. 
One may identify $|\!-\!2K_Y|$ with the linear system of plane sextics singular at $p_1, p_2$,  
and ${\aut}(Y)$ with the stabilizer of $p_1+p_2$ in ${\PGL}_3$. 
In this form, Casnati and del Centina \cite{C-dC} proved that $|\!-\!2K_Y|/{\aut}(Y)$ is rational. 
Their proof is based on a direct calculation of an invariant field. 
Here we shall present another simple proof. 

\begin{proposition}[\cite{C-dC}]\label{(3,3,1)}
The quotient $|\!-\!2K_Y|/{\aut}(Y)$ is rational. 
Therefore $\mathcal{M}_{3,3,1}$ is rational. 
\end{proposition}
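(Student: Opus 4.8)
The plan is to imitate the method of \S\ref{sec:(2,2,1)}: use the slice method to cut down the group, then the no-name method to peel off a projective-space factor, reducing to a low-dimensional quotient that is visibly rational. Here $Y$ is the blow-up of ${\proj}^2$ at two points $p_1,p_2$, so $\dim|\!-\!2K_Y|=17$ and the group ${\aut}(Y)$, the stabilizer of $p_1+p_2$ in ${\PGL}_3$, has dimension $5$, giving $\dim\bigl(|\!-\!2K_Y|/{\aut}(Y)\bigr)=12$. The distinguished curves on $Y$ are the three $(-1)$-curves: $\Sigma_1,\Sigma_2$ over $p_1,p_2$, and the strict transform $\ell$ of the line $\overline{p_1p_2}$. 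First I would choose a ${\aut}(Y)$-equivariant evaluation map, for instance $B\mapsto B|_{\ell}$ into $|\mathcal{O}_{\ell}(4)|$ (since $(-2K_Y.\ell)=4$), on which ${\aut}(Y)$ acts with an open orbit: the residual ${\PGL}_2$-action on $\ell$, together with the freedom coming from the rest of the group, should make a general degree-$4$ divisor on $\ell$ have a small stabilizer. Applying Proposition \ref{slice} replaces $|\!-\!2K_Y|/{\aut}(Y)$ by ${\proj}V/G$, where ${\proj}V$ is the sublinear system of $-2K_Y$-curves passing through four fixed general points of $\ell$, and $G\subset{\aut}(Y)$ is the (now small) stabilizer of that divisor.

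Next I would introduce a second equivariant evaluation, restricting the surviving curves to $\Sigma_1$ and $\Sigma_2$: set $W=|\mathcal{O}_{\Sigma_1}(2)|\times|\mathcal{O}_{\Sigma_2}(2)|$ (note $(-2K_Y.\Sigma_i)=2$) and consider $\psi:{\proj}V\dashrightarrow W$, $B\mapsto(B|_{\Sigma_1},B|_{\Sigma_2})$. Because $-2K_Y$ is ${\aut}(Y)$-linearized and the linear conditions defining the fibers of $\psi$ are affine-linear in the coefficients of $B$, the map $\psi$ is $G$-birational to the projectivization of a $G$-linearized vector bundle over an open subset of $W$. One then checks that $G$ acts almost freely on $W$ — a general pair of degree-$2$ divisors on $\Sigma_1$ and $\Sigma_2$ should kill the remaining symmetry, just as in the proof of Proposition \ref{(2,2,1)} — and invokes Proposition \ref{no-name 1} (or \ref{no-name 2}, to absorb the scalars acting trivially on $W$) to conclude ${\proj}V/G\sim{\proj}^{N}\times(W/G)$ for the appropriate $N$. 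Since $\dim(W/G)$ is small (of the order of $4+4-\dim G$), $W/G$ is rational — either it is a point or a unirational surface/curve, hence rational, or one finishes it off by a further elementary slice/no-name step.

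The main obstacle I anticipate is bookkeeping: one must choose the two evaluation maps so that (a) the first genuinely has an open ${\aut}(Y)$-orbit on a space large enough that the residual group $G$ becomes small, yet (b) the second map still has enough ``room'' that $G$ acts almost freely on $W$, and (c) the final quotient $W/G$ has dimension low enough to recognize as rational by inspection. A cleaner alternative, which I would keep in reserve, is to exploit the del Pezzo picture directly: a general member of $\mathcal{M}_{3,3,1}$ has quotient a degree-$7$ del Pezzo surface, which is rigid (no moduli), so $\mathcal{M}_{3,3,1}\sim|\!-\!2K_Y|/{\aut}(Y)$ is a single invariant-theory problem for the $5$-dimensional connected solvable-by-reductive group ${\aut}(Y)$ acting linearly on $H^0(-2K_Y)$; since the unipotent radical of ${\aut}(Y)$ is large and its Levi part is just a torus times a small factor, Miyata's theorem \ref{Miyata} applied after a suitable slice should already give rationality, and this is essentially the shape of the argument in \cite{C-dC}. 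Either way, no deep new input is needed beyond the slice and no-name machinery already set up; the work is entirely in arranging the reductions so that every intermediate quotient is manifestly rational.
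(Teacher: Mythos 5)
Your overall template (slice method followed by the no-name method, ending in a low-dimensional finite quotient) is the right one, and your second evaluation map --- restriction to the two exceptional curves --- is exactly the step that makes the paper's proof work. But your first reduction is broken. First, the numerics: $(-2K_Y.\ell)=6-2-2=2$, not $4$, so the target of $B\mapsto B|_\ell$ is $|\mathcal{O}_\ell(2)|\simeq{\proj}^2$ (also $\dim|\!-\!2K_Y|=21$ and $\dim{\aut}(Y)=4$, so the quotient is $17$-dimensional, as it must be since $\dim\mathcal{M}_{3,3,1}=20-r=17$). More seriously, there is no ``residual ${\PGL}_2$-action on $\ell$'': every element of ${\aut}(Y)$ preserves the two points $\ell\cap E_1$ and $\ell\cap E_2$, so the image of ${\aut}(Y)\to{\aut}(\ell)$ is contained in ${\C}^{\times}\rtimes\frak{S}_2$ and is at most one-dimensional. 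A one-dimensional group has no open orbit on ${\proj}^2$ (let alone ${\proj}^4$), so the hypothesis of Proposition \ref{slice} fails for this map and the first slice cannot be performed. Your reserve plan has its own gap: Miyata's theorem applies only to the connected solvable identity component of ${\aut}(Y)$, and the residual $\frak{S}_2$ swapping $p_1$ and $p_2$ is not absorbed --- a finite quotient of a $17$-dimensional rational variety is not automatically rational.

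The repair is to drop the $\ell$-step and slice directly along your second map $\varphi\colon B\mapsto(B|_{E_1},B|_{E_2})\in W=|\mathcal{O}_{E_1}(2)|\times|\mathcal{O}_{E_2}(2)|$. Via $E_i={\proj}(T_{p_i}{\proj}^2)$ the four-dimensional group ${\aut}(Y)$ does act almost transitively on the four-dimensional $W$: a general point is a pair of tangent directions at each $p_i$, i.e.\ four lines $L_{ij}$ through the $p_i$, and the stabilizer $G$ is the \emph{finite} group $\frak{S}_2\ltimes(\frak{S}_2)^2$ of projectivities preserving $p_1+p_2$ and $\sum_{i,j}L_{ij}$. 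The slice is the linear system ${\proj}V$ of sextics (double at $p_1,p_2$) passing through the four points $q_{ij}$. Since $G$ is finite one splits off the $G$-invariant net ${\proj}V_0=\overline{p_1p_2}+\sum_{i,j}L_{ij}+|{\Oplane}(1)|$, on which $G$ acts almost freely, and the no-name method gives ${\proj}V/G\sim{\C}^{15}\times({\proj}V_0/G)$ with ${\proj}V_0/G$ a two-dimensional finite quotient of ${\proj}^2$, hence rational. This is precisely the proof in the paper.
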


\begin{proof}
Let $E_i\subset Y$ be the $(-1)$-curve over $p_i$ 
and let $W=|\mathcal{O}_{E_1}(2)| \times |\mathcal{O}_{E_2}(2)|$.  
We consider the map 
$\varphi\colon|\!-\!2K_Y| \dashrightarrow W$, $B\mapsto(B|_{E_1}, B|_{E_2})$, 
which is ${\aut}(Y)$-equivariant. 
The identifications $E_i={\proj}(T_{p_i}{\proj}^2)$ show that ${\aut}(Y)$ acts on $W$ almost transitively. 
If $\mathbf{q}=(q_{11}+q_{12}, q_{21}+q_{22})$ is a general point of $W$,  
let $L_{ij}\subset{\proj}^2$ be the line passing through $p_i$ with tangent $q_{ij}$. 
Then the stabilizer $G\subset{\aut}(Y)$ of $\mathbf{q}$ is 
identified with the group of $g\in{\PGL}_3$ which preserve $\sum_{i,j}L_{ij}$ and $p_1+p_2$.  
In particular, $G\simeq \frak{S}_2\ltimes(\frak{S}_2)^2$. 
The fiber $\varphi^{-1}(\mathbf{q})$ is an open set of the linear system ${\proj}V$ 
of sextics through $\{ q_{ij}\}_{i,j}$. 
By the slice method we have $|\!-\!2K_Y|/{\aut}(Y)\sim{\proj}V/G$. 
 
The net 
${\proj}V_0 = \overline{p_1p_2}+\sum_{i,j}L_{ij}+|{\Oplane}(1)|$ 
is a $G$-invariant linear subspace of ${\proj}V$.  
Since $G$ is finite, we can decompose the $G$-representation $V$ as $V=V_0 \oplus V_0^{\perp}$. 
The projection ${\proj}V \dashrightarrow {\proj}V_0$ from $V_0^{\perp}$ is a $G$-linearized vector bundle. 
Since $G$ acts on ${\proj}V_0$ almost freely, 
by the no-name method we have  
${\proj}V/G \sim {\C}^{15} \times ({\proj}V_0/G)$.  
The quotient ${\proj}V_0/G$ is clearly rational. 
\end{proof}

As in Corollary \ref{fixed curve (2,2,1)}, we have the following. 

\begin{corollary}\label{fixed curve (3,3,1)}
The fixed curve map $\mathcal{M}_{3,3,1}\to\mathcal{M}_8$ is generically injective 
with a generic image being the locus of 
non-hyperelliptic, non-trigonal, non-bielliptic curves possessing $g_6^2$.  
\end{corollary}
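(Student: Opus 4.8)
The statement asserts that the fixed curve map $\mathcal{M}_{3,3,1}\to\mathcal{M}_8$ is generically injective, with image the locus of non-hyperelliptic, non-trigonal, non-bielliptic genus $8$ curves that possess a $g^2_6$. The plan is to run the same argument as in Corollary \ref{fixed curve (2,2,1)}: first identify the fixed curve map as the composition $f\circ\mathcal{P}^{-1}$, where $\mathcal{P}\colon|\!-\!2K_Y|/{\aut}(Y)\dashrightarrow\mathcal{M}_{3,3,1}$ is the birational period map of Proposition \ref{(3,3,1)} and $f\colon|\!-\!2K_Y|/{\aut}(Y)\dashrightarrow\mathcal{M}_8$ is the natural moduli map sending a two-nodal plane sextic to the normalization of its branch curve; then show that $f$ is generically injective with the asserted image. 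Since $\mathcal{P}$ is birational, generic injectivity of $f$ is equivalent to that of the fixed curve map, and the images agree.

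For the image description, first note that the genus $8$ fixed curve $C^g$ of $(X,\iota)\in\mathcal{M}_{3,3,1}$ is the normalization of a plane sextic with two nodes at $p_1,p_2$, so $C^g$ carries the $g^2_6$ given by the restriction of $|{\Oplane}(1)|$. One must then check that a general such $C^g$ is non-hyperelliptic, non-trigonal, and non-bielliptic; this is a dimension count against the known dimensions of the hyperelliptic, trigonal, and bielliptic loci in $\mathcal{M}_8$ (each of which has codimension $\geq 2$, while the image of $f$ has dimension $18 - \dim{\aut}(Y) = 18 - 4 = 14$... in fact one compares with the dimension of the relevant sublocus directly), so genericity rules them out. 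Conversely, a genus $8$ curve that is non-hyperelliptic, non-trigonal, non-bielliptic and carries a $g^2_6$ maps $6:1$-free-of-basepoints to ${\proj}^2$ with image a sextic whose singularities are, for general such a curve, exactly two nodes (by the genus formula $\binom{5}{2}-8 = 2$ and the fact that worse singularities or non-nodal configurations occur in lower-dimensional families, using that the curve is not trigonal or bielliptic to exclude the degenerations). This realizes $C^g$ as the fixed curve of a member of $\mathcal{M}_{3,3,1}$.

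For generic injectivity of $f$, the key point is the \emph{uniqueness} of the $g^2_6$ on a general curve in the image: by the Appendix A.20 argument of \cite{A-C-G-H} cited after Proposition \ref{(2,2,1)}, a general curve in this locus has only one $g^2_6$, namely the one coming from the two-nodal plane model. Hence the plane sextic model $C+(\text{2 nodes})$, and thus the pair $(p_1,p_2,B)$ up to ${\PGL}_3$, and thus the point of $|\!-\!2K_Y|/{\aut}(Y)$, is recovered from $C^g\in\mathcal{M}_8$. This gives a rational inverse to $f$ on its image, so $f$ is generically injective.

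The main obstacle is the sharp singularity analysis of the plane model: one must verify that for a \emph{general} non-hyperelliptic, non-trigonal, non-bielliptic genus $8$ curve with a (necessarily unique) $g^2_6$, the associated degree $6$ plane curve has precisely two nodes and no other singularity. The count $\binom{5}{2}-8=2$ gives the expected number of nodes, and one must argue that tacnodes, cusps, or a triple point each force an extra condition — equivalently move the curve into a proper subvariety — while also checking such degenerate models do not secretly correspond to trigonal or bielliptic curves (a triple point would give a $g^1_3$, a tacnode or deeper tangency can produce a bielliptic structure). Carrying out this case analysis carefully, rather than the invariant-theoretic rationality (already done in Proposition \ref{(3,3,1)}), is where the real work lies; the rest is a formal transcription of Corollary \ref{fixed curve (2,2,1)}.
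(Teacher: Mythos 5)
Your proposal matches the paper's argument: the corollary is proved there exactly by composing the birational period map of Proposition \ref{(3,3,1)} with the moduli map $f$ sending a two-nodal sextic to its normalization, and invoking the uniqueness of the $g^2_6$ on a generic member of the image, word for word ``as in Corollary \ref{fixed curve (2,2,1)}.'' The singularity analysis you outline for the converse inclusion (and the garbled dimension count $18-4=14$, which should read $21-4=17=\dim\mathcal{M}_{3,3,1}$) is detail the paper leaves implicit, but the route is the same.
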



\subsection{$\mathcal{M}_{4,4,1}$ and three-nodal sextics}\label{sec:(4,4,1)}

Let $p_1, p_2, p_3$ be three linearly independent points in ${\proj}^2$. 
The blow-up $Y$ of ${\proj}^2$ at $p_1+p_2+p_3$ is a sextic del Pezzo surface. 
As before, we have a birational equivalence $|\!-\!2K_Y|/{\aut}(Y)\sim\mathcal{M}_{4,4,1}$. 
One may identify $|\!-\!2K_Y|$ with the linear system of plane sextics singular at $p_1+p_2+p_3$, 
and ${\aut}(Y)$ with the group $\frak{S}_2\ltimes G$ where $G$ is the stabilizer of $p_1+p_2+p_3$ in ${\PGL}_3$ 
and $\frak{S}_2$ is generated by the standard Cremona transformation based at $p_1+p_2+p_3$. 
In this form, $|\!-\!2K_Y|/{\aut}(Y)$ is proved to be rational by Casnati and del Centina \cite{C-dC}.  

\begin{proposition}[\cite{C-dC}]\label{(4,4,1)}
The space $\mathcal{M}_{4,4,1}$ is rational. 
\end{proposition}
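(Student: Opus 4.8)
The strategy mirrors the cases $\mathcal{M}_{2,2,1}$ and $\mathcal{M}_{3,3,1}$: reduce the rationality of $\mathcal{M}_{4,4,1}$, via the birational equivalence $\mathcal{M}_{4,4,1}\sim|\!-\!2K_Y|/{\aut}(Y)$, to a problem in invariant theory on the sextic del Pezzo surface $Y$, and then dispose of that problem by a combination of the slice method (Proposition \ref{slice}) and the no-name method (Proposition \ref{no-name 1}). The new feature compared with the previous two cases is the extra $\frak{S}_2$ generated by the Cremona involution, which enlarges $\aut(Y)$ from a solvable group to a non-solvable finite extension; the core of the argument will be to find an $\aut(Y)$-equivariant fibration of $|\!-\!2K_Y|$ whose base carries an almost free $\aut(Y)$-action and is itself visibly rational.

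\textbf{Key steps.} First I would fix coordinates so that $p_1,p_2,p_3$ are the three coordinate points of $\proj^2$, identify $|\!-\!2K_Y|$ with the $\mathbb{P}^{16}$ of plane sextics having double points at $p_1,p_2,p_3$, and record the six $(-1)$-curves of $Y$: the exceptional curves $E_1,E_2,E_3$ over the $p_i$ and the strict transforms $L_{12},L_{13},L_{23}$ of the lines $\overline{p_ip_j}$. These six curves form a hexagon, and $\aut(Y)=\frak{S}_2\ltimes G$ acts on the hexagon; in particular the restriction maps $B\mapsto B|_{E_i}$ and $B\mapsto B|_{L_{jk}}$ assemble into an $\aut(Y)$-equivariant map $\varphi$ from $|\!-\!2K_Y|$ to the product $W$ of the six linear systems $|\mathcal{O}_{(-1)\text{-curve}}(?)|$ (each a $\mathbb{P}^1$ or $\mathbb{P}^2$ according as $-2K_Y$ restricts to degree $2$ on a $(-1)$-curve), permuted by $\aut(Y)$ in accordance with the hexagon action. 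Second, I would check that $\aut(Y)$ acts almost transitively on $W$: the torus $G\cong(\mathbb{C}^\times)^2$ moves the restriction data freely enough, and a general point of $W$ cuts out finitely many, generically projectively rigid configurations. Applying the slice method, $|\!-\!2K_Y|/\aut(Y)$ becomes birational to $\varphi^{-1}(\mathbf{w})/G_{\mathbf{w}}$ for a general $\mathbf{w}\in W$, where the stabilizer $G_{\mathbf{w}}$ is a finite group (one expects something like $\frak{S}_2\ltimes(\frak{S}_2)^3$ or a subgroup thereof). Third, $\varphi^{-1}(\mathbf{w})$ is an open subset of a linear subsystem $\mathbb{P}V$ of $|\!-\!2K_Y|$, cut out by prescribing the restrictions to the six $(-1)$-curves. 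As in the proof of Proposition \ref{(3,3,1)}, I would exhibit a $G_{\mathbf{w}}$-invariant linear subspace $\mathbb{P}V_0\subset\mathbb{P}V$ spanned by obvious sextics built from the lines $\overline{p_ip_j}$, a conic through all three $p_i$, and the pullback of $|\Oplane(1)|$ (together with their Cremona images), split $V=V_0\oplus V_0^\perp$ as $G_{\mathbf{w}}$-representations since $G_{\mathbf{w}}$ is finite, and note that the projection $\mathbb{P}V\dashrightarrow\mathbb{P}V_0$ is a $G_{\mathbf{w}}$-linearized vector bundle. Provided $G_{\mathbf{w}}$ acts almost freely on $\mathbb{P}V_0$, the no-name method gives $\mathbb{P}V/G_{\mathbf{w}}\sim\mathbb{C}^N\times(\mathbb{P}V_0/G_{\mathbf{w}})$, and $\mathbb{P}V_0/G_{\mathbf{w}}$ is rational because it is the quotient of a projective space of small dimension by a finite group (and one can take $\dim\mathbb{P}V_0$ so small, e.g.\ $2$ or $3$, that rationality is elementary — a linear action, or a classical result).

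\textbf{Main obstacle.} The delicate point is the almost-freeness of the finite stabilizer $G_{\mathbf{w}}$ on the small projective space $\mathbb{P}V_0$ (and, upstream, the almost transitivity of $\aut(Y)$ on $W$ together with the precise identification of $G_{\mathbf{w}}$). The Cremona involution acts nontrivially on everything in sight, so one must verify that a general configuration in $V_0$ — lines, conic, a generic line from $|\Oplane(1)|$ — is not fixed by any nontrivial element of $G_{\mathbf{w}}$; since the Cremona transformation interchanges $E_i$ with $L_{jk}$, the generic member of $V_0$ is genuinely asymmetric under it once the $|\Oplane(1)|$-summand is present, and no torus element can fix a generic such line. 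If, contrary to expectation, $G_{\mathbf{w}}$ fails to be almost free on the first natural choice of $\mathbb{P}V_0$, the remedy is to enlarge $V_0$ by one more $G_{\mathbf{w}}$-isotypic summand (staying in low dimension) until almost-freeness holds; the rationality of $\mathbb{P}V_0/G_{\mathbf{w}}$ will persist because the relevant $G_{\mathbf{w}}$ is a small $2$-group extension, for which quotients of $\mathbb{P}^2$ or $\mathbb{P}^3$ are rational. In the write-up I would cite \cite{C-dC} for the result and give this slice-plus-no-name argument as the alternative proof, exactly parallel to Propositions \ref{(2,2,1)} and \ref{(3,3,1)}.
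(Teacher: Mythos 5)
The paper does not actually prove this proposition: it cites Casnati--del Centina \cite{C-dC}, whose argument is a direct computation of an invariant field, and Remark \ref{another proof (4,4,1)} explicitly warns that a geometric proof in the style of \S\ref{sec:(2,2,1)} and \S\ref{sec:(3,3,1)} is possible ``but not so short''. Your proposal attempts exactly that geometric route, and it breaks at the slice-method step. For the sextic del Pezzo surface $Y$ the group ${\aut}(Y)=\frak{S}_2\ltimes G$ has dimension $2$ (its identity component is the torus $({\C}^{\times})^2$ fixing the three points, and the component group is the dihedral group of order $12$ acting on the hexagon of $(-1)$-curves). Since $(-2K_Y.E)=2$ for each of the six $(-1)$-curves $E$, your target $W$ is a product of six copies of $|\mathcal{O}_E(2)|\simeq{\proj}^2$, of dimension $12$; the generic ${\aut}(Y)$-orbit in $W$ has dimension at most $2$, so the action is nowhere near almost transitive and Proposition \ref{slice} does not apply. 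Even restricting $\varphi$ to the three exceptional curves $E_1,E_2,E_3$ gives a $6$-dimensional base, still far too large. This is precisely the point at which the inductive pattern of \S\ref{sec:(2,2,1)}--\S\ref{sec:(3,3,1)} stops working: there ${\aut}(Y)$ had dimension $6$, resp.\ $4$, matching the dimension of the chosen base, whereas here the group has become too small relative to any natural restriction datum. (A smaller slip: $|\!-\!2K_Y|$ is a ${\proj}^{18}$, not a ${\proj}^{16}$ --- sextics give $27-3\cdot 3=18$.)

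To salvage a geometric proof you would need either a base of dimension at most $2$ with almost transitive ${\aut}(Y)$-action (hard to extract from restriction to $(-1)$-curves), or a different mechanism altogether --- for instance an equivariant linear fibration of $H^0(-2K_Y)$ over a small ${\aut}(Y)$-representation to which the no-name method applies directly, or the identification of $|\!-\!2K_Y|/{\aut}(Y)$ with a locus in $\mathcal{M}_7$ via the generically injective fixed curve map (Corollary \ref{fixed curve (4,4,1)}), neither of which is set up in your sketch. As written, the ``main obstacle'' you isolate (almost-freeness of a finite stabilizer on ${\proj}V_0$) is not the real obstacle; the argument fails one step earlier, before any finite stabilizer appears.
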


\begin{remark}\label{another proof (4,4,1)}
The proof in \cite{C-dC} is by a direct calculation of an invariant field. 
Actually, it is also possible to give a geometric proof as in the previous sections, 
but not so short. 
\end{remark}

As noted in \cite{C-dC}, the natural map $|\!-\!2K_Y|/{\aut}(Y)\dashrightarrow\mathcal{M}_7$ is generically injective. 
This is a consequence of the classical fact that a generic three-nodal plane sextic has exactly two $g^2_6$, 
$|{\Oplane}(1)|$ and its transformation by the Cremona map. 
Thus 

\begin{corollary}\label{fixed curve (4,4,1)}
The fixed curve map for $\mathcal{M}_{4,4,1}$ is generically injective 
with a generic image being the locus of 
non-hyperelliptic, non-trigonal, non-bielliptic curves possessing $g^2_6$. 
\end{corollary}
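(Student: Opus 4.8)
The plan is to transport everything through the birational period map $\mathcal{P}\colon|\!-\!2K_Y|/{\aut}(Y)\dashrightarrow\mathcal{M}_{4,4,1}$ of Proposition \ref{(4,4,1)}. Under $\mathcal{P}$, the fixed curve map $F$ becomes the assignment sending a smooth $B\in|\!-\!2K_Y|$ to the class $[B]\in\mathcal{M}_7$: for the double cover $f\colon X\to Y$ branched along a smooth bianticanonical curve $B$ on the sextic del Pezzo surface $Y$ one has $X^{\iota}\simeq f^{-1}(B)\simeq B$, adjunction on $Y$ gives $2p_a(B)-2=(-2K_Y)(-K_Y)=2K_Y^2=12$ so $p_a(B)=7$, and since $k=(r-a)/2=0$ there are no residual rational curves, i.e. $B=C^g$ with $g=7$. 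Pushing $B$ down to $\proj^2$ identifies it with the normalization $\widetilde B_0$ of a three-nodal plane sextic $B_0$ singular at $p_1+p_2+p_3$, so $F\circ\mathcal{P}$ is exactly the map $B_0\mapsto[\widetilde B_0]$, which was already observed to be generically injective in the paragraph preceding the statement.

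First I would record the generic injectivity in the precise form needed. By the classical fact cited there, a general three-nodal sextic $B_0$ carries exactly two $g^2_6$'s: the restriction of $|\Oplane(1)|$ and its image under the standard Cremona involution $\mathrm{cr}$ based at $p_1+p_2+p_3$, which again produces a three-nodal sextic singular at those points. Hence if $\widetilde B_0\simeq\widetilde B_1$ for two general members, the isomorphism carries one of the two $g^2_6$'s of $\widetilde B_1$ to $|\Oplane(1)|$ of $\widetilde B_0$, so $B_1$ is projectively equivalent, up to the stabilizer $G$ of $p_1+p_2+p_3$ in $\PGL_3$, either to $B_0$ or to $\mathrm{cr}(B_0)$. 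Since ${\aut}(Y)=\frak{S}_2\ltimes G$ with $\frak{S}_2=\langle\mathrm{cr}\rangle$, these two cases fall into one ${\aut}(Y)$-orbit, so the general fibre of $F\circ\mathcal{P}$ is a single point and $F$ is generically injective.

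It then remains to identify the generic image. On the one hand, for a general $B_0$ the series $|\Oplane(1)|\big|_{\widetilde B_0}$ is a base-point-free $g^2_6$ mapping $\widetilde B_0$ birationally onto a plane curve, and a genus-$7$ curve admitting such a series cannot be hyperelliptic, trigonal, or bielliptic: otherwise every $g^2_6$ would be composed with the $g^1_2$, $g^1_3$, or the bielliptic involution, so the associated map would factor through a conic, a conic, or a plane cubic and could not be birational onto a sextic. Conversely, the image of $F$ is an irreducible $16$-dimensional subvariety of the locus $\mathcal{N}\subset\mathcal{M}_7$ of curves carrying a $g^2_6$; since $\mathcal{N}$ is itself irreducible of dimension $16=\dim\mathcal{M}_{4,4,1}$ — being dominated by the Severi variety of irreducible three-nodal plane sextics modulo $\PGL_3$ — the image is dense in $\mathcal{N}$, and as the hyperelliptic, trigonal and bielliptic loci in $\mathcal{M}_7$ have dimensions $13$, $15$, $12<16$, a general member of the image is indeed a non-hyperelliptic, non-trigonal, non-bielliptic curve with a $g^2_6$. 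The main obstacle is the classical statement that the general three-nodal plane sextic has exactly two $g^2_6$'s; granted that (as used already, cf. \cite{C-dC}), the rest is the bookkeeping above, entirely parallel to Corollaries \ref{fixed curve (2,2,1)} and \ref{fixed curve (3,3,1)}.
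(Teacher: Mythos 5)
Your argument for generic injectivity is the same as the paper's: the paper cites the classical fact that a general three\nobreakdash-nodal plane sextic carries exactly two $g^2_6$'s, namely $|\Oplane(1)|$ and its transform under the standard Cremona involution, and observes that this involution generates the factor $\frak{S}_2$ of ${\aut}(Y)=\frak{S}_2\ltimes G$, so the two series lie in one ${\aut}(Y)$-orbit. Your first two paragraphs spell this out correctly, as do the genus computation and the identification of $F\circ\mathcal{P}$ with $B_0\mapsto[\widetilde B_0]$.

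One step in your image characterization is false as stated: it is \emph{not} true that on a trigonal curve of genus $7$ every $g^2_6$ is composed with the $g^1_3$. A general trigonal curve of genus $7$ lies in $|L_{3,3}|$ on ${\F}_1$ (Maroni invariant $2$, trigonal bundle $T=L_{0,1}|_C$), and the residual series $|K_C-2T|=\bigl|L_{1,0}|_C\bigr|$ is a base-point-free $g^2_6$ mapping $C$ birationally onto a plane sextic with an ordinary triple point (the image of the three points $C\cap\Sigma$ under the blow-down ${\F}_1\to{\proj}^2$). So a trigonal genus-$7$ curve does admit a birationally very ample $g^2_6$, and your ``otherwise the map could not be birational onto a sextic'' dichotomy breaks down there. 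The conclusion survives for two reasons: your closing dimension count (the image is irreducible of dimension $16$, while the hyperelliptic, trigonal and bielliptic loci have dimensions $13$, $15$, $12$) already shows a general member of the image avoids these loci, making the composed-series argument redundant for that direction; and if one wants the pointwise statement, the correct route is Maroni's description of special divisors (\cite{M-Sc}, used throughout \S\ref{sec: g>6, k>0}--\S\ref{sec:g=4}), by which the only $g^2_6$'s on a trigonal genus-$7$ curve are $|2T|$ and $|K_C-2T|$, whose plane images are a conic and a one-triple-point sextic, never a three-nodal sextic. The bielliptic exclusion likewise needs an argument rather than an assertion (Castelnuovo--Severi suffices here). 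Finally, for the converse inclusion the essential point hiding behind ``dominated by the Severi variety'' is that a $g^2_6$ on a genus-$7$ curve is automatically base-point free (a $g^2_5$ is impossible for genus reasons) and, once hyperelliptic, trigonal and bielliptic curves are excluded, birationally very ample, so its image is a sextic of $\delta$-invariant $3$; stating this explicitly would close the loop.
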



\subsection{$\mathcal{M}_{5,5,1}$ and a quintic del Pezzo surface}\label{sec:(5,5,1)}

Let $Y$ be a quintic del Pezzo surface. 
As in the previous sections, 
we have a birational map $|\!-\!2K_Y|/{\aut}(Y)\dashrightarrow\mathcal{M}_{5,5,1}$ 
by the double cover construction (see also \cite{A-K}). 
Shepherd-Barron \cite{SB2} proved that $|\!-\!2K_Y|/{\aut}(Y)$ is rational. 
Also it is classically known that the natural map 
$|\!-\!2K_Y|/{\aut}(Y)\dashrightarrow\mathcal{M}_6$ is birational (cf. \cite{SB2}). 
Therefore 

\begin{proposition}[\cite{SB2}, \cite{A-K}]\label{rational (5,5,1)}
The space $\mathcal{M}_{5,5,1}$ is rational. 
The fixed curve map $\mathcal{M}_{5,5,1}\to\mathcal{M}_6$ is birational. 
\end{proposition}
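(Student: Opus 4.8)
The plan is to deduce both assertions by combining the double cover picture of \S\ref{ssec: right resol} with the two external facts recalled just above. First I would set up the birational period map in the smooth-branch pattern $(\alpha)$. For a quintic del Pezzo surface $Y$ and a smooth $B\in|\!-\!2K_Y|$, the pair $(Y,B)$ is a right DPN pair, so the double cover $X\to Y$ branched along $B$ is a $K3$ surface with a non-symplectic involution $\iota$ satisfying $X/\iota=Y$; since $r=\rho(Y)=5$, $k=(r-a)/2=0$, and $B$ is irreducible (which forces $\delta=1$, as $K_Y^2=5$ is odd), the pair $(X,\iota)$ lies in $\mathcal{M}_{5,5,1}$, and we obtain a natural map $\mathcal{P}\colon|\!-\!2K_Y|/\aut(Y)\dashrightarrow\mathcal{M}_{5,5,1}$. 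Because $B$ is smooth no right resolution intervenes, and $\mathcal{P}$ is generically injective: the quotient surface $X/\iota=Y$ and the branch divisor $B$ are recovered from $(X,\iota)$, so via Lemma \ref{covering map & LB} and the Torelli theorem — exactly as in the recipe of \S\ref{ssec: recipe} — the fibres of the period map are $\aut(Y)$-orbits. A dimension count then makes $\mathcal{P}$ birational: $h^0(-2K_Y)=1+3K_Y^2=16$ and $\aut(Y)$ is finite, so $\dim|\!-\!2K_Y|/\aut(Y)=15=\dim\mathcal{M}_{5,5,1}$, whence $\mathcal{P}$ is dominant. Now Shepherd-Barron \cite{SB2} proved that $|\!-\!2K_Y|/\aut(Y)$ is rational, so $\mathcal{M}_{5,5,1}$ is rational.

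For the fixed curve map, the point is that $k=0$, so the fixed locus $X^{\iota}$ of a general $(X,\iota)\in\mathcal{M}_{5,5,1}$ consists solely of the genus $6$ curve $C^6$, which the right covering construction identifies with the branch curve $B$. Hence $F\circ\mathcal{P}$ is the natural map $|\!-\!2K_Y|/\aut(Y)\to\mathcal{M}_6$ sending $B$ to its isomorphism class, and it remains to cite the classical fact (see also \cite{SB2}, \cite{A-K}) that this map is birational: a general canonically embedded genus $6$ curve $C\subset\proj^5$ lies on a unique quintic del Pezzo surface $Y$, cut out by the quadrics through $C$, and $C$ itself is cut on $Y$ by a further quadric, so that $C\in|\!-\!2K_Y|$; conversely a general member of $|\!-\!2K_Y|$ arises this way. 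Therefore $F=(F\circ\mathcal{P})\circ\mathcal{P}^{-1}$ is birational onto $\mathcal{M}_6$.

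The only step with real content is the generic injectivity of $\mathcal{P}$, and here it is soft: it is precisely the equivalence between a $K3$ surface with non-symplectic involution and its del Pezzo quotient equipped with a smooth branch curve, so no desingularization has to be tracked and the argument reduces to Lemma \ref{covering map & LB} plus the Torelli theorem, as in pattern $(\alpha)$. The rationality of $|\!-\!2K_Y|/\aut(Y)$ and the birationality with $\mathcal{M}_6$ I would take verbatim as external input from \cite{SB2} and \cite{A-K}.
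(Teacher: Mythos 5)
Your proposal is correct and follows the same route as the paper: the double-cover/DPN-pair correspondence gives a generically injective period map $|\!-\!2K_Y|/\aut(Y)\dashrightarrow\mathcal{M}_{5,5,1}$, which is birational by the dimension count, and both the rationality of $|\!-\!2K_Y|/\aut(Y)$ and the birationality of $|\!-\!2K_Y|/\aut(Y)\dashrightarrow\mathcal{M}_6$ are taken from \cite{SB2} and \cite{A-K} exactly as in the paper. The only blemish is your appeal to Lemma \ref{covering map & LB} and the Torelli theorem for generic injectivity: that lemma is stated only for $Y={\proj}^2$, ${\proj}^1\times{\proj}^1$, ${\F}_n$ and does not literally cover a quintic del Pezzo surface, but this is harmless since your primary (pattern $(\alpha)$) argument via the equivalence of $(X,\iota)$ with the right DPN pair $(Y,B)$ already suffices and is what the paper uses.
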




\subsection{$\mathcal{M}_{6,6,1}$ and genus five curves}\label{sec:(6,6,1)}

Let ${\proj}V=|\mathcal{O}_{{\proj}^4}(2)|$ 
and let $\mathbb{G}(1, {\proj}V)$ be the Grassmannian of pencils of quadrics in ${\proj}^4$.  
Let $\mathcal{E} \to \mathbb{G}(1, {\proj}V)$ be the universal quotient bundle. 
The fiber $\mathcal{E}_l$ over a pencil $l={\proj}W$ is the linear space $H^0(\mathcal{O}_{{\proj}^4}(2))/W$. 
A general pencil $l$ defines a smooth $(2, 2)$ complete intersection $Y_l$ in ${\proj}^4$,  
which is an anticanonical model of a quartic del Pezzo surface (cf. \cite{De}). 
One has the identification $\mathcal{E}_l=H^0(\mathcal{O}_{Y_l}(2))=H^0(-2K_{Y_l})$ 
by a general property of complete intersections. 
Hence a general point of the bundle ${\proj}\mathcal{E}$ corresponds to a pair $(Y_l, B)$ of 
a quartic del Pezzo surface $Y_l$ and a $-2K_{Y_l}$-curve $B$. 
This defines a period map ${\proj}\mathcal{E}\dashrightarrow \mathcal{M}_{6,6,1}$, 
which descends to a rational map $\mathcal{P}\colon{\proj}\mathcal{E}/{\PGL}_5\dashrightarrow\mathcal{M}_{6,6,1}$. 
Since $Y_l \subset {\proj}^4$ is an anticanonical model, we see that $\mathcal{P}$ is generically injective. 
The equality ${\dim}({\proj}\mathcal{E}/{\PGL}_5)=14$ shows that $\mathcal{P}$ is birational.  

The $-2K_{Y_l}$-curve on $Y_l$ defined by a general point of ${\proj}\mathcal{E}_l$ is 
a $(2, 2, 2)$ complete intersection in ${\proj}^4$, which is a canonical genus five curve. 
We study ${\proj}\mathcal{E}/{\PGL}_5$ from this viewpoint. 

Let $\mathbb{G}(2, {\proj}V)$ be the Grassmannian of nets of quadrics in ${\proj}^4$, 
and let $\mathcal{F} \to \mathbb{G}(2, {\proj}V)$ be the universal sub bundle. 
The fiber $\mathcal{F}_{P}$ over a net $P={\proj}U$ is the linear subspace $U$ of $V$.  
The bundle ${\proj}\mathcal{E}$ parametrizes pairs $(W, L)$ of 
a $2$-plane $W\subset V$ and a line $L\subset V/W$, 
while the bundle ${\proj}\mathcal{F}^{\vee}$ parametrizes pairs $(U, H)$ of 
a $3$-plane $U \subset V$ and a 2-plane $H \subset U$. 
These two objects canonically correspond by 
$(W, L)\mapsto(\langle W, L\rangle, W)$ and $(U, H)\mapsto(H, U/H)$. 
Thus we have a canonical ${\PGL}_5$-isomorphism ${\proj}\mathcal{E}\simeq {\proj}\mathcal{F}^{\vee}$. 

Let $P={\proj}U$ be a general point of $\mathbb{G}(2, {\proj}V)$, 
and $B\subset{\proj}^4$ be the $g=5$ curve defined by $P$. 
One may identify $V=H^0(\mathcal{O}_{{\proj}^4}(2))$ with $S^2H^0(K_B)$, 
and $\mathcal{F}_P=U$ with the kernel of the natural linear map $S^2H^0(K_B) \to H^0(2K_B)$, 
which is surjective by M. Noether. 
Let $\pi\colon\mathcal{X}_5\to\mathcal{M}_5$ be the universal curve 
(over the open locus of curves with no automorphism), 
$K_{\pi}$ be the relative canonical bundle for $\pi$, 
and $\mathcal{G}$ be the kernel of the bundle map $S^2\pi_{\ast}K_{\pi}\to\pi_{\ast}K_{\pi}^2$. 
Since $\mathbb{G}(2, {\proj}V)/{\PGL}_5$ is naturally birational to $\mathcal{M}_5$, 
the above identification gives rise to the birational equivalence 
\begin{equation}
{\proj}\mathcal{F}^{\vee}/{\PGL}_5 \sim {\proj}\mathcal{G}^{\vee} \sim \mathcal{M}_5\times{\proj}^2. 
\end{equation}
The moduli space $\mathcal{M}_5$ is rational by Katsylo \cite{Ka2}. 
Therefore 

\begin{proposition}\label{rational (6,6,1)}
The quotient ${\proj}\mathcal{F}^{\vee}/{\PGL}_5$ is rational. 
Hence $\mathcal{M}_{6,6,1}$ is rational. 
\end{proposition}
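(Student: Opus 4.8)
The plan is to exhibit a chain of birational equivalences that reduces $\mathcal{M}_{6,6,1}$ to the product $\mathcal{M}_5 \times {\proj}^2$, and then invoke Katsylo's theorem on the rationality of $\mathcal{M}_5$. First I would set up the period map: a generic quartic del Pezzo surface $Y$ is the complete intersection of a pencil of quadrics in ${\proj}^4$, and by the complete-intersection identification $H^0(-2K_Y) = H^0(\mathcal{O}_Y(2))$ is the quotient $H^0(\mathcal{O}_{{\proj}^4}(2))/W$, where $W$ is the $2$-plane of quadrics cutting out $Y$. Thus the total space of the universal quotient bundle ${\proj}\mathcal{E}$ over the Grassmannian $\mathbb{G}(1,{\proj}V)$ of pencils parametrizes pairs $(Y_l,B)$ with $B \in |\!-\!2K_{Y_l}|$, and the double-cover / right-DPN-pair correspondence gives a generically injective rational map $\mathcal{P}\colon {\proj}\mathcal{E}/{\PGL}_5 \dashrightarrow \mathcal{M}_{6,6,1}$; a dimension count ($14 = 14$) forces it to be birational.

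Next I would reinterpret ${\proj}\mathcal{E}$ from the curve side. A generic $-2K_{Y_l}$-curve is a $(2,2,2)$ complete intersection in ${\proj}^4$, hence a canonically embedded genus $5$ curve, and the data $(W,L)$ of a $2$-plane $W\subset V$ together with a line $L\subset V/W$ is equivalent to the data $(U,H)$ of the $3$-plane $U=\langle W,L\rangle$ together with its distinguished $2$-plane $H=W$. This yields a canonical ${\PGL}_5$-equivariant isomorphism ${\proj}\mathcal{E} \simeq {\proj}\mathcal{F}^{\vee}$, where $\mathcal{F}$ is the universal sub-bundle over the Grassmannian $\mathbb{G}(2,{\proj}V)$ of nets of quadrics. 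Over a general net $P={\proj}U$ defining a canonical genus $5$ curve $B$, one identifies $V=H^0(\mathcal{O}_{{\proj}^4}(2))$ with $S^2H^0(K_B)$ and $\mathcal{F}_P=U$ with the kernel of the (surjective, by Noether) multiplication map $S^2H^0(K_B)\to H^0(2K_B)$. Globalizing over the universal curve $\pi\colon\mathcal{X}_5\to\mathcal{M}_5$, the bundle $\mathcal{F}$ corresponds to the kernel $\mathcal{G}$ of $S^2\pi_*K_\pi \to \pi_*K_\pi^2$, and since $\mathbb{G}(2,{\proj}V)/{\PGL}_5$ is birational to $\mathcal{M}_5$ (canonical curves up to projective equivalence), we get ${\proj}\mathcal{F}^{\vee}/{\PGL}_5 \sim {\proj}\mathcal{G}^{\vee}$.

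Finally, I would observe that $\mathcal{G}$ has rank $3$ (the ambient $S^2 H^0(K_B)$ has dimension $15$, the target $H^0(2K_B)$ dimension $12$), so ${\proj}\mathcal{G}^{\vee}$ is a ${\proj}^2$-bundle over (an open subset of) $\mathcal{M}_5$, and being a projectivized vector bundle over a variety it is birational to $\mathcal{M}_5\times{\proj}^2$. Since $\mathcal{M}_5$ is rational by Katsylo, so is $\mathcal{M}_5\times{\proj}^2$, hence ${\proj}\mathcal{F}^{\vee}/{\PGL}_5$, hence ${\proj}\mathcal{E}/{\PGL}_5$, and therefore $\mathcal{M}_{6,6,1}$.

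The main obstacle I anticipate is not any single hard estimate but the accumulation of ``general member'' verifications that make each arrow genuinely birational rather than merely dominant: that a generic point of $\mathbb{G}(1,{\proj}V)$ really gives a smooth quartic del Pezzo with $H^0(\mathcal{O}_Y(2))=H^0(-2K_Y)$ (so $\mathcal{P}$ is well-defined and generically injective via the DPN-pair dictionary), that the net-to-pencil bookkeeping ${\proj}\mathcal{E}\simeq{\proj}\mathcal{F}^{\vee}$ is ${\PGL}_5$-equivariant on the nose, and above all the surjectivity of $S^2H^0(K_B)\to H^0(2K_B)$ for the relevant curves (M. Noether's theorem, valid for non-hyperelliptic, non-trigonal, non-plane-quintic curves, which the generic genus $5$ curve is) so that $\mathcal{G}$ is locally free of rank $3$ over an open subset of $\mathcal{M}_5$. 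Once these are in place the rationality follows formally from the no-name principle for projectivized bundles together with Katsylo's rationality of $\mathcal{M}_5$.
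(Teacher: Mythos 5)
Your proposal is correct and follows essentially the same route as the paper: the birational period map ${\proj}\mathcal{E}/{\PGL}_5\dashrightarrow\mathcal{M}_{6,6,1}$, the canonical identification ${\proj}\mathcal{E}\simeq{\proj}\mathcal{F}^{\vee}$ via the correspondence $(W,L)\leftrightarrow(\langle W,L\rangle,W)$, the descent to the rank-$3$ kernel bundle $\mathcal{G}$ of $S^2\pi_{\ast}K_{\pi}\to\pi_{\ast}K_{\pi}^2$ over $\mathcal{M}_5$ (M.~Noether), and Katsylo's rationality of $\mathcal{M}_5$. The verifications you flag (smoothness of the generic $(2,2)$ intersection, equivariance of the bookkeeping, surjectivity of the multiplication map for a general genus $5$ curve) are exactly the ones implicit in the paper's argument and pose no difficulty.
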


The following assertion is also obtained. 

\begin{corollary}\label{fixed curve (6,6,1)} 
The fixed curve map $\mathcal{M}_{6,6,1}\to\mathcal{M}_5$ is dominant, 
with the fiber over a general $B\in \mathcal{M}_5$ being birationally identified with ${\proj}U^{\vee}$ 
where $U$ is the kernel of the natural map $S^2H^0(K_B) \to H^0(2K_B)$. 
\end{corollary}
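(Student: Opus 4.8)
The plan is to read off $F$ from the birational period map $\mathcal{P}\colon{\proj}\mathcal{E}/{\PGL}_5\dashrightarrow\mathcal{M}_{6,6,1}$ constructed above, using the chain of ${\PGL}_5$-identifications ${\proj}\mathcal{E}\simeq{\proj}\mathcal{F}^{\vee}$ and ${\proj}\mathcal{F}^{\vee}/{\PGL}_5\sim{\proj}\mathcal{G}^{\vee}\sim\mathcal{M}_5\times{\proj}^2$. First I would note that the fixed locus of the 2-elementary $K3$ surface attached to the right DPN pair $(Y_l,B)$ is isomorphic to the branch curve $B$ (here $k=0$, so $X^{\iota}$ is irreducible of genus $5$). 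Hence the morphism $q\colon{\proj}\mathcal{E}/{\PGL}_5\to\mathcal{M}_5$, $(Y_l,B)\mapsto[B]$, satisfies $F\circ\mathcal{P}=q$, so that $F=q\circ\mathcal{P}^{-1}$ as rational maps; since $\mathcal{P}$ is birational, it suffices to describe $q$.

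Next I would transport $q$ through the isomorphisms. A general point of ${\proj}\mathcal{E}$, written $(W,L)$ with $l={\proj}W$, corresponds to the point $(U,H)=(\langle W,L\rangle,W)$ of ${\proj}\mathcal{F}^{\vee}$, and the genus five curve $B$ --- the $(2,2,2)$ complete intersection in ${\proj}^4$ cut out by the quadrics of $W$ and a representative of $L$ --- is exactly the base locus of the net $P={\proj}U$. Therefore $q$ factors as the bundle projection ${\proj}\mathcal{F}^{\vee}\to\mathbb{G}(2,{\proj}V)$ followed by $\mathbb{G}(2,{\proj}V)\dashrightarrow\mathbb{G}(2,{\proj}V)/{\PGL}_5\sim\mathcal{M}_5$, and under ${\proj}\mathcal{F}^{\vee}/{\PGL}_5\sim{\proj}\mathcal{G}^{\vee}$ it becomes simply the projection ${\proj}\mathcal{G}^{\vee}\to\mathcal{M}_5$ of a projectivized rank $3$ bundle. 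In particular $q$, hence $F$, is dominant: by M. Noether's theorem a general canonical genus five curve is cut out by a net of quadrics, and a general such net contains a pencil defining a smooth quartic del Pezzo surface, so every general $[B]$ lies in the image.

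Finally I would compute the generic fiber. By the identification recalled in the construction, $\mathcal{G}_{[B]}$ is the kernel $U$ of the natural map $S^2H^0(K_B)\to H^0(2K_B)$, so the fiber of ${\proj}\mathcal{G}^{\vee}\to\mathcal{M}_5$ over a general $[B]$ is ${\proj}U^{\vee}$; transporting via the birational map $\mathcal{P}$ gives $F^{-1}([B])\sim{\proj}U^{\vee}$. The one point needing a separate (small) argument --- and the main obstacle --- is to check that the generic fiber of the quotient map ${\proj}\mathcal{F}^{\vee}/{\PGL}_5\to\mathbb{G}(2,{\proj}V)/{\PGL}_5$ really is the full fiber ${\proj}(\mathcal{F}_P^{\vee})$ of ${\proj}\mathcal{F}^{\vee}\to\mathbb{G}(2,{\proj}V)$; for this it is enough that the stabilizer of a general net $P$ in ${\PGL}_5$ is trivial, which holds because that stabilizer is canonically identified with $\aut(B)$ --- a projective transformation preserving $P$ preserves its base locus $B$, and conversely every automorphism of the canonically embedded curve $B$ extends to ${\proj}^4$ and preserves the quadrics through $B$ --- and $\aut(B)$ is trivial for general $B\in\mathcal{M}_5$.
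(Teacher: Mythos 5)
Your proof is correct and follows essentially the same route as the paper: the corollary is read off from the identification ${\proj}\mathcal{E}\simeq{\proj}\mathcal{F}^{\vee}$ and the birational equivalence ${\proj}\mathcal{F}^{\vee}/{\PGL}_5\sim{\proj}\mathcal{G}^{\vee}$ over $\mathcal{M}_5$, with the key point (which the paper also invokes) being that the stabilizer of a general net of quadrics is $\aut(B)$, trivial for general $B$ of genus $5$. Your write-up merely makes explicit the steps the paper leaves implicit.
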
 

The first bundle structure ${\proj}\mathcal{E}\to\mathbb{G}(1, {\proj}V)$ corresponds to the quotient surface map 
$\mathcal{M}_{6,6,1}\dashrightarrow\mathcal{M}_{DP}(4)$, $(X, \iota)\mapsto X/\iota$, 
while the second one ${\proj}\mathcal{F}^{\vee}\to\mathbb{G}(2, {\proj}V)$ corresponds to the fixed curve map. 
The latter is easier to handle with thanks to the absence of automorphism of general $g=5$ curves.


\subsection{$\mathcal{M}_{7,7,1}$ and cubic surfaces}\label{sec:(7,7,1)}

If $Y \subset {\proj}^3$ is a cubic surface, 
the restriction map $H^0(\mathcal{O}_{{\proj}^3}(2)) \to H^0(-2K_Y)$ is isomorphic. 
Hence a general point of $U=|\mathcal{O}_{{\proj}^3}(3)|\times|\mathcal{O}_{{\proj}^3}(2)|$ 
corresponds to a pair $(Y, B)$ of a smooth cubic surface $Y$ and a smooth $-2K_Y$-curve $B$, 
which gives a 2-elementary $K3$ surface of type $(7, 7, 1)$. 
This induces a period map $\mathcal{P}\colon U/{\PGL}_4\dashrightarrow\mathcal{M}_{7,7,1}$. 
Since cubic surfaces are anticanonically embedded, $\mathcal{P}$ is generically injective. 
By the equality ${\dim}(U/{\PGL}_4)=13$, we see that $\mathcal{P}$ is birational. 

\begin{proposition}\label{rational (7,7,1)}
The quotient $U/{\PGL}_4$ is rational. 
Therefore $\mathcal{M}_{7,7,1}$ is rational. 
\end{proposition}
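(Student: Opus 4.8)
The plan is to exhibit $U/{\PGL}_4$ as a projectivized vector bundle over a base that is already known, or easily shown, to be rational, and then invoke the no-name method (Proposition~\ref{no-name 2}) to split off the projective factor. Concretely, consider the two natural projections of $U=|\mathcal{O}_{{\proj}^3}(3)|\times|\mathcal{O}_{{\proj}^3}(2)|$. The first, $U\to|\mathcal{O}_{{\proj}^3}(3)|$, sends $(Y,B)$ to the cubic surface $Y$; it is ${\PGL}_4$-equivariant, and ${\PGL}_4$ acts on the space of smooth cubic surfaces with finite (generically trivial, after passing to the open locus of surfaces with no automorphisms) stabilizers. Restricting to that open locus, the fibers of $U\to|\mathcal{O}_{{\proj}^3}(3)|$ are the linear spaces $|\mathcal{O}_{{\proj}^3}(2)|$, on which the stabilizer acts linearly; so $U$ is ${\PGL}_4$-birational to the total space of a ${\PGL}_4$-linearized vector bundle over $|\mathcal{O}_{{\proj}^3}(3)|^{\mathrm{sm}}$.

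First I would record that the moduli space of smooth cubic surfaces, $|\mathcal{O}_{{\proj}^3}(3)|/{\PGL}_4$, is rational — this is a classical theorem (one may cite the standard references, e.g. the work on the Cross ratio variety / Coble's invariants, or simply note $\mathcal{M}_{DP}(3)$ is rational). Then I would check the hypotheses of Proposition~\ref{no-name 2}: the group $G_0$ of elements of ${\PGL}_4$ acting trivially on the base $|\mathcal{O}_{{\proj}^3}(3)|$ is trivial (a projective transformation fixing all cubic surfaces is the identity), so in fact $\overline{G}={\PGL}_4$ acts almost freely on the open locus of cubics with trivial automorphism group. Hence Proposition~\ref{no-name 1} applies directly to the ${\PGL}_4$-linearized vector bundle $E=H^0(\mathcal{O}_{{\proj}^3}(2))\otimes\mathcal{O}$ over the base, and $\mathbb{P}E/{\PGL}_4$ is birational to $\mathbb{P}^N\times\big(|\mathcal{O}_{{\proj}^3}(3)|/{\PGL}_4\big)$, where $N=\dim|\mathcal{O}_{{\proj}^3}(2)|=9$. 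Since the base quotient is rational and the product of a projective space with a rational variety is rational, we conclude $U/{\PGL}_4$ is rational; by the birationality of $\mathcal{P}$ established just above the statement, $\mathcal{M}_{7,7,1}$ is rational.

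The main obstacle is the rationality of the base $|\mathcal{O}_{{\proj}^3}(3)|/{\PGL}_4$, i.e.\ the moduli space of cubic surfaces. If one does not wish to cite that result, an alternative is to use the slice method (Proposition~\ref{slice}) with a different equivariant fibration — for instance, fixing six general points in ${\proj}^2$ (a cubic surface is $\mathrm{Bl}_6{\proj}^2$), one reduces to a small finite stabilizer acting on a linear system, and then a no-name argument against a suitable $G$-invariant subsystem finishes the job; this is essentially the pattern of \S\S\ref{sec:(2,2,1)}--\ref{sec:(4,4,1)}. Either way the computation of the relevant fiber dimensions and the verification that the generic stabilizer is trivial are routine once the right equivariant map is chosen, so the only genuinely substantive input is the rationality of the cubic-surface moduli, which I would simply quote.
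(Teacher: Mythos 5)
Your argument is correct, but it is not the route the paper takes for its proof: it is precisely the alternative the paper records in the one-line remark immediately after Corollary \ref{fixed curve (7,7,1)} (``apply the no-name lemma to the projection $U\to|\mathcal{O}_{{\proj}^3}(3)|$ and resort to the rationality of $\mathcal{M}_{DP}(3)$''). The paper's own proof instead fibers $U/{\PGL}_4$ over $\mathcal{M}_4$ by sending $(Y,Q)$ to the canonical genus-$4$ curve $B=Y\cap Q$; since the quadric through a general canonical genus-$4$ curve is unique, the fiber over $B$ is the linear system of cubics containing $B$, whence $U/{\PGL}_4\sim{\proj}\mathcal{E}\sim\mathcal{M}_4\times{\proj}^4$ and everything reduces to Shepherd-Barron's rationality of $\mathcal{M}_4$. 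That choice is deliberate: it simultaneously determines the fixed curve map $\mathcal{M}_{7,7,1}\to\mathcal{M}_4$ and its fibers, which is one of the stated aims of the paper, and it sidesteps automorphisms since a general genus-$4$ curve has none. Your route buys a shorter argument at the cost of quoting the classical rationality of the cubic-surface moduli rather than of $\mathcal{M}_4$. One point to tighten: $H^0(\mathcal{O}_{{\proj}^3}(2))$ is \emph{not} a ${\PGL}_4$-representation --- the center $\mu_4\subset{\SL}_4$ acts on degree-$2$ forms by $\zeta\mapsto\zeta^{\pm2}$, which is $-1$ at $\zeta=\sqrt{-1}$ --- so Proposition \ref{no-name 1} does not apply ``directly'' to $E=H^0(\mathcal{O}_{{\proj}^3}(2))\times|\mathcal{O}_{{\proj}^3}(3)|$ as you assert. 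You must first twist $E$ by a suitable power of the tautological bundle of $|\mathcal{O}_{{\proj}^3}(3)|$ (on which $\mu_4$ acts by $\zeta^{\pm3}$, a generator of its character group, so e.g.\ $E\otimes\mathcal{O}(\pm2)$ is ${\PGL}_4$-linearized) and use the canonical isomorphism ${\proj}E\simeq{\proj}(E\otimes L)$; this is exactly what Proposition \ref{no-name 2} is for, and it is the pattern followed in Propositions \ref{rational (6,4,0)}, \ref{rational (6,4,1)} and \ref{rational (9,5)}. With that routine correction your proof is complete.
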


\begin{proof}
For a general $(Y, Q)\in U$ the intersection $B=Y\cap Q$ is a canonical $g=4$ curve. 
This induces a rational map $f\colon U/{\PGL}_4\dashrightarrow\mathcal{M}_4$, 
which is identified with the fixed curve map $\mathcal{M}_{7,7,1}\to\mathcal{M}_4$.  
The $f$-fiber over a general $B\in\mathcal{M}_4$ is the linear system of cubics containing $B$, 
for the quadric containing $B$ is unique. 
Therefore, 
if $\pi\colon\mathcal{X}_4\to\mathcal{M}_4$ is the universal curve (over an open locus) 
and $\mathcal{E}$ is the kernel of the bundle map $S^3\pi_{\ast}K_{\pi}\to\pi_{\ast}K_{\pi}^3$ 
where $K_{\pi}$ is the relative canonical bundle, 
then we have the birational equivalence 
\begin{equation}
U/{\PGL}_4 \sim {\proj}\mathcal{E} \sim \mathcal{M}_4\times{\proj}^4. 
\end{equation}
The rationality of $\mathcal{M}_4$ is proved by Shepherd-Barron \cite{SB1}. 
\end{proof}

\begin{corollary}\label{fixed curve (7,7,1)}
The fixed curve map $\mathcal{M}_{7,7,1}\to\mathcal{M}_4$ is dominant, 
with the fiber over a general $B$ being birationally identified with 
the projectivization of the kernel of the natural map $S^3H^0(K_B)\to H^0(3K_B)$.  
\end{corollary}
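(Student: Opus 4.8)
The plan is to extract the statement from the proof of Proposition \ref{rational (7,7,1)}. There we produced a birational period map $\mathcal{P}\colon U/{\PGL}_4 \dashrightarrow \mathcal{M}_{7,7,1}$ with $U = |\mathcal{O}_{{\proj}^3}(3)| \times |\mathcal{O}_{{\proj}^3}(2)|$, and we observed that, under this identification, the fixed curve map $F$ corresponds to the rational map $f\colon U/{\PGL}_4 \dashrightarrow \mathcal{M}_4$ carrying a pair $(Y, Q)$ to the canonical genus four curve $Y \cap Q$; equivalently $f = F \circ \mathcal{P}$. Since that proof further exhibited $U/{\PGL}_4 \sim {\proj}\mathcal{E} \sim \mathcal{M}_4 \times {\proj}^4$ with $f$ the first projection, $f$ is dominant, hence so is $F = f \circ \mathcal{P}^{-1}$. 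This disposes of the first assertion.

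For the fiber, I would first reduce the fixed curve map to $f$: as $\mathcal{P}$ restricts to an isomorphism over a dense open subset of $\mathcal{M}_{7,7,1}$, and both spaces lie over $\mathcal{M}_4$ compatibly, $\mathcal{P}$ carries the general fiber of $f$ birationally onto the general fiber of $F$. So it is enough to describe $f^{-1}(B)$ for a general $B \in \mathcal{M}_4$. By the classical geometry of canonical genus four curves recalled in the proof of Proposition \ref{rational (7,7,1)}, $B \subset {\proj}^3$ lies on a unique quadric, determined by $B$; therefore $f^{-1}(B)$ is precisely the linear system $|I_B(3)|$ of cubic surfaces through $B$.

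It then remains to rewrite $|I_B(3)|$ intrinsically. Since $B$ is canonically embedded, ${\proj}^3 = {\proj}H^0(K_B)^{\vee}$, whence $H^0(\mathcal{O}_{{\proj}^3}(3)) = S^3 H^0(K_B)$ and $\mathcal{O}_{{\proj}^3}(3)|_B \simeq 3K_B$. Twisting the ideal-sheaf sequence $0 \to I_B \to \mathcal{O}_{{\proj}^3} \to \mathcal{O}_B \to 0$ by $\mathcal{O}(3)$ and taking sections gives $H^0(I_B(3)) = \ker\bigl(S^3 H^0(K_B) \to H^0(3K_B)\bigr)$, the natural multiplication-restriction map. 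Hence $f^{-1}(B) = {\proj}H^0(I_B(3))$ is the projectivization of this kernel, which is the assertion.

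Since this is a corollary, there is no real obstacle: the one point needing a word of care is the transfer of the phrase \emph{general fiber} along the birational map $\mathcal{P}$, which is routine. As a consistency check (not needed for the statement), comparing $\dim f^{-1}(B) = 4$ with $\dim S^3 H^0(K_B) - \dim H^0(3K_B) = 20 - 15 = 5$ shows the restriction map is surjective, recovering $h^0(I_B(3)) = 5$.
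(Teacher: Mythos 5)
Your argument is exactly the paper's: the corollary is read off from the proof of Proposition \ref{rational (7,7,1)}, where $f$ is identified with the fixed curve map, the fiber over a general $B$ is the system of cubics through $B$ (the quadric being unique), and this is rewritten as ${\proj}\ker\bigl(S^3H^0(K_B)\to H^0(3K_B)\bigr)$. The dimension check at the end is a nice sanity confirmation but adds nothing beyond what the paper already uses.
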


One may also deduce the rationality of $\mathcal{M}_{7,7,1}$ by 
applying the no-name lemma to the projection $U\to|\mathcal{O}_{{\proj}^3}(3)|$ and 
resorting to the rationality of $\mathcal{M}_{DP}(3)$ (cf. \cite{Do}).


\subsection{$\mathcal{M}_{8,8,1}$ and quadric del Pezzo surfaces}\label{sec:(8,8,1)}

As before, one may use the fibration 
$\mathcal{M}_{8,8,1}\dashrightarrow\mathcal{M}_{DP}(2)$, $(X, \iota)\mapsto X/\iota$, 
to reduce the rationality of $\mathcal{M}_{8,8,1}$ to that of $\mathcal{M}_{DP}(2)$ due to Katsylo \cite{Ka3}. 
However, in order to describe the fixed curve map, we here adopt a more roundabout approach. 

Recall that if $Y$ is a quadric del Pezzo surface, 
its anticanonical map $\phi\colon Y\to{\proj}^2$ is a double covering branched over a smooth quartic $\Gamma$. 
The correspondence $Y\mapsto\Gamma$ induces a birational map 
$\mathcal{M}_{DP}(2)\dashrightarrow\mathcal{M}_3$. 
The covering transformation $i$ of $\phi$ is the \textit{Geiser involution} of $Y$. 
Its fixed curve $Y^i$ belongs to $|\!-\!2K_Y|$. 
Let $V_-\subset H^0(-2K_Y)$ be the line given by $Y^i$ and 
let $V_+=\phi^{\ast}H^0({\Oplane}(2))\subset H^0(-2K_Y)$. 
We have the $i$-decomposition $H^0(-2K_Y)=V_+\oplus V_-$ 
where $i^{\ast}|_{V_{\pm}}=\pm1$. 

\begin{lemma}\label{basics of |-2K| (8,8,1)}
Every smooth curve $B\in|\!-\!2K_Y|$ has genus $3$, 
and the map $\phi|_B\colon B\to{\proj}^2$ is a canonical map of $B$. 
In particular, $B$ is hyperelliptic if and only if $B\in{\proj}V_+$. 
\end{lemma}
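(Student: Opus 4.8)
The plan is to compute the genus and the canonical map of $B$ directly, and then to read off the hyperelliptic criterion from the decomposition $H^0(-2K_Y)=V_+\oplus V_-$.

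First I would compute the genus. Since $Y$ is a del Pezzo surface of degree $2$, one has $(K_Y.K_Y)=2$, and by adjunction a smooth $B\in|\!-\!2K_Y|$ has
\begin{equation*}
2g(B)-2 = (B.(B+K_Y)) = (-2K_Y.(-2K_Y+K_Y)) = (-2K_Y.-K_Y) = 2(K_Y.K_Y) = 4,
\end{equation*}
so $g(B)=3$. Next I would identify $K_B$. By adjunction $K_B = (K_Y + B)|_B = (K_Y - 2K_Y)|_B = (-K_Y)|_B$. Thus the canonical system of $B$ is cut out by $|\!-\!K_Y|_B$; since $\phi = \phi_{|-K_Y|}\colon Y\to{\proj}^2$ is the anticanonical map and $B$ is not contracted by $\phi$, the restriction $\phi|_B\colon B\to{\proj}^2$ is precisely the canonical map of $B$ (one should check $h^0(-K_Y|_B)=3 = h^0(K_B)$, which follows from the exact sequence $0\to -K_Y - B = K_Y \to -K_Y \to -K_Y|_B\to 0$ together with $h^0(K_Y)=h^1(K_Y)=0$, $h^1(-K_Y)=0$ on a del Pezzo surface).

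It then remains to characterize when $\phi|_B$ fails to be an embedding, i.e. when $B$ is hyperelliptic. Here the key observation is that $\phi$ is a degree-$2$ cover of ${\proj}^2$ branched along the quartic $\Gamma = Y^i$, where $i$ is the Geiser involution. If $B\subset{\proj}V_+$, then $B = \phi^{-1}(Q)$ for a conic $Q\subset{\proj}^2$, and $\phi|_B\colon B\to Q\cong{\proj}^1$ is a degree-$2$ morphism, so $B$ is hyperelliptic (with $g^1_2$ induced by $\phi$), and its canonical map factors through $Q$ — consistent with the canonical map not being an embedding. Conversely, if $B\notin{\proj}V_+$, I would argue that $\phi|_B$ is birational onto its image: a defining section of $B$ has a nonzero $V_-$-component, so $B$ is not $i$-invariant, hence $i$ does not preserve $B$ and the general fiber of $\phi|_B$ is a single point; being a degree-$4$ plane curve mapping birationally via its canonical map, $B$ is then non-hyperelliptic. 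The one delicate point — and the step I expect to require the most care — is ruling out that $\phi|_B$ could still be $2$-to-$1$ onto a conic even when $B$ is not $i$-invariant, or more generally that the canonical map could be $2$-to-$1$ for some other reason; this is handled by noting that $\deg(\phi|_B) = \tfrac12\deg(B\to\phi(B))\cdot\deg\phi(B)$ forces $\phi|_B$ birational onto a quartic unless $i(B)=B$, since $(\,-K_Y.B\,) = 2(K_Y.K_Y)=4$ and the only factorization compatible with a non-$i$-invariant $B$ is the birational one. Combining the two directions gives: $B$ is hyperelliptic $\iff$ $B\in{\proj}V_+$.
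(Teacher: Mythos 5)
Your proof is correct and follows essentially the same route as the paper: adjunction plus the vanishings $h^0(K_Y)=h^1(K_Y)=0$ identify $\phi|_B$ with the canonical map, and the $i$-decomposition $V_+\oplus V_-$ settles the hyperelliptic criterion. One small overstatement in your converse direction: ``a nonzero $V_-$-component implies $B$ is not $i$-invariant'' fails for the single curve $B=Y^i\in{\proj}V_-$ (where $i^{\ast}s=-s$, so $i(B)=B$); that case is harmless, since $\phi$ maps $Y^i$ isomorphically onto the smooth quartic $\Gamma$, which is non-hyperelliptic, but it should be noted separately rather than folded into the ``not $i$-invariant'' argument.
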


\begin{proof}
The first sentence follows from the adjunction formula $-K_Y|_B\simeq K_B$ and 
the vanishings $h^0(K_Y)=h^1(K_Y)=0$. 
By the $\iota$-decomposition of $H^0(-2K_Y)$ we have $i(B)=B$ if and only if $B\in{\proj}V_{\pm}$. 
Hence $\phi|_B$ is generically injective (actually an embedding) unless $B\in{\proj}V_+$, 
which proves the last assertion. 
\end{proof}

Let $M_{\Gamma}\subset|{\Oplane}(4)|$ be the cone 
over the locus of double conics $2Q$, $Q\in|{\Oplane}(2)|$, with vertex $\Gamma\in|{\Oplane}(4)|$. 
It is the closure of the locus of smooth quartics tangent to $\Gamma$ at eight points lying on a conic. 

\begin{lemma}\label{cone structure}
The morphism $\phi_{\ast}\colon|\!-\!2K_Y|\to|{\Oplane}(4)|$ 
induces an isomorphism between $|\!-\!2K_Y|/i$ and $M_{\Gamma}$ 
which maps the pencils $\langle \phi^{\ast}Q, Y^i\rangle$, $Q\in|{\Oplane}(2)|$, 
to the pencils $\langle 2Q, \Gamma\rangle$. 
\end{lemma}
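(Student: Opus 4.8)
The plan is to analyze the $i$-equivariant structure of $|\!-\!2K_Y|$ and transport it via $\phi_\ast$ to $|\Oplane(4)|$. We have the decomposition $H^0(-2K_Y) = V_+ \oplus V_-$ with $\dim V_+ = 6$ and $\dim V_- = 1$, so $|\!-\!2K_Y| = \proj(V_+ \oplus V_-)$ has dimension $6$. The quotient $|\!-\!2K_Y|/i$ is therefore birationally the join of $\proj V_+ \simeq \proj^5$ and the point $\proj V_- = \{Y^i\}$, quotiented by the residual $i$-action; but since $i$ acts by $+1$ on $V_+$ and $-1$ on $V_-$, and a point of $\proj(V_+\oplus V_-)$ is of the form $[v_+ : v_-]$, the involution identifies $[v_+ : v_-]$ with $[v_+ : -v_-]$. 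First I would observe that $\phi_\ast$ kills nothing: since $\phi$ is generically $2{:}1$, for $B \in |\!-\!2K_Y|$ the cycle $\phi_\ast B$ is a quartic plane curve, and $\phi_\ast(\phi^\ast Q) = 2Q$ for a conic $Q$, while $\phi_\ast(Y^i) = \Gamma$ (the branch quartic), because $Y^i$ maps isomorphically onto $\Gamma$ — indeed $\phi|_{Y^i}$ is the restriction of the double cover to its branch locus.

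**The explicit formula for $\phi_\ast$.** Next I would make the map $\phi_\ast\colon |\!-\!2K_Y| \to |\Oplane(4)|$ explicit on the decomposition. Write an element of $|\!-\!2K_Y|$ as the zero locus of $s = \phi^\ast t \cdot u + \lambda\, w$, where $t \in H^0(\Oplane(2))$, $u$ is a fixed generator of... — more simply, using that $V_+ = \phi^\ast H^0(\Oplane(2))$ and picking a generator $w$ of $V_-$ with $w^2 \in \phi^\ast H^0(\Oplane(4))$ equal to (a constant times) $\phi^\ast(\text{equation of }\Gamma)$. A general member of $|\!-\!2K_Y|$ is $\{\phi^\ast q + \lambda w = 0\}$ with $q \in H^0(\Oplane(2))$ and $\lambda \in \C$. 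Pushing forward: the curve $\{\phi^\ast q + \lambda w = 0\}$ is the locus where $\phi^\ast q = -\lambda w$, i.e. where $(\phi^\ast q)^2 = \lambda^2 w^2$, i.e. $\phi^\ast(q^2) = \lambda^2 \phi^\ast(\gamma)$ where $\gamma$ is the quartic equation of $\Gamma$. Hence $\phi_\ast$ of this member is the quartic $\{q^2 - \lambda^2 \gamma = 0\} \in |\Oplane(4)|$. As $q$ ranges over $|\Oplane(2)| \simeq \proj^5$ and $[\lambda^2]$ over... we see the image is exactly $\{[q^2 - \mu\gamma] : q \in H^0(\Oplane(2)),\ \mu \in \C\}$, which is precisely the cone $M_\Gamma$ over the Veronese locus of double conics $\{[q^2]\}$ with vertex $[\gamma] = \Gamma$. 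Moreover $\phi_\ast(\langle \phi^\ast Q, Y^i\rangle) = \langle 2Q, \Gamma\rangle$ is immediate from this formula by letting $q$ be fixed $= Q$ and varying $\lambda$.

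**Injectivity and the isomorphism.** The remaining point is that $\phi_\ast$ is constant on $i$-orbits and separates distinct orbits, hence descends to an isomorphism $|\!-\!2K_Y|/i \xrightarrow{\sim} M_\Gamma$. That it is $i$-invariant: $\phi\circ i = \phi$, so $\phi_\ast(i(B)) = \phi_\ast(B)$. For injectivity on orbits: from the quartic $q^2 - \lambda^2\gamma$ one recovers the pair $([q^2],[\gamma])$ — equivalently the unordered pair $\{[q]_{\text{up to sign}}, \lambda^2\}$ — hence recovers $\{\phi^\ast q \pm \lambda w\} = \{B, i(B)\}$; the subtlety is that the pencil $\langle q^2, \gamma\rangle$ determines $\lambda^2$ only up to the choice of representative of the pencil, which is exactly the ambiguity $[q^2 - \lambda^2\gamma] = [q'^2 - \lambda'^2\gamma]$ forcing $q' = cq$, $\lambda'^2 = c^2\lambda^2$ — and this rescaling is absorbed in passing to $\proj(V_+\oplus V_-)$. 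So the fibers of $\phi_\ast$ are precisely the $i$-orbits. Since $|\!-\!2K_Y|/i$ and $M_\Gamma$ are both irreducible of the same dimension and $\phi_\ast$ is a bijective morphism onto a normal variety (a cone over a smooth Veronese variety, whose only singularity is the vertex $\Gamma$, matched with the point $\proj V_-$ where $i$ acts freely... — actually $i$ fixes $\proj V_+$ and $\proj V_-$ pointwise so the quotient $|\!-\!2K_Y|/i$ may be singular along the image of $\proj V_+$), I would verify normality of the target directly and conclude via Zariski's main theorem.

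**Main obstacle.** The hard part will be the precise bookkeeping of the $i$-fixed loci and the vertex/Veronese structure to be sure $\phi_\ast$ is an isomorphism and not merely a bijective morphism or a normalization: one must check that $M_\Gamma$ is exactly the scheme-theoretic image and that the map is unramified, i.e. that $\phi_\ast$ identifies $|\!-\!2K_Y|/i$ with $M_\Gamma$ as varieties rather than inducing a purely inseparable-looking $2{:}1$ collapse anywhere. Concretely, I expect the delicate point to be the behavior along $\proj V_+$ (the hyperelliptic members $B = \phi^\ast Q$, where $B = i(B)$): there $\phi_\ast B = 2Q$ is a double conic, so these map into the Veronese vertex-directions of the cone, and one must confirm the quotient map is an isomorphism there too, using that $i$ acts trivially on $\proj V_+$ so no folding occurs. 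Everything else is the elementary identity $(\phi^\ast q)^2 = \phi^\ast(q^2)$ and the description of $M_\Gamma$ as a cone, which require no real work.
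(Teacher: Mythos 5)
Your argument is correct, and it reaches the conclusion by a genuinely different route than the paper. The paper works intrinsically: for $B$ in a pencil $\langle \phi^{\ast}Q, Y^i\rangle$ it computes $B|_{Y^i}=\phi^{\ast}Q|_{Y^i}=\phi^{-1}(Q\cap\Gamma)$, concludes that $\phi_{\ast}B$ is tangent to $\Gamma$ at the eight points $Q\cap\Gamma$ lying on the conic $Q$ (which is exactly the defining property of $M_{\Gamma}$ as a closure), and then finishes with the single dimension count $\dim M_{\Gamma}=\dim|\!-\!2K_Y|=6$. You instead use the model $w^2=\gamma$ of the double cover to get the explicit formula $\phi_{\ast}\{\phi^{\ast}q+\lambda w=0\}=\{q^2-\lambda^2\gamma=0\}$, which exhibits the image as the cone $\{[q^2-\mu\gamma]\}$ directly and, more importantly, lets you compute the fibers of $\phi_{\ast}$ and check they are exactly the $i$-orbits. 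That last point is a real gain: the paper's ``proves the assertion'' leaves the passage from dominance onto $M_{\Gamma}$ to the isomorphism statement implicit, whereas your recovery of $(q,\lambda)$ from $q^2-\lambda^2\gamma$ up to the sign ambiguity $\lambda\mapsto-\lambda$ (using that $\gamma$ is irreducible, hence neither a square nor a combination $\alpha q_1^2+\beta q_2^2$, so distinct rulings of the cone meet only at the vertex) makes the degree-$2$ collapse onto $i$-orbits explicit. What the paper's route buys in exchange is coordinate-freeness and the tangency description of $M_{\Gamma}$, which is reused immediately afterwards in the construction of the period map for $\mathcal{M}_{8,8,1}$. Your closing concerns are well placed but resolvable exactly as you suggest: the induced map $|\!-\!2K_Y|/i\to M_{\Gamma}$ is a bijective, hence finite, birational morphism of projective varieties, so normality of the cone $M_{\Gamma}$ (cone over the image of the injective immersion $[q]\mapsto[q^2]$ with vertex $[\gamma]$ off its secant and tangent varieties, again by irreducibility of $\gamma$) upgrades it to an isomorphism; along $\proj V_+$ no folding occurs since $i$ acts trivially there and $[q]\mapsto[q^2]$ is an embedding. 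For the purposes to which the lemma is put (birational identifications of moduli spaces), even a weaker ``isomorphism over a dense open set'' would suffice, so none of these refinements threatens the argument.
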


\begin{proof}
If $B\in\langle\phi^{\ast}Q, Y^i\rangle$, then $B|_{Y^i}=\phi^{\ast}Q|_{Y^i}=\phi^{-1}(Q\cap\Gamma)$ 
so that $\phi_{\ast}B$ is tangent to $\Gamma$ at $Q\cap\Gamma$. 
Hence $\phi_{\ast}(|\!-\!2K_Y|)\subset M_{\Gamma}$, 
and the equality ${\dim}\, M_{\Gamma}={\dim}\,|\!-\!2K_Y|$ proves the assertion. 
\end{proof}


Now let $U\subset|{\Oplane}(4)|\times|{\Oplane}(4)|$ be the locus of pairs $(\Gamma, \Gamma')$ such that 
$\Gamma$ and $\Gamma'$ are smooth and tangent to each other at eight points lying on a conic. 
For a $(\Gamma, \Gamma')\in U$ we take the double cover $\phi\colon Y\to{\proj}^2$ branched over $\Gamma$. 
By Lemma \ref{cone structure} we have $\phi^{\ast}\Gamma'=B+i(B)$ for a smooth $B\in|\!-\!2K_Y|$ 
where $i$ is the Geiser involution of $Y$. 
Taking the 2-elementary $K3$ surface corresponding to $(Y, B)\simeq(Y, i(B))$,  
we obtain a well-defined morphism $U\to\mathcal{M}_{8,8,1}$, 
which descends to a rational map $\mathcal{P}\colon U/{\PGL}_3\dashrightarrow\mathcal{M}_{8,8,1}$. 

\begin{proposition}\label{birat (8,8,1)}
The map $\mathcal{P}$ is birational. 
\end{proposition}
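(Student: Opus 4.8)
The plan is to construct an explicit rational inverse of $\mathcal{P}$; the dimension count serves as a first check. For a smooth quartic $\Gamma$ the cone $M_{\Gamma}$ has dimension $6$, so $\dim U = 14+6 = 20$, and since a general smooth plane quartic has trivial stabilizer in ${\PGL}_3$, the group ${\PGL}_3$ acts on $U$ almost freely; thus $\dim(U/{\PGL}_3) = 12 = \dim\mathcal{M}_{8,8,1}$. It therefore suffices to produce a rational map $\mathcal{Q}\colon\mathcal{M}_{8,8,1}\dashrightarrow U/{\PGL}_3$ that inverts $\mathcal{P}$ on both sides.

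First I would define $\mathcal{Q}$. Let $(X,\iota)\in\mathcal{M}_{8,8,1}$ be general. By the correspondence between 2-elementary $K3$ surfaces and right DPN pairs (\S\ref{ssec: right resol}), $(X,\iota)$ determines, canonically up to isomorphism, a right DPN pair $(Y,B)$ with $Y=X/\iota$ a smooth quadric del Pezzo surface and $B\in|\!-\!2K_Y|$ smooth; the only residual ambiguity is the interchange $B\leftrightarrow i(B)$ under the Geiser involution $i\in{\aut}(Y)$. The anticanonical morphism of $Y$ is intrinsic, and after fixing an isomorphism $|\!-\!K_Y|^{\vee}\cong{\proj}^2$ (unique up to ${\PGL}_3={\PGL}(H^0(-K_Y)^{\vee})$) it becomes the double cover $\phi\colon Y\to{\proj}^2$ branched along a smooth quartic $\Gamma$. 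Set $\Gamma'=\phi_{\ast}B$. For general $B$, Lemma \ref{basics of |-2K| (8,8,1)} gives that $B$ is a non-hyperelliptic genus $3$ curve and $\phi|_B$ is its canonical embedding, so $\Gamma'=\phi(B)$ is a smooth plane quartic; by Lemma \ref{cone structure}, $\Gamma'$ is a general member of $M_{\Gamma}$, hence tangent to $\Gamma$ at the eight points $Q\cap\Gamma$ for the corresponding conic $Q$, so that $(\Gamma,\Gamma')\in U$. As $i$ acts trivially on ${\proj}^2$ we have $\phi_{\ast}i(B)=\phi_{\ast}B$, so $\mathcal{Q}(X,\iota):=[(\Gamma,\Gamma')]\in U/{\PGL}_3$ is well defined.

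Then I would verify $\mathcal{P}\circ\mathcal{Q}={\rm id}$ and $\mathcal{Q}\circ\mathcal{P}={\rm id}$. Passing from $(X,\iota)$ to $(\Gamma,\Gamma')$ and applying $\mathcal{P}$: one takes the double cover of ${\proj}^2$ branched along $\Gamma$, which is $\phi\colon Y\to{\proj}^2$ again, and forms the $K3$ surface of $(Y,B_0)$ with $\phi^{\ast}\Gamma'=B_0+i(B_0)$; but $\Gamma'=\phi_{\ast}B$ with $B$ irreducible and $\phi|_B$ birational onto its image, so $\phi^{\ast}\phi_{\ast}B=B+i(B)$ and $B_0=B$ up to the flip, whence $\mathcal{P}(\mathcal{Q}(X,\iota))=(X,\iota)$. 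Conversely, for general $(\Gamma,\Gamma')\in U$ the surface $(X,\iota)=\mathcal{P}(\Gamma,\Gamma')$ is built from the right DPN pair $(Y,B)$ with $Y$ the branched double cover and $\phi^{\ast}\Gamma'=B+i(B)$; since $(Y,B)$ is exactly the pair associated with $(X,\iota)$, the anticanonical morphism of $Y$ recovers $\phi$ up to ${\PGL}_3$, its branch quartic recovers $\Gamma$, and $\phi_{\ast}B=\Gamma'$, so $\mathcal{Q}(\mathcal{P}(\Gamma,\Gamma'))=[(\Gamma,\Gamma')]$. Hence $\mathcal{P}$ is birational.

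The part I expect to require the most care is the genericity bookkeeping behind these identifications: that for a general $(X,\iota)$ the quotient $X/\iota$ really is a smooth degree-$2$ del Pezzo surface (automatic, since $\mathcal{P}$ is built from such) and the accompanying $B$ is general enough in $|\!-\!2K_Y|$ for $\phi|_B$ to be an embedding, so that $(\Gamma,\Gamma')$ genuinely lands in the open set $U$; and, dually, that for general $(\Gamma,\Gamma')\in U$ the pullback $\phi^{\ast}\Gamma'$ splits as $B+i(B)$ with $B$ smooth, which follows from $\Gamma'\in M_{\Gamma}$ (Lemma \ref{cone structure}) together with $\Gamma'$ being a smooth, hence normal, plane quartic. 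All of these reduce to Lemmas \ref{basics of |-2K| (8,8,1)} and \ref{cone structure} and the classical description of degree-$2$ del Pezzo surfaces recalled at the start of \S\ref{sec:(8,8,1)}.
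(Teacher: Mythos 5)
Your proof is correct and follows essentially the same route as the paper: both rest on Lemmas \ref{basics of |-2K| (8,8,1)} and \ref{cone structure}, the fact that ${\aut}(Y)=\langle i\rangle$ for a general quadric del Pezzo surface, and the dimension count $\dim(U/{\PGL}_3)=12$; the paper phrases this as generic injectivity of the fibration $U/{\PGL}_3\dashrightarrow\mathcal{M}_{DP}(2)$ plus the dimension count, whereas you write out the inverse $\mathcal{Q}$ explicitly. The only soft spot is the remark that a general $(X,\iota)$ has quotient a quadric del Pezzo surface ``automatic, since $\mathcal{P}$ is built from such,'' which is circular until dominance is known — but your dimension count (or the general fact recalled at the start of \S\ref{sec:k=0}) closes that gap.
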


\begin{proof}
By the birational equivalence $\mathcal{M}_{DP}(2)\sim\mathcal{M}_3$ and Lemma \ref{cone structure}, 
the first projection $U\to|{\Oplane}(4)|$, $(\Gamma, \Gamma')\mapsto\Gamma$, 
induces a fibration $U/{\PGL}_3\to\mathcal{M}_{DP}(2)$ whose fiber over 
a general $Y\in\mathcal{M}_{DP}(2)$ is an open set of $|\!-\!2K_Y|/i$. 
Since ${\aut}(Y)=\langle i\rangle$ for a general $Y$, 
this shows that $\mathcal{P}$ is generically injective.  
The equality ${\dim}(U/{\PGL}_3)=12$ concludes the proof. 
\end{proof}

\begin{proposition}\label{rational (8,8,1)}
The quotient $U/{\PGL}_3$ is rational. 
Hence $\mathcal{M}_{8,8,1}$ is rational. 
\end{proposition}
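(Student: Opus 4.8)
The plan is to read off the rationality of $U/\PGL_3$ from the fibration over $\mathcal{M}_{DP}(2)$ already exhibited in the proof of Proposition \ref{birat (8,8,1)}, and then to trivialize that fibration birationally over its base using the cone structure of $M_{\Gamma}$ supplied by Lemma \ref{cone structure}.

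By Proposition \ref{birat (8,8,1)} it suffices to prove that $U/\PGL_3$ is rational. Recall from the proof of that proposition that the first projection $(\Gamma,\Gamma')\mapsto\Gamma$ induces a dominant map $\rho\colon U/\PGL_3\to\mathcal{M}_{DP}(2)$ whose fiber over a general quadric del Pezzo surface $Y$ is a dense open subset of $|{-}2K_Y|/i$; here $\Gamma$ is the branch quartic of the anticanonical double cover $\phi\colon Y\to\proj^2$ and $i$ is the Geiser involution. By Lemma \ref{cone structure}, $\phi_{\ast}$ identifies $|{-}2K_Y|/i$ with the cone $M_{\Gamma}\subset|\Oplane(4)|$ over the locus $\{\,2Q\mid Q\in|\Oplane(2)|\,\}\cong\proj^5$ of double conics, with vertex the point $[\Gamma]$.

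A cone over $\proj^5$ with vertex a rational point is birational to $\proj^5\times\mathbb{A}^1$: projection away from the vertex realizes the cone minus its vertex as the total space of a line bundle over $\proj^5$, which is birationally trivial. To carry this out over $\mathcal{M}_{DP}(2)$, pass via the birational equivalence $\mathcal{M}_{DP}(2)\sim\mathcal{M}_3$ to the open locus of automorphism-free plane quartic curves, over which a universal curve with Hodge bundle $\mathbb{E}$ of rank $3$ exists. Over this locus the relative linear system $|\Oplane(2)|$ is the projective bundle $\proj(S^2\mathbb{E})$, an honest (Zariski-locally trivial) $\proj^5$-bundle, and the relative cone vertex $[\Gamma]$ is the section cut out by the line sub-bundle $\mathcal{F}\subset S^4\mathbb{E}$ defining the universal canonical quartic; this vertex section is single-valued precisely because the general plane quartic has no nontrivial automorphism, so its canonical model is canonically attached to its isomorphism class. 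Consequently $U/\PGL_3$ is birational to the total space of a line bundle over $\proj(S^2\mathbb{E})$, hence to $\proj(S^2\mathbb{E})\times\mathbb{A}^1$, hence to $\mathcal{M}_{DP}(2)\times\proj^5\times\mathbb{A}^1$. Since $\mathcal{M}_{DP}(2)$ is rational by Katsylo \cite{Ka3}, so is $U/\PGL_3$, and therefore $\mathcal{M}_{8,8,1}$ is rational.

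The main obstacle is this globalization step: one must ensure that the $\proj^5$ occurring as the base of the cone is a genuine projective space over $\C(\mathcal{M}_{DP}(2))$, which holds because it is the projectivization of an actual vector bundle $S^2\mathbb{E}$ rather than merely a Brauer--Severi bundle, and that the cone vertex gives a rational section, which is exactly where automorphism-freeness of the general plane quartic enters. An alternative route avoiding the universal family: apply the slice method (Proposition \ref{slice}) to the $\PGL_3$-equivariant map $U\to|\Oplane(2)|$ sending $(\Gamma,\Gamma')$ to the conic through its eight tangency points, reducing $U/\PGL_3$ to a quotient $W/\aut(Q_0)$ with $\aut(Q_0)\cong\PGL_2$ acting almost freely (again because the general quartic is automorphism-free); then split off a $\proj^1$-factor by the no-name method (Proposition \ref{no-name 2}); and finally identify $|\Oplane(4)|/\aut(Q_0)$ with the $\proj^5$-bundle $\proj(S^2\mathbb{E})\to\mathcal{M}_3$ using $\PGL_3/\aut(Q_0)\sim\{\text{smooth conics}\}$. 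Either way the argument bottoms out at the rationality of $\mathcal{M}_{DP}(2)\sim\mathcal{M}_3$.
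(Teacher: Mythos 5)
Your argument is correct, but it takes a genuinely different route from the paper's. The paper does not fiber over $\mathcal{M}_{DP}(2)$ at all: it considers the ${\PGL}_3$-equivariant map $\varphi\colon U\to|{\Oplane}(4)|\times|{\Oplane}(2)|$, $(\Gamma,\Gamma')\mapsto(\Gamma,Q)$, where $Q$ is the unique conic with $2Q\in\langle\Gamma,\Gamma'\rangle$; the $\varphi$-fibers are open subsets of the pencils $\langle 2Q,\Gamma\rangle$, so the no-name lemma gives $U/{\PGL}_3\sim{\proj}^1\times(|{\Oplane}(4)|\times|{\Oplane}(2)|)/{\PGL}_3$, and the latter quotient is handled by slicing over $|{\Oplane}(2)|$ (reducing to $|{\Oplane}(4)|/{\PGL}_2$ for the stabilizer of a smooth conic) and invoking Theorem \ref{Katsylo} on linear ${\SL}_2$-representations. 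Note that your conic --- the image of $\Gamma'$ under projection from the cone vertex --- is exactly the paper's $Q$, and your relative ${\proj}(S^2\mathbb{E})$ is birationally the same object as $(|{\Oplane}(4)|\times|{\Oplane}(2)|)/{\PGL}_3$; so the two decompositions of $U/{\PGL}_3$ into (affine line or pencil) $\times$ (quartic-plus-conic space) essentially agree, and the real divergence is in the last step. There the paper gets away with the elementary Katsylo--Bogomolov theorem on ${\SL}_2$-representations, whereas your route bottoms out at the rationality of $\mathcal{M}_3$, which is the much deeper result of \cite{Ka3}; in exchange you must (and do) address the globalization issues --- existence of the universal curve and Hodge bundle over the automorphism-free locus, absence of a Brauer obstruction for ${\proj}(S^2\mathbb{E})$, and the rationality of the vertex section --- none of which arise in the paper's proof. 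Your version has the side benefit of making the geometry of Proposition \ref{fixed curve (8,8,1)} (the cone structure of the $F$-fibers) do the work explicitly, but as a proof of rationality it is strictly heavier in its inputs.
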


\begin{proof}
We have a ${\PGL}_3$-equivariant dominant morphism 
\begin{equation*}
\varphi:U\to|{\Oplane}(4)|\times|{\Oplane}(2)|, \qquad (\Gamma, \Gamma')\mapsto(\Gamma, Q), 
\end{equation*}
where $Q$ is the unique conic with $2Q$ contained in the pencil $\langle\Gamma, \Gamma'\rangle$. 
The $\varphi$-fiber over a general $(\Gamma, Q)$ is identified with 
an open set of the pencil $\langle2Q, \Gamma\rangle$ in $|{\Oplane}(4)|$. 
Then we may use the no-name lemma \ref{no-name 2} to see that 
\begin{equation*}
U/{\PGL}_3 \sim {\proj}^1\times(|{\Oplane}(4)|\times|{\Oplane}(2)|)/{\PGL}_3. 
\end{equation*}
The quotient $(|{\Oplane}(4)|\times|{\Oplane}(2)|)/{\PGL}_3$ is rational by 
the slice method for the projection $|{\Oplane}(4)|\times|{\Oplane}(2)|\to|{\Oplane}(2)|$ 
and Katsylo's theorem \ref{Katsylo}. 
\end{proof}

By $\mathcal{P}$ we identify $\mathcal{M}_{8,8,1}$ birationally with $U/{\PGL}_3$. 
The first projection $U\to|{\Oplane}(4)|$, $(\Gamma, \Gamma')\mapsto\Gamma$, 
induces the quotient surface map 
\begin{equation*}
Q:\mathcal{M}_{8,8,1} \dashrightarrow \mathcal{M}_{DP}(2) \sim \mathcal{M}_3, 
\qquad (X, \iota)\mapsto X/\iota. 
\end{equation*}
On the other hand, the second projection $U\to|{\Oplane}(4)|$, $(\Gamma, \Gamma')\mapsto\Gamma'$, 
induces the fixed curve map $F\colon\mathcal{M}_{8,8,1}\to\mathcal{M}_3$. 
Let $J$ be the rational involution of $\mathcal{M}_{8,8,1}$ induced by the involution 
$(\Gamma, \Gamma')\mapsto(\Gamma', \Gamma)$ of $U$. 
Now we know that 

\begin{proposition}\label{fixed curve (8,8,1)}
The two fibrations $F, Q\colon\mathcal{M}_{8,8,1}\dashrightarrow\mathcal{M}_3$ 
are exchanged by the involution $J$ of $\mathcal{M}_{8,8,1}$. 
In particular, the generic fiber $F^{-1}(\Gamma')$ of $F$ is birationally identified with 
the cone $M_{\Gamma'}$ in ${\proj}(S^4H^0(K_{\Gamma'}))$. 
\end{proposition}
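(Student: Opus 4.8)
The plan is to transport everything, via the birational period map $\mathcal{P}$ of Proposition \ref{birat (8,8,1)}, back to $U/\PGL_3$, where the three objects in question become completely explicit: the involution $J$ corresponds to the swap $\bar\jmath\colon[(\Gamma,\Gamma')]\mapsto[(\Gamma',\Gamma)]$ (this is how $J$ was defined), and it remains only to identify the maps $Q$ and $F$ with the two coordinate projections $[(\Gamma,\Gamma')]\mapsto[\Gamma]$ and $[(\Gamma,\Gamma')]\mapsto[\Gamma']$, both regarded as maps into $\mathcal{M}_3$ via the standard identification of a smooth plane quartic with the point of $\mathcal{M}_3$ it defines (a non-hyperelliptic genus $3$ curve and its canonical model). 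Once these identifications are in place, the relations $F\circ J=Q$ and $Q\circ J=F$ are tautological, and the statement about the generic fibre of $F$ follows by transporting the known description of the generic fibre of $Q$ through $J$.

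First I would recall that for $(\Gamma,\Gamma')\in U$ with $(X,\iota)=\mathcal{P}(\Gamma,\Gamma')$, the quotient surface $X/\iota$ is the quadric del Pezzo surface $Y$ whose anticanonical double cover $\phi\colon Y\to\proj^2$ is branched over $\Gamma$, so under $\mathcal{M}_{DP}(2)\sim\mathcal{M}_3$ one has $Q(X,\iota)=[\Gamma]$; this is just a restatement of the construction preceding the proposition. For $F$ I would use that, by the DPN correspondence, the fixed curve $X^\iota$ is isomorphic to the branch curve $B\in|\!-\!2K_Y|$ of $X\to Y$, that $\phi_\ast B=\Gamma'$ by Lemma \ref{cone structure}, and that by Lemma \ref{basics of |-2K| (8,8,1)} the morphism $\phi|_B\colon B\to\proj^2$ is the canonical map of $B$. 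Hence $\Gamma'\subset\proj^2$ is precisely the canonical model of $X^\iota$, so $F(X,\iota)=[\Gamma']$ in $\mathcal{M}_3$. Since the identification of $\mathcal{M}_3$ with (smooth plane quartics)$/\PGL_3$ used here — canonical model of the fixed curve — is the same as the one used to define $\mathcal{M}_{DP}(2)\sim\mathcal{M}_3$ — branch quartic of the Geiser double cover — the swap $\bar\jmath$ interchanges $Q$ and $F$. The one point requiring care, though it is routine, is exactly this matching of the two a priori different ways of seeing a plane quartic as a point of $\mathcal{M}_3$; that is the crux of the ``exchange'' assertion, and I do not expect any genuine obstacle beyond it.

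For the second assertion I would invoke the proof of Proposition \ref{birat (8,8,1)}: there the fibre of $Q$ over a general $Y\in\mathcal{M}_{DP}(2)$ with branch quartic $\Gamma$ is shown to be a dense open subset of $|\!-\!2K_Y|/i$, which by Lemma \ref{cone structure} is isomorphic to the cone $M_\Gamma\subset|\Oplane(4)|$. Because $J$ is a birational involution of $\mathcal{M}_{8,8,1}$ with $F\circ J=Q$, it restricts to a birational map $F^{-1}(\Gamma')\dashrightarrow Q^{-1}(\Gamma')$ for general $\Gamma'$, so $F^{-1}(\Gamma')\sim M_{\Gamma'}$. Finally, writing the canonically embedded quartic $\Gamma'$ inside $\proj^2=|K_{\Gamma'}|^\vee$ gives $H^0(\Oplane(1))\cong H^0(K_{\Gamma'})$ and hence $|\Oplane(4)|=\proj(S^4H^0(K_{\Gamma'}))$, which exhibits $M_{\Gamma'}$ inside $\proj(S^4H^0(K_{\Gamma'}))$ as claimed. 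All the substantive work having been done in Proposition \ref{birat (8,8,1)} and Lemmas \ref{basics of |-2K| (8,8,1)} and \ref{cone structure}, the proof is essentially a matter of assembling these facts.
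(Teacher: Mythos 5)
Your proof is correct and follows essentially the same route as the paper, which states the proposition without explicit proof as an immediate consequence of identifying $Q$ and $F$ with the two coordinate projections of $U$ under $\mathcal{P}$. Your added verification that $F$ corresponds to $(\Gamma,\Gamma')\mapsto[\Gamma']$ — via Lemma \ref{basics of |-2K| (8,8,1)}, since $\phi|_B$ is the canonical map of $B\simeq X^{\iota}$ so that $\Gamma'$ is its canonical model — is exactly the detail the paper leaves implicit.
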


The fixed locus of $J$ contains the locus $\mathcal{B}\subset\mathcal{M}_{8,8,1}$ 
of pairs $(\Gamma, \Gamma)\in U$, $\Gamma\in|{\Oplane}(4)|$. 
Generically, $\mathcal{B}$ may be characterized either as the locus  
of $(1)$ the vertices of $F$-fibers $M_{\Gamma'}$, $\Gamma'\in\mathcal{M}_3$, 
of $(2)$ the vertices of $Q$-fibers $|\!-\!2K_Y|/i$, $Y\in\mathcal{M}_{DP}(2)$, 
and of $(3)$ quadruple covers of ${\proj}^2$ branched over smooth quartics. 
Via the last description, $\mathcal{B}$ admits the structure of a ball quotient 
by a result of Kond\=o \cite{Ko2}.


\subsection{$\mathcal{M}_{9,9,1}$ and del Pezzo surfaces of degree $1$}\label{sec:(9,9,1)}

Let $Y$ be a del Pezzo surface of degree $1$. 
The bi-anticanonical map $\phi_{-2K_Y}\colon Y\to{\proj}^3$ is a degree $2$ morphism onto a quadratic cone $Q$,  
which maps the base point of $|\!-\!K_Y|$ to the vertex $p_0$ of $Q$, 
and which is branched over a smooth curve $C\in |\mathcal{O}_Q(3)|$ and $p_0$. 
In view of this, we may define a map 
$|\mathcal{O}_Q(3)|\times|\mathcal{O}_Q(1)|\dashrightarrow \mathcal{M}_{9,9,1}$ as follows. 
For a general $(C, H)\in|\mathcal{O}_Q(3)|\times|\mathcal{O}_Q(1)|$ 
we take the double cover $Y\to Q$ branched over $C$ and $p_0$, 
and let $B\in|\!-\!2K_Y|$ be the pullback of $H$. 
(Equivalently, we take the double cover of the desingularization ${\F}_2$ of $Q$ branched over $C+\Sigma$, 
and then contract the $(-1)$-curve over $\Sigma$.) 
Then we associate the 2-elementary $K3$ surface corresponding to the right DPN pair $(Y, B)$. 
This construction, being ${\aut}(Q)$-invariant, defines a period map 
$\mathcal{P}\colon(|\mathcal{O}_Q(3)|\times|\mathcal{O}_Q(1)|)/{\aut}(Q)\dashrightarrow\mathcal{M}_{9,9,1}$. 
Since the double cover $Y\to Q$ is a bi-anticanonical map of $Y$, $\mathcal{P}$ is generically injective. 
Then $\mathcal{P}$ is birational because 
$(|\mathcal{O}_Q(3)|\times|\mathcal{O}_Q(1)|)/{\aut}(Q)$ has dimension $11$. 

\begin{proposition}\label{rational (9,9,1)}
The quotient $(|\mathcal{O}_Q(3)|\times|\mathcal{O}_Q(1)|)/{\aut}(Q)$ is rational. 
Therefore $\mathcal{M}_{9,9,1}$ is rational. 
\end{proposition}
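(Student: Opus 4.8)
The plan is to realize the quotient $(|\mathcal{O}_Q(3)|\times|\mathcal{O}_Q(1)|)/\aut(Q)$ as a vector bundle over a low-dimensional rational base by peeling off the hyperplane factor first. Write $Q\subset{\proj}^3$ for the quadratic cone with vertex $p_0$, and recall $\aut(Q)$ is the stabilizer of $Q$ (equivalently of $p_0$) in ${\PGL}_4$; it is connected of dimension $7$, and it is solvable-by-reductive with ${\PGL}_2$ (acting on the ruling of $Q$) as a Levi quotient. The linear system $|\mathcal{O}_Q(1)|$ of hyperplane sections has dimension $3$, and a general $H\in|\mathcal{O}_Q(1)|$ is a smooth conic missing the vertex. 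First I would show that $\aut(Q)$ acts on $|\mathcal{O}_Q(1)|$ with a dense orbit: the hyperplane sections not through $p_0$ form one orbit because any two smooth conics on $Q$ avoiding the vertex are exchanged by an automorphism of $Q$ (desingularize to $\F_2$, where such conics are exactly the smooth members of $|L_{1,0}|$, on which $\aut(\F_2)$ acts transitively by Proposition \ref{stabilizer of section}). Hence by the slice method (Proposition \ref{slice}) applied to the projection to $|\mathcal{O}_Q(1)|$,
\[
(|\mathcal{O}_Q(3)|\times|\mathcal{O}_Q(1)|)/\aut(Q)\ \sim\ |\mathcal{O}_Q(3)|/G_H,
\]
where $G_H\subset\aut(Q)$ is the stabilizer of a general hyperplane section $H$.

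Next I would identify $G_H$. On the desingularization $\F_2\to Q$ (which collapses the $(-2)$-curve $\Sigma$ to $p_0$), the curve $H$ pulls back to a smooth member of $|L_{1,0}|$, and by Proposition \ref{stabilizer of section} its stabilizer in $\aut(\F_2)$ sits in an extension $1\to{\C}^\times\to G_H\to\aut(\Sigma)=\PGL_2\to 1$; since $\F_2$ has even $n$ this sequence splits, so $G_H\simeq\C^\times\times\PGL_2$ (or an isogenous group — only its identity component and its action on cohomology matter below). The key point is that $G_H$ is reductive of dimension $4$. Now I would apply the no-name method. The linear system $|\mathcal{O}_Q(3)|$ has dimension $h^0(\mathcal{O}_Q(3))-1$; a short cohomology computation on $\F_2$ (where $\mathcal{O}_Q(3)$ pulls back to $L_{3,0}$, with $h^0(L_{3,0})=\tfrac12\cdot 4\cdot(6+2)=16$ by the formula in the proof of Proposition \ref{def eqn in Hirze}) gives $\dim|\mathcal{O}_Q(3)|=15$. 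Inside it I look for a small $G_H$-invariant linear subsystem to serve as the base: the natural candidate is the subsystem $3H+|\mathcal{O}_Q(1)|$ of cubics containing the fixed conic $H$ with residual a hyperplane section, which is $G_H$-invariant of dimension $3$, together with possibly one more invariant piece (e.g. $2H+|$the ruling$|$, or $H+Q\cdot$cone-generators) so as to reach a base on which $G_H$ acts almost freely. Because $G_H$ is reductive, the ambient representation $H^0(\mathcal{O}_Q(3))$ splits $G_H$-equivariantly as this invariant subspace plus a complement, and Proposition \ref{no-name 1} (after passing to $\overline{G}_H=G_H/(\text{kernel on }|\mathcal{O}_Q(3)|)$ and twisting by a linearized line bundle as in Proposition \ref{no-name 2} to absorb the scalar) makes the projection from the complement a vector bundle over the invariant base modulo $G_H$. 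So it remains to prove that this base quotient — a quotient of a ${\proj}^2$ or ${\proj}^3$ by $\PGL_2\times\C^\times$ (with, at worst, a residual finite or one-dimensional action) — is rational, which follows from Miyata's theorem \ref{Miyata} for the solvable part and Theorem \ref{Katsylo} for the $\SL_2\times(\C^\times)^k$ part.

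The main obstacle I anticipate is the middle step: choosing the $G_H$-invariant subsystem of $|\mathcal{O}_Q(3)|$ correctly so that (i) $G_H$, or rather its relevant quotient, acts \emph{almost freely} on it — this is what the no-name method requires — and (ii) the resulting base quotient is manifestly rational. A general cubic $C\in|\mathcal{O}_Q(3)|$ has trivial stabilizer in $G_H$ (indeed the double cover has generic automorphism group $\langle\iota\rangle$, so the relevant stabilizer in $\aut(Q)$ is trivial), but one must exhibit a \emph{proper linear subsystem} on which this already holds, since the no-name fibration is built over the base, not over the whole system. If the cleanest invariant subsystem fails almost-freeness, the fallback is to factor the computation through the $\C^\times$ first (homogenizing the cone, Miyata) and then handle the residual $\PGL_2\simeq\aut(\Sigma)$ action on a small space of binary forms directly, which is classical. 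Either way the dimension count ${\dim}((|\mathcal{O}_Q(3)|\times|\mathcal{O}_Q(1)|)/\aut(Q))=15+3-7=11$ is consistent with a vector bundle of rank $\ge 9$ over a rational surface or threefold, and no serious numerology should block the argument. The statement about $\mathcal{M}_{9,9,1}$ then follows from the birationality of $\mathcal{P}$ established just before the proposition.
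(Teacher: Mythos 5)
Your route is genuinely different from the paper's. The paper applies the no-name lemma \ref{no-name 2} to the \emph{other} projection, $|\mathcal{O}_Q(3)|\times|\mathcal{O}_Q(1)|\to|\mathcal{O}_Q(3)|$, obtaining ${\proj}^3\times(|\mathcal{O}_Q(3)|/{\aut}(Q))$, and then quotes the birational identification $|\mathcal{O}_Q(3)|/{\aut}(Q)\sim\mathcal{M}_{DP}(1)$ together with Dolgachev's rationality of $\mathcal{M}_{DP}(1)$. You instead slice off the hyperplane factor first; that opening move is sound (Proposition \ref{stabilizer of section} gives the almost transitive action on $|\mathcal{O}_Q(1)|\simeq|L_{1,0}|$ and identifies $G_H$ as an extension of ${\PGL}_2$ by ${\C}^{\times}$, split since $n=2$ is even), and it has the virtue of not invoking the rationality of $\mathcal{M}_{DP}(1)$ as a black box.

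There is, however, a concrete problem with your middle step. The no-name method (Proposition \ref{no-name 1}) requires an \emph{almost free} action on the base of the fibration, and $G_H$ has dimension $4$: it cannot act almost freely on a $3$-dimensional invariant subsystem such as $3H+|\mathcal{O}_Q(1)|$, nor on anything of dimension $\leq 3$. So the fibration you propose over that subsystem does not satisfy the hypothesis of the no-name lemma, and the step as written fails. The good news is that the step is unnecessary. By Lemma \ref{cover linearization Hirze}, $H^0(\mathcal{O}_Q(3))=H^0(L_{3,0})$ is a linear representation of $\widetilde{G}={\SL}_2\ltimes R$; restricting to the preimage $\widetilde{G}_H\simeq{\SL}_2\times{\C}^{\times}$ of $G_H$ (in the coordinates of Proposition \ref{def eqn in Hirze} it decomposes as $\bigoplus_{i=0}^{3}{\rm Sym}^{2i}({\C}^2)$ with distinct ${\C}^{\times}$-weights), one has
\begin{equation*}
|\mathcal{O}_Q(3)|/G_H \;\sim\; H^0(L_{3,0})/\bigl(\widetilde{G}_H\times{\C}^{\times}\bigr),
\end{equation*}
the extra ${\C}^{\times}$ being the projective scalars; this is a quotient of a linear representation of ${\SL}_2\times({\C}^{\times})^2$ and is rational directly by Theorem \ref{Katsylo}. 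With that replacement your argument closes, and it amounts to reproving (rather than citing) the rationality of $\mathcal{M}_{DP}(1)$.
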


\begin{proof}
We may apply the no-name lemma \ref{no-name 2} to the projection 
$|\mathcal{O}_Q(3)|\times|\mathcal{O}_Q(1)|\to|\mathcal{O}_Q(3)|$ 
to see that 
$(|\mathcal{O}_Q(3)|\times|\mathcal{O}_Q(1)|)/{\aut}(Q)\sim{\proj}^3\times(|\mathcal{O}_Q(3)|/{\aut}(Q))$. 
The quotient $|\mathcal{O}_Q(3)|/{\aut}(Q)$ is birational to $\mathcal{M}_{DP}(1)$, 
which is rational by Dolgachev \cite{Do}. 
\end{proof}


\section{The case $g\geq7$}\label{sec: g>6, k>0}

In this section we prove that the spaces ${\moduli}$ with $g\geq7$ and $k>0$ are rational. 
In \S \ref{sec: period maps g>6, k>0} we construct birational period maps 
using trigonal curves of fixed Maroni invariant. 
Then we prove the rationality for each space. 


\subsection{Period maps and Maroni loci}\label{sec: period maps g>6, k>0}

We first construct birational period maps for $\mathcal{M}_{r,r-2,\delta}$ with $2\leq r\leq5$ 
using curves on the Hirzebruch surface ${\F}_{6-r}$. 
We keep the notation of \S \ref{Sec:Hirze}. 
Let $U_r \subset |L_{3, r-2}|$ be the open set of smooth curves 
which are transverse to the $(r-6)$-curve $\Sigma$. 
For each $C\in U_r$ the curve $C+\Sigma$ belongs to $|\!-\!2K_{{\F}_{6-r}}|$ 
and has the $r-2$ nodes $C\cap\Sigma$ as the singularities. 
Considering the 2-elementary $K3$ surfaces associated to the DPN pairs $({\F}_{6-r}, C+\Sigma)$, 
we obtain a period map 
$\mathcal{P}_r\colon U_r/{\aut}({\F}_{6-r})\dashrightarrow\mathcal{M}_{r,r-2,\delta}$. 

\begin{proposition}\label{birational (r,r-2,delta), r<6} 
The map $\mathcal{P}_r$ is birational. 
\end{proposition}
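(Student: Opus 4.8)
The plan is to run the degree-computation recipe of \S\ref{ssec: recipe}: build a generically injective lift of $\mathcal{P}_r$ to the cover ${\cove}_{r,r-2,\delta}$, then divide $|{\Or}(D_{L_-})|$ by the degree of an auxiliary covering. Put $n=6-r\in\{1,2,3,4\}$. For general $C\in U_r$ the curve $B=C+\Sigma\in|\!-\!2K_{{\F}_n}|$ has exactly the $r-2$ nodes $C\cap\Sigma$ and no other singularity, so by \S\ref{ssec: right resol} the $2$-elementary $K3$ surface $(X,\iota)$ attached to $({\F}_n,B)$ has invariant $(r,r-2)$, with main fixed curve the trigonal curve $C$ of genus $g=3n+2(r-2)-2=12-r\geq7$ and Maroni invariant $2$; the parity criterion of \cite{A-N} applied to the right resolution gives $\delta$ ($\delta=0$ when $r=2$, and $\delta=1$ otherwise). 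Since $\dim U_r=\dim|L_{3,r-2}|=31-2r$ and $\dim{\aut}({\F}_n)=n+5=11-r$, we have $\dim(U_r/{\aut}({\F}_n))\geq20-r=\dim\mathcal{M}_{r,r-2,\delta}$.

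Since the components $C,\Sigma$ of $B$ have distinct classes in $NS_{{\F}_n}$ and $B$ has no $D_{2n}$-point, the only datum to rigidify is the set of nodes: let $\widetilde{U}_r\to U_r$ be the $\frak{S}_{r-2}$-Galois covering parametrising pairs $(C,\mu)$ where $\mu$ is an ordering of $C\cap\Sigma$. Following Lemma \ref{L_+ and A-D-E}, take $L_+$ to be the overlattice of $f^{\ast}NS_{{\F}_n}\oplus A_1^{r-2}$ generated by the classes $[F_1],[F_2]$ of the two components of $X^{\iota}$ (with $f(F_1)=C$, $f(F_2)=\Sigma$), presented by half-integral glue vectors just as in Examples \ref{ex:1}--\ref{ex:4}; it is even $2$-elementary of main invariant $(r,r-2,\delta)$. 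An ordering $\mu$ together with the basis $\{L_{1,0},L_{0,1}\}$ of $NS_{{\F}_n}$ yields an isometry $j\colon L_+\to L_+(X,\iota)$, hence (by Borel's theorem \cite{Bo}) a morphism $\tilde p\colon\widetilde{U}_r\to{\cove}_{r,r-2,\delta}$ lifting $\mathcal{P}_r$; as ${\aut}({\F}_n)$ acts trivially on $NS_{{\F}_n}$, $\tilde p$ is ${\aut}({\F}_n)$-invariant and descends to a lift ${\lift}_r\colon\widetilde{U}_r/{\aut}({\F}_n)\dashrightarrow{\cove}_{r,r-2,\delta}$ of $\mathcal{P}_r$.

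To prove ${\lift}_r$ generically injective it suffices to show the $\tilde p$-fibres are ${\aut}({\F}_n)$-orbits. If $\tilde p(C,\mu)=\tilde p(C',\mu')$, we get a Hodge isometry $\Phi\colon H^2(X',{\Z})\to H^2(X,{\Z})$ with $\Phi\circ j'=j$. Since $B$ has no triple point, the ample class of Lemma \ref{ample class} is a combination of $f^{\ast}H$ (some fixed ample $H$ on ${\F}_n$) and $F_1,F_2$, all fixed by $\Phi$; so $\Phi$ preserves ample cones and $\Phi=\varphi^{\ast}$ for an isomorphism $\varphi\colon X\to X'$ by the Torelli theorem. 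Lemma \ref{covering map & LB} with $L=L_{1,0}$ gives $\varphi^{\ast}((f')^{\ast}L_{1,0})=f^{\ast}L_{1,0}$, and $\phi_{f^{\ast}L_{1,0}}$ factors as $\phi_{L_{1,0}}\circ f$ with $\phi_{L_{1,0}}$ contracting $\Sigma$. As noted after Lemma \ref{covering map & LB}, for $n\geq2$ one recovers $f\colon X\to{\F}_n$ by desingularising $\phi_{f^{\ast}L_{1,0}}(X)=\phi_{L_{1,0}}({\F}_n)$; for $n=1$ (i.e.\ $r=5$), $\phi_{L_{1,0}}$ is the blow-down ${\F}_1\to{\proj}^2$, and since $\Phi$ fixes $[F_2]$ the map $\varphi$ carries $F_2$ to its counterpart on $X'$, so the induced automorphism of ${\proj}^2$ fixes the point $\phi_{L_{1,0}}(\Sigma)=\phi_{f^{\ast}L_{1,0}}(F_2)$ and lifts to ${\F}_1$. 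In either case $\varphi$ descends to $\psi\in{\aut}({\F}_n)$ with $\psi\circ f=f'\circ\varphi$; then $\psi(B)=B'$ from the branch loci, and $\varphi(f^{-1}(p_i))=(f')^{-1}(p_i')$ gives $\psi(\mu)=\mu'$, so $(C,\mu)$ and $(C',\mu')$ lie in one ${\aut}({\F}_n)$-orbit. Hence ${\lift}_r$ is generically injective, and since ${\cove}_{r,r-2,\delta}$ is irreducible of dimension $20-r$ while $\dim(\widetilde{U}_r/{\aut}({\F}_n))\geq20-r$, ${\lift}_r$ is birational.

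By step 5 of the recipe, $\deg(\mathcal{P}_r)$ equals $|{\Or}(D_{L_-})|$ divided by $\deg(\widetilde{U}_r/{\aut}({\F}_n)\to U_r/{\aut}({\F}_n))=(r-2)!$ (the covering $\widetilde{U}_r\to U_r$ being $\frak{S}_{r-2}$-Galois and ${\aut}({\F}_n)$ connected). It remains to verify $|{\Or}(D_{L_-})|=(r-2)!$. Now $D_{L_-}\cong D_{L_+}$ (with form negated), which is $2$-elementary of length $r-2$ and parity $\delta$, and $\mathrm{sign}(q_{L_+})\equiv2-r\pmod{8}$ by the signature formula for even lattices; a direct inspection — or the glue description of $L_+$ — then gives $q_{L_+}\cong\langle-\tfrac12\rangle^{\oplus(r-2)}$, whose only vectors of square $-\tfrac12$ are its $r-2$ generators (as $r-2\leq3$), so ${\Or}(D_{L_-})\cong\frak{S}_{r-2}$ and $\deg(\mathcal{P}_r)=1$. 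The part I expect to need the most care is the uniform check that the lift is well defined and generically injective — especially the descent of $\varphi$ to ${\aut}({\F}_{6-r})$ in the $n=1$ case — together with pinning down $L_+$ precisely enough for the final count; the rest runs parallel to Examples \ref{ex:1}--\ref{ex:4}.
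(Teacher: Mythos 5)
Your proof is correct, but it takes the long route that the paper explicitly sets aside at the start of its own argument ("Of course one may follow the recipe in \S\ref{ssec: recipe}, but here a more direct proof is possible"). You run the full degree-computation machinery: label the $r-2$ nodes, build the lattice-marked lift to ${\cove}_{r,r-2,\delta}$, apply Torelli and recover $f$ from $f^{\ast}L_{1,0}$, and finally match $|{\Or}(D_{L_-})|=(r-2)!$ against the covering degree. The paper instead observes that by Proposition \ref{moduli of trigonal} the quotient $U_r/{\aut}({\F}_{6-r})$ \emph{is} the Maroni locus $\mathcal{T}_{12-r,6-r}$, and that the fixed curve map $\mathcal{M}_{r,r-2,\delta}\to\mathcal{M}_{12-r}$ is a left inverse of $\mathcal{P}_r$ under this identification: the main fixed curve of the double cover is $C$ itself, and $C$ intrinsically determines its $g^1_3$, its scroll ${\F}_{6-r}$, and hence the whole DPN pair, so generic injectivity is immediate and a dimension count finishes. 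What your version buys is a template that does not rely on the canonicity of the trigonal construction and therefore extends to cases with no such left inverse (this is exactly how the paper handles $\mathcal{M}_{6,2,0}$ and most later cases); what it costs is the lattice bookkeeping. Two small remarks on your write-up: the equality $\deg\bigl(\widetilde{U}_r/{\aut}({\F}_{6-r})\to U_r/{\aut}({\F}_{6-r})\bigr)=(r-2)!$ needs the ${\aut}({\F}_{6-r})$-action on $U_r$ to be almost free, not merely the group to be connected (it is almost free here, since a general trigonal curve of genus $\geq7$ has trivial automorphism group and an automorphism of ${\F}_n$ fixing $C$ pointwise fixes three points on a general fiber); and your treatment of the $r=5$ case --- recovering the blown-down point of ${\proj}^2$ from the class $[F_2]$, equivalently from the triple point of the image sextic --- is the correct fix and parallels Example \ref{ex:2}.
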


\begin{proof}
Of course one may follow the recipe in \S \ref{ssec: recipe}, 
but here a more direct proof is possible. 
Indeed, by Proposition \ref{moduli of trigonal}, 
$U_r/{\aut}({\F}_{6-r})$ is naturally birational to the moduli $\mathcal{T}_{12-r,6-r}$ of 
trigonal curves of genus $12-r$ and scroll invariant $6-r$. 
Then $\mathcal{P}_r$ is generically injective because the fixed curve map for 
$\mathcal{M}_{r,r-2,\delta}$ gives the left inverse. 
By comparison of dimensions, 
$\mathcal{P}_r$ is birational. 
\end{proof}
 
\begin{corollary}\label{fixed curve (r,r-2), r<6}
The fixed curve map for $\mathcal{M}_{r,r-2,\delta}$ with $2\leq r\leq5$ is generically injective, 
with a generic image the Maroni locus $\mathcal{T}_{12-r,6-r}$ of Maroni invariant $2$. 
\end{corollary}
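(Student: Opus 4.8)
The plan is to extract the statement directly from Proposition~\ref{birational (r,r-2,delta), r<6} and from the description of the quotient $U_r/\aut(\F_{6-r})$ furnished by Proposition~\ref{moduli of trigonal}. Note first that for $2\le r\le 5$ the integer $n=6-r$ is positive, so $\aut(\F_n)$ is connected and coincides with its identity component; hence Proposition~\ref{moduli of trigonal} applies as stated and identifies $U_r/\aut(\F_{6-r})$, up to birational equivalence, with the Maroni locus $\mathcal{T}_{12-r,\,6-r}\subset\mathcal{M}_{12-r}$ of trigonal curves of genus $12-r$ and scroll invariant $6-r$. Applying the relations $g=3n+2b-2$ and $m=b+n-2$ of \S\ref{ssec:trigonal} with $n=6-r$ and $b=r-2$ gives $g=12-r$ and Maroni invariant $m=2$, which accounts for the phrase ``Maroni invariant $2$'' in the statement.

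Next I would check that the composite $F\circ\mathcal{P}_r\colon U_r/\aut(\F_{6-r})\dashrightarrow\mathcal{M}_{12-r}$ is the tautological map $C\mapsto[C]$. Indeed, for the $2$-elementary $K3$ surface $(X,\iota)$ attached to the DPN pair $(\F_{6-r},C+\Sigma)$, the branch curve $C+\Sigma$ has exactly two irreducible components, the smooth curve $C$ of genus $12-r$ and the rational curve $\Sigma$; by \S\ref{ssec: right resol} the genus $g$ component of $X^{\iota}$ is the component of maximal geometric genus of $C+\Sigma$, hence is isomorphic to $C$ (the only further curves produced by the right resolution lie over the $r-2$ nodes $C\cap\Sigma$ and are rational). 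Under the identification $U_r/\aut(\F_{6-r})\sim\mathcal{T}_{12-r,\,6-r}$ of Proposition~\ref{moduli of trigonal}, this composite therefore becomes the natural inclusion $\mathcal{T}_{12-r,\,6-r}\hookrightarrow\mathcal{M}_{12-r}$, which is generically injective because a general trigonal curve of scroll invariant $n$ carries a unique $g^1_3$, and hence a unique realization, up to $\aut(\F_n)$, as a member of $|L_{3,b}|$ on $\F_n$.

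Finally, since $\mathcal{P}_r$ is birational by Proposition~\ref{birational (r,r-2,delta), r<6}, the fixed curve map $F$ for $\mathcal{M}_{r,r-2,\delta}$ is birationally equivalent to $F\circ\mathcal{P}_r$; it is thus generically injective, and its generic image coincides with that of $F\circ\mathcal{P}_r$, namely the Maroni locus $\mathcal{T}_{12-r,\,6-r}$. This is exactly the assertion. I do not expect a genuine obstacle here: the content is already implicit in the proof of Proposition~\ref{birational (r,r-2,delta), r<6}, where the fixed curve map was invoked as a left inverse of $\mathcal{P}_r$, and the only point deserving a line of care is the bookkeeping noted above, that the fixed-curve construction returns $C$ itself rather than a blown-up model of it.
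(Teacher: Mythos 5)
Your proposal is correct and takes essentially the same route as the paper: the corollary is an immediate consequence of Proposition~\ref{birational (r,r-2,delta), r<6}, whose proof already observes that the fixed curve map is a left inverse of $\mathcal{P}_r$ under the identification $U_r/{\aut}({\F}_{6-r})\sim\mathcal{T}_{12-r,6-r}$ supplied by Proposition~\ref{moduli of trigonal}. Your additional bookkeeping (the computation $m=b+n-2=2$, and the check that the main fixed curve is $C$ itself rather than a blown-up model) merely makes explicit what the paper leaves implicit.
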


Next we construct a birational period map for $\mathcal{M}_{6,2,0}$ using curves on ${\F}_3$. 
Let $U\subset |L_{3,0}|\times|L_{0,1}|$ be the open set of pairs $(C, F)$ such that $C$ is smooth and transverse to $F$. 
For each $(C, F)\in U$ the curve $C+F+\Sigma$ belongs to $|\!-\!2K_{{\F}_3}|$ and 
has the four nodes $F\cap(C+\Sigma)$ as the singularities. 
By taking the right resolution of $C+F+\Sigma$, we obtain a period map 
$\mathcal{P}\colon U/{\aut}({\F}_3)\dashrightarrow\mathcal{M}_{6,2,0}$. 

\begin{proposition}\label{birat (6,2,0)}
The map $\mathcal{P}$ is birational. 
\end{proposition}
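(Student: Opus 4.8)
The plan is to follow the general recipe of \S\ref{ssec: recipe}, since the period map here is of type $(\beta)$: the surface is $Y={\F}_3$, the branch curve $C+F+\Sigma$ has only nodes (four of them, at $F\cap(C+\Sigma)$), and the relevant group is ${\aut}({\F}_3)$ together with the component structure of the curve. Concretely, I would first pass to the cover $\widetilde{U}\to U$ that labels the two nodes of $C\cap F$ (the node $C\cap\Sigma$ and, trivially, nothing else is left, since the component decomposition $C+F+\Sigma$ already distinguishes the node on $\Sigma$ from the nodes on $F$, and the three branches at no point coincide). Since $n=3$ is odd, Proposition~\ref{linearization Hirze} tells us every line bundle on ${\F}_3$ is ${\aut}({\F}_3)$-linearized, which will let us build the lattice marking cleanly. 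The projection $\widetilde{U}\to U$ is an $\frak{S}_2$-cover.

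Next I would write down the reference lattice $L_+$ of main invariant $(6,2,0)$ explicitly. Take natural bases $\{h_0,h_1\}$ of $\langle 0\rangle$-type hyperbolic part — more precisely use a basis adapted to $L_{a,b}$ on ${\F}_3$, say classes $\sigma=[f^{\ast}L_{1,-3}]$ and $\phi=[f^{\ast}L_{0,1}]$ with $(\sigma,\sigma)=-6$, $(\sigma,\phi)=1$, $(\phi,\phi)=0$ — together with $A_1^4$ generated by the four exceptional classes $e_1,\dots,e_4$ over the four nodes, and then adjoin the half-sum vectors $f_i$ coming from the three components $F_1,F_2,F_3$ of $X^\iota$ (with $f(F_1)=C$, $f(F_2)=F$, $f(F_3)=\Sigma$) via Lemma~\ref{L_+ and A-D-E}. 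One checks by Nikulin's criterion (\cite{Ni1}) that the resulting overlattice is even, 2-elementary, of signature $(1,5)$, with $a=2$ and $\delta=0$ — the parity $\delta=0$ being exactly the statement already recorded in the discussion (strict transforms of $C-F+\Sigma$ lie in $4NS$, cf.\ the analogous computation in Example~\ref{ex:3}). For a point $(C,F,p_1,p_2)\in\widetilde{U}$ the labelling induces a lattice marking $j\colon L_+\to L_+(X,\iota)$, hence a lift $\tilde p\colon\widetilde{U}\to{\cove}_{6,2,0}$ of the period map, which is a morphism by Borel's extension theorem.

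Then I would verify that the $\tilde p$-fibers are ${\aut}({\F}_3)$-orbits, following step (4) of the recipe: if $\tilde p(C,F,p_1,p_2)=\tilde p(C',F',p_1',p_2')$ we get a Hodge isometry $\Phi$ of the $K3$ cohomology compatible with the markings; by Lemma~\ref{ample class} it preserves the ample cones, so the Torelli theorem yields an isomorphism $\varphi\colon X\to X'$ with $\varphi^{\ast}=\Phi$; since $\varphi^{\ast}(f'^{\ast}L_{1,0})=f^{\ast}L_{1,0}$, Lemma~\ref{covering map & LB} (applicable as $n=3>0$) and the fact that $\phi_{L_{1,0}}$ only contracts the $(-3)$-curve $\Sigma$ give an automorphism $\psi$ of ${\F}_3$ with $\psi\circ f=f'\circ\varphi$; matching branch loci gives $\psi(C+F+\Sigma)=C'+F'+\Sigma$, and matching the marked exceptional classes gives $\psi(p_i)=p_i'$, while the component classes force $\psi(C)=C'$, $\psi(F)=F'$, $\psi(\Sigma)=\Sigma$. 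As $\psi$ acts trivially on $NS_{{\F}_3}$ and ${\aut}({\F}_3)$ does too (being connected, $n$ odd), $\psi\in{\aut}({\F}_3)$. Hence $\tilde p$ descends to a generically injective lift ${\lift}\colon\widetilde{U}/{\aut}({\F}_3)\dashrightarrow{\cove}_{6,2,0}$, and since ${\dim}(U/{\aut}({\F}_3))=\dim|L_{3,0}|+1-\dim{\aut}({\F}_3)=9+1+1-7=14=20-6$ and ${\cove}_{6,2,0}$ is irreducible, ${\lift}$ is birational. Finally, $L_+\simeq U\oplus D_4$ (or whatever the reduction turns out to be), so $|{\Or}(D_{L_+})|=|{\Or}^+(4,2)|$ — in fact, matching Example~\ref{ex:3}'s pattern, $|{\Or}(D_{L_-})|$ equals the degree $2\cdot(\text{relevant permutation factor})$ of $\widetilde{U}/{\aut}({\F}_3)\to U/{\aut}({\F}_3)$, which is the $\frak{S}_2$-cover degree $2$; so ${\rm deg}(\mathcal{P})=|{\Or}(D_{L_-})|/2=1$ once the permutation count of the nodes is pinned down. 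The main obstacle I expect is the bookkeeping: writing the overlattice $L_+$ so that its discriminant form genuinely has $(a,\delta)=(2,0)$, and then correctly counting the labelling degree of $\widetilde{U}\to U$ against $|{\Or}(D_{L_-})|$ — the two nodes on $F$ versus the one on $\Sigma$ must be separated carefully, since only a subset of the four nodes is permuted by the monodromy that $\widetilde{U}$ is designed to trivialize.
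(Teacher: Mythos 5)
Your overall architecture is exactly the paper's --- pass to a cover of $U$ labelling nodes, build a lattice-marked lift to ${\cove}_{6,2,0}$, check that the fibers of the lift are orbits via Lemma \ref{ample class}, the Torelli theorem and Lemma \ref{covering map & LB}, and then compare the covering degree of $\widetilde{U}/{\aut}({\F}_3)\to U/{\aut}({\F}_3)$ with $|{\Or}(D_{L_+})|$. But the combinatorial bookkeeping that you yourself flag as the main obstacle is wrong as written, and the final degree comparison fails with your numbers. Since $C\in|L_{3,0}|$ and $(L_{a,b}.F)=a$, $(L_{a,b}.\Sigma)=b$, the curve $C$ meets the fiber $F$ in \emph{three} points and is \emph{disjoint} from $\Sigma$; the four nodes of $C+F+\Sigma$ are the three points of $C\cap F$ together with the single point $F\cap\Sigma$. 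There is no node ``$C\cap\Sigma$''. Consequently the cover $\widetilde{U}\to U$ must label the three nodes $C\cap F$ (the remaining node $F\cap\Sigma$ being distinguished by the irreducible decomposition), so it is an $\frak{S}_3$-cover of degree $6$, not an $\frak{S}_2$-cover of degree $2$.

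On the other side of the comparison, $L_+\simeq U\oplus D_4$ has discriminant group $({\Z}/2{\Z})^2$ whose three nonzero elements all have norm $1$ modulo $2{\Z}$, so ${\Or}(D_{L_+})\simeq\frak{S}_3$ has order $6$; it is not $|{\Or}^+(4,2)|=72$, which is the count for the lattice $U\oplus D_4^{2}$ appearing in Example \ref{ex:3} and seems to be what you pattern-matched against. With the correct numbers both degrees equal $6$ and ${\rm deg}(\mathcal{P})=6/6=1$; with yours the quotient is not $1$ and the argument does not close. The rest of the proposal (the linearization remark, the fiber-equals-orbit step, and the $\delta=0$ verification) is sound, though note that the dimension count should read $\dim(U/{\aut}({\F}_3))=\dim|L_{3,0}|+\dim|L_{0,1}|-\dim{\aut}({\F}_3)=21+1-8=14$, not the arithmetic you wrote.
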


\begin{proof}
We proceed as in Example \ref{ex:2}. 
Consider the following $\frak{S}_3$-cover of $U$: 
\begin{equation*}
\widetilde{U} = \{ (C, F, p_1, p_2, p_3) \in U\times({\F}_3)^3, \;  \{ p_1, p_2, p_3\}=C\cap F \} . 
\end{equation*}
The variety $\widetilde{U}$ parametrizes the curves $C+F+\Sigma$ 
equipped with labelings of the three nodes $C\cap F$. 
The rest one node $F\cap\Sigma$ is distinguished from those three 
by the irreducible decomposition of $C+F+\Sigma$. 
Therefore we will obtain a generically injective lift 
${\lift}\colon\widetilde{U}/{\aut}({\F}_3)\dashrightarrow{\cove}_{6,2,0}$ of $\mathcal{P}$.  
Since $\widetilde{U}/{\aut}({\F}_3)$ has dimension $14$, ${\lift}$ is birational. 
The projection 
$\widetilde{U}/{\aut}({\F}_3)\dashrightarrow U/{\aut}({\F}_3)$ is an $\frak{S}_3$-covering,  
while the projection ${\cove}_{6,2,0}\dashrightarrow\mathcal{M}_{6,2,0}$ 
is an ${\Or}(D_{L_+})$-covering for the lattice $L_+=U\oplus D_4$. 
It is easy to see that ${\Or}(D_{L_+})\simeq\frak{S}_3$. 
Hence $\mathcal{P}$ is birational. 
\end{proof}

The quotient $|L_{3,0}|/{\aut}({\F}_3)$ is birationally identified with the Maroni locus $\mathcal{T}_{7,3}$, 
and the fibers of the projection 
$(|L_{3,0}|\times|L_{0,1}|)/{\aut}({\F}_3) \dashrightarrow |L_{3,0}|/{\aut}({\F}_3)$ 
are the trigonal pencils. 
Therefore  

\begin{corollary}\label{fixed curve (6,2)}
The fixed curve map for $\mathcal{M}_{6,2,0}$ gives a dominant map 
$\mathcal{M}_{6,2,0} \dashrightarrow \mathcal{T}_{7,3}$ 
whose general fibers are identified with the trigonal pencils. 
\end{corollary}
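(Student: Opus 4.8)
The plan is to combine the birational period map $\mathcal{P}\colon U/{\aut}({\F}_3)\dashrightarrow\mathcal{M}_{6,2,0}$ from Proposition \ref{birat (6,2,0)} with the trigonal-curve description of the relevant Hirzebruch linear system from \S\ref{ssec:trigonal}. First I would recall that a smooth $C\in|L_{3,0}|$ on ${\F}_3$ is, by the discussion preceding Proposition \ref{moduli of trigonal}, a trigonal curve whose scroll invariant is $n=3$; the genus formula $g=3n+2b-2$ with $b=0$ gives $g=7$, so these are genus $7$ trigonal curves of scroll invariant $3$. Since general trigonal curves of genus $7$ have scroll invariant $1$ (as $g$ is odd), the locus $|L_{3,0}|/G$ with $G$ the identity component of ${\aut}({\F}_3)$ is, by Proposition \ref{moduli of trigonal}, birational to the Maroni locus $\mathcal{T}_{7,3}$. (One must note that ${\aut}({\F}_3)$ is connected for odd $n$, or at least that the component group acts trivially on the quotient, so that $|L_{3,0}|/{\aut}({\F}_3)$ agrees birationally with $|L_{3,0}|/G\sim\mathcal{T}_{7,3}$.)

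Next I would identify the fixed curve map $F\colon\mathcal{M}_{6,2,0}\to\mathcal{M}_g$ with the composition of $\mathcal{P}^{-1}$ with the natural map $U/{\aut}({\F}_3)\dashrightarrow\mathcal{M}_7$ sending $(C,F)$ to the curve $C$ — this is exactly the mechanism explained in the Introduction, namely that $F$ picks out the component of maximal genus of the branch curve $C+F+\Sigma$, which here is $C$ (the lines $F$ and $\Sigma$ being rational). Thus $F$ factors through the first projection
\begin{equation*}
(|L_{3,0}|\times|L_{0,1}|)/{\aut}({\F}_3)\dashrightarrow |L_{3,0}|/{\aut}({\F}_3)\sim\mathcal{T}_{7,3},
\end{equation*}
and the fibre of this projection over a general $C$ is the image of $|L_{0,1}|$, which by Proposition \ref{stabilizer of fiber} is a single $\pi$-fibre on ${\F}_3$ up to the stabilizer — equivalently, the $g^1_3$ of $C$ determines the pencil $|L_{0,1}|$ and hence the point of the pencil records which trigonal fibre $F$ is chosen. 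Since a trigonal curve of genus $\geq5$ has a unique trigonal pencil, the fibre of $F$ over $C$ is naturally birational to the trigonal pencil of $C$, i.e. to a $\proj^1$. This gives dominance of $F$ onto $\mathcal{T}_{7,3}$ together with the stated identification of the general fibres.

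The main obstacle, such as it is, is bookkeeping rather than depth: one must be careful that the projection $(|L_{3,0}|\times|L_{0,1}|)/{\aut}({\F}_3)\dashrightarrow|L_{3,0}|/{\aut}({\F}_3)$ really does have generic fibre the projectivized pencil $|L_{0,1}|$ modulo the stabilizer of $C$ — here one invokes Proposition \ref{stabilizer of fiber} (the ${\aut}({\F}_3)$-action on $|L_{0,1}|$ is transitive with connected solvable point-stabilizer) to see that after fixing $C$ the residual group acts on $|L_{0,1}|\cong\proj^1$ with the generic orbit being all of $\proj^1$ only if one is not careful; in fact one wants the \emph{opposite}, that the generic fibre is the full $\proj^1$ recording the choice of the trigonal-pencil member, so the point is simply that the stabilizer of a \emph{general} $C\in|L_{3,0}|$ in ${\aut}({\F}_3)$ is finite (indeed, by Corollary \ref{stab HE}-type reasoning, trivial for general trigonal $C$ of this genus), whence the fibre is genuinely $|L_{0,1}|\cong\proj^1$, the trigonal pencil of $C$. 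With that observed, the corollary follows directly from Proposition \ref{birat (6,2,0)} and Proposition \ref{moduli of trigonal}.
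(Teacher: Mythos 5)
Your proposal is correct and follows essentially the same route as the paper: compose the inverse of the birational period map of Proposition \ref{birat (6,2,0)} with the projection to $|L_{3,0}|/{\aut}({\F}_3)\sim\mathcal{T}_{7,3}$ (Proposition \ref{moduli of trigonal}) and observe that the general fiber is the pencil $|L_{0,1}|$, i.e.\ the trigonal pencil. The paper leaves the triviality of the generic stabilizer of $C\in|L_{3,0}|$ implicit, which you correctly supply; the only small inaccuracy is your parenthetical about connectedness --- ${\aut}({\F}_n)$ is connected for every $n>0$, being an extension of ${\aut}(\Sigma)\simeq{\PGL}_2$ by the connected group $R$, not just for odd $n$.
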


\begin{remark}
The locus $\mathcal{T}_{7,3}$ is the complement of $\mathcal{T}_{7,1}$ in the moduli of trigonal curves. 
Corollary \ref{fixed curve (6,2)} may also be obtained from Corollary \ref{fixed curve (r,r-2), r<6} for $r=5$ 
by extending the fixed curve map for $\mathcal{M}_{5,3,1}$ to a component of the discriminant divisor. 
\end{remark}

\begin{remark}
The Maroni loci $\mathcal{T}_{g,n}$ in this section may be characterized in terms of special divisors: 
a trigonal curve $C$ of genus $8\leq g\leq10$ (resp. $g=7$) has Maroni invariant $2$ (resp. $1$)  
if and only if $W^1_{g-3}(C)$ (resp. $W^1_5(C)$) is irreducible. 
This follows from Maroni's description of $W^r_d(C)$ (see \cite{M-Sc} Proposition 1). 
\end{remark} 



\subsection{$\mathcal{M}_{2,0,0}$ and Jacobian $K3$ surfaces}\label{sec:(2,2,0)}

By a \textit{Jacobian $K3$ surface} 
we mean a $K3$ surface endowed with an elliptic fibration and with its zero section. 

\begin{proposition}\label{rational (2,2,0)}
The space $\mathcal{M}_{2,0,0}$ is birational to 
the moduli space of Jacobian $K3$ surfaces. 
Therefore $\mathcal{M}_{2,0,0}$ is rational. 
\end{proposition}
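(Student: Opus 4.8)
The plan is to realise $\mathcal{M}_{2,0,0}$ as the moduli space of Jacobian $K3$ surfaces by means of the inverse involution of an elliptic fibration, and then to prove the rationality of that moduli space by reducing it to Weierstrass normal forms. First I would analyse a general $(X,\iota)\in\mathcal{M}_{2,0,0}$. By Proposition~\ref{compute main inv} one has $g=10$ and $k=1$, so $X^{\iota}=C^{10}\sqcup E$ with $E$ a smooth rational $(-2)$-curve. The quotient $Y=X/\iota$ is a smooth rational surface with $\rho(Y)=r=2$, and the image $\bar E$ of $E$ is a smooth rational curve with $\bar E^{2}=-4$. A rational surface of Picard number $2$ contains a $(-4)$-curve only if it is $\F_{4}$, so $Y\cong\F_{4}$ and $\bar E=\Sigma$. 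Since $|\!-\!2K_{\F_{4}}|$ has $\Sigma$ as its fixed component and $-2K_{\F_{4}}-\Sigma=L_{3,0}$ has general member a smooth trisection disjoint from $\Sigma$, the branch curve of the double cover $f\colon X\to\F_{4}$ is $B=\Sigma+T'$ with $T'\in|L_{3,0}|$ smooth and $T'\cap\Sigma=\emptyset$; thus $(\F_{4},B)$ is already a right DPN pair. Composing $f$ with the ruling $\pi\colon\F_{4}\to\proj^{1}$ makes $X$ an elliptic surface whose general fibre is the double cover of a $\pi$-fibre branched at the four points $(\Sigma+T')\cap F$, and $\sigma:=f^{-1}(\Sigma)_{\mathrm{red}}=E$ is a section. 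This attaches to $(X,\iota)$ the Jacobian $K3$ surface $(X,\pi,\sigma)$.

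Conversely, given a general Jacobian $K3$ surface $(X,\pi,\sigma)$, the inverse involution $\iota$ with respect to $\sigma$ acts by $-1$ on each smooth fibre, hence by $-1$ on $H^{0}(K_{X})$, so $\iota$ is non-symplectic. Using a Weierstrass model $y^{2}=x^{3}+a(t)x+b(t)$ with $a\in H^{0}(\Oline(8))$, $b\in H^{0}(\Oline(12))$, one finds $X/\iota\cong\F_{4}$, with $f\colon X\to\F_{4}$ branched over $\Sigma$ and over the trisection $\{x^{3}+ax+b=0\}\in|L_{3,0}|$. The class of $B=\Sigma+T'$ is $4(\Sigma+3F)\in 4\,NS_{\F_{4}}$, so $\delta=0$; and by the description of $L_{+}(X,\iota)$ in \S\ref{ssec: right resol} it is generated by $[\sigma]$ and a fibre class, hence isometric to $U$, so $a=0$; together with $r=\rho(\F_{4})=2$ this gives main invariant $(2,0,0)$. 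Running the previous construction on this $(X,\iota)$ returns the original $(\pi,\sigma)$, because $f$ restricts on a general fibre to the quotient of an elliptic curve by the involution fixing $\sigma\cap F$, i.e.\ to the inverse involution. Hence the two constructions are mutually inverse on dense open subsets, and $\mathcal{M}_{2,0,0}$ is birational to the moduli space of Jacobian $K3$ surfaces.

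For the rationality, a Weierstrass model is unique up to the automorphisms of the base $\proj^{1}$ together with the rescalings $x\mapsto\lambda^{2}x$, $y\mapsto\lambda^{3}y$. Therefore the moduli space of Jacobian $K3$ surfaces is birational to the rational quotient of the vector space $H^{0}(\Oline(8))\oplus H^{0}(\Oline(12))$ by $\SL_{2}\times\C^{\times}$, where $\SL_{2}$ acts through the coordinate changes of $\proj^{1}$ and $\C^{\times}$ acts with weights $4$ and $6$ on the two summands. By Theorem~\ref{Katsylo} this quotient is rational, whence $\mathcal{M}_{2,0,0}$ is rational.

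The step I expect to be the main obstacle is the middle one: checking that a \emph{general} Jacobian $K3$ surface has main invariant exactly $(2,0,0)$ (equivalently, that the quotient surface of the general member is precisely $\F_{4}$ and the branch locus has the shape $\Sigma+T'$, with no degeneration producing extra discriminant) and that the two assignments are genuinely mutually inverse on dense opens rather than merely dominant. Once $Y\cong\F_{4}$ and $B=\Sigma+T'$ are established, the recovery of $(\pi,\sigma)$ from $(X,\iota)$ and the reduction to the Weierstrass form are routine.
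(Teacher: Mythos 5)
Your proof is correct and follows essentially the same route as the paper: identify $\mathcal{M}_{2,0,0}$ with the moduli of Jacobian $K3$ surfaces via the inverse involution of the elliptic fibration, then reduce to Weierstrass forms and apply Theorem~\ref{Katsylo}. The only difference is that you rederive $X/\iota\cong\F_4$ and the shape $B=\Sigma+T'$ of the branch locus from scratch (via $\rho(Y)=2$ and the $(-4)$-curve), where the paper simply invokes the birational period map of Proposition~\ref{birational (r,r-2,delta), r<6} for $r=2$; both are fine, and your explicit verification of the main invariant and of the mutual inverseness fills in details the paper leaves implicit.
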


\begin{proof}
In Proposition \ref{birational (r,r-2,delta), r<6} we saw that 
a general member $(X, \iota)$ of $\mathcal{M}_{2,0,0}$ is canonically 
a double cover $f\colon X\to{\F}_4$ branched over a smooth curve $C+\Sigma, C\in|L_{3,0}|$. 
The natural projection ${\F}_4\to {\proj}^1$ gives rise to an elliptic fibration $X\to{\proj}^1$, 
and the $(-2)$-curve $E=f^{-1}(\Sigma)$ is its section. 
The involution $\iota$ is the inverse map of the fibration with respect to $E$. 
In this way a general member of $\mathcal{M}_{2,0,0}$ is 
equipped with the structure of a Jacobian $K3$ surface. 
Conversely, for a Jacobian $K3$ surface $(X\to{\proj}^1, E)$ such that every singular fiber is irreducible, 
the inversion map $\iota$ of $X/{\proj}^1$ with respect to $E$ is a non-symplectic involution of $X$, 
and the quotient of $X\to{\proj}^1$ by $\iota$ is the natural projection ${\F}_n\to {\proj}^1$ of 
a Hirzebruch surface ${\F}_n$. 
As the image of $E$ in ${\F}_n$ is a $(-4)$-curve, we have $n=4$. 
Thus our first assertion is verified. 
It is known that the moduli of Jacobian $K3$ surfaces is birational to the quotient 
\begin{equation*}
(H^0({\Oline}(8))\oplus H^0({\Oline}(12)))/{\C}^{\times}\times{\SL}_2,  
\end{equation*}
via the Weierstrass forms of elliptic fibrations (for example, see \cite{Mir}). 
By Katsylo's theorem \ref{Katsylo} this quotient is rational. 
\end{proof}


\subsection{The rationality of $\mathcal{M}_{3,1,1}$}\label{sec:(3,1,1)}

By Proposition \ref{birational (r,r-2,delta), r<6} we have a birational equivalence 
$\mathcal{M}_{3,1,1}\sim|L_{3,1}|/{\aut}({\F}_3)$ for the bundle $L_{3,1}$ on ${\F}_3$. 

\begin{proposition}\label{rational (3,1,1)}
The quotient $|L_{3,1}|/{\aut}({\F}_3)$ is rational. 
Hence $\mathcal{M}_{3,1,1}$ is rational. 
\end{proposition}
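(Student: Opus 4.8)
The plan is to analyze the $\aut(\F_3)$-action on the linear system $|L_{3,1}|$ directly and reduce the rationality of the quotient to one of the standard rationality criteria from \S\ref{sec:invariant theory}, most likely Theorem \ref{Katsylo} or the no-name method. First I would recall the structure of $G=\aut(\F_3)$ from \S\ref{Sec:Hirze}: since $n=3$ is odd, the sequence \eqref{Aut(Hir)} gives $1\to R\to\aut(\F_3)\to\PGL_2\to 1$ with $R\simeq\C^{\times}\ltimes H^0(\Oline(3))$, and by Lemma \ref{cover linearization Hirze} the cover $\widetilde{G}=\SL_2\ltimes R$ linearizes every line bundle on $\F_3$; in particular $L_{3,1}$ is $\widetilde{G}$-linearized, so $\widetilde{G}$ acts linearly on $V=H^0(L_{3,1})$ and $|L_{3,1}|/\aut(\F_3)$ is a quotient of $\proj V$ by $\widetilde{G}$ (up to the finite kernel, handled by Proposition \ref{no-name 2}). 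By Proposition \ref{def eqn in Hirze}, $V$ is the space of polynomials $\sum_{i=0}^{3}f_i(x)y^i$ with $\deg f_i\le 1+3i$, which has dimension $2+5+8+11=26$, matching $\dim\moduli=20-r=17$ after quotienting by $\dim\aut(\F_3)=2+\dim H^0(\Oline(3))+3=9$.

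**The reduction strategy.** The main idea I would pursue is a slice/fibration argument peeling off the group in stages. The unipotent-by-torus part $R$ should be dealt with first: the coefficient $f_3(x)$ of $y^3$ is a section of $L_{0,1}$-twisted data transforming under $R$ in a controlled affine way, so one can use the $R$-action (translations by $H^0(\Oline(3))$ composed with the $\C^{\times}$-scaling) to normalize part of the equation. Concretely, I expect that by the substitution $y\mapsto y+$ (section) coming from \eqref{$R$-action in coordinate} one can kill the $f_2(x)y^2$ term (a ``depressed cubic in $y$'' normalization), which is exactly the kind of move that trivializes the unipotent action; what remains is an action of $\SL_2$ (acting on the base $\proj^1$) together with the residual $\C^{\times}$. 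Then $|L_{3,1}|/\aut(\F_3)$ becomes birational to a quotient of a linear representation of $\SL_2\times\C^{\times}$ — and at that point Theorem \ref{Katsylo} applies directly, giving rationality. An alternative, if the normalization is cleaner done via an equivariant projection, is to find an $\aut(\F_3)$-equivariant dominant map from $|L_{3,1}|$ to a simpler system (e.g.\ restriction to $\Sigma$ or to a fiber $F$, using Propositions \ref{stabilizer of fiber}, \ref{stabilizer of section}) with almost transitive image, apply the slice method \ref{slice} to land on a smaller group, and then apply the no-name method \ref{no-name 1}/\ref{no-name 2} to strip off a vector-bundle factor; the base of that bundle being low-dimensional would then be visibly rational.

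**Executing the steps.** The concrete order I would follow: (1) identify $V=H^0(L_{3,1})$ explicitly via Proposition \ref{def eqn in Hirze} and record the $\widetilde{G}$-action in the coordinates of \S\ref{ssec:coordinate}; (2) use the $H^0(\Oline(3))$-translations in $R$ to normalize away the $y^2$-coefficient, checking this can be done birationally (i.e.\ on a dense open set where $f_3\ne 0$); (3) identify the stabilizer in $\widetilde{G}$ of the normalized form — it should be $\SL_2\times\C^{\times}$ (or a finite extension thereof) acting linearly on the residual coefficients; (4) invoke Theorem \ref{Katsylo}, or if a vector-bundle factor splits off, invoke Proposition \ref{no-name 2} and reduce to a manifestly rational low-dimensional base; (5) conclude $|L_{3,1}|/\aut(\F_3)$ is rational, hence $\mathcal{M}_{3,1,1}$ is rational by Proposition \ref{rational (3,1,1)}'s birational identification.

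**Expected main obstacle.** The delicate point will be step (2)–(3): verifying that the unipotent normalization is genuinely available birationally and, more importantly, correctly computing the residual group and its \emph{linear} action on what remains, so that Theorem \ref{Katsylo} or the no-name method genuinely applies. In particular one must be careful that after normalizing the $y^2$-term the residual $\C^{\times}$ still acts by honest scalars compatible with a linearization (the parity issues from Proposition \ref{linearization Hirze}, $n=3$ odd, are favorable here since every bundle is $\aut(\F_3)$-linearized, so this should go through), and that the $\SL_2$-action on the leftover coefficients is the standard one on a sum of $\SL_2$-modules $H^0(\Oline(d))$ — which is what makes Katsylo's theorem directly applicable. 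If the $y^2$-normalization is obstructed, the fallback is the slice-method route via restriction to a fiber, using the connectedness and solvability of fiber stabilizers (Proposition \ref{stabilizer of fiber}) together with Miyata's theorem \ref{Miyata}.
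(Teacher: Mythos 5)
Your primary route breaks down at the ``depressed cubic'' normalization, and the failure is concrete. In the coordinates of \S\ref{ssec:coordinate} a curve in $|L_{3,1}|$ is $f_0(x)y^3+f_1(x)y^2+f_2(x)y+f_3(x)=0$ with $\deg f_i\le 1+3i$, and the translation part of $R$ acts by $y\mapsto \alpha y+s(x)$ with $s\in H^0(\Oline(3))$, i.e.\ $\deg s\le 3$. Killing the $y^2$-coefficient requires $s=-f_1/(3f_0)$; since $f_0$ has degree $\le 1$ and generically does not divide $f_1$ (degree $\le 4$), no such $s$ exists. Equivalently, the orbit of $f_1$ under the translations is the affine subspace $f_1+f_0\cdot H^0(\Oline(3))$, which misses $0$ for generic $(f_0,f_1)$. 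So the unipotent radical cannot be normalized away this way, you never arrive at a linear representation of $\SL_2\times\C^{\times}$, and Theorem \ref{Katsylo} is not applicable as you propose. (There is also a small slip in your dimension count: $\dim R=1+h^0(\Oline(3))=5$, so $\dim\aut(\F_3)=8$ and $\dim|L_{3,1}|=25$, giving $25-8=17$; your $26-9$ happens to give the same answer.)

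Your fallback, however, is exactly the paper's proof and it works with no further input: since $(L_{3,1}.\Sigma)=1$, the restriction $C\mapsto C|_{\Sigma}$ is an equivariant map to $\Sigma\simeq|L_{0,1}|$ on which $\aut(\F_3)$ acts transitively; the slice method reduces $|L_{3,1}|/\aut(\F_3)$ to ${\proj}V/G$ where ${\proj}V$ is the hyperplane of curves through a fixed $p\in\Sigma$ and $G$ is the stabilizer of $p$, which is connected and solvable by Proposition \ref{stabilizer of fiber}; since $n=3$ is odd, $L_{3,1}$ is $\aut(\F_3)$-linearized by Proposition \ref{linearization Hirze}, so $G$ acts linearly on $V$ and Miyata's theorem \ref{Miyata} finishes the argument. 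You should have committed to this route rather than presenting it as a contingency; as written, the argument you actually develop does not prove the statement, while the one-line alternative you mention in passing does.
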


\begin{proof}
We apply the slice method to the map 
$|L_{3,1}|\dashrightarrow\Sigma$, $C\mapsto C|_{\Sigma}$. 
Let $G\subset{\aut}({\F}_3)$ be the stabilizer of a point $p\in\Sigma$, 
and let ${\proj}V \subset |L_{3,1}|$ be the linear system of curves through $p$. 
Then we have $|L_{3,1}|/{\aut}({\F}_3) \sim {\proj}V/G$. 
The group $G$ is connected and solvable by Proposition \ref{stabilizer of fiber}, 
and $G$ acts linearly on $V$ by Proposition \ref{linearization Hirze}. 
Hence ${\proj}V/G$ is rational by Miyata's theorem \ref{Miyata}. 
\end{proof}


\subsection{The rationality of $\mathcal{M}_{4,2,1}$}\label{sec:(4,2,1)}

By Proposition \ref{birational (r,r-2,delta), r<6} we have a birational equivalence 
$\mathcal{M}_{4,2,1} \sim |L_{3,2}|/{\aut}({\F}_2)$ for the bundle $L_{3,2}$ on ${\F}_2$. 

\begin{proposition}\label{rational (4,2,1)} 
The quotient $|L_{3,2}|/{\aut}({\F}_2)$ is rational. 
Hence $\mathcal{M}_{4,2,1}$ is rational. 
\end{proposition}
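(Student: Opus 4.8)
The plan is to reduce, as in §\ref{sec:k=0}, to a question of invariant theory over $\F_2$. By Proposition \ref{birational (r,r-2,delta), r<6} it suffices to prove that $|L_{3,2}|/\aut(\F_2)$ is rational. Since the parameter $b=2$ is even, Proposition \ref{linearization Hirze} gives an $\aut(\F_2)$-linearization of $L_{3,2}$; hence $\aut(\F_2)=R\rtimes\PGL_2$ acts linearly on $V=H^{0}(L_{3,2})$, and one must show that $\proj V/\aut(\F_2)$ is rational.

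First I would apply the slice method (Proposition \ref{slice}) to the $\aut(\F_2)$-equivariant restriction map $\proj V\dashrightarrow|\mathcal{O}_{\Sigma}(2)|\simeq\proj^{2}$, $C\mapsto C|_{\Sigma}$. Its target carries the action of $\PGL_2=\aut(\Sigma)$ on binary quadratic forms, which is almost transitive, and the stabilizer of a general point $p_{1}+p_{2}$ (a reduced pair of points of $\Sigma$) is a group $G$ with $1\to R\to G\to\C^{\times}\rtimes\frak{S}_{2}\to1$, the quotient being the stabilizer of $\{p_{1},p_{2}\}$ in $\PGL_2$. Thus $\proj V/\aut(\F_2)\sim\proj W/G$, where $\proj W\subset\proj V$ is the linear subsystem of curves meeting $\Sigma$ exactly at $p_{1}+p_{2}$ and $G$ acts linearly on $W$.

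Next I would strip off the unipotent radical of $G$ --- the shear subgroup $R_{u}\simeq H^{0}(\Oline(2))\simeq\C^{3}$ inside $R\simeq\C^{\times}\ltimes H^{0}(\Oline(2))$ --- by the no-name method (Propositions \ref{no-name 1}, \ref{no-name 2}). Using the coordinates of §\ref{ssec:coordinate}, the shear $g_{1,s}$ acts on a defining equation $\sum_{i}f_{i}(x_{1})y_{1}^{i}$ by adding to $f_{i}$ polynomial multiples of $s$ and of the lower $f_{j}$; on $\proj W$, where $C|_{\Sigma}=p_{1}+p_{2}$ is fixed, the $R_{u}$-orbits admit a linear transversal (fixing, say, the part of $f_{1}$ of intermediate degree) stable under the reductive quotient $\overline{G}:=G/R_{u}$, an extension of $\frak{S}_{2}$ by a two-dimensional torus. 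This gives $\proj W/G\sim\proj V'/\overline{G}$ for a linear representation $V'$ of $\overline{G}$. Finally $\proj V'/\overline{G}$ is rational: quotienting first by the torus is rational by Miyata's theorem \ref{Miyata}, and the residual $\frak{S}_{2}$ then acts by a monomial substitution, whose quotient is again rational (a $\frak{S}_{2}$-quotient of a linear, hence of a monomial, representation is always rational).

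The delicate point --- and the reason this case needs more than the short argument available for $\F_{3}$, where the restriction to $\Sigma$ cuts out a single point with connected solvable stabilizer --- is the non-connectedness of $G$. The involution in $\frak{S}_{2}$ exchanges the two points of $C\cap\Sigma$, equivalently the two nodes of the branch curve $C+\Sigma$, equivalently two points of the rational component of $X^{\iota}$, and it admits no canonical rigidification, so Miyata's theorem does not apply to $G$ outright. The heart of the proof is therefore to carry the no-name reduction of the third step out \emph{equivariantly}, so that it terminates at an honest linear representation of the reductive group $\overline{G}$ rather than at a rational variety merely carrying a $\frak{S}_{2}$-action; this rests on the explicit structure of $H^{0}(L_{3,2})$ as an $\aut(\F_{2})$-module recalled in §\ref{Sec:Hirze}.
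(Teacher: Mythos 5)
Your route is genuinely different from the paper's, and its first two stages can be made to work, but the final step contains a real gap. On the first two stages: slicing along $C\mapsto C|_{\Sigma}$ does reduce the problem to $\proj W/G$ with $1\to R\to G\to\C^{\times}\rtimes\frak{S}_{2}\to1$, and the shear subgroup $H^{0}(\Oline(2))\subset R$ can indeed be eliminated. A remark on mechanics, though: this elimination is not an instance of the no-name method (which concerns linearized vector bundles); what you need is a cross-section argument. It does go through: the extension $1\to H^{0}(\Oline(2))\to G\to\overline{G}\to1$ splits (use $\{g_{\alpha,0}\}$ together with the ${\PGL}_2$-splitting of \eqref{Aut(Hir)} for even $n$), $\overline{G}\simeq\C^{\times}\times(\C^{\times}\rtimes\frak{S}_{2})$ is reductive, so one may choose a $\overline{G}$-stable complement $T$ to $f_{0}\cdot H^{0}(\Oline(2))$ inside the $f_{1}$-coefficient space (e.g.\ $\langle X^{4},Y^{4}\rangle$ after normalizing $p_{1},p_{2}$ to $0,\infty$); the linear slice $\{f_{1}\in T\}$ meets a generic shear orbit exactly once and is preserved by the lifted $\overline{G}$, whence $\proj W/G\sim\proj S/\overline{G}$ with $\overline{G}$ acting linearly on $\proj S\simeq\proj^{18}$. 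Up to here your reduction is sound once rewritten in these terms.

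The gap is the last sentence of your third step. Having quotiented by the two-dimensional torus $T_{2}$ via Miyata, you claim the residual $\frak{S}_{2}$ ``acts by a monomial substitution, whose quotient is again rational,'' justified by the assertion that an $\frak{S}_{2}$-quotient ``of a linear, hence of a monomial, representation'' is always rational. The ``hence'' is a non sequitur: a quotient of a genuinely linear $\Z/2$-representation is rational, but Miyata's theorem \ref{Miyata} does not supply coordinates on the $16$-dimensional quotient $\proj S/T_{2}$ in which the residual involution acts linearly, and rationality of quotients by monomial-with-coefficient $\Z/2$-actions is not a cited or general theorem. Since $T_{2}$ is the normal subgroup you cannot reverse the order of the quotients, so nothing you have invoked finishes the proof; you would need, say, to exhibit a small faithful $\overline{G}$-subrepresentation of $S$ and apply Proposition \ref{no-name 1} to reduce to a quotient of dimension at most two. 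This is precisely the difficulty the paper's proof is engineered to avoid: instead of restricting to $\Sigma$, it slices along the finer invariant $C\mapsto q_{1}+q_{2}\in S^{2}{\F}_{2}$ (the second fixed points of the fiberwise involutions determined by $C$) and then along the tangent directions $T_{p_{i}}C$, cutting the stabilizer down to the finite group $G_{3}=\Z/2\ltimes\Z/4$ \emph{before} taking any quotient; the no-name lemma applied to a two-dimensional faithful summand then finishes by Castelnuovo. In short, your diagnosis of the obstruction (the non-rigidifiable $\frak{S}_{2}$ swapping the two nodes) is correct, but your proposed resolution of it is not a proof.
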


\begin{proof}
First we define an ${\aut}({\F}_2)$-equivariant map 
$\varphi_1\colon|L_{3,2}|\dashrightarrow S^2{\F}_2$ 
to the symmetric product of ${\F}_2$ as follows. 
For a general $C\in|L_{3,2}|$, 
let $C|_{\Sigma}=p_1+p_2$ and $F_i$ be the $\pi$-fiber through $p_i$. 
We have a unique involution $\iota_{F_i}$ of $F_i$ 
which fixes $p_i$ and exchanges the two points $C|_{F_i}-p_i$. 
Then we let $q_i\in F_i$ be the fixed point of $\iota_{F_i}$ other than $p_i$, 
and set $\varphi_1(C)=q_1+q_2$. 
The $\varphi_1$-fiber over a general $q_1+q_2\in S^2{\F}_2$ is 
the \textit{linear} space ${\proj}V_1$ of curves $C$ with 
$C|_{\Sigma}=p_1+p_2$ and $\iota_i(C|_{F_i})=C|_{F_i}$, 
where $F_i$ is the $\pi$-fiber through $q_i$, $p_i$ is $F_i\cap\Sigma$, and 
$\iota_i$ is the involution of $F_i$ fixing $q_i$ and $p_i$. 
If $G_1\subset{\aut}({\F}_2)$ is the stabilizer of $q_1+q_2$, 
by the slice method we have 
\begin{equation*}
|L_{3,2}|/{\aut}({\F}_2) \sim {\proj}V_1/G_1. 
\end{equation*}

Next we consider the $G_1$-equivariant map 
\begin{equation*}
\varphi_2 : {\proj}V_1 \dashrightarrow {\proj}(T_{p_1}{\F}_2) \times {\proj}(T_{p_2}{\F}_2), 
\quad C\mapsto (T_{p_1}C, T_{p_2}C). 
\end{equation*}
The $\varphi_2$-fiber over a general $(v_1, v_2)$ is the sub linear system 
${\proj}V_2\subset {\proj}V_1$ of curves passing through $v_i$ or singular at $p_i$. 
One checks that $G_1$ acts almost transitively on ${\proj}(T_{p_1}{\F}_2)\times{\proj}(T_{p_2}{\F}_2)$. 
If $G_2\subset G_1$ is the stabilizer of $(v_1, v_2)$, 
by the slice method we have  
\begin{equation*}
{\proj}V_1/G_1 \sim {\proj}V_2/G_2. 
\end{equation*}

We analyze the $G_2$-representation $V_2$ by using the coordinate system $\{ U_i\}_{i=1}^{4}$ of ${\F}_2$ 
introduced in \S \ref{ssec:coordinate}. 
We may assume that $q_1$ (resp. $q_2$) is the origin 
$(x_3, y_3)=(0, 0)$ in $U_3$ (resp. $(x_4, y_4)=(0, 0)$ in $U_4$). 
Then $p_i$ is the origin $(x_i, y_i)=(0, 0)$ in $U_i$, $i=1, 2$. 
We may also assume that $v_i \in {\proj}(T_{p_i}{\F}_2)$ is expressed as 
$v_i = {\C}(x_i+y_i)$ by the coordinate $(x_i, y_i)$ of $U_i$ around $p_i$. 
Let $g_{\alpha, s}, h_{\beta}$, and $j$ be the automorphisms of ${\F}_2$ described in the equations  
$(\ref{$R$-action in coordinate})$, $(\ref{linearization coordinate 1})$, and $(\ref{linearization coordinate 2})$ respectively.  
We set $\rho=g_{-\sqrt{-1}, 0}\circ h_{\sqrt{-1}}$. 

\begin{lemma}
For $p_i, q_i , v_i$ as above, the stabilizer $G_2$ is given by 
\begin{equation*}
G_2 = \langle j \rangle \ltimes (\langle \rho \rangle \ltimes \{ g_{1, \lambda XY} \}_{\lambda\in{\C}}) 
        \simeq {\Z}/2{\Z} \ltimes ({\Z}/4{\Z}\ltimes{\C}). 
\end{equation*}
Here $j$ acts on ${\Z}/4{\Z}\ltimes{\C}$ by $(-1, 1)$,  
and $\rho$ acts on ${\C}$ by $-1$.         
\end{lemma}

\begin{proof}
First observe that $j, \rho$, and $g_{1, \lambda XY}$ are contained in $G_2$. 
This is clear for $j$ and $\rho$. 
By \eqref{$R$-action in coordinate}, $g_{1, \lambda XY}$ acts on $U_1$ and $U_2$ by 
$x_i\mapsto x_i$, $y_i\mapsto(1+\lambda x_iy_i)^{-1}y_i$, $i=1, 2$. 
Hence it acts on both $T_{p_i}{\F}_2$ trivially. 

Conversely, we set $G_2'=\{ g\in G_2, g(q_i)=q_i \}$. 
The quotient $G_2/G_2'$ is ${\Z}/2{\Z}$ generated by $j$. 
The $G_2'$-action on $\Sigma$ is contained in $\{ h_{\beta}|_{\Sigma} \}_{\beta\in{\C}^{\times}}$. 
Hence by the sequence $(\ref{Aut(Hir)})$ 
every element of $G_2'$ is written as $g_{\alpha, s}\circ h_{\beta}$ for some 
$g_{\alpha, s}\in R$ and $\beta\in{\C}^{\times}$. 
Since $g_{\alpha, 0}$ and $h_{\beta}$ fix $q_i$, 
we have $g_{1,s}(q_i)=q_i$ so that $s=\lambda XY$ for some $\lambda \in {\C}$. 
This implies that $g_{\alpha,0}\circ h_{\beta} \in G_2'$. 
Then $g_{\alpha,0}\circ h_{\beta}$ acts on $U_1$ by $(x_1, y_1)\mapsto(\beta x_1, \alpha^{-1}y_1)$, 
and on $U_2$ by $(x_2, y_2)\mapsto (\beta^{-1} x_2, \alpha^{-1}\beta^{2}y_2)$.   
Thus we must have $\alpha\beta=1$ and $\beta^4=1$. 
\end{proof}

Next we describe the linear system ${\proj}V_2$ by the coordinates. 
By Proposition \ref{def eqn in Hirze} the space $H^0(L_{3,2})$ is isomorphic to the vector space 
$\{ \sum_{i=0}^3f_i(x_1)y_1^i, {\rm deg}f_i \leq 2i+2 \}$ by restriction to $U_1$. 
We shall express the polynomials $f_i$ as $f_i(x)=\sum_{j=0}^{2i+2}a_{ij}x^j$. 

\begin{lemma}
The linear subspace $V_2\subset H^0(L_{3,2})$ is defined by the equations  
\begin{equation}\label{def eq (4,2)}
a_{00}=a_{02}=a_{20}=a_{26}=0, \quad a_{01}=a_{10}=a_{14}. 
\end{equation}
\end{lemma}

\begin{proof}
Let $C\in |L_{3,2}|$ be defined by $\sum_{i=0}^3f_i(x_1)y_1^i=0$. 
The condition that $p_1\in C$ (resp. $p_2\in C$) is expressed by $a_{00}=0$ (resp. $a_{02}=0$). 
The condition on the tangent $T_{p_1}C$ (resp. $T_{p_2}C$) is written as $a_{01}=a_{10}$ (resp. $a_{01}=a_{14}$). 
The restriction of $C$ to the fiber $\{ x_1=0\}$ (resp. $\{ x_2=0\}$) is given by 
$\sum_{i=0}^{3}a_{i0}y_1^i=0$ (resp. $\sum_{i=0}^{3}a_{i,2i+2}y_2^i=0$). 
Since these polynomials should be anti-symmetric, 
we have $a_{20}=0$ and $a_{26}=0$. 
\end{proof}

Using the coordinate $U_3$, we identify $V_2$ with the subspace of 
$\{ \sum_{i=0}^3f_i(x_3)y_3^{3-i} \}$ defined by \eqref{def eq (4,2)}, and calculate the $G_2$-action on it. 
The coefficients $a_{01}=a_{10}=a_{14}, a_{11}, a_{12}, \cdots$ of the polynomials 
are basis of the dual space $V_2^{\vee}$. 
The subspace 
${\C}\langle a_{01}, a_{12}\rangle \subset V_2^{\vee}$
is $G_2$-invariant. 
Indeed, we have 
\begin{eqnarray*}\label{action on (V_2/V_3)^{dual}}
g_{1, \lambda XY}  &:& (a_{01}, a_{12}) \mapsto (a_{01},     a_{12}-3\lambda a_{01}),  \\  
\rho                  &:& (a_{01}, a_{12}) \mapsto (-a_{01},        a_{12}),   \\  
j                  &:& (a_{01}, a_{12}) \mapsto (a_{01}, a_{12}). 
\end{eqnarray*}
Therefore the annihilator of ${\C}\langle a_{01}, a_{12}\rangle$, 
\begin{equation*}
V_3 = \{ F(x_3, y_3) \in V_2, a_{01}=a_{12}=0 \}, 
\end{equation*} 
is a $G_2$-invariant subspace of $V_2$. 
We want to apply the slice method to the projection 
${\proj}V_2 \dashrightarrow {\proj}(V_2/V_3)$ from $V_3$, 
which is $G_2$-equivariant. 
We have the coordinate $(a_{01}, a_{12})\colon V_2/V_3 \to {\C}^2$ for $V_2/V_3$. 
Then $a=a_{01}^{-1}a_{12}$ is an inhomogeneous coordinate of ${\proj}(V_2/V_3)$. 
By the above calculation,  
$G_2$ acts on ${\proj}(V_2/V_3)$ by  
\begin{equation*}
g_{1, \lambda XY}(a) = a-3\lambda,  \qquad
\rho(a) = -a,  \qquad
j(a) = a. 
\end{equation*}
This shows that $G_2$ acts on the affine line ${\proj}(V_2/V_3)\backslash \{ a_{01}=0\}$ transitively 
with the stabilizer of the point $p_0=\{ a_{12}=0\}$ being  
$G_3 =  \langle j \rangle \ltimes \langle \rho \rangle$. 
The fiber of the projection ${\proj}V_2-{\proj}V_3 \to {\proj}(V_2/V_3)$ over $p_0$ is  
the hyperplane $\{ a_{12}=0 \} \subset {\proj}V_2$ minus ${\proj}V_3$, 
which is naturally $G_3$-isomorphic to the $G_3$-representation 
$V_3' = {\rm Hom}({\C}x_3y_3^3, V_3)$. 
Hence by the slice method we have 
\begin{equation*}
{\proj}V_2/G_2 \sim V_3'/G_3. 
\end{equation*} 

Let $V_4\subset V_3'$ be the subspace 
\begin{equation*}
V_4=({\C}x_3y_3^3)^{\vee}\otimes {\C}\langle x_3y_3^2, x_3^3y_3^2 \rangle. 
\end{equation*}
One checks that $V_4$ is $G_3$-invariant,  
and that with respect to the given basis $G_3$ acts on $V_4$ by    
$j = \begin{pmatrix} 0 & 1 \\ 
                                1 & 0  \end{pmatrix}$ 
and                                             
$\rho = \begin{pmatrix} -\sqrt{-1} & 0 \\ 
                                                0 & \sqrt{-1}  \end{pmatrix}$.                                         
In particular, $G_3$ acts on $V_4$ effectively and so almost freely. 
Since $G_3$ is a finite group, we have a $G_3$-decomposition $V_3' = V_4 \oplus V_4^{\perp}$. 
Applying the no-name lemma \ref{no-name 1} to the projection $V_3' \to V_4$ from $V_4^{\perp}$, 
we have  
\begin{equation*}
V_3'/G_3 \sim {\C}^{14}\times (V_4/G_3). 
\end{equation*}
Since ${\dim}\, V_4=2$, the quotient $V_4/G_3$ is rational. 
This completes the proof of Proposition \ref{rational (4,2,1)}.  
\end{proof}


\subsection{The rationality of $\mathcal{M}_{5,3,1}$}\label{sec:(5,3,1)}

By Corollary \ref{fixed curve (r,r-2), r<6}, 
$\mathcal{M}_{5,3,1}$ is birational to $\mathcal{T}_{7,1}$. 
In \cite{Ma2} we proved that the moduli of trigonal curves of genus $7$ is rational. 
Therefore      

\begin{proposition}\label{rational (5,3,1)} 
The moduli space $\mathcal{M}_{5,3,1}$ is rational. 
\end{proposition}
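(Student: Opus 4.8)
The plan is to reduce the rationality of $\mathcal{M}_{5,3,1}$ to the rationality of the moduli of trigonal curves of genus $7$, which can then be attacked by the invariant-theoretic machinery of \S\ref{sec:invariant theory}. First I would invoke Corollary \ref{fixed curve (r,r-2), r<6} with $r=5$: the fixed curve map identifies $\mathcal{M}_{5,3,1}$ birationally with the Maroni locus $\mathcal{T}_{7,1}\subset\mathcal{M}_7$. Since $7$ is odd, a general trigonal curve of genus $7$ has scroll invariant $1$, so $\mathcal{T}_{7,1}$ is dense in the trigonal locus of $\mathcal{M}_7$; by Proposition \ref{moduli of trigonal} (with $g=7$, $n=1$, hence $b=3$) it is birational to the rational quotient $|L_{3,3}|/\aut({\F}_1)$. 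Thus it suffices to prove that $|L_{3,3}|/\aut({\F}_1)$ is rational, i.e. that the moduli of genus $7$ trigonal curves is rational.

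For the latter, the strategy is the familiar slice-and-no-name cascade used throughout the paper. Since $n=1$ is odd, every line bundle on ${\F}_1$ admits an $\aut({\F}_1)$-linearization (Proposition \ref{linearization Hirze}), so $\aut({\F}_1)$ acts linearly on $H^0(L_{3,3})$. I would first apply the slice method (Proposition \ref{slice}) to the $\aut({\F}_1)$-equivariant map $|L_{3,3}|\dashrightarrow S^3\Sigma$, $C\mapsto C|_{\Sigma}$, whose transitivity and stabilizer properties come from Proposition \ref{stabilizer of fiber} and its variants; this replaces $\aut({\F}_1)$ by the stabilizer of three points on $\Sigma$. Then I would impose successive incidence conditions — tangent directions at those points, restrictions of $C$ to the corresponding fibres, and so on — each handled by a further application of the slice method, until the residual group is either connected solvable, where Miyata's theorem \ref{Miyata} applies, or finite, where one exhibits a small invariant linear subsystem on which the residual group acts almost freely and strips off a vector-bundle factor via the no-name lemma (Proposition \ref{no-name 1} or \ref{no-name 2}), leaving a low-dimensional quotient that is manifestly rational. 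The whole scheme runs parallel to the treatment of $\mathcal{M}_{4,2,1}$ in \S\ref{sec:(4,2,1)}.

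The delicate part is the endgame of this cascade: one must choose the chain of incidence conditions so that the residual group becomes small and acts almost freely, correctly track the branches, nodes and components of $C+\Sigma$ (the residual finite group will be an extension of a product of symmetric groups coming from the labeling of $C\cap\Sigma$), and finally verify the rationality of the resulting two- or three-dimensional quotient. This computation is exactly what is carried out in \cite{Ma2}, where the moduli of genus $7$ trigonal curves is shown to be rational; in the paper itself I would therefore simply cite that result, so that the proof reduces to the single statement that $\mathcal{M}_{5,3,1}\sim\mathcal{T}_{7,1}$ is rational by \cite{Ma2}.
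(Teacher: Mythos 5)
Your proposal is correct and matches the paper's proof: the paper likewise combines Corollary \ref{fixed curve (r,r-2), r<6} (giving $\mathcal{M}_{5,3,1}\sim\mathcal{T}_{7,1}$, the full trigonal locus in genus $7$ since $7$ is odd) with the citation of \cite{Ma2} for the rationality of the moduli of genus $7$ trigonal curves. The extra sketch of how the slice/no-name cascade inside \cite{Ma2} would go is not needed, since the paper simply defers to that reference exactly as you ultimately do.
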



\subsection{The rationality of $\mathcal{M}_{6,2,0}$}\label{sec:(6,2,0)}

Recall from Proposition \ref{birat (6,2,0)} that we have a birational equivalence 
$\mathcal{M}_{6,2,0} \sim (|L_{3,0}|\times|L_{0,1}|)/{\aut}({\F}_3)$. 

\begin{proposition}\label{rational (6,2,0)} 
The quotient $(|L_{3,0}|\times|L_{0,1}|)/{\aut}({\F}_3)$ is rational. 
Therefore $\mathcal{M}_{6,2,0}$ is rational. 
\end{proposition}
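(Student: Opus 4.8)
The plan is to mimic the treatment of $\mathcal{M}_{3,1,1}$ in \S\ref{sec:(3,1,1)}: first reduce, by the slice method, to a projective quotient by a connected solvable group, and then invoke Miyata's theorem. Recall from Proposition \ref{birat (6,2,0)} that $\mathcal{M}_{6,2,0}\sim(|L_{3,0}|\times|L_{0,1}|)/{\aut}({\F}_3)$, so it suffices to prove that this last quotient is rational.

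First I would consider the ${\aut}({\F}_3)$-equivariant projection $(|L_{3,0}|\times|L_{0,1}|)\dashrightarrow|L_{0,1}|$, $(C,F)\mapsto F$. By Proposition \ref{stabilizer of fiber}, ${\aut}({\F}_3)$ acts transitively on $|L_{0,1}|$, and the stabilizer $G$ of a point $[F_0]\in|L_{0,1}|$ — equivalently of the point $p_0=F_0\cap\Sigma$ of $\Sigma$ — is connected and solvable. The fiber over $[F_0]$ is, up to the open condition that $C$ be smooth and transverse to $F_0$ (harmless birationally), the whole linear system $|L_{3,0}|\times\{F_0\}$. Hence Proposition \ref{slice} gives
\[
(|L_{3,0}|\times|L_{0,1}|)/{\aut}({\F}_3)\sim|L_{3,0}|/G.
\]
Since $n=3$ is odd, Proposition \ref{linearization Hirze} shows that $L_{3,0}$ is ${\aut}({\F}_3)$-linearized, hence $G$-linearized, so $|L_{3,0}|={\proj}H^0(L_{3,0})$ is the projectivization of a linear representation of the connected solvable group $G$. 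Exactly as in the proof of Proposition \ref{rational (3,1,1)}, Miyata's theorem \ref{Miyata} then yields that $|L_{3,0}|/G$ is rational; concretely, $H^0(L_{3,0})$ is a linear representation of the connected solvable group $G\times{\C}^{\times}$ with ${\C}^{\times}$ acting by scalars, and $H^0(L_{3,0})/(G\times{\C}^{\times})$ is birational to $|L_{3,0}|/G$. Combined with Proposition \ref{birat (6,2,0)}, this proves that $\mathcal{M}_{6,2,0}$ is rational.

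I do not expect a serious obstacle here — this is among the easier cases, and the argument is almost identical to \S\ref{sec:(3,1,1)}. The only point requiring (routine) care is the passage from a linear representation of a connected solvable group to the rationality of the quotient of its projectivization, which is the standard consequence of Theorem \ref{Miyata} obtained by adjoining the scaling torus ${\C}^{\times}$ as above.
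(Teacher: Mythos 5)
Your proof is correct and follows essentially the same route as the paper: the slice method applied to the projection onto $|L_{0,1}|$ reduces the problem to $|L_{3,0}|/G$ for the connected solvable stabilizer $G$ of a point of $\Sigma$, and Miyata's theorem (via the ${\aut}({\F}_3)$-linearization of $L_{3,0}$ from Proposition \ref{linearization Hirze}) finishes the argument, exactly as in the paper's reference back to the proof of Proposition \ref{rational (3,1,1)}.
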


\begin{proof}
Applying the slice method to the projection $|L_{3,0}|\times|L_{0,1}| \to |L_{0,1}|$, 
we have 
$(|L_{3,0}|\times|L_{0,1}|)/{\aut}({\F}_3) \sim |L_{3,0}|/G$ 
where $G\subset {\aut}({\F}_3)$ is the stabilizer of a point $p\in\Sigma$. 
As in the proof of Proposition \ref{rational (3,1,1)}, 
we see that $|L_{3,0}|/G$ is rational. 
\end{proof}


\section{The case $g=6$}\label{sec:g=6} 

In this section we prove that the spaces ${\moduli}$ with $g=6$ and $k>0$ are rational. 
We will find curves of Clifford index $1$, i.e., trigonal curves and plane quintics as the main fixed curves. 


\subsection{$\mathcal{M}_{6,4,0}$ and plane quintics}\label{ssec:(6,4,0)}

Let $U\subset |{\Oplane}(5)|\times|{\Oplane}(1)|$ be the open set of pairs $(C, L)$ such that 
$C$ is smooth and transverse to $L$. 
The 2-elementary $K3$ surface associated to the sextic $C+L$ has parity $\delta=0$. 
Indeed, if $(Y, B_1+B_2)$ is the corresponding right DPN pair, 
we have $B_1-B_2\in 4NS_Y$. 
Thus we obtain a period map $\mathcal{P}\colon U/{\PGL}_3 \to \mathcal{M}_{6,4,0}$. 

\begin{proposition}\label{birational (6,4,0)}
The period map $\mathcal{P}$ is birational. 
\end{proposition}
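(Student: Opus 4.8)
The plan is to follow the recipe of \S\ref{ssec: recipe} with $Y={\proj}^2$. The DPN pair attached to $(C,L)\in U$ is $({\proj}^2,C+L)$, whose singularities are exactly the five nodes $C\cap L$; by \eqref{compute r by A-D-E}, \eqref{compute k by A-D-E} together with the parity remark above, the associated 2-elementary $K3$ surfaces have main invariant $(6,4,0)$, with $X^{\iota}=F_1+F_2$ where $f(F_1)=C$ is a genus $6$ curve and $f(F_2)=L$. As the cover of $U$ I would take
\begin{equation*}
\widetilde{U}=\{\,(C,L,p_1,\dots,p_5)\in U\times({\proj}^2)^5,\ \{p_i\}_{i=1}^{5}=C\cap L\,\},
\end{equation*}
an $\frak{S}_5$-covering of $U$ parametrizing the sextics $C+L$ together with a labelling of their five nodes; the two components are automatically distinguished by their degrees, so $\widetilde{U}$ carries a complete and reasonable labelling. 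Writing $\mathbf{p}=(p_1,\dots,p_5)$, and taking natural bases $h$ of $\langle2\rangle$ and $\{e_1,\dots,e_5\}$ of $A_1^5$, I would put $2f_1=5h-\sum_{i=1}^{5}e_i$ and $2f_2=h-\sum_{i=1}^{5}e_i$ in $(\langle2\rangle\oplus A_1^5)^{\vee}$ and set $L_+=\langle\langle2\rangle\oplus A_1^5,f_1,f_2\rangle$. One checks that $L_+$ is even and 2-elementary of main invariant $(6,4,0)$, and in fact $L_+\simeq U(2)\oplus D_4$.

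For $(C,L,\mathbf{p})\in\widetilde{U}$ with $(X,\iota)=\mathcal{P}(C+L)$ and $f\colon X\to{\proj}^2$ the right covering map, the assignment $h\mapsto[f^{\ast}\Oplane(1)]$, $e_i\mapsto[f^{-1}(p_i)]$, $f_j\mapsto[F_j]$ defines, via Lemma \ref{L_+ and A-D-E}, an isometry $j\colon L_+\to L_+(X,\iota)$, hence a lift $\tilde p\colon\widetilde{U}\to\cove_{6,4,0}$ of the period map, which is a morphism by Borel's extension theorem. Since $\PGL_3={\aut}({\proj}^2)$ acts trivially on $NS_{{\proj}^2}$, $\tilde p$ is $\PGL_3$-invariant and descends to a rational map $\lift\colon\widetilde{U}/\PGL_3\dashrightarrow\cove_{6,4,0}$ lifting $\mathcal{P}$.

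Next I would show $\lift$ is generically injective, i.e.\ that the $\tilde p$-fibres are $\PGL_3$-orbits, following step (4) of the recipe: given $\tilde p(C,L,\mathbf{p})=\tilde p(C',L',\mathbf{p}')$ one obtains a Hodge isometry $\Phi$ with $\Phi\circ j'=j$, which preserves the ample cones by Lemma \ref{ample class}; the Torelli theorem then supplies $\varphi\colon X\to X'$ with $\varphi^{\ast}=\Phi$, and Lemma \ref{covering map & LB} applied to $\Oplane(1)$ produces $\psi\in\PGL_3$ with $\psi\circ f=f'\circ\varphi$. Reading off the branch loci and the classes $j(e_i)$, $j(f_j)$ gives $\psi(C,L,\mathbf{p})=(C',L',\mathbf{p}')$. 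Since $\dim(\widetilde{U}/\PGL_3)=22-8=14=\dim\cove_{6,4,0}$ and $\cove_{6,4,0}$ is irreducible, $\lift$ is birational.

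Finally, by step (5) of the recipe $\deg(\mathcal{P})=|{\Or}(D_{L_+})|\big/\deg(\widetilde{U}/\PGL_3\to U/\PGL_3)$; the denominator is $|\frak{S}_5|=120$, as $\PGL_3$ is connected and acts almost freely on $U$, while $L_+\simeq U(2)\oplus D_4$ yields $|{\Or}(D_{L_+})|=|{\Or}^-(4,2)|=120$ by \cite{M-S}. Hence $\deg(\mathcal{P})=1$ and $\mathcal{P}$ is birational. I expect the generic-injectivity step to be the main obstacle: one must check carefully that $\varphi$ respects the labelling data packed into $\widetilde{U}$, which is precisely what confines the recovered automorphism $\psi$ to the connected group $\PGL_3$; the attendant lattice bookkeeping — the identification $L_+\simeq U(2)\oplus D_4$ and the count $|{\Or}(D_{L_+})|=120$ — is the other point requiring care.
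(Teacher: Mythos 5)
Your proof is correct and follows essentially the same route as the paper: the same $\frak{S}_5$-cover $\widetilde{U}$, the dimension count $\dim(U/\PGL_3)=14$, and the comparison of the degree $|\frak{S}_5|=120$ of $\widetilde{U}/\PGL_3\to U/\PGL_3$ with $|{\Or}(D_{L_+})|=|{\Or}^-(4,2)|=5!$ for $L_+\simeq U(2)\oplus D_4$. The paper simply delegates the construction of $L_+$, the lift $\lift$, and the generic-injectivity check to the recipe of \S\ref{ssec: recipe}, whereas you write these out explicitly (note in your lattice that $f_1-f_2=2h$, so the overlattice has index $2$, not $4$, which is what makes $a=4$ come out right); the substance is identical.
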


\begin{proof}
We consider the following $\frak{S}_5$-cover of $U$. 
\begin{equation*}
\widetilde{U} = \{ (C, L, p_1, \cdots, p_5) \in U\times({\proj}^2)^5, \: C\cap L=\{ p_1, \cdots, p_5\} \}. 
\end{equation*}
By $\widetilde{U}$ the sextics $C+L$ are endowed with complete labelings of the nodes. 
Since ${\dim}(U/{\PGL}_3)=14$, 
this induces a birational lift $\widetilde{U}/{\PGL}_3\to{\cove}_{6,4,0}$ of $\mathcal{P}$  
by the recipe in \S \ref{ssec: recipe}. 
The projection $\widetilde{U}/{\PGL}_3 \to U/{\PGL}_3$ has degree $|\frak{S}_5|$ 
because ${\PGL}_3$ acts almost freely on $U$. 
On the other hand, 
${\cove}_{6,4,0}$ is an ${\Or}(D_{L_+})$-cover of $\mathcal{M}_{6,4,0}$ for the lattice $L_+=U(2)\oplus D_4$. 
By \cite{M-S} we have $|{\Or}(D_{L_+})|=|{\Or}^-(4, 2)|=5!$. 
This proves the proposition. 
\end{proof}
 
\begin{proposition}\label{rational (6,4,0)}
The quotient $(|{\Oplane}(5)|\times|{\Oplane}(1)|)/{\PGL}_3$ is rational. 
Therefore $\mathcal{M}_{6,4,0}$ is rational. 
\end{proposition}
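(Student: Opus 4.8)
The plan is to reduce the quotient $(|\Oplane(5)|\times|\Oplane(1)|)/{\PGL}_3$ to a form where a standard rationality theorem applies, using the slice method followed by the no-name method. First I would apply the slice method (Proposition \ref{slice}) to the second projection
\begin{equation*}
\varphi : |\Oplane(5)|\times|\Oplane(1)| \to |\Oplane(1)|, \qquad (C, L)\mapsto L.
\end{equation*}
Since ${\PGL}_3$ acts transitively on the space of lines $|\Oplane(1)|\simeq({\proj}^2)^{\vee}$, the slice method gives
\begin{equation*}
(|\Oplane(5)|\times|\Oplane(1)|)/{\PGL}_3 \sim |\Oplane(5)|/G,
\end{equation*}
where $G\subset{\PGL}_3$ is the stabilizer of a fixed line $L_0$. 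This $G$ is (up to the center acting trivially) the group of affine transformations of the affine plane ${\proj}^2\setminus L_0$, i.e. $G\simeq{\GL}_2\ltimes{\C}^2$ acting naturally; in particular $G$ is connected and solvable.

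Next I would try to realize $|\Oplane(5)|$, or a birational model of it, as a linear representation of $G$, so that Miyata's theorem (Theorem \ref{Miyata}) applies and yields rationality of $|\Oplane(5)|/G$. The subtlety is that $|\Oplane(5)|$ is a projective space, not a vector space, and $G$ acts only projectively; one must linearize. The standard device is to choose a $G$-semi-invariant section — for instance the power $L_0^5\in H^0(\Oplane(5))$, which spans a $G$-stable line on which $G$ acts by a character $\chi$ — and pass to the affine chart $\{L_0^5\ne 0\}\subset|\Oplane(5)|$, which is a $G$-stable affine space birational to $|\Oplane(5)|$ and carries a genuine linear $G$-action (after twisting by $\chi^{-1}$, exactly as in the proof of Proposition \ref{no-name 2}). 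Then $|\Oplane(5)|/G$ is rational by Theorem \ref{Miyata}, and combining with the slice-method identification above and the birational period map $\mathcal{P}$ of Proposition \ref{birational (6,4,0)}, we conclude that $\mathcal{M}_{6,4,0}$ is rational.

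The main obstacle I anticipate is the linearization bookkeeping at the projective/affine interface: verifying carefully that $\{L_0^5\ne 0\}$ is $G$-invariant, identifying the character $\chi$ by which $G$ scales $L_0^5$, and checking that the resulting affine space is $G$-equivariantly a linear representation so that Miyata's theorem genuinely applies. An alternative, cleaner route that avoids this is to apply the no-name method (Proposition \ref{no-name 2} or \ref{no-name 1}) directly: decompose $H^0(\Oplane(5))$ using the $G$-stable filtration by order of vanishing along $L_0$, exhibit a $G$-linearized sub-line bundle, and reduce $|\Oplane(5)|/G$ to ${\proj}^N\times(W/G)$ for some small $W$ whose rationality is immediate. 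Either way the crux is purely the invariant-theoretic reduction; once it is in place, rationality follows from the cited theorems together with $\mathcal{P}$ being birational.
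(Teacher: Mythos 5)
Your reduction has a fatal error at its central step. The stabilizer $G\subset{\PGL}_3$ of a line $L_0$ is indeed the parabolic subgroup ${\GL}_2\ltimes{\C}^2$ of affine transformations of ${\proj}^2\setminus L_0$, and it is connected — but it is \emph{not} solvable: it contains ${\SL}_2$ (equivalently, its Levi factor ${\GL}_2$ is not solvable). Consequently Miyata's theorem (Theorem \ref{Miyata}) simply does not apply to $|{\Oplane}(5)|/G$, and all the linearization bookkeeping you worry about is moot. Note also that $|{\Oplane}(5)|/G\sim(|{\Oplane}(5)|\times|{\Oplane}(1)|)/{\PGL}_3\sim{\proj}^2\times(|{\Oplane}(5)|/{\PGL}_3)$, so the rationality you are trying to establish by an elementary slice-plus-Miyata argument is essentially equivalent to the rationality of the moduli space of plane quintics — a genuinely hard theorem of Shepherd-Barron \cite{SB1.5}. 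That this would fall out of Miyata's theorem should have been a warning sign.

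The paper goes the other way around: it projects onto $|{\Oplane}(5)|$ rather than $|{\Oplane}(1)|$, observes that ${\PGL}_3$ acts almost freely on $|{\Oplane}(5)|$, and applies the no-name lemma to the bundle $\mathcal{E}\otimes\mathcal{L}$ (where $\mathcal{E}=H^0({\Oplane}(1))\times|{\Oplane}(5)|$ and $\mathcal{L}$ is the tautological bundle; the twist by $\mathcal{L}$ cancels the action of the center of ${\SL}_3$ so that the bundle is honestly ${\PGL}_3$-linearized). This yields $(|{\Oplane}(5)|\times|{\Oplane}(1)|)/{\PGL}_3\sim{\proj}^2\times(|{\Oplane}(5)|/{\PGL}_3)$, and the rationality of $|{\Oplane}(5)|/{\PGL}_3$ is then quoted from \cite{SB1.5}. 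Your second, ``alternative'' sketch (filtering $H^0({\Oplane}(5))$ by order of vanishing along $L_0$) could in principle be pushed through by a further application of the slice method — the kernel of $G\to{\aut}(L_0)$ \emph{is} connected and solvable — but as written it is not a proof, and in any case the route actually needed here is the one through Shepherd-Barron's theorem, not through Miyata's.
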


\begin{proof}
The vector bundle $\mathcal{E}=H^0({\Oplane}(1))\times|{\Oplane}(5)|$ over $|{\Oplane}(5)|$ is 
${\SL}_3$-linearized in which the element $e^{2\pi i/3}I\in{\SL}_3$ acts by 
the scalar multiplication by $e^{4\pi i/3}$. 
On the other hand, the tautological bundle $\mathcal{L}$ over $|{\Oplane}(5)|$ is also 
${\SL}_3$-linearized where $e^{2\pi i/3}I$ acts by the multiplication by $e^{2\pi i/3}$. 
Hence $\mathcal{E}\otimes\mathcal{L}$ is ${\PGL}_3$-linearized. 
Note that ${\proj}\mathcal{E}$ is canonically isomorphic to ${\proj}(\mathcal{E}\otimes\mathcal{L})$. 
Since ${\PGL}_3$ acts almost freely on $|{\Oplane}(5)|$, 
we may apply the no-name lemma to $\mathcal{E}\otimes\mathcal{L}$ to see that 
\begin{equation*}
(|{\Oplane}(5)|\times|{\Oplane}(1)|)/{\PGL}_3 \sim 
{\proj}(\mathcal{E}\otimes\mathcal{L})/{\PGL}_3 \sim 
{\proj}^2\times(|{\Oplane}(5)|/{\PGL}_3). 
\end{equation*}
The quotient $|{\Oplane}(5)|/{\PGL}_3$ is rational by Shepherd-Barron \cite{SB1.5}. 
\end{proof}

\begin{corollary}\label{curve moduli (6,4,0)} 
The fixed curve map for $\mathcal{M}_{6,4,0}$ is a dominant map onto 
the locus of plane quintics, i.e., non-hyperelliptic curves having $g^2_5$, 
and its general fibers are identified with the $g^2_5$. 
\end{corollary}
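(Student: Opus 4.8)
The plan is to unwind the birational identifications established just above and to track what the curve $B \in |{\Oplane}(5)|$, together with the line $L$, becomes on the $K3$ side. By Proposition \ref{birational (6,4,0)} the period map $\mathcal{P}\colon U/{\PGL}_3 \dashrightarrow \mathcal{M}_{6,4,0}$ is birational, so a general $(X,\iota)\in\mathcal{M}_{6,4,0}$ is recovered, up to the right resolution and double cover construction of \S\ref{ssec: right resol}, from a general pair $(C,L)\in U$ with $C$ a smooth plane quintic and $L$ a transverse line. The genus $g=6$ component of $X^\iota$ is the component of the right-resolved branch curve of maximal geometric genus; since $L\simeq{\proj}^1$ has genus $0$ while $C$ has geometric genus $\binom{4}{2}=6$, the main fixed curve $C^6$ is canonically identified with the normalization of $C$, i.e. with the smooth plane quintic $C$ itself. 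First I would make this identification precise: the right resolution separates the five nodes $C\cap L$, and the strict transform of $C$ is isomorphic to $C$, so the component $F_1$ of $X^\iota$ lying over $C$ is isomorphic to $C$, giving $F(X,\iota)=[C]\in\mathcal{M}_6$.

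Next I would identify the image of $F$. A smooth plane quintic is a non-hyperelliptic genus $6$ curve equipped with a $g^2_5$ (the restriction of $|{\Oplane}(1)|$), and conversely any non-hyperelliptic genus $6$ curve carrying a $g^2_5$ is a smooth plane quintic, since the $g^2_5$ must be very ample (base-point-freeness and very ampleness follow because $K_C - g^2_5$ is itself a $g^2_5$ by Riemann-Roch, and a genus $6$ curve with a base-point or a non-injective $g^2_5$ would be hyperelliptic or trigonal). Thus the locus of plane quintics inside $\mathcal{M}_6$ coincides with the locus of non-hyperelliptic curves possessing a $g^2_5$, and this locus is irreducible of dimension $12 = \dim(|{\Oplane}(5)|/{\PGL}_3)$, matching $\dim\mathcal{M}_{6,4,0}=20-6=14$ minus the fiber dimension. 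Since the construction of $(X,\iota)$ from $(C,L)$ uses only the abstract curve $C$ together with the line $L$ (equivalently, the $g^2_5$), I would argue that $F$ factors through the forgetful map $U/{\PGL}_3 \to |{\Oplane}(5)|/{\PGL}_3 \sim \{\text{plane quintics}\}\subset\mathcal{M}_6$, which is dominant onto that locus, and whose general fiber is $\{L\in|{\Oplane}(1)|\}/(\text{stabilizer})$, i.e. an open subset of the net of lines, which is exactly the $g^2_5$ of $C$.

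Finally I would assemble these: $F$ is the composition of the birational inverse $\mathcal{P}^{-1}\colon\mathcal{M}_{6,4,0}\dashrightarrow U/{\PGL}_3$ with the projection to $|{\Oplane}(5)|/{\PGL}_3$, and the latter is dominant onto the locus of non-hyperelliptic genus $6$ curves with a $g^2_5$ with general fiber the $g^2_5$ itself; hence the same holds for $F$. The main obstacle I anticipate is the rigidity claim that a general $(X,\iota)\in\mathcal{M}_{6,4,0}$ admits \emph{no other} contraction $X/\iota\to{\proj}^2$ realizing its branch as a quintic-plus-line — but this is precisely the content of Proposition \ref{birational (6,4,0)} (degree one of $\mathcal{P}$), so once that is invoked the fiber of $F$ over $[C]$ is genuinely just the choice of line $L$, and the remaining work is the elementary curve-theory identification of $\{$plane quintics$\}$ with $\{$non-hyperelliptic curves with $g^2_5\}$ inside $\mathcal{M}_6$, together with checking that the $g^2_5$ on a general plane quintic is unique (which again follows since two distinct $g^2_5$'s would force a lower Clifford index, contradicting that a smooth plane quintic has Clifford index $2$).
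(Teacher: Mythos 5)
Your proposal is correct and follows exactly the route the paper intends: the corollary is an immediate consequence of Proposition \ref{birational (6,4,0)}, with the fixed curve map factoring through the projection $U/{\PGL}_3\to|{\Oplane}(5)|/{\PGL}_3$, the image identified with the locus of non-hyperelliptic genus $6$ curves carrying a $g^2_5$, and the fibre $|{\Oplane}(1)|$ identified with the $g^2_5$ by restriction. One factual slip: a smooth plane quintic has Clifford index $1$ (computed by the $g^2_5$ itself, $5-2\cdot 2=1$), not $2$, so your parenthetical argument for the uniqueness of the $g^2_5$ does not work as stated; the uniqueness is nevertheless the classical fact the paper records in the note following the corollary (it follows, e.g., from the standard result that a smooth plane curve of degree $d\geq 4$ has a unique $g^2_d$), and it is indeed needed to conclude that the fibre over a general point of the image is a single net rather than a union of several.
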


Note that a smooth plane quintic has only one $g_5^2$.


\subsection{$\mathcal{M}_{6,4,1}$ and trigonal curves}\label{ssec:(6,4,1)}

Let $Q={\proj}^1\times{\proj}^1$  
and let $U\subset|{\sheaf}_Q(3,4)|\times|{\sheaf}_Q(1,0)|$ be the open set of pairs $(C, F)$ such that 
$C$ is smooth and transverse to $F$. 
The group $G={\PGL}_2\times{\PGL}_2$ acts on $U$. 
For a $(C, F)\in U$ the 2-elementary $K3$ surface $(X, \iota)$ associated to the bidegree $(4, 4)$ curve $C+F$ 
has invariant $(r, a)=(6, 4)$. 
In order to calculate $\delta$, 
we blow-up a point $p$ in $C\cap F$ and then contract the two ruling fibers through $p$ 
to obtain an irreducible sextic with a node or a cusp and with a $D_4$-singularity. 
By Lemma \ref{L_+ and A-D-E} this shows that 
$L_+(X, \iota)\simeq \langle2\rangle\oplus A_1\oplus D_4$, 
and so $(X, \iota)$ has $\delta=1$. 
Thus we obtain a period map $\mathcal{P} \colon U/G \to \mathcal{M}_{6,4,1}$. 

\begin{proposition}\label{birat (6,4,1)}
The period map $\mathcal{P}$ is birational. 
\end{proposition}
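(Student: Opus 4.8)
The plan is to run the recipe of \S\ref{ssec: recipe}, exactly as in Example~\ref{ex:1}. First I would introduce the $\frak{S}_4$-cover
\[
\widetilde{U} = \{ (C, F, p_1,\dots,p_4)\in U\times Q^4 \ :\ C\cap F=\{p_1,\dots,p_4\} \}
\]
of $U$, which parametrizes the bidegree $(4,4)$ curves $C+F$ together with a labeling of their four nodes; note that the components $C$ and $F$ are already distinguished by their classes in $NS_Q$. For the reference lattice, let $\{u,v\}$ be a basis of $U(2)$ with $u^2=v^2=0$, $(u,v)=2$, and $\{e_1,\dots,e_4\}$ a natural basis of $A_1^4$; setting $2f_1=3u+4v-\sum_{i=1}^4 e_i$ and $2f_2=u-\sum_{i=1}^4 e_i$ in $(U(2)\oplus A_1^4)^\vee$, the overlattice $L_+=\langle U(2)\oplus A_1^4, f_1, f_2\rangle$ is even and $2$-elementary of main invariant $(6,4,1)$ (in fact $L_+\simeq\langle2\rangle\oplus A_1\oplus D_4$, matching the computation preceding the proposition).

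Next, for $(C,F,p_1,\dots,p_4)\in\widetilde{U}$ with $(X,\iota)=\mathcal{P}(C,F)$ and right covering map $f\colon X\to Q$, the fixed curve decomposes as $X^\iota=F_1+F_2$ with $f(F_1)=C$ and $f(F_2)=F$, and I would define $j\colon L_+\to L_+(X,\iota)$ by $u\mapsto[f^*\mathcal{O}_Q(1,0)]$, $v\mapsto[f^*\mathcal{O}_Q(0,1)]$, $e_i\mapsto[f^{-1}(p_i)]$, $f_j\mapsto[F_j]$; this is an isometry onto $L_+(X,\iota)$ by Lemma~\ref{L_+ and A-D-E}. The resulting lift $\tilde p\colon\widetilde{U}\to{\cove}_{6,4,1}$ of $p$ is a morphism by Borel's extension theorem \cite{Bo}, and it is invariant under $G=(\PGL_2)^2$ because $G$ acts trivially on $NS_Q$ and preserves the two rulings. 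To see that the $\tilde p$-fibers are $G$-orbits, suppose $\tilde p(C,F,p_1,\dots,p_4)=\tilde p(C',F',p_1',\dots,p_4')$; then there is a Hodge isometry $\Phi$ of the associated $K3$ cohomologies with $\Phi\circ j'=j$, which preserves the ample cones by Lemma~\ref{ample class}, so by the Torelli theorem $\Phi=\varphi^*$ for an isomorphism $\varphi\colon X\to X'$. Applying Lemma~\ref{covering map & LB} with $L=\mathcal{O}_Q(1,1)$ gives $\psi\in\aut(Q)$ with $\psi\circ f=f'\circ\varphi$; since $\psi$ carries the branch curve $C+F$ onto $C'+F'$ and $C$ has the asymmetric bidegree $(3,4)$, $\psi$ cannot interchange the two rulings, so $\psi\in G$, $\psi(C)=C'$, $\psi(F)=F'$, and $\psi(p_i)=p_i'$ because $\varphi(f^{-1}(p_i))=(f')^{-1}(p_i')$. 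Hence ${\lift}\colon\widetilde{U}/G\dashrightarrow{\cove}_{6,4,1}$ is generically injective, and since $\dim(\widetilde{U}/G)=14=\dim{\cove}_{6,4,1}$ and ${\cove}_{6,4,1}$ is irreducible, ${\lift}$ is birational.

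Finally I would compare degrees: the projection $\widetilde{U}/G\to U/G$ has degree $|\frak{S}_4|=24$ because $G$ acts almost freely on $U$, whereas ${\cove}_{6,4,1}\to\mathcal{M}_{6,4,1}$ has degree $|\Or(D_{L_+})|$, which a computation with \cite{M-S} shows is also $24$. Therefore $\deg\mathcal{P}=1$, i.e.\ $\mathcal{P}$ is birational. The step requiring the most care is the degree bookkeeping — confirming that $L_+$ really has main invariant $(6,4,1)$ and that $|\Or(D_{L_+})|$ equals $|\frak{S}_4|$ — together with the observation that, unlike Example~\ref{ex:1}, no extra factor coming from a swap of the two rulings of $Q$ intervenes here, precisely because $C$ has unequal bidegrees so that such a swap does not preserve $U$.
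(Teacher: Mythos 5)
Your proposal is correct and follows essentially the same route as the paper, which itself simply invokes the recipe of \S\ref{ssec: recipe} via Example~\ref{ex:1}, uses the same $\frak{S}_4$-cover $\widetilde{U}$, and compares $\deg(\widetilde{U}/G\to U/G)=|\frak{S}_4|$ with $|\Or(D_{L_+})|=|\frak{S}_4|$ for $L_+\simeq U\oplus A_1^4$. Your explicit presentation of $L_+$ as an overlattice of $U(2)\oplus A_1^4$ and your observation that the asymmetric bidegree $(3,4)$ rules out a swap of the rulings (so that $\psi\in G$ automatically) are exactly the details the paper leaves implicit.
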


\begin{proof}
Let $\widetilde{U}\subset U\times Q^4$ be the locus of 
$(C, F, p_1, \cdots, p_4)$ such that $C\cap F= \{ p_1, \cdots, p_4\}$. 
As in Example \ref{ex:1}, we see that $\mathcal{P}$ lifts to a birational map 
$\widetilde{U}/G \to {\cove}_{6,4,1}$. 
Note that ${\dim}(U/G)=14$. 
The projection $\widetilde{U}/G \to U/G$ is an $\frak{S}_4$-covering, 
while ${\cove}_{6,4,1}$ is an ${\Or}(D_{L_+})$-cover of $\mathcal{M}_{6,4,1}$ 
for the lattice $L_+=U\oplus A_1^4$. 
It is easy to calculate that ${\Or}(D_{L_+})\simeq\frak{S}_4$. 
\end{proof}

\begin{proposition}\label{rational (6,4,1)}
The quotient $(|{\sheaf}_Q(3,4)|\times|{\sheaf}_Q(1,0)|)/G$ is rational. 
Therefore $\mathcal{M}_{6,4,1}$ is rational. 
\end{proposition}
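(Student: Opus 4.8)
The plan is to prove that the quotient $(|\mathcal{O}_Q(3,4)|\times|\mathcal{O}_Q(1,0)|)/G$ with $G=\PGL_2\times\PGL_2$ is rational; by Proposition \ref{birat (6,4,1)} this is equivalent to the assertion. Write $H^0(\mathcal{O}_Q(3,4))=V_3\otimes V_4$, with $V_3$ the space of binary cubics on the first factor (on which the first $\PGL_2$ acts) and $V_4$ the space of binary quartics on the second (on which the second $\PGL_2$ acts), and note that $|\mathcal{O}_Q(1,0)|\cong\mathbb{P}^1$ is acted on transitively by the first $\PGL_2$. First I would apply the slice method (Proposition \ref{slice}) to the $G$-equivariant projection $|\mathcal{O}_Q(3,4)|\times|\mathcal{O}_Q(1,0)|\to|\mathcal{O}_Q(1,0)|\cong\mathbb{P}^1$: since the first $\PGL_2$ acts transitively with Borel stabilizer $B$, the quotient becomes birational to $\mathbb{P}(V_3\otimes V_4)/(B\times\PGL_2)$, i.e.\ to $|\mathcal{O}_Q(3,4)|/(B\times\PGL_2)$, where $F_0\subset Q$ denotes the fibre picked out by the chosen point of $|\mathcal{O}_Q(1,0)|$, $B$ acts on $V_3$, and the remaining full $\PGL_2$ acts on $V_4$.

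The key point is to dispose of this remaining $\PGL_2$. Consider the rational map $\psi\colon\mathbb{P}(V_3\otimes V_4)\dashrightarrow|\mathcal{O}_{F_0}(4)|=\mathbb{P}(V_4)$ given by $C\mapsto C|_{F_0}$: it is $B$-invariant (because $B$ fixes $F_0$ pointwise), $\PGL_2$-equivariant and dominant, and its fibres over $\mathbb{P}(V_4)$ are affine spaces, since the restriction $H^0(\mathcal{O}_Q(3,4))\to H^0(\mathcal{O}_{F_0}(4))$ is surjective with kernel $H^0(\mathcal{O}_Q(2,4))$. The obstruction is that $\PGL_2$ is \emph{not} almost transitive on $\mathbb{P}(V_4)$: the quotient $\mathbb{P}(V_4)/\PGL_2$ is the one-dimensional $j$-line, which is however rational. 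I would overcome this exactly in the spirit of the Weierstrass-form argument of Proposition \ref{rational (2,2,0)}. The family of normal forms $\{[\,X^4+pX^2Y^2+Y^4\,]\}_p$ is a rational section of $\mathbb{P}(V_4)\dashrightarrow\mathbb{P}(V_4)/\PGL_2$, and the stabilizer in $\PGL_2$ of the pencil $\langle X^4+Y^4,\,X^2Y^2\rangle$ is the \emph{constant} dihedral group $\Gamma$ of order $8$ (generated by $x\mapsto ix$ and $x\mapsto 1/x$), whose Klein four-subgroup $\Gamma_0$ fixes every member of the pencil while $\Gamma/\Gamma_0$ acts on the parameter by $p\mapsto-p$. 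Hence $(|\mathcal{O}_Q(3,4)|\times|\mathcal{O}_Q(1,0)|)/G$ is birational to $\widetilde U/(B\times\Gamma)$, where $\widetilde U=\psi^{-1}\big(\{[\,X^4+pX^2Y^2+Y^4\,]\}_p\big)$ is fibred over the rational base $\{p\}\cong\mathbb{A}^1$, the group $B$ acts linearly on the affine-space fibres, and $\Gamma$ permutes the fibres through $p\mapsto\pm p$ with $\Gamma_0$ acting within each fibre.

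Finally I would run this quotient fibrewise over the $p$-line. Since $B$ and $\Gamma_0$ commute, one can (decomposing into $\Gamma_0$-isotypic pieces and applying Lie–Kolchin inside each) carry out Miyata's reduction $B$-equivariantly together with $\Gamma_0$, so that Miyata's theorem (Theorem \ref{Miyata}) and rationality of linear quotients by the finite abelian group $\Gamma_0$ (Fischer) both produce \emph{purely transcendental} extensions of $\C(p)$; as $\C(p)$ is rational this makes $\widetilde U/(B\times\Gamma_0)$ rational and still fibred over $\mathbb{A}^1_p$, and dividing by the residual $\Gamma/\Gamma_0\cong\mathbb{Z}/2$ (acting over $p\mapsto-p$) by the no-name lemma (Proposition \ref{no-name 1}) finishes the proof. (Alternatively, one may first strip off a factor $\mathbb{P}^1$: the $\PGL_2\times\PGL_2$-linearized $\mathbb{P}^1$-bundle $|\mathcal{O}_Q(3,4)|\times|\mathcal{O}_Q(1,0)|\to|\mathcal{O}_Q(3,4)|$ and the no-name lemma give $\mathcal{M}_{6,4,1}\sim\mathbb{P}^1\times\big(|\mathcal{O}_Q(3,4)|/\PGL_2^{2}\big)\sim\mathbb{P}^1\times\mathcal{T}_{6,0}$ via Proposition \ref{moduli of trigonal}, reducing the problem to the rationality of the Maroni locus $\mathcal{T}_{6,0}$, which is established by the same slice/normal-form analysis.) The main obstacle is precisely this $\PGL_2$-quotient of the binary-quartic data $C|_{F_0}$: because binary quartics modulo $\PGL_2$ already carry a one-parameter modulus, no single slice suffices, and the essential device is to pass to a normal form with constant automorphism group and to complete the remaining solvable-plus-finite quotients in a family over the resulting rational base.
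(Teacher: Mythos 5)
Your reduction steps are fine (the slice along $|\mathcal{O}_Q(1,0)|$, and the affine-bundle structure of $\psi$), but the central device — the normal form for binary quartics with ``constant automorphism group'' — is miscalibrated, and this is a genuine gap. The pencil $\{[X^4+pX^2Y^2+Y^4]\}$ is exactly the locus of quartics whose root set is a free orbit $\{t,-t,1/t,-1/t\}$ of the fixed Klein four-group $V=\{x,\ -x,\ 1/x,\ -1/x\}\subset{\PGL}_2$, so the group that must accompany this slice in the section method is the full normalizer $N_{{\PGL}_2}(V)$. That normalizer is the octahedral group of order $24$ (it permutes the three fixed-point pairs $\{0,\infty\},\{1,-1\},\{i,-i\}$), acting on the parameter line through $N(V)/V\simeq\frak{S}_3$; your dihedral group of order $8$ is a proper subgroup. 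Concretely, a generic ${\PGL}_2$-orbit in ${\proj}(V_4)$ meets the pencil in \emph{six} points (the $j$-invariant of $X^4+pX^2Y^2+Y^4$ has degree $6$ in $p$), forming a single $\frak{S}_3$-orbit, whereas $\langle p\mapsto -p\rangle$ has orbits of size two. Hence $\widetilde U/(B\times\Gamma)$ with your $\Gamma$ is a generically $3$-to-$1$ cover of $(|\mathcal{O}_Q(3,4)|\times|\mathcal{O}_Q(1,0)|)/G$, and rationality of a degree-$3$ cover does not imply rationality of the base. The same defect infects your parenthetical alternative, since there you propose to prove the rationality of $\mathcal{T}_{6,0}$ ``by the same slice/normal-form analysis.''

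A repair would have to run the final quotient with the full $\frak{S}_3$ acting on ${\proj}^1_p$ by M\"obius transformations, which is substantially harder than the abelian $\Gamma_0$-isotypic argument you sketch (and that sketch of a ``$\Gamma_0$-equivariant Miyata over the base'' is itself only a plan, not a proof). The paper sidesteps the whole issue: it twists the bundle $\mathcal{E}=H^0(\mathcal{O}_Q(1,0))\times{\proj}V$ by $\mathcal{O}_{{\proj}V}(1)$ to kill the $(-1,1)$-ambiguity and obtain an honest $G$-linearization, applies the no-name lemma to get $(|\mathcal{O}_Q(3,4)|\times|\mathcal{O}_Q(1,0)|)/G\sim{\proj}^1\times\mathcal{T}_6$, and then quotes Shepherd-Barron's theorem that the trigonal locus $\mathcal{T}_6$ is rational. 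That last citation is a genuinely nontrivial input which your normal-form argument does not recover; if you follow your alternative route you should simply invoke it rather than attempt to reprove it.
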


\begin{proof}
We denote $V=H^0({\sheaf}_Q(3,4))$. 
The vector bundle $\mathcal{E}=H^0({\sheaf}_Q(1,0))\times{\proj}V$ over ${\proj}V$ admits 
a natural linearization by ${\SL}_2\times{\PGL}_2$, 
in which $(-1, 1)\in{\SL}_2\times{\PGL}_2$ acts by the multiplication by $-1$. 
On the other hand, the hyperplane bundle ${\sheaf}_{{\proj}V}(1)$ over ${\proj}V$ 
is also ${\SL}_2\times{\PGL}_2$-linearized where $(-1, 1)$ acts by $-1$. 
Hence the bundle $\mathcal{E}\otimes{\sheaf}_{{\proj}V}(1)$ is $G$-linearized. 
We have a canonical isomorphism 
${\proj}(\mathcal{E}\otimes{\sheaf}_{{\proj}V}(1))\simeq{\proj}\mathcal{E}$. 
The $G$-action on ${\proj}V$ is almost free, 
with the quotient ${\proj}V/G$ naturally birational to the moduli $\mathcal{T}_6$ of trigonal curves of genus $6$ 
(see Proposition \ref{moduli of trigonal}). 
Hence by the no-name lemma for $\mathcal{E}\otimes{\sheaf}_{{\proj}V}(1)$ we have 
\begin{equation*}
(|{\sheaf}_Q(1,0)|\times{\proj}V)/G \sim {\proj}^1\times \mathcal{T}_6. 
\end{equation*}
The space $\mathcal{T}_6$ is rational by Shepherd-Barron \cite{SB1}. 
\end{proof}

\begin{corollary}\label{curve moduli (6,4,1)}
The fixed curve map for $\mathcal{M}_{6,4,1}$ 
gives a dominant map $\mathcal{M}_{6,4,1}\dashrightarrow \mathcal{T}_6$,  
whose fiber over a general $C$ is identified with the pencil $|K_C-2T|$ 
where $T$ is the trigonal bundle. 
\end{corollary}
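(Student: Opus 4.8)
The plan is to follow the fixed curve map through the birational equivalence $\mathcal{P}\colon(|\sheaf_Q(3,4)|\times|\sheaf_Q(1,0)|)/G\dashrightarrow\mathcal{M}_{6,4,1}$ of Proposition \ref{birat (6,4,1)}. For a general $(C,F)\in U$ the branch curve $B=C+F$ has only the four nodes $C\cap F$; since $C$ is smooth its strict transform under the right resolution is again $C$, and $F$ remains a smooth rational curve. Hence the genus $6$ component of the fixed locus of $\mathcal{P}(C,F)$ is identified with $C$ while the rational component is $F$, so under $\mathcal{P}$ the fixed curve map becomes the $G$-invariant map $U\to\mathcal{M}_6$, $(C,F)\mapsto[C]$, which factors through the $G$-equivariant projection $\mathrm{pr}\colon U\to|\sheaf_Q(3,4)|^{\mathrm{sm}}$ onto the first factor.

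Next I would identify the image. A smooth $C\in|\sheaf_Q(3,4)|$ satisfies $C\cdot\sheaf_Q(0,1)=3$, so the projection of $Q$ corresponding to the ruling $|\sheaf_Q(0,1)|$ restricts to a $g^1_3$ on $C$ with trigonal bundle $T=\sheaf_Q(0,1)|_C$; for a general such $C$ this is the unique $g^1_3$. Conversely, Proposition \ref{moduli of trigonal} with $g=6$ and $n=0$ (where $\F_0=Q$, $2b=g-3n+2=8$, and $G=\PGL_2\times\PGL_2$ is the identity component of $\aut(\F_0)$) identifies $|L_{3,4}|/G=|\sheaf_Q(3,4)|^{\mathrm{sm}}/G$ birationally with the Maroni locus $\mathcal{T}_{6,0}$, which is dense in the whole trigonal locus $\mathcal{T}_6$ since general trigonal curves of even genus have scroll invariant $0$. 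As $\mathrm{pr}$ is dominant, the composite $U/G\to\mathcal{M}_6$ is dominant onto $\mathcal{T}_6$, and $|\sheaf_Q(3,4)|^{\mathrm{sm}}/G\to\mathcal{T}_6$ is birational. For a general $C$ the stabiliser of $C$ in $G$ is trivial (a general genus $6$ trigonal curve has no nontrivial automorphism, and an automorphism of $Q$ fixing the non-degenerate curve $C$ must be trivial), so the fibre of $U/G\to\mathcal{T}_6$ over a general $[C]$ is birational to $\mathrm{pr}^{-1}(C)$, an open subset of the pencil $|\sheaf_Q(1,0)|$.

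It then remains to identify this pencil with $|K_C-2T|$. By adjunction $K_C=(K_Q+C)|_C=\sheaf_Q(1,2)|_C$, hence $K_C-2T=\sheaf_Q(1,2)|_C-\sheaf_Q(0,2)|_C=\sheaf_Q(1,0)|_C$. The restriction map $H^0(Q,\sheaf_Q(1,0))\to H^0(C,\sheaf_Q(1,0)|_C)$ is injective because $h^0(\sheaf_Q(1,0)-C)=h^0(\sheaf_Q(-2,-4))=0$; moreover $|\sheaf_Q(1,0)|$ is a pencil and so is $|K_C-2T|$, since $h^0(K_C-2T)=h^1(2T)=h^0(2T)-\chi(2T)=3-1=2$, using $h^0(2T)=3$ for a general genus $6$ trigonal curve (which itself follows by restricting $H^0(\F_0,L_{0,2})$ to $C$). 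An injective linear map between two $2$-dimensional spaces is an isomorphism, so restriction induces an isomorphism $|\sheaf_Q(1,0)|\cong|K_C-2T|$, which furnishes the asserted identification of the general fibre. The step I expect to be the main obstacle is this last one: one must verify carefully that the stabiliser of the general $C$ in $G$ is trivial and that $h^0(2T)=3$ on the general member, so that $|\sheaf_Q(1,0)|$ maps onto, and not merely into, $|K_C-2T|$; matching the conventions of \S\ref{ssec:trigonal} for $\F_0$ with the present $Q={\proj}^1\times{\proj}^1$ also needs a remark but is routine.
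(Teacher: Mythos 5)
Your proof is correct and follows essentially the same route as the paper: the corollary is read off from the birational period map of Proposition \ref{birat (6,4,1)} together with the identification $|\mathcal{O}_Q(3,4)|/G\sim\mathcal{T}_6$ from Proposition \ref{moduli of trigonal} and the no-name splitting ${\proj}^1\times\mathcal{T}_6$ in Proposition \ref{rational (6,4,1)}, the ${\proj}^1$-factor being $|\mathcal{O}_Q(1,0)|$ restricted to $C$, which adjunction identifies with $|K_C-2T|$. The only point worth tightening is that $h^0(2T)=3$ requires surjectivity (not just injectivity) of $H^0(\mathcal{O}_Q(0,2))\to H^0(2T)$, i.e.\ the vanishing $h^1(\mathcal{O}_Q(-3,-2))=0$, which is immediate from K\"unneth; equivalently one can directly check $h^1(\mathcal{O}_Q(-2,-4))=0$ to see that $H^0(\mathcal{O}_Q(1,0))\to H^0(K_C-2T)$ is an isomorphism.
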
  

\begin{remark}
One might also deduce Corollaries \ref{curve moduli (6,4,0)} and \ref{curve moduli (6,4,1)} 
from the birational map $\mathcal{M}_{5,5,1}\dashrightarrow\mathcal{M}_6$ in \S \ref{sec:(5,5,1)}, 
by extending it to the discriminant divisor (cf. \cite{A-K}). 
\end{remark}


\subsection{The rationality of $\mathcal{M}_{5+k,5-k,\delta}$ with $k>1$}\label{ssec:g=6,k>1}

We construct 2-elementary $K3$ surfaces using curves on ${\F}_2$. 
We keep the notation of \S \ref{Sec:Hirze}. 
For $2\leq k\leq5$ let $U_k\subset|L_{3,1}|\times|L_{0,1}|$ be the locus of pairs $(C, F)$ such that 
$({\rm i})$ $C$ is smooth, 
$({\rm ii})$ $F$ intersects with $C$ at $C\cap\Sigma$ with multiplicity $k-2$, and 
$({\rm iii})$ $|C\cap F\backslash \Sigma|=5-k$. 
For $k=2$, these conditions mean that $F$ does not pass $C\cap\Sigma$ 
and is transverse to $C$. 
In particular, $U_2$ is open in $|L_{3,1}| \times |L_{0,1}|$. 
For $k\geq3$, $U_k$ is regarded as a sublocus of $|L_{3,1}|$ 
because $F$ is uniquely determined by the point $C\cap\Sigma$. 

\begin{lemma}\label{lemma (5+k,5-k)}
The variety $U_k$ has the expected dimension $22-k$. 
\end{lemma}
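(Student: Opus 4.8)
\emph{The case $k=2$.} By Proposition~\ref{def eqn in Hirze} one computes $h^0(L_{3,1})=20$ and $h^0(L_{0,1})=2$, so that $|L_{3,1}|\times|L_{0,1}|$ has dimension $19+1=20$. The conditions cutting out $U_2$ (smoothness of $C$, transversality of $C$ and $F$, and $F\not\ni C\cap\Sigma$) are open and obviously satisfiable, so $U_2$ is a dense open subset and $\dim U_2=20=22-2$. Thus the plan is only about $3\le k\le5$.

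\emph{Reduction to an incidence variety.} For $3\le k\le5$ the curve $F$ is determined by $C$: on $U_k$ we have $\Sigma\not\subset C$, so $(L_{3,1}.\Sigma)=1$ forces $C\cap\Sigma$ to be a single reduced point $q$, and condition~(ii) (with $k-2\ge1$) forces $F$ to be the $\pi$-fibre $F_q$ through $q$. I would therefore regard $U_k$ as the open subset of the incidence variety
$$I_k=\bigl\{\,(C,q)\in|L_{3,1}|\times\Sigma \;:\; q\in C,\ \operatorname{mult}_q(C\cap F_q)\ge k-2\,\bigr\}$$
on which, in addition, $C$ is smooth, $C\cap\Sigma=\{q\}$ is reduced, and $C\cap F_q\smallsetminus\{q\}$ consists of $5-k$ reduced points. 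The first projection restricted to this open subset is injective, since $q$ is recovered as $C\cap\Sigma$; so it identifies the open subset with $U_k$, and it suffices to compute $\dim I_k$ and to check non-emptiness of the open part.

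\emph{Dimension of $I_k$.} I would use the second projection $I_k\to\Sigma$. Fix $q$ and choose the coordinates of \S\ref{ssec:coordinate} so that $q$ is the origin of $U_1$ and $F_q=\{x_1=0\}$; writing $C\in|L_{3,1}|$ by its equation $\sum_{i=0}^{3}f_i(x_1)y_1^i=0$ with $\deg f_i\le 1+2i$ (Proposition~\ref{def eqn in Hirze}), one has $\operatorname{mult}_q(C\cap F_q)=\min\{\,i:f_i(0)\ne0\,\}$, so the two conditions defining $I_k$ over $q$ amount to the $k-2$ visibly independent linear equations $f_0(0)=\cdots=f_{k-3}(0)=0$ on $H^0(L_{3,1})$. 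Hence the fibre of $I_k\to\Sigma$ over $q$ is a linear $\mathbb{P}^{21-k}$. Since $\aut(\mathbb{F}_2)$ acts transitively on $\Sigma$ (Proposition~\ref{stabilizer of fiber}) and carries each $F_q$ to another $\pi$-fibre compatibly, these fibres are all projectively equivalent and $I_k\to\Sigma$ is a $\mathbb{P}^{21-k}$-bundle; in particular $I_k$ is irreducible of dimension $1+(21-k)=22-k$.

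\emph{Non-emptiness, and the main point.} Granting that the open conditions above are compatible with the linear ones, irreducibility of $I_k$ gives $\dim U_k=\dim I_k=22-k$, completing the proof. I would verify $U_k\ne\emptyset$ by an explicit equation: take $f_0=x_1$ (putting $q$ on $\Sigma$ with $C$ meeting $\Sigma$ transversally there) and $f_1,\dots,f_3$ with $f_i(0)=0$ for $1\le i\le k-3$ but otherwise general, then deduce smoothness of $C$ away from $F_q$ from Bertini applied to this linear subsystem (which is base-point free off $\{q\}\cup F_q$, since $|L_{3,1}|$ itself is base-point free as $L_{3,1}=3L_{1,0}+L_{0,1}$), and check smoothness near $q$ and transversality of $C$ and $F_q$ off $q$ directly from the local equation. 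This non-emptiness and genericity check is the only step requiring real care; the dimension count itself is routine bookkeeping.
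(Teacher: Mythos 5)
Your proof is correct and follows essentially the same route as the paper: after placing the fibre at $\{x_1=0\}$, the defining conditions become $k-2$ independent linear equations $f_0(0)=\cdots=f_{k-3}(0)=0$ on $H^0(L_{3,1})$ plus open conditions, giving $19-(k-2)+1=22-k$. You merely package this as a $\mathbb{P}^{21-k}$-bundle over $\Sigma$ and add an explicit non-emptiness/irreducibility check that the paper leaves implicit.
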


\begin{proof}
Let $f(x_1, y_1)=0$ be a defining equation of a smooth $C\in|L_{3,1}|$ as in Proposition \ref{def eqn in Hirze}. 
We normalize $F=\{ x_1=0 \}$.  
Then we have $(C, F)\in U_k$ if and only if  
the cubic polynomial $f(0, y_1)$ of $y_1$ is factorized as $f(0, y_1)=y_1^{k-2}g(y_1)$ such that 
$g(0)\ne0$ and $g$ has no multiple root. 
This proves the assertion. 
\end{proof}

For a pair $(C, F)\in U_k$, the curve $B=C+F+\Sigma$ belongs to $|\!-\!2K_{{\F}_2}|$ 
and is singular at $(C\cap F) \cup (C\cap\Sigma) \cup (F\cap\Sigma)$.  
For $k=2$ those points are distinct nodes of $B$. 
For $k\geq3$ the point $C\cap\Sigma=F\cap\Sigma$ is a $D_{2k-2}$-singularity of $B$,  
and the rest $5-k$ points $C\cap F\backslash \Sigma$ are nodes. 
Hence the 2-elementary $K3$ surface associated to $B$ has invariant $(r, a)=(5+k, 5-k)$, 
and we obtain a period map 
$\mathcal{P}_k\colon U_k/{\aut}({\F}_2) \dashrightarrow \mathcal{M}_{5+k,5-k,\delta}$  
where $\delta=1$ for $k\leq4$ and $\delta=0$ for $k=5$. 

\begin{proposition}\label{birational (5+k,5-k)}
The map $\mathcal{P}_k$ is birational. 
\end{proposition}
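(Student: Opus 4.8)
The plan is to follow the recipe of \S\ref{ssec: recipe}, exactly as in Examples \ref{ex:1}--\ref{ex:3}. First I would introduce a cover $\widetilde{U}_k\to U_k$ parametrizing the curves $B=C+F+\Sigma$ endowed with a complete, reasonable labeling of the singularities and of the branches at the $D_{2k-2}$-point (when $k\geq3$): that is, $\widetilde{U}_k$ should parametrize tuples $(C,F,p_1,\dots,p_{5-k})$ with $\{p_1,\dots,p_{5-k}\}=C\cap F\setminus\Sigma$, so that the projection $\widetilde{U}_k\to U_k$ is an $\mathfrak{S}_{5-k}$-cover. The remaining singularities of $B$ (the nodes on $\Sigma$ when $k=2$, and the $D_{2k-2}$-point when $k\geq3$) are automatically distinguished from the labelled nodes by the irreducible decomposition $B=C+F+\Sigma$, since the three components have distinct classes in $NS_{\mathbb{F}_2}$; this same decomposition distinguishes the three branches of $B$ at the $D_{2k-2}$-point. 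Thus $\widetilde{U}_k$ carries a complete reasonable labeling.

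Next I would define the reference lattice $L_+$ of main invariant $(5+k,5-k,\delta)$ explicitly, following the model of Example \ref{ex:4}: take natural bases of $\langle 2\rangle\oplus A_1$ (for $H=f^*L_{1,0}$ and $E=f^*L_{1,-1}$, or an equivalent basis of $NS_{\mathbb{F}_2}$ pulled back), of $A_1^{5-k}$ (for the labelled nodes), and the root basis of a $D_{2k-2}$-lattice for the curves over the $D_{2k-2}$-point, passing to the sublattice $A_1^{2k-2}\subset D_{2k-2}$ spanned by the ``even'' labelled vectors $d_i$ as in Example \ref{ex:4}; then adjoin the appropriate half-integral vectors $f_i$ coming from the components $F_j$ of $X^\iota$ via Lemma \ref{L_+ and A-D-E}. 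By Lemmas \ref{L_+ and A-D-E} and \ref{ample class} the labeling $\mu$ on $\widetilde{U}_k$ induces a lattice-marking $j\colon L_+\to L_+(X,\iota)$ and a natural polarization, hence (via Borel extension) a morphism $\tilde{p}_k\colon\widetilde{U}_k\to\widetilde{\mathcal{M}}_{5+k,5-k,\delta}$; since $\aut(\mathbb{F}_2)$ acts trivially on $NS_{\mathbb{F}_2}$ and preserves the labeling structure, $\tilde{p}_k$ is $\aut(\mathbb{F}_2)$-invariant and descends to $\widetilde{\mathcal{P}}_k\colon\widetilde{U}_k/\aut(\mathbb{F}_2)\dashrightarrow\widetilde{\mathcal{M}}_{5+k,5-k,\delta}$. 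The generic injectivity of $\widetilde{\mathcal{P}}_k$ is the step (4) argument: a period coincidence gives a Hodge isometry preserving ample cones (Lemma \ref{ample class}), hence an isomorphism $\varphi\colon X\to X'$ by Torelli; since $Y=\mathbb{F}_2$ with $n=2>0$, Lemma \ref{covering map & LB} (applied to $L=L_{1,0}$, recovering the contraction $\mathbb{F}_2\to$ quadric cone and then desingularizing) produces $\psi\in\aut(\mathbb{F}_2)$ with $\psi\circ f=f'\circ\varphi$; because $\psi$ respects the branch curve and the labelled points, $\psi$ carries $(C,F,\mu)$ to $(C',F',\mu')$, and $\psi$ acts trivially on $NS$, so $\psi$ lies in the identity component. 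Thus the $\tilde p_k$-fibers are $\aut(\mathbb{F}_2)$-orbits.

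By Lemma \ref{lemma (5+k,5-k)}, $\dim U_k=22-k$, and since $\aut(\mathbb{F}_2)$ has dimension $6$ and acts generically freely on $U_k$ (a nontrivial automorphism fixing $C$, $F$, and the labelled points would fix too many points of general fibers, cf.\ the arguments in \S\ref{Sec:Hirze}), $\dim(U_k/\aut(\mathbb{F}_2))=16-k=20-r$, matching $\dim\widetilde{\mathcal{M}}_{5+k,5-k,\delta}$. As $\widetilde{\mathcal{M}}$ is irreducible, $\widetilde{\mathcal{P}}_k$ is birational. Therefore $\deg(\mathcal{P}_k)$ equals $|\Or(D_{L_+})|$ divided by $\deg(\widetilde{U}_k/\aut(\mathbb{F}_2)\to U_k/\aut(\mathbb{F}_2))=|\mathfrak{S}_{5-k}|=(5-k)!$. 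It remains to compute $|\Or(D_{L_+})|$: one identifies the isometry class of $L_+$ (e.g.\ $L_+\simeq U\oplus D_4\oplus A_1^{?}$-type splittings, or the relevant $D_{2k-2}$-block) and applies \cite{M-S} Corollary 2.4 and Lemma 2.5, as in Examples \ref{ex:3} and \ref{ex:4}; the expected outcome is $|\Or(D_{L_+})|=(5-k)!$, giving $\deg(\mathcal{P}_k)=1$. \textbf{The main obstacle} I anticipate is precisely this last bookkeeping: pinning down the isometry class of the 2-elementary lattice $L_+$ for each $k\in\{2,3,4,5\}$ and checking the order of its discriminant-form orthogonal group matches $(5-k)!$ — in particular the $D_{2k-2}$-contribution and the half-integral glue vectors $f_i$ must be handled carefully, and the $k=5$ (parity $\delta=0$) case needs separate verification of the parity claim via $C-F+\Sigma\in 4NS$.
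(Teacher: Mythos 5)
Your plan is sound and, for $k=2$, it is exactly the paper's proof: label the three nodes $C\cap F$ by an $\frak{S}_3$-cover, note that the two remaining nodes $C\cap\Sigma$ and $F\cap\Sigma$ are distinguished by the irreducible decomposition, and match the degree of $\widetilde{U}_2/{\aut}({\F}_2)\to U_2/{\aut}({\F}_2)$ against $|{\Or}(D_{L_+})|=|\frak{S}_3|$ for $L_+=U\oplus A_1\oplus D_4$. For $k\geq3$ the paper explicitly remarks that "a similar argument is possible" — which is what you carry out — but then takes a shortcut instead: since $F$ is determined by $C$ for $k\geq3$, the locus $U_k$ sits inside $|L_{3,1}|$, so $U_k/{\aut}({\F}_2)$ is birationally a sublocus of the Maroni locus $\mathcal{T}_{6,2}$ by Proposition \ref{moduli of trigonal}, and generic injectivity of $\mathcal{P}_k$ follows at once because the fixed curve map furnishes a left inverse; birationality then comes from the dimension count of Lemma \ref{lemma (5+k,5-k)}. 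The shortcut avoids all the lattice bookkeeping you flag as the main obstacle (which does work out: the discriminant group for $(r,a)=(8,2)$, $(9,1)$, $(10,0)$ has orthogonal group of order $2$, $1$, $1$, i.e.\ $(5-k)!$), at the cost of being special to the situation where the branch curve determines the auxiliary data.

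Two small numerical slips are worth correcting. First, $\dim{\aut}({\F}_2)=7$ (the sequence \eqref{Aut(Hir)} gives $\dim R=n+2=4$ plus $\dim{\PGL}_2=3$), so $\dim(U_k/{\aut}({\F}_2))=22-k-7=15-k$, which is indeed $20-r$ for $r=5+k$; your "$16-k=20-r$" is a false identity as written, even though the conclusion you need (equality of dimensions) is true. Second, on ${\F}_2$ the pulled-back N\'eron--Severi lattice $f^{\ast}NS_{{\F}_2}$ is $U(2)$ (as in Example \ref{ex:3}), not $\langle2\rangle\oplus A_1$ — the latter is the ${\F}_1$ pattern of Example \ref{ex:2}; this does not affect the final count of $|{\Or}(D_{L_+})|$, which depends only on the isometry class of the full $L_+$, but the glue vectors $f_i$ must be written against the correct sublattice.
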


\begin{proof}
First we treat the case $k=2$ using the recipe in \S \ref{ssec: recipe}. 
Let $\widetilde{U}_2\subset U_2\times({\F}_2)^3$ be the locus of 
$(C, F, p_1, p_2, p_3)$ such that $C\cap F=\{ p_i\}_{i=1}^{3}$. 
The space $\widetilde{U}_2$ parametrizes the $-2K_{{\F}_2}$-curves $B=C+F+\Sigma$ 
endowed with labelings of the three nodes $C\cap F$. 
The rest two nodes, $F\cap\Sigma$ and $C\cap\Sigma$, 
are distinguished by the irreducible decomposition of $B$, 
and the components of $B$ are identified by their classes in $NS_{{\F}_2}$. 
Thus we will obtain a birational lift 
$\widetilde{U}_2/{\aut}({\F}_2)\dashrightarrow{\cove}_{7,3,1}$ of $\mathcal{P}_2$.  
The projection $\widetilde{U}_2/{\aut}({\F}_2) \dashrightarrow U_2/{\aut}({\F}_2)$ is an $\frak{S}_3$-covering, 
while ${\cove}_{7,3,1}$ is an ${\Or}(D_{L_+})$-cover of $\mathcal{M}_{7,3,1}$ 
for the lattice $L_+=U\oplus A_1\oplus D_4$.  
Since ${\Or}(D_{L_+})\simeq\frak{S}_3$, the map $\mathcal{P}_2$ is birational. 
 
For $k\geq3$ a similar argument is possible, 
but we may also proceed as in the proof of Proposition \ref{birational (r,r-2,delta), r<6}: 
since $U_k$ may be regarded as a sublocus of $|L_{3,1}|$, 
by Proposition \ref{moduli of trigonal} the quotient $U_k/{\aut}({\F}_2)$ is 
naturally birational to a sublocus of the Maroni divisor $\mathcal{T}_{6,2}$. 
Considering the fixed curve map for $\mathcal{M}_{5+k,5-k,\delta}$, 
we see that $\mathcal{P}_k$ is generically injective. 
By Lemma \ref{lemma (5+k,5-k)}, $\mathcal{P}_k$ is birational. 
\end{proof}

\begin{proposition}\label{rational (5+k,5-k)}
The quotient $U_k/{\aut}({\F}_2)$ is rational. 
Therefore $\mathcal{M}_{5+k,5-k,\delta}$ with $2\leq k\leq5$ is rational. 
\end{proposition}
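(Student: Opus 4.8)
The plan is to analyze the quotient $U_k/{\aut}({\F}_2)$ case by case in $k$, in each case stripping off affine-space or projective-space bundle factors via the slice method and the no-name lemma, until we reach a quotient of small dimension that is manifestly rational. The recurring mechanism is: the subgroup $R\subset{\aut}({\F}_2)$ together with the ${\PGL}_2$-action on $\Sigma$ acts with large orbits on the auxiliary data (the fiber $F$, or equivalently the point $C\cap\Sigma$, and the jet of $C$ there), so we may slice down to a stabilizer that is either connected solvable or finite, and then either invoke Miyata's theorem \ref{Miyata} or decompose the representation and apply the no-name lemma \ref{no-name 1}.

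First I would handle $k=2$, which is the mildest case: here $U_2$ is open in $|L_{3,1}|\times|L_{0,1}|$, and ${\aut}({\F}_2)$ acts on the $|L_{0,1}|=\Sigma$ factor transitively with stabilizer $G_p$ connected and solvable by Proposition \ref{stabilizer of fiber}. By the slice method \ref{slice}, $U_2/{\aut}({\F}_2)$ is birational to ${\proj}V/G_p$ where ${\proj}V\subset|L_{3,1}|$ is the sublinear system of curves transverse to the normalized fiber; since $L_{3,1}$ is ${\aut}({\F}_2)$-linearized (Proposition \ref{linearization Hirze}, as $b=1$ is odd here — one checks $L_{3,1}$ is linearized, or passes to the double cover $\widetilde G$ and uses Lemma \ref{cover linearization Hirze} together with a no-name trick as in Proposition \ref{no-name 2}), $G_p$ acts linearly on $V$, and Miyata's theorem gives rationality. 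For $3\le k\le 4$ the locus $U_k$ sits inside $|L_{3,1}|$, cut out by the vanishing of the first $k-2$ Taylor coefficients of $C$ along $\Sigma$ at the point $C\cap\Sigma$; I would again slice by the position of that point $p\in\Sigma$, reducing to ${\proj}V_k/G_p$ for a $G_p$-invariant linear subsystem $V_k$, and then — exactly as in the proof of Proposition \ref{rational (4,2,1)} — peel off further jet conditions at $p$ by the slice method against the relevant tangent/jet spaces, shrinking $G_p$ to a smaller connected solvable (or finite) group, finishing by Miyata or by the no-name lemma after a representation splitting.

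The case $k=5$ is the genuinely constrained one, since $U_5$ has dimension $17$, the curve $C$ must meet $\Sigma$ in a single point with multiplicity $3$, the branch curve acquires a $D_8$-singularity, and $\delta=0$; here ${\aut}({\F}_2)$ is still positive-dimensional but the slice method against the point $C\cap\Sigma$ and the full local jet of $C$ there will cut the acting group down to something finite — plausibly a subgroup of the finite group appearing in the $(4,2,1)$-analysis — after which I would fix a complement to an effective subrepresentation and apply the no-name lemma \ref{no-name 1} to reduce to a two- or three-dimensional quotient, which is rational. The main obstacle I anticipate is precisely the bookkeeping in this last case: identifying the stabilizer group after all the jet conditions are imposed, checking it acts almost freely (hence verifying an effective piece of the representation to run no-name), and confirming that the leftover low-dimensional quotient is rational rather than merely unirational. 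A secondary technical point is the linearization of $L_{3,1}$: since $L_{3,1}=L_{a,b}$ with $b=1$ odd, Proposition \ref{linearization Hirze} does not directly apply for $n=2$ even, so one must either pass to $\widetilde G={\SL}_2\ltimes R$ via Lemma \ref{cover linearization Hirze} and discharge the central ${\Z}/2{\Z}$ using a twist as in Proposition \ref{no-name 2}, or simply work with the linear slice after the first slice method step where the ambiguity disappears.
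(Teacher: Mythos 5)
Your overall strategy (slice method down to a linear system modulo a connected solvable group, then Miyata) is the paper's, and your handling of the linearization issue for $L_{3,1}$ via $\widetilde{G}={\SL}_2\ltimes R$ and Lemma \ref{cover linearization Hirze} is exactly right. But you have manufactured a difficulty in the case $k=5$ (and, to a lesser extent, in $k=3,4$) that is not there, and you leave it unresolved: you anticipate having to slice additionally by ``the full local jet of $C$'' at $C\cap\Sigma$, cutting the acting group down to something finite and forcing a no-name argument whose almost-freeness you admit you have not checked. None of this is needed. The point you are underusing is already contained in the proof of Lemma \ref{lemma (5+k,5-k)}: once the fiber $F$ is fixed (normalize $F=\{x_1=0\}$), conditions (ii) and (iii) defining $U_k$ say precisely that the cubic $f(0,y_1)$ is divisible by $y_1^{k-2}$ --- a \emph{linear} condition on the coefficients of the defining polynomial of $C$ --- together with open conditions ($g(0)\neq0$, no multiple root of $g$). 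Hence for every $k$, including $k=5$, the fiber of $U_k\to|L_{0,1}|$ over $F$ is an open subset of a single linear subsystem ${\proj}V_k\subset|L_{3,1}|$, on which the stabilizer $G\subset{\SL}_2\ltimes R$ of $F$ --- connected and solvable by the proof of Proposition \ref{stabilizer of fiber} --- acts linearly via the $\widetilde{G}$-linearization of $L_{3,1}$. One application of the slice method followed by Miyata's theorem therefore finishes all four cases uniformly; no further slicing occurs, no finite stabilizer appears, and no no-name step is required. Your template for $k=3,4$ already \emph{is} the complete proof once you delete the ``peel off further jet conditions'' step; the same sentence applies verbatim to $k=5$, whose only genuine distinctions (the $D_8$-singularity and $\delta=0$) affect the identification of the target moduli space, not the rationality of $U_5/{\aut}({\F}_2)$.
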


\begin{proof}
This is analogous to the proofs of Propositions \ref{rational (3,1,1)} and \ref{rational (6,2,0)}: 
we apply the slice method to the projection $U_k\to|L_{0,1}|$, 
whose general fiber is an open set of a linear subspace ${\proj}V_k\subset|L_{3,1}|$ 
by the proof of Lemma \ref{lemma (5+k,5-k)}. 
Note that although $L_{3,1}$ may not be ${\aut}({\F}_2)$-linearized, 
one may instead use the group ${\SL}_2\ltimes R$ in \S \ref{Sec:Hirze}: 
it acts on $L_{3,1}$ by Lemma \ref{cover linearization Hirze}, 
and the stabilizer $G\subset{\SL}_2\ltimes R$ of a point of $|L_{0,1}|$ 
is still connected and solvable by the proof of Proposition \ref{stabilizer of fiber}. 
Therefore we may use Miyata's theorem for the $G$-representation $V_k$ to see that 
\begin{equation*}
U_k/{\aut}({\F}_2) \sim U_k/{\SL}_2\ltimes R \sim {\proj}V_k/G 
\end{equation*}
is rational. 
\end{proof}

We shall study the fixed curve maps.  
For $k\geq3$ we let $\mathcal{U}_k\subset\mathcal{T}_{6,2}$ be the closure of 
the image of natural morphism $U_k\to\mathcal{T}_{6,2}$. 
In particular, $\mathcal{U}_3$ coincides to $\mathcal{T}_{6,2}$. 
By Proposition \ref{birational (5+k,5-k)} we have 

\begin{corollary}\label{fixed curve (7,3) (8,2)}
The fixed curve map for $\mathcal{M}_{7,3,1}$ is a dominant map 
$\mathcal{M}_{7,3,1}\dashrightarrow\mathcal{T}_{6,2}$ 
whose general fibers are birationally identified with the $g^1_3$. 
The fixed curve map for $\mathcal{M}_{5+k,5-k,\delta}$ with $k\geq3$ is generically injective 
with a generic image $\mathcal{U}_k$. 
\end{corollary}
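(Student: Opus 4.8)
The plan is to transfer the fixed curve map to the parameter side via the birational period map $\mathcal{P}_k$ of Proposition \ref{birational (5+k,5-k)}, and to recognise the resulting rational map as the tautological map from $|L_{3,1}|/\aut(\F_2)$ to the Maroni locus $\mathcal{T}_{6,2}$ given by Proposition \ref{moduli of trigonal}. So I would first compute the composite $F\circ\mathcal{P}_k$, then factor it through $|L_{3,1}|/\aut(\F_2)$, and finally read off the fibre structure when $k=2$ and the generic injectivity when $k\geq3$ from the geometry of the projection $U_k\to|L_{3,1}|$, $(C,F)\mapsto C$.

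\textit{Step one.} For $(C,F)\in U_k$ put $(X,\iota)=\mathcal{P}_k(C,F)$; then $X^{\iota}$ is isomorphic to the right resolution $B'$ of $B=C+F+\Sigma$, and I claim its genus $6$ component is isomorphic to $C$ itself. Indeed, among the components of $B$ only $C$ has positive geometric genus, namely $g=6$, while $F$ and $\Sigma$ are rational; and since $C$ is smooth, its strict transform under the blow-ups of points producing $B'$ (described in \S\ref{ssec: right resol}) stays isomorphic to $C$, and the component of $B'\subset X^{\iota}$ lying over it maps isomorphically onto it. Viewing $C\in|L_{3,1}|$ as a trigonal curve of genus $6$ and scroll invariant $2$ as in \S\ref{ssec:trigonal}, it follows that under the birational identification $\mathcal{P}_k\colon U_k/\aut(\F_2)\dashrightarrow\mathcal{M}_{5+k,5-k,\delta}$ the fixed curve map $F$ corresponds to the rational map $U_k/\aut(\F_2)\dashrightarrow\mathcal{M}_6$ induced by $(C,F)\mapsto C$, whose image lies in $\mathcal{T}_{6,2}$.

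\textit{Step two.} The assignment $(C,F)\mapsto C$ is $\aut(\F_2)$-equivariant, so it descends to a rational map $U_k/\aut(\F_2)\dashrightarrow|L_{3,1}|/\aut(\F_2)$, and by Proposition \ref{moduli of trigonal} the target is naturally $\mathcal{T}_{6,2}$, canonically embedded in $\mathcal{M}_6$. For $k\geq3$, where $U_k$ is identified with an $\aut(\F_2)$-invariant locally closed subvariety of $|L_{3,1}|$, the map $U_k/\aut(\F_2)\dashrightarrow\mathcal{T}_{6,2}$ is birational onto its image, which is $\mathcal{U}_k$ by the definition of $\mathcal{U}_k$; since $\mathcal{P}_k$ is birational, the fixed curve map for $\mathcal{M}_{5+k,5-k,\delta}$ is therefore generically injective with a generic image $\mathcal{U}_k$, and $\mathcal{U}_3=\mathcal{T}_{6,2}$ because $U_3$ is dense in $|L_{3,1}|$ (Lemma \ref{lemma (5+k,5-k)}). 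For $k=2$, i.e. $\mathcal{M}_{7,3,1}$, the projection $U_2\to|L_{3,1}|$, $(C,F)\mapsto C$, is dominant, and its fibre over $C$ is the open subset of $|L_{0,1}|$ of those $F$ transverse to $C$ and avoiding $C\cap\Sigma$; recall that $|L_{0,1}|$ restricts on any $C\in|L_{3,1}|$ to the $g^1_3$ of $C$. A general $C\in|L_{3,1}|$ is a general trigonal curve of genus $6$, hence has trivial automorphism group; and any automorphism of $\F_2$ acting trivially on $C$ is trivial, for it fixes $\pi(c)$ for every $c\in C$ and $\pi(C)={\proj}^1$, so it is the identity on the base ${\proj}^1$ of $\pi$, and it then fixes three points of a general $\pi$-fibre $F_0\simeq{\proj}^1$, so is the identity on $F_0$ and hence on $\F_2$. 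Thus a general $C$ has trivial stabiliser in $\aut(\F_2)$, and passing to rational quotients the general fibre of $U_2/\aut(\F_2)\dashrightarrow\mathcal{T}_{6,2}$ is birational to $|L_{0,1}|\simeq{\proj}^1$, the $g^1_3$. Transporting along $\mathcal{P}_2$ shows that the fixed curve map $\mathcal{M}_{7,3,1}\dashrightarrow\mathcal{T}_{6,2}$ is dominant with general fibres the $g^1_3$.

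The only place I expect to need genuine care is Step one, namely the verification that the genus $6$ component of $X^{\iota}$ is isomorphic to $C$ and not merely birational to it; this rests on the fact that the right resolution of $C+F+\Sigma$ is produced by blowing up points only, so the smooth curve $C$ is never modified --- in particular it survives intact through the $D_{2k-2}$-resolution step, where it contributes one of the smooth local branches of $B$.
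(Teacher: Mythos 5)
Your proposal is correct and follows exactly the route the paper intends: the corollary is stated as an immediate consequence of the birationality of $\mathcal{P}_k$ (Proposition \ref{birational (5+k,5-k)}), with the fixed curve map transported to $(C,F)\mapsto C$ on $U_k/\aut(\F_2)$, the identification of $|L_{0,1}|$ with the $g^1_3$ for $k=2$, and the inclusion $U_k\subset|L_{3,1}|$ for $k\geq3$ giving generic injectivity onto $\mathcal{U}_k$. Your Step one (the genus $6$ component of $X^{\iota}$ is isomorphic to $C$ because the right resolution only blows up points, leaving the smooth component intact) and the stabiliser check for $k=2$ are exactly the details the paper leaves implicit.
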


In fact, it is more natural to identify the fibers of $\mathcal{M}_{7,3,1}\dashrightarrow\mathcal{T}_{6,2}$ 
with the non-free pencils $|K_C-2T|$ where $T$ is the trigonal bundle, 
whose free part is $|T|$ (cf. Corollary \ref{curve moduli (6,4,1)}). 
The loci $\mathcal{U}_k$ may be described in terms of special divisors as follows. 

\begin{proposition}\label{fixed curve locus g=6}
Let $C$ be a trigonal curve of genus $6$ with the trigonal bundle $T$.  

$(1)$ The curve $C$ has scroll invariant $2$ if and only if $W^1_4(C)$ is irreducible. 

$(2)$ When $C$ has scroll invariant $2$, we have $C\in\mathcal{U}_4$ 
if and only if $2K_C-5T$ is contained in ${\rm Sing}({\rm Sing}W^1_5(C))$. 

$(3)$ The locus $\mathcal{U}_5$ is the intersection of $\mathcal{U}_4$ with the theta-null divisor. 
\end{proposition}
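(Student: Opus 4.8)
The plan is to translate each of the three conditions on the genus~$6$ trigonal curve $C$ into a statement about the geometry of the curves $B=C'+F+\Sigma$ on ${\F}_2$ that appear in \S\ref{ssec:g=6,k>1}, using the fact that $C$ is realized as the smooth member $C'\in|L_{3,1}|$ and that $|L_{1,m}|$ with $m=b+n-2=1$ restricts to $|K_C|$ while $|L_{0,1}|$ restricts to the trigonal $g^1_3=|T|$ (see \S\ref{ssec:trigonal}). Throughout I will use Maroni's description of $W^r_d(C)$ for trigonal curves, quoted in the remark after Corollary~\ref{fixed curve (6,2)}: for a trigonal curve of genus $g$ and scroll invariant $n$, the special linear series $|D|$ with small Clifford index are built from the trigonal pencil $|T|$ and the hyperplane class $H=K_C-(n-1)T$, so that $W^1_4(C)$, $W^1_5(C)$ and their singular loci are all expressed in terms of $T$, $H$, and effective divisors of small degree.

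First I would prove (1). A genus~$6$ trigonal curve has scroll invariant $0$ or $2$ (since $g=3n+2b-2$ with $b\ge0$ forces $n\in\{0,2\}$ when $g=6$, as $n=1$ gives an odd contribution — here $2b=g-3n+2=8-3n$, so $n=0,b=4$ or $n=2,b=1$). When $n=0$ the curve lies on ${\F}_0=Q$ as a $(3,3)$-curve and $W^1_4(C)$ contains both $|T|$ and $|K_C-T-g^1_3\text{-residuals}|$ giving a reducible variety (two rulings), whereas for $n=2$ Maroni's formula collapses these into a single component. This is the cleanest of the three parts and I would handle it by directly invoking Maroni's Proposition~1 in \cite{M-Sc}, exactly as in the analogous remarks for $\mathcal{T}_{g,n}$ elsewhere in the paper.

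Next, (2): here the point is that $W^1_5(C)$ for a scroll-invariant-$2$ genus~$6$ curve has a singular locus, whose singular locus is a finite set of line bundles of the form $K_C-2T+(\text{base points})$; the condition $2K_C-5T\in{\rm Sing}({\rm Sing}\,W^1_5(C))$ singles out exactly when the residual data degenerates so that the relevant section $F\in|L_{0,1}|$ meets $C'$ along $C'\cap\Sigma$ with the extra multiplicity required to land in $U_4$ rather than merely in $U_3=\{$generic $n=2$ case$\}$. Concretely I would compute ${\rm Sing}(W^1_5(C))$ via Maroni and show its singular points correspond to divisors $K_C-2T$ twisted by a length-one subscheme supported at a point of the trigonal fibre through $C'\cap\Sigma$; the condition that this twist be by $2K_C-5T$ is then equivalent to the defining tangency condition for $U_4$. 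Part (3) I would deduce from Lemma~\ref{basics of |-2K| (8,8,1)}-type reasoning adapted here: $C\in\mathcal{U}_5$ means the curve $B$ acquires the $D_8$-singularity forcing $\delta=0$, and by Proposition~\ref{compute main inv} $\delta=0$ is equivalent to the class of $X^\iota$ being $2$-divisible, which on the curve side translates to $C$ carrying a vanishing theta-null — i.e. $C$ lies on the theta-null divisor — while remaining in $\mathcal{U}_4$.

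\medskip

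I expect part (2) to be the main obstacle: identifying ${\rm Sing}({\rm Sing}\,W^1_5(C))$ precisely and matching its distinguished point with the line bundle $2K_C-5T$ requires a careful Maroni-type analysis of secant and trisecant behaviour of the canonical model inside the scroll, and the bookkeeping of which infinitely near point of $C'\cap\Sigma$ is being twisted in is delicate. Parts (1) and (3) should follow fairly formally once the dictionary between $({\F}_2$-picture$)$ and $(\text{Brill--Noether picture})$ is set up, with (3) using only the $\delta$-criterion of Proposition~\ref{compute main inv} together with the standard fact that $2$-divisibility of $[X^\iota]$ corresponds to a theta characteristic with extra sections on the fixed curve.
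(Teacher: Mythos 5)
Your plan for parts (1) and (2) is essentially the paper's argument: everything reduces to Maroni's description of $W^1_4(C)$ and $W^1_5(C)$ for a trigonal curve. Two caveats, though. In (1) your model for the scroll-invariant-$0$ case is wrong: a genus-$6$ trigonal curve with $n=0$ is a bidegree $(3,4)$ curve on $Q$ (a $(3,3)$-curve has genus $4$), and the reducibility of $W^1_4(C)$ comes not from ``two rulings'' giving two trigonal pencils but from the isolated point $K_C-2T$ (the degree-$4$ ruling class) lying off the curve $T+W_1(C)$; for $n=2$ one has $K_C-2T\sim T+p_0$ with $p_0=C\cap\Sigma$, which absorbs that point into the curve, whence irreducibility. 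In (2) you correctly guess that ${\rm Sing}({\rm Sing}\,W^1_5(C))$ consists of twists of $K_C-2T$ by points of the trigonal fibre through $p_0$, but you leave the computation as ``the main obstacle''; it is in fact short: $W^1_5=W_+\cup W_-$ with $W_+=T+W_2(C)$ and $W_-=K_C-W_+$, so ${\rm Sing}\,W^1_5=W_+\cap W_-=V_+\cup V_-$ with $V_+=T+p_0+W_1(C)$ and $V_-=2T-W_1(C)$, and $V_+\cap V_-=\{T+p_0+p_1,\,T+p_0+p_2\}$ where $T-p_0\sim p_1+p_2$; since $2K_C-5T\sim T+2p_0$, membership is exactly $p_0\in\{p_1,p_2\}$, i.e.\ ramification of the trigonal map at $p_0$, which is the defining condition of $\mathcal{U}_4$.

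The genuine gap is in (3). Your chain --- $C\in\mathcal{U}_5\Rightarrow D_8$-singularity $\Rightarrow\delta=0\Rightarrow[X^\iota]$ is $2$-divisible $\Rightarrow$ vanishing theta-null --- at best yields the inclusion $\mathcal{U}_5\subset\mathcal{U}_4\cap(\hbox{theta-null divisor})$, and even that rests on an unproved translation between $2$-divisibility of $[X^\iota]$ in $L_+(X,\iota)$ and theta characteristics on the fixed curve; the statement in the introduction relating $\delta=0$ to theta characteristics is a \emph{consequence} of propositions such as this one, not an available input, so invoking it here is circular. More seriously, the reverse inclusion is what carries the content: one must show that a curve of $\mathcal{U}_4$ carrying an effective even theta characteristic already lies in $\mathcal{U}_5$, and nothing in your argument addresses this --- a priori $\mathcal{U}_4\cap(\hbox{theta-null divisor})$ could have components other than $\mathcal{U}_5$. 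The paper stays entirely inside Brill--Noether theory: a curve of $\mathcal{T}_{6,2}$ has an effective even theta characteristic iff the residual involution on $W^1_5(C)$ has a fixed point, iff $V_+\cap V_-$ is a single point, iff $p_1=p_2$; for $C\in\mathcal{U}_4$, where already $p_0\in\{p_1,p_2\}$, this is exactly $T\sim3p_0$, the defining condition of $\mathcal{U}_5$. Some argument of this kind (or another proof of the converse) is needed to close part (3).
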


\begin{proof}
$(1)$ This follows from \cite{M-Sc} Proposition 1 (or \cite{A-C-G-H} V. A-9). 
Specifically, $W^1_4(C)$ consists of the curve $T+W_1(C)$ and the point $K_C-2T$, 
and $K_C-2T$ is not contained in $T+W_1(C)$ if and only if $C$ has scroll invariant $0$. 
When $C$ is an $L_{3,1}$-curve on ${\F}_2$, 
we have $K_C-2T \sim T+p_0$ for the point $p_0=C\cap\Sigma$. 

$(2)$ The locus $\mathcal{U}_4\subset\mathcal{T}_{6,2}$ consists of those $C$ 
whose trigonal map ramifies at $p_0$. 
By \cite{M-Sc} Proposition 1 we have $W^1_5(C)=W_+\cup W_-$ 
where $W_+=T+W_2(C)$ and $W_-=K_C-W_+$ is the residual of $W_+$. 
Let $V_+=T+p_0+W_1(C)$ and $V_-=2T-W_1(C)$ be the residual of $V_+$.  
Then we see that 
\begin{equation*}
{\rm Sing}W^1_5(C) = W_+\cap W_- = V_+\cup V_-. 
\end{equation*}
If $T-p_0\sim p_1+p_2$, we have 
$V_+\cap V_-=\{ T+p_0+p_1, T+p_0+p_2\}$. 
Since $2K_C-5T\sim T+2p_0$, this proves the assertion. 

$(3)$ 
A curve $C$ in $\mathcal{T}_{6,2}$ has an effective even theta characteristic if and only if 
the residual involution on $W^1_5(C)$ has a fixed point. 
By the structure of $W^1_5(C)$ described above, 
this is exactly when $V_+\cap V_-$ is one point, i.e., $p_1=p_2$.  
When $C\in \mathcal{U}_4$, this is equivalent to the condition $T\sim3p_0$. 
\end{proof}


\section{The case $g=5$}\label{sec:g=5} 

In this section we prove that the spaces ${\moduli}$ with $g=5$ and $k>0$ are rational. 
The case $(k, \delta)=(4, 0)$ was settled by Kond\=o \cite{Ko} using trigonal curves with vanishing theta-null. 
It turns out that the main fixed curves for other $(k, \delta)$ are also trigonal. 

\subsection{The rationality of $\mathcal{M}_{7,5,1}$}\label{ssec:(7,5,1)}

We construct 2-elementary $K3$ surfaces using curves on ${\F}_1$. 
Let  $U\subset|L_{3,2}|\times|L_{1,0}|$ be the open set of pairs $(C, H)$ such that 
$C$ and $H$ are smooth and transverse to each other. 
The 2-elementary $K3$ surface associated to the $-2K_{{\F}_1}$-curve $C+H$ has invariant $(g, k)=(5, 1)$. 
Thus we obtain a period map  
$\mathcal{P}\colon U/{\aut}({\F}_1)\dashrightarrow\mathcal{M}_{7,5,1}$. 

\begin{proposition}\label{birat (7,5,1)} 
The period map $\mathcal{P}$ is birational. 
\end{proposition}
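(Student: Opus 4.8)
The plan is to apply the recipe of \S\ref{ssec: recipe}, in the style of Examples \ref{ex:1} and \ref{ex:2}. Since $(L_{3,2}\cdot L_{1,0})=5$ on ${\F}_1$, a general pair $(C,H)\in U$ yields a $-2K_{{\F}_1}$-curve $C+H$ whose only singularities are the five nodes $C\cap H$; hence its right resolution produces a $2$-elementary $K3$ surface $(X,\iota)$ of type $(7,5,1)$ (note $g=11-2^{-1}(7+5)=5$ and $k=1$), and we obtain the period map $\mathcal{P}$. First I would introduce the $\frak{S}_5$-cover
\[
\widetilde{U}=\bigl\{(C,H,p_1,\dots,p_5)\in U\times({\F}_1)^5 \ :\ \{p_i\}_{i=1}^5=C\cap H\bigr\},
\]
parametrizing $C+H$ together with a labeling of its five nodes; the two components $C$ and $H$ need no labeling, being distinguished by their classes in $NS_{{\F}_1}$ (equivalently, by genus).

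Next I would build the reference lattice. By Lemma \ref{L_+ and A-D-E}, $L_+(X,\iota)$ is generated by $f^{\ast}NS_{{\F}_1}\cong\langle2\rangle\oplus A_1$ (spanned by the pullbacks of $L_{1,0}$ and $\Sigma$), the five $(-2)$-curves over the nodes, which span an $A_1^5$, and the classes $[F_1],[F_2]$ of the two components of $X^{\iota}$ over $C$ and $H$. This prescribes an even $2$-elementary overlattice $L_+\supset\langle2\rangle\oplus A_1^6$ of index $2$, of main invariant $(7,5,1)$, together with vectors $f_1,f_2$ in the dual realizing $[F_1],[F_2]$; the labeling $(p_i)$ then induces a lattice-marking $j\colon L_+\to L_+(X,\iota)$, hence a lift $\tilde p\colon\widetilde{U}\to\cove_{7,5,1}$ of $\mathcal{P}$, which is a morphism by Borel's theorem \cite{Bo}. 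Since $\aut({\F}_1)$ acts trivially on $NS_{{\F}_1}$, $\tilde p$ is $\aut({\F}_1)$-invariant and descends to $\lift\colon\widetilde{U}/\aut({\F}_1)\dashrightarrow\cove_{7,5,1}$.

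The heart of the argument is to show that $\lift$ is generically injective, i.e.\ that the $\tilde p$-fibers are $\aut({\F}_1)$-orbits. Given $\tilde p(C,H,\mathbf{p})=\tilde p(C',H',\mathbf{p}')$ one obtains a Hodge isometry $\Phi$ with $\Phi\circ j'=j$; by Lemma \ref{ample class} $\Phi$ preserves the ample cones, so the Torelli theorem yields $\varphi\colon X\to X'$ with $\varphi^{\ast}=\Phi$. Applying Lemma \ref{covering map & LB} with $L=L_{1,0}$ produces $\psi\in\aut({\proj}^2)$ with $\psi\circ\phi_{L}\circ f=\phi_{L}\circ f'\circ\varphi$, where $\phi_{L}\colon{\F}_1\to{\proj}^2$ contracts $\Sigma$. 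The subtle point, exactly as in Example \ref{ex:2}, is that $\psi$ must fix the contracted point $\phi_{L}(\Sigma)$: here the branch sextic of $\phi_{L}\circ f$ is $(\phi_L)_{\ast}C+(\phi_L)_{\ast}H$, a nodal plane quintic together with a line, and $\phi_{L}(\Sigma)$ is precisely the node of the quintic, which is intrinsically characterized as the unique singular point of the branch curve not lying on its line component. Hence $\psi$ lifts to $\bar\psi\in\aut({\F}_1)$ with $\bar\psi\circ f=f'\circ\varphi$, and $\bar\psi$ carries $(C,H,\mathbf{p})$ to $(C',H',\mathbf{p}')$ (the labeling being respected because $\varphi$ matches the $(-2)$-curves $f^{-1}(p_i)$ with $(f')^{-1}(p_i')$). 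Since $\dim(\widetilde{U}/\aut({\F}_1))=19-6=13=\dim\cove_{7,5,1}$ and $\cove_{7,5,1}$ is irreducible, $\lift$ is birational. Finally, $\widetilde{U}/\aut({\F}_1)\to U/\aut({\F}_1)$ has degree $|\frak{S}_5|=120$ (as $\aut({\F}_1)$ acts almost freely on $U$), while $\cove_{7,5,1}\to\mathcal{M}_{7,5,1}$ has degree $|{\Or}(D_{L_+})|$; computing the latter to be $120$ via \cite{M-S} gives $\deg\mathcal{P}=1$.

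The main obstacle I anticipate is the lattice bookkeeping: pinning down the overlattice $L_+$ and the glue vectors $f_1,f_2$ precisely enough that $j$ is an isometry onto all of $L_+(X,\iota)$ rather than a proper sublattice, and then evaluating $|{\Or}(D_{L_+})|$ correctly. The geometric input — the identification of $\phi_{L}(\Sigma)\in{\proj}^2$, which makes the $\tilde p$-fibers genuine $\aut({\F}_1)$-orbits — should be routine once the branch curve is analyzed, reducibility rendering the point canonical.
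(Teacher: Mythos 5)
Your proposal is correct and follows essentially the same route as the paper: the $\frak{S}_5$-cover labeling the five nodes, the recipe of \S\ref{ssec: recipe}, the identification of $\phi_{L_{1,0}}(\Sigma)$ as the unique node of the image quintic (so that $\psi\in\aut({\proj}^2)$ lifts to $\aut({\F}_1)$), and the degree comparison $|\frak{S}_5|=|{\Or}^-(4,2)|=5!$. The paper's proof is just a terser version of this, deferring the details to Example \ref{ex:2}.
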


\begin{proof}
As before, we consider an $\frak{S}_5$-cover $\widetilde{U}$ of $U$ 
whose fiber over a $(C, H)\in U$ corresponds to labelings of the five nodes $C\cap H$. 
Noticing that the blow-down $\phi\colon{\F}_1\to{\proj}^2$ contracts 
the $(-1)$-curve to the unique node of $\phi(C)$,  
one may proceed as in Example \ref{ex:2} to obtain a birational lift  
$\widetilde{U}/{\aut}({\F}_1)\dashrightarrow{\cove}_{7,5,1}$ of $\mathcal{P}$. 
The projection ${\cove}_{7,5,1}\dashrightarrow \mathcal{M}_{7,5,1}$ 
is an ${\Or}(D_{L_+})$-covering for the lattice $L_+=U\oplus A_1^5$. 
By \cite{M-S} we have $|{\Or}(D_{L_+})|=|{\Or}^-(4, 2)|=5!$, so that $\mathcal{P}$ is birational. 
\end{proof}

\begin{proposition}\label{rational (7,5,1)} 
The quotient $U/{\aut}({\F}_1)$ is rational. 
Hence $\mathcal{M}_{7,5,1}$ is rational. 
\end{proposition}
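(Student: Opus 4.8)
The plan is to reduce the quotient, by a single application of the slice method, to a linear representation of a group to which Katsylo's theorem applies. First I would apply the slice method (Proposition \ref{slice}) to the second projection $U\to|L_{1,0}|$, $(C,H)\mapsto H$. By Proposition \ref{stabilizer of section} the group ${\aut}({\F}_1)$ acts transitively on the open set of smooth divisors in $|L_{1,0}|$, so this action is almost transitive, and the fiber over a general smooth $H$ is precisely the open set of $|L_{3,2}|$ consisting of smooth curves transverse to $H$. Hence $U/{\aut}({\F}_1)$ is birational to $|L_{3,2}|/G_H$, where $G_H\subset{\aut}({\F}_1)$ is the stabilizer of $H$, which by the exact sequence \eqref{stab section} sits in $1\to{\C}^{\times}\to G_H\to{\aut}(\Sigma)\to 1$ with ${\aut}(\Sigma)\simeq{\PGL}_2$.

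Next I would record that $G_H$ is a connected reductive group isogenous to ${\SL}_2\times{\C}^{\times}$. Indeed, the proof of Proposition \ref{stabilizer of section} shows $G_H\cap R=\{g_{\alpha,0}\}$ is a torus, so $G_H$ has trivial unipotent radical; being a $4$-dimensional connected reductive group with adjoint quotient ${\PGL}_2$, it is a quotient of ${\SL}_2\times{\C}^{\times}$ by a finite central subgroup. (Concretely, blowing $\Sigma$ down by $\phi\colon{\F}_1\to{\proj}^2$, one identifies $G_H$ with the Levi subgroup ${\GL}_2$ of the stabilizer of $\phi(\Sigma)$ in ${\PGL}_3$.) Since $n=1$ is odd, Proposition \ref{linearization Hirze} gives an ${\aut}({\F}_1)$-linearization of $L_{3,2}$, hence a linear action of $G_H$ on $V:=H^0(L_{3,2})$, inducing the natural $G_H$-action on $|L_{3,2}|={\proj}V$.

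Finally I would adjoin the torus ${\C}^{\times}$ of homotheties of $V$, which commutes with $G_H$. Since the ${\C}^{\times}$-invariant rational functions on $V$ are exactly the rational functions on ${\proj}V$, taking $G_H$-invariants gives
\[
U/{\aut}({\F}_1) \;\sim\; |L_{3,2}|/G_H \;\sim\; V/(G_H\times{\C}^{\times}).
\]
Now $G_H\times{\C}^{\times}$ is isogenous to ${\SL}_2\times({\C}^{\times})^2$: a surjection ${\SL}_2\times({\C}^{\times})^2\to G_H\times{\C}^{\times}$ with finite kernel makes $V$ a linear representation of ${\SL}_2\times({\C}^{\times})^2$ with the same field of invariants. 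By Theorem \ref{Katsylo} this quotient is rational, hence so is $U/{\aut}({\F}_1)$, and therefore so is $\mathcal{M}_{7,5,1}$.

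The step I expect to require the most care is the passage from ${\proj}V/G_H$ to $V/(G_H\times{\C}^{\times})$ together with pinning down $G_H$ up to isogeny, so that Katsylo's theorem is applicable in exactly the form stated; everything else is a routine use of the slice method and of the linearization results of \S\ref{Sec:Hirze}. Should the reductive structure of $G_H$ prove awkward to handle directly, a fallback is to apply the slice method a second time via the map $|L_{3,2}|\dashrightarrow|\mathcal{O}_{\Sigma}(2)|$, $C\mapsto C\cap\Sigma$, on which ${\aut}(\Sigma)\simeq{\PGL}_2$ acts almost transitively, reducing to a connected solvable group up to a factor of $\frak{S}_2$ and then invoking Miyata's theorem \ref{Miyata}.
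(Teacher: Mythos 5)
Your proof is correct, but it takes a different route from the one in the paper. The paper applies the no-name lemma (Proposition \ref{no-name 2}) to the \emph{other} projection $|L_{3,2}|\times|L_{1,0}|\to|L_{3,2}|$, obtaining $U/{\aut}({\F}_1)\sim{\proj}^2\times\mathcal{T}_5$ and then quoting the rationality of the trigonal locus $\mathcal{T}_5$ from \cite{Ma2} --- an external and nontrivial input, which however has the side benefit of exhibiting the fixed curve map (Corollary \ref{fixed curve (7,5,1)}). You instead slice along $U\to|L_{1,0}|$, identify the stabilizer $G_H$ concretely as ${\GL}_2\subset{\PGL}_3$ via the blow-down of $\Sigma$, and reduce to a linear representation of a group isogenous to ${\SL}_2\times({\C}^{\times})^2$, which Katsylo--Bogomolov (Theorem \ref{Katsylo}) handles; the passage from ${\proj}V/G_H$ to $V/(G_H\times{\C}^{\times})$ and the pullback of the representation along a finite central isogeny are both legitimate, since finite kernels acting trivially do not change the invariant field. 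This is self-contained within the paper's toolbox and is in fact the very strategy the paper uses for $\mathcal{M}_{10,8,1}$ and $\mathcal{M}_{10,6,0}$ (Propositions \ref{rational (10,8,1)} and \ref{rational (10,6,0)}), so it is certainly sound here. One small caveat: your fallback via $C\mapsto C|_{\Sigma}\in|\mathcal{O}_{\Sigma}(2)|$ would leave a disconnected stabilizer (an extension involving $\frak{S}_2$), so Miyata's theorem alone would not close that variant; an extra no-name step for the finite factor would be needed. Since your main argument does not rely on the fallback, this does not affect the proof.
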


\begin{proof}
By Proposition \ref{moduli of trigonal}, 
$|L_{3,2}|/{\aut}({\F}_1)$ is canonically birational to the moduli $\mathcal{T}_5$ of trigonal curves of genus $5$. 
Since $L_{1,0}$ is ${\aut}({\F}_1)$-linearized, 
we may apply the no-name lemma \ref{no-name 2} to the projection $|L_{3,2}|\times|L_{1,0}|\to|L_{3,2}|$.  
Then we have $U/{\aut}({\F}_1)\sim{\proj}^2\times\mathcal{T}_5$. 
The space $\mathcal{T}_5$ is rational by \cite{Ma2}. 
\end{proof} 

For every smooth $C\in|L_{3,2}|$, 
the restriction of $|L_{1,0}|$ to $C$ is identified with the linear system $|K_C-T|$ 
where $T$ is the trigonal bundle. 
Therefore 

\begin{corollary}\label{fixed curve (7,5,1)}
The fixed curve map for $\mathcal{M}_{7,5,1}$ gives 
a dominant map $\mathcal{M}_{7,5,1}\dashrightarrow \mathcal{T}_5$  
whose general fibers are birationally identified with the residuals of the $g^1_3$. 
\end{corollary}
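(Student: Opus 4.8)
The plan is to transport the fixed curve map across the birational period map $\mathcal{P}\colon U/{\aut}({\F}_1)\dashrightarrow\mathcal{M}_{7,5,1}$ of Proposition~\ref{birat (7,5,1)}, thereby identifying $\mathcal{M}_{7,5,1}$ birationally with $U/{\aut}({\F}_1)$. First I would check that under this identification the fixed curve map $F$ is induced by the first projection ${\rm pr}_1\colon U\to|L_{3,2}|$, $(C,H)\mapsto C$. For a general $(C,H)\in U$ the branch curve is $B=C+H$, whose only singularities are the five transverse nodes $C\cap H$; by the A-D-E description of the right resolution in \S\ref{ssec: right resol} every exceptional curve lying over a node is rational, and $H\cong{\proj}^1$, so the unique component of $X^{\iota}$ of positive genus is the strict transform of $C$. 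By the genus formula of \S\ref{ssec:trigonal} this component is a smooth trigonal curve of genus $3\cdot1+2\cdot2-2=5$. Hence $F$ descends to the projection $U/{\aut}({\F}_1)\to|L_{3,2}|/{\aut}({\F}_1)$.

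To pin down the target and establish dominance, I would invoke Proposition~\ref{moduli of trigonal}, as already used in the proof of Proposition~\ref{rational (7,5,1)}: since ${\aut}({\F}_1)$ is connected and the generic scroll invariant for odd genus is $1$, the quotient $|L_{3,2}|/{\aut}({\F}_1)$ is canonically birational to the moduli $\mathcal{T}_5$ of trigonal curves of genus $5$. As ${\rm pr}_1$ is dominant onto the smooth members of $|L_{3,2}|$, the composite $F\colon\mathcal{M}_{7,5,1}\dashrightarrow\mathcal{T}_5$ is dominant, with image $\mathcal{T}_5$.

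The remaining task is to identify the general fiber. For general $C$ the ${\rm pr}_1$-fiber is the full ${\proj}^2=|L_{1,0}|$ of sections $H$ transverse to $C$, and $C$ has trivial stabilizer in ${\aut}({\F}_1)$ — this is exactly the almost freeness used in Proposition~\ref{rational (7,5,1)} to split $U/{\aut}({\F}_1)\sim{\proj}^2\times\mathcal{T}_5$ — so the fiber of $F$ over $[C]$ is birational to $|L_{1,0}|$. I would then restrict to $C$: since $K_C=L_{1,1}|_C$ and the trigonal bundle is $T=L_{0,1}|_C$, one has $L_{1,0}|_C=K_C-T$, and a Riemann--Roch count $h^0(K_C-T)=h^0(T)+\deg(K_C-T)-g+1=2+5-5+1=3$, together with the injectivity of restriction (no nonzero section of $L_{1,0}$ vanishes on $C$, as $L_{1,0}-L_{3,2}$ is not effective), shows that $|L_{1,0}|$ restricts isomorphically to $|K_C-T|$. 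A member of $|K_C-T|$ is the residual in the canonical series of a divisor of the $g^1_3=|T|$, so the general fiber of $F$ is birationally the system of residuals of the $g^1_3$.

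All the component facts are already in hand, so I expect no serious obstacle: the corollary is essentially an assembly of Propositions~\ref{birat (7,5,1)}, \ref{rational (7,5,1)} and \ref{moduli of trigonal} with the restriction identity relating $|L_{1,0}|$ and $|K_C-T|$. The one point deserving care is the very first step, confirming that the genus $5$ component of $X^{\iota}$ is the strict transform of $C$ rather than another component, so that $F$ coincides with ${\rm pr}_1$ and not a twist of it; this is guaranteed by the rationality of $H$ and of every curve contracted in the right resolution.
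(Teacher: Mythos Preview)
Your proposal is correct and follows essentially the same approach as the paper: the paper's argument consists of the single observation preceding the corollary that for every smooth $C\in|L_{3,2}|$ the restriction of $|L_{1,0}|$ to $C$ is identified with $|K_C-T|$, together with the birational identifications from Propositions~\ref{birat (7,5,1)} and~\ref{moduli of trigonal}. Your write-up simply makes explicit the steps the paper leaves implicit (the genus computation, the dimension count via Riemann--Roch, and the injectivity of restriction).
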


\subsection{The rationality of $\mathcal{M}_{6+k,6-k,1}$ with $k>1$}\label{ssec:(6+k,6-k),k>1}

We consider curves on ${\F}_1$. 
For $2\leq k\leq5$ let $U_k\subset|L_{3,2}|\times|L_{0,1}|$ be the locus of pairs $(C, F)$ such that 
$({\rm i})$ $C$ is smooth and transverse to the $(-1)$-curve $\Sigma$, 
$({\rm ii})$ $F$ intersects with $C$ at one of $C\cap\Sigma$ with multiplicity $k-2$, and 
$({\rm iii})$ $|C\cap F\backslash\Sigma|=5-k$. 
When $k=2$, the conditions $({\rm ii})$ and $({\rm iii})$ simply mean that $F$ is transverse to $C+\Sigma$.  
In particular, $U_2$ is open in $|L_{3,2}|\times|L_{0,1}|$. 
The projection $U_k\to|L_{3,2}|$ is dominant for $k=2, 3$, and generically injective for $k=4, 5$. 
As in the proof of Lemma \ref{lemma (5+k,5-k)}, one checks that $U_k$ has the expected dimension $20-k$. 

For a $(C, F)\in U_k$, the curve $B=C+F+\Sigma$ belongs to $|-2K_{{\F}_1}|$. 
When $k=2$, $B$ has only nodes (the intersections of the components) as the sigularities. 
When $k\geq3$, $B$ has the $D_{2k-2}$-point $F\cap\Sigma$, 
the $5-k$ nodes $C\cap F\backslash\Sigma$, the one node $C\cap\Sigma\backslash F$, 
and no other singularity. 
Thus the 2-elementary $K3$ surface associated to $B$ has invariant $(r, a)=(6+k, 6-k)$. 
When $k=4$, we have parity $\delta=1$ by Lemma \ref{delta=1} $(3)$. 
Hence we obtain a period map  
$\mathcal{P}_k\colon U_k/{\aut}({\F}_1)\dashrightarrow\mathcal{M}_{6+k,6-k,1}$. 

\begin{proposition}\label{birational (6+k,6-k)} 
The map $\mathcal{P}_k$ is birational. 
\end{proposition}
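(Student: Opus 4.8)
The plan is to mimic the proof of Proposition~\ref{birational (5+k,5-k)}, running the recipe of \S\ref{ssec: recipe} for $k\in\{2,3\}$ and the trigonal-curve argument of Proposition~\ref{birational (r,r-2,delta), r<6} for $k\in\{4,5\}$. In every case the dimension count is uniform: $\dim U_k=20-k$ and $\dim\aut(\F_1)=6$, so $\dim(U_k/\aut(\F_1))=14-k=\dim\mathcal{M}_{6+k,6-k,1}$; since $\mathcal{M}_{6+k,6-k,1}$ is irreducible it suffices to prove that $\mathcal{P}_k$ is generically injective. The split into two groups is forced by the shape of $U_k$: for $k=2$ it carries an extra $|L_{0,1}|$-direction, and for $k=3$ the projection $U_k\to|L_{3,2}|$ is two-to-one (there are two points in $C\cap\Sigma$), so in these cases $U_k/\aut(\F_1)$ is not simply a sublocus of the trigonal moduli and the bare fixed-curve argument does not directly yield generic injectivity.

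For $k=2,3$ I would run the recipe of \S\ref{ssec: recipe}, as in Examples~\ref{ex:1} and~\ref{ex:2}. One introduces a cover $\widetilde{U}_k\to U_k$ parametrizing $(C,F)\in U_k$ together with a reasonable labeling of the singularities of $B=C+F+\Sigma$ (and, when $k=3$, of the branches of $B$ at its $D_4$-point): for $k=3$ this is the $\frak{S}_2$-cover of Example~\ref{ex:2} labeling the two nodes in $C\cap F\setminus\Sigma$, while for $k=2$ one labels the nodes of $B$, using that nodes of distinct types, and the components of $B$, are already separated by the irreducible decomposition of $B$ and by the $NS_{\F_1}$-classes of its components. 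By Lemma~\ref{L_+ and A-D-E} a point of $\widetilde{U}_k$ then carries a natural lattice marking, which gives a lift $\widetilde{U}_k\to{\cove}_{6+k,6-k,1}$ of the period morphism underlying $\mathcal{P}_k$, and this lift is $\aut(\F_1)$-invariant because $\aut(\F_1)$ acts trivially on $NS_{\F_1}$. That its fibers are $\aut(\F_1)$-orbits is checked exactly as in Example~\ref{ex:2}: Lemma~\ref{ample class} forces a marking-preserving Hodge isometry to preserve the ample cones, the Torelli theorem yields an isomorphism of the associated $K3$ surfaces, and Lemma~\ref{covering map & LB}---applied after the contraction $\phi\colon\F_1\to\proj^2$ of $\Sigma$, whose image is the unique singular point of worst type of the branch sextic---recovers an automorphism of $\F_1$. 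Hence ${\lift}\colon\widetilde{U}_k/\aut(\F_1)\dashrightarrow{\cove}_{6+k,6-k,1}$ is generically injective, so birational by the dimension count. It then remains to identify the reference lattice $L_+$ of main invariant $(6+k,6-k,1)$ forced by Lemma~\ref{L_+ and A-D-E} (for $k=3$ it is $U(2)\oplus E_7$, as in Example~\ref{ex:2}), to compute $|\Or(D_{L_+})|$ via \cite{M-S}, and to check that this number equals the degree of the covering $\widetilde{U}_k/\aut(\F_1)\to U_k/\aut(\F_1)$; then $\mathcal{P}_k$ has degree $1$, i.e.\ is birational.

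For $k=4,5$ I would instead argue as in Propositions~\ref{birational (r,r-2,delta), r<6} and~\ref{birational (5+k,5-k)}. Here the tangency condition defining $U_k$ pins down the point of $C\cap\Sigma$ carrying it, so $U_k$ is (generically) a sublocus of $|L_{3,2}|$; by Proposition~\ref{moduli of trigonal} the quotient $|L_{3,2}|/\aut(\F_1)$ is naturally birational to the moduli $\mathcal{T}_5$ of trigonal curves of genus $5$, hence $U_k/\aut(\F_1)$ is birational to a sublocus of $\mathcal{T}_5$. The fixed curve map for $\mathcal{M}_{6+k,6-k,1}$ sends $\mathcal{P}_k(C,F)$ to (the normalization of) the curve $C$, so its composition with $\mathcal{P}_k$ is the natural inclusion of $U_k/\aut(\F_1)$ into $\mathcal{T}_5$; thus $\mathcal{P}_k$ admits a left inverse, is generically injective, and hence birational by the dimension count.

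The step I expect to be the main obstacle is the bookkeeping at the $D_{2k-2}$-point when $k=3$: one must ensure that the two points of $C\cap\Sigma$ give non-isomorphic $K3$ surfaces, which is exactly why the recipe---tracking the $D_4$-branches through the lattice marking---is used there rather than the bare fixed-curve argument; and in the recipe route one must identify the reference lattice $L_+$ and the group $\Or(D_{L_+})$ precisely, so that the two covering degrees cancel. Once these identifications are in place, the remaining verifications are routine applications of the templates in \S\ref{ssec: recipe}.
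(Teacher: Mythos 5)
Your proposal is correct, and for $k=2,3$ it coincides with the paper's proof: $k=3$ is exactly Example~\ref{ex:2}, and for $k=2$ the paper takes precisely the $\frak{S}_3\times\frak{S}_2$-cover you describe (labelling $C\cap F$ and $C\cap\Sigma$ independently, the node $F\cap\Sigma$ being already distinguished) and checks $|{\Or}(D_{L_+})|=2\cdot|{\rm Sp}(2,2)|=12$ for $L_+=U\oplus D_4\oplus A_1^2$, which matches the covering degree. Where you diverge is $k=4,5$: the paper stays inside the recipe of \S\ref{ssec: recipe}, observing that all singular points of $B=C+F+\Sigma$ and all branches at the $D_{2k-2}$-point are a priori distinguished by type and by the irreducible decomposition, so that $\mathcal{P}_k$ lifts with \emph{no} cover to a birational map $U_k/{\aut}({\F}_1)\dashrightarrow{\cove}_{6+k,6-k,1}$, and then verifies ${\cove}_{6+k,6-k,1}=\mathcal{M}_{6+k,6-k,1}$ (i.e.\ ${\Or}(D_{L_-})$ is trivial for these two lattices). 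Your route instead uses that $U_k\to|L_{3,2}|$ is generically injective for $k=4,5$, identifies $U_k/{\aut}({\F}_1)$ with a sublocus of $\mathcal{T}_5$ via Proposition~\ref{moduli of trigonal}, and extracts generic injectivity of $\mathcal{P}_k$ from the fixed curve map --- exactly the argument the paper uses for the parallel cases $k\geq3$ of Proposition~\ref{birational (5+k,5-k)}, so it is certainly admissible here. The trade-off is minor: the paper's version requires one small lattice computation but is uniform with the other cases, while yours avoids that computation and establishes along the way the generic injectivity of the fixed curve map for $k=4,5$, which the paper only records afterwards as a consequence (Corollary~\ref{fixed curve (6+k,6-k) ver1}). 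The verifications you leave open (the reference lattice and the equality of covering degrees for $k=2$) are exactly the ones the paper carries out, so nothing essential is missing.
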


\begin{proof} 
When $k=4, 5$, the singularities of the curves $B=C+F+\Sigma$ are a priori distinguished 
by their type and by the irreducible decomposition of $B$. 
Also the three branches at the $D_{2k-2}$-point are distinguished by the decomposition of $B$. 
Therefore $\mathcal{P}_k$ lifts to a birational map 
$U_k/{\aut}({\F}_1)\dashrightarrow{\cove}_{6+k,6-k,1}$ by the recipe in \S \ref{ssec: recipe}. 
One checks that ${\cove}_{6+k,6-k,1}=\mathcal{M}_{6+k,6-k,1}$ in these cases. 

The case $k=3$ is treated in Example \ref{ex:2}. 

For $k=2$, we label the three nodes $C\cap F$ and the two nodes $C\cap\Sigma$ independently. 
This is realized by an $\frak{S}_3\times\frak{S}_2$-cover $\widetilde{U}_2$ of $U_2$. 
The rest node $F\cap\Sigma$ of $B$ is distinguished from those five. 
Then we will obtain a birational lift  
$\widetilde{U}_2/{\aut}({\F}_1) \dashrightarrow {\cove}_{8,4,1}$ of $\mathcal{P}_2$. 
By \cite{M-S} we have $|{\Or}(D_{L_+})|=2\cdot|{\rm Sp}(2, 2)|=2\cdot3!$ 
for the lattice $L_+=U\oplus D_4\oplus A_1^2$. 
Therefore $\mathcal{P}_2$ is birational. 
\end{proof}

\begin{proposition}\label{rational g=5, k>1}
The quotient $U_k/{\aut}({\F}_1)$ is rational. 
Therefore $\mathcal{M}_{6+k,6-k,1}$ with $k>1$ is rational. 
\end{proposition}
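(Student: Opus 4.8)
The plan is to follow the strategy of Propositions \ref{rational (3,1,1)}, \ref{rational (6,2,0)} and \ref{rational (5+k,5-k)} and apply the slice method to the projection
\begin{equation*}
f\colon U_k \longrightarrow |L_{0,1}|, \qquad (C, F)\mapsto F.
\end{equation*}
First I would observe that $f$ is $\aut({\F}_1)$-equivariant and dominant, and that $\aut({\F}_1)$ acts transitively on $|L_{0,1}|$ by Proposition \ref{stabilizer of fiber}; hence by the slice method \ref{slice} we obtain $U_k/\aut({\F}_1)\sim f^{-1}(F_0)/G$, where $F_0$ is a general $\pi$-fiber and $G\subset\aut({\F}_1)$ is its stabilizer, equivalently the stabilizer of the point $p_0=F_0\cap\Sigma\in\Sigma$.

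Next I would identify the slice $f^{-1}(F_0)$. Working in the coordinate system of \S\ref{ssec:coordinate} exactly as in the proof of Lemma \ref{lemma (5+k,5-k)} and Proposition \ref{rational (5+k,5-k)}, once $F_0$ is fixed the conditions in the definition of $U_k$ that constrain $C$ — that the degree-three divisor $C|_{F_0}$ has multiplicity $k-2$ at $p_0$ — are \emph{linear} conditions on $C\in|L_{3,2}|$, whereas the remaining requirements (smoothness of $C$, transversality of $C$ to $\Sigma$, and $|C\cap F_0\setminus\Sigma|=5-k$) are open. Therefore $f^{-1}(F_0)$ is an open subset of a linear subsystem ${\proj}V_k\subseteq|L_{3,2}|$, and since $G$ fixes $p_0$ (hence $F_0$ and $\Sigma$) it preserves ${\proj}V_k$; thus $U_k/\aut({\F}_1)\sim {\proj}V_k/G$. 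By Proposition \ref{stabilizer of fiber} the group $G$ is connected and solvable.

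I would then conclude by a linearization argument. Since $n=1$ is odd, the line bundle $L_{3,2}$ is $\aut({\F}_1)$-linearized by Proposition \ref{linearization Hirze}, so $H^0(L_{3,2})$ is an $\aut({\F}_1)$-representation and $V_k$ is a $G$-subrepresentation of it; in particular the passage to $\SL_2\ltimes R$ needed in the $\F_2$ case of Proposition \ref{rational (5+k,5-k)} is unnecessary here. Hence ${\proj}V_k/G$ is rational by Miyata's theorem \ref{Miyata}, so $U_k/\aut({\F}_1)$ is rational, and the rationality of $\mathcal{M}_{6+k,6-k,1}$ follows from the birational equivalence of Proposition \ref{birational (6+k,6-k)}.

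The only step requiring genuine care is the identification of $f^{-1}(F_0)$ with an open subset of a linear system, i.e. checking both the linearity of the prescribed incidence/tangency conditions and the openness of the genericity conditions. This is a routine coordinate computation on ${\F}_1$, entirely parallel to the one carried out for ${\F}_2$ in \S\ref{sec:g=6}; all the other ingredients — transitivity of $\aut({\F}_1)$ on $|L_{0,1}|$, connectedness and solvability of $G$, and the linearization of line bundles on the odd Hirzebruch surface ${\F}_1$ — are already established in \S\ref{Sec:Hirze}.
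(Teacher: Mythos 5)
Your proof is correct and follows essentially the same route as the paper, which simply invokes the argument of Proposition \ref{rational (5+k,5-k)}: slice method over $|L_{0,1}|$, identification of the slice with an open subset of a linear subsystem ${\proj}V_k\subset|L_{3,2}|$, and Miyata's theorem for the connected solvable stabilizer. Your observation that the detour through ${\SL}_2\ltimes R$ is unnecessary here because $n=1$ is odd (so $L_{3,2}$ is already ${\aut}({\F}_1)$-linearized by Proposition \ref{linearization Hirze}) is exactly the simplification the paper's cross-reference implicitly relies on.
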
 

\begin{proof} 
The same proof as for Proposition \ref{rational (5+k,5-k)} works. 
\end{proof}

For completeness, we briefly explain the birational period map for $\mathcal{M}_{10,2,0}$ 
constructed by Kond\=o \cite{Ko}. 
Let $U\subset|L_{3,2}|$ be the locus of smooth curves $C$ which are tangent to $\Sigma$. 
The quotient $U/{\aut}({\F}_1)$ is identified with the theta-null divisor $\mathcal{T}_5'$ in the trigonal locus. 
For a $C\in U$ let $F$ be the $\pi$-fiber through $C\cap\Sigma$. 
The 2-elementary $K3$ surface associated to the $-2K_{{\F}_1}$-curve $C+F+\Sigma$ 
belongs to $\mathcal{M}_{10,2,0}$. 
Then the period map $U/{\aut}({\F}_1)\to\mathcal{M}_{10,2,0}$ is birational. 
Kond\=o's proof is essentially along the line of \S \ref{ssec: recipe}. 
It can also be deduced using the fixed curve map. 

We describe the fixed curve maps as rational maps 
$F_k:\mathcal{M}_{6+k,6-k,1}\dashrightarrow\mathcal{T}_5$ to the trigonal locus. 
By Proposition \ref{birational (6+k,6-k)} we have 

\begin{corollary}\label{fixed curve (6+k,6-k) ver1}
The map $F_k$ is dominant for $k=2, 3$, and is generically injective for $k=4, 5$. 
The general fibers of $F_2$ are birationally identified with the $g^1_3$, 
and the general fibers of $F_3$ are identified with the two points ${\rm Sing}W^1_4(C)$, $C\in\mathcal{T}_5$. 
\end{corollary}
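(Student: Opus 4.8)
The plan is to transport the statement, via the birational period map $\mathcal{P}_k$ of Proposition~\ref{birational (6+k,6-k)}, to a question about the projection $U_k\to|L_{3,2}|$. First I would observe that, by the right covering construction of \S\ref{ssec: right resol} and the definition \eqref{fixed curve map} of the fixed curve map, the composite $F_k\circ\mathcal{P}_k$ sends the DPN pair $({\F}_1,\,C+F+\Sigma)$ to the genus~$5$ component of the fixed locus of the associated $K3$ surface, which is $C$ itself (being smooth, $C$ is untouched by the right resolution). Since $C\in|L_{3,2}|$, by \S\ref{ssec:trigonal} it is trigonal of genus $5$, the trigonal bundle being $T=L_{0,1}|_C$, cut out by the ruling $\pi$; hence $F_k$ takes values in $\mathcal{T}_5$. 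As $\mathcal{P}_k$ is birational, it suffices to study the map $U_k/{\aut}({\F}_1)\to\mathcal{T}_5$, which by Proposition~\ref{moduli of trigonal} is the one induced by the projection $U_k\to|L_{3,2}|$ (the first factor, when $U_k\subset|L_{3,2}|\times|L_{0,1}|$).

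I would then dispatch the easy cases. For $k=2$ this projection is dominant with fibre $|L_{0,1}|\cong{\proj}^1$, and a general trigonal $C$ has trivial stabilizer in ${\aut}({\F}_1)$ (an automorphism of ${\F}_1$ preserving $C$ restricts to one of $C$ fixing $T$, hence is trivial); so the general fibre of $F_2$ is birational to $|L_{0,1}|$, which restricted to $C$ becomes $|T|$, the $g^1_3$. For $k=4,5$ the projection $U_k\to|L_{3,2}|$ is generically injective, as recorded in \S\ref{ssec:(6+k,6-k),k>1}, so combined with the birationality of $\mathcal{P}_k$ we get that $F_4$ and $F_5$ are generically injective.

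The substantive case is $k=3$. Here the fibre of $U_3\to|L_{3,2}|$ over a general $C$ is the pair of $\pi$-fibres through the two points of $C\cap\Sigma$, so $F_3$ is generically $2$-to-$1$ onto $\mathcal{T}_5$. To identify this fibre I would write $C\cap\Sigma=p_1+p_2$ and note that, since $\Sigma\in|L_{1,-1}|$ while $K_C=L_{1,1}|_C$, $T=L_{0,1}|_C$, and $L_{1,1}$ equals $2L_{0,1}+L_{1,-1}$ in ${\rm Pic}({\F}_1)$, we get $p_1+p_2\in|K_C-2T|$; a Riemann--Roch count gives $h^0(K_C-2T)=1$, so $p_1+p_2$ is the unique effective divisor in that class. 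Invoking Maroni's structure theorem for $W^1_4(C)$ (\cite{M-Sc} Proposition~1), for a general trigonal curve of genus $5$ one has $W^1_4(C)=\bigl(T+W_1(C)\bigr)\cup\bigl(K_C-T-W_1(C)\bigr)$, the two curves on the right being distinct; hence a point $D$ of ${\rm Sing}W^1_4(C)$ lies on both, and writing $D=T+p=K_C-T-q$ forces $p+q\sim K_C-2T$, so $\{p,q\}=\{p_1,p_2\}$ and $D\in\{T+p_1,\,T+p_2\}$. Thus ${\rm Sing}W^1_4(C)=\{T+p_1,\,T+p_2\}$, which under $D\mapsto D-T$ matches the $F_3$-fibre $\{p_1,p_2\}$.

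The routine ingredients are the intersection computations on ${\F}_1$ and the equality $h^0(K_C-2T)=1$. The one step that will take real care is the $k=3$ identification: one must quote Maroni's description of $W^1_4(C)$ correctly and verify that its two components are distinct and meet transversally in exactly two points for a general $C$, so that ${\rm Sing}W^1_4(C)$ consists of exactly two points and this count matches the degree of $F_3$.
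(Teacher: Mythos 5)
Your proposal is correct and follows essentially the same route as the paper: the paper's proof also reduces to identifying the $F_3$-fiber with $\Sigma\cap C=\{p_1,p_2\}$, notes $K_C\sim 2T+p_1+p_2$, and invokes the Martens--Schreyer decomposition $W^1_4(C)=W_+\cup W_-$ with ${\rm Sing}\,W^1_4(C)=W_+\cap W_-=\{T+p_1,T+p_2\}$. You merely spell out the intermediate computations (the class of $\Sigma$ in ${\rm Pic}({\F}_1)$, $h^0(K_C-2T)=1$, and the explicit solution of $T+p=K_C-T-q$) and the easy cases $k=2,4,5$ that the paper leaves implicit.
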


\begin{proof}
The $F_3$-fiber over a general $C\in|L_{3,2}|$ is 
identified with the two points $\{ p_1, p_2\}=\Sigma\cap C$. 
Note that $K_C\sim2T+p_1+p_2$ for the trigonal bundle $T$. 
By \cite{M-Sc}, $W^1_4(C)$ consists of two residual components, 
$W_+=T+W_1(C)$ and $W_-=K_C-W_+$. 
Then ${\rm Sing}W^1_4(C)$ is the intersection $W_+\cap W_-=\{ T+p_1, T+p_2\}$. 
\end{proof} 

A generic image of $F_4$ (resp. $F_5$) is the locus where 
the trigonal map ramifies (resp. totally ramifies) at the base point of one of ${\rm Sing}W^1_4(C)$. 
We also note that the theta-null divisor $\mathcal{T}_5'$ is exactly the locus where 
${\rm Sing}W^1_4(C)$ is one point. 
Thus the double covering $F_3\colon \mathcal{M}_{9,3,1}\dashrightarrow\mathcal{T}_5$ 
is the quotient by the residuation and is ramified at $\mathcal{M}_{10,2,0}$ over $\mathcal{T}_5'$

\section{The case $g=4$}\label{sec:g=4} 

In this section we study the case $g=4$, $k>0$. 
Our constructions are related to canonical models of genus $4$ curves, i.e., 
curves on quadratic surfaces cut out by cubics. 
Except for \S \ref{ssec:(10,4,0)} we shall use the following notation: 
$Q$ is the surface ${\proj}^1\times{\proj}^1$, 
$L_{a,b}$ is the bundle ${\sheaf}_Q(a, b)$, 
and ${\aut}(Q)_0={\PGL}_2\times{\PGL}_2$ is the identity component of ${\aut}(Q)$. 
 
\subsection{The rationality of $\mathcal{M}_{8,6,1}$}\label{ssec:(8,6,1)}
 
Let $U\subset|L_{3,3}|\times|L_{1,1}|$ be the open set of pairs $(C, H)$ 
such that $C$ and $H$ are smooth and transverse to each other. 
Considering the 2-elementary $K3$ surfaces associated to the $-2K_Q$-curves $C+H$, 
we obtain a period map $\mathcal{P}\colon U/{\aut}(Q)\dashrightarrow\mathcal{M}_{8,6,1}$. 
In Example \ref{ex:1} we proved that $\mathcal{P}$ is birational. 

\begin{proposition}\label{rational (8,6)}
The quotient $U/{\aut}(Q)$ is rational. 
Hence $\mathcal{M}_{8,6,1}$ is rational. 
\end{proposition}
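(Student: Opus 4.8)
The plan is to reduce $U/{\aut}(Q)$ to a quotient of a linear representation by a solvable or near-solvable group, by peeling off the line $H$ first and then exploiting the slice method on the $(1,1)$-curve. Concretely, $U\subset|L_{3,3}|\times|L_{1,1}|$ carries two natural projections. First I would look at the projection $\mathrm{pr}_2\colon U\to|L_{1,1}|$. The group ${\aut}(Q)_0={\PGL}_2\times{\PGL}_2$ acts on $|L_{1,1}|$, and by Proposition \ref{L_{1,1}} (applied on each ruling, or directly: a smooth $(1,1)$-curve is the graph of an isomorphism ${\proj}^1\to{\proj}^1$, and any two are ${\aut}(Q)_0$-equivalent with connected solvable stabilizer) the action is almost transitive with stabilizer $G_H$ of a general $H$ connected and solvable — in fact $G_H\simeq{\PGL}_2$ embedded diagonally, which is \emph{not} solvable, so here one should instead keep $H$ generic and analyze $G_H$ directly. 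Since $\mathrm{pr}_2$ is ${\aut}(Q)_0$-equivariant and dominant with almost transitive target, the slice method (Proposition \ref{slice}) gives $U/{\aut}(Q)_0 \sim \mathrm{pr}_2^{-1}(H)/G_H$, where $\mathrm{pr}_2^{-1}(H)$ is an open subset of the linear system $|L_{3,3}|$ of cubics transverse to the fixed $H$. One then has to account for the component group ${\aut}(Q)/{\aut}(Q)_0\simeq\frak{S}_2$ (swapping rulings) at the very end.

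Next I would make $G_H\simeq{\PGL}_2$ act on the affine cone $H^0(L_{3,3})$ — this needs a ${\SL}_2$- or ${\PGL}_2$-linearization of $L_{3,3}$ restricted to the stabilizer, which is available since $L_{3,3}$ is ${\aut}(Q)_0$-linearized (it is a power of $L_{1,1}=-K_Q/2$ twisted appropriately; $-2K_Q=L_{2,2}$, and $L_{3,3}$ is linearized after tensoring with the hyperplane bundle of the projective space, exactly as in the proofs of Propositions \ref{rational (6,4,0)} and \ref{rational (6,4,1)}). So the remaining task is the rationality of a quotient ${\proj}W/{\PGL}_2$ (or $W/{\PGL}_2$ for a linear $W$). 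Since $\dim|L_{3,3}|=15$ and $\dim{\PGL}_2=3$, after the slice the quotient has dimension $\dim U - \dim{\aut}(Q) = (15+3)-6=12$, matching $\dim\mathcal{M}_{8,6,1}=20-8=12$ as it must.

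For the ${\PGL}_2$-quotient I would apply the slice method once more, now to the restriction map $|L_{3,3}|\dashrightarrow |\,\sheaf_{H}(6)|\simeq{\proj}^6$, $C\mapsto C|_H$ (using $H\simeq{\proj}^1$ and $(L_{3,3}.H)=6$), which is $G_H$-equivariant with $G_H={\PGL}_2$ acting on ${\proj}^6=|{\sheaf}_{{\proj}^1}(6)|$ in the standard way. Taking the fiber over a general binary sextic divisor $D$ of $6$ points on $H$, its stabilizer $G_D\subset{\PGL}_2$ is a finite group (for $D$ generic it is trivial or very small), and the fiber is an open subset of a linear subsystem ${\proj}V_D$; then $U/{\aut}(Q)\sim {\proj}V_D/G_D$ after also folding in $\frak{S}_2$, and one uses the no-name method (Proposition \ref{no-name 1} or \ref{no-name 2}) plus the fact that a finite-group quotient of a linear space complementary to a rational locus is rational. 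I expect the main obstacle to be the bookkeeping of linearizations and the precise finite stabilizer at the second slice — specifically verifying that $G_D$ (together with the outer $\frak{S}_2$) acts almost freely on the relevant linear slice so that no-name applies cleanly, and that the residual quotient is manifestly rational; this is the same kind of elementary but delicate representation-theoretic check carried out in Sections \ref{ssec:(6,4,0)}–\ref{ssec:(6,4,1)}, so it should go through, but it is where the real work lies.
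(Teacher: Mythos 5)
Your route is genuinely different from the paper's, but as written it has a gap at the second reduction. After the first slice over $|L_{1,1}|$ (which is fine: the smooth $(1,1)$-curves form one ${\aut}(Q)_0$-orbit with stabilizer the diagonal ${\PGL}_2$, and you correctly note this group is not solvable), you apply the slice method to the restriction map $|L_{3,3}|\dashrightarrow|\sheaf_H(6)|\simeq{\proj}^6$ and conclude $U/{\aut}(Q)\sim{\proj}V_D/G_D$ with $G_D$ finite. This is a misapplication of Proposition \ref{slice}: the slice method requires the group to act almost transitively on the target, and ${\PGL}_2$ (dimension $3$) cannot act almost transitively on ${\proj}^6$. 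Your own dimension count exposes the problem: the fiber ${\proj}V_D$ of this linear projection is a ${\proj}^9$ (the center is the $8$-dimensional system $H+|L_{2,2}|$), so ${\proj}V_D/G_D$ has dimension $9$, whereas $U/{\aut}(Q)$ must have dimension $12$; the missing $3$ dimensions are exactly $\dim({\proj}^6/{\PGL}_2)$. The correct tool here is the no-name method over ${\proj}^6$: the restriction map is birationally the projectivization of a ${\PGL}_2$-linearized rank-$10$ bundle, ${\PGL}_2$ acts almost freely on ${\proj}^6$, and one reduces to ${\proj}^9\times({\proj}^6/{\PGL}_2)$, with ${\proj}^6/{\PGL}_2$ the (rational) space of binary sextics. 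Even then, the residual $\frak{S}_2$ (the involution of $Q$ swapping the rulings and fixing $H$ pointwise) acts trivially on ${\proj}^6$, so it does not satisfy the almost-free hypothesis of Proposition \ref{no-name 1}, and folding it in requires a separate argument in the style of Proposition \ref{no-name 2}; this is not mere bookkeeping.

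For comparison, the paper's proof runs the fibration the other way and is much shorter: $|L_{3,3}|/{\aut}(Q)$ is naturally birational to $\mathcal{M}_4$ (a general canonical genus-$4$ curve lies on a unique smooth quadric), the system $|L_{1,1}|$ restricts to $|K_C|$ on a smooth $C\in|L_{3,3}|$, so $U/{\aut}(Q)$ is birational to the projectivized Hodge bundle ${\proj}(\pi_{\ast}K_{\mathcal{X}_4/\mathcal{M}_4})$ over $\mathcal{M}_4$, which is rational since $\mathcal{M}_4$ is rational by Shepherd-Barron. That choice of projection avoids both the non-solvable stabilizer and the $\frak{S}_2$ issue entirely, because a general genus-$4$ curve has trivial automorphism group.
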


\begin{proof}
Recall that the quotient $|L_{3,3}|/{\aut}(Q)$ is naturally birational to 
the moduli $\mathcal{M}_4$ of genus $4$ curves. 
This is a consequence of the fact that a general canonical genus $4$ curve 
is a complete intersection of a cubic and a unique smooth quadric.   
For a smooth $C\in|L_{3,3}|$ the linear system $|L_{1,1}|$ is identified with $|K_C|$ by restriction. 
Then let $\pi\colon\mathcal{X}_4\to\mathcal{M}_4$ be the universal genus $4$ curve (over an open locus),  
and $\mathcal{E}$ be the bundle $\pi_{\ast}K_{\mathcal{X}_4/\mathcal{M}_4}$ over $\mathcal{M}_4$. 
The above remark implies that $(|L_{3,3}|\times|L_{1,1}|)/{\aut}(Q)$ is birational to ${\proj}\mathcal{E}$. 
Since $\mathcal{M}_4$ is rational (\cite{SB1}), so is ${\proj}\mathcal{E}$. 
\end{proof}

\begin{corollary}
The fixed curve map $\mathcal{M}_{8,6,1}\to\mathcal{M}_4$ is dominant 
with the general fibers birationally identified with the canonical systems. 
\end{corollary}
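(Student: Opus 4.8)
The plan is to deduce the corollary directly from the birational equivalence
$(|L_{3,3}|\times|L_{1,1}|)/{\aut}(Q)\sim{\proj}\mathcal{E}$ established in the proof of
Proposition \ref{rational (8,6)}, together with the fact that $\mathcal{P}$ is birational
(Example \ref{ex:1}). The key point is to identify, under the chain of birational maps, what
the fixed curve map $F\colon\mathcal{M}_{8,6,1}\to\mathcal{M}_4$ becomes. Since
$\mathcal{P}\colon U/{\aut}(Q)\dashrightarrow\mathcal{M}_{8,6,1}$ is birational and sends the class
of $(C,H)$ to the 2-elementary $K3$ surface attached to the nodal curve $C+H$, I would first
observe that for such a general member the genus $4$ component of the fixed locus $X^\iota$ is
exactly the curve $F_1$ with $f(F_1)=C$ (the branch component of top geometric genus), and that
$F_1\cong C$. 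Hence $F\circ\mathcal{P}$ is the ${\aut}(Q)$-invariant map
$U\to\mathcal{M}_4$, $(C,H)\mapsto[C]$, which descends to the projection
$U/{\aut}(Q)\dashrightarrow|L_{3,3}|/{\aut}(Q)\sim\mathcal{M}_4$ forgetting $H$.

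Next I would match this projection with the bundle structure. Under the identification
$|L_{3,3}|/{\aut}(Q)\sim\mathcal{M}_4$, a general smooth $C\in|L_{3,3}|$ lies on a unique smooth
quadric, so the surface $Q$ is intrinsically recovered from $C$; restricting $|L_{1,1}|$ to $C$
gives precisely the canonical system $|K_C|$, which is the content already used in the proof of
Proposition \ref{rational (8,6)}. Therefore the fiber of
$U/{\aut}(Q)\dashrightarrow\mathcal{M}_4$ over a general $[C]$ is birational to the open subset of
$|L_{1,1}|\simeq|K_C|\cong{\proj}^3$ consisting of divisors $H$ transverse to $C$, i.e.\ to the
projectivized fiber ${\proj}\mathcal{E}_{[C]}={\proj}H^0(K_C)$. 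Pulling back through
$\mathcal{P}^{-1}$, the fiber of $F$ over a general genus $4$ curve $C$ is birationally identified
with $|K_C|$, which is exactly the assertion. Dominance of $F$ follows since the total space
${\proj}\mathcal{E}$ dominates $\mathcal{M}_4$.

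Concretely, the steps in order are: (1) recall $F\circ\mathcal{P}$ is the forgetful map
$(C,H)\mapsto[C]$ on $U/{\aut}(Q)$, using that $C\cong F_1$ is the maximal-genus component of
$X^\iota$; (2) recall from Proposition \ref{rational (8,6)} that this forgetful map is, after the
identifications $|L_{3,3}|/{\aut}(Q)\sim\mathcal{M}_4$ and $|L_{1,1}|_C\simeq|K_C|$, the projection
${\proj}\mathcal{E}\to\mathcal{M}_4$; (3) conclude that $F$ is dominant and that a general fiber
$F^{-1}([C])$ is birational to ${\proj}H^0(K_C)=|K_C|$. The only point requiring a little care,
which I expect to be the main (minor) obstacle, is step (1): one must check that for a general
DPN pair $(Q,C+H)$ the genus $4$ curve $C^g$ in $X^\iota$ really is the copy of $C$ in the double
cover and carries no extra identification beyond $C^g\cong C$ — this is immediate from
Proposition \ref{compute main inv} (giving $g=11-2^{-1}(r+a)=4$ and $k=1$) and from the explicit
branch description $X^\iota=F_1+F_2$ with $f(F_1)=C$, $f(F_2)=H$, so that $F_1\cong C$ while $F_2$
is the rational curve $\cong H$; hence $C^g=F_1$ and everything matches.
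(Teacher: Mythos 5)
Your proposal is correct and follows essentially the same route the paper intends: the corollary is read off from the birational period map of Example \ref{ex:1} together with the identifications $|L_{3,3}|/{\aut}(Q)\sim\mathcal{M}_4$ and $|L_{1,1}|\simeq|K_C|$ already established in the proof of Proposition \ref{rational (8,6)}, so that $F\circ\mathcal{P}$ becomes the bundle projection ${\proj}\mathcal{E}\to\mathcal{M}_4$. Your step (1), checking that the genus $4$ component of $X^{\iota}$ is the copy of $C$, is exactly the (routine) verification the paper leaves implicit.
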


\subsection{The rationality of $\mathcal{M}_{9,5,1}$}\label{ssec:(9,5,1)}

For a point $p\in Q$ we denote by $D_p$ the union of the two ruling fibers meeting at $p$. 
Let $U\subset|L_{3,3}|\times Q$ be the open set of pairs $(C, p)$ such that 
$C$ is smooth and transverse to $D_p$. 
Taking the right resolution of the $-2K_Q$-curves $C+D_p$, 
we obtain a period map $\mathcal{P}\colon U/{\aut}(Q)\dashrightarrow\mathcal{M}_{9,5,1}$. 

\begin{proposition}\label{birat (9,5)}
The map $\mathcal{P}$ is birational. 
\end{proposition}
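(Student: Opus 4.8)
The plan is to apply the recipe of \S\ref{ssec: recipe}, following Example \ref{ex:1} almost verbatim. Write $R_1\in|L_{1,0}|$ and $R_2\in|L_{0,1}|$ for the two ruling fibers through $p$, so that $D_p=R_1+R_2$ and $C+D_p\in|\!-\!2K_Q|$. For a general $(C,p)\in U$ the curve $B=C+D_p$ has exactly seven nodes as its only singularities: the node $p=R_1\cap R_2$ of $D_p$, the three points of $C\cap R_1$, and the three points of $C\cap R_2$. Hence Condition \ref{genericity assumption} holds, and by \eqref{compute r by A-D-E}, \eqref{compute k by A-D-E} and Proposition \ref{compute main inv} the associated $2$-elementary $K3$ surface $(X,\iota)$ has $r=2+7=9$, $k=2$, hence $a=5$ and $g=4$; its parity is $\delta=1$ by Lemma \ref{delta=1}(1) applied to the three components $C,R_1,R_2$, each pair of which meets in a node of $B$. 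Since $\dim U=17$, $\dim{\aut}(Q)=6$ and ${\aut}(Q)$ acts almost freely on $U$, we have $\dim(U/{\aut}(Q))=11=\dim\mathcal{M}_{9,5,1}$, so the period map $\mathcal{P}$ of the statement is dominant.

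I would then introduce the cover
\begin{equation*}
\widetilde U=\{\,(C,p,q_1,q_2,q_3,q_1',q_2',q_3')\in U\times Q^3\times Q^3,\ C\cap R_1=\{q_i\}_{i=1}^{3},\ C\cap R_2=\{q_i'\}_{i=1}^{3}\,\},
\end{equation*}
which is an $(\frak{S}_3)^2$-covering of $U$. Because $R_1$ and $R_2$ carry distinct classes in $NS_Q$ (so are not interchanged by the identity component $G={\PGL}_2\times{\PGL}_2$ of ${\aut}(Q)$), and the node $p$ and the three components of $B$ are distinguished by their type and by the irreducible decomposition of $B$, the space $\widetilde U$ parametrizes the curves $B$ together with a complete, $G$-compatible labeling of their singularities and components. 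As in Example \ref{ex:1} one prepares a reference lattice $L_+$ of main invariant $(9,5,1)$ — an explicit overlattice of $U(2)\oplus A_1^{7}$ by the classes $[F_0],[F_1],[F_2]$ suggested by Lemma \ref{L_+ and A-D-E} — and, for $(C,p,\ldots)\in\widetilde U$ with $(X,\iota)=\mathcal{P}(C+D_p)$ and right covering map $f\colon X\to Q$, a marking $j\colon L_+\to L_+(X,\iota)$ sending $U(2)$ to $f^{\ast}NS_Q$, the $A_1$-generators to the $(-2)$-curves over $q_i,q_i',p$, and the $f_i$ to $[F_i]$ (with $f(F_0)=C$, $f(F_1)=R_1$, $f(F_2)=R_2$). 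By Lemma \ref{L_+ and A-D-E} this $j$ is an isometry, and by Borel's extension theorem \cite{Bo} the resulting period assignment is a morphism $\tilde p\colon\widetilde U\to{\cove}_{9,5,1}$ lifting $p$. It is $G$-invariant (but not ${\aut}(Q)$-invariant, since ${\aut}(Q)$ swaps $L_{1,0}$ and $L_{0,1}$), hence descends to ${\lift}\colon\widetilde U/G\dashrightarrow{\cove}_{9,5,1}$.

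Next I would show ${\lift}$ is birational; since ${\cove}_{9,5,1}$ is irreducible of dimension $11=\dim(\widetilde U/G)$, it suffices that the $\tilde p$-fibers be $G$-orbits. If $\tilde p$ agrees on two points, one obtains a Hodge isometry $\Phi$ with $\Phi\circ j'=j$; it preserves ample cones by Lemma \ref{ample class}, so the Torelli theorem yields an isomorphism $\varphi\colon X\to X'$ with $\varphi^{\ast}=\Phi$. As $\Phi$ matches the classes $[f^{\ast}L_{1,1}]$ and $[(f')^{\ast}L_{1,1}]$ (a sum of two matched ruling classes), Lemma \ref{covering map & LB} — applicable because $Q\ne{\F}_1$ — produces $\psi\in{\aut}(Q)$ with $\psi\circ f=f'\circ\varphi$; since $\psi$ fixes each ruling class we get $\psi\in G$, and $\psi$ carries the labeled curve $C+D_p$ to $C'+D_{p'}$. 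Hence the $\tilde p$-fibers are $G$-orbits and ${\lift}$ is birational.

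Finally, ${\rm deg}(\mathcal{P})$ equals $|{\Or}(D_{L_-})|$ divided by ${\rm deg}(\widetilde U/G\to U/{\aut}(Q))$. The latter is $[{\aut}(Q):G]\cdot|(\frak{S}_3)^2|=2\cdot(3!)^2=72$. On the other hand $L_+\simeq U\oplus D_4\oplus A_1^{3}$, and a computation with \cite{M-S} gives $|{\Or}(D_{L_-})|=|{\Or}(D_{L_+})|=72$; therefore $\mathcal{P}$ is birational. I expect the genuinely delicate steps to be not the Torelli/recovery argument above (which is routine given Lemmas \ref{ample class} and \ref{covering map & LB}), but rather the bookkeeping: verifying that $U$ is exactly the locus on which $B$ acquires the seven stated nodes and no worse singularity, correctly identifying $L_+$, and carrying out the order computation $|{\Or}(D_{L_+})|=72$.
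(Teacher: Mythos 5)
Your proposal is correct and follows essentially the same route as the paper: the same $(\frak{S}_3)^2$-cover $\widetilde U$ labeling the six nodes $C\cap D_p$ compatibly with the two (distinguished) rulings, the same reduction to the identity component ${\aut}(Q)_0$, the same reference lattice $L_+\simeq U\oplus D_4\oplus A_1^3$, and the same degree comparison $2\cdot(3!)^2=72=|{\Or}^+(4,2)|$. The only difference is that you spell out more of the routine bookkeeping (singularity count, parity, Torelli step) that the paper delegates to Example \ref{ex:1}.
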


\begin{proof}
Let $\widetilde{U}$ be the space of those $(C, p, p_1,\cdots, p_6)\in U\times Q^6$ 
such that $C\cap D_p=\{p_i\}_{i=1}^6$ and that 
$p_1, p_2, p_3$ lie on the $(1, 0)$-component of $D_p$. 
The space $\widetilde{U}$ parametrizes the curves $C+D_p$ endowed with 
labelings of the six nodes $C\cap D_p$ which take into account the decomposition of $D_p$. 
The rest node of $C+D_p$, the point $p$, is clearly distinguished from those six. 
Note that ${\aut}(Q)$ does not act on $\widetilde{U}$, 
for the definition of $\widetilde{U}$ involves the distinction of the two rulings. 
Rather $\widetilde{U}$ is acted on by ${\aut}(Q)_0$. 
As in Example \ref{ex:1}, $\mathcal{P}$ lifts to 
a birational map $\widetilde{U}/{\aut}(Q)_0\dashrightarrow{\cove}_{9,5,1}$. 
The projection $\widetilde{U}/{\aut}(Q)_0\dashrightarrow U/{\aut}(Q)$ has degree $2\cdot|\frak{S}_3\times\frak{S}_3|$, 
while ${\cove}_{9,5,1}$ is an ${\Or}(D_{L_+})$-cover of $\mathcal{M}_{9,5,1}$ 
for the lattice $L_+=U\oplus D_4\oplus A_1^3$. 
By \cite{M-S} we have $|{\Or}(D_{L_+})|=|{\Or}^+(4, 2)|=72$. 
\end{proof}

\begin{proposition}\label{rational (9,5)}
The quotient $(|L_{3,3}|\times Q)/{\aut}(Q)$ is rational. 
Therefore $\mathcal{M}_{9,5,1}$ is rational. 
\end{proposition}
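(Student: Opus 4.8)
The plan is to follow the pattern of the previous proofs in this section: strip off the factor $Q$ with the slice method, then strip off most of $|L_{3,3}|$ with the no-name method, and finally compute a small residual quotient directly. First I would apply the slice method, Proposition~\ref{slice}, to the first projection $f\colon|L_{3,3}|\times Q\to Q$. Since $\aut(Q)$ acts transitively on $Q$, this gives $(|L_{3,3}|\times Q)/\aut(Q)\sim|L_{3,3}|/G_p$, where $G_p\subset\aut(Q)$ is the stabilizer of a point $p\in Q$. Taking $p$ to be the intersection of two coordinate rulings $R_1,R_2$ and letting $\tau\in\aut(Q)$ be the involution exchanging the two factors of $Q$, the group $G_p$ is the $4$-dimensional solvable group $(B\times B)\rtimes\langle\tau\rangle$, where $B\subset\PGL_2$ is the Borel subgroup fixing $p$ on a ruling; its identity component is $B\times B$.

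Next, restriction to the two rulings through $p$ defines a $G_p$-equivariant rational map $\rho\colon|L_{3,3}|\dashrightarrow|\mathcal{O}_{R_1}(3)|\times|\mathcal{O}_{R_2}(3)|\cong\proj^3\times\proj^3$, $C\mapsto(C|_{R_1},C|_{R_2})$. A direct cohomology count (the kernel of restriction to $R_1\cup R_2$ is $H^0(L_{2,2})$, of dimension $9$) shows that $\rho$ is dominant with linear fibers of dimension $9$, so $\rho$ presents $|L_{3,3}|$ birationally as $\proj E$ for a $G_p$-linearized vector bundle $E$ of rank $10$ over an open set of $\proj^3\times\proj^3$. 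Moreover $G_p$ acts faithfully on $\proj^3\times\proj^3$ (an automorphism trivial on both rulings is trivial) and almost freely there: a general pair of degree-$3$ divisors on $R_1,R_2$ has trivial stabilizer, since an affine transformation permuting three general points of a line is the identity, and for a general pair no element of the $\tau$-coset can interchange the two divisors. Hence the no-name method in the form of Proposition~\ref{no-name 2} gives $|L_{3,3}|/G_p\sim\proj^9\times(\proj^3\times\proj^3)/G_p$.

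It remains to show the surface $(\proj^3\times\proj^3)/G_p$ is rational. Dividing first by the connected normal subgroup $B\times B$, which acts as a product action, yields $(\proj^3/B)\times(\proj^3/B)$; each factor is a rational curve, because $B$ lifts to a connected solvable subgroup of $\GL_2$, so Theorem~\ref{Miyata} applies to its linear action on $H^0(\mathcal{O}_{\proj^1}(3))$, and the quotient is one-dimensional. The residual group $\langle\tau\rangle\cong\Z/2\Z$ acts on $(\proj^3/B)\times(\proj^3/B)$ by exchanging the two factors, so $(\proj^3\times\proj^3)/G_p\sim S^2(\proj^3/B)$ is the symmetric square of a rational curve, which is rational. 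Therefore $|L_{3,3}|/G_p$ is rational, and by Proposition~\ref{birat (9,5)} the space $\mathcal{M}_{9,5,1}$ is rational.

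The step I expect to require the most care is the verification that $\rho$ is birationally a projective bundle with $G_p$ acting almost freely on the base $\proj^3\times\proj^3$, since this is exactly what licenses the no-name reduction to a surface. The non-connectedness of $G_p$, which would ordinarily be the obstruction to invoking Miyata, turns out to be harmless because $\tau$ descends to the factor-swap on a product of two copies of one rational curve, whose quotient is a symmetric square. One could alternatively work over $\mathcal{M}_4\sim|L_{3,3}|/\aut(Q)$, under which $(|L_{3,3}|\times Q)/\aut(Q)$ becomes the relative dual quadric $Q_C\cong\proj^1\times\proj^1$ inside $\proj\mathcal{E}$, $\mathcal{E}=\pi_{\ast}K_{\pi}$; but making rationality descend from the rational base $\mathcal{M}_4$ runs again into the double cover that orders the two rulings, so the slice-method route above seems cleaner.
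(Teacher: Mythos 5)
Your proof is correct and follows essentially the same route as the paper: the slice method over $Q$, restriction to the two rulings through $p$, and the no-name method over $W=|\mathcal{O}_{R_1}(3)|\times|\mathcal{O}_{R_2}(3)|$, the paper then disposing of the two-dimensional base quotient simply by noting $\dim(W/G_p)=2$ rather than via your explicit symmetric-square computation. The one point to tighten is the assertion that $E$ is $G_p$-linearized: $L_{3,3}$ is a priori linearized only for the cover of $G_p$ inside $\frak{S}_2\ltimes({\SL}_2)^2$, and the paper supplies the twist by $\mathcal{O}(1)\boxtimes\mathcal{O}(1)$ on $W$ exactly so that hypothesis (iii) of Proposition \ref{no-name 2} holds --- your citation of that proposition is the right instinct, but the required line bundle (or, equivalently, a character twist killing the elements $(\pm1,\mp1)$) should be exhibited.
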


\begin{proof}
Applying the slice method to the projection $|L_{3,3}|\times Q \to Q$, 
we have 
\begin{equation*}
(|L_{3,3}|\times Q)/{\aut}(Q)\sim |L_{3,3}|/G
\end{equation*} 
where $G\subset{\aut}(Q)$ is the stabilizer of a point $p\in Q$. 
Let $F_1, F_2$ be respectively the $(1, 0)$- and the $(0, 1)$-fiber through $p$.  
We denote $W=|\mathcal{O}_{F_1}(3)|\times|\mathcal{O}_{F_2}(3)|$. 
We want to apply the no-name lemma to the $G$-equivariant map 
$|L_{3,3}|\dashrightarrow W$, $C\mapsto(C|_{F_1}, C|_{F_2})$. 
It is birationally the projectivization of a subbundle $\mathcal{E}$ of the vector bundle 
$H^0(L_{3,3})\times W$ over $W$. 
Let $\widetilde{G}$ be the preimage of $G\subset{\aut}(Q)$ in $\frak{S}_2\ltimes({\SL}_2)^2$. 
Since $\frak{S}_2\ltimes({\SL}_2)^2$ acts on $L_{3,3}$, the bundle $\mathcal{E}$ is $\widetilde{G}$-linearized. 
On the other hand, consider the natural line bundle $\mathcal{L}=\mathcal{O}(1)\boxtimes\mathcal{O}(1)$ 
over $W$ which is also $\widetilde{G}$-linearized. 
The kernel of $\widetilde{G}\to G$ is generated by the elements $(1, -1), (-1, 1)$ of $({\SL}_2)^2$, 
which act on both $\mathcal{E}$ and $\mathcal{L}$ by the multiplication by $-1$. 
Thus the bundle $\mathcal{E}\otimes\mathcal{L}$ is $G$-linearized. 
The group $G$ acts on $W$ almost freely: 
this follows by noticing that for a general point $(D_1, D_2)$ of $W$  
there is no isomorphism $F_1\to F_2$ mapping $(p, D_1)$ to $(p, D_2)$, 
and that ${\PGL}_2$ acts on ${\proj}^1\times|{\Oline}(3)|$ almost freely. 
Therefore we may apply the no-name lemma to $\mathcal{E}\otimes\mathcal{L}$ to deduce that 
\begin{equation*}
|L_{3,3}|/G\sim {\proj}\mathcal{E}/G \sim {\proj}(\mathcal{E}\otimes\mathcal{L})/G \sim {\proj}^9\times(W/G). 
\end{equation*}
The quotient $W/G$ is rational because ${\dim}(W/G)=2$. 
\end{proof}

\begin{corollary}\label{fixed curve (9,5)} 
The fixed curve map $\mathcal{M}_{9,5,1}\to\mathcal{M}_4$ is dominant 
with the general fibers birationally identified with the products of the two trigonal pencils.  
\end{corollary}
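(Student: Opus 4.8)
The plan is to transport the fixed curve map $F\colon\mathcal{M}_{9,5,1}\to\mathcal{M}_4$ through the birational period map $\mathcal{P}\colon U/{\aut}(Q)\dashrightarrow\mathcal{M}_{9,5,1}$ of Proposition~\ref{birat (9,5)} and recognize it as a projection. First I would check that for $(C,p)\in U$ the genus $4$ component of the fixed locus $X^{\iota}$ of the associated 2-elementary $K3$ surface is (a copy of) the curve $C\subset Q$ itself. Indeed, since $C$ is transverse to $D_p$ the curve $C+D_p$ has only nodes, namely the point $p$ and the six points $C\cap D_p$; hence by \S\ref{ssec: right resol} the fixed locus $X^{\iota}$ is isomorphic to the strict transform $B'$ of $C+D_p$ in its right resolution, whose components are the strict transforms of $C$, of the first ruling fiber, and of the second ruling fiber. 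Only the first has positive genus, and blowing up smooth points of it leaves it unchanged, so the genus $4$ component is $\cong C$. Consequently $F\circ\mathcal{P}$ is the rational map $U/{\aut}(Q)\dashrightarrow\mathcal{M}_4$ induced by the ${\aut}(Q)$-equivariant projection $\mathrm{pr}\colon U\to|L_{3,3}|$, $(C,p)\mapsto C$, followed by the birational identification $|L_{3,3}|/{\aut}(Q)\sim\mathcal{M}_4$ recalled in the proof of Proposition~\ref{rational (8,6)} (a general canonical genus $4$ curve is cut out by a cubic on its unique smooth quadric, which is isomorphic to $Q$).

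For dominance, note that $\mathrm{pr}$ is dominant, since for a general $C\in|L_{3,3}|$ a general $p\in Q$ gives $(C,p)\in U$, and $|L_{3,3}|/{\aut}(Q)\sim\mathcal{M}_4$; hence $F$ is dominant.

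For the generic fiber, fix a general $[C]\in\mathcal{M}_4$, so that $C$ has trivial automorphism group, lies on a unique smooth quadric $Q$, and has exactly two distinct $g^1_3$'s, which I will write $|T_1|$ and $|T_2|$, cut out on $C$ by the two rulings of $Q$ and satisfying $T_1+T_2=K_C$. Any $\phi\in{\aut}(Q)$ with $\phi(C)=C$ restricts to an automorphism of $C$; if this restriction is the identity, then $\phi$ fixes pointwise the three points $C\cap F$ of a general ruling fiber $F\simeq{\proj}^1$, hence fixes $F$, hence $\phi=\mathrm{id}$. Thus the stabilizer ${\aut}(Q)_C\hookrightarrow{\aut}(C)$ is trivial. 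By the slice argument (cf. Proposition~\ref{slice}) applied to $\mathrm{pr}$, the fiber $F^{-1}([C])$ is then birational to $\mathrm{pr}^{-1}(C)/{\aut}(Q)_C$, i.e.\ to the open subset $\{\,p\in Q : (C,p)\in U\,\}$ of $Q$. Finally, writing $p=(p_1,p_2)\in{\proj}^1\times{\proj}^1=Q$, the two ruling fibers through $p$ meet $C$ in the divisors $C\cap(\{p_1\}\times{\proj}^1)$ and $C\cap({\proj}^1\times\{p_2\})$, which sweep out $|T_1|$ and $|T_2|$ respectively as $p_1$ and $p_2$ vary; this is the canonical identification $Q\cong|T_1|\times|T_2|$, so $F^{-1}([C])$ is birational to the product of the two trigonal pencils of $C$.

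All of the individual steps are routine. The only points that need genuine care are the bookkeeping in \S\ref{ssec: right resol} identifying the genus $4$ component with $C$, and the classical input for the last step: that a general genus $4$ curve lies on a unique smooth quadric whose two rulings realize its two (distinct) trigonal pencils, together with the verification that ${\aut}(Q)_C$ is trivial for general $C$ so that no extra finite quotient intervenes. I do not expect a serious obstacle.
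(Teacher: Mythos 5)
Your proposal is correct and is essentially the argument the paper intends (the corollary is stated without proof, as an immediate consequence of Proposition \ref{birat (9,5)}): transport $F$ through the birational period map, observe that it becomes the projection $U/{\aut}(Q)\dashrightarrow|L_{3,3}|/{\aut}(Q)\sim\mathcal{M}_4$, and identify the fiber over a general $C$ with $\{p\in Q:(C,p)\in U\}\subset Q\cong|T_1|\times|T_2|$ using triviality of ${\aut}(Q)_C$. The only cosmetic point is that your appeal to Proposition \ref{slice} is really the elementary fiberwise statement that the fiber of $U/G\to V/G$ over $[v]$ is $f^{-1}(v)/G_v$, rather than the almost-transitive slice method itself, but as used it is correct.
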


\subsection{$\mathcal{M}_{10,4,1}$ and the universal genus $4$ curve}\label{ssec:(10,4,1)}

As in \S \ref{ssec:(9,5,1)}, 
for a point $p\in Q$ we denote by $D_p$ the reducible bidegree $(1, 1)$ curve singular at $p$. 
Let $U\subset|L_{3,3}|\times Q$ be the locus of pairs $(C, p)$ such that 
$C$ is smooth, passes $p$, and is transverse to each component of $D_p$. 
The space $U$ is an open set of the universal bidegree $(3, 3)$ curve. 
The bidegree $(4, 4)$ curve $C+D_p$ has the $D_4$-point $p$, the four nodes $C\cap D_p\backslash p$, 
and no other singularity.  
The associated 2-elementary $K3$ surface has invariant $(g, k)=(4, 3)$. 
It has parity $\delta=1$ by Lemma \ref{delta=1} $(2)$. 
Thus we obtain a period map $\mathcal{P}\colon U/{\aut}(Q)\dashrightarrow\mathcal{M}_{10,4,1}$. 

\begin{proposition}\label{birat (10,4,1)}
The map $\mathcal{P}$ is birational. 
\end{proposition}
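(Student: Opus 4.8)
The plan is to follow the recipe of \S\ref{ssec: recipe}, as in Examples \ref{ex:1} and \ref{ex:2}, since the branch curve $C+D_p$ has a $D_4$-point together with four nodes and the quotient surface is $Q={\proj}^1\times{\proj}^1$. First I would introduce the appropriate cover $\widetilde{U}\to U$: it should parametrize curves $C+D_p$ equipped with a labeling of the four nodes $C\cap D_p\setminus p$ that is compatible with the irreducible decomposition of $D_p=D_p'+D_p''$ (two of the four nodes lie on each ruling component through $p$). Concretely, $\widetilde{U}\subset U\times Q^4$ consists of $(C,p,p_1,p_2,p_3,p_4)$ with $C\cap D_p\setminus p=\{p_1,\dots,p_4\}$ and, say, $p_1,p_2$ on the $(1,0)$-component. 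As in \S\ref{ssec:(9,5,1)}, the full group ${\aut}(Q)$ does not act on $\widetilde{U}$ because the definition distinguishes the two rulings; rather the identity component $G={\aut}(Q)_0=({\PGL}_2)^2$ acts. The projection $\widetilde{U}/G\to U/{\aut}(Q)$ then has degree $[{\aut}(Q):G]\cdot|\frak{S}_2\ltimes(\frak{S}_2)^2|=2\cdot 2^3=16$, the outer factor $2$ coming from the ruling swap, the $\frak{S}_2\ltimes(\frak{S}_2)^2$ from permuting the two components of $D_p$ and the nodes within each component.

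Next I would write down the reference lattice $L_+$ of main invariant $(10,4,1)$ adapted to this labeling. The natural sublattice is $U(2)\oplus A_1^4\oplus D_4$ (the $U(2)$ from $f^{\ast}L_{1,0},f^{\ast}L_{0,1}$, the $A_1^4$ from the $(-2)$-curves over the four nodes, and the $D_4$ from the resolution of the $D_4$-point), and one adds overlattice vectors $2f_i$ expressing the classes of the components $F_i$ of $X^\iota$ (the strict transform of $C$, the two ruling components of $D_p$, and the central component of the $D_4$-resolution that is part of $B'$), in the style of Example \ref{ex:4}. A short computation should confirm that this overlattice is even, $2$-elementary, of invariant $(10,4,1)$; I expect $L_+\simeq U\oplus D_4\oplus A_1^4$ or a similar form, whence by \cite{M-S} one reads off $|{\Or}(D_{L_+})|$. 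Using the labeling $\mu$ and the natural basis of $NS_Q$, Lemma \ref{L_+ and A-D-E} furnishes a lattice marking $j\colon L_+\to L_+(X,\iota)$, hence a lift $\tilde p\colon\widetilde{U}\to{\cove}_{10,4,1}$, which is a morphism by Borel's extension theorem.

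Then I would verify that the $\tilde p$-fibers are $G$-orbits, i.e.\ that the induced ${\lift}\colon\widetilde{U}/G\dashrightarrow{\cove}_{10,4,1}$ is generically injective. This is the routine Torelli argument: given $\tilde p(C,p,\dots)=\tilde p(C',p',\dots)$ one gets a Hodge isometry $\Phi$ compatible with the markings; by Lemma \ref{ample class} it preserves the ample cones, so Torelli gives $\varphi\colon X\to X'$ with $\varphi^\ast=\Phi$; since $\varphi^\ast$ fixes $f^{\ast}L_{1,1}$, Lemma \ref{covering map & LB} (the $Q$ case) produces $\psi\in{\aut}(Q)$ with $\psi\circ f=f'\circ\varphi$; comparing branch loci gives $\psi(C+D_p)=C'+D_{p'}$, and $\varphi$-compatibility of the marked nodes and components gives $\psi(p_i)=p_i'$ and preservation of the two rulings, so $\psi\in G$. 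Since $\dim(U/{\aut}(Q))=3\cdot3-6=\dots$; more to the point, $\dim(\widetilde{U}/G)=\dim U-\dim G=(12+2)-6=8=20-r$, and ${\cove}_{10,4,1}$ is irreducible, so ${\lift}$ is birational. Finally, comparing degrees, ${\deg}(\mathcal{P})=|{\Or}(D_{L_+})|/16$, which should equal $1$ once the lattice computation is carried out. The main obstacle I anticipate is purely bookkeeping: getting the overlattice vectors $f_i$ exactly right so that the $D_4$-branch component and the four node classes interact correctly, and then confirming $|{\Or}(D_{L_+})|=16$ matches the covering degree; the geometry of the argument is parallel to Examples \ref{ex:1}, \ref{ex:2}, and \S\ref{ssec:(9,5,1)}.
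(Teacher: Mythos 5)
Your overall strategy is the right one and matches the paper's: the same cover $\widetilde{U}$, the same Torelli argument for generic injectivity of the lift, and a final degree comparison. But the proof \emph{is} that degree comparison, and your numbers are off in a way that would make it fail. First, the covering degree: the fiber of $\widetilde{U}\to U$ over $(C,p)$ has only $|(\frak{S}_2)^2|=4$ elements, since the condition that $p_1,p_2$ lie on the $(1,0)$-component forbids swapping the two pairs within this covering; that swap is realized instead by the ruling-exchanging automorphism of $Q$, i.e.\ precisely by the index $[{\aut}(Q):{\aut}(Q)_0]=2$. Hence $\widetilde{U}/{\aut}(Q)_0\to U/{\aut}(Q)$ has degree $2\cdot4=8$ --- the group $\frak{S}_2\ltimes(\frak{S}_2)^2$ of order $8$ already \emph{contains} the ruling swap as its outer $\frak{S}_2$ --- whereas you multiply $|\frak{S}_2\ltimes(\frak{S}_2)^2|=8$ by a further factor $[{\aut}(Q):G]=2$ and get $16$, double-counting that swap (permuting the two components of $D_p$ \emph{is} exchanging the rulings). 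Second, the lattice: $U\oplus D_4\oplus A_1^4$ has $a=6$, not $4$ --- it is the invariant lattice for $\mathcal{M}_{10,6,1}$ in \S\ref{ssec:(10,6,1)} --- while the correct invariant lattice here is $L_+\simeq U(2)\oplus E_7\oplus A_1$, for which \cite{M-S} gives $|{\Or}(D_{L_+})|=2^2\cdot|{\Or}^+(2,2)|=8$. With your degree $16$ the comparison would yield $\deg(\mathcal{P})=8/16$, an absurdity signalling the miscount; with the corrected degree $8$ one gets $\deg(\mathcal{P})=8/8=1$ as required.

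A smaller slip: since $h^0(\mathcal{O}_Q(3,3))=16$, the incidence variety $U$ is a ${\proj}^{14}$-bundle over an open set of $Q$, so $\dim U=16$ and $\dim(\widetilde{U}/{\aut}(Q)_0)=16-6=10=20-r$, not $8$; your ``$(12+2)-6$'' does not correspond to anything. None of this changes the architecture of the argument, but because the conclusion rests entirely on the equality of two integers, these counts have to be carried out correctly.
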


\begin{proof}
As in \S \ref{ssec:(9,5,1)}, 
we consider the locus $\widetilde{U}\subset U\times Q^4$ of 
those $(C, p, p_1,\cdots, p_4)$ such that $C\cap D_p\backslash p=\{p_i\}_{i=1}^4$ and 
that $p_1, p_2$ lie on the $(1, 0)$-component of $D_p$. 
Since the three branches of $C+D_p$ at $p$ are distinguished by the irreducible decomposition of $C+D_p$,  
then $\mathcal{P}$ lifts to a birational map $\widetilde{U}/{\aut}(Q)_0\dashrightarrow{\cove}_{10,4,1}$. 
The space $\widetilde{U}/{\aut}(Q)_0$ is an $\frak{S}_2\ltimes(\frak{S}_2)^2$-cover of $U/{\aut}(Q)$, 
while ${\cove}_{10,4,1}$ is an ${\Or}(D_{L_+})$-cover of $\mathcal{M}_{10,4,1}$ 
for the lattice $L_+=U(2)\oplus E_7\oplus A_1$. 
By \cite{M-S} we have $|{\Or}(D_{L_+})|=2^2\cdot|{\Or}^+(2, 2)|=8$. 
\end{proof} 

Since $|L_{3,3}|/{\aut}(Q)$ is naturally biational to $\mathcal{M}_4$, 
the quotient $U/{\aut}(Q)$ is birational to the universal genus $4$ curve, 
which is rational by Catanese \cite{Ca}. 
Therefore 

\begin{proposition}\label{rational (10,4,1)} 
The moduli space $\mathcal{M}_{10,4,1}$ is rational. 
\end{proposition}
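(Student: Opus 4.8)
The plan is to deduce rationality of $\mathcal{M}_{10,4,1}$ from the rationality of the universal genus $4$ curve, via the birational period map already established. First I would invoke Proposition~\ref{birat (10,4,1)}: the period map $\mathcal{P}\colon U/{\aut}(Q)\dashrightarrow\mathcal{M}_{10,4,1}$ is birational, so it is enough to show that $U/{\aut}(Q)$ is rational, where $U\subset|L_{3,3}|\times Q$ is the locus of pairs $(C,p)$ with $C$ a smooth bidegree $(3,3)$ curve passing through $p$ and transverse to each component of $D_p$.

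Next I would reinterpret $U/{\aut}(Q)$ in moduli-theoretic terms. Deleting the transversality condition changes $U$ only within a proper closed subset, so $U$ is birational to the universal bidegree $(3,3)$ curve $\{(C,p): p\in C\}$ over the open set of smooth members of $|L_{3,3}|$. Since a general canonically embedded genus $4$ curve lies on a unique smooth quadric and is cut out there by a cubic, the quotient $|L_{3,3}|/{\aut}(Q)$ is naturally birational to $\mathcal{M}_4$ — this is the same identification used in the proof of Proposition~\ref{rational (8,6)}. Carrying this identification along the universal family, the marked point $p\in C$ descends to a point on the corresponding abstract genus $4$ curve, so $U/{\aut}(Q)$ is birational to the universal genus $4$ curve $\pi\colon\mathcal{X}_4\to\mathcal{M}_4$ (over the open locus of curves with trivial automorphism group, which is all that matters birationally). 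Catanese's theorem~\cite{Ca} asserts that this universal curve is rational, and composing the birational equivalences then yields that $\mathcal{M}_{10,4,1}$ is rational.

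The step I expect to require the most care — although it is in fact essentially disposed of already — is the compatibility of the identification $|L_{3,3}|/{\aut}(Q)\sim\mathcal{M}_4$ with the marking of the point $p$, i.e.\ checking that the moduli of pairs $(C,p)$ genuinely reproduces the \emph{universal} genus $4$ curve rather than some other fibration over $\mathcal{M}_4$; here the absence of automorphisms of the generic genus $4$ curve makes the descent from the family of quadric sections to the family of abstract curves unambiguous. Once that is granted, the argument is purely formal.
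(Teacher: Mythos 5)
Your proposal is correct and follows exactly the paper's own argument: the paper also deduces rationality by identifying $U/{\aut}(Q)$ with the universal genus $4$ curve via the natural birational equivalence $|L_{3,3}|/{\aut}(Q)\sim\mathcal{M}_4$ and then invoking Catanese's rationality theorem. Your additional remark about the descent being unambiguous because a generic genus $4$ curve has no automorphisms is a reasonable elaboration of a point the paper leaves implicit.
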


\begin{corollary}\label{fixed curve (10,4,1)}
The fixed curve map $\mathcal{M}_{10,4,1}\to\mathcal{M}_4$ 
identifies $\mathcal{M}_{10,4,1}$ birationally with the universal genus $4$ curve.  
\end{corollary}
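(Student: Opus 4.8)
The plan is to combine the birationality of the period map $\mathcal{P}$ (Proposition \ref{birat (10,4,1)}) with the chain of birational identifications already used in Proposition \ref{rational (10,4,1)}, and then to trace through what the fixed curve map becomes under them. Concretely, $\mathcal{P}$ identifies $\mathcal{M}_{10,4,1}$ birationally with $U/{\aut}(Q)$, where $U\subset|L_{3,3}|\times Q$ is the locus of pairs $(C,p)$ with $C$ smooth, $p\in C$, and $C$ transverse to each component of $D_p$. The projection $q\colon U\to|L_{3,3}|$, $(C,p)\mapsto C$, is ${\aut}(Q)$-equivariant and dominant, and its fiber over a general $C$ is a dense open subset of $C$; hence $U$ is birational, ${\aut}(Q)$-equivariantly, to the incidence variety $\{(C,p)\in|L_{3,3}|\times Q : p\in C\}$.

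First I would recall the classical fact that a general canonical genus $4$ curve is cut out on a unique smooth quadric by a cubic, so that the assignment $C\mapsto[C]$ induces a birational equivalence $|L_{3,3}|/{\aut}(Q)\sim\mathcal{M}_4$; moreover for general $C$ the stabilizer of $C$ in ${\aut}(Q)$ is isomorphic to ${\aut}(C)$, which is trivial. Therefore the ${\aut}(Q)$-action on the incidence variety above is almost free, and the induced map $U/{\aut}(Q)\dashrightarrow|L_{3,3}|/{\aut}(Q)\sim\mathcal{M}_4$ has as its fiber over a general $[C]$ the curve $C$ itself. This exhibits $U/{\aut}(Q)$ birationally as the universal genus $4$ curve $\mathcal{X}_4\to\mathcal{M}_4$, as was already noted before Proposition \ref{rational (10,4,1)}.

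It remains to check that under the identification $\mathcal{M}_{10,4,1}\sim U/{\aut}(Q)$ the fixed curve map $F$ is exactly this projection to $\mathcal{M}_4$. For $(C,p)\in U$ the right covering map $f\colon X\to Q$ of the DPN pair $(Q,C+D_p)$ has fixed locus $X^\iota$ isomorphic to the right resolution of $C+D_p$; its component of maximal genus is the strict transform of $C$, which is $C$ itself since $C$ is already smooth. Hence $F(\mathcal{P}(C,p))=[C]$, which is exactly the composite $U/{\aut}(Q)\dashrightarrow|L_{3,3}|/{\aut}(Q)\sim\mathcal{M}_4$ analysed above. The one point that needs a little care --- the main (if modest) obstacle --- is the bookkeeping ensuring that these several birational maps are genuinely compatible over $\mathcal{M}_4$, i.e.\ that the fibrewise description really matches the structure morphism of $\mathcal{X}_4$; this follows once one observes that all the maps involved are restrictions of morphisms of the relevant incidence and parameter varieties, together with the triviality of ${\aut}(C)$ for general $C$.
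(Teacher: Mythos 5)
Your proposal is correct and follows essentially the same route as the paper: the paper's argument is exactly that $\mathcal{P}$ identifies $\mathcal{M}_{10,4,1}$ with $U/{\aut}(Q)$, that $U$ is an open set of the universal bidegree $(3,3)$ curve so that $U/{\aut}(Q)$ is birational to the universal genus $4$ curve over $\mathcal{M}_4\sim|L_{3,3}|/{\aut}(Q)$, and that the fixed curve map corresponds to the projection $(C,p)\mapsto C$ since the genus-$4$ component of $X^{\iota}$ lies over the already-smooth curve $C$. Your additional care about the triviality of ${\aut}(C)$ and the compatibility of the fibrewise identifications is exactly the bookkeeping the paper leaves implicit.
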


\subsection{$\mathcal{M}_{10,4,0}$ and genus $4$ curves with vanishing theta-null}\label{ssec:(10,4,0)}

We consider curves on ${\F}_2$, keeping the notation of \S \ref{Sec:Hirze}. 
Let $U\subset |L_{3,0}| \times |L_{0,2}|$ be the open set of pairs $(C, D)$ such that 
$C$ and $D$ are smooth and transverse to each other. 
In Example \ref{ex:3}, we showed that the 2-elementary $K3$ surfaces 
associated to the $-2K_{{\F}_2}$-curves $C+D+\Sigma$ have main invariant $(10, 4, 0)$, 
and that the induced period map 
$U/{\aut}({\F}_2)\dashrightarrow \mathcal{M}_{10,4,0}$ is birational.

The quotient $|L_{3,0}|/{\aut}({\F}_2)$ is naturally birational to 
the theta-null divisor $\mathcal{M}_4'$ in $\mathcal{M}_4$. 
Indeed, recall that the morphism $\phi_{L_{1,0}}\colon{\F}_2\to{\proj}^3$ associated to the bundle $L_{1,0}$ 
is the minimal desingularization of the quadratic cone $Q=\phi_{L_{1,0}}({\F}_2)$,  
and that the restriction of $\phi_{L_{1,0}}$ to each smooth $C\in|L_{3,0}|$ is a canonical map of $C$. 
Then our claim follows from the fact that a non-hyperelliptic genus $4$ curve 
has an effective even theta characteristic if and only if its canonical model lies on a singular quadric. 
In that case the half canonical pencil is given by the pencil of lines on $Q$, 
or equivalently, the pencil $|L_{0,1}|$. 

\begin{proposition}\label{rational (10,4,0)}
The quotient $(|L_{3,0}|\times|L_{0,2}|)/{\aut}({\F}_2)$ is rational. 
Hence $\mathcal{M}_{10,4,0}$ is rational. 
\end{proposition}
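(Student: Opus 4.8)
The plan is to reduce, in two steps, to a linear representation of ${\SL}_2\times{\C}^{\times}$ and then apply Katsylo's theorem \ref{Katsylo}. By Example \ref{ex:3} the period map $\mathcal{P}\colon(|L_{3,0}|\times|L_{0,2}|)/{\aut}({\F}_2)\dashrightarrow\mathcal{M}_{10,4,0}$ is birational, so it suffices to show that the quotient $(|L_{3,0}|\times|L_{0,2}|)/{\aut}({\F}_2)$ is rational.

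First I would split off the $|L_{0,2}|$-factor by the no-name method. As $n=2$ is even and the second index of $L_{0,2}$ is even, Proposition \ref{linearization Hirze} provides an ${\aut}({\F}_2)$-linearization of $L_{0,2}$. The group ${\aut}({\F}_2)$ acts almost freely on $|L_{3,0}|$: the stabilizer of a general smooth $C\in|L_{3,0}|$ injects into ${\aut}(C)$, since an automorphism of ${\F}_2$ fixing $C$ pointwise preserves the $\pi$-fiber through each of the three points of $C$ on it and so is the identity there, hence the identity; and the general member of $|L_{3,0}|$ has no automorphism, a general point of the divisor $\mathcal{M}_4'$ being automorphism-free. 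Applying the no-name lemma \ref{no-name 1} to the ${\aut}({\F}_2)$-linearized trivial bundle $|L_{3,0}|\times H^0(L_{0,2})$ over $|L_{3,0}|$ then gives
\begin{equation*}
(|L_{3,0}|\times|L_{0,2}|)/{\aut}({\F}_2)\sim{\proj}^2\times(|L_{3,0}|/{\aut}({\F}_2)),
\end{equation*}
so we are reduced to proving that $|L_{3,0}|/{\aut}({\F}_2)$, which is birational to the theta-null divisor $\mathcal{M}_4'$, is rational.

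For this I would pass to the quadric cone $Q=\phi_{L_{1,0}}({\F}_2)$, which is the weighted projective space ${\proj}(1,1,2)$ with coordinates $x,y$ of weight $1$ and $z$ of weight $2$. A general $C\in|L_{3,0}|$ is disjoint from $\Sigma$, so its image in $Q$ avoids the vertex and is the zero locus of a weighted form $z^3+z^2p_2(x,y)+zp_4(x,y)+p_6(x,y)$ with $p_{2k}$ a binary form of degree $2k$, the coefficient of $z^3$ being normalized to $1$. The group ${\aut}({\F}_2)$ is identified with the automorphism group of ${\proj}(1,1,2)$ fixing the vertex; its unipotent part consists of the Tschirnhaus substitutions $z\mapsto z-q(x,y)$, $q\in H^0({\Oline}(2))$. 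Using the slice method \ref{slice} for the equivariant map $C\mapsto p_2$, I would normalize $p_2=0$; then the stabilizer of $\{p_2=0\}$ acts on the pair $(p_4,p_6)$, and, modulo the $1$-dimensional subgroup of scalings acting trivially on ${\proj}(1,1,2)$, this stabilizer is ${\SL}_2\times{\C}^{\times}$ acting linearly on $H^0({\Oline}(4))\oplus H^0({\Oline}(6))$. Hence $|L_{3,0}|/{\aut}({\F}_2)$ is birational to $(H^0({\Oline}(4))\oplus H^0({\Oline}(6)))/({\SL}_2\times{\C}^{\times})$, which is rational by Theorem \ref{Katsylo}. The one slightly delicate point, and where I expect the work to lie, is the group-theoretic bookkeeping of this last step: verifying that the Tschirnhaus substitutions exhaust the $3$-dimensional unipotent part of ${\aut}({\F}_2)$ and that the residual group acting on $(p_4,p_6)$ is exactly ${\SL}_2\times{\C}^{\times}$, so that Katsylo's theorem applies. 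This mirrors the Weierstrass-form argument used for $\mathcal{M}_{2,0,0}$ in Proposition \ref{rational (2,2,0)}.
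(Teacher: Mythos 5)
Your proof is correct, and the first half is essentially the paper's: both arguments apply the no-name method to the projection onto the $|L_{3,0}|$-factor (the paper invokes Proposition \ref{no-name 2}, you invoke Proposition \ref{no-name 1} plus the standard descent to projectivizations, which is the same content since the action on $|L_{3,0}|$ is almost free), reducing everything to the rationality of $|L_{3,0}|/{\aut}({\F}_2)\sim\mathcal{M}_4'$. Where you genuinely diverge is in the second half: the paper simply cites Dolgachev \cite{Do} for the rationality of $\mathcal{M}_4'$, whereas you reprove it by realizing $|L_{3,0}|$ as the weighted sextics $z^3+z^2p_2+zp_4+p_6$ on ${\proj}(1,1,2)$, killing $p_2$ by a Tschirnhaus substitution via the slice method, and feeding the residual linear action on $H^0({\Oline}(4))\oplus H^0({\Oline}(6))$ into Katsylo's theorem \ref{Katsylo}. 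The point you flag as delicate does work out: the unipotent radical of ${\aut}({\F}_2)\simeq{\PGL}_2\ltimes({\C}^{\times}\ltimes H^0({\Oline}(2)))$ is exactly the three-dimensional group of substitutions $z\mapsto z+q$, it acts on $p_2$ by translation (so the slice over $p_2=0$ is legitimate), and the stabilizer of $p_2=0$ is $({\GL}_2\times{\C}^{\times})/\{(\lambda I,\lambda^2)\}$, onto which ${\SL}_2\times{\C}^{\times}$ surjects, so the invariant field is that of a linear ${\SL}_2\times{\C}^{\times}$-representation and Theorem \ref{Katsylo} applies; the dimensions ($12-4=8=\dim\mathcal{M}_4'$) confirm the bookkeeping. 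Your route is self-contained (it is in effect an inlining of the Weierstrass-form argument behind the cited result, mirroring Proposition \ref{rational (2,2,0)}), at the cost of some length; the paper's citation is shorter but opaque. Both are valid.
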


\begin{proof}
Since both $L_{3,0}$ and $L_{0,2}$ are ${\aut}({\F}_2)$-linearized (Proposition \ref{linearization Hirze}) 
and since a general $C\in\mathcal{M}_4'$ has no automorphism, 
we may apply the no-name lemma \ref{no-name 2} to the projection $|L_{3,0}|\times|L_{0,2}| \to |L_{3,0}|$ 
to see that 
\begin{equation*}
(|L_{3,0}|\times|L_{0,2}|)/{\aut}({\F}_2) \sim {\proj}^2\times(|L_{3,0}|/{\aut}({\F}_2)) \sim {\proj}^2\times\mathcal{M}_4'. 
\end{equation*}
The space $\mathcal{M}_4'$ is rational by Dolgachev \cite{Do}. 
\end{proof}

\begin{corollary}\label{fixed curve (10,4,0)}
The fixed curve map for $\mathcal{M}_{10,4,0}$ is a dominant map onto $\mathcal{M}_4'$ 
whose general fibers are birationally identified with the symmetric products of the half-canonical pencils. 
\end{corollary}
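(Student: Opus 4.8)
The plan is to read off the fixed curve map from the birational period map $\mathcal{P}\colon(|L_{3,0}|\times|L_{0,2}|)/{\aut}({\F}_2)\dashrightarrow\mathcal{M}_{10,4,0}$ constructed in Example \ref{ex:3}. The first, and essentially only substantive, step is to check that for a general $(C,D)\in U$ the genus $4$ component of $X^{\iota}$ is the curve $C$ itself. The right resolution of the $-2K_{{\F}_2}$-curve $C+D+\Sigma$ merely separates the eight nodes $(C+\Sigma)\cap D$, all of which are smooth points of the components, so $B'$ is the disjoint union of the strict transforms of $C$, of the two ruling fibers $D_+,D_-$, and of $\Sigma$; by the genus formula on ${\F}_2$ the curve $C\in|L_{3,0}|$ has genus $4$, while $D_{\pm}$ and $\Sigma$ are rational. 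Since $X^{\iota}\cong B'$ for a right DPN pair, we get $C^{g}\cong C$. As recalled in \S\ref{ssec:(10,4,0)}, $\phi_{L_{1,0}}$ restricts on $C$ to a canonical map onto a genus $4$ curve on the quadric cone, so the ${\aut}({\F}_2)$-orbit of $C$ corresponds to a point of the theta-null divisor $\mathcal{M}_4'$ under the identification $|L_{3,0}|/{\aut}({\F}_2)\sim\mathcal{M}_4'$.

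It then follows that, under $\mathcal{M}_{10,4,0}\sim(|L_{3,0}|\times|L_{0,2}|)/{\aut}({\F}_2)$, the fixed curve map $F$ becomes the first projection $(|L_{3,0}|\times|L_{0,2}|)/{\aut}({\F}_2)\dashrightarrow|L_{3,0}|/{\aut}({\F}_2)\sim\mathcal{M}_4'$, which is visibly dominant; this gives the first assertion. For the generic fiber I would fix a general $C$; its fiber is $|L_{0,2}|$ modulo the stabilizer of $C$ in ${\aut}({\F}_2)$. A general member of $\mathcal{M}_4'$ has no nontrivial automorphism, and any automorphism of ${\F}_2$ preserving $C$ restricts to one of $C$, hence acts trivially on $C$; as automorphisms of ${\F}_2$ preserve $\pi$ and $C$ meets a general $\pi$-fiber in three points, such an automorphism is the identity. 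This triviality is also implicit in the generic injectivity of the lift ${\lift}$ established in Example \ref{ex:3}. Hence the fiber of $F$ over a general $C$ is all of $|L_{0,2}|$.

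Finally I would identify $|L_{0,2}|$ with the symmetric square of the half-canonical pencil. The system $|L_{0,1}|=\pi^{\ast}|{\Oline}(1)|$ restricts on $C$ to the half-canonical $g^1_3$, namely the pencil of lines of the cone, and since both are pencils this restriction is an isomorphism $|L_{0,1}|\simeq|\theta|$. As the multiplication map $S^2H^0(L_{0,1})\to H^0(L_{0,2})$ is an isomorphism, we obtain a canonical identification $|L_{0,2}|\cong S^2|L_{0,1}|\cong S^2|\theta|$, which is the stated description of the generic fiber. I do not expect a real obstacle here: once Example \ref{ex:3} is available the whole argument is formal, the only point meriting a line of justification being the triviality of the stabilizer of a general theta-null genus $4$ curve inside ${\aut}({\F}_2)$.
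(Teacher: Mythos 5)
Your argument is correct and follows essentially the route the paper intends: the corollary is read off from the birational period map of Example \ref{ex:3} together with the identification $|L_{3,0}|/{\aut}({\F}_2)\sim\mathcal{M}_4'$ and the fact (used in the proof of Proposition \ref{rational (10,4,0)}) that a general theta-null genus $4$ curve has no automorphisms, so the fiber over $C$ is the full $|L_{0,2}|\cong S^2|L_{0,1}|\cong S^2|\theta|$. The extra details you supply — the genus computation for $C\in|L_{3,0}|$, the triviality of the stabilizer via the three points on a general $\pi$-fiber, and the multiplication map $S^2H^0(L_{0,1})\to H^0(L_{0,2})$ — are exactly the points the paper leaves implicit.
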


\subsection{The rationality of $\mathcal{M}_{11,3,1}$ and $\mathcal{M}_{12,2,1}$}\label{ssec:(11,3)&(12,2)}

Let $Q={\proj}^1\times{\proj}^1$. 
For a point $p\in Q$ we denote by $D_p$ the reducible bidegree $(1, 1)$ curve singular at $p$. 
For $k=4, 5$ let $U_k\subset|L_{3,3}|\times Q$ be the locus of pairs $(C, p)$ such that 
$({\rm i})$ $C$ is smooth with $p\in C$, 
$({\rm ii})$ the $(1, 0)$-component of $D_p$ is tangent to $C$ at $p$ with multiplicity $k-2$, and 
$({\rm iii})$ the $(0, 1)$-component of $D_p$ is transverse to $C$. 
The space $U_k$ is acted on by ${\aut}(Q)_0$ (but not by ${\aut}(Q)$).  
The bidegree $(4, 4)$ curve $C+D_p$ has the $D_{2k-2}$-singularity $p$, 
the $7-k$ nodes $C\cap D_p\backslash p$, and no other singularity. 
Taking the right resolution of $C+D_p$, 
we obtain a period map 
$\mathcal{P}_k\colon U_k/{\aut}(Q)_0\dashrightarrow\mathcal{M}_{7+k,7-k,1}$. 

\begin{proposition}\label{birational (11,3)&(12,2)}
The map $\mathcal{P}_k$ is birational. 
\end{proposition}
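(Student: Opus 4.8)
The plan is to follow the recipe of \S\ref{ssec: recipe}, exactly as in Examples \ref{ex:1}--\ref{ex:4} and in the proofs of Propositions \ref{birat (9,5)} and \ref{birat (10,4,1)}. First I would introduce an appropriate cover $\widetilde{U}_k\to U_k$ that parametrizes the curves $B=C+D_p$ together with a ``reasonable'' labelling of the singularities, the branches at the $D_{2k-2}$-point, and the components. Concretely, $\widetilde{U}_k\subset U_k\times Q^{7-k}$ should be the locus of tuples $(C,p,p_1,\dots,p_{7-k})$ with $C\cap D_p\setminus p=\{p_i\}$, with a further refinement (as in \S\ref{ssec:(9,5,1)}) specifying which of these nodes lie on the $(1,0)$-component versus the $(0,1)$-component of $D_p$; since the two components of $D_p$ play asymmetric roles (only the $(1,0)$-component is tangent to $C$), the two components are already intrinsically distinguished, and hence so are the three branches of $B$ at the $D_{2k-2}$-point. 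Thus $\widetilde{U}_k$ carries a complete and reasonable labelling, and $\widetilde{U}_k\to U_k$ is a Galois cover with group $\mathfrak{S}_{7-k}$ (for $k=5$ this is $\mathfrak{S}_2$; for $k=4$ it is $\mathfrak{S}_3$).

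Next I would define the reference lattice $L_+$ for each of $(11,3,1)$ and $(12,2,1)$ explicitly, as an overlattice of $U\oplus D_{2k-2}\oplus A_1^{7-k}$ with a handful of glue vectors $f_i$ encoding the classes of the components $F_i$ of $X^\iota$, following the template of the reference lattices in Examples \ref{ex:2}, \ref{ex:3}, \ref{ex:4}. Using Lemma \ref{L_+ and A-D-E} together with the $(-2)$-curve configuration over a $D_{2k-2}$-point described in \S\ref{ssec: right resol}, the labelling $\mu$ on $\widetilde{U}_k$ induces a lattice-marking $j\colon L_+\to L_+(X,\iota)$ for the associated $(X,\iota)=\mathcal{P}_k(C,p)$, hence a lift $\tilde p\colon\widetilde{U}_k\to{\cove}_{7+k,7-k,1}$, which is a morphism by Borel's extension theorem. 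One checks $\tilde p$ is ${\aut}(Q)_0$-invariant (the group acts trivially on $NS_Q$), so it descends to ${\lift}_k\colon\widetilde{U}_k/{\aut}(Q)_0\dashrightarrow{\cove}_{7+k,7-k,1}$.

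The core step is to show that the $\tilde p$-fibers are ${\aut}(Q)_0$-orbits, so that ${\lift}_k$ is generically injective. Given $\tilde p(C,p,\dots)=\tilde p(C',p',\dots)$, I would invoke the marking-compatible Hodge isometry $\Phi\colon H^2(X',\Z)\to H^2(X,\Z)$, use Lemma \ref{ample class} to see $\Phi$ preserves the ample cones, apply the Torelli theorem to get $\varphi\colon X\to X'$ with $\varphi^*=\Phi$, then use Lemma \ref{covering map & LB} with $L=L_{1,1}=\mathcal{O}_Q(1,1)$ to produce $\psi\in{\aut}(Q)$ with $\psi\circ f=f'\circ\varphi$; since $\psi$ fixes the classes $j(u),j(v)$ it preserves each ruling, so $\psi\in{\aut}(Q)_0$, and comparing branch loci and labelled points gives $\psi(C,p,\dots)=(C',p',\dots)$. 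Since ${\dim}(U_k/{\aut}(Q)_0)=\dim\widetilde{U}_k/{\aut}(Q)_0$ equals $20-(7+k)$ and ${\cove}_{7+k,7-k,1}$ is irreducible, ${\lift}_k$ is birational. Finally I would compute $|{\Or}(D_{L_+})|$ from \cite{M-S} --- for $L_+\simeq U\oplus E_8\oplus A_1$ (case $(11,3,1)$) one gets ${\Or}(D_{L_+})\simeq\mathfrak{S}_3$, and for $L_+\simeq U\oplus E_8\oplus A_1$ adjusted to the $(12,2,1)$ data one gets $\mathfrak{S}_2$ --- and compare with the degree $|\mathfrak{S}_{7-k}|$ of $\widetilde{U}_k/{\aut}(Q)_0\to U_k/{\aut}(Q)$, which matches, forcing $\deg(\mathcal{P}_k)=1$. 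The main obstacle I anticipate is pinning down the precise glue vectors $f_i$ in the reference lattices and checking that the resulting overlattice has the asserted discriminant form of invariant $(7+k,7-k,1)$ --- this is the delicate bookkeeping, but it is routine in the sense of the earlier examples; everything else is a verbatim transcription of the established recipe.
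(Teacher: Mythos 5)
Your overall strategy is exactly the paper's (the recipe of \S\ref{ssec: recipe}), and most of the skeleton is right, but there are two concrete errors in the bookkeeping, and they happen to cancel in the final degree count, which masks the fact that the argument as written does not go through.

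First, the cover. For $k=4$ the $7-k=3$ nodes of $C+D_p$ away from $p$ split as one node on the $(1,0)$-component and two nodes on the $(0,1)$-component. As you yourself observe, the two components of $D_p$ are intrinsically distinguished (only one is tangent to $C$ at $p$), so the single node on the $(1,0)$-component is a priori labelled and a ``reasonable'' labelling only has to order the two nodes on the $(0,1)$-component. The correct cover is therefore a \emph{double} cover for both $k=4$ and $k=5$ --- this is what the paper uses --- and your claimed Galois group $\frak{S}_{7-k}=\frak{S}_3$ for $k=4$ contradicts your own refinement. This is not merely cosmetic: a permutation mixing the node on the $(1,0)$-component with a node on the $(0,1)$-component does not induce a marking $j\colon L_+\to L_+(X,\iota)$ at all, because the glue vector $f_i$ attached to each component of $D_p$ involves only the $e_j$ lying on that component; so with the full $\frak{S}_3$-cover the lift $\tilde p$ is not defined (or, if forced, its fibers are no longer $G$-orbits), and the generic injectivity step fails.

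Second, the discriminant computation. The lattice $U\oplus E_8\oplus A_1$ you write down for $(11,3,1)$ has length $a=1$, not $3$, so it cannot be $L_+$; and in any case ${\Or}(D_{L_\pm})$ for the correct lattices ($L_-=\langle2\rangle^2\oplus E_8\oplus A_1^{5-k}$, so $D_{L_-}\simeq(\Z/2)^{7-k}$ with form $\langle1/2\rangle^2\oplus\langle3/2\rangle^{5-k}$) is $\frak{S}_2$ for \emph{both} $k=4$ and $k=5$, not $\frak{S}_3$: for $k=4$ the three norm-$1/2$ elements are not all interchangeable because the induced bilinear pairing distinguishes one of them. With the correct double cover and ${\Or}(D_{L_-})\simeq\frak{S}_2$ the comparison $2/2=1$ gives $\deg\mathcal{P}_k=1$ as desired; your $6/6=1$ reaches the right conclusion only because two independent mistakes offset each other.
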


\begin{proof}
We only have to distinguish the two intersection points other than $p$ 
of $C$ and the $(0, 1)$-component of $D_p$. 
This defines a double cover $\widetilde{U}_k\to U_k$. 
The rest singularities of $C+D_p$ and the branches of $C+D_p$ at $p$ are a priori labeled as before. 
Checking ${\dim}(U_k/{\aut}(Q)_0)=13-k$, 
we see that $\mathcal{P}_k$ lifts to a birational map 
$\widetilde{U}_k/{\aut}(Q)_0 \dashrightarrow {\cove}_{7+k,7-k,1}$. 
The variety ${\cove}_{7+k,7-k,1}$ is an ${\Or}(D_{L_-})$-cover of $\mathcal{M}_{7+k,7-k,1}$ 
for the lattice $L_-=\langle2\rangle^2\oplus E_8\oplus A_1^{5-k}$. 
Then ${\Or}(D_{L_-})\simeq\frak{S}_2$ for both $k=4, 5$, 
so that $\mathcal{P}_k$ has degree $1$. 
\end{proof} 

\begin{proposition}\label{rational (11,3)&(12,2)} 
The quotient $U_k/{\aut}(Q)_0$ is rational. 
Therefore $\mathcal{M}_{11,3,1}$ and $\mathcal{M}_{12,2,1}$ are rational. 
\end{proposition}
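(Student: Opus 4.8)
The plan is to run the same slice-and-no-name machinery that worked for the closely related spaces $\mathcal{M}_{9,5,1}$ and $\mathcal{M}_{10,4,1}$. First I would apply the slice method (Proposition \ref{slice}) to the projection $U_k\to Q$, $(C,p)\mapsto p$. Since ${\aut}(Q)_0={\PGL}_2\times{\PGL}_2$ acts transitively on $Q$ with the stabilizer $G$ of a point $p$ being the product of two Borel subgroups (connected and solvable), this reduces the problem to the rationality of $(U_k)_p/G$, where $(U_k)_p$ is the fiber over $p$. Writing $F_1,F_2$ for the $(1,0)$- and $(0,1)$-rulings through $p$, the fiber $(U_k)_p$ is an open set of a linear system ${\proj}V_k\subset|L_{3,3}|$: namely those $C$ passing through $p$ with prescribed tangency of order $k-2$ along $F_1$. (For $k=4$ this is a simple tangency condition, for $k=5$ an inflectional one; in both cases it cuts a linear subspace of $H^0(L_{3,3})$, exactly as in the proof of Lemma \ref{lemma (5+k,5-k)}.)

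Next I would linearize. The bundle $L_{3,3}$ need not be ${\aut}(Q)$-linearized, but — as in the proof of Proposition \ref{rational (5+k,5-k)} — one may instead work with the group $\widetilde{G}=\frak{S}_2\ltimes({\SL}_2)^2$, or rather its index-two identity component $({\SL}_2)^2$, which does act on $H^0(L_{3,3})$; the preimage $\widehat{G}$ of $G$ in $({\SL}_2)^2$ is still connected and solvable (it maps onto $G$ with finite central kernel generated by $(-1,1),(1,-1)$). Then $\widehat{G}$ acts linearly on $V_k$, and by Miyata's theorem \ref{Miyata} the quotient ${\proj}V_k/\widehat{G}$ is rational. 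Since $U_k/{\aut}(Q)_0 \sim U_k/({\SL}_2)^2 \sim {\proj}V_k/\widehat{G}$, this finishes the rationality of $U_k/{\aut}(Q)_0$, and by Proposition \ref{birational (11,3)&(12,2)} the rationality of $\mathcal{M}_{11,3,1}$ and $\mathcal{M}_{12,2,1}$.

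Alternatively, instead of landing directly on a linear $\widehat{G}$-representation one could interpose a no-name step: map ${\proj}V_k$ equivariantly to $|\mathcal{O}_{F_2}(3)|$ by restriction to the transverse ruling $F_2$, observe that $G$ acts on $|\mathcal{O}_{F_2}(3)|$ through one ${\PGL}_2$-factor with almost-free (or at least almost-transitive) action, and apply the no-name/slice method as in Proposition \ref{rational (9,5)}; after twisting by the appropriate line bundle $\mathcal{O}(1)$ to descend the linearization past the kernel of $\widehat{G}\to G$, one reduces to a quotient of a projective space of small dimension by a finite or solvable group. Either route is routine once the linear-system description of the fibers is in hand.

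The main obstacle I anticipate is purely bookkeeping: verifying in coordinates (using the coordinate system of \S\ref{ssec:coordinate}, adapted to $Q$) that the tangency conditions $({\rm ii})$ and $({\rm iii})$ genuinely cut out a linear subspace $V_k\subset H^0(L_{3,3})$ of the expected dimension, that its complement behaves as required, and — for the no-name variant — that the relevant $G$-action on $|\mathcal{O}_{F_2}(3)|$ is almost free and that the line bundle twist correctly kills the central kernel $(-1,1),(1,-1)$. None of this is conceptually hard, but it is exactly the sort of computation the paper defers to the reader; the key geometric input, that $U_k$ has the expected dimension $13-k$, was already recorded in the statement preceding Proposition \ref{birational (11,3)&(12,2)}.
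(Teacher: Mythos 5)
Your proposal is correct and follows essentially the same route as the paper, which disposes of this case in three lines by the slice method for $U_k\to Q$ (with stabilizer $({\C}^{\times}\ltimes{\C})^2$, connected and solvable) followed by Miyata's theorem. Your extra care about passing to $({\SL}_2)^2$ to linearize $L_{3,3}$ is a legitimate detail the paper leaves implicit (it is handled explicitly in the analogous Proposition \ref{rational (5+k,5-k)}), but it does not change the argument.
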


\begin{proof}
This is a consequence of the slice method for the projection $U_k\to Q$
and Miyata's theorem. 
The stabilizer of a point $p\in Q$ in ${\aut}(Q)_0$ is isomorphic to $({\C}^{\times}\ltimes{\C})^2$, 
which is connected and solvable. 
\end{proof}

\begin{corollary}\label{fixed curve (11,3)}
The fixed curve map $\mathcal{M}_{11,3,1}\to\mathcal{M}_4$ is finite and dominant, 
with the fiber over a general $C\in\mathcal{M}_4$ being the ramification 
points of the two trigonal maps $C\to{\proj}^1$. 
\end{corollary}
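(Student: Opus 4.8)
The plan is to derive the statement from the birational period map $\mathcal{P}_4\colon U_4/{\aut}(Q)_0\dashrightarrow\mathcal{M}_{11,3,1}$ of Proposition~\ref{birational (11,3)&(12,2)} (the case $k=4$) by identifying the fixed curve map with a concrete forgetful map. For $(C,p)\in U_4$ the branch curve $B=C+D_p$ of the right DPN pair $(Q,B)$ has $C$ as its unique component of positive geometric genus, so the genus $4$ component $C^4$ of $X^{\iota}$ for $(X,\iota)=\mathcal{P}_4(C,p)$ is $C$ itself. Hence $F\circ\mathcal{P}_4$ is the morphism
\begin{equation*}
\phi\colon U_4/{\aut}(Q)_0\longrightarrow\mathcal{M}_4,\qquad (C,p)\mapsto[C],
\end{equation*}
and it suffices to analyze $\phi$, which I would factor through $|L_{3,3}|/{\aut}(Q)_0$ by first forgetting $p$.

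For the factor $|L_{3,3}|/{\aut}(Q)_0\to\mathcal{M}_4$: a general canonical genus $4$ curve $C$ lies on a smooth quadric, on which it is an $L_{3,3}$-curve, and the two rulings restrict to the two distinct $g^1_3$'s of $C$; since a general $C$ has trivial automorphism group and exactly two $g^1_3$'s and its canonical model is unique up to projective equivalence, the fiber over a general $[C]$ records precisely an ordering of the two $g^1_3$'s, so this map is dominant of degree $2$. For the factor $U_4\to|L_{3,3}|$: by condition (ii) with $k=4$, a point $p$ lies over $C$ exactly when the $(1,0)$-fiber through $p$ meets $C$ at $p$ with multiplicity $\geq2$, i.e.\ $p$ is a ramification point of the trigonal map $\phi_1\colon C\to{\proj}^1$ cut out by the first ruling. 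By Riemann--Hurwitz $\phi_1$ has $12$ ramification points, which for general $C$ are simple and disjoint from the $12$ ramification points of the second trigonal map $\phi_2$; hence condition (iii) holds automatically at each of them, and this projection is dominant of degree $12$.

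Combining the two factors, $\phi$ is generically finite of degree $24$, and since ${\dim}(U_4/{\aut}(Q)_0)=13-4=9={\dim}\,\mathcal{M}_4$ it is dominant; therefore so is $F$. Tracing the fiber over a general $[C]$: the two sheets of $|L_{3,3}|/{\aut}(Q)_0\to\mathcal{M}_4$ correspond to the two orderings of the $g^1_3$'s, and over the sheet where the first ruling induces $\phi_1$ (resp.\ $\phi_2$) the $12$ points of $U_4$ are exactly the ramification points of $\phi_1$ (resp.\ $\phi_2$). Thus the $F$-fiber over a general $C$ is canonically identified with the union of the ramification loci of the two trigonal maps $C\to{\proj}^1$. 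The point requiring care is this bookkeeping --- that one factor of the fiber is the ramification of a \emph{single} trigonal map while the degree-$2$ base exchanges the two maps --- together with the genericity inputs (a general genus $4$ curve has trivial automorphisms and exactly two $g^1_3$'s, the $12+12$ ramification points are distinct, and (iii) is then automatic at all of them).
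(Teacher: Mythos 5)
Your proposal is correct and is exactly the intended derivation: the paper states this corollary without proof precisely because it follows from Proposition~\ref{birational (11,3)&(12,2)} by reading off the fibers of the forgetful map $(C,p)\mapsto[C]$, with the degree $2$ coming from $|L_{3,3}|/{\aut}(Q)_0\to\mathcal{M}_4$ (ordering of the two $g^1_3$'s) and the degree $12$ from the ramification of one trigonal map. One small imprecision: for condition (iii) you need the genericity that the $(0,1)$-fibers \emph{through} the ramification points of $\phi_1$ avoid the ramification locus of $\phi_2$ (transversality of the whole fiber), not merely that the two sets of ramification points are disjoint; this is still an open dense condition on $C$, so the argument stands.
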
 

For $k=5$ the image of the natural map $U_5\to\mathcal{M}_4$ 
consists of curves such that one of its trigonal maps has a total ramification point. 
Such a point is nothing but a Weierstrass point whose first non-gap is $3$. 
Therefore  

\begin{corollary}\label{fixed curve (12,2)}
The fixed curve map for $\mathcal{M}_{12,2,1}$ is generically injective 
with a generic image being the locus of curves having a Weierstrass point 
whose first non-gap is $3$. 
\end{corollary}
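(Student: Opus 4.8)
The plan is to leverage the birational period map $\mathcal{P}_5\colon U_5/{\aut}(Q)_0\dashrightarrow\mathcal{M}_{12,2,1}$ from Proposition \ref{birational (11,3)&(12,2)}, together with the identification $|L_{3,3}|/{\aut}(Q)\sim\mathcal{M}_4$ and the fact (used already in \S\ref{ssec:(8,6,1)}) that a general canonical genus $4$ curve is a $(3,3)$-complete intersection with a unique smooth quadric. Under this identification the fixed curve map $F\colon\mathcal{M}_{12,2,1}\to\mathcal{M}_4$ is the composition of $\mathcal{P}_5^{-1}$ with the map $U_5/{\aut}(Q)_0\to\mathcal{M}_4$ that sends $(C,p)$ to $C$ (forgetting $p$). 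So I need two things: first, that this map $U_5/{\aut}(Q)_0\to\mathcal{M}_4$ is generically injective, i.e. a general $C\in\mathcal{M}_4$ in the image admits essentially one pair $(C,p)$ with the prescribed tangency; second, an intrinsic description of the image.

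First I would analyze the geometric meaning of condition $({\rm ii})$ in the definition of $U_5$: for $k=5$, one ruling fiber $F_1$ through $p$ meets $C$ at $p$ with multiplicity $k-2=3$, i.e. $F_1$ is a total tangent (an inflectional/osculating fiber) of $C$ at $p$. Restricting $|L_{1,0}|$ to $C$ gives one of the two trigonal pencils $|T_1|$ on $C$ (the other ruling gives $|T_2|$); the condition that $F_1\cap C=3p$ says precisely that $|T_1|$ has a total ramification point at $p$, i.e. $3p\sim T_1$. A point $p$ with $h^0(3p)=2$ is exactly a point whose first non-gap is $\le3$; since $g=4$, by Weierstrass gap considerations $p$ is not hyperelliptic-type so the first non-gap is $2$ or $3$, and the first non-gap $2$ would force $C$ hyperelliptic. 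Hence the image of $U_5\to\mathcal{M}_4$ is the locus of genus $4$ curves possessing a Weierstrass point whose first non-gap is $3$, which is what the corollary claims. For generic injectivity of $F$: I would argue that a general such $C$ has exactly one Weierstrass point $p$ with $3p\sim T_i$ for one of the two $g^1_3$'s (a dimension count on the $11$-dimensional divisor in $\mathcal{M}_4$ cut out by this condition, or directly from the fact that $\dim(U_5/{\aut}(Q)_0)=13-5=8$ matches the dimension of the relevant locus in $\mathcal{M}_4$), and that the pair $(C,p)$ together with the choice of ruling is then determined; the residual ${\Or}(D_{L_-})\simeq\frak{S}_2$ accounts for swapping the two rulings, and since $F$ is already the descent to $\mathcal{M}_{12,2,1}$ rather than to the cover, $F$ comes out generically injective.

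The main steps in order: (1) quote Proposition \ref{birational (11,3)&(12,2)} to reduce to $U_5/{\aut}(Q)_0$ and its forgetful map to $\mathcal{M}_4\sim|L_{3,3}|/{\aut}(Q)$; (2) translate condition $({\rm ii})$ for $k=5$ into "$|L_{1,0}|$ restricts to a trigonal pencil with a total ramification point at $p$", hence "$p$ is a Weierstrass point with first non-gap $3$"; (3) conclude that the image of $F$ is the asserted locus, checking it is irreducible of the expected dimension $8$ and that generically there is a single such $(C,p)$ up to the ruling-swap already factored out; (4) assemble generic injectivity of $F$. The step I expect to be the main obstacle is (3)–(4): making rigorous that a \emph{general} member of the target locus has a \emph{unique} such Weierstrass point (so that $F$ is genuinely injective, not just finite), which requires either a careful monodromy/dimension argument on the space of $(C,T,p)$ with $3p\sim T$, or a direct check that two non-$\frak{S}_2$-equivalent pairs $(C,p)$, $(C,p')$ give non-isomorphic lattice-marked $K3$ surfaces — most cleanly handled by transporting the generic-injectivity statement already established for $\widetilde{U}_5/{\aut}(Q)_0\dashrightarrow{\cove}_{12,2,1}$ down through the $\frak{S}_2$-quotient and observing that the two points of a general $C\cap(\text{$(0,1)$-component of }D_p)$ carry no extra moduli.
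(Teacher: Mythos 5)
Correct, and essentially the paper's own (much terser) argument: Proposition \ref{birational (11,3)&(12,2)} makes $\mathcal{P}_5$ birational, condition (ii) for $k=5$ says a ruling fiber meets $C$ only at $p$ with multiplicity $3$, i.e.\ $3p\sim T$ for one of the two $g^1_3$'s, which for a non-hyperelliptic curve is precisely a Weierstrass point with first non-gap $3$, and generic injectivity follows because a general curve in the image has such a point for only one of its two trigonal pencils and that point is unique, so the forgetful map $U_5/{\aut}(Q)_0\to\mathcal{M}_4$ is generically injective. One small slip: the $\frak{S}_2\simeq{\Or}(D_{L_-})$ in the proof of Proposition \ref{birational (11,3)&(12,2)} records the labelling of the two residual intersection points of $C$ with the $(0,1)$-component of $D_p$ (as your final sentence correctly says), not the swap of the two rulings — the ruling swap is excluded simply because generically only one $g^1_3$ admits a total ramification point — but this does not affect the argument.
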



\section{The case $g=3$}\label{sec:g=3} 

In this section we study the case $g=3$, $k>0$. 
When $(k, \delta)\ne(2, 0)$, we use 
plane quartics to construct general members of ${\moduli}$. 
For $(k, \delta)=(2, 0)$ we find hyperelliptic curves as the main fixed curves.

\subsection{The rationality of $\mathcal{M}_{9,7,1}$}\label{ssec:(9,7)}

Let $U\subset|{\Oplane}(4)|\times|{\Oplane}(2)|$ be the open locus of pairs $(C, Q)$ such that 
$C$ and $Q$ are smooth and transverse to each other. 
The 2-elementary $K3$ surfaces associated to the sextics $C+Q$ have invariant $(g, k)=(3, 1)$, 
and we obtain a period map 
$\mathcal{P}\colon U/{\PGL}_3\dashrightarrow\mathcal{M}_{9,7,1}$. 

\begin{proposition}\label{birat (9,7)}
The map $\mathcal{P}$ is birational. 
\end{proposition}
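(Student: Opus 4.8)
The plan is to apply the recipe of \S\ref{ssec: recipe} with $Y=\proj^2$, exactly as in Examples \ref{ex:1}--\ref{ex:4}. First I would build the cover $\widetilde{U}\to U$ that parametrizes a sextic $C+Q\in U$ together with a labelling of its eight nodes $C\cap Q$; since $C$ and $Q$ carry distinct degrees, the two components of $C+Q$ are already distinguished by their classes in $NS_{\proj^2}$, so the projection $\widetilde{U}\to U$ is an $\frak{S}_8$-covering and the labelled datum is complete and reasonable. Next I would fix a reference lattice $L_+$: take an orthogonal basis $e_1,\dots,e_8$ of $A_1^8$ and a basis $h$ of $\langle 2\rangle$, set $2f_0=4h-\sum_{i=1}^{8}e_i$ for the component $C$ and $2f_1=2h-\sum_{i=1}^{8}e_i$ for the component $Q$, and let $L_+=\langle\langle2\rangle\oplus A_1^8,f_0,f_1\rangle$. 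One checks routinely (Hurwitz/adjunction on $\proj^2$ giving $r=\rho(\proj^2)+8\cdot1=9$, $a=7$, $\delta=1$) that this overlattice is even $2$-elementary of main invariant $(9,7,1)$, and that it is the lattice predicted by Lemma \ref{L_+ and A-D-E}.

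Then, for $(C,Q,p_1,\dots,p_8)\in\widetilde{U}$ with $(X,\iota)=p(C+Q)$ and right covering map $f\colon X\to\proj^2$, I would define the marking $j\colon L_+\to L_+(X,\iota)$ by $h\mapsto[f^{\ast}\Oplane(1)]$, $e_i\mapsto[f^{-1}(p_i)]$, and $f_0,f_1\mapsto[F_0],[F_1]$ where $X^\iota=F_0+F_1$ with $f(F_0)=C$, $f(F_1)=Q$. By Lemma \ref{L_+ and A-D-E} this is an isometry, giving $\tilde p\colon\widetilde{U}\to\cove_{9,7,1}$, which is $G=\PGL_3$-invariant since $\PGL_3$ acts trivially on $NS_{\proj^2}$. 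To show $\tilde p$-fibres are $G$-orbits, follow step (4) of the recipe: equality of periods yields a Hodge isometry $\Phi$ compatible with the markings, hence (via Lemma \ref{ample class}, which guarantees $\Phi$ preserves ample cones) a $K3$ isomorphism $\varphi\colon X\to X'$; since $\Phi$ fixes $[f^{\ast}\Oplane(1)]$, Lemma \ref{covering map & LB} produces $\psi\in\PGL_3$ with $\psi\circ f=f'\circ\varphi$, and because $\varphi$ respects the marked curves and points, $\psi$ carries $(C+Q,p_1,\dots,p_8)$ to $(C'+Q',p_1',\dots,p_8')$. As $\dim(U/\PGL_3)=14+5-8=11\cdots$ — more precisely $\dim U = 14+5=19$, $\dim\PGL_3=8$, so $\dim(U/\PGL_3)=11$, and one must simply verify this equals $20-r=11$ — so $\widetilde{U}/\PGL_3$ has dimension $11=\dim\cove_{9,7,1}$, forcing $\lift\colon\widetilde{U}/\PGL_3\dashrightarrow\cove_{9,7,1}$ to be birational.

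Finally I would compare degrees: the projection $\widetilde{U}/\PGL_3\to U/\PGL_3$ has degree $|\frak{S}_8|=8!$ because $\PGL_3$ acts almost freely on $U$, while $\cove_{9,7,1}\to\mathcal{M}_{9,7,1}$ has degree $|\Or(D_{L_+})|$. Since $D_{L_+}\simeq(\Z/2\Z)^7$ of parity $1$, one computes via \cite{M-S} Corollary 2.4 and Lemma 2.5 that $|\Or(D_{L_+})|=|\Or^{-}(8,2)|=8!$ (the sextic being a smooth quartic plus a conic should give $L_+\simeq U\oplus E_8$ — no, rather $L_+\simeq U\oplus D_4\oplus\cdots$; the exact isometry class must be pinned down, but in any case $|\Or(D_{L_+})|$ is a fixed number one reads off). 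Hence $\deg\mathcal{P}=|\Or(D_{L_+})|/8!$, and the proof reduces to checking this ratio is $1$. The main obstacle is precisely this last bookkeeping: identifying the isometry class of $L_+$ accurately enough to evaluate $|\Or(D_{L_+})|$ and confirm it equals $8!$ (equivalently, confirming $D_{L_+}$ is the $2$-elementary form of length $7$, parity $1$, with orthogonal group $\Or^-(8,2)$ of order $8!$); everything else is the now-standard application of the recipe.
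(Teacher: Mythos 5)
Your proposal follows exactly the paper's route: the $\frak{S}_8$-cover $\widetilde{U}\to U$ labelling the eight nodes $C\cap Q$, the lattice marking via Lemma \ref{L_+ and A-D-E}, the birational lift $\widetilde{U}/{\PGL}_3\dashrightarrow{\cove}_{9,7,1}$ obtained from the recipe of \S\ref{ssec: recipe} together with the dimension count $\dim(U/{\PGL}_3)=11=20-r$, and the final comparison of the degree $8!$ of $\widetilde{U}/{\PGL}_3\to U/{\PGL}_3$ with $|{\Or}(D_{L_+})|$. Your explicit reference lattice is also correct (note only that $f_0-f_1=h$, so adjoining $f_1$ alone already gives the overlattice).

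The one step you leave genuinely unresolved — and where your tentative answers are wrong — is the identification of $L_+$ and the evaluation of $|{\Or}(D_{L_+})|$, which is the only piece of real content beyond the boilerplate. Neither $U\oplus E_8$ (rank $10$, length $0$) nor ``$U\oplus D_4\oplus\cdots$'' is right: your overlattice is even, hyperbolic, $2$-elementary of main invariant $(9,7,1)$, hence by Nikulin's classification isometric to $U\oplus A_1^7$. Consequently $D_{L_+}$ is the odd $2$-elementary form of length $7$ and ${\Or}(D_{L_+})\simeq{\Or}^+(6,2)\simeq\frak{S}_8$, of order $8!$ — not ${\Or}^-(8,2)$, whose order is roughly $3.9\times10^8$ and is nowhere near $8!$; had that been the group, the period map would have enormous degree and the argument would collapse. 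With the correct identification $|{\Or}(D_{L_+})|=|{\Or}^+(6,2)|=8!$ the ratio is $1$ and the proof closes as you intended.
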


\begin{proof}
We consider an $\frak{S}_8$-covering $\widetilde{U}\to U$ whose fiber over a $(C, Q)\in U$ 
corresponds to labelings of the eight nodes $C\cap Q$ of $C+Q$.  
As before, we see that $\mathcal{P}$ lifts to a birational map 
$\widetilde{U}/{\PGL}_3\dashrightarrow{\cove}_{9,7,1}$. 
The projection ${\cove}_{9,7,1}\dashrightarrow\mathcal{M}_{9,7,1}$ 
is an ${\Or}(D_{L_+})$-covering for the lattice $L_+=U\oplus A_1^7$. 
By \cite{M-S} we calculate $|{\Or}(D_{L_+})|=|{\Or}^+(6, 2)|=8!$. 
\end{proof}

Recall that we have a natural birational equivalence $|{\Oplane}(4)|/{\PGL}_3 \sim \mathcal{M}_3$, 
for a canonical model of a non-hyperelliptic genus $3$ curve is a plane quartic. 

\begin{proposition}\label{rational (9,7)} 
The quotient $(|{\Oplane}(4)|\times|{\Oplane}(2)|)/{\PGL}_3$ is rational. 
Therefore $\mathcal{M}_{9,7,1}$ is rational. 
\end{proposition}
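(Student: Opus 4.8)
The plan is to exhibit $(|\Oplane(4)|\times|\Oplane(2)|)/\PGL_3$ as a projective bundle over the rational variety $\mathcal{M}_3 \sim |\Oplane(4)|/\PGL_3$, using the no-name method (Proposition \ref{no-name 2}) applied to the projection onto the first factor. The fiber over a general plane quartic $C$ is the linear system $|\Oplane(2)|$ of conics, so the relevant object is the vector bundle $\mathcal{E} = H^0(\Oplane(2))\times|\Oplane(4)|$ over $|\Oplane(4)|$, together with the $\SL_3$-action induced by the natural $\SL_3$-linearizations of both $\Oplane(2)$ and $\Oplane(4)$. The natural moduli map $|\Oplane(4)|/\PGL_3 \dashrightarrow \mathcal{M}_3$ is birational because a non-hyperelliptic genus $3$ curve has a plane quartic as its canonical model, so I only need to produce a $\PGL_3$-linearized bundle refining $\mathcal{E}$ and verify the almost-freeness hypothesis.

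First I would record that $\PGL_3$ (equivalently $\SL_3$ modulo its center $\mu_3 = \langle \zeta I\rangle$ with $\zeta = e^{2\pi i/3}$) acts almost freely on $|\Oplane(4)|$: a general smooth plane quartic has trivial automorphism group, hence trivial stabilizer in $\PGL_3$. Next I would twist $\mathcal{E}$ by an appropriate power of the tautological line bundle $\mathcal{L}$ over $|\Oplane(4)|$ to descend the $\SL_3$-action to $\PGL_3$: the center $\zeta I$ acts on $H^0(\Oplane(2)) = S^2(\C^3)^{\vee}$ by $\zeta^{-2} = \zeta$, and on each fiber of $\mathcal{L}$ (a line in $S^4(\C^3)^{\vee}$, on which $\zeta I$ acts by $\zeta^{-4} = \zeta^{2}$) one computes the scalar by which $\mu_3$ acts; choosing the twist $\mathcal{E}\otimes\mathcal{L}^{\otimes m}$ so that $\mu_3$ acts trivially, and noting $\proj(\mathcal{E}\otimes\mathcal{L}^{\otimes m}) \cong \proj\mathcal{E}$ canonically, yields a $\PGL_3$-linearized bundle. (Concretely, since the scalars live in $\mu_3$, a suitable $m\in\{1,2\}$ works, exactly as in the proof of Proposition \ref{rational (6,4,0)}.)

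Then Proposition \ref{no-name 2} (or directly Proposition \ref{no-name 1} for the twisted bundle) gives
\begin{equation*}
(|\Oplane(4)|\times|\Oplane(2)|)/\PGL_3 \;\sim\; \proj(\mathcal{E}\otimes\mathcal{L}^{\otimes m})/\PGL_3 \;\sim\; \proj^{N}\times\big(|\Oplane(4)|/\PGL_3\big),
\end{equation*}
where $N = \dim|\Oplane(2)| = 5$. Finally, $|\Oplane(4)|/\PGL_3 \sim \mathcal{M}_3$ is rational by Katsylo, so the product is rational, and hence $\mathcal{M}_{9,7,1}$ is rational via the birational period map $\mathcal{P}$ of Proposition \ref{birat (9,7)}. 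The only step requiring care is the linearization bookkeeping in the middle paragraph — tracking the $\mu_3$-weights so that the twisted bundle genuinely descends to $\PGL_3$ — but this is the same device already used for $\mathcal{M}_{6,4,0}$, so I expect no real obstacle.
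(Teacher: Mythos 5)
Your proof is correct. The linearization bookkeeping checks out: $\zeta I$ with $\zeta=e^{2\pi i/3}$ acts on $H^0(\Oplane(2))=S^2(\C^3)^{\vee}$ by $\zeta^{-2}=\zeta$ and on the tautological bundle $\mathcal{L}\subset H^0(\Oplane(4))\times|\Oplane(4)|$ by $\zeta^{-4}=\zeta^2$, so $m=1$ kills the center and $\mathcal{E}\otimes\mathcal{L}$ is genuinely $\PGL_3$-linearized; almost-freeness holds since a general quartic has no automorphisms, and the no-name lemma then gives $\proj^5\times(|\Oplane(4)|/\PGL_3)\sim\proj^5\times\mathcal{M}_3$.

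The paper reaches the same conclusion by a slightly different mechanism: instead of twisting a trivial bundle on the parameter space $|\Oplane(4)|$ and invoking the no-name lemma, it descends directly to the moduli space, using the canonical identification $|\Oplane(2)|\xrightarrow{\sim}|2K_C|$ by restriction to identify the quotient with $\proj(\pi_{\ast}K_{\pi}^2)$ for the universal curve $\pi\colon\mathcal{X}_3\to\mathcal{M}_3$. That route avoids the $\mu_3$-weight computation entirely (the bundle $\pi_{\ast}K_{\pi}^2$ is already defined over $\mathcal{M}_3$), and it has the side benefit of exhibiting the fibers of the fixed curve map intrinsically as bicanonical systems, which is exactly the content of Corollary \ref{fixed curve (9,7)}. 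Your version trades that intrinsic description for a more mechanical but equally valid argument, modeled on the paper's own treatment of $\mathcal{M}_{6,4,0}$; both ultimately rest on Katsylo's rationality of $\mathcal{M}_3$ and the birationality of $\mathcal{P}$ from Proposition \ref{birat (9,7)}.
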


\begin{proof}
Let $\pi\colon\mathcal{X}_3\to\mathcal{M}_3$ be the universal curve (over an open set), 
$K_{\pi}$ be the relative canonical bundle for $\pi$, 
and $\mathcal{E}$ be the bundle $\pi_{\ast}K_{\pi}^2$. 
As the restriction map $|{\Oplane}(2)|\to|2K_C|$ for a smooth quartic $C$ is isomorphic, 
the quotient $(|{\Oplane}(4)|\times|{\Oplane}(2)|)/{\PGL}_3$ is birational to ${\proj}\mathcal{E}$. 
Since $\mathcal{M}_3$ is rational by Katsylo \cite{Ka3}, so is ${\proj}\mathcal{E}$. 
\end{proof} 

\begin{corollary}\label{fixed curve (9,7)} 
The fixed curve map $\mathcal{M}_{9,7,1}\to\mathcal{M}_3$ is dominant 
with general fibers being birationally identified with the bi-canonical systems. 
\end{corollary}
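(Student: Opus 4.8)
The plan is to mirror the structure of the preceding $g=3$ subsection (the case $\mathcal{M}_{9,7,1}$), since Corollary \ref{fixed curve (9,7)} is proved there by identifying the fixed curve map with the composition $F = \psi \circ \mathcal{P}^{-1}$, where $\psi\colon (|{\Oplane}(4)|\times|{\Oplane}(2)|)/{\PGL}_3 \dashrightarrow \mathcal{M}_3$ is the map induced by $(C,Q)\mapsto C$. First I would recall from Proposition \ref{birat (9,7)} that $\mathcal{P}\colon U/{\PGL}_3 \dashrightarrow \mathcal{M}_{9,7,1}$ is birational, so that $\mathcal{P}^{-1}$ identifies a general $(X,\iota)\in\mathcal{M}_{9,7,1}$ with a pair $(C,Q)$ up to projective equivalence; here $C$ is the plane quartic component of the branch sextic $C+Q$, which is precisely the genus-$3$ component $C^g$ of $X^\iota$ under the right covering map $f\colon X\to{\proj}^2$ (the quadric component $Q$ pulls back to the union of $(-2)$-curves or the other component, not to $C^g$, by Proposition \ref{compute main inv}). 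Hence $F = \psi\circ\mathcal{P}^{-1}$ on a dense open set.

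Next I would argue dominance: the map $\psi$ is the first projection of a (birational) projective bundle $(|{\Oplane}(4)|\times|{\Oplane}(2)|)/{\PGL}_3 \sim {\proj}\mathcal{E}$ over (an open set of) $\mathcal{M}_3$, as established in the proof of Proposition \ref{rational (9,7)}, where $\mathcal{E} = \pi_\ast K_\pi^2$. A projective bundle projection is visibly dominant, so $F$ is dominant. For the fiber description, the fiber of $\psi$ over a general $[C]\in\mathcal{M}_3$ is, by the isomorphism $|{\Oplane}(2)| \xrightarrow{\sim} |2K_C|$ (restriction, which is an isomorphism because $h^0({\Oplane}(2))=6=h^0(2K_C)$ and the restriction is injective on a smooth quartic), identified with ${\proj}|2K_C| = |2K_C|$, i.e. the bicanonical system of $C$. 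One should note that for general $C$ the stabilizer in ${\PGL}_3$ is trivial, so passing to the quotient does not collapse this fiber.

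The main obstacle — though it is minor here — is verifying that the identification of $\mathcal{M}_{9,7,1}$ with $U/{\PGL}_3$ via $\mathcal{P}$ carries the fixed curve map exactly to $\psi$, i.e. that the genus-$3$ component of $X^\iota$ is canonically the quartic $C$ and not some curve obtained from it by an automorphism or a further birational modification. This follows because $f\colon X\to{\proj}^2$ restricts to an isomorphism $C^g \xrightarrow{\sim} C$ (the branch sextic $C+Q$ has $C$ smooth and meeting $Q$ transversally, so no resolution is needed along $C$), and because $\phi_{f^\ast{\Oplane}(1)}$ recovers $f$ by Lemma \ref{covering map & LB}, so the $g^2_4$ on $C^g$ cut out by $|{\Oplane}(1)|$ — which is the unique $g^2_4$, hence the canonical one — is intrinsic to $(X,\iota)$. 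Assembling these observations gives: $F$ is dominant, and the fiber of $F$ over a general $C$ is birationally $|2K_C|\simeq{\proj}^5$; which is Corollary \ref{fixed curve (9,7)}. I would close by remarking that this is entirely parallel to Corollaries \ref{fixed curve (7,7,1)} and \ref{fixed curve (6,6,1)}, the only change being the degrees of the linear systems involved.
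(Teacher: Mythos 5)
Your proposal is correct and follows the same route the paper takes: Corollary \ref{fixed curve (9,7)} is read off from the birationality of $\mathcal{P}$ (Proposition \ref{birat (9,7)}) together with the identification $(|{\Oplane}(4)|\times|{\Oplane}(2)|)/{\PGL}_3\sim{\proj}(\pi_\ast K_\pi^2)$ over $\mathcal{M}_3$ established in the proof of Proposition \ref{rational (9,7)}, via the restriction isomorphism $|{\Oplane}(2)|\to|2K_C|$. Your extra check that the genus-$3$ component of $X^{\iota}$ is canonically the quartic $C$ itself is a point the paper leaves implicit, and it is argued correctly.
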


\subsection{The rationality of $\mathcal{M}_{10,6,1}$}\label{ssec:(10,6,1)}

Let $U\subset|{\Oplane}(4)|\times|{\Oplane}(2)|$ be the locus of pairs $(C, Q)$ 
such that $Q$ is a union of two distinct lines, and $C$ is smooth and transverse to $Q$. 
The 2-elementary $K3$ surfaces associated to the nodal sextics $C+Q$ 
have invariant $(g, k)=(3, 2)$, and parity $\delta=1$ by Lemma \ref{delta=1} (1). 
Hence we obtain a period map 
$\mathcal{P}\colon U/{\PGL}_3\dashrightarrow\mathcal{M}_{10,6,1}$. 

\begin{proposition}\label{birat (10,6,1)}
The period map $\mathcal{P}$ is birational. 
\end{proposition}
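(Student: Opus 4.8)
The plan is to run the recipe of \S\ref{ssec: recipe}, exactly as in Example \ref{ex:1} and in the proof of Proposition \ref{birat (9,7)}. For $(C,Q)\in U$ write $Q=L_1+L_2$ with $L_1,L_2$ distinct lines; since $C$ is smooth and transverse to $Q$ (so in particular $L_1\cap L_2\notin C$), the sextic $B=C+L_1+L_2$ has exactly nine nodes --- the four points $C\cap L_1$, the four points $C\cap L_2$, and the point $p=L_1\cap L_2$ --- and no other singularity. Applying Lemma \ref{delta=1}$(1)$ to the components $C,L_1,L_2$ gives $\delta=1$, and together with \eqref{compute r by A-D-E}, \eqref{compute k by A-D-E} this confirms that $(X,\iota)=\mathcal{P}(B)$ has main invariant $(10,6,1)$. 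First I would introduce the cover $\widetilde{U}\to U$, of degree $2\cdot(4!)^2$, parametrizing $(C,Q)$ together with an ordering $Q=L_1+L_2$ of the two lines and an ordering of the four points of $C\cap L_i$ for each $i$; its Galois group is $\mathfrak{S}_2\ltimes(\mathfrak{S}_4)^2$. The ninth node $p$ is distinguished from the other eight by being $L_1\cap L_2$, and the three components of $B$ are distinguished by their degrees, so $\widetilde{U}$ carries a complete, ``reasonable'' labelling of the data of $B$.

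Next I would fix the reference lattice. Let $h$ generate $\langle2\rangle$, let $\{e_{ij}\}_{1\le i\le2,\,1\le j\le4}$ and $e_p$ be a standard basis of $A_1^9$, and set
\[
L_+=\bigl\langle\,\langle2\rangle\oplus A_1^9,\ f_C,\ f_{L_1},\ f_{L_2}\,\bigr\rangle,\qquad 2f_C=4h-\!\sum_{i,j}e_{ij},\quad 2f_{L_i}=h-e_p-\!\sum_{j}e_{ij}.
\]
One checks that $L_+$ is an even $2$-elementary lattice of main invariant $(10,6,1)$: there is a relation $f_C+f_{L_1}+f_{L_2}\in\langle2\rangle\oplus A_1^9$, so these glue vectors contribute index $4$, cutting the discriminant length down to $6$. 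For a point of $\widetilde{U}$, with $f\colon X\to\proj^2$ the right covering map and $X^\iota=F_C+F_{L_1}+F_{L_2}$, the labelling and Lemma \ref{L_+ and A-D-E} yield a marking $j\colon L_+\xrightarrow{\ \sim\ }L_+(X,\iota)$ with $h\mapsto[f^\ast\Oplane(1)]$, $e_{ij}\mapsto[f^{-1}(p_{ij})]$, $e_p\mapsto[f^{-1}(p)]$, $f_C\mapsto[F_C]$, $f_{L_i}\mapsto[F_{L_i}]$; passing to periods gives a lift $\tilde p\colon\widetilde{U}\to\cove_{10,6,1}$, which is a morphism by Borel's theorem \cite{Bo} and, since $\PGL_3$ acts trivially on $NS_{\proj^2}$, descends to $\widetilde{\mathcal{P}}\colon\widetilde{U}/\PGL_3\dashrightarrow\cove_{10,6,1}$.

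I would then check that the $\tilde p$-fibres are $\PGL_3$-orbits by the standard argument: a coincidence of periods yields a Hodge isometry $\Phi$ compatible with the markings, hence, using Lemma \ref{ample class} and the Torelli theorem, an isomorphism $\varphi\colon X\to X'$ with $\varphi^\ast=\Phi$; Lemma \ref{covering map & LB} applied with $L=\Oplane(1)$ then produces $\psi\in\PGL_3$ with $\psi\circ f=f'\circ\varphi$, and $\psi$ carries $B$ onto $B'$, the labelled nodes onto the primed ones (read off from $j(e_{ij})$, $j(e_p)$), and, being linear, the decomposition $L_1+L_2$ onto $L_1'+L_2'$ compatibly with the ordering. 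Since $\dim U=\dim|\Oplane(4)|+4=14+4=18$ (the reducible conics form a $4$-dimensional subvariety of $|\Oplane(2)|$) and $\dim\PGL_3=8$, we get $\dim(\widetilde{U}/\PGL_3)=10=\dim\cove_{10,6,1}$, so $\widetilde{\mathcal{P}}$ is birational, and as $\PGL_3$ acts almost freely on $U$ we conclude $\deg\mathcal{P}=|\Or(D_{L_+})|\big/\bigl(2\cdot(4!)^2\bigr)$.

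The last step is where the real work lies: one must pin down the isometry class of the $2$-elementary form $D_{L_+}$ precisely enough to compute, via \cite{M-S} (Corollary 2.4 and Lemma 2.5), that $|\Or(D_{L_+})|=2\cdot(4!)^2=1152$ --- i.e.\ that its orthogonal group is the combinatorial group $\mathfrak{S}_2\ltimes(\mathfrak{S}_4)^2$ of the cover --- whence $\deg\mathcal{P}=1$. A related point needing honest verification is that the monodromy of the universal sextic over $U$ on the nine nodes is no larger than this evident group (it fixes $p$, swaps the two quadruples, and realizes full $\mathfrak{S}_4$ within each, the latter because the dual curve of a smooth quartic is irreducible with simple generic branching), so that $\widetilde{U}$ is genuinely the right cover. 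Both checks are routine but cannot be skipped.
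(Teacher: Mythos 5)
Your proposal is correct and follows essentially the same route as the paper: the same cover $\widetilde{U}$ of degree $2\cdot(4!)^2$, the same lift to $\cove_{10,6,1}$, and the same degree comparison; the paper closes the computation you flag by observing that $L_+\simeq U\oplus D_4\oplus A_1^4$, whence $|{\Or}(D_{L_+})|=2^4\cdot|{\Or}^+(4,2)|=2^4\cdot72=1152$ by \cite{M-S}. The monodromy check in your last paragraph is not actually needed: generic injectivity of $\lift$ together with $\dim(\widetilde{U}/{\PGL}_3)=10=\dim\cove_{10,6,1}$ and the irreducibility of $\cove_{10,6,1}$ already forces $\widetilde{U}/{\PGL}_3$ to be irreducible and $\lift$ birational, which is all the recipe of \S\ref{ssec: recipe} requires.
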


\begin{proof}
We define an $\frak{S}_2\ltimes(\frak{S}_4)^2$-cover of $U$ as the locus 
$\widetilde{U}\subset U\times({\proj}^2)^4\times({\proj}^2)^4$ of 
those $(C, Q, p_{1+}, p_{2+}, \cdots, p_{4-})$ such that 
$C\cap Q=\{ p_{1+}, p_{2+}, \cdots, p_{4-}\}$ and 
that $\{ p_{i+}\}_{i=1}^{4}$ belong to a same component of $Q$. 
By considering $\widetilde{U}$, the eight nodes $C\cap Q$ and the two components of $Q$ are 
labelled compatibly (cf. Example \ref{ex:3}). 
The rest node of $C+Q$, ${\rm Sing}(Q)$, is distinguished from those eight by the decomposition of $C+Q$. 
Then $\mathcal{P}$ will lift to a birational map 
$\widetilde{U}/{\PGL}_3\dashrightarrow{\cove}_{10,6,1}$. 
The projection $\widetilde{U}/{\PGL}_3\to U/{\PGL}_3$ is an $\frak{S}_2\ltimes(\frak{S}_4)^2$-covering, 
while the Galois group of ${\cove}_{10,6,1}\to\mathcal{M}_{10,6,1}$ is 
${\Or}(D_{L_+})$ for the lattice $L_+=U\oplus D_4\oplus A_1^4$. 
Since $|{\Or}(D_{L_+})|=2^4\cdot|{\Or}^+(4, 2)|=2^4\cdot72$ by \cite{M-S}, 
$\mathcal{P}$ is then birational. 
\end{proof}

\begin{proposition}\label{rational (10,6,1)}
The quotient $U/{\PGL}_3$ is rational. 
Hence $\mathcal{M}_{10,6,1}$ is rational. 
\end{proposition}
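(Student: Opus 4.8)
The plan is to mimic the pattern already used for $\mathcal{M}_{6,4,0}$ and $\mathcal{M}_{9,7,1}$: exhibit the quotient as (birationally) a projective bundle over a rational moduli space, via a no-name argument. Here $U$ is an open set of $|{\Oplane}(4)|\times|{\Oplane}(2)|$ cut out by the condition that $Q$ be a \emph{pair of distinct lines}, so the second factor is really the variety of unordered pairs of lines, which is birational to $S^2({\proj}^2)^{\vee}\cong{\proj}(S^2 H^0({\Oplane}(1))^\vee)$ — this is ${\proj}^5$, but it is \emph{not} linearly embedded as $|{\Oplane}(2)|$; rather it sits inside $|{\Oplane}(2)|$ as the discriminant hypersurface of rank-$\le2$ conics, which is an ${\aut}({\proj}^2)$-invariant rational subvariety. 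So $U$ is birational to a vector-bundle-like object over the first factor $|{\Oplane}(4)|$: over a general quartic $C$, the fibre is an open subset of the variety of line-pairs, i.e. of $S^2(\check{{\proj}^2})$.

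First I would set up the ${\PGL}_3$-equivariant projection $U \dashrightarrow |{\Oplane}(4)|$, $(C,Q)\mapsto C$. Recall from \S\ref{ssec:(9,7)} that $|{\Oplane}(4)|/{\PGL}_3$ is naturally birational to $\mathcal{M}_3$, and that ${\PGL}_3$ acts (almost) freely on $|{\Oplane}(4)|$ since a general plane quartic has no automorphisms. The fibre of the projection over $C$ is an open subset of the space of pairs of lines, which is ${\PGL}_3$-birational to ${\proj}(S^2 W^\vee)$ where $W=H^0({\Oplane}(1))$. This is the projectivization of a ${\PGL}_3$-linearized vector bundle? Not quite — $S^2 W^\vee$ is $\SL_3$-linearized but the central $\mu_3\subset\SL_3$ acts by a nontrivial scalar, so it descends only to a projective bundle, not a vector bundle, over $|{\Oplane}(4)|/{\PGL}_3$. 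To fix this I would twist by a suitable power of the tautological bundle $\mathcal{L}$ over $|{\Oplane}(4)|$, exactly as in the proof of Proposition~\ref{rational (6,4,0)}: $e^{2\pi i/3}I$ acts on $S^2 W^\vee$ by $e^{-4\pi i/3}$ (degree $-2$ in $W$, hence weight $\equiv -2\equiv 1$) and on $\mathcal{L}$ by $e^{2\pi i/3}$, so an appropriate tensor $\mathcal{E}\otimes\mathcal{L}^{\otimes k}$ is genuinely ${\PGL}_3$-linearized while ${\proj}(\mathcal{E}\otimes\mathcal{L}^{\otimes k})\cong{\proj}\mathcal{E}$ canonically. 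Apply Proposition~\ref{no-name 2} (the no-name lemma) to conclude
\begin{equation*}
U/{\PGL}_3 \;\sim\; {\proj}^N\times\bigl(|{\Oplane}(4)|/{\PGL}_3\bigr)\;\sim\;{\proj}^N\times\mathcal{M}_3,
\end{equation*}
with $N=5$. Since $\mathcal{M}_3$ is rational by Katsylo \cite{Ka3}, the product is rational, hence so is $\mathcal{M}_{10,6,1}$ by Proposition~\ref{birat (10,6,1)}.

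The main obstacle I anticipate is the careful identification of the fibre of $U\dashrightarrow|{\Oplane}(4)|$ as a \emph{projective space} (not merely some rational variety) on which ${\PGL}_3$, or rather the stabilizer structure, acts in the right linearized way: one must check that the line-pair variety $S^2(\check{{\proj}^2})$ is ${\PGL}_3$-equivariantly birational to ${\proj}(S^2 W^\vee)$ compatibly with a global bundle structure over the base, and pin down the scalar action of $\mu_3$ to know which twist $\mathcal{L}^{\otimes k}$ to use. A cleaner alternative, and probably the route I would actually take, is to bypass the quartic and instead project $U$ onto the line-pair factor: $(C,Q)\mapsto Q$. The group ${\PGL}_3$ acts on the space of (ordered or unordered) line-pairs with a dense orbit — three general lines, or a general pair, can be normalized — so by the slice method (Proposition~\ref{slice}) $U/{\PGL}_3$ is birational to $\varphi^{-1}(Q_0)/G$ where $G$ is the (connected, solvable) stabilizer of a fixed line-pair $Q_0$, and $\varphi^{-1}(Q_0)$ is an open subset of the \emph{linear} system of quartics singular at the two prescribed points (or through prescribed points), i.e. a linear representation of $G$. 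Then Miyata's theorem~\ref{Miyata} gives rationality directly, with no need to invoke $\mathcal{M}_3$. Either way the computation is routine; the only genuine care needed is verifying almost-transitivity of the ${\PGL}_3$-action on line-pairs and solvability of the relevant stabilizer, both of which are elementary.
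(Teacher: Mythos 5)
Both of your routes have a genuine gap, and in both cases it is the one you half-suspected. In Route 1 the fibre of $U\dashrightarrow|{\Oplane}(4)|$ over a general quartic $C$ is the variety of unordered pairs of distinct lines, i.e. an open subset of $S^2\check{{\proj}}^2$, which is $4$-dimensional; it is \emph{not} birational to ${\proj}(S^2W^{\vee})\cong{\proj}^5$. The map $\{L_1,L_2\}\mapsto L_1L_2$ embeds $S^2\check{{\proj}}^2$ into $|{\Oplane}(2)|$ as the cubic discriminant hypersurface of rank-$\le2$ conics, so the fibre is not (an open subset of) a linear subsystem, and Propositions \ref{no-name 1}/\ref{no-name 2} — which apply to vector bundles and their projectivizations — cannot be invoked for this fibration no matter how you twist by $\mathcal{L}$. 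Your conclusion $U/{\PGL}_3\sim{\proj}^5\times\mathcal{M}_3$ already fails a dimension count: it has dimension $11$, whereas $\dim U/{\PGL}_3=\dim\mathcal{M}_{10,6,1}=10$. The repair is exactly what the paper does: identify the fibre over $C$ with $S^2|K_C|$ via the isomorphism $|{\Oplane}(1)|\to|K_C|$, so that $U/{\PGL}_3$ becomes the \emph{relative symmetric square} of ${\proj}\mathcal{F}$, $\mathcal{F}=\pi_{\ast}K_{\pi}$, over $\mathcal{M}_3$; since ${\proj}\mathcal{F}$ is the projectivization of a rank-$3$ bundle it is birationally $\mathcal{M}_3\times{\proj}^2$, whence $U/{\PGL}_3\sim\mathcal{M}_3\times S^2{\proj}^2$, and $S^2{\proj}^2\sim({\C}^2\times{\C}^2)/\frak{S}_2$ is rational.

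Route 2 fails for a different reason: the stabilizer in ${\PGL}_3$ of an unordered pair of distinct lines is \emph{not connected}. In coordinates with $Q_0=\{xy=0\}$ it contains the swap $x\leftrightarrow y$, so it is an extension of $\frak{S}_2$ by a connected solvable group. Miyata's theorem (Theorem \ref{Miyata}) requires a connected solvable group, so the slice method leaves you with a residual $\frak{S}_2$-quotient of a $10$-dimensional linear-quotient variety, and rationality of such a finite quotient is not automatic. (Passing to ordered pairs does not help, since the conic $Q$ carries no preferred ordering of its components; you would reintroduce the same $\frak{S}_2$.) If you want to avoid $\mathcal{M}_3$ altogether, the paper's own remark after Corollary \ref{fixed curve (11,5)} points to a working alternative — slice over the point configuration $C\cap Q\setminus{\rm Sing}(Q)$ and then apply the no-name lemma — but that is a different argument from either of yours.
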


\begin{proof}
We keep the notation in the proof of Proposition \ref{rational (9,7)}. 
Let $\mathcal{F}$ be the bundle $\pi_{\ast}K_{\pi}$ over $\mathcal{M}_3$. 
By restriction, the space of singular plane conics is identified with 
the symmetric product of the canonical system of every smooth quartic. 
This implies that $U/{\PGL}_3$ is birational to the symmetric product of 
${\proj}\mathcal{F}$ over $\mathcal{M}_3$. 
Since ${\proj}\mathcal{F}\sim\mathcal{M}_3\times{\proj}^2$, 
we have 
$U/{\PGL}_3 \sim \mathcal{M}_3\times S^2{\proj}^2$. 
Then $S^2{\proj}^2$ 
is birational to the quotient of ${\C}^2\times{\C}^2$ by the permutation, and so is rational. 
Since $\mathcal{M}_3$ is rational (\cite{Ka3}), our assertion is proved. 
\end{proof}

\begin{corollary}\label{fixed curve (10,6,1)} 
The fixed curve map $\mathcal{M}_{10,6,1}\to\mathcal{M}_3$ is dominant 
with general fibers being birationally identified with the symmetric products of the canonical systems. 
\end{corollary}

\subsection{$\mathcal{M}_{10,6,0}$ and hyperelliptic curves}\label{ssec:(10,6,0)}

We construct 2-elementary $K3$ surfaces using curves on the Hirzebruch surface ${\F}_4$. 
We keep the notation of \S \ref{Sec:Hirze}. 
Let $U\subset|L_{2,0}|\times |L_{1,0}|$ be the open set of pairs $(C, H)$ such that 
$C$ and $H$ are smooth and transverse to each other. 
The 2-elementary $K3$ surface $(X, \iota)$ associated to the nodal $-2K_{{\F}_4}$-curve $C+H+\Sigma$ 
has invariant $(g, k)=(3, 2)$. 
Since the strict transform of $C-H-\Sigma$ in $Y'=X/\iota$ belongs to $4NS_{Y'}$, 
$(X, \iota)$ has parity $\delta=0$. 
Thus we obtain a period map 
$\mathcal{P}\colon U/{\aut}({\F}_4)\dashrightarrow\mathcal{M}_{10,6,0}$. 

\begin{proposition}\label{birat (10,6,0)} 
The map $\mathcal{P}$ is birational. 
\end{proposition}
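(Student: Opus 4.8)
The plan is to follow the recipe of \S\ref{ssec: recipe}, exactly as in the previous propositions of this section (e.g.\ Propositions \ref{birat (9,7)} and \ref{birat (10,6,1)}), now adapted to curves on $\F_4$. First I would introduce a suitable cover $\widetilde{U}\to U$ whose fiber over $(C,H)\in U$ records a labeling of the relevant singularities of $B=C+H+\Sigma$. The curve $B$ has as singularities the $2(L_{2,0}.L_{1,0})=8$ nodes in $C\cap H$, the $(L_{2,0}.\Sigma)=0$ points of $C\cap\Sigma$ (none, since $(L_{2,0}.\Sigma)=0$), and the $(L_{1,0}.\Sigma)=0$ points of $H\cap\Sigma$ — so in fact one needs to recount intersection numbers: $(L_{2,0}.L_{1,0})=(2\Sigma+8F)(\Sigma+4F)$ on $\F_4$, giving the actual node count, $(L_{2,0}.\Sigma)$ the $C\cap\Sigma$ count, and $(L_{1,0}.\Sigma)$ the $H\cap\Sigma$ count; the nodes coming from different intersection loci are distinguished a priori by the irreducible decomposition of $B$, so only the nodes within a single such locus need to be labeled by $\widetilde{U}$. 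Thus $\widetilde{U}\to U$ is a product of symmetric-group covers of the appropriate orders.

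Next I would build the lift $\widetilde{p}\colon\widetilde{U}\to\cove_{10,6,0}$: using Lemma \ref{L_+ and A-D-E} the labeling together with the natural basis $\{L_{1,0},L_{0,1}\}$ of $NS_{\F_4}$ and the classes of the components $F_i$ of $X^\iota$ determines a marking $j\colon L_+\to L_+(X,\iota)$, where $L_+$ is an explicit even $2$-elementary lattice of main invariant $(10,6,0)$ assembled from $U$ (or $U(2)$), copies of $A_1$, and overlattice vectors as in Examples \ref{ex:1}--\ref{ex:4}. Borel's extension theorem makes $\widetilde{p}$ a morphism. Since $\aut(\F_4)$ acts trivially on $NS_{\F_4}$, the map $\widetilde{p}$ is $\aut(\F_4)$-invariant and descends to $\lift\colon\widetilde{U}/\aut(\F_4)\dashrightarrow\cove_{10,6,0}$.

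Then I would prove $\lift$ is generically injective by showing the $\widetilde{p}$-fibers are $\aut(\F_4)$-orbits, following step (4) of the recipe: from $\widetilde{p}(C,H,\dots)=\widetilde{p}(C',H',\dots)$ one gets a Hodge isometry $\Phi$ with $\Phi\circ j'=j$, which preserves the ample cones by Lemma \ref{ample class}, hence an isomorphism $\varphi\colon X\to X'$ by Torelli; since $\varphi^\ast(f'^\ast L_{1,0})=f^\ast L_{1,0}$, Lemma \ref{covering map \& LB} yields $\psi\in\aut(\F_4)$ with $\psi\circ f=f'\circ\varphi$ — here I must be slightly careful because $\phi_{L_{1,0}}$ contracts $\Sigma$, so as in Example \ref{ex:2} I would argue that $\psi$ (a priori an automorphism of the quadric cone $\phi_{L_{1,0}}(\F_4)$) fixes the vertex, which is forced since the vertex corresponds to a distinguished high-order singularity of the branch locus ($\Sigma$ is a component of $B$), and therefore $\psi$ lifts to $\F_4$. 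Comparing $B$, $B'$ as branch loci and the labeled points then gives $\psi(C,H,\dots)=(C',H',\dots)$, and $\psi\in G=\aut(\F_4)$ since $\aut(\F_4)$ acts trivially on $NS$. Once $\dim(\widetilde{U}/\aut(\F_4))=\dim\cove_{10,6,0}=20-10=10$ is checked, irreducibility of $\cove_{10,6,0}$ forces $\lift$ birational. Finally, $\deg(\mathcal{P})$ equals $|\Or(D_{L_+})|$ divided by the degree of $\widetilde{U}/\aut(\F_4)\to U/\aut(\F_4)$; I would compute $|\Or(D_{L_+})|$ via \cite{M-S} after identifying $L_+$ up to isometry (likely $L_+\simeq U\oplus D_4\oplus A_1^{?}$ or similar, as in Example \ref{ex:3} and Proposition \ref{birat (10,6,1)}), and the covering degree combinatorially, and check the two agree.

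The main obstacle I expect is the bookkeeping: getting the intersection numbers on $\F_4$ right so as to count the singularities of $C+H+\Sigma$ correctly, constructing the reference lattice $L_+$ with the precise overlattice vectors so that $j$ is genuinely an isometry onto $L_+(X,\iota)$ (Lemma \ref{L_+ and A-D-E}), and then matching $|\Or(D_{L_+})|$ against the explicit degree of $\widetilde{U}/\aut(\F_4)\to U/\aut(\F_4)$. The subtlety about the contracted curve $\Sigma$ in Lemma \ref{covering map \& LB} is a real but routine point, handled exactly as in Example \ref{ex:2}. Everything else is parallel to the worked cases already in the paper.
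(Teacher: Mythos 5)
Your proposal is correct and follows essentially the same route as the paper: an $\frak{S}_8$-cover labeling the eight nodes $C\cap H$ (note $(L_{2,0}.L_{1,0})=(2\Sigma+8F).(\Sigma+4F)=8$, not $2(L_{2,0}.L_{1,0})$, and $C\cap\Sigma=H\cap\Sigma=\emptyset$, so these are the only singularities of $B$), a birational lift to $\cove_{10,6,0}$ via the recipe, and the degree comparison $|\frak{S}_8|=8!=|{\Or}^+(6,2)|=|{\Or}(D_{L_+})|$ for $L_+\simeq U(2)\oplus D_4^2$. The only detail left open in your sketch is this identification of $L_+$, which checks out exactly as you anticipate.
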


\begin{proof}
We consider an $\frak{S}_8$-covering $\widetilde{U}\to U$ whose fiber over a $(C, H)\in U$ 
corresponds to labelings of the eight nodes $C\cap H$ of $C+H+\Sigma$.  
In the familiar way we will obtain a birational lift 
$\widetilde{U}/{\aut}({\F}_4)\dashrightarrow{\cove}_{10,6,0}$ of $\mathcal{P}$. 
The variety ${\cove}_{10,6,0}$ is an ${\Or}(D_{L_+})$-cover of $\mathcal{M}_{10,6,0}$ 
for the lattice $L_+=U(2)\oplus D_4^2$. 
By \cite{M-S} we have $|{\Or}(D_{L_+})|=|{\Or}^+(6, 2)|=8!$. 
\end{proof}

\begin{proposition}\label{rational (10,6,0)}
The quotient $(|L_{2,0}|\times|L_{1,0}|)/{\aut}({\F}_4)$ is rational. 
Therefore $\mathcal{M}_{10,6,0}$ is rational. 
\end{proposition}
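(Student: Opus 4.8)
The plan is to mimic the proofs in the analogous cases (Propositions \ref{rational (7,5,1)}, \ref{rational (9,7)}, \ref{rational (10,4,0)}), using the no-name method after recognizing the base of a natural projection as a moduli space of curves. First I would record the basic geometry of smooth $C\in|L_{2,0}|$ on ${\F}_4$: by the discussion in \S\ref{ssec:action on linear system}, such a $C$ is a hyperelliptic curve of genus $n-1=3$, its $g^1_2$ being the restriction of $\pi$, and by Proposition \ref{HE moduli} there is a geometric quotient $|L_{2,0}|/{\aut}({\F}_4)\simeq\mathcal{H}_3$, the moduli space of genus $3$ hyperelliptic curves. Since $\mathcal{H}_3$ is rational (it is a classical fact; alternatively $\mathcal{H}_3\sim (H^0({\Oline}(8)))/({\C}^\times\times{\SL}_2)$, which is rational by Miyata's Theorem \ref{Miyata} or by Theorem \ref{Katsylo}), the quotient $|L_{2,0}|/{\aut}({\F}_4)$ is rational.

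Next I would apply the no-name method to the first projection $p_1\colon|L_{2,0}|\times|L_{1,0}|\to|L_{2,0}|$. The subtlety is linearization: by Proposition \ref{linearization Hirze}, since $n=4$ is even and $L_{1,0}=L_{1,0}$ has $b=0$ even, the bundle $L_{1,0}$ \emph{is} ${\aut}({\F}_4)$-linearized, so $H^0(L_{1,0})$ carries a linear ${\aut}({\F}_4)$-action and $|L_{1,0}|\times|L_{2,0}|\to|L_{2,0}|$ is ${\aut}({\F}_4)$-equivariantly birational to the projectivization of a linearized vector bundle over $|L_{2,0}|$. The other issue is that the no-name method requires an almost free action on the base; but ${\aut}({\F}_4)$ does \emph{not} act almost freely on $|L_{2,0}|$, since by Corollary \ref{stab HE} a general $C$ has stabilizer $\langle\iota_C\rangle\simeq{\Z}/2{\Z}$. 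So I would instead invoke Proposition \ref{no-name 2} with $G={\aut}({\F}_4)$, $G_0=\{g\in{\aut}({\F}_4):g|_{|L_{2,0}|}={\rm id}\}$. One must check $(i)$ $\overline{G}=G/G_0$ acts almost freely on $|L_{2,0}|$ — this is exactly Corollary \ref{stab HE}, provided $G_0=\langle\iota_C\rangle$ for the generic $C$, which needs a small argument that the generic-stabilizer subgroup is actually contained in the ineffectivity kernel; $(ii)$ $G_0$ acts on $E=p_1^\ast H^0(L_{1,0})$ by a scalar character $\alpha\colon G_0\to{\C}^\times$ — here I would use that $\iota_C$ acts on $H^0(L_{1,0})$, after identifying $|L_{1,0}|\simeq|K_C|$ by restriction (Lemma \ref{basics of |-2K| (8,8,1)}-type reasoning, or directly: $\phi_{L_{1,0}}$ realizes the canonical map of $C$ and $\iota_C$ acts as the hyperelliptic involution, hence by $-1$ on $H^0(K_C)=H^0(L_{1,0})$), so $\alpha(\iota_C)=-1$; $(iii)$ there is a $G$-linearized line bundle $L$ on $|L_{2,0}|$ on which $G_0$ acts by $\alpha$ — for this I would take $L=\mathcal{O}_{|L_{2,0}|}(1)$, or a twist of it, and verify that $\iota_C$ acts on the fiber by $-1$; if the tautological bundle does not have the right sign, one tensors by a one-dimensional representation of $G$ (as in the proof of Proposition \ref{linearization Hirze}) to correct it. Granting $(i)$–$(iii)$, Proposition \ref{no-name 2} gives ${\proj}E/G\sim{\proj}^N\times(|L_{2,0}|/G)\sim{\proj}^N\times\mathcal{H}_3$, which is rational, and hence $\mathcal{M}_{10,6,0}$ is rational by the birational period map $\mathcal{P}$ of Proposition \ref{birat (10,6,0)}.

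The main obstacle is the bookkeeping in verifying the hypotheses of Proposition \ref{no-name 2}, in particular pinning down the character $\alpha$ and producing the line bundle $L$ with matching $G_0$-action. The identification $|L_{1,0}|_C\simeq|K_C|$ and the fact that $\iota_C$ is the hyperelliptic involution make $\alpha(\iota_C)=-1$ essentially forced, so the real content is matching this sign on a linearized line bundle over $|L_{2,0}|$; the uniform trick (twist by a $1$-dimensional rep of ${\aut}({\F}_4)$, using that ${\aut}({\F}_4)$ has characters coming from the ${\C}^\times$ in $R$, cf.\ Proposition \ref{linearization Hirze}) should resolve it. An alternative, slicker route that sidesteps $G_0$ entirely: replace ${\aut}({\F}_4)$ by the group $\widetilde{G}={\SL}_2\ltimes R$ of \S\ref{ssec:linearization}, which acts on everything with kernel only ${\Z}/2{\Z}$; push the whole argument up to $\widetilde{G}$, where $L_{1,0}$ and $L_{2,0}$ are automatically linearized (Lemma \ref{cover linearization Hirze}), and use that the generic stabilizer there is still finite, reducing to the same $\mathcal{H}_3$. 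I expect one of these two variants to go through with only routine checking.
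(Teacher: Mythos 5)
Your reduction to the first projection $|L_{2,0}|\times|L_{1,0}|\to|L_{2,0}|$ and the no-name method has a genuine gap, and it occurs exactly at the point you flag as "bookkeeping". First, the identification of the fibre is wrong: by adjunction $K_C=(K_{{\F}_4}+C)|_C=L_{0,2}|_C$, so since $C\cap\Sigma=\emptyset$ one gets $L_{1,0}|_C\simeq L_{0,4}|_C\simeq 2K_C$, not $K_C$ (indeed $h^0(L_{1,0})=6=h^0(2K_C)$, whereas $h^0(K_C)=3$). Second, and fatally, the generic stabilizer $\langle\iota_C\rangle$ does \emph{not} act on $H^0(L_{1,0})\simeq H^0(2K_C)$ by a scalar: the invariant subspace is the $5$-dimensional part pulled back from ${\proj}^1$ (i.e. $\pi^{\ast}H^0(\Oline(4))$) and the anti-invariant part is $1$-dimensional, so $\iota_C$ acts nontrivially on ${\proj}H^0(L_{1,0})\simeq{\proj}^5$. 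This is precisely why Corollary \ref{fixed curve (10,6,0)} describes the fibres of $\mathcal{M}_{10,6,0}\to\mathcal{H}_3$ as the \emph{quotients} $|2K_C|/\iota_C$ rather than as projective spaces. Moreover, the "small argument that the generic stabilizer is contained in the ineffectivity kernel" cannot exist: $\iota_C$ depends on $C$ and moves a general $C'\in|L_{2,0}|$, so $G_0=\{g:g|_{|L_{2,0}|}={\rm id}\}$ is trivial while the generic stabilizer is ${\Z}/2$. Hence neither hypothesis $(i)$ nor $(ii)$ of Proposition \ref{no-name 2} holds in your setup, and passing to $\widetilde{G}={\SL}_2\ltimes R$ does not change this, since a lift of $\iota_C$ still sits in the stabilizer and still acts non-scalarly on $H^0(L_{1,0})$. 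Your argument therefore does not establish the birational triviality of the fibration over $\mathcal{H}_3$ (the fibre ${\proj}^5/\iota_C$ is rational, but that alone does not trivialize the family).

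The paper avoids this by slicing in the \emph{other} direction: project to $|L_{1,0}|$, on whose smooth members ${\aut}({\F}_4)$ acts transitively with stabilizer $G_H$ sitting in $1\to{\C}^{\times}\to G_H\to{\aut}(\Sigma)\to1$ (Proposition \ref{stabilizer of section}); the slice is the linear system $|L_{2,0}|$, on which a lift ${\SL}_2\times{\C}^{\times}$ of $G_H$ acts linearly, and Katsylo's Theorem \ref{Katsylo} applies. If you want to salvage your direction, you would have to handle the ${\Z}/2$-action on the ${\proj}^5$-bundle directly, e.g.\ by splitting off the rank-$5$ invariant subbundle $\pi^{\ast}H^0(\Oline(4))$ and arguing separately; but that is a different (and longer) proof.
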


\begin{proof}
This is a consequence of the slice method for the projection $|L_{2,0}|\times|L_{1,0}|\to|L_{1,0}|$, 
Proposition \ref{stabilizer of section}, and Katsylo's theorem \ref{Katsylo}. 
\end{proof}

Recall that the quotient $|L_{2,0}|/{\aut}({\F}_4)$ is birational to 
the moduli $\mathcal{H}_3$ of genus $3$ hyperelliptic curves (Proposition \ref{HE moduli}), 
and that the stabilizer in ${\aut}({\F}_4)$ of a general $C\in|L_{2,0}|$ 
is generated by its hyperelliptic involution (Corollary \ref{stab HE}). 
Since $\Sigma\cap C=\emptyset $, we have $L_{1,0}|_C \simeq L_{0,4}|_C \simeq 2K_C$. 
Then the restriction map $|L_{1,0}|\to|2K_C|$ is isomorphic because $h^0(L_{-1,0})=h^1(L_{-1,0})=0$. 
These show that 

\begin{corollary}\label{fixed curve (10,6,0)} 
The fixed curve map for $\mathcal{M}_{10,6,0}$ gives a dominant morphism 
$\mathcal{M}_{10,6,0}\to\mathcal{H}_3$ whose general fibers are birationally identified with 
the quotients of the bi-canonical systems by the hyperelliptic involutions. 
\end{corollary}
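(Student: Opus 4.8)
The plan is to carry the fixed curve map over to the birational model $U/{\aut}({\F}_4)$ furnished by the period map $\mathcal{P}$ of Proposition~\ref{birat (10,6,0)}, and then read off its fibres. For $(C,H)\in U$ the $-2K_{{\F}_4}$-curve $B=C+H+\Sigma$ has components $C$, $H$, $\Sigma$ of geometric genus $3$, $0$, $0$, so the genus $3$ component of the fixed locus of the associated $(X,\iota)$ is isomorphic to $C$; as recalled before Proposition~\ref{HE moduli}, $C$ is hyperelliptic with $g^1_2$ cut out by $|L_{0,1}|$ (the restriction of $\pi$). Hence under $\mathcal{P}$ the fixed curve map $F\colon\mathcal{M}_{10,6,0}\to\mathcal{H}_3$ becomes the ${\aut}({\F}_4)$-invariant map $(C,H)\mapsto C$, which factors through the projection $U/{\aut}({\F}_4)\dashrightarrow|L_{2,0}|/{\aut}({\F}_4)$ followed by the birational equivalence $|L_{2,0}|/{\aut}({\F}_4)\sim\mathcal{H}_3$ of Proposition~\ref{HE moduli}. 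In particular $F$ is dominant.

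Next I would describe the general fibre. For a general $C\in|L_{2,0}|$, the preimage in $U$ of the orbit ${\aut}({\F}_4)\cdot C$ is ${\aut}({\F}_4)\times_{{\rm Stab}(C)}{\rm pr}_1^{-1}(C)$, so, arguing as in the slice method, the fibre of $U/{\aut}({\F}_4)\dashrightarrow|L_{2,0}|/{\aut}({\F}_4)$ over $[C]$ is birational to ${\rm pr}_1^{-1}(C)/{\rm Stab}_{{\aut}({\F}_4)}(C)$. Here ${\rm pr}_1^{-1}(C)=\{H\in|L_{1,0}|:(C,H)\in U\}$ is a dense open subset of $|L_{1,0}|$, and by Corollary~\ref{stab HE} (with $n=4$) the stabilizer of $C$ is $\langle\iota_C\rangle\simeq{\Z}/2{\Z}$, where $\iota_C$ extends the hyperelliptic involution of $C$ to ${\F}_4$.

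It remains to identify $|L_{1,0}|$ with the bi-canonical system of $C$. Since $(L_{2,0}.\Sigma)=0$, the curve $C$ is disjoint from $\Sigma$, whence $\mathcal{O}_{{\F}_4}(\Sigma)|_C\simeq\mathcal{O}_C$ and so $L_{1,0}|_C\simeq L_{0,4}|_C$; by adjunction $K_C\simeq L_{0,2}|_C$, hence $L_{1,0}|_C\simeq 2K_C$. As $h^0(L_{-1,0})=h^1(L_{-1,0})=0$, restriction gives an isomorphism $H^0({\F}_4,L_{1,0})\simeq H^0(C,2K_C)$, and being induced by an automorphism of ${\F}_4$ it is $\iota_C$-equivariant, the action of $\iota_C$ on the target being that of the hyperelliptic involution. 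Combining, the general fibre of $F$ over $[C]\in\mathcal{H}_3$ is birational to $|2K_C|/\langle\iota_C\rangle$, the quotient of the bi-canonical system by the hyperelliptic involution, as claimed. The only point requiring a little care is the first identification in the second paragraph: the ${\aut}({\F}_4)$-action on $|L_{2,0}|$ is not almost transitive (its generic stabilizer is $\langle\iota_C\rangle$, not trivial), so Proposition~\ref{slice} does not apply literally; one should instead use the relative form of the slice method, i.e. base change to the generic point of $|L_{2,0}|/{\aut}({\F}_4)$ and invoke the slice method over that field.
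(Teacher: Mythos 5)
Your proof is correct and follows essentially the same route as the paper: transport the fixed curve map through the birational period map of Proposition~\ref{birat (10,6,0)}, identify the base with $\mathcal{H}_3$ via Proposition~\ref{HE moduli} and the generic stabilizer with $\langle\iota_C\rangle$ via Corollary~\ref{stab HE}, and identify $|L_{1,0}|$ with $|2K_C|$ by the adjunction computation $L_{1,0}|_C\simeq L_{0,4}|_C\simeq 2K_C$ together with $h^0(L_{-1,0})=h^1(L_{-1,0})=0$. Your closing remark that Proposition~\ref{slice} does not apply verbatim (the action on $|L_{2,0}|$ is not almost transitive and the generic stabilizer is nontrivial) and must be replaced by its fibered version over the generic point of $|L_{2,0}|/{\aut}({\F}_4)$ is a legitimate point of care that the paper leaves implicit.
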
 

We note that the hyperelliptic locus $\mathcal{H}_3$ is the theta-null divisor of $\mathcal{M}_3$.

\subsection{$\mathcal{M}_{11,5,1}$ and the universal genus $3$ curve}\label{ssec:(11,5)} 

Let $U\subset|{\Oplane}(4)|\times|{\Oplane}(2)|$ be the locus of pairs $(C, Q)$ such that 
$C$ is smooth, 
$Q$ is the union of distinct lines with $p={\rm Sing}(Q)$ lying on $C$, 
and each component of $Q$ is transverse to $C$. 
The point $p$ is a $D_4$-singularity of the sextic $C+Q$, 
and the rest singularities of $C+Q$ are the six nodes $C\cap Q\backslash p$. 
The 2-elementary $K3$ surface associated to $C+Q$ 
has invariant $(g, k)=(3, 3)$. 
Thus we obtain a period map $\mathcal{P}\colon U/{\PGL}_3\dashrightarrow \mathcal{M}_{11,5,1}$. 

\begin{proposition}\label{birat (11,5,1)}
The map $\mathcal{P}$ is birational. 
\end{proposition}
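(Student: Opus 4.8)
The statement to prove is that the period map $\mathcal{P}\colon U/{\PGL}_3\dashrightarrow\mathcal{M}_{11,5,1}$ is birational, where $U$ parametrizes pairs $(C,Q)$ with $C$ a smooth plane quartic, $Q$ a pair of distinct lines whose node $p={\rm Sing}(Q)$ lies on $C$, and each line of $Q$ transverse to $C$. The strategy I would follow is exactly the recipe of \S\ref{ssec: recipe}, mirroring the treatment of $\mathcal{M}_{9,7,1}$ and $\mathcal{M}_{10,6,1}$ in the two preceding subsections. First I would introduce an appropriate cover $\widetilde{U}\to U$ that labels the singularities of the sextic $C+Q$ in a ``reasonable'' way: the $D_4$-point $p$ is intrinsically distinguished (it is the unique non-nodal singularity, and it is the point where the two lines meet), while the six remaining nodes $C\cap Q\backslash p$ split into two groups of three according to which line of $Q$ they lie on. So $\widetilde{U}\subset U\times({\proj}^2)^3\times({\proj}^2)^3$ should be the locus of $(C,Q,p_{1+},p_{2+},p_{3+},p_{1-},p_{2-},p_{3-})$ with $C\cap Q\backslash p=\{p_{i\pm}\}$ and $\{p_{i+}\}$ all on one component of $Q$; this simultaneously labels the nodes and the two lines, hence also the three branches of $C+Q$ at $p$. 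The projection $\widetilde{U}\to U$ is then an $\frak{S}_2\ltimes(\frak{S}_3)^2$-covering.

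\textbf{Key steps.} Next I would construct the lifted period map. Using Lemma \ref{L_+ and A-D-E}, the labelling on $\widetilde{U}$ together with a basis of $NS_{{\proj}^2}$ determines a lattice marking $j\colon L_+\to L_+(X,\iota)$ of the associated $K3$, where $L_+$ is the even $2$-elementary lattice of main invariant $(11,5,1)$ built explicitly from $\langle2\rangle$, the root lattice $D_4$ (for the $D_4$-point), and an $A_1^6$ summand (for the six nodes), together with the classes of the seven fixed-curve components; one checks directly that $L_+$ is even, $2$-elementary, of the correct invariant. This yields $\tilde p\colon\widetilde{U}\to{\cove}_{11,5,1}$, a morphism by Borel extension, invariant under $G={\PGL}_3$ since ${\PGL}_3$ acts trivially on $NS_{{\proj}^2}$. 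To see the induced map ${\lift}\colon\widetilde{U}/{\PGL}_3\dashrightarrow{\cove}_{11,5,1}$ is generically injective, I would run the standard argument: two points with equal period give a Hodge isometry $\Phi$ compatible with the markings; Lemma \ref{ample class} shows $\Phi$ respects the ample cones, so Torelli gives $\varphi\colon X\to X'$; Lemma \ref{covering map & LB} applied to $L={\Oplane}(1)$ (via $\Phi(f'^{\ast}L)=f^{\ast}L$) produces $\psi\in{\PGL}_3$ with $\psi\circ f=f'\circ\varphi$, and then $\psi$ carries $(C,Q)$ with all labels to $(C',Q')$ with all labels. Since ${\dim}(\widetilde{U}/{\PGL}_3)={\dim}\,U-8$ equals ${\dim}\,{\moduli}=20-11=9$ (check: $\dim U = 14+2+2-1=\dots$, count the parameters for $C$, $Q$, and the incidence $p\in C$, then subtract $8$ for ${\PGL}_3$), the map ${\lift}$ is birational. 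Finally, comparing degrees: the projection $\widetilde{U}/{\PGL}_3\to U/{\PGL}_3$ has degree $2\cdot(3!)^2=72$, and ${\cove}_{11,5,1}\to\mathcal{M}_{11,5,1}$ is an ${\Or}(D_{L_+})$-covering; computing $|{\Or}(D_{L_+})|$ via \cite{M-S} — with $L_+\simeq U\oplus D_4\oplus A_1^4$ or the analogous decomposition, giving $|{\Or}(D_{L_+})|=2^?\cdot|{\Or}^{\pm}(?,2)|$ — and matching it against $72$ concludes that $\mathcal{P}$ has degree $1$.

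\textbf{Main obstacle.} The genuinely delicate point is the combinatorial bookkeeping that makes the labelling on $\widetilde{U}$ ``complete and reasonable'' — specifically, verifying that the three branches of $C+Q$ at the $D_4$-point are intrinsically distinguished (one branch from $C$, two from the lines of $Q$), since for a $D_4$-singularity the identification of the exceptional dual graph with the abstract graph is not unique until the branches are ordered (cf.\ the remark after Figure \ref{dual graph}). Here the irreducible decomposition $C+Q_+ +Q_-$ of the sextic, together with the labelling of the two lines recorded in $\widetilde{U}$, pins this down. The second point requiring care is the exact evaluation of $|{\Or}(D_{L_+})|$ and the matching dimension count; these are routine with the cited lemmas of \cite{M-S} but must come out to exactly $72$ for the argument to close, which in turn forces the precise form of the lattice $L_+$ one writes down. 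Everything else is parallel to Examples \ref{ex:3} and \ref{ex:4} and the proof of Proposition \ref{birat (10,6,1)}.
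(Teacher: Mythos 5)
Your proposal follows exactly the paper's proof: the same cover $\widetilde{U}\to U$ labelling the six nodes compatibly with the two components of $Q$ (an $\frak{S}_2\ltimes(\frak{S}_3)^2$-covering of degree $72$), the same recipe for the lift to ${\cove}_{11,5,1}$, and the same degree comparison. The only item you left open resolves as follows: the invariant lattice is $L_+\simeq U\oplus D_4^2\oplus A_1$ (your tentative $U\oplus D_4\oplus A_1^4$ has rank $10$ and length $6$, which is the lattice for $\mathcal{M}_{10,6,1}$, not $(11,5,1)$), and by \cite{M-S} one gets $|{\Or}(D_{L_+})|=|{\Or}^+(4,2)|=72=2\cdot(3!)^2$, closing the argument as you anticipated.
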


\begin{proof}
Let $\widetilde{U}\subset U\times({\proj}^2)^6$ be the locus of those $(C, Q, p_1\cdots, p_6)$ 
such that $C\cap Q\backslash {\rm Sing}(Q)=\{p_i\}_{i=1}^{6}$ 
and that $p_1, p_2, p_3$ lie on the same component of $Q$. 
For a point $(C,\cdots, p_6)$ of $\widetilde{U}$, 
the six nodes of $C+Q$ and the two components of $Q$ are labelled in a compatible way. 
In particular, the three tangents at the $D_4$-point of $C+Q$ are also distinguished. 
Thus $\mathcal{P}$ lifts to a birational map 
$\widetilde{U}/{\PGL}_3\dashrightarrow{\cove}_{11,5,1}$. 
The space $\widetilde{U}/{\PGL}_3$ is an $\frak{S}_2\ltimes(\frak{S}_3)^2$-cover of $U/{\PGL}_3$, 
while ${\cove}_{11,5,1}$ is an ${\Or}(D_{L_+})$-cover of $\mathcal{M}_{11,5,1}$ 
for the lattice $L_+=U\oplus D_4^2\oplus A_1$. 
By \cite{M-S} we calculate $|{\Or}(D_{L_+})|=|{\Or}^+(4, 2)|=72$. 
\end{proof}

\begin{proposition}\label{rational (11,5)}
The quotient $U/{\PGL}_3$ is rational. 
Hence $\mathcal{M}_{11,5,1}$ is rational. 
\end{proposition}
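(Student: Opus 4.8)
The plan is to realise $U/{\PGL}_3$ as ${\proj}^2\times(\mathcal{X}/{\PGL}_3)$, where $\mathcal{X}$ denotes the universal plane quartic, and then to identify $\mathcal{X}/{\PGL}_3$ with the universal genus $3$ curve.

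First I would build the fibration. Sending $(C,Q)\in U$ to $(C,p)$ with $p={\rm Sing}(Q)$ defines a ${\PGL}_3$-equivariant dominant map $U\dashrightarrow\mathcal{X}$, where $\mathcal{X}=\{(C,p)\in|{\Oplane}(4)|\times{\proj}^2:p\in C\}$. The fibre over a general $(C,p)$ consists of the unordered pairs of distinct lines through $p$ that are transverse to $C$; since the multiplication $(L_1,L_2)\mapsto L_1L_2$ identifies $S^2{\proj}(H^0(I_p(1)))$ with ${\proj}(S^2H^0(I_p(1)))$, this fibre is an open subset of the ${\proj}^2$-bundle ${\proj}\mathcal{G}$, where $\mathcal{G}$ is the pullback to $\mathcal{X}$ of $S^2\bigl(\Omega^1_{{\proj}^2}(1)\bigr)$ (by the Euler sequence, $H^0(I_p(1))$ is the fibre at $p$ of $\Omega^1_{{\proj}^2}(1)$). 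Thus $U$ is birational over $\mathcal{X}$ to ${\proj}\mathcal{G}$. The one point needing care here is that this fibre is genuinely the whole ${\proj}^2$ and not a proper subvariety, which holds because a generic pair of lines through a generic point of $C$ meets $C$ transversely.

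Next I would invoke the no-name lemma \ref{no-name 2} for $\mathcal{G}$, with $G={\SL}_3$ and $G_0=\mu_3=\ker({\SL}_3\to{\PGL}_3)$ (which acts trivially on $\mathcal{X}$). The bundle $\mathcal{G}$ is ${\SL}_3$-linearized; $\mu_3$ acts on $\Omega^1_{{\proj}^2}(1)$ by the scalar character $\zeta\mapsto\zeta^{-1}$, hence on $\mathcal{G}=S^2(\Omega^1_{{\proj}^2}(1))$ by $\zeta\mapsto\zeta$, and the ${\SL}_3$-linearized line bundle $L={\rm pr}_2^{\ast}{\Oplane}(-1)$ on $\mathcal{X}$ carries the same $\mu_3$-action. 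Moreover ${\PGL}_3$ acts almost freely on $\mathcal{X}$, since a general smooth plane quartic has trivial automorphism group and hence a general point on it has trivial stabilizer. Therefore ${\proj}\mathcal{G}/{\PGL}_3\sim{\proj}^2\times(\mathcal{X}/{\PGL}_3)$, and so $U/{\PGL}_3\sim{\proj}^2\times(\mathcal{X}/{\PGL}_3)$.

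Finally I would show that $\mathcal{X}/{\PGL}_3$, which is birational to the universal genus $3$ curve, is rational, using the slice method twice. The projection $\mathcal{X}\to{\proj}^2$, $(C,p)\mapsto p$, is ${\PGL}_3$-equivariant with ${\PGL}_3$ transitive on ${\proj}^2$, and the stabilizer of a point $p_0$ is a parabolic $P$, so $\mathcal{X}/{\PGL}_3\sim|I_{p_0}(4)|/P$. Then $C\mapsto T_{p_0}C$ gives a $P$-equivariant map $|I_{p_0}(4)|\dashrightarrow{\proj}^1$ onto the pencil of lines through $p_0$, again with transitive action, and with the stabilizer of a flag $(p_0,\ell_0)$ a Borel $B\subset{\PGL}_3$; the fibre is the linear subsystem ${\proj}V$ of quartics with $p_0\in C$ and $T_{p_0}C=\ell_0$, so $\mathcal{X}/{\PGL}_3\sim{\proj}V/B$. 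Replacing $B$ by the Borel $\widetilde B\subset{\SL}_3$ lying over it (the extra $\mu_3$ acts trivially on ${\proj}V$, and $\widetilde B$ acts linearly on $V$), we get ${\proj}V/B\sim{\proj}V/\widetilde B\sim V/(\widetilde B\times{\C}^{\times})$, which is rational by Miyata's theorem \ref{Miyata} since $\widetilde B\times{\C}^{\times}$ is connected solvable and $V$ is a linear representation of it. (Alternatively one may simply quote the known rationality of the universal genus $3$ curve.) Chaining the equivalences gives the rationality of $U/{\PGL}_3$, hence of $\mathcal{M}_{11,5,1}$. The whole argument is routine: the only genuinely delicate points are the $\mu_3$-bookkeeping in the no-name step and the verification that the fibre of $U\to\mathcal{X}$ is the full ${\proj}^2$.
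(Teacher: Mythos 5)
Your proof is correct and follows essentially the same route as the paper: fiber $U$ over the universal pointed quartic via $(C,Q)\mapsto(C,{\rm Sing}(Q))$, identify the fibre with the ${\proj}^2$ of line-pairs through $p$ (the paper phrases this as the symmetric square of the pencil $|K_C-p|$, which is the same thing since $K_C={\Oplane}(1)|_C$), split off the ${\proj}^2$ factor, and conclude by the rationality of the universal genus $3$ curve via the slice method and Miyata's theorem. Your explicit $\mu_3$-bookkeeping in the no-name step just makes precise the descent the paper leaves implicit.
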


\begin{proof}
Let $\pi\colon\mathcal{X}_3\to\mathcal{M}_3$ be the universal genus $3$ curve (over an open locus), 
and let $\mathcal{E}$ be the subbundle of $\pi^{\ast}\pi_{\ast}K_{\pi}$ 
whose fiber over a $(C, p)$ is $H^0(K_C-p)$. 
The natural map $U\to\mathcal{X}_3$, $(C, Q)\mapsto (C, {\rm Sing}(Q))$, shows that 
$U/{\PGL}_3$ is birational to the symmetric product of ${\proj}\mathcal{E}$ over $\mathcal{X}_3$. 
In particular, $U/{\PGL}_3$ is birational to 
${\proj}^2\times\mathcal{X}_3$. 
The space $\mathcal{X}_3$ is known to be rational: 
see \cite{Do} where this fact is attributed to Shepherd-Barron. 
It is a consequence of the slice method and Miyata's theorem  
(associate to a pointed quartic $(C, p)$ the pointed tangent line $(T_pC, p)$).  
\end{proof}

\begin{corollary}\label{fixed curve (11,5)}
The fixed curve map for $\mathcal{M}_{11,5,1}$ lifts to a dominant map 
$\mathcal{M}_{11,5,1}\dashrightarrow\mathcal{X}_3$ whose fiber over 
a general $(C, p)$ is birationally identified with the symmetric product of the pencil $|K_C-p|$. 
\end{corollary}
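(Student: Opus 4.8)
The plan is to read the statement off the birational model of $\mathcal{M}_{11,5,1}$ already constructed. By Proposition \ref{birat (11,5,1)} the period map $\mathcal{P}\colon U/\PGL_3\dashrightarrow\mathcal{M}_{11,5,1}$ is birational, where $U\subset|\Oplane(4)|\times|\Oplane(2)|$ consists of pairs $(C,Q)$ with $C$ a smooth quartic and $Q=L_1+L_2$ a pair of distinct lines whose node $p={\rm Sing}(Q)$ lies on $C$. First I would identify the fixed curve map under $\mathcal{P}$: the genus $3$ component $C^g$ of $X^\iota$ is exactly the quartic $C$, and since $C\subset\proj^2$ is a canonical model we have $K_C\simeq\Oplane(1)|_C$. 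Thus $F$ is the composition of $\mathcal{P}^{-1}$ with the projection $U/\PGL_3\dashrightarrow|\Oplane(4)|/\PGL_3\sim\mathcal{M}_3$.

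To promote $F$ to a map into $\mathcal{X}_3$ I would exhibit a canonical marked point on $C^g$. The natural choice is the $D_4$-singularity $p$ of the branch sextic $C+Q$, which lies on $C$. Since $p$ is distinguished from the six nodes of $C+Q$ by its singularity type, it is recovered intrinsically from $(X,\iota)$ (this is precisely the labeling used in the proof of Proposition \ref{birat (11,5,1)}; equivalently $p$ is read off from the position of the $D_4$ summand in the decomposition of $L_+(X,\iota)$). Hence $(X,\iota)\mapsto(C,p)$ is a well-defined $\PGL_3$-invariant assignment, giving the desired lift $\mathcal{M}_{11,5,1}\dashrightarrow\mathcal{X}_3$. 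I would then check that this lift agrees with the projection to $\mathcal{X}_3$ in the description $U/\PGL_3\sim S^2(\proj\mathcal{E})$ over $\mathcal{X}_3$ from Proposition \ref{rational (11,5)}, under $(C,Q)\mapsto(C,{\rm Sing}(Q))$; dominance is then immediate, since a general pointed quartic $(C,p)$ is attained by choosing $Q$ to be any two distinct general lines through $p$.

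For the fiber over a general $(C,p)$, the remaining freedom is the choice of the conic $Q$, i.e.\ of the unordered pair $\{L_1,L_2\}$ of distinct lines through $p$. Restriction to $C$ sends a line $L$ through $p$ to the divisor $L|_C-p$; since $C$ lies on no line, the restriction $H^0(\Oplane(1)-p)\to H^0(K_C-p)$ is an isomorphism of two-dimensional spaces, so the pencil of lines through $p$ is identified with the pencil $|K_C-p|$. Passing from the conic $Q=L_1+L_2$ to the unordered pair of its restrictions, the fiber is birationally the symmetric product $S^2|K_C-p|$, matching the $S^2\proj\mathcal{E}$ factor in $U/\PGL_3$.

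The main obstacle is the well-definedness of the lift at the level of $\mathcal{M}_{11,5,1}$ rather than of $U/\PGL_3$: one must ensure the marked point $p$ is intrinsic to the abstract pair $(X,\iota)$. This reduces to the observation, already implicit in Proposition \ref{birat (11,5,1)}, that among the singularities of the branch sextic only $p$ is a $D_4$-point, so it is canonically singled out, and it sits on $C^g$ as the common point of the exceptional configuration generating the $D_4$ root sublattice. The two remaining verifications --- the isomorphism $H^0(\Oplane(1)-p)\cong H^0(K_C-p)$ and the ordered-to-unordered passage for the pair of lines --- are routine.
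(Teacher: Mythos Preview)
Your proposal is correct and follows the same route as the paper: the corollary is read off directly from the proof of Proposition~\ref{rational (11,5)}, where the map $U\to\mathcal{X}_3$, $(C,Q)\mapsto(C,{\rm Sing}(Q))$ already identifies $U/{\PGL}_3$ birationally with the symmetric product of ${\proj}\mathcal{E}$ over $\mathcal{X}_3$, with $\mathcal{E}_{(C,p)}=H^0(K_C-p)$. Your discussion of the ``main obstacle'' (intrinsicality of $p$ to $(X,\iota)$) is unnecessary: since $\mathcal{P}$ is birational by Proposition~\ref{birat (11,5,1)}, the lift is simply $\mathcal{P}^{-1}$ composed with the map $U/{\PGL}_3\to\mathcal{X}_3$, and well-definedness is automatic.
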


\begin{remark}
One can also prove Propositions \ref{rational (10,6,1)} and \ref{rational (11,5)} 
by considering the configuration $C\cap Q\backslash {\rm Sing}(Q)$ of points 
and using the no-name lemma. 
This avoids resorting to the rationality of $\mathcal{M}_3$. 
\end{remark}

\subsection{The rationality of $\mathcal{M}_{8+k,8-k,\delta}$ with $k\geq4$}\label{ssec:(8+k,8-k)}

For $4\leq k\leq6$ 
let $U_k\subset|{\Oplane}(4)|\times|{\Oplane}(1)|^2$ be the locus of 
triplets $(C, L_1, L_2)$ such that 
$({\rm i})$ $C$ is smooth and passes $p=L_1\cap L_2$, 
$({\rm ii})$ $L_2$ is transverse to $C$, and 
$({\rm iii})$ $L_1$ is tangent to $C$ at $p$ with multiplicity $k-2$ and transverse to $C$ elsewhere. 
For $k=5, 6$ the point $p$ is an inflectional point of $C$ of order $k-2$. 
The sextic $B=C+L_1+L_2$ has the $D_{2k-2}$-point $p$, 
the $9-k$ nodes $(L_1+L_2)\cap C\backslash p$, and no other singularity. 
Taking the right resolution of $B$, 
we obtain a period map $\mathcal{P}_k\colon U_k/{\PGL}_3 \dashrightarrow \mathcal{M}_{8+k,8-k,\delta}$. 
Here $\delta=1$ for $k=4, 5$, and $\delta=0$ for $k=6$. 

\begin{proposition}\label{birat (8+k,8-k)} 
The map $\mathcal{P}_k$ is birational. 
\end{proposition}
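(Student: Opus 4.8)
The plan is to follow the recipe in \S\ref{ssec: recipe}, exactly as in Examples \ref{ex:3} and \ref{ex:4}, since this is a type-$(\beta)$ period map with $Y={\proj}^2$ and $G={\PGL}_3$, the curves $B=C+L_1+L_2$ having the prescribed singularities $D_{2k-2}$ and $(9-k)$ nodes. First I would introduce an appropriate cover $\widetilde{U}_k\to U_k$: since the two lines $L_1,L_2$ are already distinguished by the way they meet $C$, and the $D_{2k-2}$-point $p$ is distinguished from the nodes by its type, and the three branches of $B$ at $p$ are distinguished by the irreducible decomposition of $B$, the only choice left is a labeling of the $9-k$ nodes $(L_1+L_2)\cap C\setminus p$. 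So $\widetilde{U}_k\subset U_k\times({\proj}^2)^{9-k}$ is the locus of $(C,L_1,L_2,p_1,\dots,p_{9-k})$ with $(L_1+L_2)\cap C\setminus p=\{p_i\}$, and the projection $\widetilde{U}_k/{\PGL}_3\to U_k/{\PGL}_3$ has degree $|\frak{S}_{9-k}|=(9-k)!$ because ${\PGL}_3$ acts almost freely on $U_k$.

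Next I would write down the reference lattice $L_+$ of main invariant $(8+k,8-k,\delta)$, following the model of Example \ref{ex:4}: take $\langle2\rangle$ (for ${\Oplane}(1)$), an $A_1^{9-k}$ (for the nodes), and a $D_{2k-2}$ (for the chain over the $D_{2k-2}$-point, with the numbering of Figure \ref{dual graph}), expand the $D_{2k-2}$ root basis into an orthogonal $A_1^{2k-2}$ via the inductive relation $d_{2i+1}=2f_{2i+1}+d_{2i+2}+d_{2i}+d_{2i-1}$, and then adjoin explicit half-integral vectors $f_0,f_1,f_2$ (the classes of the strict transforms $F_0,F_1,F_2$ of $C,L_1,L_2$) chosen so that the resulting overlattice is even and 2-elementary with the right invariant; Lemma \ref{L_+ and A-D-E} guarantees this describes $L_+(X,\iota)$. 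Assigning $e_1,e_2$ to the branches of $C,L_1$ at $p$ gives a canonical isometry $D_{2k-2}\to\Lambda_p$, and the labeling $(p_1,\dots,p_{9-k})$ together with $h\mapsto[f^{\ast}{\Oplane}(1)]$ produces a lattice marking $j$, hence a lift $\tilde p\colon\widetilde{U}_k\to{\cove}_{8+k,8-k,\delta}$ by Borel extension, descending to ${\lift}\colon\widetilde{U}_k/{\PGL}_3\dashrightarrow{\cove}_{8+k,8-k,\delta}$. Generic injectivity of ${\lift}$ follows the standard Torelli argument: from a Hodge isometry respecting $j,j'$ one gets (via Lemma \ref{ample class}) preservation of ample cones, an isomorphism $\varphi\colon X\to X'$, then (via Lemma \ref{covering map & LB} applied to $L={\Oplane}(1)$, recovering $f$ by desingularizing $\phi_{f^{\ast}L}(X)$) an automorphism $\psi\in{\PGL}_3$ with $\psi\circ f=f'\circ\varphi$, which matches the sextics and the labeled nodes, so the $\tilde p$-fibers are ${\PGL}_3$-orbits. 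A dimension count ${\dim}(\widetilde{U}_k/{\PGL}_3)=20-(8+k)$ (checked as in Lemma \ref{lemma (5+k,5-k)}, namely $\dim U_k=30-k$ minus $8$) then forces ${\lift}$ to be birational.

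Finally I would identify the discriminant group. Computing $D_{L_+}$ from the explicit presentation should give $L_+\simeq U\oplus E_8\oplus A_1^{6-k}$ for $k=4,5$ and $L_+\simeq U(2)\oplus D_4^{?}$-type for $k=6$ — in any case $|{\Or}(D_{L_-})|$ should come out exactly $(9-k)!$ (e.g. $\frak{S}_3$ or $\frak{S}_2$ for small $9-k$), using \cite{M-S} Corollary 2.4 and Lemma 2.5. Since ${\lift}$ is birational, $\deg(\mathcal{P}_k)=|{\Or}(D_{L_-})|/(9-k)!=1$, so $\mathcal{P}_k$ is birational.

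The main obstacle I expect is purely bookkeeping: writing down the correct half-integral vectors $f_0,f_1,f_2$ so that the overlattice is genuinely even 2-elementary of the asserted $(r,a,\delta)$, and then correctly computing $|{\Or}(D_{L_-})|$ — in particular matching the parity $\delta$ (the case $k=6$, $\delta=0$ needs the divisibility criterion of \cite{A-N}, verified by checking $\sum(-1)^{n_i}B_i\in 4NS_{Y'}$, as in Example \ref{ex:3}) and confirming the orthogonal-group order agrees with $(9-k)!$. The geometric steps (transitivity, Torelli, recovery of $f$ from a line bundle) are routine given Lemmas \ref{covering map & LB} and \ref{ample class}.
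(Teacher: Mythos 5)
Your overall strategy is exactly the paper's (the $k=6$ case is literally Example \ref{ex:4}), but there is a genuine error in the choice of the cover $\widetilde{U}_k$ for $k=4,5$, and it propagates into the final degree count. You label all $9-k$ nodes $(L_1+L_2)\cap C\setminus p$ by the full symmetric group $\frak{S}_{9-k}$. But since $L_1$ and $L_2$ are distinguished (as you yourself note), the nodes lying on $L_1$ are intrinsically distinguished from those lying on $L_2$: in the reference lattice the half-integral vectors $f_1,f_2$ representing the strict transforms of $L_1,L_2$ involve exactly the $e_i$ attached to the nodes on the respective line, so a labeling that exchanges a node of $L_1\cap C$ with one of $L_2\cap C$ does not extend to an isometry $L_+\to L_+(X,\iota)$, and the lift $\tilde{p}$ is simply not defined on your cover. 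The ``reasonable'' cover is the product of symmetric groups respecting this partition: for $k=4$ there are two residual nodes on $L_1$ and three on $L_2$, giving an $\frak{S}_3\times\frak{S}_2$-cover of degree $12$; for $k=5$ there is one residual node on $L_1$ (automatically distinguished) and three on $L_2$, giving an $\frak{S}_3$-cover of degree $6$. Only for $k=6$, where all three residual nodes lie on $L_2$, does your $\frak{S}_{9-k}$ coincide with the correct group.

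Correspondingly, your claim that $|{\Or}(D_{L_-})|$ ``should come out exactly $(9-k)!$'' is false for $k=4,5$. The invariant lattices are $L_+\simeq U(2)\oplus A_1^2\oplus E_8$ for $k=4$ (main invariant $(12,4,1)$, so $a=4$, not $a=2$ as your guess $U\oplus E_8\oplus A_1^{6-k}$ would give) with $|{\Or}(D_{L_+})|=2\cdot|{\rm Sp}(2,2)|=12$, and $L_+\simeq U\oplus E_8\oplus A_1^3$ for $k=5$ with $|{\Or}(D_{L_+})|=|\frak{S}_3|=6$. These match the degrees $12$ and $6$ of the corrected covers, so $\deg\mathcal{P}_k=1$; with your covers the quotient $|{\Or}(D_{L_-})|/(9-k)!$ would be $12/120$ and $6/24$, which already signals that the construction has gone wrong. (There is also a small arithmetic slip in your dimension count: $\dim U_k=20-k$, not $30-k$, so that $\dim(U_k/{\PGL}_3)=12-k=20-(8+k)$ as required.)
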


\begin{proof}
See Example \ref{ex:4} for the case $k=6$. 

For $k=4$, we label the three nodes $L_2\cap C\backslash L_1$ and 
the two nodes $L_1\cap C\backslash L_2$ independently. 
This is realized by an $\frak{S}_3\times\frak{S}_2$-covering $\widetilde{U}_4\to U_4$ as before. 
Note that $L_1$ and $L_2$ are distinguished by their intersection with $C$. 
Then we will obtain a birational lift 
$\widetilde{U}_4/{\PGL}_3 \dashrightarrow {\cove}_{12,4,1}$ of $\mathcal{P}_4$. 
The invariant lattice $L_+$ is isometric to $U(2)\oplus A_1^2\oplus E_8$. 
By \cite{M-S} we have $|{\Or}(D_{L_+})|=2\cdot|{\rm Sp}(2, 2)|=12$. 
Therefore $\mathcal{P}_4$ is birational. 

For $k=5$, we label only the three nodes $L_2\cap C\backslash L_1$, 
which defines an $\frak{S}_3$-covering $\widetilde{U}_5\to U_5$. 
The rest one node $L_1\cap C\backslash L_2$ is obviously distinguished from those three. 
Therefore we will obtain a birational lift 
$\widetilde{U}_5/{\PGL}_3 \dashrightarrow {\cove}_{13,3,1}$ of $\mathcal{P}_5$. 
The invariant lattice $L_+$ is isometric to $U\oplus E_8\oplus  A_1^3$. 
Then we have ${\Or}(D_{L_+}) \simeq \frak{S}_3$, which proves the proposition. 
\end{proof}

\begin{proposition}\label{rational (8+k,8-k)}
The quotient $U_k/{\PGL}_3$ is rational. 
Therefore $\mathcal{M}_{12,4,1}$, $\mathcal{M}_{13,3,1}$, and $\mathcal{M}_{14,2,0}$ are rational. 
\end{proposition}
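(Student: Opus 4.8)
The plan is to use the slice method with respect to a map recording the flag geometry, followed by Miyata's theorem. Fix $4\le k\le6$, and let $V$ be the incidence variety of triples $(p,L_1,L_2)$ with $p$ a point of ${\proj}^2$ and $L_1\ne L_2$ two lines through $p$; since ${\PGL}_3$ acts transitively on $V$, it is a homogeneous space. The assignment $(C,L_1,L_2)\mapsto(L_1\cap L_2,L_1,L_2)$ gives a ${\PGL}_3$-equivariant dominant morphism $f\colon U_k\to V$. First I would identify the stabilizer $G\subset{\PGL}_3$ of a flag $(p_0,L_1^0,L_2^0)$: every element of $G$ fixes $p_0$, hence acts on the pencil of lines through $p_0$ through a homomorphism $G\to{\PGL}_2$ whose image is the ${\C}^{\times}$ fixing the points $L_1^0,L_2^0$ and whose kernel --- the transformations fixing $p_0$ and every line through it --- is isomorphic to ${\C}^{\times}\ltimes{\C}^2$. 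Thus $G$ is connected and solvable of dimension $4$, and by the slice method (Proposition~\ref{slice}) we get $U_k/{\PGL}_3\sim f^{-1}(p_0,L_1^0,L_2^0)/G$.

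Next I would analyze this fiber. It is the locus of smooth quartics $C$ passing through $p_0$, meeting $L_1^0$ at $p_0$ with multiplicity $\ge k-2$, transverse to $L_1^0$ elsewhere, and transverse to $L_2^0$. With the flag fixed, the first two conditions are $k-2$ linear conditions on $C$, so $f^{-1}(p_0,L_1^0,L_2^0)$ is a dense open subset of ${\proj}W$ for a linear subspace $W\subset H^0({\Oplane}(4))$ of codimension $k-2$. Let $\widetilde{G}\subset{\GL}_3$ be the preimage of $G$; as a central ${\C}^{\times}$-extension of $G$ it is connected and solvable, and it acts linearly on $W$. The scalars in $\widetilde{G}$ act on all of $W$ by a fixed nonzero weight, so a $\widetilde{G}$-invariant rational function on $W$ must be homogeneous of degree $0$; hence ${\C}(W)^{\widetilde{G}}={\C}({\proj}W)^{\widetilde{G}}={\C}({\proj}W)^{G}$, that is, $W/\widetilde{G}\sim{\proj}W/G$. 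By Miyata's theorem (Theorem~\ref{Miyata}) the quotient $W/\widetilde{G}$ is rational, so $U_k/{\PGL}_3\sim{\proj}W/G$ is rational, and by Proposition~\ref{birat (8+k,8-k)} the spaces $\mathcal{M}_{12,4,1}$, $\mathcal{M}_{13,3,1}$, $\mathcal{M}_{14,2,0}$ are rational.

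The delicate points are the verification that $G$ is connected and solvable and the transition from $W/\widetilde{G}$ to ${\proj}W/G$; neither is hard, but one should check in particular that, once the flag is fixed, the prescribed contact of $C$ with $L_1^0$ is a linear condition (this is why one fixes the whole flag $(p,L_1,L_2)$ rather than only $p$). An alternative route is to push $U_k$ forward to the universal pointed plane quartic $(C,p=L_1\cap L_2)$: then $L_1$ is the unique line meeting $C$ at $p$ with multiplicity $k-2$ (the tangent line when $k=4$, the flex or hyperflex tangent when $k=5,6$) and $L_2$ varies in the pencil of lines through $p$, so the no-name method (Proposition~\ref{no-name 2}) would reduce the problem to the rationality of the universal genus $3$ curve $\mathcal{X}_3$, respectively its flex or hyperflex locus. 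But that requires separately establishing rationality of those loci, whereas the slice-method argument above handles $k=4,5,6$ uniformly; for $k=6$ it continues Example~\ref{ex:4}.
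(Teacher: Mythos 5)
Your proof is correct and follows essentially the same route as the paper: slice over the configuration of the two lines (your flag variety $V$ is just the graph of $(L_1,L_2)\mapsto L_1\cap L_2$ over the paper's $|{\Oplane}(1)|^2$), identify the connected solvable stabilizer, and apply Miyata's theorem to the resulting linear system of quartics with prescribed contact. The only addition is your explicit passage from ${\proj}W/G$ to $W/\widetilde{G}$ via the ${\GL}_3$-preimage, a linearization point the paper leaves implicit; it is handled correctly.
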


\begin{proof}
Consider the projection $\pi\colon U_k\to|{\Oplane}(1)|^2, (C, L_1, L_2)\mapsto(L_1, L_2)$. 
The group ${\PGL}_3$ acts almost transitively on $|{\Oplane}(1)|^2$ 
with the stabilizer of a general $(L_1, L_2)$ being isomorphic to $({\C}^{\times}\ltimes{\C})^2$. 
The $\pi$-fiber over $(L_1, L_2)$ is an open set of the linear system of quartics 
which are tangent to $L_1$ at $L_1\cap L_2$ with multiplicity $\geq k-2$. 
Thus our assertion follows from the slice method for $\pi$ and Miyata's theorem. 
\end{proof} 

The fixed curve maps are described as follows. 
Recall that the ordinary inflectional points of a smooth quartic $C\subset{\proj}^2$ 
are just the normal Weierstrass points of $C$, 
and the inflectional points of order $4$ are just the Weierstrass points of weight $2$. 
Then let $\mathcal{X}_3$ be the universal genus $3$ curve, 
$\mathcal{U}_5\subset\mathcal{X}_3$ the locus of normal Weierstrass points, 
and $\mathcal{U}_6\subset\mathcal{X}_3$ the locus of Weierstrass points of weight $2$. 

\begin{corollary}
The fixed curve map for $\mathcal{M}_{12,4,1}$ (resp. $\mathcal{M}_{8+k,8-k,\delta}$ with $k=5, 6$)  
lifts to a dominant map $\mathcal{M}_{12,4,1}\dashrightarrow \mathcal{X}_3$ 
(resp. $\mathcal{M}_{8+k,8-k,\delta} \dashrightarrow \mathcal{U}_k$)  
whose general fibers are birationally identified with the pencils $|K_C-p|$. 
\end{corollary}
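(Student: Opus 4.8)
The plan is to transport the fixed curve map through the birational period map $\mathcal{P}_k$ of Proposition \ref{birat (8+k,8-k)}, so that the entire statement may be verified on the parameter space $U_k/{\PGL}_3$. First I would observe that for each triplet $(C, L_1, L_2)\in U_k$ the branch sextic $B=C+L_1+L_2$ has $C$, a smooth quartic of genus $3$, as its unique component of maximal geometric genus; the lines $L_1, L_2$ and all exceptional curves arising in the right resolution of the $D_{2k-2}$-point are rational. By the correspondence between 2-elementary $K3$ surfaces and right DPN pairs, the genus $3$ component of $X^{\iota}\simeq B'$ is therefore the strict transform of $C$, so that the fixed curve map $F$ corresponds under $\mathcal{P}_k$ to the ${\PGL}_3$-invariant map $U_k\to\mathcal{M}_3$, $(C, L_1, L_2)\mapsto C$.

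Next I would introduce the lift. The assignment $\phi_k\colon U_k\to\mathcal{X}_3$, $(C, L_1, L_2)\mapsto (C, p)$ with $p=L_1\cap L_2\in C$, is ${\PGL}_3$-equivariant and descends to a rational map $\bar\phi_k\colon U_k/{\PGL}_3\dashrightarrow\mathcal{X}_3$ whose composition with the projection $\mathcal{X}_3\to\mathcal{M}_3$ is $F$. This is the desired lift, since $p$ is recovered intrinsically from $(X,\iota)$ as the $D_{2k-2}$-singularity of the branch curve reconstructed from the birational map $\mathcal{P}_k$. To identify the image I would read off condition (iii): $L_1$ meets $C$ at $p$ with multiplicity $k-2$, forcing $L_1=T_pC$ and making $p$ an inflectional point of $C$ of order $k-2$. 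For $k=4$ this is the tautological ordinary tangency, so $p$ is arbitrary and $\bar\phi_4$ dominates $\mathcal{X}_3$; for $k=5$ (order $3$, an ordinary flex) and $k=6$ (order $4$, a hyperflex) the point $p$ is a normal Weierstrass point, respectively a Weierstrass point of weight $2$, by the flex/Weierstrass dictionary for smooth plane quartics recalled just above, whence the image is exactly $\mathcal{U}_5$, respectively $\mathcal{U}_6$.

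Finally I would compute the generic fiber. A general pointed quartic $(C, p)$ has trivial automorphism group, so its stabilizer in ${\PGL}_3$ (acting through the canonical embedding) is trivial, and the fiber of $\bar\phi_k$ over $(C, p)$ is birational to the $\phi_k$-fiber in $U_k$ over $(C, p)$. On that fiber $L_1=T_pC$ is already determined, while $L_2$ ranges over the pencil of lines through $p$, subject only to the open conditions $L_2\ne L_1$ and $L_2$ transverse to $C$. Restricting such a line to $C$ gives $L_2\cap C=p+D$ with $D\in|K_C-p|$, via the identification $\mathcal{O}_C(1)\simeq K_C$; this matches the pencil of lines through $p$ with $|K_C-p|$, and hence the general fiber with $|K_C-p|$, completing the identification asserted in the corollary.

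The main obstacle I anticipate is the bookkeeping that makes the lift genuinely canonical: I must ensure that $p$, and with it $L_1=T_pC$, is intrinsic to the isomorphism class of $(X,\iota)$ and independent of the chosen ${\PGL}_3$-representative, and that the order of tangency is matched to the correct stratum of Weierstrass points so that the images $\mathcal{U}_5,\mathcal{U}_6$ come out precisely. Once these points are settled, the fiber computation is routine.
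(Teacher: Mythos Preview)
Your proposal is correct and follows essentially the same reasoning that the paper leaves implicit: the corollary is stated without proof because it is read off directly from the construction of $U_k$ in \S\ref{ssec:(8+k,8-k)} together with the flex/Weierstrass dictionary recalled just before the statement. Your expansion—identifying $C$ as the genus-$3$ component of the branch curve, lifting via $(C,L_1,L_2)\mapsto(C,p)$, noting that $L_1=T_pC$ is determined while $L_2$ ranges over the pencil of lines through $p\simeq|K_C-p|$—is exactly what the paper intends, and the minor concern you flag about canonicity of $p$ is handled by the birationality of $\mathcal{P}_k$ (Proposition~\ref{birat (8+k,8-k)}).
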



\section{The case $g=2$}\label{sec:g=2} 

In this section we treat the case $g=2$, $k>0$. 
In view of the unirationality result \cite{Ma}, we may assume $k<9$. 
The case $(k, \delta)=(1, 0)$ is reduced to the rationality of $\mathcal{M}_{10,4,0}$ 
via the structure as an arithmetic quotient. 
The case $(k, \delta)=(1, 1)$ is settled by analyzing the quotient rational surfaces. 
The cases $2\leq k\leq4$ and $(k, \delta)=(5, 0)$ are studied using genus $2$ curves on ${\F}_3$, 
and the case $k\geq5$ with $\delta=1$ is studied using cuspidal plane quartics.

\subsection{The rationality of $\mathcal{M}_{10,8,0}$}\label{ssec:(10,8,0)} 

Here we may take the same approach as the one for $\mathcal{M}_{10,10,0}$ by Kond\=o \cite{Ko}. 
Recall that $\mathcal{M}_{10,8,0}$ is an open set of the modular variety $\mathcal{F}({\Or}(L_-))$ 
where $L_-$ is the 2-elementary lattice $U^2\oplus E_8(2)$. 
Since we have canonical isomorphisms 
${\Or}(L_-)\simeq {\Or}(L_-^{\vee})\simeq {\Or}(L_-^{\vee}(2))$, 
the variety $\mathcal{F}({\Or}(L_-))$ is isomorphic to $\mathcal{F}({\Or}(L_-^{\vee}(2)))$. 
The lattice $L_-^{\vee}(2)$ is isometric to $U(2)^2\oplus E_8$, 
so that $\mathcal{F}({\Or}(L_-^{\vee}(2)))$ is birational to $\mathcal{M}_{10,4,0}$. 
In \S \ref{ssec:(10,4,0)} we proved that $\mathcal{M}_{10,4,0}$ is rational, 
and hence $\mathcal{M}_{10,8,0}$ is rational.

\subsection{The rationality of $\mathcal{M}_{10,8,1}$}\label{ssec:(10,8,1)} 

We construct the corresponding right DPN pairs starting from the Hirzebruch surface ${\F}_2$. 
For a smooth curve $C\in|L_{2,2}|$ transverse to $\Sigma$, 
let $f\colon Y\to{\F}_2$ be the double cover branched along $C$. 
Since we have a conic fibration $Y\to{\F}_2\to{\proj}^1$, the surface $Y$ is rational. 
The curve $f^{\ast}\Sigma$ is a $(-4)$-curve on $Y$. 

\begin{lemma}\label{lemma 1 (10,8,1)}
We have $|\!-\!2K_Y|=f^{\ast}\Sigma+f^{\ast}|L_{1,0}|$. 
\end{lemma}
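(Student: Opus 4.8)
The goal is to identify the bi-anticanonical system of $Y$, where $f\colon Y\to{\F}_2$ is the double cover branched along a smooth $C\in|L_{2,2}|$ transverse to $\Sigma$. The natural tool is the ramification formula $K_Y = f^{\ast}(K_{{\F}_2}+\tfrac12 C)$, valid since $f$ is a double cover with smooth branch divisor $C$ and $\tfrac12 C$ makes sense because $C\in|L_{2,2}|$ is divisible by $2$ in the relevant sense (indeed $L_{2,2}=2\otimes L_{1,1}$... but one must be careful: $L_{2,2}\ne 2L_{1,1}$ as $b$ need not be even). The cleaner route is to work directly: $K_Y = f^{\ast}L$ for the line bundle $L = K_{{\F}_2}\otimes M$ where $M$ is a square root of $\mathcal{O}_{{\F}_2}(C)=L_{2,2}$ pulled back — but since such a square root need not exist on ${\F}_2$, I would instead compute $f^{\ast}(-2K_Y)$ intrinsically. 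Write $-2K_Y = f^{\ast}(-2K_{{\F}_2}) \otimes \mathcal{O}_Y(-f^{\ast}C)$ using $2K_Y = f^{\ast}(2K_{{\F}_2}) + f^{\ast}C$ (this last identity holds because $f^{\ast}C = 2R$ where $R$ is the reduced ramification divisor, and $2K_Y = f^{\ast}(2K_{{\F}_2}) + 2R$). So $-2K_Y = f^{\ast}(-2K_{{\F}_2} - C)$.

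**Carrying out the computation.** Now $-2K_{{\F}_2} = L_{4,2\cdot2-4} = L_{4,0}$ using $K_{{\F}_n}\cong L_{-2,n-2}$, so $-2K_{{\F}_2}\cong L_{4,0}$, hence $-2K_{{\F}_2}-C \cong L_{4,0}\otimes L_{-2,-2} = L_{2,-2}$. Thus $-2K_Y = f^{\ast}L_{2,-2}$. The class $L_{2,-2}$ is not effective on ${\F}_2$ (it has negative intersection with $\Sigma$: $(L_{2,-2}.\Sigma) = -2 < 0$), but $f^{\ast}L_{2,-2}$ can still be effective on $Y$ because $f^{\ast}\Sigma$ is a fixed part. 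Indeed $(L_{2,-2}) = (L_{2,0}) - 2\Sigma$ on... wait, $L_{2,0} - 2F'$? Let me instead write $L_{2,-2} = L_{1,0} + (L_{1,-2})$ and note $L_{1,-2} = \Sigma$ since $\mathcal{O}_{{\F}_2}(\Sigma) = L_{1,-n} = L_{1,-2}$. So $L_{2,-2} = L_{1,0} + \Sigma$ as divisor classes on ${\F}_2$, giving $-2K_Y = f^{\ast}L_{1,0} + f^{\ast}\Sigma$. Since $f^{\ast}\Sigma$ is a fixed $(-4)$-curve (it is $f$-exceptional over... no — $\Sigma$ is in the branch-disjoint locus, $C$ transverse to $\Sigma$, so $f^{-1}(\Sigma)$ is a single smooth rational curve; compute its self-intersection: $(f^{\ast}\Sigma)^2 = 2(\Sigma)^2 = -4$), we get $|\!-\!2K_Y| \supseteq f^{\ast}\Sigma + |f^{\ast}L_{1,0}|$.

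**Finishing.** For the reverse inclusion, I would show $f^{\ast}\Sigma$ is in the fixed part of $|\!-\!2K_Y|$: since $(f^{\ast}\Sigma)^2 = -4 < 0$ and $(-2K_Y \cdot f^{\ast}\Sigma) = (f^{\ast}L_{1,0}\cdot f^{\ast}\Sigma) + (f^{\ast}\Sigma)^2 = 2(L_{1,0}.\Sigma) + (-4)\cdot\tfrac{?}{}$. Here one must be careful with projection formula for double covers: $(f^{\ast}A\cdot f^{\ast}B) = 2(A.B)$, so $(f^{\ast}L_{1,0}\cdot f^{\ast}\Sigma) = 2(L_{1,0}.\Sigma) = 0$ and $(f^{\ast}\Sigma)^2 = 2(\Sigma^2) = -4$, hence $(-2K_Y\cdot f^{\ast}\Sigma) = -4 < 0$, so $f^{\ast}\Sigma$ is a fixed component of every member of $|\!-\!2K_Y|$. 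Subtracting it, $|\!-\!2K_Y - f^{\ast}\Sigma| = |f^{\ast}L_{1,0}|$, completing the proof. The main obstacle is purely bookkeeping: getting the line-bundle arithmetic on ${\F}_2$ right (especially $K_{{\F}_n}\cong L_{-2,n-2}$, $\mathcal{O}(\Sigma)\cong L_{1,-n}$, and $(L_{a,b}.F)=a$, $(L_{a,b}.\Sigma)=b$ from the excerpt) and correctly handling the factor of $2$ in the projection formula for the double cover; there is no real geometric difficulty beyond identifying $f^{\ast}\Sigma$ as the negative fixed part. I would write:

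\begin{proof}
Since $f$ is a double cover with smooth branch curve $C\in|L_{2,2}|$, the ramification formula gives $2K_Y=f^{\ast}(2K_{{\F}_2})+f^{\ast}C$, hence $-2K_Y=f^{\ast}(-2K_{{\F}_2}-C)$. Using $K_{{\F}_2}\cong L_{-2,0}$ and $\mathcal{O}_{{\F}_2}(C)\cong L_{2,2}$ we get $-2K_{{\F}_2}-C\cong L_{4,0}\otimes L_{-2,-2}\cong L_{2,-2}$, and since $\mathcal{O}_{{\F}_2}(\Sigma)\cong L_{1,-2}$ this equals $L_{1,0}\otimes\mathcal{O}_{{\F}_2}(\Sigma)$ as a divisor class. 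Therefore $-2K_Y=f^{\ast}L_{1,0}+f^{\ast}\Sigma$, so $|\!-\!2K_Y|\supseteq f^{\ast}\Sigma+f^{\ast}|L_{1,0}|$. For the reverse inclusion note that $(f^{\ast}\Sigma)^2=2(\Sigma^2)=-4$ and $(-2K_Y.f^{\ast}\Sigma)=2(L_{1,0}.\Sigma)+(f^{\ast}\Sigma)^2=-4<0$, so $f^{\ast}\Sigma$ lies in the fixed part of $|\!-\!2K_Y|$. Subtracting it, $|\!-\!2K_Y|=f^{\ast}\Sigma+|f^{\ast}L_{1,0}|$, and $|f^{\ast}L_{1,0}|=f^{\ast}|L_{1,0}|$ by the projection formula since $f_{\ast}\mathcal{O}_Y=\mathcal{O}_{{\F}_2}\oplus L_{1,1}^{-1}$ and $H^0(L_{1,0}\otimes L_{1,1}^{-1})=H^0(L_{0,-1})=0$.
\end{proof}
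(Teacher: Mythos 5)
Your proof is correct, and it closes the argument by a genuinely different route than the paper. Both proofs start from the same ramification‑formula identity $-2K_Y\simeq f^{\ast}L_{2,-2}$ and the decomposition $L_{2,-2}=L_{1,0}+\Sigma$ (using $K_{{\F}_2}\simeq L_{-2,0}$ and $\mathcal{O}_{{\F}_2}(\Sigma)\simeq L_{1,-2}$), which yields the inclusion $f^{\ast}\Sigma+f^{\ast}|L_{1,0}|\subseteq|\!-\!2K_Y|$. For the reverse inclusion the paper does a pure dimension count: it restricts $-2K_Y$ to the disjoint union of $f^{\ast}\Sigma$ and a smooth $D=f^{\ast}H$, uses adjunction to get $-2K_Y|_D\simeq 2K_D$ for the genus‑$2$ curve $D$, and reads off $h^0(-2K_Y)\leq 1+h^0(2K_D)+h^0(\mathcal{O}_{{\proj}^1}(-4))=4$ from the resulting exact sequence, matching the $3$‑dimensional subsystem already exhibited. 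You instead argue structurally: $(-2K_Y.f^{\ast}\Sigma)=-4<0$ forces the irreducible $(-4)$-curve $f^{\ast}\Sigma$ into the fixed part, and after subtracting it you identify $|f^{\ast}L_{1,0}|$ with $f^{\ast}|L_{1,0}|$ via $f_{\ast}\mathcal{O}_Y=\mathcal{O}_{{\F}_2}\oplus L_{1,1}^{-1}$ and $H^0(L_{0,-1})=0$ --- the same mechanism the paper uses elsewhere (in \S\ref{ssec: right resol}) to recover covering maps from pulled-back bundles. Your version makes the geometric reason for the fixed component explicit and avoids the adjunction/genus computation, at the cost of needing two small facts you should state: $f^{-1}(\Sigma)$ is irreducible (true, since $(C.\Sigma)=2>0$ so the induced double cover of $\Sigma\simeq{\proj}^1$ is branched, hence connected) and reduced (true, since $\Sigma$ is not a branch component), so that the negativity criterion applies to it as an irreducible curve. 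With those remarks added, the proof is complete and fully rigorous.
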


\begin{proof}
By the ramification formula we have $-2K_Y\simeq f^{\ast}L_{2,-2}$. 
Since $|L_{2,-2}|=\Sigma+|L_{1,0}|$ and ${\dim}\,|L_{1,0}|=3$, 
it suffices to show that ${\dim}\,|\!-\!2K_Y|=3$. 
We take a smooth curve $H\in|L_{1,0}|$ transverse to $C$. 
The inverse image $D=f^{\ast}H$ is a smooth genus $2$ curve disjoint from $f^{\ast}\Sigma$. 
Since ${\sheaf}_D(f^{\ast}\Sigma)\simeq {\sheaf}_D$, 
we have $-K_Y|_D\simeq K_D$ by the adjunction formula. 
Then the exact sequence 
\begin{equation*}
0 \to {\sheaf}_Y \to -2K_Y \to 2K_D\oplus {\sheaf}_{f^{\ast}\Sigma}(-4) \to 0 
\end{equation*}
shows that ${\dim}\,|\!-\!2K_Y|=h^0(2K_D)=3$. 
\end{proof}

Thus the resolution of the bi-anticanonical map $Y\dashrightarrow {\proj}^3$ of $Y$ 
is given by the composition of $f\colon Y\to{\F}_2$ and 
the morphism $\phi\colon{\F}_2\to{\proj}^3$ associated to $L_{1,0}$. 
The image $\phi({\F}_2)$ is a quadratic cone with vertex $\phi(\Sigma)$. 
In this way the quotient morphism $f$ is recovered from the bi-anticanonical map of $Y$.  
It follows that  

\begin{lemma}\label{lemma 2 (10,8,1)}
For two smooth curves $C, C'\in|L_{2,2}|$ transverse to $\Sigma$, 
the associated double covers $Y, Y'$ are isomorphic if and only if 
$C$ and $C'$ are ${\aut}({\F}_2)$-equivalent. 
\end{lemma}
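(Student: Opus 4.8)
The plan is to prove Lemma \ref{lemma 2 (10,8,1)} by showing that the bi-anticanonical map of $Y$ canonically recovers the pair $({\F}_2, C)$ up to automorphism of ${\F}_2$, and then read off the statement. First I would record what is already established: by Lemma \ref{lemma 1 (10,8,1)} the linear system $|\!-\!2K_Y|$ equals $f^{\ast}\Sigma + f^{\ast}|L_{1,0}|$, and since $h^0(L_{1,0})=4$ this system has dimension $3$ with a fixed component $f^{\ast}\Sigma$. Hence the bi-anticanonical map $\phi_{-2K_Y}\colon Y\dashrightarrow{\proj}^3$ factors as $\phi_{L_{1,0}}\circ f$, where $\phi_{L_{1,0}}\colon{\F}_2\to{\proj}^3$ is the morphism contracting $\Sigma$ to the vertex of the quadric cone $\phi_{L_{1,0}}({\F}_2)$. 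So the bi-anticanonical map of $Y$ is a genuinely canonical construction: it depends only on the abstract surface $Y$, not on the chosen presentation as a double cover.

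The key step is the converse direction. Suppose $\varphi\colon Y\to Y'$ is an isomorphism. Then $\varphi^{\ast}K_{Y'}\simeq K_Y$, so $\varphi$ carries the bi-anticanonical map of $Y$ to that of $Y'$; concretely there is a linear automorphism $g$ of ${\proj}^3$ preserving the quadric cone $\phi_{L_{1,0}}({\F}_2)$ with $g\circ\phi_{L_{1,0}}\circ f = \phi_{L_{1,0}}\circ f'\circ\varphi$. Since $g$ fixes the cone, it fixes its vertex, hence lifts to an automorphism $\bar g$ of the minimal desingularization ${\F}_2$ of the cone (the desingularization is canonical, being the blow-up of the vertex). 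Thus $\bar g\circ f = f'\circ\varphi$ as maps $Y\to{\F}_2$, after possibly composing with the deck transformation. Comparing branch loci, $f$ is branched over $C+2\cdot(\text{something})$ — more precisely the branch divisor of $f$ on ${\F}_2$ is exactly $C$ (the only curve $f^{\ast}\Sigma$ is not in the branch), and likewise for $f'$. Therefore $\bar g(C) = C'$, i.e.\ $C$ and $C'$ are ${\aut}({\F}_2)$-equivalent. Conversely, if $C' = h(C)$ for some $h\in{\aut}({\F}_2)$, then $h$ lifts to an isomorphism of the double covers $Y\to Y'$ by the universal property of the fibre product, so the equivalence is genuine.

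The main obstacle I expect is the bookkeeping around the deck transformation and the identification of branch loci: one must be careful that an isomorphism $Y\to Y'$ need not a priori commute with the double cover structures, and the argument that it does rests entirely on the canonicity of the bi-anticanonical map together with the fact that $\phi_{L_{1,0}}$ contracts precisely $\Sigma$ and nothing else (so that the lift $\bar g$ to ${\F}_2$ is forced). A secondary point to check is that the branch curve $C$ is intrinsically recoverable from $f$ — this is standard for a flat double cover of a smooth surface, the branch being the locus where $f$ is not étale, which is a reduced divisor here since $C$ is smooth and transverse to $\Sigma$. Once these are in place the lemma follows formally, and it will feed directly into the construction of a birational period map for $\mathcal{M}_{10,8,1}$ of type $(\alpha)$ as announced in the introduction.
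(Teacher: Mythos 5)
Your proof is correct and follows essentially the same route as the paper: Lemma \ref{lemma 1 (10,8,1)} shows the movable part of $|\!-\!2K_Y|$ is $f^{\ast}|L_{1,0}|$, so the bi-anticanonical map factors as $\phi_{L_{1,0}}\circ f$ onto the quadric cone, whose vertex and canonical desingularization force any isomorphism $Y\to Y'$ to descend to an automorphism of ${\F}_2$ matching the branch curves. The paper leaves these last steps implicit ("It follows that\dots"), and your write-up simply spells them out; the only superfluous caution is the aside about composing with the deck transformation, which is not needed since $\phi_{L_{1,0}}$ is generically injective and hence $\bar g\circ f=f'\circ\varphi$ outright.
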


Now we let $U\subset|L_{2,2}|\times |L_{1,0}|$ be the open locus of pairs $(C, H)$ 
such that $C$ and $H$ are smooth and that $C$ is transverse to $H+\Sigma$. 
To a $(C, H)\in U$ we associate the right DPN pair $(Y, B)$ where 
$f\colon Y\to{\F}_2$ is the double cover branched along $C$ and $B=f^{\ast}(H+\Sigma)$. 
Since $(Y, B)$ has invariant $(g, k)=(2, 1)$, 
we obtain a period map $U\to\mathcal{M}_{10,8,1}$. 
By Lemmas \ref{lemma 1 (10,8,1)} and \ref{lemma 2 (10,8,1)} 
the induced map $U/{\aut}({\F}_2)\dashrightarrow \mathcal{M}_{10,8,1}$ is generically injective. 
In view of the equality 
${\dim}(U/{\aut}({\F}_2))=10$, 
we have a birational equivalence 
$U/{\aut}({\F}_2)\sim \mathcal{M}_{10,8,1}$. 

\begin{proposition}\label{rational (10,8,1)}
The quotient $U/{\aut}({\F}_2)$ is rational. 
Hence $\mathcal{M}_{10,8,1}$ is rational. 
\end{proposition}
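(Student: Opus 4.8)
The plan is to apply the slice method to an appropriate projection and then reduce to a problem that Miyata's theorem or Katsylo's theorem can settle. The space we must handle is $U/{\aut}({\F}_2)$, where $U\subset|L_{2,2}|\times|L_{1,0}|$ is the locus of pairs $(C,H)$ with $C,H$ smooth and $C$ transverse to $H+\Sigma$. The natural first move is to project to the second factor: consider the ${\aut}({\F}_2)$-equivariant map $U\to|L_{1,0}|$, $(C,H)\mapsto H$. By Proposition \ref{stabilizer of section}, ${\aut}({\F}_2)$ acts transitively on the open set of smooth divisors in $|L_{1,0}|$, and the stabilizer $G_H$ of such an $H$ sits in the exact sequence $1\to{\C}^{\times}\to G_H\to{\aut}(\Sigma)\to1$; in particular $G_H$ is connected and solvable. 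So by Proposition \ref{slice} (the slice method) we get $U/{\aut}({\F}_2)\sim U_H/G_H$, where $U_H\subset|L_{2,2}|$ is the open set of smooth curves transverse to $H+\Sigma$.

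Next I would linearize. The bundle $L_{2,2}$ has $b=2$ even, so by Proposition \ref{linearization Hirze} it admits an ${\aut}({\F}_2)$-linearization, hence $G_H$ acts linearly on the vector space $V=H^0(L_{2,2})$, and $U_H$ is a $G_H$-invariant open subset of the projective space ${\proj}V$. (If one prefers to avoid the linearization subtlety altogether, one can instead lift to the group $\widetilde{G}={\SL}_2\ltimes R$ of \S\ref{Sec:Hirze}, which by Lemma \ref{cover linearization Hirze} acts linearly on every line bundle, and whose relevant stabilizer is still connected and solvable by the proof of Proposition \ref{stabilizer of fiber} / Proposition \ref{stabilizer of section}; this is exactly the device used in the proof of Proposition \ref{rational (5+k,5-k)}.) Since $G_H$ is connected and solvable and $V$ is a linear representation of it, Miyata's theorem \ref{Miyata} gives that $V/G_H$, and hence ${\proj}V/G_H\sim U_H/G_H$, is rational. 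Combining with the slice-method birational equivalence above yields the rationality of $U/{\aut}({\F}_2)$. Finally, the birational equivalence $U/{\aut}({\F}_2)\sim\mathcal{M}_{10,8,1}$ established just before the proposition (via Lemmas \ref{lemma 1 (10,8,1)} and \ref{lemma 2 (10,8,1)} together with the dimension count ${\dim}(U/{\aut}({\F}_2))=10$) gives that $\mathcal{M}_{10,8,1}$ is rational.

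The one point that needs genuine care — the ``main obstacle'' such as it is — is arranging a group that acts \emph{linearly} on the slice while remaining connected and solvable. A naive application of the slice method produces the stabilizer $G_H\subset{\aut}({\F}_2)$, but $L_{2,2}$ being ${\aut}({\F}_2)$-linearized is exactly the case $b$ even of Proposition \ref{linearization Hirze}, so this is fine; still, it is cleanest to record explicitly that $G_H$ acts linearly on $H^0(L_{2,2})$ (or pass to $\widetilde{G}$ as above) before invoking Miyata. One should also check that the open set $U_H$ is nonempty and that the generic point of ${\proj}V$ lies in it, which is immediate since a generic $C\in|L_{2,2}|$ is smooth and transverse to the fixed curve $H+\Sigma$. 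Everything else — the slice method hypotheses (transitivity and solvable stabilizer on $|L_{1,0}|$), and Miyata's theorem — is already in place in the excerpt, so the proof is short.

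\begin{proof}
Consider the ${\aut}({\F}_2)$-equivariant projection $U\to|L_{1,0}|$, $(C,H)\mapsto H$, which is dominant onto the open set $U'\subset|L_{1,0}|$ of smooth divisors. By Proposition \ref{stabilizer of section}, ${\aut}({\F}_2)$ acts transitively on $U'$, and for $H\in U'$ the stabilizer $G_H$ fits in the exact sequence \eqref{stab section}, hence is connected and solvable. By the slice method \ref{slice} we have $U/{\aut}({\F}_2)\sim U_H/G_H$, where $U_H\subset|L_{2,2}|$ is the open set of smooth curves transverse to $H+\Sigma$. Since $b=2$ is even, the bundle $L_{2,2}$ is ${\aut}({\F}_2)$-linearized by Proposition \ref{linearization Hirze}, so $G_H$ acts linearly on $V=H^0(L_{2,2})$, and $U_H$ is a nonempty $G_H$-invariant open subset of ${\proj}V$. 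As $G_H$ is connected and solvable, Miyata's theorem \ref{Miyata} shows that $V/G_H$ is rational, hence so is ${\proj}V/G_H\sim U_H/G_H\sim U/{\aut}({\F}_2)$. Since $U/{\aut}({\F}_2)\sim\mathcal{M}_{10,8,1}$ as established above, $\mathcal{M}_{10,8,1}$ is rational.
\end{proof}
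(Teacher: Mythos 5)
There is a genuine error in your argument: the stabilizer $G_H$ of a smooth divisor $H\in|L_{1,0}|$ is \emph{not} solvable. Proposition \ref{stabilizer of section} gives the exact sequence $1\to{\C}^{\times}\to G_H\to{\aut}(\Sigma)\to1$ with the map to ${\aut}(\Sigma)\simeq{\PGL}_2$ \emph{surjective}, so $G_H$ is an extension of ${\PGL}_2$ by ${\C}^{\times}$ and contains a copy of ${\PGL}_2$ in its composition series. You appear to have conflated this with Proposition \ref{stabilizer of fiber}, where the stabilizer of a point of $|L_{0,1}|$ maps onto ${\aut}(\Sigma,p)\simeq{\C}^{\times}\ltimes{\C}$ and hence \emph{is} connected and solvable; that is the situation exploited in Propositions \ref{rational (3,1,1)} and \ref{rational (5+k,5-k)}, but it is not the situation here. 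Consequently Miyata's theorem \ref{Miyata} cannot be invoked, and your parenthetical fallback (lifting to $\widetilde{G}={\SL}_2\ltimes R$) does not repair this, since the stabilizer of $H$ in $\widetilde{G}$ still surjects onto ${\SL}_2$.

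The slice method step itself is fine and is exactly what the paper does; the correct finish is Katsylo's theorem \ref{Katsylo} rather than Miyata. Since $n=2$ is even the sequence \eqref{Aut(Hir)} splits via the ${\SL}_2$-linearization of $\mathcal{E}_2$, and $G_H$ is (up to a finite central quotient) ${\SL}_2\times{\C}^{\times}$; its action on $H^0(L_{2,2})$ lifts to a linear representation of ${\SL}_2\times{\C}^{\times}$, so Theorem \ref{Katsylo} gives the rationality of ${\proj}V/G_H\sim U_H/G_H\sim U/{\aut}({\F}_2)$. With that substitution your proof matches the paper's.
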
 

\begin{proof}
This follows from the slice method for the projection $|L_{2,2}|\times |L_{1,0}|\to|L_{1,0}|$, 
Proposition \ref{stabilizer of section}, and Katsylo's theorem \ref{Katsylo}. 
\end{proof}


\begin{remark}\label{second proof (10,8,1)} 
The above $(Y, B)$ are (generically) right resolution of the plane sextics $C+Q$ 
where $C$ is a one-nodal quartic and $Q$ is a smooth conic transverse to $C$. 
Although the period map for this sextic model has degree $>1$, 
we can analyze its fibers to derive the rationality of $\mathcal{M}_{10,8,1}$. 
This alternative approach 
may also be of some interest. 
We here give an outline for future reference. 

Let $\widetilde{U}\subset|{\Oplane}(4)|\times({\proj}^2)^8$ be the locus of $(C, p_1,\cdots, p_8)$ 
such that $C$ is one-nodal and $\{p_i\}_{i=1}^8=C\cap Q$ for a smooth conic $Q$. 
Using the labeling $\{p_i\}_{i=1}^8$, 
we obtain a birational map 
$\mathcal{U}=\widetilde{U}/{\PGL}_3 \dashrightarrow {\cove}_{10,8,1}$
as before. 
The projection ${\cove}_{10,8,1}\dashrightarrow\mathcal{M}_{10,8,1}$ is 
an $\frak{S}_8\ltimes({\Z}/2)^6$-covering, 
which exceeds the the obvious $\frak{S}_8$-symmetry of $\mathcal{U}$.  
In order to find the rest $({\Z}/2)^6$-symmetry, 
let $H$ be the group of even cardinality subsets of $\{1,\cdots, 8\}$ 
with the symmetric difference operation. 
For $\{i, j\}\in H$ and $(C, p_1,\cdots, p_8)\in\mathcal{U}$, 
consider the quadratic transformation  
$\varphi\colon{\proj}^2\dashrightarrow{\proj}^2$ based at $p_i, p_j$, and $p_0={\rm Sing}(C)$. 
We set $C^+=\varphi(C), p_i^+=\varphi(\overline{p_0p_i}), p_j^+=\varphi(\overline{p_0p_j})$, 
and $p_k^+=\varphi(p_k)$ for $k\ne i, j$. 
Then we have $(C^+, p_1^+,\cdots, p_8^+)\in\mathcal{U}$, 
and this defines an action of $H$ on $\mathcal{U}$. 
The element $\{1,\cdots, 8\}\in H$ acts on $\mathcal{U}$ trivially 
(it 
gives the covering transformation of the above $Y\to{\F}_2$). 
Now the period map $\mathcal{U}\dashrightarrow\mathcal{M}_{10,8,1}$ is $\frak{S}_8\ltimes H$-invariant, 
so that $\mathcal{M}_{10,8,1}$ is birational to $\mathcal{U}/(\frak{S}_8\ltimes H)$ by a degree comparison. 
We can prove that $\mathcal{U}/(\frak{S}_8\ltimes H)$ is rational. 
\end{remark}

\subsection{$\mathcal{M}_{11,7,1}$ and genus $2$ curves on ${\F}_3$}\label{ssec:(11,7)}

We construct 2-elementary $K3$ surfaces using curves on ${\F}_3$. 
Let $U\subset|L_{2,0}|\times |L_{1,1}|$ be the open set of pairs $(C, D)$ such that 
$C$ and $D$ are smooth and transverse to each other. 
Then the curves $C+D+\Sigma$ belong to $|\!-\!2K_{{\F}_3}|$. 
Taking the right resolution of $C+D+\Sigma$, 
we obtain a period map $\mathcal{P}\colon U/{\aut}({\F}_3)\dashrightarrow\mathcal{M}_{11,7,1}$. 

\begin{proposition}
The map $\mathcal{P}$ is birational. 
\end{proposition}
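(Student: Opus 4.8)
The plan is to follow the recipe of \S\ref{ssec: recipe}, which is the standard pattern for period maps of type $(\beta)$ in this paper. First I would set up a cover $\widetilde{U}\to U$ that labels the singularities of the curves $C+D+\Sigma$: the curve $B=C+D+\Sigma$ on ${\F}_3$ has as singularities the $2$ nodes $C\cap\Sigma$, the $3$ nodes $C\cap D$, and the $1$ node $D\cap\Sigma$ (counting: $(L_{2,0}.L_{0,1})=2$ for $C\cap\Sigma$, $(L_{2,0}.L_{1,1})=3$ for $C\cap D$, $(L_{1,1}.\Sigma)=1$ for $D\cap\Sigma$), giving $6$ nodes total; together with $\rho({\F}_3)=2$ and formula \eqref{compute r by A-D-E} this gives $r=2+\sum l(a_{2l-1}+a_{2l})=2+6\cdot1$? — I should recompute, but the point is that $(r,a)=(11,7)$ is forced by the configuration, and $\delta=1$ follows from Lemma \ref{delta=1}(1) applied to the three components $C,D,\Sigma$ (since $C\cap D$, $C\cap\Sigma$, $D\cap\Sigma$ each contain a node of $B$). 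The node $D\cap\Sigma$ is distinguished from the other $5$ by the irreducible decomposition of $B$, so $\widetilde{U}$ should be an $\frak{S}_2\times\frak{S}_3$-cover recording a labeling of $C\cap\Sigma$ and of $C\cap D$ separately; the three components of $B$ are distinguished by their classes in $NS_{{\F}_3}$.

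Next I would write down the reference lattice $L_+$ explicitly (an even $2$-elementary overlattice of main invariant $(11,7,1)$ built from $NS_{{\F}_3}=\langle2\rangle\oplus\langle-1\rangle$ or rather $U\oplus(\text{roots})$, the $A_1$'s for the six nodes, and the correction vectors $f_i$ coming from the components $F_i$ of $X^\iota$ via Lemma \ref{L_+ and A-D-E}), exactly as in Examples \ref{ex:1}--\ref{ex:4}. Using Lemma \ref{L_+ and A-D-E} the labeling $\mu$ and the natural basis of $NS_{{\F}_3}$ produce a marking $j\colon L_+\to L_+(X,\iota)$, hence a lift $\tilde p\colon\widetilde{U}\to{\cove}_{11,7,1}$, which is a morphism by Borel's extension theorem. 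Then I would check that $\tilde p$ is $G$-invariant ($G$ the identity component of ${\aut}({\F}_3)$, which acts trivially on $NS_{{\F}_3}$), and that its fibers are $G$-orbits: given $\tilde p(C,D,\mu)=\tilde p(C',D',\mu')$, Lemma \ref{ample class} shows the resulting Hodge isometry preserves ample cones, Torelli gives $\varphi\colon X\to X'$, Lemma \ref{covering map & LB} with $L=L_{1,0}$ recovers an automorphism $\psi$ of ${\F}_3$ (here using $n=3>0$ and the remark after Lemma \ref{covering map & LB} that one desingularizes $\phi_L(Y)$ — no ${\F}_1$ subtlety arises), and $\psi$ carries the labeled branch curve of $f$ to that of $f'$, so $\psi\in G$ and $(C,D,\mu)$, $(C',D',\mu')$ lie in one $G$-orbit.

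Since $\dim(U/{\aut}({\F}_3))=20-11=9$ (one checks $\dim|L_{2,0}|+\dim|L_{1,1}|-\dim{\aut}({\F}_3)=4+6? $ against $9$; the equality is what makes $\mathcal P$ dominant), the lift ${\lift}\colon\widetilde{U}/G\dashrightarrow{\cove}_{11,7,1}$ is birational because ${\cove}_{11,7,1}$ is irreducible. Therefore $\deg(\mathcal P)=|{\Or}(D_{L_+})|$ divided by the degree of $\widetilde{U}/G\to U/{\aut}({\F}_3)$. The latter degree is $[{\aut}({\F}_3):G]\cdot|\frak{S}_2\times\frak{S}_3| = 1\cdot 2\cdot 6 = 12$ (note $[{\aut}({\F}_3):G]=1$ for odd $n$, since $\mathcal E_3$ does not split as a ${\PGL}_2$-bundle and ${\aut}({\F}_3)$ is connected). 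On the lattice side, $L_+$ should be isometric to something like $U\oplus D_4\oplus A_1^{?}$ or $U\oplus E_7\oplus A_1^2$; computing $D_{L_+}$ and invoking \cite{M-S} Corollary 2.4 and Lemma 2.5 to evaluate $|{\Or}(D_{L_+})|$ as an orthogonal group in characteristic $2$ should give exactly $12$, so that $\mathcal P$ is birational. The main obstacle I anticipate is purely bookkeeping: pinning down the correct overlattice $L_+$ and its discriminant form so that $|{\Or}(D_{L_+})|$ matches the combinatorial degree $12$ — the geometric steps (Torelli, recovery of $\psi$ via $L_{1,0}$, the $G$-orbit argument) are routine repetitions of the examples, but the lattice computation has no slack and must be done carefully.
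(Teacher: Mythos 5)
Your overall strategy is exactly the paper's (the recipe of \S\ref{ssec: recipe}: label the nodes, lift to $\cove_{11,7,1}$, compare the degree of $\widetilde U/G\to U/{\aut}({\F}_3)$ with $|{\Or}(D_{L_+})|$), but the intersection-number bookkeeping you sketch is wrong, and since the entire argument is a numerical match between two degrees, this is not a cosmetic slip. On ${\F}_3$ one has $(L_{a,b}.\Sigma)=b$ and $(L_{a,b}.L_{c,d})=3ac+ad+bc$, so $(C.\Sigma)=(L_{2,0}.\Sigma)=0$ (the curve $C$ is \emph{disjoint} from $\Sigma$ --- your ``$(L_{2,0}.L_{0,1})=2$'' is the intersection with a fiber, not with $\Sigma$) and $(C.D)=(L_{2,0}.L_{1,1})=6+2+0=8$, not $3$. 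Hence $B=C+D+\Sigma$ has nine nodes: the eight points $C\cap D$ and the single point $D\cap\Sigma$. This is also what makes the main invariant come out right: $r=\rho({\F}_3)+9=11$ and $k=2$, whereas your six nodes would force $r=8$. Consequently the relevant cover $\widetilde U\to U$ is the $\frak{S}_8$-cover labelling the eight nodes $C\cap D$ (the ninth node $D\cap\Sigma$ is a priori distinguished by the irreducible decomposition of $B$), of degree $8!$ --- not an $\frak{S}_2\times\frak{S}_3$-cover of degree $12$. On the lattice side $L_+\simeq U\oplus D_4\oplus A_1^5$ and $|{\Or}(D_{L_+})|=|{\Or}^+(6,2)|=8!$, so the two degrees do agree and $\mathcal{P}$ is birational; but with your numbers you would be trying to match $12$ against $8!$, and no choice of $L_+$ rescues that.

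Two smaller points. Your appeal to Lemma~\ref{delta=1}~$(1)$ for $\delta=1$ fails because $C\cap\Sigma=\emptyset$, so the three pairwise intersections are not all nonempty; in fact no argument is needed, since $(r,a)=(11,7)$ admits only $\delta=1$ in Nikulin's table. And in the dimension check, $\dim|L_{2,0}|=11$ and $\dim|L_{1,1}|=6$ while $\dim{\aut}({\F}_3)=8$, giving $\dim(U/{\aut}({\F}_3))=9=20-r$ as required --- your ``$4+6$'' is off, though you flagged it. The geometric steps (Torelli, recovery of $\psi$ via $L_{1,0}$, the $G$-orbit argument, connectedness of ${\aut}({\F}_3)$) are fine and are indeed routine repetitions of the examples; it is precisely the ``bookkeeping with no slack'' that you identified as the risk, and it is where the proposal goes wrong.
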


\begin{proof}
We consider an $\frak{S}_8$-covering $\widetilde{U}\to U$ whose fiber over a $(C, D)\in U$ 
corresponds to labelings of the eight nodes $C\cap D$. 
The rest node of $C+D+\Sigma$ is the one point $D\cap\Sigma$, 
which is obviously distinguished from those eight. 
Thus we obtain a birational lift 
$\widetilde{U}/{\aut}({\F}_3)\dashrightarrow{\cove}_{11,7,1}$ of $\mathcal{P}$. 
The projection ${\cove}_{11,7,1}\dashrightarrow\mathcal{M}_{11,7,1}$ is an ${\Or}(D_{L_+})$-covering 
for the lattice $L_+=U\oplus D_4\oplus A_1^5$. 
By \cite{M-S} we have $|{\Or}(D_{L_+})|=|{\Or}^+(6, 2)|=8!$, 
which implies that $\mathcal{P}$ is birational.   
\end{proof}

\begin{proposition}\label{rational (11,7)}
The quotient $(|L_{2,0}|\times |L_{1,1}|)/{\aut}({\F}_3)$ is rational. 
Therefore $\mathcal{M}_{11,7,1}$ is rational. 
\end{proposition}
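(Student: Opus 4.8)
The plan is to follow the same pattern used throughout Section \ref{sec:g=2}, reducing the rationality of the quotient $(|L_{2,0}|\times|L_{1,1}|)/{\aut}({\F}_3)$ to a manageable quotient problem by peeling off the factor $|L_{1,1}|$ via the no-name method. First I would observe that $L_{1,1}$ is ${\aut}({\F}_3)$-linearized: indeed $3$ is odd, so by Proposition \ref{linearization Hirze} \emph{every} line bundle on ${\F}_3$ carries an ${\aut}({\F}_3)$-linearization. Hence the projection
\begin{equation*}
|L_{2,0}|\times|L_{1,1}| \longrightarrow |L_{2,0}|
\end{equation*}
is an ${\aut}({\F}_3)$-equivariant vector bundle (the trivial bundle with fiber $H^0(L_{1,1})$, projectivized), and I can apply the no-name lemma \ref{no-name 2} once I know that ${\aut}({\F}_3)$ acts almost freely on $|L_{2,0}|$.

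For the almost-freeness, the key input is Proposition \ref{HE moduli} together with Corollary \ref{stab HE}: since $n=3\geq3$, the stabilizer in ${\aut}({\F}_3)$ of a general smooth $C\in|L_{2,0}|$ is exactly $\langle\iota_C\rangle\cong{\Z}/2{\Z}$, where $\iota_C$ is the extension of the hyperelliptic involution, and $\mathcal{H}_2$ is a geometric quotient $U_0/{\aut}({\F}_3)$ on the open locus $U_0\subset|L_{2,0}|$ of smooth divisors. So ${\aut}({\F}_3)$ does \emph{not} act almost freely on $|L_{2,0}|$, but the finite kernel $G_0=\langle\iota_C\rangle$ over a general point is constant, and this is precisely the situation Proposition \ref{no-name 2} is designed for. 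Thus I would check that $G_0$ acts on the fiber $H^0(L_{1,1})$ by a scalar character $\alpha$ — which it does, since $\iota_C$ acts linearly on the one-dimensional-over-$\mathcal{H}_2$ family and, being an involution, acts on $H^0(L_{1,1})$ through its natural lift; the point is that one can trivialize the character by tensoring with an ${\aut}({\F}_3)$-linearized line bundle $L\to|L_{2,0}|$ on which $\iota_C$ acts by the same $\alpha$, e.g. a suitable power of the tautological bundle or its twist (condition (iii) of Proposition \ref{no-name 2}). Granting this, Proposition \ref{no-name 2} gives
\begin{equation*}
(|L_{2,0}|\times|L_{1,1}|)/{\aut}({\F}_3) \;\sim\; {\proj}^N\times\big(|L_{2,0}|/{\aut}({\F}_3)\big)\;\sim\;{\proj}^N\times\mathcal{H}_2,
\end{equation*}
where $N=h^0(L_{1,1})-1$ and the last identification is Proposition \ref{HE moduli}.

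Finally I would invoke the rationality of $\mathcal{H}_2$. The moduli space of genus $2$ curves is rational (it is well known to be rational — e.g.\ it is birational to a quotient of a space of binary sextics by ${\rm PGL}_2$, which is handled by classical invariant theory, or directly by Katsylo's theorem \ref{Katsylo} applied to $H^0(\mathcal{O}_{{\proj}^1}(6))$ under ${\SL}_2$). Since a product of a projective space with a rational variety is rational, the quotient is rational, and therefore $\mathcal{M}_{11,7,1}$ is rational.

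The main obstacle I expect is the bookkeeping in the second paragraph: verifying hypotheses (ii) and (iii) of Proposition \ref{no-name 2} — namely that the generic stabilizer $\langle\iota_C\rangle$ acts on $H^0(L_{1,1})$ by a genuine scalar (not just projectively, which is automatic) and producing the linearized line bundle $L$ over the base carrying the matching character. One expects this to be routine given that $3$ is odd and all line bundles on ${\F}_3$ are ${\aut}({\F}_3)$-linearized, so the character $\alpha$ can be read off from the action of $\iota_C$ on $H^0(L_{1,1})$ and matched by an appropriate power of the ample generator; but it is the one place where a small computation, rather than a citation, is needed. As an alternative — parallel to the remark following Proposition \ref{rational (11,5)} — one could instead label the eight nodes $C\cap D$, pass to the configuration of eight points and use the plain no-name lemma, avoiding any appeal to the rationality of $\mathcal{H}_2$; I would mention this as a fallback in case the scalar-character check proves awkward.
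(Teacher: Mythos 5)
Your route (no-name method over the base $|L_{2,0}|$, then rationality of $\mathcal{H}_2$) is genuinely different from the paper's, which instead applies the slice method to the \emph{other} projection $|L_{2,0}|\times|L_{1,1}|\to|L_{1,1}|$ and finishes with Proposition \ref{L_{1,1}} and Miyata's theorem. Unfortunately your version has a real gap at exactly the point you flag, and it is not a ``small computation'': Proposition \ref{no-name 2} does not apply here. In that proposition $G_0$ is by definition the kernel of the $G$-action on the base $U$, i.e.\ a single normal subgroup acting trivially on \emph{all} of $U$. For the projection to $|L_{2,0}|$ this kernel is trivial (two general members of $|L_{2,0}|$ meet in $12$ points, so an automorphism of ${\F}_3$ preserving every $L_{2,0}$-curve fixes a dense set of points and is the identity). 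The generic stabilizer $\langle\iota_C\rangle$ of Corollary \ref{stab HE} is \emph{not} such a kernel --- $\iota_C$ genuinely varies with $C$, so these stabilizers are conjugate but not equal, contrary to your assertion that ``the finite kernel $G_0=\langle\iota_C\rangle$ over a general point is constant.'' Hence hypothesis $(i)$ of Proposition \ref{no-name 2}, almost freeness of $\overline{G}=G/G_0=G$ on the base, fails precisely because of the hyperelliptic involution. Even if one tried to rework the lemma around the generic stabilizer, hypothesis $(ii)$ fails too: $\iota_C$ does not act on the $7$-dimensional space $H^0(L_{1,1})$ by a scalar, since that space splits into nonzero $\pm1$-eigenspaces for $\iota_C$.

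What your fibration actually produces is recorded in the paper's own Corollary \ref{fixed curve (11,7)}: the fiber of $\mathcal{M}_{11,7,1}\dashrightarrow\mathcal{M}_2$ over a general $C$ is $|4K_C|/\iota\simeq{\proj}^6/({\Z}/2{\Z})$, not ${\proj}^6$. So the claimed equivalence with ${\proj}^N\times\mathcal{H}_2$ does not follow; you are left with a ${\proj}^6/({\Z}/2{\Z})$-fibration over $\mathcal{M}_2$ whose birational triviality would itself need an argument (e.g.\ descending the two eigensubbundles of $H^0(L_{1,1})$ along the hyperelliptic involution), on top of the appeal to the rationality of $\mathcal{M}_2$. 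The paper sidesteps all of this by slicing along $|L_{1,1}|$: by Proposition \ref{L_{1,1}} the stabilizer $G$ of a general $D\in|L_{1,1}|$ is connected and solvable, it acts linearly on $H^0(L_{2,0})$ because every line bundle on ${\F}_3$ is ${\aut}({\F}_3)$-linearized for $n=3$ odd (Proposition \ref{linearization Hirze}), and Miyata's theorem \ref{Miyata} gives the rationality of $|L_{2,0}|/G$ at once. I recommend you switch to that projection.
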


\begin{proof}
We apply the slice method to the projection $|L_{2,0}|\times |L_{1,1}| \to |L_{1,1}|$, 
and then use Proposition \ref{L_{1,1}} and Miyata's theorem. 
\end{proof}

For every smooth $C\in|L_{2,0}|$ the linear system $|L_{1,1}|$ is identified with $|4K_C|$ by restriction. 
Indeed, we have $4K_C\simeq L_{0,4}|_C$ by the adjunction formula, 
and $L_{0,4}|_C\simeq L_{1,1}|_C$ because $C\cap\Sigma=\emptyset$. 
Then the vanishings $h^0(L_{-1,1})=h^1(L_{-1,1})=0$ prove our claim. 
In view of Proposition \ref{HE moduli} and Corollary \ref{stab HE}, we have 

\begin{corollary}\label{fixed curve (11,7)}
The fixed curve map $\mathcal{M}_{11,7,1}\to\mathcal{M}_2$ is dominant 
with a general fiber being birationally identified with 
the quotient of $|4K_C|$ by the hyperelliptic involution. 
\end{corollary}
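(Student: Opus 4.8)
The plan is to verify the final statement of Corollary \ref{fixed curve (11,7)} by unwinding the birational period map $\mathcal{P}\colon U/{\aut}({\F}_3)\dashrightarrow\mathcal{M}_{11,7,1}$ that has just been shown to be birational. First I would recall that the fixed curve map $F\colon\mathcal{M}_{11,7,1}\to\mathcal{M}_2$ sends $(X,\iota)$ to the genus $g=11-2^{-1}(r+a)=2$ component $C^g$ of $X^{\iota}$. By the construction of $\mathcal{P}$, a general point of $U/{\aut}({\F}_3)$ is a pair $(C,D)$ with $C\in|L_{2,0}|$ smooth, $D\in|L_{1,1}|$ smooth, transverse, and the corresponding right DPN pair is $({\F}_3, C+D+\Sigma)$ after right resolution. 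The component of $X^{\iota}$ of maximal geometric genus is the one lying over the component of the branch curve of maximal genus; since $C\in|L_{2,0}|$ is a hyperelliptic curve of genus $n-1=2$ (by the remark preceding Proposition \ref{HE moduli}), while $D\in|L_{1,1}|$ and $\Sigma$ are rational, the genus $2$ component $C^g$ is the double-cover-image curve over $C$, which is isomorphic to $C$ itself (the cover $X\to{\F}_3$ is unramified over a general point of $C$? — no: $C$ is part of the branch locus, so $C^g\cong C$). Hence $F\circ\mathcal{P}$ factors through the projection $U/{\aut}({\F}_3)\dashrightarrow|L_{2,0}|/{\aut}({\F}_3)$, $(C,D)\mapsto C$.

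Next I would identify the target and the generic fibre. By Proposition \ref{HE moduli} the quotient $|L_{2,0}|/{\aut}({\F}_3)$ is (birationally) the moduli space $\mathcal{H}_2=\mathcal{M}_2$ of genus $2$ curves, since every genus $2$ curve is hyperelliptic. This already gives that $F$ is dominant: the composite $U/{\aut}({\F}_3)\dashrightarrow\mathcal{M}_2$ is dominant because $|L_{2,0}|\to\mathcal{H}_2$ is. For the fibre structure, fix a general $C\in|L_{2,0}|$. By Corollary \ref{stab HE} (valid since $n=3\geq3$), the stabilizer of $C$ in ${\aut}({\F}_3)$ is exactly $\langle\iota_C\rangle\cong\Z/2\Z$, generated by the extension of the hyperelliptic involution of $C$ to ${\F}_3$. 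The fibre of $F$ over $[C]\in\mathcal{M}_2$ is therefore birational to $|L_{1,1}|/\langle\iota_C\rangle$, the quotient of the linear system $|L_{1,1}|$ (an open subset of smooth $D$ transverse to $C$) by the residual $\Z/2\Z$-action induced by $\iota_C$ acting on $\F_3$.

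To finish, I would identify $|L_{1,1}|$ restricted to $C$ with $|4K_C|$: by the adjunction formula on $\F_3$ one has $4K_C\simeq L_{0,4}|_C$, and since $C\cap\Sigma=\emptyset$ (as $C\in|L_{2,0}|$ is disjoint from the $(-3)$-curve) one gets $L_{0,4}|_C\simeq L_{1,1}|_C$. The vanishings $h^0(L_{-1,1})=h^1(L_{-1,1})=0$, via the restriction sequence $0\to L_{-1,1}\to L_{1,1}\to L_{1,1}|_C\to0$, show that the restriction map $H^0(L_{1,1})\to H^0(4K_C)$ is an isomorphism, hence $|L_{1,1}|\cong|4K_C|$ as linear systems on which $\iota_C$ acts compatibly with the hyperelliptic involution. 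Therefore the generic $F$-fibre is birational to $|4K_C|/\langle\text{hyperelliptic involution}\rangle$, which is the assertion. The main obstacle — such as it is — is making precise that $\mathcal{P}$ really does send $(C,D)$ to a 2-elementary $K3$ whose genus $2$ fixed component is $C$, i.e.\ that the right-resolution process over the nodes does not alter which component carries the genus; this is handled by the general formula in \S\ref{ssec: right resol} stating that the genus of the main component of $X^{\iota}$ equals the maximal geometric genus of components of $B$, together with the birationality of $\mathcal{P}$ already established, so no serious difficulty remains.
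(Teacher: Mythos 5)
Your proposal is correct and follows essentially the same route as the paper: the paper's justification is precisely the paragraph preceding the corollary (the adjunction computation $4K_C\simeq L_{0,4}|_C\simeq L_{1,1}|_C$ together with the vanishings $h^0(L_{-1,1})=h^1(L_{-1,1})=0$), combined with Proposition \ref{HE moduli} for the identification $|L_{2,0}|/{\aut}({\F}_3)\sim\mathcal{M}_2$ and Corollary \ref{stab HE} for the residual $\langle\iota_C\rangle$-action on the fibre. Your write-up merely makes these steps explicit, so there is nothing to add.
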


\subsection{$\mathcal{M}_{12,6,1}$, $\mathcal{M}_{13,5,1}$, $\mathcal{M}_{14,4,0}$ and 
                       genus $2$ curves on ${\F}_3$}\label{ssec:g=2 II}

We construct 2-elementary $K3$ surfaces using curves on ${\F}_3$. 
For $3\leq k\leq5$ let $U_k\subset|L_{2,0}|\times |L_{1,0}|\times |L_{0,1}|$ be the locus of triplets 
$(C, H, F)$ such that 
$({\rm i})$ $C$ and $H$ are smooth and transverse to each other,  
$({\rm ii})$ $F$ intersects with $C$ at $p=F\cap H$ with multiplicity $k-3$ in case $k=4, 5$, 
and is transverse to $C+H$ in case $k=3$.  
It is easy to calculate ${\dim}\, U_k=19-k$. 
For a $(C, H, F)\in U_k$ the curve $B=C+H+F+\Sigma$ belongs to $|\!-\!2K_{{\F}_3}|$. 
When $k=3$, $B$ has only nodes as the singularities. 
When $k=4, 5$, $B$ has the $D_{2k-4}$-singularity $p$, 
the nodes $C\cap H\backslash p$, $F\cap C\backslash p$, $F\cap\Sigma$, 
and no other sigularity. 
The 2-elementary $K3$ surface $(X, \iota)$ associated to $B$ has invariant $(r, a)=(9+k, 9-k)$. 
When $k=5$, $(X, \iota)$ has parity $\delta=0$. 
Indeed, let $(Y', B')$ be the corresponding right DPN pair. 
The curve $B'$ has two components over $p$, say $E_3$ and $E_5$, 
whose numbering corresponds to the one for the vertices of the $D_6$-graph in Figure \ref{dual graph}. 
Then the sum of $-E_3+E_5$ and the strict transform of $C+F-H-\Sigma$ belongs to $4NS_{Y'}$, 
which proves $\delta=0$.  
Thus we obtain a period map 
$\mathcal{P}_k\colon U_k/{\aut}({\F}_3)\dashrightarrow\mathcal{M}_{9+k,9-k,\delta}$ 
where $\delta=1$ if $k=3, 4$ and $\delta=0$ if $k=5$. 

\begin{proposition}\label{birat g=2 II}
The map $\mathcal{P}_k$ is birational. 
\end{proposition}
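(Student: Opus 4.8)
The plan is to run the degree-of-period-map recipe of \S\ref{ssec: recipe}, exactly in the style of Examples \ref{ex:1}--\ref{ex:4}. The components $C$, $H$, $F$, $\Sigma$ of $B=C+H+F+\Sigma$ carry pairwise distinct classes in $NS_{{\F}_3}$, hence are intrinsically labelled; consequently the three branches of $B$ at the $D_{2k-4}$-point $p$ (when $k=4,5$) are distinguished by the irreducible decomposition of $B$, and nodes of distinct incidence type ($C\cap H$, $C\cap F$, $F\cap\Sigma$, and $H\cap F$ when $k=3$) are likewise distinguished. The only genuine ambiguity is the ordering of the equivalued nodes. Accordingly I would take $\widetilde{U}_3\to U_3$ to be the $\frak{S}_6\times\frak{S}_2$-cover ordering the six nodes $C\cap H$ and the two nodes $C\cap F$, and $\widetilde{U}_k\to U_k$ for $k=4,5$ to be the $\frak{S}_5$-cover ordering the five nodes $C\cap H\setminus p$ (the remaining nodes of $B$ being singletons of identifiable type).

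Next, for $(C,H,F,\mu)\in\widetilde{U}_k$ let $(X,\iota)$ be the associated $2$-elementary $K3$ surface. By Lemma \ref{L_+ and A-D-E} the labelling $\mu$ together with a fixed basis of $NS_{{\F}_3}$ induces a lattice marking $j\colon L_+\to L_+(X,\iota)$, where $L_+$ is the even $2$-elementary lattice of main invariant $(9+k,9-k,\delta)$ made explicit by that lemma. Taking the period of $((X,\iota),j)$ gives a lift $\tilde{p}_k\colon\widetilde{U}_k\to{\cove}_{9+k,9-k,\delta}$, a morphism by Borel's theorem \cite{Bo}. Since $\aut({\F}_3)$ acts trivially on $NS_{{\F}_3}$, the morphism $\tilde{p}_k$ is $\aut({\F}_3)$-invariant and descends to a lift ${\lift}\colon\widetilde{U}_k/\aut({\F}_3)\dashrightarrow{\cove}_{9+k,9-k,\delta}$ of $\mathcal{P}_k$.

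The crux is to show ${\lift}$ is generically injective, i.e. that the $\tilde{p}_k$-fibres are $\aut({\F}_3)$-orbits. Given $\tilde{p}_k(C,H,F,\mu)=\tilde{p}_k(C',H',F',\mu')$, one obtains a Hodge isometry $\Phi$ of the second cohomologies compatible with the markings; by Lemma \ref{ample class} $\Phi$ preserves the ample cones, so by the Torelli theorem it is induced by an isomorphism $\varphi\colon X\to X'$. As $\varphi^{\ast}$ carries $f'^{\ast}L_{1,0}$ to $f^{\ast}L_{1,0}$, Lemma \ref{covering map & LB} yields an automorphism $\psi$ of ${\F}_3$ with $\psi\circ f=f'\circ\varphi$; here one uses that $\phi_{L_{1,0}}$ contracts only the $(-3)$-curve $\Sigma$, so that desingularizing $\phi_{L_{1,0}}({\F}_3)$ recovers both ${\F}_3$ and $f$. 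Then $\psi$ sends $(B,\mu)$ to $(B',\mu')$ and acts trivially on $NS_{{\F}_3}$, so $\psi\in\aut({\F}_3)$. Since ${\cove}_{9+k,9-k,\delta}$ is irreducible and $\dim(\widetilde{U}_k/\aut({\F}_3))=11-k=20-(9+k)$, the lift ${\lift}$ is birational.

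Finally, $\deg(\mathcal{P}_k)$ equals $|{\Or}(D_{L_+})|$ divided by the degree of $\widetilde{U}_k/\aut({\F}_3)\to U_k/\aut({\F}_3)$, which is $6!\cdot2$ for $k=3$ and $5!$ for $k=4,5$ (here $\aut({\F}_3)$ acts generically freely on $U_k$, the stabilizer of a general configuration being trivial). One then identifies $L_+$ explicitly for each $k$ via Lemma \ref{L_+ and A-D-E} and evaluates $|{\Or}(D_{L_+})|$ using \cite{M-S} Corollary 2.4 and Lemma 2.5, checking that the two numbers agree. I expect this last step --- the precise identification of $L_+$ for $k=3,4$ and the order computation --- together with the verification that the labelling scheme on $\widetilde{U}_k$ is genuinely complete (that no symmetry of the configuration has been overlooked), to be the only real work; everything else is a verbatim reprise of the recipe.
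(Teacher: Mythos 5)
Your proposal is correct and follows the paper's proof essentially verbatim: the same covers $\widetilde{U}_3\to U_3$ (an $\frak{S}_6\times\frak{S}_2$-cover) and $\widetilde{U}_k\to U_k$ for $k=4,5$ (an $\frak{S}_5$-cover), the same lifting and Torelli argument from the recipe of \S\ref{ssec: recipe}, and the same final degree comparison. The one step you defer is exactly where the paper does its concrete work: it identifies $L_+=U\oplus A_1^2\oplus D_4^2$ for $k=3$ (giving $|{\Or}(D_{L_+})|=2\cdot|{\rm Sp}(4,2)|=2\cdot6!$) and $L_-=U\oplus U(2)\oplus D_4\oplus A_1^{5-k}$ for $k=4,5$ (giving $|{\Or}(D_{L_-})|=|{\Or}^-(4,2)|=5!$), matching your covering degrees.
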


\begin{proof}
For $k=3$ we label the six nodes $C\cap H$ and the two nodes $C\cap F$ independently, 
which is realized by an $\frak{S}_6\times\frak{S}_2$-covering $\widetilde{U}_3\to U_3$. 
The rest two nodes, $F\cap H$ and $F\cap\Sigma$, are distinguished by the irreducible decomposition of $B$. 
This defines a birational lift $\widetilde{U}_3/{\aut}({\F}_3) \dashrightarrow {\cove}_{12,6,1}$ of $\mathcal{P}_3$. 
The variety ${\cove}_{12,6,1}$ is an ${\Or}(D_{L_+})$-cover of $\mathcal{M}_{12,6,1}$ 
for the lattice $L_+=U\oplus A_1^2\oplus D_4^2$. 
By \cite{M-S} we have $|{\Or}(D_{L_+})|=2\cdot|{\rm Sp}(4, 2)|=2\cdot6!$. 
Therefore $\mathcal{P}_3$ has degree $1$. 

For $k=4, 5$ we consider labelings the five nodes $C\cap H\backslash F$, 
which defines an $\frak{S}_5$-cover $\widetilde{U}_k$ of $U_k$. 
The rest singularities of $B$ and the branches at the $D_{2k-4}$-point are a priori distinguished.  
Thus we obtain a birational lift 
$\widetilde{U}_k/{\aut}({\F}_3) \dashrightarrow {\cove}_{9+k,9-k,\delta}$ 
of $\mathcal{P}_k$. 
The variety ${\cove}_{9+k,9-k,\delta}$ is an ${\Or}(D_{L_-})$-cover of $\mathcal{M}_{9+k,9-k,\delta}$ 
for the lattice $L_-=U\oplus U(2)\oplus D_4\oplus A_1^{5-k}$. 
We have $|{\Or}(D_{L_-})|=|{\Or}^-(4, 2)|=5!$ for both $k=4, 5$. 
Hence $\mathcal{P}_k$ is birational. 
\end{proof}

\begin{proposition}\label{rational g=2 II} 
The quotient $U_k/{\aut}({\F}_3)$ is rational. 
Therefore $\mathcal{M}_{12,6,1}$, $\mathcal{M}_{13,5,1}$, and $\mathcal{M}_{14,4,0}$ are rational. 
\end{proposition}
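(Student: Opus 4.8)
The plan is to follow the now-familiar route of the slice method (Proposition~\ref{slice}) together with Miyata's theorem (Theorem~\ref{Miyata}), exactly as in the proofs of Propositions~\ref{rational (11,7)} and~\ref{rational (5+k,5-k)}. First I would use the ${\aut}({\F}_3)$-equivariant projection $f\colon U_k\to V$, $(C,H,F)\mapsto(H,F)$, where $V$ is the locus of those $(H,F)$ in $|L_{1,0}|\times|L_{0,1}|$ with $H$ a smooth section; by condition $({\rm i})$ in the definition of $U_k$ this is genuinely a morphism into $V$, and it is dominant because for a general pair $(H,F)$ the curves $C\in|L_{2,0}|$ satisfying the prescribed incidence conditions form a nonempty family. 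By Proposition~\ref{stabilizer of section}, ${\aut}({\F}_3)$ acts transitively on the smooth divisors of $|L_{1,0}|$, and the stabilizer $G_H$ of such an $H$ still surjects onto ${\aut}(\Sigma)\cong{\rm PGL}_2$; hence $G_H$ acts transitively on $|L_{0,1}|\simeq\Sigma$ and ${\aut}({\F}_3)$ acts transitively on $V$. Fixing a base point $(H_0,F_0)\in V$ and setting $p_0=F_0\cap\Sigma$, the stabilizer $G_0$ of $(H_0,F_0)$ sits in $1\to{\C}^\times\to G_0\to{\aut}(\Sigma,p_0)\to1$ with ${\aut}(\Sigma,p_0)\cong{\C}^\times\ltimes{\C}$, so $G_0$ is connected and solvable; the slice method then gives $U_k/{\aut}({\F}_3)\sim f^{-1}(H_0,F_0)/G_0$.

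Next I would identify the fiber $f^{-1}(H_0,F_0)$ as an open subset of a projective space. For $k=3$ it is an open subset of $|L_{2,0}|$ itself. For $k=4,5$ it is an open subset of the linear subsystem ${\proj}V_k\subset|L_{2,0}|$ cut out by requiring $C$ to meet the fiber $F_0$ at the point $q_0=F_0\cap H_0$ with multiplicity $k-3$, a codimension $k-3$ linear condition on $H^0(L_{2,0})$. Since $G_0$ preserves the fibration $\pi$ and fixes $p_0$, it fixes $F_0$, and it fixes $H_0$ and hence $q_0$ together with the tangent direction of $F_0$ at $q_0$; therefore $V_k\subset H^0(L_{2,0})$ is $G_0$-invariant. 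Because $n=3$ is odd, Proposition~\ref{linearization Hirze} provides an ${\aut}({\F}_3)$-linearization of $L_{2,0}$, so $G_0$ acts linearly on $V_k$. As $G_0$ is connected and solvable, Miyata's theorem yields that ${\proj}V_k/G_0$ is rational, whence $U_k/{\aut}({\F}_3)$ is rational; the rationality of $\mathcal{M}_{12,6,1}$, $\mathcal{M}_{13,5,1}$ and $\mathcal{M}_{14,4,0}$ then follows from Proposition~\ref{birat g=2 II}. The same argument works uniformly for all three values of $k$, the parity $\delta$ playing no role at this stage.

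The argument is essentially routine once the machinery of \S\ref{Sec:Hirze} is in place; the one point that genuinely matters is the choice of the slicing morphism. Slicing only over $|L_{1,0}|$ would leave the stabilizer $G_H$, which surjects onto ${\rm PGL}_2$ and so is not solvable, and Miyata's theorem would not apply — it is precisely the additional quotient by the position of the fiber $F$ that reduces the structure group to the connected solvable $G_0$, which is why I slice over the pair $(H,F)$. A dimension check ($\dim U_k=19-k$, $\dim V=5$, $\dim{\aut}({\F}_3)=8$, $\dim G_0=3$) confirms $\dim f^{-1}(H_0,F_0)=14-k=\dim{\proj}V_k$, consistently with $\dim\mathcal{M}_{9+k,9-k,\delta}=11-k$.
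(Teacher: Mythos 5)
Your proof is correct and follows essentially the same route as the paper: the slice method applied to the projection $U_k\to|L_{1,0}|\times|L_{0,1}|$, with the stabilizer of a general $(H,F)$ identified as connected and solvable via Propositions \ref{stabilizer of section} and \ref{stabilizer of fiber}, the fiber recognized as an open subset of a linear subsystem ${\proj}V_k\subset|L_{2,0}|$, and Miyata's theorem to conclude. Your additional remarks on the linearization of $L_{2,0}$ for odd $n$ and the dimension check are consistent with (and make explicit) what the paper leaves implicit.
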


\begin{proof}
We apply the slice method to the projection $\pi_k\colon U_k\to|L_{1,0}|\times |L_{0,1}|$. 
By Propositions \ref{stabilizer of section} and \ref{stabilizer of fiber} 
the group ${\aut}({\F}_3)$ acts on $|L_{1,0}|\times |L_{0,1}|$ almost transitively 
with the stabilizer $G$ of a general $(H, F)$ being connected and solvable. 
The fiber $\pi_k^{-1}(H, F)$ is an open set of a linear subspace ${\proj}V_k$ of $|L_{2,0}|$. 
Then ${\proj}V_k/G$ is rational by Miyata's theorem. 
\end{proof}

As in the paragraph just before Corollary \ref{fixed curve (11,7)}, 
we see that for every smooth $C\in|L_{2,0}|$ the linear system $|L_{1,0}|$ is identified with $|3K_C|$ by restriction. 
When $k=3$, we obtain a pointed genus $2$ curve $(C, p)$ by considering either of the two points $F\cap C$. 
When $k\geq4$, the point $p=H\cap F$ determines $F$, and $H$ is a general member of $|3K_C-p|$. 
For $k=5$, $p$ is a Weierstrass point of $C$. 
These infer the following. 

\begin{corollary}\label{fixed curve g=2, 2<k<6}
Let $\mathcal{X}_2$ be the moduli of pointed genus $2$ curves $(C, p)$, 
and let $\mathcal{W}\subset\mathcal{X}_2$ be the divisor of Weierstrass points. 
The fixed curve maps for $\mathcal{M}_{12,6,1}$, $\mathcal{M}_{13,5,1}$, and $\mathcal{M}_{14,4,0}$
lift to rational maps $\widetilde{F}_k:\mathcal{M}_{9+k,9-k,\delta}\dashrightarrow\mathcal{X}_2$. 
Then 

$(1)$ $\widetilde{F}_3$ is dominant with a general fiber birationally identified with 
the quotient of $|3K_C|$ by the hyperelliptic involution. 
 
$(2)$ $\widetilde{F}_4$ is dominant with a general fiber birationally identified with $|3K_C-p|$. 

$(3)$ $\widetilde{F}_5$ is a dominant map onto $\mathcal{W}$ 
whose general fiber is birationally identified with the quotient of $|3K_C-p|$ by the hyperelliptic involution. 
\end{corollary}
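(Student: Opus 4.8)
The strategy is to transport the fixed curve map through the birational equivalence $\mathcal{P}_k\colon U_k/{\aut}({\F}_3)\dashrightarrow\mathcal{M}_{9+k,9-k,\delta}$ of Proposition \ref{birat g=2 II}, enrich it with a marked point, and then read off the fibers from the linear systems on ${\F}_3$. First I would observe that if $(X,\iota)=\mathcal{P}_k(C,H,F)$ with right covering map $f\colon X\to{\F}_3$, the genus $2$ component $C^g$ of $X^\iota$ is the component of $X^\iota$ lying over $C$, and since $C$ is a branch component $f$ maps it isomorphically onto the smooth hyperelliptic curve $C\in|L_{2,0}|$ (all other components of $X^\iota$ lie over $H$, $F$, $\Sigma$, or the exceptional curves of the right resolution, hence are rational). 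Thus $F\circ\mathcal{P}_k$ is induced by the map $U_k\to\mathcal{M}_2$, $(C,H,F)\mapsto[C]$; since $\mathcal{M}_2=|L_{2,0}|/{\aut}({\F}_3)$ by Proposition \ref{HE moduli}, this descends to $U_k/{\aut}({\F}_3)$ and agrees with the fixed curve map.

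The lift $\widetilde{F}_k$ is obtained by recording a point of $C$. By Proposition \ref{HE moduli} and Corollary \ref{stab HE}, the stabilizer in ${\aut}({\F}_3)$ of a general $C\in|L_{2,0}|$ is generated by the involution $\iota_C$ of ${\F}_3$ restricting to the hyperelliptic involution of $C$; recall that $\iota_C$ fixes every $\pi$-fiber and interchanges its two intersection points with $C$. For $k=4,5$ the incidence condition defining $U_k$ forces the point $p:=F\cap H$ to lie on $C$, and for $k=3$ I would take $p$ to be one of the two points of $F\cap C$; in the latter case the two choices give isomorphic pointed curves via $\iota_C$, so $(C,p)$ is a well-defined point of $\mathcal{X}_2$. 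The assignment $(C,H,F)\mapsto(C,p)$ is then ${\aut}({\F}_3)$-invariant as a point of $\mathcal{X}_2$, so it descends to a rational map $\widetilde{F}_k\colon\mathcal{M}_{9+k,9-k,\delta}\dashrightarrow\mathcal{X}_2$; forgetting $p$ recovers the fixed curve map, which is the asserted lifting property. For $k=5$ the tangency of $F$ to $C$ at $p$ says precisely that $p$ is a ramification point of the hyperelliptic $g^1_2=\pi|_C$, i.e.\ a Weierstrass point, so $\widetilde{F}_5$ factors through $\mathcal{W}$.

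To describe the fibers over a general $(C,p)$, note that $F$ is the unique $\pi$-fiber through $p$, so the fiber of $\widetilde{F}_k$ is birational to the space of admissible $H$ modulo the stabilizer of $(C,F,p)$ in ${\aut}({\F}_3)$. For $k=3$ the admissible $H$ range over an open subset of $|L_{1,0}|$; since $C$ is disjoint from $\Sigma$ one has $L_{1,0}|_C\simeq 3K_C$ with the restriction map $H^0(L_{1,0})\to H^0(3K_C)$ an isomorphism (a short cohomology computation on ${\F}_3$ and on $C$), and the stabilizer of $(C,F)$ is $\langle\iota_C\rangle$, so the fiber is birational to $|3K_C|/\langle\iota_C\rangle$, the choice of $p$ versus $\iota_C(p)$ being absorbed into this quotient. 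For $k=4$ one additionally imposes $p\in H$, so $H|_C\in|3K_C-p|$; here $\iota_C(p)\neq p$ generically, the stabilizer of $(C,p)$ is trivial, and the fiber is birational to $|3K_C-p|$. For $k=5$ again $H|_C\in|3K_C-p|$, but now $\iota_C$ fixes the Weierstrass point $p$, so the fiber is $|3K_C-p|/\langle\iota_C\rangle$. Comparing dimensions ($\dim\mathcal{M}_{12,6,1}=8=4+\dim(|3K_C|/\iota_C)$, $\dim\mathcal{M}_{13,5,1}=7=4+\dim|3K_C-p|$, $\dim\mathcal{M}_{14,4,0}=6=3+\dim(|3K_C-p|/\iota_C)$, with $\dim\mathcal{X}_2=4$ and $\dim\mathcal{W}=3$) shows that $\widetilde{F}_3,\widetilde{F}_4$ are dominant onto $\mathcal{X}_2$ and $\widetilde{F}_5$ is dominant onto $\mathcal{W}$.

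The part requiring care is the role of $\iota_C$: one must verify, via Proposition \ref{HE moduli} and Corollary \ref{stab HE}, that it is a genuine automorphism of the pair $({\F}_3,C)$ inducing the hyperelliptic involution and that it is the entire stabilizer of a general $C$; this is what makes the marked point well-defined in $\mathcal{X}_2$ for $k=3$, and, dually, what produces the quotient (resp.\ the absence of a quotient) in the fibers for $k=3,5$ (resp.\ $k=4$). The remaining verifications — identifying $L_{1,0}|_C$ with $3K_C$, cutting out $|3K_C-p|$ by the condition $p\in H$, and the dimension counts — are routine.
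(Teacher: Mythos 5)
Your argument is correct and follows essentially the same route as the paper: the paper likewise identifies $|L_{1,0}|$ with $|3K_C|$ by restriction (via $L_{1,0}|_C\simeq L_{0,3}|_C\simeq 3K_C$ since $C\cap\Sigma=\emptyset$), defines the marked point as one of $F\cap C$ for $k=3$ and as $p=H\cap F$ for $k=4,5$, and reads off the fibers and the quotients by $\iota_C$ from Proposition \ref{HE moduli} and Corollary \ref{stab HE}. Your treatment of the role of $\iota_C$ (well-definedness of the marked point for $k=3$, and the presence or absence of the hyperelliptic quotient in the fibers according to whether $\iota_C$ fixes $p$) is exactly the intended content.
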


\subsection{$\mathcal{M}_{9+k,9-k,1}$ with $k\geq5$ and cuspidal plane quartics}\label{ssec:g=2 III}

Let $U\subset|{\Oplane}(4)|$ be the locus of 
plane quartics with an ordinary cusp and with no other singularity. 
For $5\leq k\leq8$ 
we denote by $U_k\subset U\times|{\Oplane}(1)|$ the locus of pairs $(C, L)$ such that 
if $M\subset{\proj}^2$ is the tangent line of $C$ at the cusp,  
then $L$ intersects with $C$ at $C\cap M\backslash{\rm Sing}(C)$ with multiplicity $k-5$, 
and is transverse to $C$ elsewhere.  
The space $U_k$ is of dimension $19-k$. 
This is obvious for $5\leq k\leq7$. 
For $k=8$, if we take the homogeneous coordinate $[X,Y,Z]$ of ${\proj}^2$ and  
normalize $p=[0,0,1]$, $M=\{Y=0\}$, and $L=\{Z=0\}$, 
then quartics $C$ having cusp at $p$ with $(C.M)_p=3$ and $(C.L)_{L\cap M}=3$ 
are defined by the equations 
\begin{equation}\label{eqn: special cuspidal quartic}
a_{02}Y^2Z^2+\sum_{i+j=3}a_{ij}X^iY^jZ+a_{13}XY^3+a_{04}Y^4=0, 
\end{equation} 
in which the coefficients $a_{\ast}$ may be taken general. 
This shows that $U_8$ is of the expected dimension. 

For a $(C, L)\in U_k$ the sextic $B=C+L+M$ has an $E_7$-singularity at $p={\rm Sing}(C)$, 
$9-k$ nodes at $C\cap L\backslash M$, 
a $D_{2k-8}$-singularity at $L\cap M$ (resp. two nodes at $L\cap M$ and $C\cap M\backslash p$) 
in case $k\geq6$ (resp. $k=5$), 
and no other singularity. 
Hence the 2-elementary $K3$ surface $(X, \iota)$ associated to $B$ 
has invariant $(r, a)=(9+k, 9-k)$. 
When $k=5$, $(X, \iota)$ has parity $\delta=1$ by Lemma \ref{delta=1} $(1)$. 
Thus we obtain a period map 
$\mathcal{P}_k\colon U_k\dashrightarrow\mathcal{M}_{9+k,9-k,1}$. 

\begin{proposition}\label{birat g=2 III}
The map $\mathcal{P}_k$ is birational. 
\end{proposition}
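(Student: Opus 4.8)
The plan is to follow the recipe of \S\ref{ssec: recipe}, mimicking Examples \ref{ex:1}--\ref{ex:4}, with a cover $\widetilde{U}_k\to U_k$ that labels all the data on $B=C+L+M$ that can be distinguished naturally, and then to compare the degree of $\widetilde{U}_k/\mathrm{PGL}_3\to U_k/\mathrm{PGL}_3$ with $|\mathrm{O}(D_{L_+})|$ (equivalently $|\mathrm{O}(D_{L_-})|$).

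First I would observe that on $B=C+L+M$ the singularities are a priori of distinct A-D-E types except for the $9-k$ nodes $C\cap L\setminus M$: the $E_7$-point $p=\mathrm{Sing}(C)$, the nodes on $C\cap L$, and (for $k\ge 6$) the $D_{2k-8}$-point $L\cap M$ are mutually distinguished by singularity type (for $k=5$, the two extra nodes $L\cap M$ and $C\cap M\setminus p$ are distinguished from the nodes on $C\cap L$ by the irreducible decomposition of $B$, since $L\cap M$ lies on $L\cap M$ while $C\cap M\setminus p$ lies on $C\cap M$). Moreover the three components $C,L,M$ of $B$ carry distinct classes in $NS_{\mathbb P^2}$ — or are distinguished geometrically, $M$ being the cuspidal tangent and $L$ the remaining line — so the branches of $B$ at $p$ and at the $D_{2k-8}$-point are canonically labelled once the components are. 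Thus the only labelling that needs to be imposed by a cover is that of the $9-k$ nodes on $C\cap L$ (plus, when $k=5$, nothing further). So I would set $\widetilde{U}_k\to U_k$ to be the $\mathfrak{S}_{9-k}$-cover recording an ordering of the nodes $C\cap L\setminus M$ (for $k=8$ this is trivial, for $k=5$ it is an $\mathfrak{S}_4$-cover, etc.). Then $\widetilde{U}_k$ parametrizes sextics $C+L+M$ with a complete reasonable labelling in the sense of the recipe, and — after defining the reference lattice $L_+$ from Lemma \ref{L_+ and A-D-E} exactly as in Example \ref{ex:4}, with an $E_7$-block, a $D_{2k-8}$-block, an $A_1^{9-k}$-block and the classes of the components — one obtains a lift $\widetilde{p}\colon\widetilde{U}_k\to\cove_{9+k,9-k,1}$, which is $\mathrm{PGL}_3$-invariant and descends to a generically injective $\lift\colon\widetilde{U}_k/\mathrm{PGL}_3\dashrightarrow\cove_{9+k,9-k,1}$ by the Torelli/ample-cone argument in steps (2)--(4) of the recipe, using Lemmas \ref{ample class}, \ref{covering map & LB} and the fact that one recovers the morphism $f\colon X\to\mathbb P^2$ from $j(h)$, the nodes from the $j(e_i)$, and the sextic as the branch curve. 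Since $\dim(U_k/\mathrm{PGL}_3)=\dim U_k-8=11-k=20-(9+k)=\dim\mathcal M_{9+k,9-k,1}$ (using $\dim U_k=19-k$), the lift $\lift$ is birational.

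Next I would compute the degree of $\widetilde{U}_k/\mathrm{PGL}_3\to U_k/\mathrm{PGL}_3$, which is $|\mathfrak{S}_{9-k}|=(9-k)!$ because $\mathrm{PGL}_3$ acts almost freely on $U_k$ (a projective transformation preserving a cuspidal quartic, its cuspidal tangent and the general line $L$ is trivial), and compare it with the degree of $\cove_{9+k,9-k,1}\to\mathcal M_{9+k,9-k,1}$, which is $|\mathrm{O}(D_{L_+})|$ for the invariant lattice $L_+$ above. For $k=8$ one has $L_+\cong U\oplus E_8\oplus E_7\oplus(\text{a }D_0/A_1^0\text{ term, i.e. nothing})$ with $D_{L_+}$ of length $1$, so $|\mathrm{O}(D_{L_+})|=1$; for $k=7$ and $k=6$ the $D_{2k-8}$-block is $D_6$ resp. $D_4$ and one gets a short $A_1^{2}$ resp. $A_1^{3}$ block, so I would read off $|\mathrm{O}(D_{L_+})|$ from \cite{M-S} (it will come out $|\mathfrak S_2|$, $|\mathfrak S_3|$ respectively, matching $(9-k)!$); for $k=5$ the relevant lattice is $L_-\cong U\oplus U(2)\oplus E_8\oplus A_1^{3}$ (or its $L_+$-counterpart) with $|\mathrm{O}(D)|=|\mathfrak S_4|=4!$ (say), again matching. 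In each case the two covers have the same degree, hence $\mathcal P_k$ is birational. The main obstacle, as usual in this recipe, is the lattice bookkeeping: writing down the overlattice $L_+=\langle\langle2\rangle\oplus A_1^{9-k}\oplus E_7\oplus D_{2k-8},\,f_*\rangle$ explicitly (with glue vectors $2f_0=4h-\sum d_{2i+1}-\sum e_i$ etc. as in Example \ref{ex:4}), checking it is even, $2$-elementary of the correct main invariant $(9+k,9-k,1)$, and computing $|\mathrm{O}(D_{L_+})|$ correctly via \cite{M-S} Cor.~2.4 and Lem.~2.5 — together with verifying that in the anomalous case $k=8$ the degenerate shape of equation \eqref{eqn: special cuspidal quartic} really gives $\dim U_8=11$ and that the generic such quartic still has exactly one cusp and no worse singularity.
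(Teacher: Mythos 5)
Your proposal is correct and follows essentially the same route as the paper: an $\frak{S}_{9-k}$-cover labelling the nodes $C\cap L\backslash M$, the observation that all other singular points and the branches at them are a priori distinguished by singularity type and the irreducible decomposition of $B$, a birational lift to ${\cove}_{9+k,9-k,1}$ via the recipe of \S\ref{ssec: recipe}, and the degree comparison $(9-k)!=|{\Or}(D_{L_-})|$ using that ${\PGL}_3$ acts almost freely. The paper streamlines the lattice bookkeeping you were worried about by working with the anti-invariant lattice, which is uniformly $L_-\simeq U^2\oplus A_1^{9-k}$ with ${\Or}(D_{L_-})\simeq\frak{S}_{9-k}$ immediate (your tentative $U\oplus U(2)\oplus E_8\oplus A_1^3$ for $k=5$ has the wrong rank, but the resulting count $4!$ is right).
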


\begin{proof}
We consider an $\frak{S}_{9-k}$-covering $\widetilde{U}_k\to U_k$ 
whose fiber over a $(C, L)\in U_k$ corresponds to labelings the $9-k$ nodes $C\cap L\backslash M$. 
As described above, the rest singular points of $B$ are a priori distinguished 
by the type of singularity and by the irreducible decomposition of $B$. 
Also note that the two lines $L$ and $M$ are distinguished by their intersection with $C$. 
Therefore we will obtain a birational lift 
$\widetilde{U}_k/{\PGL}_3 \dashrightarrow {\cove}_{9+k,9-k,1}$
of $\mathcal{P}_k$. 
The anti-invariant lattice $L_-$ is isometric to $U^2\oplus A_1^{9-k}$. 
It is easy to check that ${\Or}(D_{L_-})\simeq\frak{S}_{9-k}$. 
Since ${\PGL}_3$ acts on $U$ almost freely, this shows that $\mathcal{P}_k$ has degree $1$. 
\end{proof}

\begin{proposition}\label{rational g=2 III}
The quotient $U_k/{\PGL}_3$ is rational. 
Therefore $\mathcal{M}_{9+k,9-k,1}$ with $5\leq k\leq8$ is rational. 
\end{proposition}

\begin{proof}
Let $V\subset{\proj}^2\times|{\Oplane}(1)|^2$ be the locus of triplets $(p, M, L)$ such that $p\in M$. 
We have the ${\PGL}_3$-equivariant map 
$\pi_k\colon U_k \to V$, $(C, L)\mapsto ({\rm Sing}(C), M, L)$, 
where $M$ is the tangent line of $C$ at the cusp. 
The group ${\PGL}_3$ acts on $V$ almost transitively with the stabilizer of a general point 
isomorphic to ${\C}^{\times}\times({\C}^{\times}\ltimes{\C})$. 
Since a general $\pi_k$-fiber is an open set of a sub-linear system of $|{\Oplane}(4)|$, 
the assertion follows from the slice method for $\pi_k$ and Miyata's theorem. 
\end{proof}

Let $\mathcal{X}_2$ be the moduli of pointed genus $2$ curves $(C, p)$. 
Recall (cf. \cite{Do}) that we have a birational map 
$\mathcal{X}_2\dashrightarrow U/{\PGL}_3$ 
by associating to a pointed curve $(C, p)$ the image $\phi(C)\subset{\proj}^2$ by the linear system $|K_C+2p|$. 
When $p$ is not a Weierstrass point, $\phi(C)$ is a quartic with a cusp $\phi(p)$, 
and the projection from $\phi(p)$ gives the hyperelliptic map of $C$. 
Thus we see the following.  

\begin{corollary}
The fixed curve map for $\mathcal{M}_{9+k,9-k,1}$ with $5\leq k\leq8$ 
lifts to a rational map $\widetilde{F}_k:\mathcal{M}_{9+k,9-k,1}\dashrightarrow\mathcal{X}_2$.
Then

$(1)$ $\widetilde{F}_5$ is dominant with general fibers birationally identified with $|K_C+2p|$.

$(2)$ $\widetilde{F}_6$ is dominant with general fibers birationally identified with $|3p|$. 

$(3)$ $\widetilde{F}_7$ is birational. 

$(4)$ $\widetilde{F}_8$ is generically injective 
          with a generic image being the divisor of those $(C, p)$ with $5p-2K_C$ effective 
          minus the divisor of Weierstrass points. 
\end{corollary}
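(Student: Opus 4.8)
The plan is to combine the birational equivalence $\mathcal{M}_{9+k,9-k,1}\sim U_k/{\PGL}_3$ from Proposition~\ref{birat g=2 III} with the birational map $\mathcal{X}_2\dashrightarrow U/{\PGL}_3$, $(C',p')\mapsto\phi_{|K_{C'}+2p'|}(C')$ recalled above, whose inverse sends a cuspidal quartic to the pair consisting of its normalization and the preimage of the cusp, and to identify $\widetilde F_k$ with the rational quotient of the projection $U_k\to U$, $(C,L)\mapsto C$. First I would note that if $(X,\iota)$ corresponds to $(C,L)\in U_k$, then its branch sextic is $C+L+M$ with $M$ the cuspidal tangent of $C$, and the genus $2$ component of $X^{\iota}$ is the normalization $\widehat C$ of $C$; thus the fixed curve map records $\widehat C$, while $\widetilde F_k$ is obtained by remembering in addition the point $p\in\widehat C$ over the cusp. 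Under the identification $\mathcal{X}_2\sim U/{\PGL}_3$ this exhibits $\widetilde F_k$ as the map induced by $(C,L)\mapsto C$, so its generic fibre over $(\widehat C,p)$ is the $\aut$-quotient of the locus of lines $L$ satisfying the incidence conditions defining $U_k$ relative to a fixed general cuspidal quartic $C$. Since a general pointed genus $2$ curve has no nontrivial automorphism and a general cuspidal quartic arises from a non-Weierstrass $(\widehat C,p)$, this quotient is just that locus of lines.

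The remaining ingredient is to translate the incidence conditions into linear systems on $\widehat C$ via the isomorphism $\phi^{\ast}\colon|\Oplane(1)|\xrightarrow{\ \sim\ }|K_C+2p|$ induced by $\phi=\phi_{|K_C+2p|}$. Writing $r$ for the residual intersection point of $C$ with $M$ (a smooth point of $C$), one computes $\phi^{\ast}M=3p+r$, hence $r\sim K_C-p$; that is, $r$ is the hyperelliptic conjugate of $p$. For a line $L$ meeting $C$ at $r$ with multiplicity $k-5$ and transverse to $C$ elsewhere one has $\phi^{\ast}L=(k-5)r+D$ with $D$ effective of degree $9-k$, and substituting $r\sim K_C-p$ gives $D\sim(6-k)K_C+(k-3)p$; thus $D$ ranges over $|K_C+2p|$, $|3p|$, $|4p-K_C|$, $|5p-2K_C|$ for $k=5,6,7,8$ respectively, while $L$ is recovered from $\phi^{\ast}L=(k-5)r+D$. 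Assembling this: for $k=5$ the fibre of $\widetilde F_5$ is identified via $L\mapsto\phi^{\ast}L$ with the complete system $|K_C+2p|$, giving $(1)$; for $k=6$ the line $L$ must pass through $\phi(r)$ and the fibre is identified with $|3p|$, giving $(2)$; for $k=7$ the degree $2$ class $4p-K_C$ has $h^{0}=1$ for general $(\widehat C,p)$ (it equals $K_C$ only on the proper closed locus $4p\sim2K_C$), so $D$ and hence $L$ (the tangent line to $C$ at $r$) are unique, $\widetilde F_7$ is generically injective, and the equality $\dim\mathcal{M}_{16,2,1}=\dim\mathcal{X}_2=4$ forces it to be birational, giving $(3)$; for $k=8$ one has $\phi^{\ast}L=3r+D$ with $D$ the unique point of the degree $1$ class $5p-2K_C$, which exists exactly when $5p-2K_C$ is effective, so $\widetilde F_8$ is generically injective onto the divisor $\{5p-2K_C\ \text{effective}\}$, giving $(4)$.

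Finally I would dispose of the degeneracies. The construction $\phi_{|K_C+2p|}$ produces a cuspidal quartic only when $p$ is not a Weierstrass point (otherwise $K_C+2p\sim2K_C$ and the image is a conic), so the Weierstrass divisor must be removed in $(4)$; for $k=5,6,7$ the general member is automatically non-Weierstrass, which is why no correction appears there. Dominance in $(1)$--$(3)$ follows from the surjectivity of $U_k\to U$ — for a general cuspidal quartic $C$ the admissible lines exist (arbitrary transverse lines for $k=5$, lines through $\phi(r)$ for $k=6$, the tangent at $r$ for $k=7$) — together with the dimension counts $\dim(U_k/{\PGL}_3)=11-k$. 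I expect the main obstacle to be purely bookkeeping: correctly computing $\phi^{\ast}M$ and propagating the linear equivalences through the four cases, and pinning down the genericity hypotheses (non-Weierstrass, non-special degree $2$ class, flex condition) so that the fibre descriptions are exactly as stated; none of this is deep, but it must be carried out case by case.
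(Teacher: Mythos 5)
Your proposal is correct and follows exactly the route the paper intends: identify $\widetilde F_k$ with the projection $U_k/\PGL_3\to U/\PGL_3\sim\mathcal{X}_2$ from Proposition \ref{birat g=2 III} and the cuspidal-quartic model $(C,p)\mapsto\phi_{|K_C+2p|}(C)$, then compute $\phi^{\ast}M=3p+r$ with $r\sim K_C-p$ and read off the fibres as $|(6-k)K_C+(k-3)p|$ case by case. The paper omits these computations ("Thus we see the following"), and your linear-equivalence bookkeeping, the dimension counts, and the treatment of the Weierstrass locus in case $(4)$ all check out.
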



\section{The case $g=1$ (I)}\label{sec:g=1 (1)} 

In this section we study the case $g=1$, $1\leq k\leq4$, $\delta=1$. 
It seems (at least to the author) difficult to use the previous methods for this case. 
We obtain general members of ${\moduli}$ from plane sextics of the form $C_1+C_2$ 
where $C_1$ is a nodal cubic and $C_2$ is a smooth cubic intersecting with $C_1$ 
at the node with suitable multiplicity. 
But the period map for this sextic model has degree $>1$. 
In order to pass to a ``canonical'' construction, 
we find a Weyl group symmetry in this sextic model, 
which reflects the fact that the Galois group of ${\cover}\dashrightarrow{\moduli}$ is 
the Weyl group or its central quotient. 
This leads to regard the cubics $C_i$ as anticanonical curves on del Pezzo surfaces of degree $k$. 

\subsection{Weyl group actions on universal families}\label{ssec:equiv Weyl}

We begin with constructing a universal family of marked del Pezzo surfaces 
with an equivariant action by the Weyl group. 
For $5\leq d\leq8$ let $U^d\subset({\proj}^2)^d$ be the open set of ordered $d$ points 
in general position in the sense of \cite{De}. 
There exists a geometric quotient $\mathcal{U}^d=U^d/{\PGL}_3$ of $U^d$ by ${\PGL}_3$ (see \cite{D-O}). 
Since any $\mathbf{p}=(p_1,\cdots, p_d)\in U^d$ has trivial stabilizer, 
the quotient map $U^d\to\mathcal{U}^d$ is a principal ${\PGL}_3$-bundle by Luna's etale slice theorem. 
Hence by \cite{GIT} Proposition 7.1 the projection ${\proj}^2\times U^d\to U^d$ 
descends to a smooth morphism $\mathcal{X}^d\to\mathcal{U}^d$ where 
$\mathcal{X}^d$ is a geometric quotient of ${\proj}^2\times U^d$ by ${\PGL}_3$. 
Here ${\PGL}_3$ acts on ${\proj}^2$ in the natural way. 
For $1\leq i\leq d$ we have the section $s_i$ of $\mathcal{X}^d\to\mathcal{U}^d$ 
induced by the $i$-th projection $U^d\to{\proj}^2$, $(p_1,\cdots, p_d)\mapsto p_i$. 
Blowing-up the $d$ sections $s_i$, 
we obtain a family $f\colon\mathcal{Y}^d\to\mathcal{U}^d$ of del Pezzo surfaces of degree $9-d$. 
The exceptional divisor over $s_i$, denoted by $\mathcal{E}_i$, is a family of $(-1)$-curves. 

Let $\Lambda_d$ be the lattice $\langle 1\rangle \oplus \langle -1\rangle^d$ with 
a natural orthogonal basis $h, e_1,\cdots, e_d$. 
For each $\mathbf{p}\in\mathcal{U}^d$, 
the Picard lattice of the $f$-fiber $\mathcal{Y}^d_{\mathbf{p}}$ over $\mathbf{p}$ 
is isometric to $\Lambda_d$ by associating $h$ to the pullback of ${\Oplane}(1)$ 
and $e_i$ to the class of the $(-1)$-curve $(\mathcal{E}_i)_{\mathbf{p}}$. 
This gives a trivialization $\varphi\colon\mathcal{U}^d\times\Lambda_d\to R^2f_{\ast}{\Z}$ of the local system. 
Thus we have a universal family 
\begin{equation}
(f:\mathcal{Y}^d\to\mathcal{U}^d, \: \: \varphi)
\end{equation} 
of marked del Pezzo surfaces. 

Recall that the Weyl group $W_d$ is the group of isometries of $\Lambda_d$ 
which fix the vector $3h-\sum_{i=1}^{d}e_i$. 
Let $w\in W_d$. 
For each $\mathbf{p}\in\mathcal{U}^d$ 
the marked del Pezzo surface $(\mathcal{Y}^d_{\mathbf{p}}, \varphi_{\mathbf{p}})$ 
is transformed by $w$ to $(\mathcal{Y}^d_{\mathbf{p}}, \varphi_{\mathbf{p}}\circ w^{-1})$, 
which is isomorphic to $(\mathcal{Y}^d_{\mathbf{q}}, \varphi_{\mathbf{q}})$ for a $\mathbf{q}\in \mathcal{U}^d$. 
Indeed, the classes $\varphi_{\mathbf{p}}\circ w^{-1}(e_i)$ are represented by 
disjoint $(-1)$-curves $E_i'$ on $\mathcal{Y}^d_{\mathbf{p}}$ 
which define a blow-down $\mathcal{Y}^d_{\mathbf{p}}\to{\proj}^2$. 
Then $\mathbf{q}$ is the blown-down points of $E_1',\cdots, E_d'$. 
We set $w(\mathbf{p})=\mathbf{q}$. 
By construction, we have an isomorphism 
$\mu_w\colon\mathcal{Y}^d_{\mathbf{p}}\to\mathcal{Y}^d_{w(\mathbf{p})}$ with 
$(\mu_w)_{\ast}\circ\varphi_{\mathbf{p}}\circ w^{-1}=\varphi_{w(\mathbf{p})}$. 
The last equality characterizes $\mu_w$ uniquely 
because the cohomological representation 
${\aut}(\mathcal{Y}^d_{\mathbf{p}})\to {\Or}({\rm Pic}(\mathcal{Y}^d_{\mathbf{p}}))$ is injective. 
This ensures that 
$(w'w)(\mathbf{p})=w'(w(\mathbf{p}))$ and that 
$\mu_{w'}\circ\mu_w=\mu_{w'w}$ for $w, w'\in W_d$. 
In this way we obtain an equivariant action of $W_d$ on the family $f\colon\mathcal{Y}^d\to\mathcal{U}^d$. 
The $W_d$-action on $\mathcal{U}^d$ is the Cremona representation. 
In the following we will refer to \cite{D-O} for the basic properties of the Cremona representation for each $d$. 
 
On $\mathcal{Y}^d$ we have two natural $W_d$-linearized vector bundles: 
the relative tangent bundle $T_f$ and the relative anticanonical bundle $K_f^{-1}$. 
The direct image $f_{\ast}K_f^{-1}$ is a $W_d$-linearized bundle over $\mathcal{U}^d$. 
The fiber of the ${\proj}^{9-d}$-bundle ${\proj}(f_{\ast}K_f^{-1})$ 
over a point $\mathbf{p}=[(p_1,\cdots, p_d)]$ of $\mathcal{U}^d$ 
is identified with the linear system of plane cubics through the $d$ points $p_1,\cdots, p_d$ of ${\proj}^2$.

\subsection{$\mathcal{M}_{11,9,1}$ and del Pezzo surfaces of degree $1$}\label{ssec:(11,9)}

\subsubsection{A period map}

Let $f\colon\mathcal{Y}^8\to\mathcal{U}^8$ be the family of marked del Pezzo surfaces of degree $1$ 
constructed above. 
The Cremona representation of the Weyl group $W_8$ has kernel ${\Z}/2$ 
whose generator $w_0$ acts on the $f$-fibers by the Bertini involutions. 
The quotient $W_8/\langle w_0\rangle$ is isomorphic to ${\Or}^+(8, 2)$ and 
acts on $\mathcal{U}^8$ almost freely. 

We let $\mathcal{V}^8\subset{\proj}(f_{\ast}K_f^{-1})$ be the locus of singular anticanonical curves. 
The locus $\mathcal{V}^8$ is invariant under the $W_8$-action on ${\proj}(f_{\ast}K_f^{-1})$, 
and the projection $\mathcal{V}^8\to\mathcal{U}^8$ is of degree $12$. 
We denote by $\mathcal{E}$ the pullback of the bundle $f_{\ast}K_f^{-1}$ by $\mathcal{V}^8\to\mathcal{U}^8$.  
An open set of the variety ${\proj}\mathcal{E}$ parametrizes 
triplets $(\mathcal{Y}^8_{\mathbf{p}}, C_1, C_2)$ of 
a marked del Pezzo surface $\mathcal{Y}^8_{\mathbf{p}}=Y$, 
a nodal $-K_Y$-curve $C_1$, and a smooth $-K_Y$-curve $C_2$. 
Notice that $C_1$ is irreducible and transverse to $C_2$ because $(C_1.C_2)=1$. 
Taking a right resolution of the DPN pair $(Y, C_1+C_2)$, 
we obtain a 2-elementary $K3$ surface of invariant $(g, k)=(1, 1)$. 
This defines a period map $\mathcal{P}\colon{\proj}\mathcal{E}\dashrightarrow\mathcal{M}_{11,9,1}$. 

\begin{proposition}\label{birat (11,9)}
The map $\mathcal{P}$ is $W_8$-invariant and 
descends to a birational map ${\proj}\mathcal{E}/W_8 \dashrightarrow\mathcal{M}_{11,9,1}$. 
\end{proposition}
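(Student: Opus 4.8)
The statement has two parts: $W_8$-invariance of $\mathcal{P}$, and birationality of the induced map on the quotient. The invariance is the easier half: a point of ${\proj}\mathcal{E}$ over $\mathbf{p}\in\mathcal{U}^8$ is a triple $(\mathcal{Y}^8_{\mathbf p}, C_1, C_2)$, and for $w\in W_8$ the isomorphism $\mu_w\colon\mathcal{Y}^8_{\mathbf p}\to\mathcal{Y}^8_{w(\mathbf p)}$ from \S\ref{ssec:equiv Weyl} carries the anticanonical curves $C_1,C_2$ to anticanonical curves on $\mathcal{Y}^8_{w(\mathbf p)}$, hence carries the DPN pair $(\mathcal{Y}^8_{\mathbf p}, C_1+C_2)$ isomorphically to $(\mathcal{Y}^8_{w(\mathbf p)}, \mu_w(C_1)+\mu_w(C_2))$. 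Since isomorphic DPN pairs give isomorphic 2-elementary $K3$ surfaces (\S\ref{ssec: right resol}) and the period only depends on the isomorphism class of $(X,\iota)$ (Theorem \ref{thm: moduli}), $\mathcal{P}$ is constant on $W_8$-orbits. As $\mathcal{P}$ is a rational map to a variety and $W_8$ acts on ${\proj}\mathcal{E}$, the universal property of the rational quotient (\S\ref{sec:invariant theory}) gives the factorization ${\proj}\mathcal{E}/W_8\dashrightarrow\mathcal{M}_{11,9,1}$.

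\textbf{Dominance and dimension count.} First I would check surjectivity onto a general point of $\mathcal{M}_{11,9,1}$: by Proposition \ref{compute main inv} a general such $(X,\iota)$ has $g=1$, $k=9$, and its right DPN pair $(Y,B)$ has $Y$ a del Pezzo surface of degree $1$ (since $\rho(Y)=r-\sum(\cdots)$; one reads off from the allowed singularities that $B$ has one node and a smooth elliptic curve, so $Y$ has $\rho=2$... I must instead argue $r=\rho(Y')=11$ forces, after accounting for the exceptional curves over the node, that $Y$ itself is degree $1$). A marking of $Y$ lifts it into $\mathcal{Y}^8\to\mathcal{U}^8$, and $B=C_1+C_2$ with $C_1$ the nodal anticanonical curve, $C_2$ the smooth one, so $(X,\iota)$ is in the image. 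Then I would compute $\dim({\proj}\mathcal{E}/W_8)$: $\dim\mathcal{U}^8=16-8=8$ (dimension of ${\proj}^2$-configurations of $8$ points mod $\PGL_3$), the fiber $\mathcal{V}^8\to\mathcal{U}^8$ is generically finite (degree $12$), and ${\proj}\mathcal{E}$ adds the dimension of $|{-}K_Y|={\proj}^1$, giving $\dim{\proj}\mathcal{E}=8+1=9$. Since $W_8$ is finite, $\dim({\proj}\mathcal{E}/W_8)=9=20-r$ with $r=11$, matching $\dim\mathcal{M}_{11,9,1}$. So a dominant map between irreducible varieties of equal dimension is generically finite, and it remains to show the generic fiber is a single point.

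\textbf{Generic injectivity — the main obstacle.} This is the heart of the proof and follows the recipe of \S\ref{ssec: recipe}, which here must be adapted because the parameter space is a family over $\mathcal{U}^8$ rather than a fixed linear system. The plan is to produce a lift to the cover $\cove_{11,9,1}$: a marking of $Y$ together with the data of $C_1,C_2$ and a labeling of the exceptional curves over the node determines, via Lemma \ref{L_+ and A-D-E}, a lattice marking $j\colon L_+\to L_+(X,\iota)$, hence a period in $\cove_{11,9,1}$. I would then argue as in step (4) of the recipe: if two triples have equal period in $\cove_{11,9,1}$, the resulting Hodge isometry preserves ample cones by Lemma \ref{ample class}, Torelli gives $\varphi\colon X\to X'$, and since the right covering map $f\colon X\to Y$ can be recovered from $L_+(X,\iota)$-data (here $Y$ is degree $1$, so one uses $f^{\ast}({-}2K_Y)$ and the bi-anticanonical structure as in \S\ref{sec:(9,9,1)}, not Lemma \ref{covering map & LB} directly), $\varphi$ descends to an isomorphism of the marked del Pezzo surfaces matching $C_1,C_2$ — i.e., the two triples coincide in ${\proj}\mathcal{E}$. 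Thus $\tilde{p}$-fibers are single points, so the lift ${\proj}\mathcal{E}\dashrightarrow\cove_{11,9,1}$ is generically injective, hence (equal dimensions, $\cove$ irreducible) birational. Finally, the Galois group of $\cove_{11,9,1}\dashrightarrow\mathcal{M}_{11,9,1}$ is ${\Or}(D_{L_+})$ with $L_+$ of invariant $(11,9,1)$; one computes $|{\Or}(D_{L_+})|=|{\Or}^+(8,2)|=|W_8/\langle w_0\rangle|$, and the Bertini involution $w_0$ acts trivially on ${\proj}\mathcal{E}$ (it fixes each $|{-}K_Y|$ pointwise and each $\mathbf p$), so ${\proj}\mathcal{E}/W_8\to\mathcal{M}_{11,9,1}$ and $\cove_{11,9,1}\to\mathcal{M}_{11,9,1}$ have matching degrees over the common dominant source, forcing $\mathcal{P}$ to be birational. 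The delicate point I expect to cost real work is verifying that the Weyl group action on ${\proj}\mathcal{E}$ genuinely accounts for \emph{all} of ${\Or}(D_{L_+})$ — i.e., that the labeling ambiguities (branches at the node, choice of which anticanonical curve is $C_1$) are exactly absorbed by $W_8$ and contribute no extra finite factor — which requires a careful bookkeeping of the monodromy of the twelve nodal curves in $\mathcal{V}^8$ together with the node-resolution combinatorics.
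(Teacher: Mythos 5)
Your proposal is correct and follows essentially the same route as the paper: $W_8$-invariance via isomorphic DPN pairs, a generically injective lift to ${\cove}_{11,9,1}$ established by the recipe of \S\ref{ssec: recipe}, and the degree comparison $|W_8/\langle w_0\rangle|=|{\Or}^+(8,2)|=|{\Or}(D_{L_+})|$ using that the Bertini involution acts trivially on ${\proj}\mathcal{E}$. The only divergence is in recovering the covering map for the Torelli step: the paper sidesteps the degree-$1$ del Pezzo by blowing down via $\mathbf{p}$ to ${\proj}^2$ and identifying ${\proj}\mathcal{E}$ with a space of plane sextics with nine labeled nodes (the ninth being the base point of the cubic pencil), which reduces everything to the already-verified recipe over ${\proj}^2$ and also disposes of your worries about residual labeling ambiguities, whereas your bi-anticanonical alternative would work but needs extra care.
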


\begin{proof}
Two $W_8$-equivalent points of ${\proj}\mathcal{E}$ give rise to isomorphic DPN pairs
so that $\mathcal{P}$ is $W_8$-invariant. 
We shall show that $\mathcal{P}$ lifts to a birational map 
${\proj}\mathcal{E} \dashrightarrow {\cove}_{11,9,1}$. 
For a point $(\mathbf{p}, C_1, C_2)$ of ${\proj}\mathcal{E}$, 
we have the marking $\varphi_{\mathbf{p}}$ of the Picard lattice of 
$\mathcal{Y}^8_{\mathbf{p}}$ induced by $\mathbf{p}$. 
The curve $C_1+C_2$ has two nodes, namely the intersection point $C_1\cap C_2$ and the node of $C_1$, 
which are clearly distinguished. 
This induces a marking of the invariant lattice of $(X, \iota)=\mathcal{P}(\mathbf{p}, C_1, C_2)$, 
which defines a lift ${\lift}\colon{\proj}\mathcal{E} \dashrightarrow {\cove}_{11,9,1}$ of $\mathcal{P}$. 
In order to show that ${\lift}$ is birational, 
the key point is that the blow-down $\pi\colon\mathcal{Y}^8_{\mathbf{p}}\to{\proj}^2$ defined by $\mathbf{p}$ 
translates the triplet $(\mathbf{p}, C_1, C_2)$ into the plane sextic $\pi(C_1+C_2)$ 
endowed with a labeling of the nine nodes $\pi(C_1)\cap\pi(C_2)$. 
Indeed, $\pi(C_1)\cap\pi(C_2)$ consists of the eight ordered points $\mathbf{p}$ 
and the rest one point $\pi(C_1\cap C_2)$. 
The first eight nodes recover the marked del Pezzo surface, 
and the ninth is determined as the base point of the associated cubic pencil. 
Note also that the unlabeled node ${\rm Sing}(\pi(C_1))$ of $\pi(C_1+C_2)$ is clearly 
distinguished from those nine. 
In this way ${\proj}\mathcal{E}$ is birationally identified with 
the ${\PGL}_3$-quotient of the space of such nodal sextics with labelings. 
Now one may follow the recipe in \S \ref{ssec: recipe} to see that ${\lift}$ is birational. 

We compare the two projections ${\proj}\mathcal{E}\to{\proj}\mathcal{E}/W_8$ 
and ${\cove}_{11,9,1}\dashrightarrow\mathcal{M}_{11,9,1}$. 
The Bertini involution $w_0\in W_8$ acts trivially on ${\proj}\mathcal{E}$ 
because it acts trivially on the anticanonical pencils $|\!-\! K_Y|$. 
Hence ${\proj}\mathcal{E}\to{\proj}\mathcal{E}/W_8$ is an ${\Or}^+(8, 2)$-covering. 
On the other hand, ${\cove}_{11,9,1}$ is an ${\Or}(D_{L_+})$-cover of 
$\mathcal{M}_{11,9,1}$ for the lattice $L_+=U\oplus A_1^9$.  
Since ${\Or}(D_{L_+})\simeq {\Or}^+(8, 2)$, this finishes the proof. 
\end{proof}

\subsubsection{The rationality} 

The $W_8$-action on ${\proj}\mathcal{E}$ gets rid of the markings of del Pezzo surfaces. 
This implies that the quotient ${\proj}\mathcal{E}/W_8$ is birationally a moduli of triplets $(Y, C_1, C_2)$ 
where $Y$ is an (unmarked) del Pezzo surface of degree $1$, and 
$C_1$ (resp. $C_2$) is a singular (resp. smooth) $-K_Y$-curve on $Y$. 
We consider the blow-up $\widehat{Y}\to Y$ at the base point $C_1\cap C_2$ of $|\!-\! K_Y|$. 
The quotient of $\widehat{Y}$ by the Bertini involution is the Hirzebruch surface ${\F}_2$, 
and the quotient morphism $\phi\colon\widehat{Y}\to{\F}_2$ is branched over the $(-2)$-curve $\Sigma$ 
and over a smooth $L_{3,0}$-curve $\Gamma$. 
Note that $\phi^{-1}(\Sigma)$ is the exceptional curve of $\widehat{Y}\to Y$. 
The pencil $|L_{0,1}|$ on ${\F}_2$ is pulled-back by $\phi$ to the pencil $|\!-\! K_{\widehat{Y}}|=|\!-\! K_Y|$. 
Then $\phi(C_1)$ is an $L_{0,1}$-fiber tangent to $\Gamma$, 
and $\phi(C_2)$ is an $L_{0,1}$-fiber transverse to $\Gamma$. 
If we let $U\subset|L_{3,0}|\times|L_{0,1}|^2$ be the locus of triplets $(\Gamma, F_1, F_2)$ such that 
$\Gamma$ is smooth and $F_1$ (resp. $F_2$) is tangent (resp. transverse) to $\Gamma$, 
we thus obtain a rational map 
\begin{equation*}
{\proj}\mathcal{E}/W_8 \dashrightarrow U/{\aut}({\F}_2). 
\end{equation*}
Since this construction may be reversed, the map is birational. 

\begin{proposition}\label{rational (11,9)}
The quotient $U/{\aut}({\F}_2)$ is rational. 
Thus $\mathcal{M}_{11,9,1}$ is rational. 
\end{proposition}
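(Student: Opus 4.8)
The plan is to reduce the rationality of $U/{\aut}({\F}_2)$ to a smaller invariant-theory problem by peeling off the linear systems one at a time via the slice method and the no-name method, exactly in the spirit of the proofs of Propositions \ref{rational (10,4,0)}, \ref{rational (10,8,1)}, and \ref{rational g=2 II}. First I would apply the slice method (Proposition \ref{slice}) to the projection $U\to|L_{0,1}|\times|L_{0,1}|$, $(\Gamma, F_1, F_2)\mapsto(F_1, F_2)$. By Proposition \ref{stabilizer of fiber} the group ${\aut}({\F}_2)$ acts on $|L_{0,1}|$ transitively with connected solvable stabilizer, and one checks that it acts almost transitively on $|L_{0,1}|\times|L_{0,1}|$ with the stabilizer $G$ of a general pair $(F_1, F_2)$ still connected and solvable (it is cut out inside the fiber stabilizer by one further linear condition, so remains solvable). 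Then $U/{\aut}({\F}_2)\sim V/G$ where $V\subset|L_{3,0}|$ is the locally closed set of smooth $L_{3,0}$-curves tangent to $F_1$ and transverse to $F_2$, an open subset of a linear subspace of $|L_{3,0}|$ defined by the tangency condition along $F_1$.

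The remaining point is to realize $V$, or rather its projectivization, as (an open set of) a $G$-linear representation so that Miyata's theorem \ref{Miyata} applies. Here I would use the device from the proof of Proposition \ref{rational (5+k,5-k)}: although $L_{3,0}$ need not be ${\aut}({\F}_2)$-linearized, it is linearized by the group $\widetilde{G}={\SL}_2\ltimes R$ of \S\ref{ssec:linearization} by Lemma \ref{cover linearization Hirze}, and the stabilizer in $\widetilde{G}$ of a general point of $|L_{0,1}|\times|L_{0,1}|$ is still connected and solvable by the same argument. Passing to the linear space $\widehat V=H^0(L_{3,0})$ cut out by the tangency condition (a linear subspace, hence a subrepresentation of the solvable stabilizer), and noting $U_k/{\aut}({\F}_2)\sim\widehat V/\widetilde{G}_{(F_1,F_2)}$ exactly as in Proposition \ref{rational (5+k,5-k)}, Miyata's theorem gives that this quotient is rational. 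Combined with Proposition \ref{birat (11,9)} (which already gives $\mathcal{M}_{11,9,1}\sim{\proj}\mathcal{E}/W_8\sim U/{\aut}({\F}_2)$), this proves that $\mathcal{M}_{11,9,1}$ is rational.

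The main obstacle I anticipate is bookkeeping rather than conceptual: verifying carefully that after imposing the tangency condition along $F_1$ the relevant group still acts with a connected solvable stabilizer, and that the tangency locus is genuinely a linear subspace (equivalently, that one picks the slice so that the fiber is an open subset of a linear system, not merely a subvariety). One must also confirm the dimension count, that $\dim(U/{\aut}({\F}_2))=20-11=9$, which matches $\dim V - \dim G$ and is consistent with Proposition \ref{birat (11,9)}; this is where I would double-check that the extra tangency condition drops the expected dimension by exactly one. Once these routine verifications are in place, the structure of the argument is identical to the earlier Hirzebruch-surface cases and no further input is needed.

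\begin{proof}
As in \cite{Ma2} and the earlier sections, we use the slice method and Miyata's theorem.
Consider the projection $\pi\colon U\to|L_{0,1}|\times|L_{0,1}|$, $(\Gamma, F_1, F_2)\mapsto(F_1, F_2)$.
By Proposition \ref{stabilizer of fiber} the group ${\aut}({\F}_2)$ acts on $|L_{0,1}|$ transitively with connected solvable stabilizer; hence it acts on $|L_{0,1}|\times|L_{0,1}|$ almost transitively, and the stabilizer of a general pair $(F_1, F_2)$ is connected and solvable.
The $\pi$-fiber over a general $(F_1, F_2)$ is an open set of a linear subspace ${\proj}V\subset|L_{3,0}|$, namely the curves $\Gamma$ tangent to $F_1$ (a single linear condition) and transverse to $F_2$.
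By the slice method \ref{slice} we have $U/{\aut}({\F}_2)\sim {\proj}V/G$ where $G\subset{\aut}({\F}_2)$ is the stabilizer of $(F_1, F_2)$.
As in the proof of Proposition \ref{rational (5+k,5-k)}, although $L_{3,0}$ need not be ${\aut}({\F}_2)$-linearized, we may instead use the group ${\SL}_2\ltimes R$ of \S\ref{Sec:Hirze}: it acts on $L_{3,0}$ by Lemma \ref{cover linearization Hirze}, and the stabilizer $\widetilde{G}\subset{\SL}_2\ltimes R$ of $(F_1, F_2)$ is still connected and solvable by the argument of Proposition \ref{stabilizer of fiber}.
Thus $U/{\aut}({\F}_2)\sim V/\widetilde{G}$, and since $V$ is a $\widetilde{G}$-invariant linear subspace of $H^0(L_{3,0})$, Miyata's theorem \ref{Miyata} shows that $V/\widetilde{G}$ is rational.
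Hence $U/{\aut}({\F}_2)$ is rational, and by Proposition \ref{birat (11,9)} so is $\mathcal{M}_{11,9,1}$.
\end{proof}
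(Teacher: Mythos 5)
There is a genuine gap at the central step. You slice over $|L_{0,1}|\times|L_{0,1}|$ and claim that the fiber over a general $(F_1,F_2)$ — the set of smooth $\Gamma\in|L_{3,0}|$ tangent to $F_1$ and transverse to $F_2$ — is an open subset of a \emph{linear} subspace of $|L_{3,0}|$, "the tangency condition being a single linear condition." This is false. The restriction $\Gamma|_{F_1}$ is a degree $3$ divisor on $F_1\simeq{\proj}^1$ (since $(L_{3,0}.L_{0,1})=3$), and the condition that this divisor have a multiple point is the vanishing of the discriminant of a binary cubic, i.e.\ the pullback under the (surjective, linear) restriction map $|L_{3,0}|\dashrightarrow|\mathcal{O}_{F_1}(3)|$ of a degree $4$ hypersurface. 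So your slice fiber is an open subset of a discriminant hypersurface, not of a linear system, and Miyata's theorem \ref{Miyata} (and the device of Proposition \ref{rational (5+k,5-k)}, which produces a \emph{linear} representation of a solvable group) cannot be applied to it. Tangency to $F_1$ \emph{at a prescribed point} $p$ is a linear condition ($\Gamma|_{F_1}\geq 2p$), but tangency somewhere on $F_1$ is not.

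The paper's proof fixes exactly this by enlarging the base of the slice: it maps $U$ to $V\times|L_{0,1}|$ with $V={\F}_2\times_{{\proj}^1}{\F}_2$, sending $(\Gamma,F_1,F_2)$ to $((p,q),F_2)$ where $p$ is the tangency point and $q=\Gamma|_{F_1}-2p$ is the residual point. Over this finer base the fiber is the linear system $\{\Gamma:\Gamma|_{F_1}=2p+q\}$ (a codimension $3$ linear condition), the stabilizer of a general $((p,q),F_2)$ is still connected and solvable (an extension of ${\C}^{\times}$ by $\{s\in H^0(\Oline(2)):s([0,1])=0\}$), and Miyata's theorem then applies. Your overall architecture — slice method followed by Miyata — is the right one, and your dimension count is consistent, but without recording the tangency data $(p,q)$ in the base the argument does not go through.
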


\begin{proof}
Let $V$ be the fiber product ${\F}_2\times_{{\proj}^1}{\F}_2$, 
where $\pi\colon{\F}_2\to{\proj}^1$ is the natural projection.  
We consider the ${\aut}({\F}_2)$-equivariant map 
\begin{equation*}
\varphi: U \to V\times |L_{0,1}|, \qquad  (\Gamma, F_1, F_2)\mapsto((p, q), F_2), 
\end{equation*}
where $p$ is the point of tangency of $\Gamma$ and $F_1$, 
and $q=\Gamma|_{F_1}-2p$. 
By \S \ref{Sec:Hirze}, the group ${\aut}({\F}_2)$ acts on $V\times |L_{0,1}|$ almost transitively. 
If we normalize $F_1=\pi^{-1}([0,1])$ and  $F_2=\pi^{-1}([1,0])$, 
the stabilizer $G$ of a general point $((p, q), F_2)$ with $p, q\in F_1$ is described by the exact sequence  
$0\to H \to G \to {\C}^{\times} \to 1$, 
where $H=\{ s\in H^0({\Oline}(2)), s([0,1])=0\}$. 
In particular, $G$ is connected and solvable. 
By the slice method we have $U/{\aut}({\F}_2)\sim \varphi^{-1}((p, q), F_2)/G$. 
The fiber $\varphi^{-1}((p, q), F_2)$ is an open set of 
the linear system ${\proj}W\subset|L_{3,0}|$ of curves $\Gamma$ with $\Gamma|_{F_1}=2p+q$. 
By Miyata's theorem the quotient ${\proj}W/G$ is rational. 
\end{proof}

\subsection{$\mathcal{M}_{12,8,1}$ and quadric del Pezzo surfaces}\label{ssec:(12,8)}

\subsubsection{A period map}

Let us begin with few remarks. 
Let $Y$ be a quadric del Pezzo surface and let $\phi\colon Y\to{\proj}^2$ be its anticanonical map, 
which is branched along a smooth quartic $\Gamma$. 
The pullback $\phi^{\ast}L$ of a line $L\subset{\proj}^2$ is singular if and only if $L$ is tangent to $\Gamma$. 
Therefore the reduced curve $\phi^{-1}(\Gamma)$, the fixed locus of the Geiser involution, 
is the locus of singular points of anticanonical curves on $Y$. 
For every $p\in\phi^{-1}(\Gamma)$ we have a unique singular $-K_Y$-curve $C_p$ with $p\in{\rm Sing}(C_p)$, 
which is the pullback of the tangent line $L_p$ of $\Gamma$ at $\phi(p)$. 
The singular curve $C_p$ is irreducible and nodal if and only if $L_p$ is an ordinary tangent line. 
In this case, the Geiser involution exchanges the two tangents of $C_p$ at $p$. 
On the other hand, $C_p$ has only one tangent at $p$ if and only if 
$\phi(p)$ is an inflectional point, i.e., a Weierstrass point of $\Gamma$. 

Now let $f\colon\mathcal{Y}^7\to\mathcal{U}^7$ be the family of marked quadric del Pezzo surfaces 
constructed in \S \ref{ssec:equiv Weyl}. 
The Cremona representation of the Weyl group $W_7$ has kernel ${\Z}/2$ 
whose generator $w_0$ acts on the $f$-fibers by the Geiser involutions. 
The quotient $W_7/\langle w_0\rangle$ is isomorphic to ${\rm Sp}(6, 2)$ and acts on $\mathcal{U}^7$ almost freely. 

Let $\mathcal{C}\subset \mathcal{Y}^7$ be the fixed locus of $w_0$. 
We define a double cover $\widetilde{\mathcal{C}}$ of $\mathcal{C}$ 
as the locus in $({\proj}T_f)|_{\mathcal{C}}$ of triplets $(\mathbf{p}, p, v)$ such that 
$v\in{\proj}(T_p\mathcal{Y}^7_{\mathbf{p}})$ is a tangent at $p$ of 
the anticanonical curve on $\mathcal{Y}^7_{\mathbf{p}}$ singular at $p$. 
The locus $\widetilde{\mathcal{C}}$ is invariant under the $W_7$-action on $({\proj}T_f)|_{\mathcal{C}}$. 
In particular, the Geiser involution $w_0$ acts on $\widetilde{\mathcal{C}}$ by the covering transformation of 
$\widetilde{\mathcal{C}}\to \mathcal{C}$. 
The branch divisor of $\widetilde{\mathcal{C}}\to\mathcal{C}$ is the family of the Weierstrass points. 

We pull back the bundle $f_{\ast}K_f^{-1}$ on $\mathcal{U}^7$ by the projection 
$\widetilde{\mathcal{C}}\to \mathcal{U}^7$, 
and consider its subbundle $\mathcal{E}$ whose fiber over a $(\mathbf{p}, p, v)\in\widetilde{\mathcal{C}}$ 
is the vector space of anticanonical forms on $\mathcal{Y}^7_{\mathbf{p}}$ vanishing at $p$. 
An open set of the variety ${\proj}\mathcal{E}$ parametrizes quadruples 
$(\mathcal{Y}^7_{\mathbf{p}}, C_1, v, C_2)$ such that 
$\mathcal{Y}^7_{\mathbf{p}}=Y$ is a marked quadric del Pezzo surface, 
$C_1$ is an irreducible nodal $-K_Y$-curve, 
$v$ is one of the tangents of $C_1$ at the node, and 
$C_2$ is a smooth $-K_Y$-curve passing through the node of $C_1$. 
Then $C_1+C_2$ is a $-2K_Y$-curve with the $D_4$-singularity $C_1\cap C_2$ and with no other singularity. 
Its two tangents given by $C_1$ at the $D_4$-point are distinguished by $v$. 
The 2-elementary $K3$ surface associated to the DPN pair $(Y, C_1+C_2)$ has invariant $(g, k)=(1, 2)$. 
Thus we obtain a period map $\mathcal{P}\colon{\proj}\mathcal{E}\dashrightarrow\mathcal{M}_{12,8,1}$. 

\begin{proposition}\label{birat (12,8)}
The map $\mathcal{P}$ is $W_7$-invariant 
and descends to a birational map ${\proj}\mathcal{E}/W_7\dashrightarrow\mathcal{M}_{12,8,1}$. 
\end{proposition}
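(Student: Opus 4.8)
The plan is to follow the recipe of \S\ref{ssec: recipe}, adapted to the present situation where the base of the construction is the Weyl-equivariant family $f\colon\mathcal{Y}^7\to\mathcal{U}^7$ rather than a single rational surface. First I would verify $W_7$-invariance of $\mathcal{P}$: two points of ${\proj}\mathcal{E}$ lying in the same $W_7$-orbit correspond to the \emph{same} abstract del Pezzo surface $Y$ with the same anticanonical data $(C_1,v,C_2)$, only the marking $\varphi_{\mathbf{p}}$ of ${\rm Pic}(Y)$ changing; since the DPN pair $(Y,C_1+C_2)$ and hence the associated $2$-elementary $K3$ surface depend only on the unmarked data, $\mathcal{P}$ factors through ${\proj}\mathcal{E}/W_7$. (Note that the Geiser involution $w_0$ acts trivially on ${\proj}\mathcal{E}$, because it fixes $|\!-\!K_Y|$ pointwise, fixes the singular anticanonical curve $C_1$, fixes the node of $C_1$, and \emph{exchanges} the two tangents there — but by construction of $\widetilde{\mathcal{C}}$ the datum $v$ is one marked tangent, and here one must check carefully that $w_0$ preserves the chosen $v$; this forces the quotient acting effectively on ${\proj}\mathcal{E}$ to be $W_7/\langle w_0\rangle\simeq{\rm Sp}(6,2)$. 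Actually the cleaner route is: $w_0$ acts on $\widetilde{\mathcal{C}}$ as the covering involution of $\widetilde{\mathcal{C}}\to\mathcal{C}$, hence nontrivially, so $W_7$ itself acts effectively and the relevant Galois group is $W_7$, of order $2|{\rm Sp}(6,2)|$.)

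Next I would construct a generically injective lift $\widetilde{\mathcal{P}}\colon{\proj}\mathcal{E}\dashrightarrow\cove_{12,8,1}$. Over a point $(\mathbf{p},C_1,v,C_2)$ the marking $\varphi_{\mathbf{p}}$ of ${\rm Pic}(\mathcal{Y}^7_{\mathbf{p}})$ together with the labelling of the exceptional curves of the right resolution of the $D_4$-point $C_1\cap C_2$ — where the three branches are distinguished: two belong to $C_1$ and are ordered by $v$, the third belongs to $C_2$ — induces via Lemma \ref{L_+ and A-D-E} a lattice marking $j\colon L_+\to L_+(X,\iota)$ with $L_+\simeq U\oplus E_7$ (the reference lattice built from $\langle h,e_1,\dots,e_7\rangle$ and the $D_4$-root data as in Examples \ref{ex:2} and \ref{ex:4}). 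By Borel's extension theorem this gives a morphism $\widetilde{\mathcal{P}}$. To see the fibers are points, I would run the now-standard argument: a Hodge isometry $\Phi$ compatible with the markings preserves ample cones by Lemma \ref{ample class}, yields $\varphi\colon X\to X'$ by Torelli, descends by Lemma \ref{covering map & LB} (applied to $\phi_{-2K_Y}$, i.e.\ the bi-anticanonical map onto ${\proj}^2$ realizing $Y$ as a double plane) to an isomorphism of the quotient surfaces carrying branch curve to branch curve and, through the marking, the seven blown-down points and the $D_4$-datum to their primed counterparts; this reconstructs $(\mathbf{p},C_1,v,C_2)$ up to the residual ${\PGL}_3$, but ${\PGL}_3$ acts freely on $U^7$, so the fiber is a single point. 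Thus $\widetilde{\mathcal{P}}$ is generically injective, and since $\cove_{12,8,1}$ is irreducible of dimension $20-12=8=\dim{\proj}\mathcal{E}$ (one should double-check: $\dim\mathcal{U}^7=2$, $\widetilde{\mathcal{C}}\to\mathcal{U}^7$ is generically finite so $\dim\widetilde{\mathcal{C}}=2$, and ${\proj}\mathcal{E}$ has fiber dimension $h^0(-K_Y)-2=\dim|\!-\!K_Y(-p)|=1$... this gives $3$, not $8$ — so in fact one expects ${\proj}\mathcal{E}/W_7$, not ${\proj}\mathcal{E}$, to map to $\cove$, and the dimension count is $\dim{\proj}\mathcal{E}=3+?$; I would recompute this carefully, presumably $\mathcal{E}$ has larger rank so that $\dim{\proj}\mathcal{E}=8$) the lift $\widetilde{\mathcal{P}}$ is birational.

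Finally I would compare Galois groups. The covering ${\proj}\mathcal{E}\to{\proj}\mathcal{E}/W_7$ is Galois with group $W_7$ (acting effectively as noted), while ${\cove}_{12,8,1}\to\mathcal{M}_{12,8,1}$ is Galois with group ${\Or}(D_{L_+})$ for $L_+\simeq U\oplus E_7$, whose discriminant form is that of $E_7$, i.e.\ $\langle 3/2\rangle$ on ${\Z}/2$, giving $|{\Or}(D_{L_+})|$ of order... here I would instead note $D_{L_+}\simeq D_{E_7}$ has length $1$ so ${\Or}(D_{L_+})$ is trivial, meaning $\cove_{12,8,1}=\mathcal{M}_{12,8,1}$; but then $W_7$-invariance would force ${\proj}\mathcal{E}/W_7\to\mathcal{M}_{12,8,1}$ birational only if the lift ${\proj}\mathcal{E}\dashrightarrow\cove_{12,8,1}$ has degree $|W_7|$, which is precisely what generic injectivity of $\widetilde{\mathcal{P}}$ delivers after passing to the quotient. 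Concretely: $\widetilde{\mathcal{P}}$ birational plus $\cove=\mathcal{M}$ gives that $\mathcal{P}$ has degree $|W_7|$, hence the induced map ${\proj}\mathcal{E}/W_7\dashrightarrow\mathcal{M}_{12,8,1}$ is birational. The main obstacle I anticipate is exactly this bookkeeping of the group orders and the verification that $W_7$ (and not merely ${\rm Sp}(6,2)$) acts effectively on ${\proj}\mathcal{E}$ via the tangent-direction datum $v$ — equivalently, pinning down precisely how the Geiser involution moves $(C_1,v,C_2)$ and whether the reference lattice is $U\oplus E_7$ with trivial discriminant orthogonal group or something with a residual ${\frak S}_k$; getting this dictionary exactly right, together with the correct dimension count for $\mathcal{E}$, is where the real care is needed. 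The descent argument itself (Torelli $+$ Lemmas \ref{ample class}, \ref{covering map & LB}, \ref{L_+ and A-D-E}) is by now routine.
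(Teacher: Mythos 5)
Your overall architecture is the right one and matches the paper's: establish $W_7$-invariance, lift $\mathcal{P}$ to ${\cove}_{12,8,1}$ using the marking $\varphi_{\mathbf{p}}$ together with the labelled tangent $v$ at the $D_4$-point, show the lift is birational via the Torelli/ample-cone/blow-down argument, and then compare $|W_7|$ with the degree of ${\cove}_{12,8,1}\dashrightarrow\mathcal{M}_{12,8,1}$. You also correctly isolate the one delicate point, namely that the Geiser involution $w_0$ acts on $\widetilde{\mathcal{C}}$ as the covering involution (it exchanges the two tangents of $C_1$ at its node), so that all of $W_7$ — not just ${\rm Sp}(6,2)$ — acts effectively, indeed almost freely, on ${\proj}\mathcal{E}$.

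However, the degree comparison, which is the crux, is carried out with the wrong lattice and ends in a contradiction you do not resolve. The reference lattice $L_+$ for main invariant $(12,8,1)$ must have rank $r=12$ and $2$-elementary discriminant group of length $a=8$; your candidate $U\oplus E_7$ has rank $9$ and discriminant length $1$, so it cannot be $L_+$. The relevant computation is on the anti-invariant side: $L_-\simeq U\oplus U(2)\oplus A_1^6$, whence $|{\Or}(D_{L_-})|=2\cdot|{\rm Sp}(6,2)|=|W_7|$, and ${\cove}_{12,8,1}\to\mathcal{M}_{12,8,1}$ is a covering of exactly that degree — it is emphatically \emph{not} trivial. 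Your attempted repair ("$\widetilde{\mathcal{P}}$ birational plus $\cove=\mathcal{M}$ gives that $\mathcal{P}$ has degree $|W_7|$") is a non sequitur: if the lift were birational onto a trivial cover, $\mathcal{P}$ itself would be birational from ${\proj}\mathcal{E}$, contradicting the effective $W_7$-action you just established. With the correct lattice the argument closes cleanly: ${\lift}$ birational and ${\proj}\mathcal{E}\to{\proj}\mathcal{E}/W_7$ of degree $|W_7|=|{\Or}(D_{L_-})|$ give degree $1$ for the induced map on the quotient. Separately, your dimension count is off: $\dim\mathcal{U}^7=14-8=6$ (not $2$), $\mathcal{C}$ and hence $\widetilde{\mathcal{C}}$ is a $7$-fold (a family of curves over $\mathcal{U}^7$, not a generically finite cover of it), and ${\proj}\mathcal{E}$ has ${\proj}^1$-fibers since $h^0(-K_Y)=3$ and vanishing at $p$ imposes one condition; this gives $\dim{\proj}\mathcal{E}=8=\dim\mathcal{M}_{12,8,1}$ as needed for the birationality of the lift.
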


\begin{proof}
The $W_7$-invariance of $\mathcal{P}$ is straightforward.  
The marking $\varphi_{\mathbf{p}}$ of ${\rm Pic}(\mathcal{Y}^7_{\mathbf{p}})$ 
and the labeling $v$ for the $D_4$-point $C_1\cap C_2$ induce a marking of 
the invariant lattice of $(X, \iota)=\mathcal{P}(\mathbf{p}, C_1, v, C_2)$. 
This defines a lift ${\lift}\colon{\proj}\mathcal{E} \dashrightarrow {\cove}_{12,8,1}$ of $\mathcal{P}$. 
As in \S \ref{ssec:(11,9)}, 
the blow-down $\pi\colon\mathcal{Y}^7_{\mathbf{p}}\to {\proj}^2$ determined by $\mathbf{p}$ 
translates the quadruple $(\mathbf{p}, C_1, v, C_2)$ into 
the plane sextic $\pi(C_1+C_2)$ endowed with a labeling of its seven nodes, $\mathbf{p}$,  
and of the two tangents at its $D_4$-point $\pi(C_1\cap C_2)$ given by $\pi(C_1)$. 
Applying the recipe in \S \ref{ssec: recipe} for such sextics with labelings, we see that ${\lift}$ is birational. 
The projection ${\cove}_{12,8,1}\dashrightarrow\mathcal{M}_{12,8,1}$ 
is an ${\Or}(D_{L_-})$-covering for the lattice $L_-=U\oplus U(2)\oplus A_1^6$. 
By \cite{M-S} we have $|{\Or}(D_{L_-})|=2\cdot|{\rm Sp}(6, 2)|=|W_7|$. 
Since $W_7$ acts on ${\proj}\mathcal{E}$ almost freely, 
this concludes that the induced map 
${\proj}\mathcal{E}/W_7\dashrightarrow\mathcal{M}_{12,8,1}$ 
is birational.  
\end{proof}

\subsubsection{The rationality} 

We shall prove that ${\proj}\mathcal{E}/W_7$ is rational. 
The Weyl group $W_7$ acts on the markings of del Pezzo surfaces, 
and the Geiser involution $w_0\in W_7$ is the covering transformation of $\widetilde{\mathcal{C}} \to \mathcal{C}$. 
These facts infer that ${\proj}\mathcal{E}/W_7$ is birationally 
a moduli space of triplets $(Y, p, C)$ where 
$Y$ is an (unmarked) quadric del Pezzo surface, $p\in Y$ is a fixed point of the Geiser involution, 
and $C$ is a $-K_Y$-curve passing through $p$. 
The anticanonical map $Y\to{\proj}^2$ translates $(Y, p, C)$ into the triplet $(\Gamma, q, L)$ such that 
$\Gamma$ is a smooth plane quartic, $q\in\Gamma$, and $L$ is a line through $q$. 
If $U\subset|{\Oplane}(4)|\times{\proj}^2\times|{\Oplane}(1)|$ denotes the space of such triplets $(\Gamma, q, L)$, 
we obtain a birational equivalence 
\begin{equation*}
{\proj}\mathcal{E}/W_7 \sim U/{\PGL}_3.  
\end{equation*} 
In Proposition \ref{rational (8+k,8-k)} (for $k=4$) we proved that this quotient $U/{\PGL}_3$ is rational. 
Therefore 

\begin{proposition}\label{rational (12,8)}
The moduli space $\mathcal{M}_{12,8,1}$ is rational. 
\end{proposition}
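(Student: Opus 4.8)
The plan is to chain together the two main ingredients already prepared in this section: the birational description of $\mathcal{M}_{12,8,1}$ as a quotient by the Weyl group $W_7$, and the rationality of the relevant quotient of a space of plane quartics-with-incident-line. By Proposition \ref{birat (12,8)} the period map induces a birational equivalence ${\proj}\mathcal{E}/W_7 \sim \mathcal{M}_{12,8,1}$, so it suffices to show ${\proj}\mathcal{E}/W_7$ is rational.

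First I would explain how passing to the $W_7$-quotient forgets all the marking data. The kernel of the Cremona representation $W_7 \to {\aut}(\mathcal{U}^7)$ is $\langle w_0\rangle$ with $w_0$ acting on the $f$-fibers by the Geiser involution, and $W_7/\langle w_0\rangle \simeq {\rm Sp}(6,2)$ acts almost freely on $\mathcal{U}^7$; moreover $w_0$ is exactly the covering transformation of $\widetilde{\mathcal{C}}\to\mathcal{C}$. Hence a general point of ${\proj}\mathcal{E}/W_7$ is an unordered datum $(Y, p, C)$, where $Y$ is an unmarked quadric del Pezzo surface, $p\in Y$ is a fixed point of the Geiser involution of $Y$ (the tangent direction $v$ being swallowed by $w_0$), and $C\in|\!-\!K_Y|$ passes through $p$.

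Next I would use the anticanonical double cover $\phi\colon Y\to{\proj}^2$, branched along a smooth quartic $\Gamma$, to translate this datum into $(\Gamma, q, L)$ with $q=\phi(p)\in\Gamma$ and $L=\phi(C)$ a line through $q$; conversely, from such $(\Gamma, q, L)$ one recovers $Y$ as the double cover of ${\proj}^2$ branched over $\Gamma$, the point $p$ as the point of the ramification curve over $q$, and $C$ as $\phi^{\ast}L$. This gives a birational equivalence ${\proj}\mathcal{E}/W_7 \sim U/{\PGL}_3$, where $U\subset|{\Oplane}(4)|\times{\proj}^2\times|{\Oplane}(1)|$ is the locus of triplets $(\Gamma,q,L)$ with $\Gamma$ smooth, $q\in\Gamma$, $q\in L$. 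Finally, $U/{\PGL}_3$ is rational by Proposition \ref{rational (8+k,8-k)} (the case $k=4$), whose proof applies the slice method to the projection $U\to\{(p,M,L): p\in M\}$ and Miyata's theorem \ref{Miyata}, and the rationality of $\mathcal{M}_{12,8,1}$ follows.

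The step requiring the most care is the dictionary ${\proj}\mathcal{E}/W_7 \sim U/{\PGL}_3$: one must check that the construction above is genuinely reversible on a dense open set, in particular that the Geiser involution identification $w_0\colon\widetilde{\mathcal{C}}\to\mathcal{C}$ matches precisely the loss of the tangent data $v$, and that for general $(\Gamma,q,L)$ the reconstructed $(Y,p,C)$ is again general (so that $C=\phi^{\ast}L$ is irreducible and nodal, i.e.\ $L$ is an ordinary tangent to $\Gamma$ or $L$ is transverse to $\Gamma$ with the right incidence at $q$). All of this is of the same flavour as the analysis in \S\ref{ssec:(11,9)}, so I expect no essential new difficulty, only bookkeeping.
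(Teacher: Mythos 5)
Your proof is correct and follows essentially the same route as the paper: reduce via Proposition \ref{birat (12,8)} to ${\proj}\mathcal{E}/W_7$, observe that the $W_7$-quotient forgets the marking and (via $w_0$) the tangent datum $v$, translate $(Y,p,C)$ through the anticanonical double cover into $(\Gamma,q,L)$, and invoke Proposition \ref{rational (8+k,8-k)} with $k=4$. One small bookkeeping slip in your final paragraph: the curve that must be irreducible and nodal is $C_1=\phi^{\ast}(T_q\Gamma)$ (so $T_q\Gamma$ must be an ordinary, non-bitangent, non-inflectional tangent), whereas $C=\phi^{\ast}L$ must be \emph{smooth}, i.e.\ $L$ transverse to $\Gamma$ — these are open dense conditions, so the argument is unaffected.
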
 

\label{2nd map}
By the final step of the proof, we have a natural birational map 
$\mathcal{M}_{12,8,1}\dashrightarrow\mathcal{M}_{12,4,1}$ via $U/{\PGL}_3$. 
On the other hand, the referee suggested that we have another birational map 
$\mathcal{M}_{12,8,1}\dashrightarrow\mathcal{M}_{12,4,1}$ as follows (cf.\! Appendix). 
For the anti-invariant lattice $L_-=U\oplus U(2)\oplus A_1^6$ for $\mathcal{M}_{12,8,1}$, 
consider the odd lattice $L_-^{\vee}(2)=U(2)\oplus U\oplus \langle-1\rangle^6$. 
Its even part $M$ is isometric to $U(2)\oplus U\oplus D_6$ by Example \ref{I_n&D_n}. 
One checks that the element in $D_M$ giving the extension 
$M\subset L_-^{\vee}(2)$ is invariant under ${\Or}(D_M)$. 
Therefore we have an isomorphism 
$\mathcal{F}({\Or}(L_-))\simeq\mathcal{F}({\Or}(M))$ of modular varieties by Proposition \ref{even part}, 
and $\mathcal{F}({\Or}(M))$ is birational to $\mathcal{M}_{12,4,1}$. 

It would be interesting whether these two maps 
$\mathcal{M}_{12,8,1}\dashrightarrow\mathcal{M}_{12,4,1}$ coincide. 
Anyway, the second construction and the rationality of $\mathcal{M}_{12,4,1}$ (\S \ref{ssec:(8+k,8-k)}) 
provide another proof of the rationality of $\mathcal{M}_{12,8,1}$.

\subsection{$\mathcal{M}_{13,7,1}$ and cubic surfaces}\label{ssec:(13,7)}

\subsubsection{A period map}\label{sssec:(13,7)}

We begin with the remark that 
for every point $p$ of a cubic del Pezzo surface $Y$, 
there uniquely exists a $-K_Y$-curve $C_p$ singular at $p$. 
Indeed, if we embed $Y$ in ${\proj}^3$ naturally, 
$C_p$ is the intersection of $Y$ with the tangent plane of $Y$ at $p$. 
When $p$ is generic, $C_p$ is irreducible and nodal. 
For later use, we also explain an alternative construction. 
The blow-up $\widehat{Y}$ of $Y$ at a general $p\in Y$ is a quadric del Pezzo surface. 
Let $E\subset\widehat{Y}$ be the $(-1)$-curve over $p$, 
and let $\phi\colon\widehat{Y}\to{\proj}^2$ be the anticanonical map of $\widehat{Y}$ 
which is branched over a smooth quartic $\Gamma$. 
Then $\phi(E)$ is a bitangent of $\Gamma$, 
and $\phi^{\ast}\phi(E)=E+\iota(E)$ where $\iota$ is the Geiser involution of $\widehat{Y}$. 
Since $\phi^{\ast}\phi(E)$ is a $-K_{\widehat{Y}}$-curve, 
the image of $\iota(E)$ in $Y$ gives the desired curve $C_p$. 
The fact that $\phi$ and $\iota$ are given by the projection $Y\subset{\proj}^3\dashrightarrow{\proj}^2$ 
from $p$ connects these two constructions. 


Let $f\colon \mathcal{Y}^6 \to \mathcal{U}^6$ be the family of marked cubic surfaces 
constructed in \S \ref{ssec:equiv Weyl}. 
The Weyl group $W_6$ acts on $\mathcal{U}^6$ almost freely 
because a generic cubic surface has no automorphism. 
We shall denote a point of the variety ${\proj}T_f$ as $(\mathbf{p}, p, v)$ where 
$\mathbf{p}\in\mathcal{U}^6$, $p\in\mathcal{Y}^6_{\mathbf{p}}$, and $v\in{\proj}(T_p\mathcal{Y}^6_{\mathbf{p}})$. 
Then we let $\mathcal{Z}\subset{\proj}T_f$ be the locus of $(\mathbf{p}, p, v)$ such that 
$v$ is one of the tangents of the anticanonical curve $C_p$ 
singular at $p$. 
The locus $\mathcal{Z}$ is $W_6$-invariant and is a double cover of $\mathcal{Y}^6$. 
The branch divisor of $\mathcal{Z}\to\mathcal{Y}^6$ is the family of the Hessian quartics 
restricted to the marked cubic surfaces. 

We pull-back the vector bundle $f_{\ast}K_f^{-1}$ on $\mathcal{U}^6$ by the projection $\mathcal{Z} \to \mathcal{U}^6$, 
and consider its subbundle $\mathcal{E}$ whose fiber over a $(\mathbf{p}, p, v)\in \mathcal{Z}$ 
is the space of anticanonical forms on $\mathcal{Y}^6_{\mathbf{p}}$ which vanish at $p$ and 
whose first derivatives at $p$ vanish at $v$. 
Then $\mathcal{E}$ is a $W_6$-linearized vector bundle over $\mathcal{Z}$. 
An open set of the variety ${\proj}\mathcal{E}$ parametrizes triplets 
$(\mathcal{Y}^6_{\mathbf{p}}, p, C)$ where 
$\mathcal{Y}^6_{\mathbf{p}}=Y$ is a marked cubic surface, 
$p\in Y$ is such that the singular $-K_Y$-curve $C_p$ is irreducible and nodal,  
and $C$ is a smooth $-K_Y$-curve with $(C.C_p)_p=3$. 
By $v$ is chosen which branch of $C_p$ at $p$ is tangent to $C$. 
The curve $C_p+C$ has the $D_6$-singularity $p$ as its only singularity. 
Thus, taking the right resolution of the DPN pairs $(Y, C_p+C)$, 
we obtain a period map $\mathcal{P}\colon {\proj}\mathcal{E} \dashrightarrow \mathcal{M}_{13,7,1}$. 

\begin{proposition}\label{birat (13,7)}
The map $\mathcal{P}$ is $W_6$-invariant and descends to 
a birational map ${\proj}\mathcal{E}/W_6\dashrightarrow\mathcal{M}_{13,7,1}$. 
\end{proposition}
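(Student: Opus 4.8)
The proof will follow the template of Proposition \ref{birat (12,8)} (and Proposition \ref{birat (11,9)}), i.e. the recipe in \S \ref{ssec: recipe} applied to the sextic model obtained by blowing down a marked cubic surface. First I would observe that $\mathcal{P}$ is $W_6$-invariant: two $W_6$-equivalent points of ${\proj}\mathcal{E}$ only differ by a change of marking of the cubic surface, hence produce isomorphic DPN pairs $(Y, C_p+C)$ and hence the same point of $\mathcal{M}_{13,7,1}$. So $\mathcal{P}$ descends to $\overline{\mathcal{P}}\colon{\proj}\mathcal{E}/W_6\dashrightarrow\mathcal{M}_{13,7,1}$, and since $W_6$ acts almost freely on $\mathcal{U}^6$ (a generic cubic surface has no automorphism), the map ${\proj}\mathcal{E}\to{\proj}\mathcal{E}/W_6$ is generically a $W_6$-covering.

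Next I would build the lift to the marked moduli cover. For a point $(\mathbf{p}, p, v, C)$ of ${\proj}\mathcal{E}$, the marking $\varphi_{\mathbf p}$ of ${\rm Pic}(\mathcal{Y}^6_{\mathbf p})$ together with the labeling $v$ of one branch of $C_p$ at the $D_6$-point translates, via Lemma \ref{L_+ and A-D-E}, into a marking $j$ of the invariant lattice of $(X,\iota)=\mathcal{P}(\mathbf p, p, v, C)$; here the three branches of $C_p+C$ at $p$ are distinguished since $C$ is singled out by the irreducible decomposition and $v$ distinguishes the two branches of $C_p$, exactly as in the $D_4$-case of \S\ref{ssec:(12,8)} and the $D_6$-case treated in Example \ref{ex:2}. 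By Borel's extension theorem this gives a morphism $\tilde p\colon{\proj}\mathcal{E}\dashrightarrow{\cove}_{13,7,1}$ lifting $\mathcal{P}$. To show $\tilde p$ is birational I would argue as in the recipe: the blow-down $\pi\colon\mathcal{Y}^6_{\mathbf p}\to{\proj}^2$ determined by $\mathbf p$ turns $(\mathbf p, p, v, C)$ into the plane sextic $\pi(C_p)+\pi(C)$ carrying a labeling of its six nodes $\mathbf p$ together with the distinguished tangent $v$ at its $D_6$-point $\pi(p)$ (the alternative construction of $C_p$ via projection from $p$, recalled in \S\ref{sssec:(13,7)}, shows $\pi(C_p)$ acquires a $D_6$ at $\pi(p)$ and $\pi(C)$ is tangent to one branch there). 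Thus ${\proj}\mathcal{E}$ is birational to the ${\PGL}_3$-quotient of the space of such labeled sextics, and the argument of \S\ref{ssec: recipe}, using Lemma \ref{covering map & LB} to recover $\pi\circ f$ from $j(h)$ and the Torelli theorem with Lemma \ref{ample class} to match ample cones, shows the $\tilde p$-fibers are orbits, hence $\tilde p$ is generically injective; a dimension count (${\dim}{\proj}\mathcal{E}=\dim\mathcal{U}^6+2+{\rm rk}\,\mathcal{E}-1=7$, equal to $20-13$) then forces $\tilde p$ birational since ${\cove}_{13,7,1}$ is irreducible.

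Finally I would compare the two coverings. The map ${\proj}\mathcal{E}\to{\proj}\mathcal{E}/W_6$ is a $W_6$-covering (no Bertini/Geiser-type kernel this time, since $W_6$ acts faithfully on $\mathcal{U}^6$), while ${\cove}_{13,7,1}\dashrightarrow\mathcal{M}_{13,7,1}$ is an ${\Or}(D_{L_-})$-covering for the anti-invariant lattice $L_-$ of type $(13,7,1)$. The invariant lattice $L_+$ is $U\oplus E_8\oplus A_1^7$-ish — more precisely one reads it off from Lemma \ref{L_+ and A-D-E} as generated by $h$, the $D_6$-lattice over $p$, and the classes of the two components of the fixed curve — and one computes (using \cite{M-S}) that ${\Or}(D_{L_-})\simeq W_6$, i.e. $|{\Or}(D_{L_-})|=|W_6|=|{\Or}^-(6,2)|$. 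Therefore $\overline{\mathcal{P}}$ has degree $1$, proving the proposition.

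The main obstacle I anticipate is the bookkeeping in two places: (i) pinning down the precise reference lattice $L_+$ from the $D_6$-resolution data and the $\langle3\rangle$ (or $\langle 2\rangle\oplus\langle-1\rangle$-type) part coming from ${\rm Pic}$ of the cubic surface, so that one can correctly identify ${\Or}(D_{L_-})$ with $W_6$ and verify $|{\Or}(D_{L_-})|=|W_6|$; and (ii) checking that $W_6$ acts on the bundle ${\proj}\mathcal{E}\to\mathcal{Z}$ compatibly and that the lift $\tilde p$ is genuinely $W_6$-equivariant at the level of markings — in particular that the labeling $v$ of a branch at the $D_6$-point transforms correctly under the Cremona action. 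Both are of the same nature as the verifications already carried out in \S\ref{ssec:(12,8)} and \S\ref{ssec:(11,9)}, so I would handle them by the same method, but the $D_6$ singularity (rather than $D_4$) makes the local picture slightly heavier.
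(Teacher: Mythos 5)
Your proposal follows the paper's own proof essentially step for step: $W_6$-invariance, the translation via the blow-down $\pi$ into a plane sextic with the six nodes labeled by $\mathbf{p}$ and the branches at the $D_6$-point a priori distinguished, the recipe of \S\ref{ssec: recipe} plus the dimension count $\dim{\proj}\mathcal{E}=7$ to get a birational lift to ${\cove}_{13,7,1}$, and the degree comparison $|W_6|=|{\Or}(D_{L_-})|$. The one slip is your guess ``$L_+\simeq U\oplus E_8\oplus A_1^7$-ish'' (wrong rank --- $L_+$ must have rank $13$; the paper instead records $L_-=U\oplus U(2)\oplus A_1^5$), but since the only fact you actually use is ${\Or}(D_{L_-})\simeq{\Or}^-(6,2)\simeq W_6$, which is correct, this does not affect the argument.
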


\begin{proof}
This is analogous to Propositions \ref{birat (11,9)} and \ref{birat (12,8)}: 
for a triplet $(\mathbf{p}, C_p, C)$ in ${\proj}\mathcal{E}$, 
the blow-down $\pi\colon \mathcal{Y}^6_{\mathbf{p}} \to {\proj}^2$ determined by $\mathbf{p}$ 
translates $(\mathbf{p}, C_p, C)$ into the plane sextic $\pi(C_p+C)$ 
endowed with the labeling $\mathbf{p}$ of its six nodes. 
The rest singularity of $\pi(C_p+C)$ is the $D_6$-point $\pi(p)$, 
at which the three branches of $\pi(C_p+C)$ are a priori distinguished 
by the irreducible decomposition of $\pi(C_p+C)$ and by the intersection multiplicity at $\pi(p)$.   
By the recipe in \S \ref{ssec: recipe} we see that    
$\mathcal{P}$ lifts to a birational map ${\proj}\mathcal{E}\dashrightarrow{\cove}_{13,7,1}$. 
The variety ${\cove}_{13,7,1}$ is an ${\Or}(D_{L_-})$-cover of $\mathcal{M}_{13,7,1}$ 
for the lattice $L_-=U\oplus U(2)\oplus A_1^5$. 
Since ${\Or}(D_{L_-})\simeq{\Or}^-(6, 2)\simeq W_6$, 
our assertion is proved. 
\end{proof}

\subsubsection{The rationality} 

We shall prove that ${\proj}\mathcal{E}/W_6$ is rational. 
First we apply the no-name method to the $W_6$-linearized bundle $\mathcal{E}$ 
over $\mathcal{Z}$ to see that 
\begin{equation*}
{\proj}\mathcal{E}/W_6 \sim {\proj}^1\times(\mathcal{Z}/W_6). 
\end{equation*}
The variety $\mathcal{Z}$ is a moduli of triplets $(\mathcal{Y}^6_{\mathbf{p}}, p, v)$ 
where $\mathcal{Y}^6_{\mathbf{p}}=Y$ is a marked cubic surface, $p\in Y$, 
and $v\in{\proj}(T_pY)$ is one of the tangents of the singular $-K_Y$-curve $C_p$. 
The $W_6$-action takes off the markings of surfaces. 
Let $\widehat{Y}\to Y$ be the blow-up at $p$, and let $E$ be the $(-1)$-curve over $p$. 
When $p$ is generic, $\widehat{Y}$ is a quadric del Pezzo surface.  
As explained in the beginning of \S \ref{sssec:(13,7)}, 
if we regard $v$ as a point $q$ of $E$, 
then $q$ is contained in $E\cap\iota(E)$ where $\iota$ is the Geiser involution of $\widehat{Y}$. 
By applying the anticanonical map $\widehat{Y}\to{\proj}^2$ of $\widehat{Y}$, 
the new triplet $(\widehat{Y}, E, q)$ is then translated into a triplet $(\Gamma, L, P)$ of 
a smooth quartic $\Gamma$, a bitangent $L$ of $\Gamma$, and a point $P$ in $\Gamma\cap L$. 
Therefore, if $U\subset|{\Oplane}(4)|\times{\proj}^2$ denotes the space of pairs $(\Gamma, P)$ of 
a smooth quartic $\Gamma$ and a tangent point $P$ of a bitangent of $\Gamma$, 
we have a rational map 
\begin{equation}\label{transform (13,7)}
\mathcal{Z}/W_6 \dashrightarrow U/{\PGL}_3. 
\end{equation}
Conversely, for a point $(\Gamma, P)$ of $U$, 
we take the double cover $\phi\colon\widehat{Y}\to{\proj}^2$ branched along $\Gamma$. 
If $L$ is the tangent line of $\Gamma$ at $P$, we have $\phi^{\ast}L=E+\iota(E)$ 
for a $(-1)$-curve $E$ and the Geiser involution $\iota$ of $\widehat{Y}$. 
Since the triplet $(\widehat{Y}, E, \phi^{-1}(P))$ is isomorphic to $(\widehat{Y}, \iota(E), \phi^{-1}(P))$ by $\iota$, 
we may contract either $E$ or $\iota(E)$ to obtain a well-defined inverse map of \eqref{transform (13,7)}. 
Thus $\mathcal{Z}/W_6$ is birational to $U/{\PGL}_3$. 

\begin{proposition}\label{rational (13,7)}
The quotient $U/{\PGL}_3$ is rational. 
Hence $\mathcal{M}_{13,7,1}$ is rational. 
\end{proposition}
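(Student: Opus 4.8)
The plan is to follow the same strategy used throughout the paper for quotients of this shape, namely to strip off redundant parameters by means of the slice method and the no-name method until we are reduced to a quotient of small dimension or to a quotient already known to be rational. Here $U\subset|{\Oplane}(4)|\times{\proj}^2$ is the space of pairs $(\Gamma, P)$ consisting of a smooth plane quartic $\Gamma$ together with a point $P$ which is the tangent point of one of the bitangents of $\Gamma$. First I would make the defining condition explicit in coordinates: after normalizing, one may put $P=[0,0,1]$ and take the bitangent line $L$ to be $\{Y=0\}$, so that the condition ``$L$ is bitangent to $\Gamma$ at $P$'' becomes the vanishing of prescribed coefficients of the quartic equation (the intersection of $\Gamma$ with $L$ is $2P+2P'$ for some $P'\in L$, i.e. $\Gamma|_L$ is a perfect square vanishing doubly at $P$). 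This exhibits $U$ as birational to (an open subset of) a vector bundle over the incidence variety $V\subset{\proj}^2\times|{\Oplane}(1)|$ of flags $(P,L)$ with $P\in L$: the fiber over $(P,L)$ is the linear system of quartics bitangent to $L$ at $P$.

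The second step is to apply the slice method (Proposition \ref{slice}) to the ${\PGL}_3$-equivariant projection $\pi\colon U\to V$, $(\Gamma,P)\mapsto(P,L)$ where $L$ is the bitangent at $P$. The group ${\PGL}_3$ acts almost transitively on $V$: any flag $(P,L)$ can be moved to the standard one, and the stabilizer $G$ of the standard flag is a parabolic-type subgroup, concretely isomorphic to ${\C}^{\times}\times({\C}^{\times}\ltimes{\C})$ — in particular connected and solvable. Hence $U/{\PGL}_3$ is birational to $\pi^{-1}(P_0,L_0)/G$, and $\pi^{-1}(P_0,L_0)$ is an open set in a linear representation of $G$ (the space of quartic equations satisfying the explicit linear conditions above, on which $G$ acts linearly because ${\Oplane}(4)$ carries a ${\GL}_3$-linearization). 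Then Miyata's theorem (Theorem \ref{Miyata}) immediately gives that this quotient is rational.

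Having established that $U/{\PGL}_3$ is rational, the rationality of $\mathcal{M}_{13,7,1}$ follows by chaining the birational equivalences already set up in the excerpt: the no-name method gives ${\proj}\mathcal{E}/W_6\sim{\proj}^1\times(\mathcal{Z}/W_6)$, the analysis of $\mathcal{Z}/W_6$ via the anticanonical map and the Geiser involution gives $\mathcal{Z}/W_6\sim U/{\PGL}_3$, and Proposition \ref{birat (13,7)} gives $\mathcal{M}_{13,7,1}\sim{\proj}\mathcal{E}/W_6$; so $\mathcal{M}_{13,7,1}\sim{\proj}^1\times(U/{\PGL}_3)$ is rational.

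The step I expect to be the main obstacle is verifying that ${\PGL}_3$ acts on $V$ with an \emph{open orbit} of the correct codimension and identifying the stabilizer $G$ precisely enough to know it is connected solvable — this is routine linear algebra on flags, but one must also check that $\pi$ is genuinely dominant onto $V$, i.e. that a generic flag $(P,L)$ does occur as (tangent point, bitangent) of some smooth quartic; this amounts to the coefficient count showing $\dim U=\dim V+\dim(\text{fiber})$ with the fiber a nonempty linear system, which is exactly the coordinate computation sketched in the first step. Once the dimension bookkeeping is in place, everything else is an application of the general machinery (slice method, Miyata) already invoked many times in the paper, so no new ideas are needed beyond making the normalization explicit.
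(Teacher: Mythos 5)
Your overall strategy (slice method followed by Miyata's theorem) is the same as the paper's, but you slice along the wrong map, and this creates a genuine gap. You project $U\to V$ to the flag $(P,L)$ and claim the fiber over a flag is ``the linear system of quartics bitangent to $L$ at $P$'' --- an open set in a linear representation of the stabilizer. That is false: writing $\Gamma|_L=x^2q(x,y)$ with $P=\{x=0\}$, the condition that $\Gamma$ be tangent to $L$ at $P$ (i.e.\ $(\Gamma.L)_P\geq2$) is linear in the coefficients, but the condition that the residual binary quadratic $q$ be a perfect square is the vanishing of its discriminant, which is quadratic. So $\pi^{-1}(P,L)$ is a hypersurface (a cone over a conic, of dimension $11$) inside the $12$-dimensional linear system of quartics tangent to $L$ at $P$, not a linear system itself, and Miyata's theorem does not apply to it as stated. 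The paper avoids this by remembering \emph{both} tangent points: it maps $(\Gamma,P)\mapsto(P,Q)\in{\proj}^2\times{\proj}^2$, where $Q$ is the other tangent point, recovers $L$ as $\overline{PQ}$, and then the fiber $\{\Gamma:\Gamma|_{\overline{PQ}}=2P+2Q\}$ \emph{is} an open set of a linear system (the restriction to $L$ is required to lie in the line spanned by $x^2y^2$), so Miyata applies with the stabilizer $({\C}^{\times}\ltimes{\C})^2$ of a general point pair. Your argument can be repaired by one further application of the slice method inside your fiber, projecting $\Gamma$ to the second tangent point $Q\in L$; that reduces you exactly to the paper's situation.

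A secondary, more minor error: the stabilizer in ${\PGL}_3$ of a flag $(P,L)$ with $P\in L$ is not ${\C}^{\times}\times({\C}^{\times}\ltimes{\C})$ (dimension $3$) but the full Borel subgroup (dimension $5=8-3$). This does not by itself break the argument, since the Borel is still connected and solvable, but it signals that the dimension bookkeeping you flag as ``the main obstacle'' was not actually carried out. The final paragraph chaining ${\proj}\mathcal{E}/W_6\sim{\proj}^1\times(\mathcal{Z}/W_6)\sim{\proj}^1\times(U/{\PGL}_3)$ to conclude rationality of $\mathcal{M}_{13,7,1}$ is correct as stated.
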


\begin{proof}
For a point $(\Gamma, P)$ of $U$, 
let $Q$ be the another tangent point of the bitangent at $P$. 
This defines a ${\PGL}_3$-equivariant map 
$\varphi\colon U\to{\proj}^2\times{\proj}^2$, $(\Gamma, P)\mapsto(P, Q)$.  
The ${\PGL}_3$-action on ${\proj}^2\times{\proj}^2$ is almost transitive 
with the stabilizer $G$ of a general $(P, Q)$ isomorphic to $({\C}^{\times}\ltimes{\C})^2$. 
By the slice method we have $U/{\PGL}_3 \sim \varphi^{-1}(P, Q)/G$. 
The fiber $\varphi^{-1}(P, Q)$ is an open set of 
the linear system ${\proj}V$ of quartics $\Gamma$ with $\Gamma|_{\overline{PQ}}=2P+2Q$.  
By Miyata's theorem ${\proj}V/G$ is rational. 
\end{proof}

\subsection{$\mathcal{M}_{14,6,1}$ and quartic del Pezzo surfaces}\label{ssec:(14,6)}

\subsubsection{A period map}

We first note the following. 

\begin{lemma}\label{nodal -K curve (14,1)}
Let $Y$ be a quartic del Pezzo surface and let $(p, v)$ be a general point of ${\proj}TY$, 
where $p\in Y$ and $v\in{\proj}(T_pY)$. 
Then there uniquely exists an irreducible nodal $-K_Y$-curve $C_{p,v}$ 
whose node is $p$ with one of the tangents being $v$. 
\end{lemma}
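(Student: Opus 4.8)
The plan is to combine a dimension count with linear algebra on second-order jets. First I would record that, since $Y$ is a del Pezzo surface of degree $4$, Kodaira vanishing applied to $-K_Y=K_Y\otimes(-2K_Y)$ gives $H^i(-K_Y)=0$ for $i>0$, so by Riemann--Roch $h^0(-K_Y)=1+K_Y^2=5$ and $|\!-\!K_Y|\cong\proj^4$. For $p\in Y$ let $\mathcal{P}_p\subset|\!-\!K_Y|$ be the sublinear system of curves of multiplicity $\geq 2$ at $p$; it is the projectivization of $H^0(-K_Y\otimes\mathfrak{m}_p^2)\subset H^0(-K_Y)$, hence cut out by at most three linear conditions, so $\dim\mathcal{P}_p\geq 1$. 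The first step is to show that for general $p$ one has $\dim\mathcal{P}_p=1$ and that a general member of the pencil $\mathcal{P}_p$ is an \emph{irreducible} curve with an ordinary node at $p$. I would verify this in a plane model $Y=\mathrm{Bl}_{q_1,\dots,q_5}\proj^2$: members of $|\!-\!K_Y|$ correspond to plane cubics through $q_1,\dots,q_5$, and $\mathcal{P}_p$ to the pencil of such cubics acquiring a double point at the general image point $q$ of $p$; for $q$ general this pencil is one-dimensional, no member is a union of three concurrent lines (which would be forced for multiplicity $\geq 3$ at $q$, or for a reducible member singular at $q$), and the general member is an irreducible cubic nodal at $q$. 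Since $Y\to\proj^2$ is a local isomorphism near $p$, these properties transfer back to $Y$.

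Next I would introduce the second-jet map. For $C\in\mathcal{P}_p$ the tangent cone of $C$ at $p$ is a nonzero binary quadratic form on $T_pY$, well defined up to scalar, i.e.\ a degree-$2$ divisor on $\proj(T_pY)\cong\proj^1$; it is the image of the defining section under the linear map $H^0(-K_Y\otimes\mathfrak{m}_p^2)\to S^2(\mathfrak{m}_p/\mathfrak{m}_p^2)\cong S^2T_p^\vee$. For general $p$ this map is injective on the two-dimensional source (its kernel consists of sections vanishing to order $\geq 3$ at $p$, corresponding in the plane model to unions of three concurrent lines through $q$, of which there are none for general $q$). Hence $\mathcal{P}_p$ is realized as a pencil $\{Q_t\}_{t\in\proj^1}$ of binary quadratics on $\proj(T_pY)$, and for general $p$ this pencil is base-point-free since the resultant of $Q_0$ and $Q_1$ does not vanish identically.

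Now fix $v\in\proj(T_pY)$. The condition that $v$ be one of the tangent directions of $C_t$ at $p$ is exactly $Q_t(v)=0$. As $Q_t$ depends linearly on the homogeneous parameter, $t\mapsto Q_t(v)$ is a linear form on a two-dimensional space, so it vanishes at a unique $t(v)\in\proj^1$ unless it is identically zero, which happens only when $v$ lies in the base locus of $\{Q_t\}$ --- empty for general $p$. Therefore for general $p$, hence for general $(p,v)\in\proj TY$, there is exactly one curve $C_{p,v}:=C_{t(v)}\in\mathcal{P}_p$ admitting $v$ as a tangent at $p$, which settles existence and uniqueness. Moreover the map $v\mapsto t(v)$ is finite and surjective onto $\proj^1$ (it is the classifying map of the pencil), hence dominant, so for general $(p,v)$ the curve $C_{p,v}$ is a general member of $\mathcal{P}_p$; by the first step it is then irreducible with an ordinary node at $p$, and since an anticanonical curve has arithmetic genus $1$, an irreducible singular member has that node as its only singularity, so $C_{p,v}$ is nodal as claimed.

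The main obstacle is the first step: establishing $\dim\mathcal{P}_p=1$ together with the irreducibility and nodality of the general member, uniformly for general $p$ and for an arbitrary quartic del Pezzo surface $Y$. I expect the plane-model description (valid since every quartic del Pezzo surface is the blow-up of $\proj^2$ at five points in general position) to be the most efficient route; once it is in place, the remaining arguments are elementary linear algebra on the pencil of tangent cones.
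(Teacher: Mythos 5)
Your argument is correct in substance but follows a genuinely different route from the paper's. The paper blows up $Y$ at $p$ and then at the infinitely near point $v$, recognizes the result as a weak del Pezzo surface of degree $2$ with a unique $(-2)$-curve, and produces $C_{p,v}$ as the image of $\iota(E)$ under the covering involution $\iota$ of the anticanonical double cover of ${\proj}^2$ branched along a one-nodal quartic $\Gamma$; uniqueness is then an intersection computation. That construction has the side benefit of directly producing the pair $(\Gamma, L)$ (one-nodal quartic plus a line through its node tangent elsewhere) that the paper immediately reuses to identify ${\proj}T_f/W_5$ with $U/{\PGL}_3$ in the rationality proof. Your route --- cutting out the pencil $\mathcal{P}_p\subset|\!-\!K_Y|$ of anticanonical curves double at $p$ and reading off tangent directions from the pencil of tangent cones $\{Q_t\}\subset{\proj}(S^2T_p^{\vee})$ --- is more elementary and makes the uniqueness transparent (linearity of $t\mapsto Q_t(v)$), at the cost of not setting up the auxiliary geometry the paper needs next.

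Two points in your sketch should be repaired, though neither threatens the approach. First, your parenthetical suggesting that a reducible member of $\mathcal{P}_p$ singular at $q$ would have to be three concurrent lines is false: for general $q$ the pencil contains exactly five reducible members, namely (conic through $q$ and four of the $q_i$) $+$ (line $\overline{qq_j}$ for the fifth); what is true, and what you actually need, is that these are finite in number, so the general member is irreducible, and that no member has a triple point (which \emph{would} force three concurrent lines through $q$, impossible over five general points). Second, your justification of base-point-freeness of $\{Q_t\}$ is circular as written. It can be proved (if two of the five reducible members shared a tangent direction at $q$ other than the line directions $\overline{qq_i}$, the two conics involved would meet with multiplicity $\geq 5$ and hence coincide, a contradiction), but it is also unnecessary: once the $2$-jet map is injective the pencil $\{Q_t\}$ is a genuine line in ${\proj}(S^2T_p^{\vee})$, so the set of $v$ with $Q_t(v)\equiv 0$ is at most one point, and a general $v$ avoids it; the classifying map $v\mapsto t(v)$ remains dominant, so the rest of your argument goes through verbatim.
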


\begin{proof}
Let $Y'\to Y$ be the blow-up at $p$ with the exceptional curve $D'$, 
and $\widehat{Y}\to Y'$ be the blow-up at $v\in D'$ with the exceptional curve $E$. 
Then $\widehat{Y}$ is the blow-up of ${\proj}^2$ at seven points in almost general position in the sense of \cite{De}, 
and the strict transform $D$ of $D'$ is the unique $(-2)$-curve on $\widehat{Y}$. 
Hence the anticanonical map $\phi\colon\widehat{Y}\to{\proj}^2$ is the composition of 
the contraction $\widehat{Y}\to\widehat{Y}_0$ of $D$ and a double covering $\widehat{Y}_0\to{\proj}^2$ 
branched along a quartic $\Gamma$ with exactly one node. 
The curve $L=\phi(E)$ is a line passing through the node and tangent to $\Gamma$ elsewhere. 
We have $\phi^{\ast}L=E+\iota(E)+D$ where $\iota$ is the involution of $\widehat{Y}$ 
induced by the covering transformation of $\widehat{Y}_0\to{\proj}^2$. 
Then the image of $\iota(E)$ in $Y$ is the desired curve $C_{p,v}$. 
The uniqueness of $C_{p,v}$ follows from intersection calculation. 
\end{proof}

Pulling back the pencil of lines through $L\cap\Gamma\backslash{\rm Sing}(\Gamma)$, 
we obtain the pencil of $-K_{\widehat{Y}}$-curves through $E\cap\iota(E)$. 
Then its image in $|\!-\!K_Y|$ is the pencil $l_{p,v}$ of $-K_Y$-curves 
whose general member $C$ is smooth and passes $p$ and $v$ with $(C.C_{p,v})_p=4$. 
(There is another pencil of $-K_Y$-curves passing through $(p, v)$ with $(C.C_{p,v})_p=4$: 
the one of those singular at $p$.)

Now let $f\colon\mathcal{Y}^5\to\mathcal{U}^5$ be the family of marked quartic del Pezzo surfaces 
constructed in \S \ref{ssec:equiv Weyl}. 
The kernel of the Cremona representation of the Weyl group $W_5$ is isomorphic to $({\Z}/2)^4$, 
which is the automorphism group of a general $f$-fiber. 
Let ${\proj}T_f^0\subset{\proj}T_f$ be the open set of triplets $(\mathbf{p}, p, v)$ such that 
there exists an anticanonical curve $C_{p,v}$ on $\mathcal{Y}^5_{\mathbf{p}}$ 
as in Lemma \ref{nodal -K curve (14,1)}. 

Let $\mathcal{F}$ be the pullback of the bundle $f_{\ast}K_f^{-1}$ 
by the natural projection ${\proj}T_f^0\to\mathcal{U}^5$. 
We consider the subbundle $\mathcal{E}$ of $\mathcal{F}$ 
such that the fiber of ${\proj}\mathcal{E}$ over a $(\mathbf{p}, p, v)$ is the pencil $l_{p,v}$ described above. 
By the uniqueness of $C_{p,v}$, $\mathcal{E}$ is invariant under the $W_5$-action on $\mathcal{F}$. 
An open set of ${\proj}\mathcal{E}$ parametrizes triplets 
$(\mathcal{Y}^5_{\mathbf{p}}, C_{p,v}, C)$ such that 
$\mathcal{Y}^5_{\mathbf{p}}=Y$ is a marked quartic del Pezzo surface, 
$C_{p,v}$ is an irreducible nodal $-K_Y$-curve, 
and $C$ is a smooth $-K_Y$-curve with $(C.C_{p,v})_p=4$. 
The $-2K_Y$-curve $C_{p,v}+C$ has the $D_8$-point $p$ as its only singularity. 
The 2-elementary $K3$ surface $(X, \iota)$ associated to the DPN pair $(Y, C_{p,v}+C)$ has invariant $(g, k)=(1, 4)$. 
We show that $(X, \iota)$ has parity $\delta=1$. 
If $\pi\colon Y\to{\proj}^2$ is the blow-down given by $\mathbf{p}$, 
the sextic $\pi(C_{p,v}+C)$ has the five nodes $\mathbf{p}$, the $D_8$-point $\pi(p)$, and no other singularity. 
Then we may apply Lemma \ref{delta=1} $(3)$. 
Thus we obtain a period map 
$\mathcal{P}\colon{\proj}\mathcal{E}\dashrightarrow\mathcal{M}_{14,6,1}$. 

\begin{proposition}\label{birat (14,6,1)}
The period map $\mathcal{P}$ is $W_5$-invariant and descends to a birational map 
${\proj}\mathcal{E}/W_5\dashrightarrow\mathcal{M}_{14,6,1}$. 
\end{proposition}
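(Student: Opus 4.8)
The plan is to follow the same pattern as in Propositions \ref{birat (11,9)}, \ref{birat (12,8)}, and \ref{birat (13,7)}, namely to construct a generically injective lift $\widetilde{\mathcal{P}}\colon{\proj}\mathcal{E}\dashrightarrow{\cove}_{14,6,1}$ of $\mathcal{P}$ using the marked-del-Pezzo structure, and then to compare the covering degree of ${\proj}\mathcal{E}\to{\proj}\mathcal{E}/W_5$ against $|{\Or}(D_{L_-})|$ via \cite{M-S}. First I would observe that $\mathcal{P}$ is $W_5$-invariant: two $W_5$-equivalent points of ${\proj}\mathcal{E}$ differ only by a change of marking, hence give isomorphic DPN pairs $(Y, C_{p,v}+C)$, and since the $W_5$-action on $\mathcal{U}^5$ (and on ${\proj}\mathcal{E}$) is set up equivariantly by \S\ref{ssec:equiv Weyl}, the invariance is formal. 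Here one must check that the kernel $({\Z}/2)^4$ of the Cremona representation acts trivially on ${\proj}\mathcal{E}$; this should follow because those automorphisms of a quartic del Pezzo surface $Y$ act trivially on the anticanonical linear system $|{-}K_Y|$ (they are induced from deck transformations fixing the branch data), so the induced action on the pencils $l_{p,v}\subset|{-}K_Y|$ is trivial as well. Thus ${\proj}\mathcal{E}\to{\proj}\mathcal{E}/W_5$ is genuinely a $W_5/({\Z}/2)^4$-covering — and $W_5/({\Z}/2)^4\simeq{\Or}(D_{L_-})$ for the relevant lattice — which is what makes the degree bookkeeping work.

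Next I would build the lift. For a triplet $(\mathbf{p}, C_{p,v}, C)$ in ${\proj}\mathcal{E}$, the marking $\varphi_{\mathbf{p}}$ of ${\rm Pic}(\mathcal{Y}^5_{\mathbf{p}})$ together with the data of the $D_8$-point $p$ and the distinguished branch encoded by $v$ induce, via Lemma \ref{L_+ and A-D-E} and the labeling conventions of \S\ref{ssec: recipe}, a marking $j$ of $L_+(X,\iota)$; this defines $\widetilde{\mathcal{P}}$. To see $\widetilde{\mathcal{P}}$ is birational I would use the now-familiar translation: the blow-down $\pi\colon\mathcal{Y}^5_{\mathbf{p}}\to{\proj}^2$ determined by $\mathbf{p}$ sends the triplet to the plane sextic $\pi(C_{p,v}+C)$ equipped with a labeling of its five nodes $\mathbf{p}$ and with the distinguished branch at its $D_8$-point $\pi(p)$; the five labeled nodes recover the marked del Pezzo surface, the remaining node ${\rm Sing}(\pi(C_{p,v}))$ is distinguished from them, and the $D_8$-point is distinguished by type. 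So ${\proj}\mathcal{E}$ is birationally the ${\PGL}_3$-quotient of the space of such labeled sextics, and the recipe in \S\ref{ssec: recipe} (the argument via Lemmas \ref{ample class}, \ref{covering map & LB} and the Torelli theorem) gives generic injectivity of $\widetilde{\mathcal{P}}$. Checking $\dim({\proj}\mathcal{E}/W_5)=6=20-14$ then forces $\widetilde{\mathcal{P}}$ to be birational, since ${\cove}_{14,6,1}$ is irreducible.

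Finally I would finish the degree comparison. The anti-invariant lattice is $L_-\simeq U\oplus U(2)\oplus D_8$ (of signature $(2,6)$, 2-elementary of length $6$ and parity $1$), and by \cite{M-S} Corollary 2.4 one computes $|{\Or}(D_{L_-})|=|{\Or}^+(6,2)|=|W_5|/|({\Z}/2)^4|$. Hence the Galois group of ${\cove}_{14,6,1}\dashrightarrow\mathcal{M}_{14,6,1}$ has the same order as the deck group $W_5/({\Z}/2)^4$ of ${\proj}\mathcal{E}\to{\proj}\mathcal{E}/W_5$, so from the birational lift $\widetilde{\mathcal{P}}$ we conclude that $\mathcal{P}\colon{\proj}\mathcal{E}/W_5\dashrightarrow\mathcal{M}_{14,6,1}$ is birational.

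\textbf{Main obstacle.} The delicate point, as in the $g=1$ cases already treated, is not the lattice arithmetic but rather pinning down the geometry near the $D_8$-singularity: one must verify carefully that the distinguished branch recorded by $v$ really is recoverable intrinsically from the marked K3 surface (i.e., that the two branches of $C_{p,v}$ at $p$ are not swapped by an unseen Hodge isometry), and that the pencil $l_{p,v}$ — as opposed to the competing pencil of $-K_Y$-curves singular at $p$ with the same intersection multiplicity — is the one singled out by the construction. Establishing that the fibers of the lifted period morphism are exactly $W_5$-orbits, using that $\mathbf{p}$, $p$, $v$ and the sextic are all reconstructible, is where the real work lies; the rationality of ${\proj}\mathcal{E}/W_5$ itself will presumably be handled in the subsequent subsection by a no-name/slice reduction to a quotient like $U/{\PGL}_3$ for a suitable space $U$ of quartics with marked points, exactly as in \S\ref{ssec:(13,7)}.
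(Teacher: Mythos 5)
Your overall strategy --- lift to ${\cove}_{14,6,1}$ via the marking, show the lift is birational by the recipe of \S\ref{ssec: recipe} applied to the labelled sextic $\pi(C_{p,v}+C)$, then compare covering degrees --- is exactly the one the paper uses. But your degree bookkeeping contains two concrete errors which happen to cancel against each other, and one of them is a genuine conceptual mistake. The kernel $({\Z}/2)^4$ of the Cremona representation does \emph{not} act trivially on ${\proj}\mathcal{E}$. Its elements are honest automorphisms of the quartic del Pezzo surface $\mathcal{Y}^5_{\mathbf{p}}$ (deck transformations of its conic-bundle and double-cover structures), and deck transformations move points: they act nontrivially on $Y$, hence on the pairs $(p,v)\in{\proj}T_f^0$, hence on ${\proj}\mathcal{E}$. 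Your analogy with the degree-one case fails because there the relevant object is the anticanonical \emph{pencil}, each member of which is Bertini-invariant; here $|{-}K_Y|\simeq{\proj}^4$ and the group $({\Z}/2)^4$ acts on it through the nontrivial diagonal involutions of the anticanonical embedding. (Compare the $\mathcal{M}_{12,8,1}$ case, where the Geiser involution $w_0$ likewise acts nontrivially on ${\proj}\mathcal{E}$, being the covering transformation of $\widetilde{\mathcal{C}}\to\mathcal{C}$.) The correct statement, which the degree count forces, is that the full $W_5$ acts almost freely on ${\proj}\mathcal{E}$, so ${\proj}\mathcal{E}\to{\proj}\mathcal{E}/W_5$ has degree $|W_5|=2^4\cdot 5!$.

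The second error is the lattice: for $\mathcal{M}_{14,6,1}$ the anti-invariant lattice is $L_-\simeq U\oplus U(2)\oplus A_1^4$, of signature $(2,6)$ and length $6$; your $U\oplus U(2)\oplus D_8$ has rank $12$ and length $4$, so it has neither the right signature nor the right discriminant. With the correct lattice, $|{\Or}(D_{L_-})|=2^4\cdot|{\Or}^-(4,2)|=2^4\cdot 5!=|W_5|$, not $|{\Or}^+(6,2)|=8!$ and certainly not $|W_5|/16=120$. Once both points are corrected the two degrees agree and $\mathcal{P}$ is birational; but as written your argument would not close --- if the kernel really acted trivially, the descended map would have degree $|{\Or}(D_{L_-})|\cdot 2^4/|W_5|=16$ rather than $1$ --- so the error is not cosmetic. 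The remainder of the proposal (the construction of the lift from the five ordered nodes $\mathbf{p}$ and the distinguished branch at the $D_8$-point, the dimension count, and the concern about recovering $v$ and the pencil $l_{p,v}$ intrinsically) is sound and consistent with what the paper does.
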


\begin{proof}
This is proved in the same way as Propositions \ref{birat (11,9)}, \ref{birat (12,8)}, and \ref{birat (13,7)}. 
The projection ${\cove}_{14,6,1}\dashrightarrow\mathcal{M}_{14,6,1}$ has degree $|{\Or}(D_{L_-})|$ 
for the lattice $L_-=U\oplus U(2)\oplus A_1^4$, 
which by \cite{M-S} is equal to $2^4\cdot|{\Or}^-(4, 2)|=2^4\cdot5!=|W_5|$. 
\end{proof}

\subsubsection{The rationality} 

We shall prove that ${\proj}\mathcal{E}/W_5$ is rational. 
By the no-name method for the $W_5$-linearized bundle $\mathcal{E}$ over ${\proj}T_f^0$, we have 
\begin{equation*}
{\proj}\mathcal{E}/W_5 \sim {\proj}^1\times({\proj}T_f/W_5). 
\end{equation*}
The variety ${\proj}T_f/W_5$ is the moduli of triplets $(Y, p, v)$ 
where $Y$ is an (unmarked) quartic del Pezzo surface, $p\in Y$, and $v\in{\proj}(T_pY)$. 
As in the proof of Lemma \ref{nodal -K curve (14,1)}, 
for a general $(Y, p, v)$ we blow-up $Y$ at $p$ and $v$ and then apply the anticanonical map 
to obtain a one-nodal quartic $\Gamma$ 
and a line $L$ passing through the node of $\Gamma$ and tangent to $\Gamma$ elsewhere. 
If $U\subset|{\Oplane}(4)|\times|{\Oplane}(1)|$ is the space of such pairs $(\Gamma, L)$, 
we thus obtain a rational map 
\begin{equation}\label{rat map (14,6,1)}
{\proj}T_f/W_5 \dashrightarrow U/{\PGL}_3. 
\end{equation} 
Conversely, given a $(\Gamma, L)\in U$, 
we take the double cover $\widehat{Y}_0\to{\proj}^2$ branched along $\Gamma$ 
and then the minimal desingularization $\widehat{Y}\to\widehat{Y}_0$. 
The covering transformation of $\widehat{Y}_0\to{\proj}^2$ induces an involution $\iota$ of $\widehat{Y}$. 
The pullback of $L$ to $\widehat{Y}$ is written as $E+\iota(E)+D$ 
where $D$ is the $(-2)$-curve over the double point of $\widehat{Y}_0$, 
and $E$ is a $(-1)$-curve with $(E.D)=(E.\iota(E))=1$. 
Notice that $\iota$ exchanges $E$ and $\iota(E)$, and leaves $D$ invariant. 
Then we contract $E$ (or equivalently, $\iota(E)$) and $D$ successively 
to obtain a point $(Y, p, v)$ of ${\proj}T_f/W_5$. 
This gives a well-defined inverse map of \eqref{rat map (14,6,1)}, 
and thus ${\proj}T_f/W_5$ is birational to $U/{\PGL}_3$. 

\begin{proposition}\label{rational (14,6,1)}
The quotient $U/{\PGL}_3$ is rational. 
Hence $\mathcal{M}_{14,6,1}$ is rational. 
\end{proposition}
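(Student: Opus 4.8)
The plan is to analyze the quotient $U/{\PGL}_3$ by the same strategy that worked for the companion quotients in \S\ref{ssec:(13,7)} and \S\ref{ssec:(11,9)}, namely the slice method followed by Miyata's theorem. Recall that $U\subset|{\Oplane}(4)|\times|{\Oplane}(1)|$ is the locus of pairs $(\Gamma, L)$ such that $\Gamma$ is a quartic with exactly one node and $L$ is a line through the node of $\Gamma$ which is tangent to $\Gamma$ at the remaining intersection points. First I would introduce a ${\PGL}_3$-equivariant map $\varphi\colon U\to V$ to a homogeneous (or almost homogeneous) ${\PGL}_3$-variety that records the ``rigid'' part of the configuration: the node $p_0={\rm Sing}(\Gamma)$, the line $L$, and the further point of tangency. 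A natural candidate is $\varphi(\Gamma, L)=(p_0, L, q)$ where $q=L\cap\Gamma\setminus\{p_0\}$ counted appropriately; since $L$ meets $\Gamma$ in degree $4$, with a double point at $p_0$ absorbing intersection multiplicity $2$, the residual intersection $L|_\Gamma - 2p_0$ is a degree-$2$ divisor on $L$, and genericity forces it to be $2q$ for a single point $q\in L$. So $\varphi$ sends $(\Gamma,L)$ to the flag datum $(p_0\in L, q\in L)$, i.e.\ a point of $\{(p_0,L,q)\, :\, p_0,q\in L\}\subset({\proj}^2)^2\times|{\Oplane}(1)|$, and ${\PGL}_3$ acts on this space almost transitively.

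Next I would compute the stabilizer $G\subset{\PGL}_3$ of a general point $(p_0,L,q)$ in the image. Fixing a pair of distinct points on a line together with that line, $G$ is the subgroup of ${\PGL}_3$ preserving $p_0$, $q$, and $L=\overline{p_0q}$; in suitable coordinates this is a connected solvable group, isomorphic to something like $({\C}^\times)^2\ltimes{\C}^2$ or $({\C}^\times\ltimes{\C})^2$ (the precise shape is a routine coordinate computation, analogous to the stabilizer $({\C}^\times\ltimes{\C})^2$ appearing in the proof of Proposition \ref{rational (13,7)}). The fiber $\varphi^{-1}(p_0,L,q)$ is then an open subset of the linear system ${\proj}W\subset|{\Oplane}(4)|$ of quartics $\Gamma$ that are singular at $p_0$ and satisfy $\Gamma|_L=2p_0+2q$ — a finite list of linear conditions on the coefficients of a quartic — so ${\proj}W$ is a genuine projective space on which $G$ acts linearly (the hyperplane bundle $\mathcal{O}_{|{\Oplane}(4)|}(1)$ restricted to $W$ is $G$-linearized because $G$, being connected, has no nontrivial characters obstructing a linearization of a projective representation once we pass to the affine cone). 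By the slice method (Proposition \ref{slice}) we get $U/{\PGL}_3\sim{\proj}W/G$, and by Miyata's theorem (Theorem \ref{Miyata}) the quotient ${\proj}W/G$ of a linear representation of a connected solvable group is rational. Combined with Proposition \ref{birat (14,6,1)}, which identifies $\mathcal{M}_{14,6,1}$ birationally with ${\proj}\mathcal{E}/W_5\sim{\proj}^1\times(U/{\PGL}_3)$, this yields the rationality of $\mathcal{M}_{14,6,1}$.

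The main obstacle I anticipate is not the invariant theory, which is by now entirely routine given the earlier lemmas, but rather the bookkeeping needed to make the slice argument airtight: one must check that the generic fiber of $\varphi$ is nonempty and irreducible of the expected dimension (so that $\varphi$ is dominant onto an open ${\PGL}_3$-orbit), that a generic such fiber consists of quartics with exactly one singular point which is a node — i.e.\ no unexpected extra singularities appear when one imposes the tangency conditions $\Gamma|_L=2p_0+2q$ — and that the resulting open subset of ${\proj}W$ is $G$-invariant. One also needs the dimension count ${\dim}(U/{\PGL}_3)={\dim}|{\Oplane}(4)|+{\dim}|{\Oplane}(1)|-\dim{\PGL}_3 = 14+2-8=8$ to match $\dim\mathcal{M}_{14,6,1}=20-14=6$ after accounting for the extra ${\proj}^1$-factor and the two extra moduli coming from $(p,v)\in{\proj}TY$; concretely, $\dim({\proj}\mathcal{E}/W_5)=1+\dim({\proj}T_f/W_5)=1+(\dim\mathcal{U}^5+2)=1+2+2-1$... which should be reconciled with $\dim U/{\PGL}_3=8$ via the birational identification ${\proj}T_f/W_5\sim U/{\PGL}_3$ established before the proposition, so the count is already forced and needs only to be recorded. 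Once these genericity verifications are in place, the chain ``slice method $\Rightarrow$ Miyata'' closes the argument in a few lines.

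\begin{proof}
We use the slice method for the ${\PGL}_3$-equivariant rational map
\begin{equation*}
\varphi\colon U\dashrightarrow \{(p_0,L,q) : p_0, q\in L\}, \qquad (\Gamma, L)\mapsto(p_0, L, q),
\end{equation*}
where $p_0={\rm Sing}(\Gamma)$ and $q$ is determined by $\Gamma|_L = 2p_0+2q$ (a general such pair $(\Gamma,L)$ has $\Gamma$ one-nodal and $L|_\Gamma-2p_0$ a divisor of the form $2q$).
The group ${\PGL}_3$ acts on the target almost transitively, and the stabilizer $G$ of a general point $(p_0,L,q)$ is a connected solvable group (in suitable coordinates $p_0=[1,0,0]$, $q=[0,1,0]$, $L=\{Z=0\}$, and $G$ consists of upper-triangular matrices preserving these data).
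The fiber $\varphi^{-1}(p_0, L, q)$ is an open subset of the linear system ${\proj}W\subset|{\Oplane}(4)|$ of quartics $\Gamma$ singular at $p_0$ with $\Gamma|_L = 2p_0+2q$, which is $G$-invariant, and $G$ acts linearly on the affine cone $W$.
By Proposition \ref{slice} we have $U/{\PGL}_3 \sim {\proj}W/G$, and by Theorem \ref{Miyata} the quotient ${\proj}W/G$ is rational.
Hence $U/{\PGL}_3$ is rational, and by Proposition \ref{birat (14,6,1)} so is $\mathcal{M}_{14,6,1}$.
\end{proof}
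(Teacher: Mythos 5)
Your proof is correct and follows essentially the same route as the paper: the paper's slice map sends $(\Gamma,L)$ to the pair $({\rm Sing}(\Gamma),\, L\cap\Gamma\backslash{\rm Sing}(\Gamma))\in{\proj}^2\times{\proj}^2$, which carries the same information as your flag datum $(p_0,L,q)$ since $L=\overline{p_0q}$, and the rest (almost transitive ${\PGL}_3$-action with connected solvable stabilizer $({\C}^{\times}\ltimes{\C})^2$, fiber an open set of a linear system of quartics, Miyata) is identical. The only caveat is that the concluding sentence should invoke Proposition \ref{birat (14,6,1)} together with the no-name reduction ${\proj}\mathcal{E}/W_5\sim{\proj}^1\times({\proj}T_f/W_5)$ and the birational identification ${\proj}T_f/W_5\sim U/{\PGL}_3$ established just before the proposition, rather than attributing the product decomposition to Proposition \ref{birat (14,6,1)} alone.
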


\begin{proof}
We have the ${\PGL}_3$-equivariant morphism 
\begin{equation*}
\varphi:U\to{\proj}^2\times{\proj}^2, \qquad 
(\Gamma, L)\mapsto({\rm Sing}(\Gamma), L\cap\Gamma\backslash{\rm Sing}(\Gamma)). 
\end{equation*}
As in the proof of Proposition \ref{rational (13,7)}, 
we have $U/{\PGL}_3\sim \varphi^{-1}(P, Q)/({\C}^{\times}\ltimes{\C})^2$ 
for a general point $(P, Q)\in{\proj}^2\times{\proj}^2$. 
Then $\varphi^{-1}(P, Q)$ is identified with an open set of a linear system of quartics, 
so that our assertion follows from Miyata's theorem.  
\end{proof}

\begin{remark}
Del Pezzo surfaces in this section are not the quotient surfaces of 2-elementary $K3$ surfaces, 
but rather their "canonical" blow-down --- 
compare with \S \ref{sec:k=0} where del Pezzo surfaces appear as the quotient surfaces. 
These two series are related by mirror symmetry for lattice-polarized $K3$ surfaces. 
\end{remark}

\begin{remark}
The Weyl group symmetry translated to the moduli of labelled sextics $C_1+C_2$ 
is generated by the renumbering of labelings and by 
quadratic transformations based at ordinary intersection points of $C_1$ and $C_2$ 
(cf. Remark \ref{second proof (10,8,1)}). 
\end{remark}


\section{The case $g=1$ (II)}\label{sec:g=1 (2)} 

In this section we treat the case $g=1$, $k\geq4$, $(k, \delta)\ne(4,1)$. 
By the unirationality result \cite{Ma} we may assume $k\leq7$. 
For $k\geq5$ we use plane cubics with a chosen inflectional point to construct birational period maps. 
Together with \S \ref{sec:g=1 (1)}, we now cover the range $g=1$, $k>0$. 
Among the rest two, $\mathcal{M}_{10,10,0}$ is the moduli of Enriques surfaces and is rational by Kond\=o \cite{Ko}, 
while $\mathcal{M}_{10,10,1}$ will be treated in Appendix.

\subsection{The rationality of $\mathcal{M}_{14,6,0}$}

The space $\mathcal{M}_{14,6,0}$ is proven to be rational in the same way as for $\mathcal{M}_{10,8,0}$. 
The anti-invariant lattice $L_-$ for $\mathcal{M}_{14,6,0}$ is isometric to $U(2)^2\oplus D_4$. 
Then we have $L_-^{\vee}(2)\simeq U^2\oplus D_4$, 
so that $\mathcal{M}_{14,6,0}$ is birational to $\mathcal{M}_{14,2,0}$. 
In \S \ref{ssec:(8+k,8-k)} we proved that $\mathcal{M}_{14,2,0}$ is rational.

\subsection{The rationality of $\mathcal{M}_{15,5,1}$}

Let $U\subset|{\Oplane}(3)|\times|{\Oplane}(2)|\times{\proj}^2$ be the locus of triplets $(C, Q, p)$ such that 
$({\rm i})$ $C$ is smooth,  
$({\rm ii})$ $p$ is an inflectional point of $C$, and 
$({\rm iii})$ $Q$ is smooth, passes $p$, and is transverse to $C$.  
If $L\subset{\proj}^2$ is the tangent line of $C$ at $p$, 
the sextic $B=C+Q+L$ has a $D_8$-singularity at $p$, 
five nodes at $C\cap Q\backslash p$, one node at $Q\cap L\backslash p$, 
and no other singularity. 
The 2-elementary $K3$ surface associated to the sextic $B$ has invariant $(g, k)=(1, 5)$. 
Thus we obtain a period map $\mathcal{P}\colon U/{\PGL}_3 \to \mathcal{M}_{15,5,1}$. 

\begin{proposition}\label{birat (15,5)} 
The period map $\mathcal{P}$ is birational. 
\end{proposition}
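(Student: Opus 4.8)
\textbf{Proof plan for Proposition \ref{birat (15,5)}.}
The plan is to follow the recipe in \S\ref{ssec: recipe}, exactly as in Example \ref{ex:4} and in the proofs of Propositions \ref{birat (8+k,8-k)} and \ref{birat g=2 III}, since the parameter surface is $Y={\proj}^2$ and $G={\PGL}_3$. First I would introduce the cover $\widetilde{U}\to U$ that labels the five nodes $C\cap Q\setminus p$: set
\begin{equation*}
\widetilde{U} = \{ (C, Q, p, p_1,\dots,p_5) \in U\times({\proj}^2)^5, \; C\cap Q\setminus p = \{p_1,\dots,p_5\} \},
\end{equation*}
an $\frak{S}_5$-cover of $U$. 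The remaining singular points of $B=C+Q+L$ are automatically distinguished: the $D_8$-point $p$ is distinguished from the nodes by its type, the node $Q\cap L\setminus p$ is distinguished from the nodes $C\cap Q\setminus p$ by the irreducible decomposition of $B$ (it lies on $Q+L$, not on $C$), and the three branches at $p$ — coming from $C$, $Q$, $L$ — are distinguished by that same decomposition and by the intersection multiplicities at $p$. Also $L$ and $Q$ are distinguished by their intersection with $C$. So $\widetilde{U}$ carries a complete, reasonable labeling of all relevant data.

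Next I would write down the reference lattice $L_+$ explicitly, imitating the $D_{10}$-construction in Example \ref{ex:4}: take $h$ a generator of $\langle 2\rangle$, an orthogonal basis $\{e_1,\dots,e_5\}$ of $A_1^5$, an orthogonal basis for one more $A_1$ coming from the node $Q\cap L\setminus p$, and the root basis of a $D_8$-lattice with numbering matching Figure \ref{dual graph}; produce the sub-$A_1^8$ generated by the curves over $p$ exactly as there, and then adjoin half-integral vectors $f_0$ (the pullback of ${\Oplane}(1)$-type class built from $h$, the node classes, and the relevant $d_{2i+1}$'s), together with $f_1,f_2$ recording the components $C$, $L$, the whole thing forced by Lemma \ref{L_+ and A-D-E} to be $L_+(X,\iota)$. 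One checks $L_+$ is even, $2$-elementary of main invariant $(15,5,1)$; in fact $L_+\simeq U\oplus E_8\oplus A_1^5$ should drop out, whence the anti-invariant lattice is $L_-\simeq U^2\oplus D_4\oplus A_1^5$ — this identification is the routine-but-must-be-done computation. Using the labeling $\mu$ on $\widetilde{U}$ together with the natural basis of $NS_{{\proj}^2}$, Lemma \ref{L_+ and A-D-E} gives a marking $j\colon L_+\to L_+(X,\iota)$, hence by Borel's extension theorem a morphism $\tilde p\colon\widetilde{U}\to{\cove}_{15,5,1}$ lifting $p$; it is ${\PGL}_3$-invariant since ${\PGL}_3$ acts trivially on $NS_{{\proj}^2}$.

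Then I would verify that ${\lift}\colon\widetilde{U}/{\PGL}_3\dashrightarrow{\cove}_{15,5,1}$ is generically injective by step (4) of the recipe: if $\tilde p(C,Q,p,\dots)=\tilde p(C',Q',p',\dots)$, the marking-compatible Hodge isometry preserves ample cones by Lemma \ref{ample class}, Torelli gives $\varphi\colon X\to X'$, Lemma \ref{covering map \& LB} applied to $L={\Oplane}(1)$ gives an automorphism $\psi$ of ${\proj}^2$ with $\psi\circ f=f'\circ\varphi$, and $\psi$ carries the branch sextic and its labeled data of one to the other, so $\psi\in{\PGL}_3$. Since ${\dim}(\widetilde{U}/{\PGL}_3)={\dim}\mathcal{M}_{15,5,1}=5$ and ${\cove}_{15,5,1}$ is irreducible, ${\lift}$ is birational. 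Finally, the projection $\widetilde{U}/{\PGL}_3\to U/{\PGL}_3$ has degree $|\frak{S}_5|=5!$ (as ${\PGL}_3$ acts almost freely on $U$), while ${\cove}_{15,5,1}\dashrightarrow\mathcal{M}_{15,5,1}$ is an ${\Or}(D_{L_-})$-covering, and from $L_-\simeq U^2\oplus D_4\oplus A_1^5$ one gets ${\Or}(D_{L_-})\simeq\frak{S}_5$ (the $A_1^5$ part contributes $\frak{S}_5$, the $D_4$ part's form has trivial contribution after the identification, cf. the computations in \cite{M-S}); comparing, $\deg\mathcal{P}=5!/5!=1$.

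\textbf{Main obstacle.} The genuinely delicate point is not the Hodge-theoretic argument, which is entirely parallel to the four worked examples, but the bookkeeping: writing down the correct overlattice $L_+$ so that the half-integral gluing vectors are exactly those dictated by Lemma \ref{L_+ and A-D-E} for the $D_8$-configuration plus the two extra nodes, confirming it has the right discriminant form, and then computing $|{\Or}(D_{L_-})|$ correctly so that it matches the covering degree $5!$. If the arithmetic of $D_{L_-}$ gave a different order, the recipe would only yield $\deg\mathcal{P}=|{\Or}(D_{L_-})|/5!$, so the whole claim hinges on that lattice computation coming out as $\frak{S}_5$; everything else is a transcription of the established pattern.
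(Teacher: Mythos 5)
Your overall strategy is exactly the paper's: the same $\frak{S}_5$-cover $\widetilde{U}\to U$ labelling the five nodes $C\cap Q\setminus p$, the observation that the remaining singular points and the branches at the $D_8$-point are a priori distinguished, the lift to ${\cove}_{15,5,1}$ via the recipe of \S\ref{ssec: recipe}, the dimension count ${\dim}(\widetilde{U}/{\PGL}_3)=5$, and the final comparison of $|\frak{S}_5|$ with $|{\Or}(D_{L_-})|=5!$. The paper's proof is a two-line version of this.

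There is, however, a concrete error in your lattice bookkeeping, and since you yourself identify this as the step "the whole claim hinges on," it needs to be fixed. You assert $L_-\simeq U^2\oplus D_4\oplus A_1^5$; this lattice has rank $13$ and discriminant group of length $7$, whereas the anti-invariant lattice for $\mathcal{M}_{15,5,1}$ must have signature $(2,20-r)=(2,5)$, i.e.\ rank $7$, with $2$-elementary discriminant group of length $a=5$. The correct lattice (as in the paper) is $L_-\simeq U\oplus U(2)\oplus A_1^3$. Consequently your parenthetical justification that "the $D_4$ part's form has trivial contribution" is not a valid computation of ${\Or}(D_{L_-})$. The conclusion $|{\Or}(D_{L_-})|=5!$ is nonetheless true, and the clean way to get it from your (correct) identification $L_+\simeq U\oplus E_8\oplus A_1^5$ is to use $(D_{L_-},q_-)\simeq(D_{L_+},-q_+)$: the form is $\langle -1/2\rangle^{\oplus 5}$ on $({\Z}/2{\Z})^5$, the same discriminant form as for $U\oplus A_1^5$, whose orthogonal group was already computed to be ${\Or}^-(4,2)$ of order $5!$ in the proof of Proposition \ref{birat (7,5,1)}. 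With that repair the degree count $\deg\mathcal{P}=5!/5!=1$ goes through and your argument matches the paper's.
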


\begin{proof}
We consider an $\frak{S}_5$-cover $\widetilde{U}$ of $U$ 
whose fiber over a $(C, Q, p)\in U$ corresponds to labelings of the five nodes $C\cap Q\backslash p$. 
As in the previous sections, 
$\mathcal{P}$ lifts to a birational map 
$\widetilde{U}/{\PGL}_3\dashrightarrow{\cove}_{15,5,1}$. 
The variety ${\cove}_{15,5,1}$ is an ${\Or}(D_{L_-})$-cover of $\mathcal{M}_{15,5,1}$ 
for the lattice $L_-=U\oplus U(2)\oplus A_1^3$. 
Since ${\PGL}_3$ acts on $U$ almost freely 
and since $|{\Or}(D_{L_-})|=|{\Or}^-(4, 2)|=5!$ by \cite{M-S}, 
$\mathcal{P}$ has degree $1$. 
\end{proof}

\begin{proposition}\label{rational (15,5,1)}
The quotient $U/{\PGL}_3$ is rational. 
Hence $\mathcal{M}_{15,5,1}$ is rational. 
\end{proposition}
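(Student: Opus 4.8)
The plan is to mimic the pattern used repeatedly in this section and in \S\ref{sec:g=2} and \S\ref{sec:g=3}: reduce the quotient $U/{\PGL}_3$ to a linear action of a connected solvable group via the slice method, then invoke Miyata's theorem \ref{Miyata}. First I would choose the natural ${\PGL}_3$-equivariant morphism that records the "rigid" part of the data, namely the inflectional point $p$ together with its tangent line $L$:
\begin{equation*}
\varphi : U \to \{(p, L)\in{\proj}^2\times|{\Oplane}(1)| : p\in L\}, \qquad (C, Q, p)\mapsto(p, L).
\end{equation*}
The target is the incidence variety, which is a homogeneous space for ${\PGL}_3$; the stabilizer $G$ of a general point $(p, L)$ is the group of projective transformations fixing a point and a line through it, which is connected and solvable (isomorphic to ${\C}^{\times}\ltimes({\C}^{\times}\ltimes{\C})$ — the same kind of group that appears in the proofs of Propositions \ref{rational (13,7)} and \ref{rational (14,6,1)}). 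By the slice method \ref{slice} we get $U/{\PGL}_3 \sim \varphi^{-1}(p, L)/G$.

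Next I would identify the fiber $\varphi^{-1}(p, L)$ explicitly. Fixing coordinates with $p=[0,0,1]$ and $L=\{Y=0\}$, the conditions that $C$ be a smooth cubic with $p$ an inflection and $L$ the inflectional tangent cut out an \emph{affine-linear} (in fact linear, after the obvious identification) family of cubics: the equation of $C$ must have the form $Y\cdot(\text{conic}) + (\text{cubic in }X,Y\text{ vanishing to order }3\text{ along }Y=0\text{ appropriately})$, i.e. the coefficients of $Z^3, XZ^2, X^2Z$ are constrained so that $(C.L)_p=3$. Meanwhile $Q$ ranges over conics through $p$, again a linear condition. So $\varphi^{-1}(p, L)$ is an open subset of the projectivization of a $G$-representation $V$ (one should be slightly careful: the product of two linear systems is naturally a $G$-linearized vector bundle over one factor, but since both factors are genuine linear subspaces cut out by $G$-invariant incidence conditions, $\varphi^{-1}(p,L)$ is birationally ${\proj}(V_1)\times{\proj}(V_2)$ with $G$ acting linearly, and one can absorb one projective factor by the no-name lemma \ref{no-name 1} applied to the $G$-linearized bundle $V_1\otimes(\text{line bundle})$ over ${\proj}(V_2)$, or simply treat the whole $\varphi^{-1}(p,L)$ as an open set of a single ${\proj}V$). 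Either way, Miyata's theorem \ref{Miyata} gives that ${\proj}V/G$ is rational, hence $U/{\PGL}_3$ is rational, and therefore $\mathcal{M}_{15,5,1}$ is rational by Proposition \ref{birat (15,5)}.

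The main obstacle I anticipate is purely bookkeeping: verifying that the inflection condition plus the fixed-tangent condition genuinely cut out a \emph{linear} (not merely locally closed, possibly nonlinear) subfamily of $|{\Oplane}(3)|$ stable under $G$, so that the slice really is the projectivization of a $G$-module rather than something for which Miyata does not apply. One has to check that ``$p$ is an inflection of $C$ with tangent $L$'' is equivalent, in suitable coordinates, to the vanishing of a fixed set of coefficients of the defining cubic, and that $G$ preserves the span of the remaining monomials; this is a short explicit computation of the same flavor as the one in \eqref{eqn: special cuspidal quartic} but must be done carefully. The dimension count — $\dim(U/{\PGL}_3)=\dim U-8$ — should be recorded to confirm we land in the expected dimension $20-15=5$, matching $\dim\mathcal{M}_{15,5,1}$, but this is routine.
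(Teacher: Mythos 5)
Your overall strategy (slice method plus Miyata) is the one the paper uses, but your slice is coarser than the paper's, and the step where you dispose of the resulting fiber does not go through as written. Slicing only over the flag $(p,L)$ leaves a fiber that is an open subset of ${\proj}V_1\times{\proj}V_2$, where $V_1\subset H^0({\Oplane}(3))$ is the linear system of cubics with $(C.L)_p\geq3$ and $V_2\subset H^0({\Oplane}(2))$ is the system of conics through $p$. A product of two projective spaces is \emph{not} an open set of a single ${\proj}V$, so Miyata's theorem cannot be applied to it directly in the form you suggest. The no-name route also fails here: the stabilizer $G$ of the flag is the Borel subgroup of ${\PGL}_3$, of dimension $5$ (not the $3$-dimensional group ${\C}^{\times}\ltimes({\C}^{\times}\ltimes{\C})$ you name), while ${\proj}V_2$ has dimension $4$, so $G$ cannot act almost freely on ${\proj}V_2$ and neither Proposition \ref{no-name 1} nor \ref{no-name 2} applies.

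The gap is repairable in two ways. The cleaner repair is exactly the paper's proof: enlarge the slice data to the full triple $(Q,L,p)$ with $p\in Q\cap L$. The group ${\PGL}_3$ still acts almost transitively on this incidence variety, the stabilizer drops to ${\C}^{\times}$ (automorphisms of $Q\simeq{\proj}^1$ fixing the two points $Q\cap L$), and the fiber becomes a single linear system ${\proj}W$ of cubics, to which Miyata applies trivially. Equivalently, you can keep your slice and then apply the slice method a second time to the projection ${\proj}V_1\times{\proj}V_2\to{\proj}V_2$, noting that $G$ acts almost transitively on ${\proj}V_2$ with stabilizer ${\C}^{\times}$; this lands in the same place. (A third fix is to pass to the affine cone $V_1\oplus V_2$ and enlarge $G$ to $\widetilde{G}\times({\C}^{\times})^2$ with the torus scaling the two factors, which is still connected, solvable, and linear, so Miyata applies; but this is more bookkeeping than the problem deserves.) Your remaining checks — that the inflection and tangency conditions are linear in the coefficients of the cubic, and the dimension count $13-8=5$ — are correct.
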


\begin{proof}
Let $V\subset |{\Oplane}(2)|\times|{\Oplane}(1)|\times{\proj}^2$ be the locus of triplets $(Q, L, p)$ 
such that $p\in Q\cap L$. 
We have a ${\PGL}_3$-equivariant map 
$\pi\colon U\to V$ defined by $(C, Q, p)\mapsto(Q, L, p)$, where $L$ is the tangent line of $C$ at $p$. 
A general $\pi$-fiber is an open set of a linear system ${\proj}W$ of cubics. 
The group ${\PGL}_3$ acts on $V$ almost transitively 
with the stabilizer $G$ of a general point $(Q, L, p)$ isomorphic to ${\C}^{\times}$. 
Indeed, since $Q$ is anti-canonically embedded, 
$G$ is  identified with the group of automorphisms of $Q\simeq{\proj}^1$ fixing the two points $Q\cap L$. 
By the slice method we have $U/{\PGL}_3\sim {\proj}W/G$, and ${\proj}W/G$ is clearly rational. 
\end{proof}

\subsection{The rationality of $\mathcal{M}_{16,4,1}$ and $\mathcal{M}_{17,3,1}$}

For $k=6, 7$, let $U_k\subset|{\Oplane}(3)|\times|{\Oplane}(1)|^3$ be the locus of 
quadruplets $(C, L_1, L_2, L_3)$ such that 
$({\rm i})$ $C$ is smooth, 
$({\rm ii})$ $L_1, L_2, L_3$ are linearly independent, 
$({\rm iii})$ $p=L_1\cap L_2$ is an inflectional point of $C$ with tangent line $L_1$, 
$({\rm iv})$ $L_2, L_3$ are respectively transverse to $C$, and 
$({\rm v})$ $C$ passes (resp. does not pass) $L_2\cap L_3$ for $k=7$ (resp. $k=6$). 
When $k=6$, the singularities of the sextic $B=C+\sum_{i=1}^3L_i$ are 
\begin{equation*}
{\rm Sing}(B)= p + (L_2\cap C\backslash p) + (L_3\cap C) + (L_3\cap L_1) +(L_3\cap L_2), 
\end{equation*}
where $p$ is a $D_8$-singularity and the rest points are nodes. 
When $k=7$, denoting $q=L_2\cap L_3$, we have 
\begin{equation*}
{\rm Sing}(B)= p + q + (L_2\cap C\backslash \{ p, q\}) + (L_3\cap C\backslash q) + (L_3\cap L_1), 
\end{equation*}
where $p$ is a $D_8$-singularity, $q$ is a $D_4$-singularity, and the rest points are nodes. 
The 2-elementary $K3$ surface associated to $B$ has invariant $(r, a)=(10+k, 10-k)$, 
and we obtain a period map 
$\mathcal{P}_k\colon U_k/{\PGL}_3\to\mathcal{M}_{10+k,10-k,1}$. 

\begin{proposition}\label{birat (16,4)&(17,3)}
The map $\mathcal{P}_k$ is birational. 
\end{proposition}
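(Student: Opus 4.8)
The argument will follow the now-familiar pattern used for $\mathcal{M}_{15,5,1}$ and the Weyl-group cases of \S\ref{sec:g=1 (1)}, via the recipe of \S\ref{ssec: recipe}. First I would set up a suitable cover $\widetilde{U}_k\to U_k$ on which the nodes of the sextic $B=C+\sum_i L_i$ are labelled. For $k=6$ the only nodes not already distinguished by type and by the irreducible decomposition of $B$ are the two points $L_2\cap C\setminus p$; these must be labelled, so $\widetilde{U}_6\to U_6$ is an $\frak{S}_2$-cover (the node $L_3\cap C$, and the nodes $L_3\cap L_1$, $L_3\cap L_2$ are pinned down by the components they lie on). For $k=7$ only the single node $L_2\cap C\setminus\{p,q\}$ needs no partner, while again the remaining nodes are separated by the decomposition; here there is essentially nothing to label beyond what the components already give, so $\widetilde{U}_7=U_7$. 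In both cases the three branches of $B$ at the $D_8$-point $p$, and (for $k=7$) the two branches at the $D_4$-point $q$, are distinguished by the irreducible decomposition of $B$, so the labelling is ``reasonable'' in the sense required.

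Next, following steps (1)--(4) of \S\ref{ssec: recipe}, I would build the lattice marking: a natural basis of $NS_{{\proj}^2}$ together with the labelled exceptional configurations over $p$, $q$ and the labelled nodes determines, by Lemma~\ref{L_+ and A-D-E}, an identification of the reference lattice $L_+$ (the appropriate even 2-elementary lattice of main invariant $(10+k,10-k,1)$) with $L_+(X,\iota)$ for $(X,\iota)=\mathcal{P}_k(C,L_1,L_2,L_3)$. This gives a lift $\tilde p\colon\widetilde{U}_k\to\widetilde{\mathcal{M}}_{10+k,10-k,1}$, a morphism by Borel's extension theorem. Using the ample class of Lemma~\ref{ample class}, the Torelli theorem, and Lemmas~\ref{covering map & LB}, \ref{delta=1}, the $\tilde p$-fibers are shown to be ${\PGL}_3$-orbits exactly as in Example~\ref{ex:4}: a Hodge isometry compatible with the markings yields an isomorphism $X\to X'$, hence via $f^{\ast}\Oplane(1)$ an automorphism of ${\proj}^2$ carrying $B$ to $B'$ and respecting all labelled data. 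Since ${\dim}(\widetilde{U}_k/{\PGL}_3)=20-(10+k)$ (a routine dimension count: $9$ parameters for the cubic, minus $1$ for the inflection condition, plus the lines, modulo $8$) the lift ${\lift}\colon\widetilde{U}_k/{\PGL}_3\dashrightarrow\widetilde{\mathcal{M}}_{10+k,10-k,1}$ is birational.

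Finally I would compare degrees. The anti-invariant lattice is $L_-\simeq U\oplus U(2)\oplus A_1^{10-k}\oplus E_8$ for $k=6$ and an analogous lattice with a $D_4$-summand for $k=7$; in either case a short computation (as in \cite{M-S}) gives $|{\Or}(D_{L_-})|$ equal to $|\frak{S}_2|=2$ for $k=6$ and to $1$ for $k=7$, matching the degree of $\widetilde{U}_k/{\PGL}_3\to U_k/{\PGL}_3$ (which is $2$ and $1$ respectively, since ${\PGL}_3$ acts almost freely on $U_k$). Hence $\mathcal{P}_k$ has degree $1$, i.e.\ is birational. The main obstacle I anticipate is bookkeeping rather than conceptual: correctly identifying the reference lattice $L_+$ (equivalently $L_-$) and its discriminant-form orthogonal group, and verifying that every singular point and every branch at $p$ and $q$ is genuinely distinguished by the combinatorial data so that no extra monodromy enlarges the cover $\widetilde{U}_k\to U_k$. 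Once those lattice computations are in hand, the rest is a direct transcription of Examples~\ref{ex:2} and~\ref{ex:4}.
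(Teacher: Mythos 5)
Your overall strategy is the right one and matches the paper's, but the two computations on which the degree count rests are both wrong, and the fact that your final answer comes out to $1$ is only because the two errors cancel. First, the cover $\widetilde{U}_k$: you assert that for $k=6$ the nodes $L_3\cap C$ are ``pinned down by the components they lie on,'' but $L_3\cap C$ is not a single node --- $L_3$ is a line transverse to the cubic $C$, so it consists of \emph{three} nodes, all lying on the same pair of components $(C,L_3)$, and nothing in the combinatorial data distinguishes them from one another. They must be labelled, in addition to the two nodes $L_2\cap C\setminus p$; the correct cover is therefore an $\frak{S}_3\times\frak{S}_2$-cover of degree $12$, not an $\frak{S}_2$-cover of degree $2$. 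Similarly for $k=7$ the set $L_3\cap C\setminus q$ consists of \emph{two} indistinguishable nodes, so $\widetilde{U}_7\to U_7$ is a genuine double cover, not the identity. Second, the lattice: your $L_-\simeq U\oplus U(2)\oplus A_1^{10-k}\oplus E_8$ has rank $22-k$, whereas the anti-invariant lattice must have rank $22-r=12-k$; the correct lattice is $L_-=U\oplus U(2)\oplus A_1^{8-k}$, for which $|{\Or}(D_{L_-})|=12$ when $k=6$ and $=2$ when $k=7$ (there is no $D_4$-summand in $L_-$ for $k=7$; the $D_4$-singularity at $q$ feeds into $L_+$ via the exceptional root lattice, not into $L_-$ in the way you suggest).

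With the corrected numbers the argument closes exactly as you intended: the degree of the projection $\widetilde{U}_k/{\PGL}_3\to U_k/{\PGL}_3$ is $12$ (resp.\ $2$), which equals $|{\Or}(D_{L_-})|$, so $\deg(\mathcal{P}_k)=1$. The rest of your outline --- the lattice marking via Lemma \ref{L_+ and A-D-E}, the Torelli/ample-cone step, Borel extension, and the dimension count giving birationality of the lift --- is sound and is what the paper does. But as written, your proof does not establish the result: the degree comparison is the entire content of the proposition, and you have miscounted both sides of it.
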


\begin{proof}
For $k=6$ we label the three nodes $L_3\cap C$ and the two nodes $L_2\cap C\backslash p$ independently, 
which is realized by an $\frak{S}_3\times\frak{S}_2$-cover $\widetilde{U}_6$ of $U_6$. 
For $k=7$ we distinguish the two nodes $L_3\cap C\backslash q$, 
which defines a double cover $\widetilde{U}_7$ of $U_7$. 
Note that in both cases, the three lines $L_i$ are distinguished by their intersection properties, 
and hence the rest nodes are a priori labelled. 
As before, we see that $\mathcal{P}_k$ lifts to a birational map 
$\widetilde{U}_k/{\PGL}_3\dashrightarrow{\cove}_{10+k,10-k,1}$. 
Then ${\cove}_{10+k,10-k,1}$ is an ${\Or}(D_{L_-})$-cover of $\mathcal{M}_{10+k,10-k,1}$ 
for the lattice $L_-=U\oplus U(2)\oplus A_1^{8-k}$. 
By \cite{M-S} we have $|{\Or}(D_{L_-})|=12, 2$ for $k=6, 7$ respectively, which concludes the proof. 
\end{proof}

\begin{proposition}\label{rational (16,4)&(17,3)}
The quotient $U_k/{\PGL}_3$ is rational. 
Therefore $\mathcal{M}_{16,4,1}$ and $\mathcal{M}_{17,3,1}$ are rational. 
\end{proposition}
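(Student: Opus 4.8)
The statement to be proved is Proposition~\ref{rational (16,4)&(17,3)}: the quotients $U_k/{\PGL}_3$ for $k=6,7$ are rational. The strategy mirrors the one already used for $\mathcal{M}_{15,5,1}$ in Proposition~\ref{rational (15,5,1)} and for the cases in \S\ref{ssec:(8+k,8-k)}, namely the slice method (Proposition~\ref{slice}) applied to the projection that records the three lines, followed by Miyata's theorem (Theorem~\ref{Miyata}). Concretely, I would introduce the ${\PGL}_3$-equivariant map
\begin{equation*}
\pi_k : U_k \to V, \qquad (C, L_1, L_2, L_3) \mapsto (L_1, L_2, L_3),
\end{equation*}
where $V\subset|{\Oplane}(1)|^3$ is the locus of linearly independent triples of lines. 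The group ${\PGL}_3$ acts on $V$ almost transitively: any three lines in general position can be moved to the three coordinate lines, and the stabilizer $G$ of a general point of $V$ is the torus of diagonal matrices, i.e. $G\simeq({\C}^\times)^2$, which is connected and solvable. (One must be slightly careful: condition $({\rm iii})$ in the definition of $U_k$ makes the intersection point $p=L_1\cap L_2$ play a distinguished role, but this does not affect the stabilizer of the \emph{triple of lines}, only the geometry of the fiber.)

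The next step is to identify the fiber $\pi_k^{-1}(L_1,L_2,L_3)$ over a general point. Normalizing $L_1=\{Z=0\}$, $L_2=\{Y=0\}$, $L_3=\{X=0\}$, so that $p=[1,0,0]$, the conditions on $C\in|{\Oplane}(3)|$ are: $C$ is tangent to $L_1$ at $p$ with contact order $\geq3$ (this is condition $({\rm iii})$, that $p$ be an inflection point with tangent $L_1$ — a \emph{linear} condition on the cubic once $p$ and $L_1$ are fixed, cutting out a subspace of codimension $3$), together with, when $k=7$, the further linear condition that $C$ pass through $q=L_2\cap L_3=[1,0,0]$... no: $q=[1,0,0]$ is $p$; rather $q=L_2\cap L_3=[0,0,1]$ for the normalization above — in any case one more linear condition for $k=7$. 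Hence $\pi_k^{-1}(L_1,L_2,L_3)$ is an open subset of a linear subsystem ${\proj}W_k\subset|{\Oplane}(3)|$, and $W_k$ is a linear representation of $G$ because ${\Oplane}(3)$ is ${\GL}_3$-linearized (hence, after the standard central-character adjustment, ${\PGL}_3$-linearized, and in particular $G$-linearized). By the slice method (Proposition~\ref{slice}),
\begin{equation*}
U_k/{\PGL}_3 \sim {\proj}W_k/G .
\end{equation*}
Since $G$ is connected and solvable and $W_k$ is a linear $G$-representation, Miyata's theorem~\ref{Miyata} gives that ${\proj}W_k/G$ — equivalently $W_k/G$ up to a rational factor — is rational, whence $U_k/{\PGL}_3$ is rational. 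Combined with Proposition~\ref{birat (16,4)&(17,3)}, which identifies $\mathcal{M}_{16,4,1}$ and $\mathcal{M}_{17,3,1}$ birationally with $U_6/{\PGL}_3$ and $U_7/{\PGL}_3$ respectively, this proves that $\mathcal{M}_{16,4,1}$ and $\mathcal{M}_{17,3,1}$ are rational.

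\textbf{Main obstacle.} Everything here is routine once the fiber of $\pi_k$ is correctly described; the one point that requires genuine care is checking that $U_k$ is nonempty, irreducible, and of the expected dimension, and that the generic fiber of $\pi_k$ really is an open dense subset of the \emph{linear} system ${\proj}W_k$ (rather than something cut out by the higher-order tangency conditions in a non-linear way). The inflectional-tangency condition ``$(C.L_1)_p\geq 3$'' is linear in the coefficients of $C$ for fixed $(p,L_1)$, so this is fine, but one should verify that for generic $(L_1,L_2,L_3)$ the further open conditions $({\rm iv})$, $({\rm v})$ (transversality of $L_2,L_3$ to $C$, and $C$ passing or not passing through $L_2\cap L_3$) are satisfied by a Zariski-dense subset of ${\proj}W_k$ — this is a standard Bertini-type genericity check. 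The almost-transitivity of ${\PGL}_3$ on $V$ and the computation $G\simeq({\C}^\times)^2$ are immediate. A secondary bookkeeping point is the dimension count $\dim(U_k/{\PGL}_3)=20-(10+k)=10-k$, i.e. $4$ for $k=6$ and $3$ for $k=7$, which should match $\dim{\proj}W_k - \dim G = \dim{\proj}W_k - 2$; one verifies $\dim{\proj}W_k=6-k$ directly from the codimension count above, giving the expected answer.
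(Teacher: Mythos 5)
Your proof is correct and follows essentially the same route as the paper: the slice method for the projection $(C,L_1,L_2,L_3)\mapsto(L_1,L_2,L_3)$ onto $|{\Oplane}(1)|^3$, on which ${\PGL}_3$ acts almost transitively with general stabilizer $({\C}^{\times})^2$, followed by Miyata's theorem applied to the linear system of cubics that constitutes the general fiber. The only slip is the bookkeeping line $\dim{\proj}W_k=6-k$, which should read $\dim{\proj}W_k=12-k$ (i.e.\ $6$ and $5$ for $k=6,7$), consistent with $\dim(U_k/{\PGL}_3)=10-k$.
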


\begin{proof}
This follows from the slice method for the projection $\pi\colon U_k\to|{\Oplane}(1)|^3$. 
General $\pi$-fibers are open subsets of linear systems of cubics, 
and ${\PGL}_3$ acts on $|{\Oplane}(1)|^3$ almost transitively with a general stabilizer  
isomorphic to $({\C}^{\times})^2$. 
\end{proof}

\begin{remark}
Degenerating the above sextic models, one sees that 
\begin{enumerate}
\item  $\mathcal{M}_{18,2,1}$ is birational to the Kummer modular surface for ${\SL}_2({\Z})$, 
\item  $\mathcal{M}_{18,2,0}$ is birational to the pullback of the Kummer modular surface 
          for $\Gamma_0(2)$ by the Fricke involution,   
\item  $\mathcal{M}_{19,1,1}$ is birational to the modular curve for $\Gamma_0(3)$, 
\end{enumerate}
via their fixed curve maps. 
Note that $\mathcal{M}_{18,2,1}$ and $\mathcal{M}_{18,2,0}$ are Heegner divisors of $\mathcal{M}_{17,3,1}$, 
though they look like boundary divisors of toroidal compactification. 
\end{remark}


\section{The case $g=0$}\label{sec:g=0}

In this section we study the case $g=0$ with $k\geq4$. 
Dolgachev and Kond\=o \cite{D-K} proved the rationality of $\mathcal{M}_{11,11,1}$, 
which is the moduli of Coble surfaces and for which $(g, k)=(0, 0)$. 
The remaining case $1\leq k\leq3$ will be studied in Appendix. 
Here we treat the case $k=4$ by a similar approach as in \S \ref{sec:g=1 (1)}, 
and the cases $k=5, 6$ using cuspidal plane cubics. 
The range $k\geq7$ is covered by the unirationality result \cite{Ma}.

\subsection{$\mathcal{M}_{15,7,1}$ and cubic surfaces}\label{ssec:(15,7)}

Let $f\colon\mathcal{Y}^6\to\mathcal{U}^6$ be the family of marked cubic surfaces considered in \S \ref{ssec:(13,7)}. 
Let $\mathcal{W}\subset\mathcal{Y}^6$ be the locus of those $(\mathbf{p}, p)$ such that 
the $-K_Y$-curve $C_p$ on $Y=\mathcal{Y}^6_{\mathbf{p}}$ singular at $p$ is irreducible and cuspidal. 
($\mathcal{W}$ is the branch divisor of the double cover $\mathcal{Z}\to\mathcal{Y}^6$ in \S \ref{ssec:(13,7)}). 
Then we consider the locus $\mathcal{V}$ in $\mathcal{W}\times_{\mathcal{U}^6}\mathcal{Y}^6$ of 
those $(\mathbf{p}, p, q)$ such that the $-K_Y$-curve $C_q$ singular at $q$ is irreducible, nodal, and 
intersects with $C_p$ at $p$ with multiplicity $3$. 
The curves $C_p$ and $C_q$ do not intersect outside $p$. 
Then the $-2K_Y$-curves $C_p+C_q$ have an $E_7$-singularity at $p$, a node at $q$, and no other singularity. 
Taking the right resolution of the DPN pairs $(Y, C_p+C_q)$, 
we obtain a period map $\mathcal{P}\colon\mathcal{V}\to\mathcal{M}_{15,7,1}$. 

\begin{proposition}\label{birat (15,7)}
The map $\mathcal{P}$ descends to a birational map $\mathcal{V}/W_6\to\mathcal{M}_{15,7,1}$. 
\end{proposition}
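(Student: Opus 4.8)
The strategy follows the template established in Propositions \ref{birat (11,9)}, \ref{birat (12,8)}, \ref{birat (13,7)}, and \ref{birat (14,6,1)}: construct a generically injective lift $\widetilde{\mathcal{P}}\colon\mathcal{V}\dashrightarrow\cove_{15,7,1}$ of $\mathcal{P}$, show it is birational by the recipe of \S\ref{ssec: recipe}, and then compare the two covering degrees. First I would produce the lift. A point $(\mathbf{p},p,q)\in\mathcal{V}$ carries the marking $\varphi_{\mathbf{p}}$ of $\mathrm{Pic}(\mathcal{Y}^6_{\mathbf{p}})$, and on the $-2K_Y$-curve $C_p+C_q$ the two singular points are an $E_7$-point at $p$ and a node at $q$, which are of distinct type and hence canonically distinguished. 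By Lemma \ref{L_+ and A-D-E} this data determines a lattice marking $j$ of $L_+(X,\iota)$ for $(X,\iota)=\mathcal{P}(\mathbf{p},p,q)$, giving the morphism $\widetilde{\mathcal{P}}$ (a morphism by Borel's extension theorem). It is $W_6$-invariant because $W_6$-equivalent points of $\mathcal{V}$ yield isomorphic DPN pairs.

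Next I would verify $\widetilde{\mathcal{P}}$ is generically injective, which is where the geometric content lies. As in \S\ref{ssec:(13,7)}, the blow-down $\pi\colon\mathcal{Y}^6_{\mathbf{p}}\to\proj^2$ determined by $\mathbf{p}$ translates $(\mathbf{p},p,q)$ into the plane sextic $\pi(C_p+C_q)$ equipped with a labeling $\mathbf{p}$ of its six nodes; the cuspidal cubic $\pi(C_p)$ and the nodal cubic $\pi(C_q)$ are distinguished by the type of their singularities, and the remaining singular points of the sextic --- the $E_7$-point $\pi(p)$ and the extra node $\pi(q)$ --- are a priori labeled. Thus $\mathcal{V}$ is birationally the $\PGL_3$-quotient of a space of such labeled sextics, and the recipe in \S\ref{ssec: recipe} applies: if $\widetilde{\mathcal{P}}(\mathbf{p},p,q)=\widetilde{\mathcal{P}}(\mathbf{p}',p',q')$ one obtains via the Torelli theorem (using Lemma \ref{ample class} to control ample cones) an isomorphism $\varphi\colon X\to X'$, then via Lemma \ref{covering map & LB} an automorphism of $\proj^2$ carrying one labeled sextic to the other, hence an element matching the marked del Pezzo data. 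This forces the two points of $\mathcal{V}$ to be $W_6$-equivalent. Checking $\dim(\mathcal{V}/W_6)=20-r=5$, and using that $\cove_{15,7,1}$ is irreducible, we conclude $\widetilde{\mathcal{P}}$ is birational onto $\cove_{15,7,1}$.

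Finally I would carry out the degree comparison. The projection $\cove_{15,7,1}\dashrightarrow\mathcal{M}_{15,7,1}$ is an $\Or(D_{L_-})$-covering, and for $(g,k)=(0,4)$, $r=15$, the anti-invariant lattice is $L_-=U\oplus U(2)\oplus A_1^3$ (so $D_{L_-}$ has length $a=7$), whence $\Or(D_{L_-})\simeq\Or^-(6,2)\simeq W_6$ by \cite{M-S}. Since $W_6$ acts on $\mathcal{U}^6$, and hence on $\mathcal{V}$, almost freely (a general cubic surface has no automorphism), the quotient map $\mathcal{V}\to\mathcal{V}/W_6$ has degree $|W_6|=|\Or^-(6,2)|$, matching $|\Or(D_{L_-})|$; therefore the induced map $\mathcal{V}/W_6\to\mathcal{M}_{15,7,1}$ is birational. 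The main obstacle I anticipate is the geometric verification in the middle step --- specifically, confirming that the cuspidal curve $C_p$ and the configuration $(p,q)$ are genuinely reconstructible from $(X,\iota)$ together with the marking, i.e.\ that no extra automorphism of the reconstructed marked cubic surface permutes the relevant branches or the distinguished points; this requires checking that the $E_7$-graph in Figure \ref{dual graph} rigidifies the branches at $p$ (via the irreducible decomposition of $C_p+C_q$ and the intersection multiplicity $(C_p.C_q)_p=3$) exactly as in the $D_6$-case of \S\ref{ssec:(13,7)}.
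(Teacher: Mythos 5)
Your overall strategy is exactly the paper's: translate $(\mathbf{p},p,q)$ via the blow-down $\pi$ into the plane sextic $\pi(C_p)+\pi(C_q)$, whose six nodes are ordered by $\mathbf{p}$ and whose remaining singular points (the $E_7$-point $\pi(p)$ and the node $\pi(q)$) are a priori distinguished by type and by the irreducible decomposition; identify $\mathcal{V}$ birationally with the ${\PGL}_3$-quotient of the space of such labelled sextics; apply the recipe of \S\ref{ssec: recipe} to obtain a birational lift to ${\cove}_{15,7,1}$; and finish by comparing $|W_6|$ with $|{\Or}(D_{L_-})|$, using that $W_6$ acts almost freely on $\mathcal{V}$.

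There is, however, a concrete error in your degree comparison. The anti-invariant lattice for $\mathcal{M}_{15,7,1}$ is \emph{not} $U\oplus U(2)\oplus A_1^3$: that lattice has discriminant group of length $5$, so it is the anti-invariant lattice for $\mathcal{M}_{15,5,1}$, and its discriminant orthogonal group is ${\Or}^-(4,2)\simeq\frak{S}_5$ of order $120$, far from $|W_6|=51840$. Had you carried out the count with that lattice, the degrees would not match and you could not conclude degree $1$. The correct lattice is $L_-\simeq\langle2\rangle^2\oplus\langle-2\rangle^5$ (i.e.\ $I_{2,5}(2)$), whose discriminant group is $({\Z}/2{\Z})^7$ of length $a=7$; for this lattice $|{\Or}(D_{L_-})|=|{\Or}^-(6,2)|=|W_6|$, which is what closes the argument. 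A second, minor slip: generic injectivity of the lift $\widetilde{\mathcal{P}}$ should force two points of $\mathcal{V}$ with the same $\widetilde{\mathcal{P}}$-image to be \emph{equal}, not merely $W_6$-equivalent; the $W_6$-orbits only appear when comparing fibres of the unlifted map $\mathcal{P}$, since the $W_6$-action changes the marking and hence the lift. With these two corrections your argument coincides with the paper's proof.
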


\begin{proof}
This is similar to the arguments in \S \ref{sec:g=1 (1)}, so we shall be brief. 
The natural blow-down $\pi\colon\mathcal{Y}^6_{\mathbf{p}}\to{\proj}^2$ determined by $\mathbf{p}$ 
maps $(C_p, C_q)$ to the irreducible cubic pair $(\pi(C_p), \pi(C_q))$ such that 
(i) $\pi(C_p)$ is cuspidal, 
(ii) $\pi(C_q)$ is nodal and intersects with $\pi(C_p)$ at its cusp with multiplicity $3$, and 
(iii) $\pi(C_p)$ and $\pi(C_q)$ are transverse outside the cusp of $\pi(C_p)$. 
The six nodes of $\pi(C_p)+\pi(C_q)$ are the blown-up points of $\pi$, 
and hence naturally ordered by $\mathbf{p}$. 
One recovers $(\mathbf{p}, p, q)$ from the labelled sextic $\pi(C_p)+\pi(C_q)$ by blowing-up the ordered nodes. 
Thus, if $\widetilde{U}\subset|{\Oplane}(3)|^2\times({\proj}^2)^6$ is the space of such labelled sextics, 
$\mathcal{V}$ is birationally identified with $\widetilde{U}/{\PGL}_3$. 
Then we can apply our familiar recipe to $\widetilde{U}$ to see that 
$\mathcal{P}$ lifts to a birational map $\mathcal{V}\to{\cove}_{15,7,1}$. 
The map $\mathcal{P}$ is clearly $W_6$-invariant. 
We also note that $W_6$ acts on $\mathcal{V}$ almost freely. 
On the other hand, ${\cove}_{15,7,1}\to\mathcal{M}_{15,7,1}$ has the Galois group ${\Or}(D_{L_-})$ 
for the lattice $L_-=\langle2\rangle^2\oplus\langle-2\rangle^5$. 
By \cite{M-S} we have $|{\Or}(D_{L_-})|=|{\Or}^-(6, 2)|=|W_6|$. 
Therefore the induced map $\mathcal{V}/W_6\to\mathcal{M}_{15,7,1}$ has degree $1$. 
\end{proof}

Let $V\subset|{\Ospace}(3)|\times({\proj}^3)^2$ be the locus of triplets $(Y, p, q)$ such that 
(i) $Y$ is a smooth cubic surface containing $p$ and $q$, 
(ii) the tangent plane section $T_pY|_Y$ is irreducible and cuspidal, and 
(iii) the tangent plane section $T_qY|_Y$ is irreducible, nodal, and intersects with $T_pY|_Y$ at $p$ with multiplicity $3$. 
Getting rid of markings and considering anti-canonical models, 
we see that $\mathcal{V}/W_6$ is birationally identified with $V/{\PGL}_4$. 

\begin{proposition}\label{rational (15,7)}
The quotient $V/{\PGL}_4$ is rational. 
Therefore $\mathcal{M}_{15,7,1}$ is rational. 
\end{proposition}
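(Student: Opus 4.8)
The plan is to reduce the quotient $V/{\PGL}_4$ to a question in classical invariant theory via the slice method, exactly as in the proofs of Propositions \ref{rational (13,7)}, \ref{rational (14,6,1)}, and \ref{rational (15,5,1)}. First I would consider the ${\PGL}_4$-equivariant morphism $\varphi : V \to ({\proj}^3)^2$, $(Y, p, q) \mapsto (p, q)$, sending a triplet to the pair of marked points. The group ${\PGL}_4$ acts almost transitively on ${\proj}^3 \times {\proj}^3$ (the open orbit consisting of pairs of distinct points), and the stabilizer $G$ of a general pair $(p, q)$ is a connected solvable group — concretely, the group of projective transformations fixing $p$, fixing $q$, and preserving the line $\overline{pq}$, which is an extension of $({\C}^{\times})^2$ by a unipotent group, hence connected solvable.

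Next I would identify the general fiber $\varphi^{-1}(p, q)$. By the slice method (Proposition \ref{slice}) we get $V/{\PGL}_4 \sim \varphi^{-1}(p, q)/G$. The fiber is an open subset of the space of cubic surfaces $Y$ satisfying the incidence and tangency conditions at the two fixed points $p, q$: namely $p, q \in Y$, the tangent plane at $p$ cuts out a cuspidal cubic on $Y$, and the tangent plane at $q$ cuts out a nodal cubic meeting the first with multiplicity $3$ at $p$. The key observation is that, after fixing $p$ and $q$ (and hence the line $\overline{pq}$ and a coordinate system adapted to them), these conditions are \emph{linear} in the coefficients of the defining cubic form of $Y$: vanishing at $p$ and $q$ are linear; the tangent plane $T_pY$ is determined linearly by the partial derivatives at $p$; and the cusp/node and intersection-multiplicity conditions, once one has chosen the tangent planes, translate into the vanishing of further coefficients of the restricted cubic forms, which are again linear. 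Thus $\varphi^{-1}(p, q)$ is an open subset of the projectivization ${\proj}W$ of a linear subspace $W \subset H^0({\Ospace}(3))$, and $G$ acts linearly on $W$ since $G \subset {\PGL}_4$ acts on $H^0({\Ospace}(3))$ — more precisely one uses that $W$ is $G$-invariant and lifts the $G$-action to a linear action on the vector space $W$ (possibly after passing to a finite cover of $G$ acting through a character, but $G$ being connected one can arrange a genuine linear action, as in the cited propositions). Then Miyata's theorem \ref{Miyata} applies: ${\proj}W/G$ is rational, hence $V/{\PGL}_4$ is rational, and by Proposition \ref{birat (15,7)} so is $\mathcal{M}_{15,7,1}$.

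The main obstacle I anticipate is the bookkeeping needed to verify that the fiber $\varphi^{-1}(p, q)$ is genuinely an open subset of a \emph{linear} subsystem of $|{\Ospace}(3)|$ of the \emph{expected} dimension, and that it is nonempty. The delicate point is the condition "$T_qY|_Y$ intersects $T_pY|_Y$ at $p$ with multiplicity $3$": one must choose a good normalization (say $p = [1,0,0,0]$, $q = [0,1,0,0]$, with $\overline{pq}$ the corresponding coordinate line, and the tangent planes expressed via the linear parts of the cubic at $p$ and at $q$), then expand the cubic form and read off which coefficients are forced to vanish and which remain free, checking that the resulting count of free parameters matches $\dim V - \dim({\proj}^3)^2 = \dim(V/{\PGL}_4) = \dim \mathcal{M}_{15,7,1} = 5$. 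One also needs that a \emph{general} such cubic surface $Y$ is smooth and that $T_pY|_Y$ is actually cuspidal (not worse) and $T_qY|_Y$ actually nodal — these are open conditions, so it suffices to exhibit one example, which can be done by the same explicit normalization. Granting this linearity-and-dimension verification, the rationality is immediate from Miyata's theorem, and no further input is needed.
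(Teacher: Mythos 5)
Your overall strategy (slice method over a homogeneous base, then Miyata's theorem) is the right one and is what the paper does, but your choice of base is too small, and this creates a real gap. You slice over $({\proj}^3)^2$ via $(Y,p,q)\mapsto(p,q)$ and claim the fiber is an open subset of a linear system. It is not. Fix $p=[1,0,0,0]$ and write the cubic as $F=x_0^2L+x_0Q+C$ with $L,Q,C$ forms in $x_1,x_2,x_3$; the tangent plane at $p$ is $\{L=0\}$ and the tangent cone at $p$ of the section $T_pY|_Y$ is the binary quadratic form $Q|_{L=0}$. The condition that $T_pY|_Y$ be \emph{cuspidal} at $p$ is that this binary form have a double root, i.e., that its discriminant vanish --- a condition quadratic in the coefficients of $Q$ and depending nonlinearly on $L$, hence not linear in the coefficients of $F$. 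The same problem affects the multiplicity-$3$ condition, which amounts to the cuspidal tangent of $T_pY|_Y$ coinciding with the line $T_pY\cap T_qY$, again a line that varies with $Y$. Your own phrase ``once one has chosen the tangent planes'' gives the game away: the conditions only become linear after the tangent planes are fixed, but in your slice they are not fixed. So Miyata's theorem does not apply to $\varphi^{-1}(p,q)/G$ as you have set it up.

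The paper's fix is precisely to enlarge the base so that the tangent planes are part of the data: it maps $V$ to the space $W\subset({\proj}^3)^2\times|{\Ospace}(1)|^2$ of quadruples $(p,q,H_1,H_2)$ with $p\in H_1\cap H_2$ and $q\in H_2$, via $(Y,p,q)\mapsto(p,q,T_pY,T_qY)$ (noting that $p\in T_qY$ and that the common tangent line at $p$ is $T_pY\cap T_qY$). Over a fixed quadruple all the defining conditions --- vanishing at $p$ and $q$, prescribed tangent planes, tangent cone at $p$ proportional to $\ell^2$ for the \emph{fixed} linear form $\ell$ cutting out $H_1\cap H_2$, nodality at $q$ being open --- are linear or open, so the fiber is an open subset of a linear system (of the expected dimension $11$); ${\SL}_4$ still acts almost transitively on $W$ with connected solvable stabilizer, and Miyata's theorem applies. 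Your argument goes through once you make this adjustment.
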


\begin{proof}
For $(Y, p, q)\in V$, the point $p$ is contained in both tangent planes $T_pY$, $T_qY$. 
Moreover, the common tangent line of $T_pY|_Y$ and $T_qY|_Y$ at $p$ is given by $T_pY\cap T_qY$. 
Thus, if we consider the space $W\subset({\proj}^3)^2\times|{\Ospace}(1)|^2$ of quadruplets $(p_1, p_2, H_1, H_2)$ 
such that $p_1\in H_1\cap H_2$ and $p_2\in H_2$, 
we have the ${\PGL}_4$-equivariant map 
\begin{equation*}
\varphi : V\to W, \qquad (Y, p, q)\mapsto(p, q, T_pY, T_qY). 
\end{equation*}
By our second remark, the $\varphi$-fiber over a general $(p_1, p_2, H_1, H_2)\in W$ is 
the locus $U\subset|{\Ospace}(3)|$ of smooth cubic surfaces $Y$ such that
(i) the plane cubic $Y|_{H_2}$ is nodal at $p_2$ and passes through $p_1$ with tangent line $H_1\cap H_2$, and 
(ii) the plane cubic $Y|_{H_1}$ is cuspidal at $p_1$ with tangent line $H_1\cap H_2$. 
Then $U$ is an open set of a linear system. 
It is straightforward to see that ${\SL}_4$ acts on $W$ almost transitively, 
with the stabilizer $G\subset{\SL}_4$ of $(p_1, p_2, H_1, H_2)$ being connected and solvable. 
By the slice method we have 
\begin{equation*}
V/{\SL}_4 \sim U/G, 
\end{equation*}
and the quotient $U/G$ is rational by Miyata's theorem. 
\end{proof}

\subsection{The rationality of $\mathcal{M}_{16,6,1}$}\label{ssec:(16,6)}

Let $V\subset|{\Oplane}(3)|$ be the variety of cuspidal plane cubics. 
Let $U\subset V\times|{\Oplane}(2)|$ be the open set of pairs $(C, Q)$ such that 
$Q$ is smooth and transverse to $C+L$, where $L$ is the tangent line of $C$ at the cusp. 
Then the sextic $B=C+Q+L$ has an $E_7$-singularity at the cusp of $C$, 
eight nodes at $C\cap Q$ and $L\cap Q$, and no other singularity.  
The associated 2-elementary $K3$ surface has invariant $(g, k)=(0, 5)$, 
and we obtain a period map $\mathcal{P}\colon U/{\PGL}_3\to\mathcal{M}_{16,6,1}$. 

\begin{proposition}\label{birat (16,6)}
The map $\mathcal{P}$ is birational. 
\end{proposition}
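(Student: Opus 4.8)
The strategy is the one used repeatedly in this section (cf. Propositions \ref{birat (15,5)}, \ref{birat (16,4)&(17,3)}, \ref{birat (15,7)}): construct a labelled cover $\widetilde{U}\to U$, lift $\mathcal{P}$ to a generically injective map into the marked moduli space $\cove_{16,6,1}$ via the recipe of \S\ref{ssec: recipe}, and then compare Galois degrees. First I would fix the reference lattice: the anti-invariant lattice is $L_-=U\oplus U(2)\oplus A_1^{6}$ (invariant $(r,a,\delta)=(16,6,1)$), and one computes $L_+\simeq U\oplus E_7\oplus A_1^{6}$ from the $E_7$-point and the eight nodes of $B$ by Lemma \ref{L_+ and A-D-E}, exactly in the manner of Example \ref{ex:4}. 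The marking data consist of a natural basis of $NS_{{\proj}^2}$, the root lattice $\Lambda_p$ attached to the $E_7$-singularity (with its canonical root basis, no branch-ordering ambiguity since $E_7$-graphs admit a unique identification), and a labelling of the eight nodes; the two lines $C\cap Q$-component versus $L$ are distinguished by the irreducible decomposition of $B$ and by the intersection numbers with $C$, and the $E_7$-point is evidently distinct from the nodes.

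\textbf{Key steps.} (1) Define $\widetilde{U}\subset U\times({\proj}^2)^{8}$ as the locus of $(C,Q,p_1,\dots,p_8)$ with $\{p_i\}=(C\cup L)\cap Q$; this is an $\mathfrak{S}_8$-cover of $U$. (2) For $(C,Q,\dots)\in\widetilde U$ let $(X,\iota)=\mathcal{P}(C+Q+L)$ and $f\colon X\to{\proj}^2$ the right covering map; send $h\mapsto[f^*{\Oplane}(1)]$, $e_i\mapsto[f^{-1}(p_i)]$, the $E_7$-root basis to the classes of $(-2)$-curves over the cusp, and the $F_j$ to the components of $X^\iota$ (the strict transform $F_0$ of $C$, i.e.\ the rational curve, the strict transforms of $L$ and $Q$, and the three $(-2)$-curves inside the $E_7$-chain that are components of $B'$). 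This gives a lattice marking $j\colon L_+\to L_+(X,\iota)$ and hence a lift $\tilde p\colon\widetilde U\to\cove_{16,6,1}$, which is a morphism by Borel's theorem. (3) Check $\tilde p$ is ${\PGL}_3$-invariant and that its fibers are ${\PGL}_3$-orbits: a Hodge isometry respecting the markings preserves ample cones by Lemma \ref{ample class}, yields $\varphi\colon X\to X'$ by Torelli, and then $\psi\in{\PGL}_3$ with $\psi\circ f=f'\circ\varphi$ by Lemma \ref{covering map & LB} applied to $L={\Oplane}(1)$; since $\psi$ acts trivially on $NS_{{\proj}^2}$ and matches branch curves and labelled nodes, $\psi$ is trivial. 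Since ${\PGL}_3$ acts almost freely on $U$ and ${\dim}(\widetilde U/{\PGL}_3)={\dim}\cove_{16,6,1}$, the lift $\widetilde U/{\PGL}_3\dashrightarrow\cove_{16,6,1}$ is birational. (4) Compare degrees: the projection $\widetilde U/{\PGL}_3\to U/{\PGL}_3$ has degree $|\mathfrak{S}_8|=8!$, and $\cove_{16,6,1}\to\mathcal{M}_{16,6,1}$ has degree $|{\Or}(D_{L_-})|=|{\Or}^+(6,2)|=8!$ by \cite{M-S}. Hence $\mathcal{P}$ has degree $1$.

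\textbf{Anticipated obstacle.} The genuinely delicate point is verifying that $L_+(X,\iota)$ is exactly the overlattice generated by $f^*NS_{{\proj}^2}$, $\Lambda_p$ (the $E_7$-lattice from the cusp), and the classes $[F_j]$ of all components of $X^\iota$ — in other words, pinning down the precise overlattice presentation so that $L_+\simeq U\oplus E_7\oplus A_1^6$ and confirming its parity is $\delta=1$. This requires the computation in Lemma \ref{L_+ and A-D-E} together with the $\delta=1$ criterion: here $B$ contains the three components $C$, $Q$, $L$ pairwise meeting in nodes of $B$ (the triangle $C\cap Q$, $Q\cap L$, and a point of $C\cap L$ — but $C\cap L$ at the cusp is the $E_7$-point, so one must instead argue via Lemma \ref{delta=1}$(1)$ using a node in each of $C\cap Q$, $Q\cap L$ and a node in $C\cap L$ away from the cusp if one exists, or otherwise blow up the $E_7$-point and reduce as in Lemma \ref{delta=1}$(2)$). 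Getting this $\delta=1$ verification and the exact isometry type of $L_-$ right is where care is needed; once that is in hand, the rest follows the established template. A minor additional check is that the eight points $(C\cup L)\cap Q$ are genuinely distinct nodes and that the singularities of $B_u$ form étale covers of $U$, so that Condition \ref{genericity assumption} and the hypotheses $(\mathrm{i})$–$(\mathrm{iii})$ of \S\ref{ssec: recipe} hold.
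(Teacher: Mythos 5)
Your overall template (labelled cover, lift to $\cove_{16,6,1}$ via the recipe of \S\ref{ssec: recipe}, degree comparison) is the right one, but two of the concrete inputs are wrong, and your conclusion of degree $1$ only survives because the two errors happen to cancel numerically. First, the cover: the eight nodes of $B=C+Q+L$ are not interchangeable, since six of them lie on $C\cap Q$ and two on $L\cap Q$, and $C$, $Q$, $L$ are distinct irreducible components of $B$. A labelling that mixes a node of $C\cap Q$ with a node of $L\cap Q$ does not induce a marking of $L_+(X,\iota)$ by the fixed reference lattice $L_+$: the glue vectors $f_j$ attached to the components of $X^\iota$ are written in terms of precisely those $e_i$ lying on the given component, so the assignment $e_i\mapsto[f^{-1}(p_i)]$, $f_j\mapsto[F_j]$ fails to be an isometry on the extra sheets. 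Hence $\tilde p$ is only defined on the $\frak{S}_6\times\frak{S}_2$-cover (six nodes $C\cap Q$ and two nodes $L\cap Q$ labelled independently), not on an $\frak{S}_8$-cover; this is exactly the phenomenon already visible in Example \ref{ex:3} and in the proof for $\mathcal{M}_{10,6,1}$.

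Second, the lattices: for $(r,a,\delta)=(16,6,1)$ the anti-invariant lattice has signature $(2,4)$ and is $L_-=U(2)^2\oplus A_1^2$; your $U\oplus U(2)\oplus A_1^6$ has rank $10$ (it is the $L_-$ of $\mathcal{M}_{12,8,1}$), and your $L_+\simeq U\oplus E_7\oplus A_1^6$ has rank $15$ rather than $16$. Consequently $|{\Or}(D_{L_-})|=2\cdot|{\rm Sp}(4,2)|=2\cdot6!=1440$, not $|{\Or}^+(6,2)|=8!$. The correct comparison is $\deg\bigl(\widetilde U/{\PGL}_3\to U/{\PGL}_3\bigr)=|\frak{S}_6\times\frak{S}_2|=2\cdot6!$ against $|{\Or}(D_{L_-})|=2\cdot6!$, which gives $\deg(\mathcal{P})=1$. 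With your $\frak{S}_8$-cover and the correct $|{\Or}(D_{L_-})|$ the quotient would be $1440/8!<1$, an absurdity that signals the lift is not well defined there. Your worry about $\delta=1$ is legitimate but secondary (Lemma \ref{delta=1}$(1)$ does not apply verbatim since $C\cap L$ is only the cusp; one uses the triangle formed after resolving, or a direct computation), and is not where the proof actually turns.
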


\begin{proof}
We label the six nodes $C\cap Q$ and the two nodes $L\cap Q$ independently. 
This defines an $\frak{S}_6\times\frak{S}_2$-cover $\widetilde{U}$ of $U$. 
By the familiar method we see that $\mathcal{P}$ lifts to a birational map 
$\widetilde{U}/{\PGL}_3\dashrightarrow{\cove}_{16,6,1}$. 
The variety ${\cove}_{16,6,1}$ is an ${\Or}(D_{L_-})$-cover of $\mathcal{M}_{16,6,1}$ 
for the lattice $L_-=U(2)^2\oplus A_1^2$. 
By \cite{M-S} we calculate $|{\Or}(D_{L_-})|=2\cdot|{\rm Sp}(4, 2)|=2\cdot6!$, 
which implies that ${\rm deg}(\mathcal{P})=1$. 
\end{proof}

\begin{proposition}\label{rational (16,6)}
The quotient $U/{\PGL}_3$ is rational. 
Hence $\mathcal{M}_{16,6,1}$ is rational. 
\end{proposition}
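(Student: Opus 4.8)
The plan is to prove rationality of $U/{\PGL}_3$ by the slice method applied to a suitable ${\PGL}_3$-equivariant projection, followed by Miyata's theorem, exactly as in the proofs of Propositions \ref{rational (15,5,1)}, \ref{rational (15,7)}, and the other recent cases. The cuspidal plane cubic $C$ is rigid: all cuspidal cubics are projectively equivalent, so the datum $(C,L)$ (with $L$ the cuspidal tangent) depends only on the flag (cusp point, cuspidal tangent line). Hence I would first choose the ${\PGL}_3$-equivariant map that records this flag together with the conic $Q$, and try to land on a variety on which ${\PGL}_3$ acts almost transitively with connected solvable stabilizer.

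Concretely: let $W$ be the variety of pairs (complete flag $p\in L$ in ${\proj}^2$) — equivalently an open set of the incidence variety in ${\proj}^2\times({\proj}^2)^{\vee}$ — and consider the ${\PGL}_3$-equivariant morphism $\varphi\colon U\to W$ sending $(C,Q)$ to $(\text{cusp of }C,\ \text{cuspidal tangent of }C)$. The group ${\PGL}_3$ acts transitively on $W$, and the stabilizer $B$ of a flag $(p,L)$ is the Borel subgroup (the upper triangular matrices modulo scalars), which is connected and solvable. The fibre $\varphi^{-1}(p,L)$ is the set of pairs $(C,Q)$ where $C$ ranges over the $B$-orbit closure of a fixed cuspidal cubic with cusp $p$ and cuspidal tangent $L$ — but since all such cubics are a single ${\PGL}_3$-orbit and hence, after fixing the flag, a single $B$-orbit (the cuspidal cubic $y^2z=x^3$ has stabilizer in ${\PGL}_3$ of dimension $1$, which lies in $B$, so the $B$-orbit of it is all cuspidal cubics with that flag), $C$ is essentially determined and the fibre is an open subset of the space of conics $Q$, i.e.\ an open set of the linear system $|{\Oplane}(2)|\cong{\proj}^5$, which is a linear representation of $B$ (one should check $|{\Oplane}(2)|$, or rather $H^0({\Oplane}(2))$, carries a genuine $B$-linearization — it does, being a ${\PGL}_3$-module whose restriction to the Borel is a representation; alternatively twist by a line bundle as in the no-name arguments). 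By the slice method (Proposition \ref{slice}) we get $U/{\PGL}_3\sim \varphi^{-1}(p,L)/B$, and by Miyata's theorem \ref{Miyata} the quotient of this linear representation of the connected solvable group $B$ is rational. Combined with Proposition \ref{birat (16,6)} this gives that $\mathcal{M}_{16,6,1}$ is rational.

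The one point requiring care — the main (modest) obstacle — is the bookkeeping of what the $\varphi$-fibre really is and that it is $B$-equivariantly an open subvariety of a linear representation rather than merely a projective space. One must make sure the cubic component $C$ is genuinely pinned down by the flag (using the transitivity of the $1$-dimensional stabilizer of a cuspidal cubic acting within the Borel), so that the only remaining modulus in the fibre is the conic $Q$; and one must produce an honest $B$-linearization on $H^0({\Oplane}(2))$, which is automatic since ${\PGL}_3$ acts linearly on $H^0({\Oplane}(2))$ and restriction to $B\subset{\PGL}_3$ preserves this. Once these are in place the argument is routine. I would write:

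\begin{proof}
Consider the ${\PGL}_3$-equivariant morphism
\begin{equation*}
\varphi : U \to W, \qquad (C, Q)\mapsto (p, L),
\end{equation*}
where $p$ is the cusp of $C$ and $L$ is the tangent line of $C$ at $p$, and $W$ denotes the incidence variety of flags $p\in L$ in ${\proj}^2$.
The group ${\PGL}_3$ acts transitively on $W$ with the stabilizer $G$ of a flag $(p, L)$ being a Borel subgroup, which is connected and solvable.
Since any two cuspidal plane cubics with a fixed cusp and cuspidal tangent are $G$-equivalent,
the fibre $\varphi^{-1}(p, L)$ is identified with an open subset of the linear system $|{\Oplane}(2)|$,
which is a linear representation of $G$ via the ${\PGL}_3$-action on $H^0({\Oplane}(2))$.
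By the slice method \ref{slice} we have $U/{\PGL}_3\sim\varphi^{-1}(p, L)/G$,
and this is rational by Miyata's theorem \ref{Miyata}.
Together with Proposition \ref{birat (16,6)}, this shows that $\mathcal{M}_{16,6,1}$ is rational.
\end{proof}
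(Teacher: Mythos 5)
There is a genuine gap in your identification of the $\varphi$-fibre, and it is visible already from a dimension count. The fibre $\varphi^{-1}(p,L)$ consists of all pairs $(C,Q)$ where $C$ is a cuspidal cubic with cusp $p$ and cuspidal tangent $L$ and $Q$ is an admissible conic. The cuspidal cubics with a prescribed flag do form a single $B$-orbit, as you say, but that orbit is $4$-dimensional, not a point: $V$ is a single ${\PGL}_3$-orbit of dimension $8-1=7$ (the stabilizer of $y^2z=x^3$ is ${\C}^{\times}$), so the fibres of $V\to W$ over the $3$-dimensional flag variety $W$ have dimension $4$. Being a single $B$-orbit does not mean $C$ is ``essentially determined'': the slice method quotients the fibre by $B$ only \emph{after} you have correctly identified the fibre, and here the fibre is (an open subset of) $(B\cdot C_0)\times|{\Oplane}(2)|$, of dimension $9$, not an open subset of $|{\Oplane}(2)|$. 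Your version would give $U/{\PGL}_3\sim|{\Oplane}(2)|/B$, which is essentially a point since $\dim B=5=\dim|{\Oplane}(2)|$ and a general conic has finite stabilizer in a Borel, whereas $U/{\PGL}_3$ has dimension $12-8=4$.

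The repair is easy and leads to the paper's argument. Either slice a second time, along the projection $\varphi^{-1}(p,L)\to B\cdot C_0$ (on which $B$ acts transitively with stabilizer ${\C}^{\times}$, the stabilizer of a fixed cuspidal cubic), or — more directly — slice along the projection $U\to V$ in the first place: ${\PGL}_3$ acts transitively on $V$ with stabilizer $G\simeq{\C}^{\times}$ of a fixed $C\in V$, and the fibre over $C$ is an open subset of $|{\Oplane}(2)|$. Either way one lands on $|{\Oplane}(2)|/{\C}^{\times}$, a $4$-dimensional quotient of a projective space by a torus, which is rational (e.g.\ by Miyata). Your instinct to exploit the rigidity of the cuspidal cubic and reduce to a solvable stabilizer is exactly right; the only flaw is that you collapsed the $4$-parameter family of cubics with a given flag before the group action entitled you to.
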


\begin{proof}
The slice method for the projection $U\to V$ shows that 
$U/{\PGL}_3\sim |{\Oplane}(2)|/G$, 
where $G$ is the stabilizer of a $C\in V$. 
Since $G\simeq{\C}^{\times}$, 
the quotient $|{\Oplane}(2)|/G$ is clearly rational. 
\end{proof}

\begin{remark}
General members of $\mathcal{M}_{16,6,1}$ can also be obtained from six general lines on ${\proj}^2$. 
Matsumoto-Sasaki-Yoshida \cite{M-S-Y} studied this sextic model, 
and showed that the period map is the quotient map by the association involution. 
Therefore $\mathcal{M}_{16,6,1}$ is birational to the moduli of double-sixers (cf. \cite{D-O}), 
which is rational by Coble \cite{Co} (see \cite{B-V} for a proof by Dolgachev). 
This is an alternative approach for the rationality of $\mathcal{M}_{16,6,1}$. 
Conversely, this section may offer another proof of the result of Coble. 
\end{remark}

\subsection{The rationality of $\mathcal{M}_{17,5,1}$}\label{ssec:(17,5)}

Let $V$ be the space of cuspidal plane cubics and 
$U\subset V\times|{\Oplane}(2)|$ be the locus of pairs $(C, Q)$ such that 
$Q$ is the union of distinct lines and transverse to $C+L$, where $L$ is the tangent line of $C$ at the cusp. 
The sextic $C+Q+L$ has an $E_7$-singularity at the cusp of $C$, 
nine nodes at $C\cap Q$, $L\cap Q$, and ${\rm Sing}(Q)$, 
and no other singularity. 
Hence the associated 2-elementary $K3$ surface has invariant $(g, k)=(0, 6)$, 
and we obtain a period map $\mathcal{P}\colon U/{\PGL}_3\to\mathcal{M}_{17,5,1}$.

\begin{proposition}\label{birat (17,5)}
The map $\mathcal{P}$ is birational. 
\end{proposition}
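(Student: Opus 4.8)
The plan is to imitate the proof of Proposition~\ref{birat (16,6)}, using the recipe of \S\ref{ssec: recipe}. Write $Q=Q_1+Q_2$ for the two lines constituting $Q$ and let $L$ be the tangent line of $C$ at its cusp, so that $B=C+Q_1+Q_2+L$ has an $E_7$-point at the cusp of $C$ and nine nodes: the three points of $C\cap Q_i$ for $i=1,2$, the two points $L\cap Q_i$, and the point $Q_1\cap Q_2$. In $B$ the cuspidal cubic $C$ is singled out by its degree and by carrying the $E_7$-point, and $L$ is singled out among the three lines by meeting $C$ only at that point; hence the sole symmetry of the configuration is the interchange of $Q_1$ and $Q_2$.

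First I would introduce the $\frak{S}_2\ltimes(\frak{S}_3\times\frak{S}_3)$-cover $\widetilde{U}\to U$ whose fiber over $(C,Q)$ records an ordering of $\{Q_1,Q_2\}$ together with an ordering of the three points of $C\cap Q_i$ for each $i$; the remaining nodes $L\cap Q_1$, $L\cap Q_2$ and $Q_1\cap Q_2$ of $B$ are then canonically labelled. With $Y=\proj^2$, whose Picard lattice is spanned by $f^{\ast}\Oplane(1)$, Lemma~\ref{L_+ and A-D-E} turns this labelling --- together with the resolution of the $E_7$-point read off from Figure~\ref{dual graph} --- into a marking of $L_+(X,\iota)$ for the associated $2$-elementary $K3$ surface $(X,\iota)$, and hence into a $\PGL_3$-invariant lift $\tilde p\colon\widetilde{U}\to\cove_{17,5,1}$ of the period map.

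Next I would check that the induced map $\lift\colon\widetilde{U}/\PGL_3\dashrightarrow\cove_{17,5,1}$ is birational. Generic injectivity is the usual Torelli argument: two points of $\widetilde{U}$ with equal period give a Hodge isometry of the associated $K3$ surfaces which, by Lemma~\ref{ample class}, preserves the ample cones, hence comes from an isomorphism commuting with the involutions; Lemma~\ref{covering map & LB} (valid since $Y=\proj^2$) pushes this down to an element of $\PGL_3$ carrying one labelled sextic to the other, so the fibers of $\tilde p$ are $\PGL_3$-orbits. Since the cuspidal plane cubics form a single $7$-dimensional orbit and the pairs of distinct lines a $4$-dimensional family, $\dim(\widetilde{U}/\PGL_3)=11-8=3=\dim\mathcal{M}_{17,5,1}$, and as $\cove_{17,5,1}$ is irreducible the map $\lift$ is birational.

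Finally I would compare the two coverings. The group $\PGL_3$ acts almost freely on $U$ --- a general cuspidal cubic has stabilizer $\C^{\times}$, which is broken by two general lines --- so $\widetilde{U}/\PGL_3\dashrightarrow U/\PGL_3$ has degree $|\frak{S}_2\ltimes(\frak{S}_3)^2|=72$, whereas $\cove_{17,5,1}\dashrightarrow\mathcal{M}_{17,5,1}$ is an $\Or(D_{L_-})$-covering for the anti-invariant lattice $L_-\simeq U(2)^2\oplus A_1$, and $|\Or(D_{L_-})|=72$ by \cite{M-S}. Equality of degrees then yields $\deg(\mathcal{P})=1$. The step demanding the most care is this last bookkeeping: pinning down the precise reference lattice forced by the $E_7$-resolution so that the labelling carried by $\widetilde{U}$ is complete and ``reasonable'' in the sense of \S\ref{ssec: recipe}, and verifying $|\Or(D_{L_-})|=72$; the remainder is entirely parallel to Proposition~\ref{birat (16,6)}.
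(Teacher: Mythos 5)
Your proposal is correct and follows essentially the same route as the paper: the same $\frak{S}_2\ltimes(\frak{S}_3)^2$-cover $\widetilde{U}$ labelling the six nodes $C\cap Q$ compatibly with the two components of $Q$, the same Torelli/recipe argument for birationality of the lift, and the same degree comparison $|\frak{S}_2\ltimes(\frak{S}_3)^2|=72=|{\Or}^+(4,2)|=|{\Or}(D_{L_-})|$ for $L_-=U(2)^2\oplus A_1$. The only difference is that you spell out details (the node count, the almost-free action, the dimension count) that the paper leaves to the phrase ``as before.''
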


\begin{proof}
Let $\widetilde{U}\subset U\times({\proj}^2)^6$ be the locus of $(C, Q, p_1,\cdots, p_6)$ 
such that $C\cap Q=\{p_i\}_{i=1}^{6}$ and that $p_1, p_2, p_3$ belong to the same component of $Q$. 
By $\widetilde{U}$, the six nodes $C\cap Q$ and the two components of $Q$ are 
labeled compatibly. 
As before, $\mathcal{P}$ lifts to a birational map $\widetilde{U}/{\PGL}_3 \to {\cove}_{17,5,1}$. 
Since ${\PGL}_3$ acts on $U$ almost freely, 
$\widetilde{U}/{\PGL}_3$ is an $\frak{S}_2\ltimes(\frak{S}_3)^2$-cover of $U/{\PGL}_3$. 
On the other hand, ${\cove}_{17,5,1}$ is an ${\Or}(D_{L_-})$-cover of $\mathcal{M}_{17,5,1}$ 
for the lattice $L_-=U(2)^2\oplus A_1$.  
By \cite{M-S} we calculate $|{\Or}(D_{L_-})|=|{\Or}^+(4, 2)|=2\cdot(3!)^2$. 
\end{proof}

\begin{proposition}\label{rational (17,5)}
The quotient $U/{\PGL}_3$ is rational. 
Hence $\mathcal{M}_{17,5,1}$ is rational. 
\end{proposition}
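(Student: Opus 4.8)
The plan is to follow the same pattern used in the previous rationality proofs for quotients of linear systems on $\proj^2$ with a cuspidal cubic fixed, namely the slice method applied to a suitable projection whose image carries an almost transitive $\PGL_3$-action with connected solvable stabilizer. First I would recall that $V\subset|{\Oplane}(3)|$ is the locus of cuspidal cubics, and that $\PGL_3$ acts on $V$ almost transitively: a general cuspidal cubic is projectively equivalent to a fixed normal form (e.g. $ZY^2=X^3$), so $V$ has an open $\PGL_3$-orbit. The stabilizer $G_0\subset\PGL_3$ of a fixed cuspidal cubic $C$ is isomorphic to $\C^{\times}$ (the one-parameter group acting on the normal form), which is connected and solvable. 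Hence the natural projection $\varphi\colon U\to V$, $(C,Q)\mapsto C$, is $\PGL_3$-equivariant with almost transitive action on the base, and by the slice method (Proposition \ref{slice}) we get $U/\PGL_3\sim\varphi^{-1}(C)/G_0$.

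Next I would identify the fiber $\varphi^{-1}(C)$. For fixed $C$ (with its tangent line $L$ at the cusp determined), $\varphi^{-1}(C)$ is the open subset of $|{\Oplane}(2)|$ consisting of those conics $Q$ that are a union of two distinct lines and are transverse to $C+L$; this is a $\PGL_3$-independent open condition, so $\varphi^{-1}(C)$ is an open subset of the full linear system $|{\Oplane}(2)|\simeq\proj^5$. Therefore $U/\PGL_3$ is birational to $|{\Oplane}(2)|/G_0$ where $G_0\simeq\C^{\times}$ acts linearly (via its representation on $H^0({\Oplane}(2))=S^2(\C^3)^{\vee}$, which is linearizable since $\C^{\times}$ is a torus). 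A linear action of the connected solvable group $\C^{\times}$ on the vector space $H^0({\Oplane}(2))$ yields, by Miyata's theorem (Theorem \ref{Miyata}), a rational quotient; passing to the projectivization, $|{\Oplane}(2)|/G_0$ is rational as well (concretely, one can pick a $G_0$-eigen-coordinate of positive weight to dehomogenize, obtaining a linear $\C^{\times}$-action on affine space whose quotient is rational, or simply invoke that $\proj^5/\C^{\times}$ is rational for a linear torus action). Combining with $\mathcal{P}$ being birational (Proposition \ref{birat (17,5)}), this gives the rationality of $\mathcal{M}_{17,5,1}$.

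The only genuinely delicate point is the bookkeeping needed to be sure the slice method applies cleanly: one must check that the stabilizer of a general $C\in V$ really is $\C^{\times}$ and not something larger, and that the open locus of $Q$ defining $U$ meets the fiber $\varphi^{-1}(C)$ in a nonempty (hence dense) open set so that the birational identifications are honest. Both are routine: the automorphism group of the cuspidal cubic $ZY^2=X^3$ in $\PGL_3$ is computed directly to be the one-dimensional torus $[X:Y:Z]\mapsto[t^2X:t^3Y:Z]$, and the transversality conditions cut out a nonempty Zariski-open subset of $|{\Oplane}(2)|$. I expect no further obstacle; the proof is essentially a verbatim repetition of the argument for $\mathcal{M}_{16,6,1}$ in Proposition \ref{rational (16,6)}, with $|{\Oplane}(2)|$ playing the same role as there.

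\begin{proof}
The slice method for the projection $U\to V$ shows that
$U/{\PGL}_3\sim |{\Oplane}(2)|/G$,
where $G$ is the stabilizer of a $C\in V$.
Since $G\simeq{\C}^{\times}$,
the quotient $|{\Oplane}(2)|/G$ is clearly rational.
\end{proof}
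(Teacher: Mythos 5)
Your argument breaks at the identification of the fiber of $\varphi\colon U\to V$. In this case $U$ consists of pairs $(C,Q)$ in which $Q$ is required to be a \emph{union of two distinct lines}; that is not an open condition in $|{\Oplane}(2)|$, but cuts out (a dense open subset of) the discriminant locus of singular conics, a $4$-dimensional non-linear subvariety of $|{\Oplane}(2)|\simeq{\proj}^5$. So $\varphi^{-1}(C)$ is not an open subset of a linear system, and the slice method does not hand you a linear representation of $G\simeq{\C}^{\times}$ to which Miyata's theorem applies. The dimension count already exposes the error: if the fiber were open in ${\proj}^5$ you would get ${\dim}(U/{\PGL}_3)=7+5-8=4$, whereas $\mathcal{M}_{17,5,1}$ has dimension $20-17=3$ and $\mathcal{P}$ is birational by Proposition \ref{birat (17,5)}. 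Your proof is a verbatim transplant of the argument for $\mathcal{M}_{16,6,1}$ (Proposition \ref{rational (16,6)}), but there $Q$ was a \emph{smooth} conic, so the fiber genuinely was open in $|{\Oplane}(2)|$; that is exactly the feature that fails here.

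The repair is to linearize the fiber by first recording the singular point of $Q$: consider the ${\PGL}_3$-equivariant map $\pi\colon U\to V\times{\proj}^2$, $(C,Q)\mapsto(C,{\rm Sing}(Q))$. Its fiber over $(C,p)$ is an open subset of the \emph{net} of conics singular at $p$ (a genuine linear system ${\proj}^2\subset|{\Oplane}(2)|$). Since ${\PGL}_3$ acts almost freely on $V\times{\proj}^2$ (a general point $p$ kills the ${\C}^{\times}$ stabilizer of $C$), one applies the no-name lemma \ref{no-name 2} rather than the slice method, obtaining $U/{\PGL}_3\sim{\proj}^2\times\bigl((V\times{\proj}^2)/{\PGL}_3\bigr)$, and the base has dimension $1$, hence is rational. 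Your preliminary observations (the normal form $ZY^2=X^3$, the stabilizer ${\C}^{\times}$ of a cuspidal cubic) are correct and still useful for checking the almost-free action, but the main step as you wrote it does not go through.
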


\begin{proof}
Consider the ${\PGL}_3$-equivariant map 
$\pi\colon U \to V\times{\proj}^2$, $(C, Q)\mapsto(C, {\rm Sing}(Q))$. 
A general fiber $\pi^{-1}(C, p)$ is an open set of the net of conics singular at $p$. 
It is straightforward to see that ${\PGL}_3$ acts on $V\times{\proj}^2$ almost freely.  
Then we may apply the no-name lemma \ref{no-name 2} to see that 
\begin{equation*}
U/{\PGL}_3 \sim {\proj}^2\times((V\times{\proj}^2)/{\PGL}_3). 
\end{equation*}
The quotient $(V\times{\proj}^2)/{\PGL}_3$ is of dimension $1$ and so is rational. 
\end{proof}

%
%


\appendix
\begin{large}
\section{}\label{appendix}
\end{large}


In this section we study 
$\mathcal{M}_{12,10,1}$, $\mathcal{M}_{13,9,1}$, $\mathcal{M}_{14,8,1}$, and $\mathcal{M}_{10,10,1}$ 
using the method of Dolgachev-Kond\=o \cite{D-K} and its generalization. 
See Introduction for the details of development. 

Let $L$ be an odd lattice. 
The \textit{even part} $L_0$ of $L$ is the kernel of the natural homomorphism 
\begin{equation*}\label{eqn: even part}
L \to {\Z}/2{\Z}, \qquad l\mapsto (l, l)\mod 2{\Z}, 
\end{equation*}
which is the maximal even sublattice of $L$. 
The index $2$ extension $L_0\subset L$ is given by an element $x$ of order $2$ 
in the discriminant group $D_{L_0}$ of $L_0$: 
that is, we have $L=\langle L_0, x\rangle$ inside $L_0^{\vee}$. 
Since $L$ is odd, $x$ must have norm $1$ modulo $2{\Z}$ with respect to the discriminant form. 

\

\begin{example}\label{I_n&D_n}
Let $I_{m,n}$ denote the odd unimodular lattice $\langle1\rangle^m\oplus\langle-1\rangle^n$. 
When $n\geq4$, the even part of $I_{0,n}$ is isometric to $D_n$. 
Indeed, if we take natural orthogonal basis $e_1,\cdots, e_n$ of $I_{0,n}$, 
the even part is expressed as 
$\{ \sum_{i}x_ie_i \in I_{0,n} \: | \: \sum_ix_i\in2{\Z} \}$. 
It is generated by the following vectors: 
\begin{equation}\label{eqn: D_n root basis}
d_1=e_1+e_2, \quad d_2=-e_1+e_2, \quad d_i=-e_{i-1}+e_i \; \; (3\leq i\leq n). 
\end{equation}
One checks that these form a root system of $D_n$-type for $n\geq4$. 
When $m<n$, $I_{m,n}$ is isometric to $U^m\oplus I_{0,n-m}$. 
Therefore the even part of $I_{m,n}$ is isometric to $U^m\oplus D_{n-m}$ for $n-m\geq4$. 

Let us describe the element $x\in D_{D_n}$ giving the extension $D_n\subset I_{0,n}$. 

\noindent
$(1)$ When $n=2m+1$ is odd, $D_{D_n}$ is isomorphic to ${\Z}/4{\Z}$ generated by 
$\frac{1}{4}(d_1-d_2)+\frac{1}{2}\sum_{i=1}^{m}d_{2i+1}$. 
In particular, $D_{D_n}\simeq\langle-n/4\rangle$ as a finite quadratic form. 
In this case $x$ is the unique element of order $2$ in $D_{D_n}$. 

\noindent
$(2)$ When $n=2m$ is even, $D_{D_n}$ is isomorphic to $({\Z}/2{\Z})^2$ generated by 
$\frac{1}{2}(d_1+\sum_{i=2}^{m}d_{2i})$ and $\frac{1}{2}(d_2+\sum_{i=2}^{m}d_{2i})$. 
As a quadratic form, we have 
$D_{D_n}\simeq\langle-m/2\rangle^{\oplus2}$ 
when $m$ is odd, and  
$D_{D_n}\simeq \begin{pmatrix}  m/2   &   1/2   \\
                                                            1/2    &  m/2      \end{pmatrix}$ 
when $m$ is even. 
In case $m$ is odd or divisible by $4$, $x$ is the unique element with norm $\equiv1$. 
In case $m\equiv2$ mod $4$, $x$ is one of the three non-zero elements of $D_{D_n}$, which all have norm $\equiv1$. 
\end{example}

If a lattice $L$ has signature $(2, n)$, 
we let $\Omega_L$ be the domain defined as in \eqref{eqn:period domain}. 
Given a finite-index subgroup $\Gamma$ of ${\Or}(L)$, we shall denote\footnote{
This is consistent with the notation in \S \ref{sec: 2-ele K3}, but may be slightly different from 
some other literatures where the notation ``$\mathcal{F}_L(\Gamma^+)$'' is used instead.} 
by $\mathcal{F}_L(\Gamma)$ the modular variety $\Gamma^+\backslash\Omega_L^+$ 
where $\Omega_L^+$ is a connected component of $\Omega_L$ 
and $\Gamma^+\subset\Gamma$ is the stabilizer of $\Omega_L^+$. 
We sometimes abbreviate $\mathcal{F}_L(\Gamma)$ as $\mathcal{F}(\Gamma)$. 
The key proposition in this appendix is the following. 

\begin{proposition}[cf. \cite{D-K}]\label{even part}
Let $L_0$ be the even part of an odd lattice $L$ of signature $(2, n)$, 
and $x\in D_{L_0}$ be the element giving the extension $L_0\subset L$. 
Let $\Gamma\subset{\Or}(L_0)$ be the stabilizer of $x$. 
Then the modular varieties $\mathcal{F}_{L_0}(\Gamma)$ and $\mathcal{F}_L({\Or}(L))$ are naturally isomorphic. 
\end{proposition}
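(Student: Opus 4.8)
The plan is to show that the inclusion $L_0 \subset L$ induces an identification $\Omega_L \simeq \Omega_{L_0}$ on period domains, and that the group $\Gamma \subset \mathrm{O}(L_0)$ stabilizing $x$ is exactly the image of $\mathrm{O}(L)$ under the natural restriction homomorphism $\mathrm{O}(L) \to \mathrm{O}(L_0)$. Granting these two facts, the quotient varieties $\mathcal{F}_{L_0}(\Gamma)$ and $\mathcal{F}_L(\mathrm{O}(L))$ coincide: they are quotients of the same domain by the same group (up to the choice of connected component, which is harmless since both groups contain an element reversing the two components, or since one restricts to the stabilizer of a fixed component in both cases).

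First I would establish the period domain identification. Since $L_0$ has finite index in $L$, we have $L_0 \otimes \mathbb{Q} = L \otimes \mathbb{Q}$, hence $L_0 \otimes \mathbb{C} = L \otimes \mathbb{C}$, and the quadratic forms agree on this common space. The conditions $(\omega,\omega)=0$ and $(\omega,\bar\omega)>0$ defining $\Omega_L$ and $\Omega_{L_0}$ inside $\mathbb{P}(L\otimes\mathbb{C}) = \mathbb{P}(L_0 \otimes \mathbb{C})$ are then literally the same, so $\Omega_L = \Omega_{L_0}$ canonically, and likewise for the chosen components $\Omega_L^+ = \Omega_{L_0}^+$.

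Next I would analyze the two groups. An isometry $g$ of $L$ preserves $L_0$ (its maximal even sublattice is canonical), so restriction gives a homomorphism $\mathrm{O}(L) \to \mathrm{O}(L_0)$; it is injective because $L_0 \otimes \mathbb{Q} = L \otimes \mathbb{Q}$. Conversely, an isometry $h$ of $L_0$ extends to an isometry of $L_0^\vee$, hence acts on $D_{L_0} = L_0^\vee/L_0$; since $L = \langle L_0, x\rangle$ inside $L_0^\vee$, the extended map preserves $L$ if and only if $h$ fixes the coset $x \in D_{L_0}$, i.e. if and only if $h \in \Gamma$. Thus the image of $\mathrm{O}(L) \to \mathrm{O}(L_0)$ is precisely $\Gamma$, and the restriction map is an isomorphism $\mathrm{O}(L) \xrightarrow{\sim} \Gamma$. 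Since this isomorphism is compatible with the identification $\Omega_L^+ = \Omega_{L_0}^+$ and with the stabilizers of the chosen component, passing to quotients yields the natural isomorphism $\mathcal{F}_L(\mathrm{O}(L)) \simeq \mathcal{F}_{L_0}(\Gamma)$.

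The one point requiring a little care — and the main potential obstacle — is the claim that $\mathrm{O}(L_0) \to \mathrm{O}(D_{L_0})$ sends each $h \in \Gamma$ to something whose action on $D_{L_0}$ \emph{fixes} $x$, versus merely preserving the subgroup it generates; here I would simply note that $\Gamma$ is \emph{defined} as the pointwise stabilizer of the element $x$, so there is nothing to prove beyond checking that preservation of $L$ inside $L_0^\vee$ is equivalent to fixing the specific coset $x$ (not just the cyclic subgroup $\langle x\rangle$), which is immediate from $L/L_0 = \langle x \rangle \cong \mathbb{Z}/2\mathbb{Z}$ being generated by that very coset. I would also remark that the whole discussion is insensitive to replacing $\mathrm{O}$ by the subgroups stabilizing a fixed component $\Omega^+$, since the isomorphism $\mathrm{O}(L)\simeq\Gamma$ intertwines these subgroups.
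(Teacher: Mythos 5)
Your proof is correct and follows essentially the same route as the paper: identify $\Omega_L$ with $\Omega_{L_0}$ via $L\otimes\mathbb{Q}=L_0\otimes\mathbb{Q}$, and show that under the inclusion $\Or(L)\subset\Or(L_0)$ inside $\Or(L\otimes\mathbb{Q})$ one has $\Or(L)=\Gamma$, since preserving the overlattice $L=\langle L_0,x\rangle$ is equivalent to preserving the order-two subgroup $L/L_0\subset D_{L_0}$, i.e.\ to fixing $x$. Your explicit remark that fixing the subgroup $\{0,x\}$ forces fixing the element $x$ is a point the paper leaves implicit, but the argument is the same.
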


\begin{proof}
Since any $\gamma\in{\Or}(L)$ preserves the canonical sublattice $L_0\subset L$, 
we have the inclusion ${\Or}(L)\subset{\Or}(L_0)$ inside ${\Or}(L\otimes{\Q})={\Or}(L_0\otimes{\Q})$. 
If we regard $\gamma\in{\Or}(L)$ as an isometry of $L_0$, 
it stabilizes the overlattice $L\supset L_0$ and hence preserves the subgroup $L/L_0\subset D_{L_0}$. 
Conversely, any $\gamma'\in{\Or}(L_0)$ preserving $L/L_0\subset D_{L_0}$ must be contained in ${\Or}(L)$. 
This establishes the equality ${\Or}(L)=\Gamma$ inside ${\Or}(L\otimes{\Q})$. 
\end{proof}

We learned this technique from Dolgachev-Kond\=o \cite{D-K}, 
who considered the case when $\Gamma$ coincides to ${\Or}(L_0)$ overall. 
It is also used by Allcock \cite{Al}. 

We shall apply this proposition for $L$ a scaling of the anti-invariant lattice or its dual lattice for 
$\mathcal{M}_{12,10,1}$, $\mathcal{M}_{13,9,1}$, $\mathcal{M}_{14,8,1}$, and $\mathcal{M}_{10,10,1}$. 
Consequently, those ${\moduli}$ get realized as a finite cover or a Heegner divisor of another $\mathcal{M}_{r',a',\delta'}$, 
or even become birational to $\mathcal{M}_{r',a',\delta'}$. 
Then we prove the rationality of ${\moduli}$ by making use of 
the geometric description of $\mathcal{M}_{r',a',\delta'}$ obtained in the earlier sections. 
The case of Heegner divisor is similar to \cite{D-K}, while the case of finite cover is novelty of this appendix.

\subsection{The rationality of $\mathcal{M}_{12,10,1}$}\label{ssec:(12,10)}

The following argument was independently suggested by the referee and Kond\=o. 
The anti-invariant lattice for $\mathcal{M}_{12,10,1}$ is isometric to 
the scaling $I_{2,8}(2)$ of the unimodular lattice $I_{2,8}$. 
Hence $\mathcal{M}_{12,10,1}$ is naturally birational to $\mathcal{F}({\Or}(I_{2,8}))$. 
By Example \ref{I_n&D_n}, the even part of $I_{2,8}$ is isometric to $U^2\oplus D_6$, 
and the element in $D_{U^2\oplus D_6}$ giving the extension $U^2\oplus D_6\subset I_{2,8}$ 
is invariant under ${\Or}(U^2\oplus D_6)$. 
Therefore we have $\mathcal{F}({\Or}(I_{2,8}))\simeq\mathcal{F}({\Or}(U^2\oplus D_6))$ by Proposition \ref{even part}. 
Then $U^2\oplus D_6$ is the anti-invariant lattice for $\mathcal{M}_{12,2,1}$, 
so that $\mathcal{F}({\Or}(U^2\oplus D_6))$ is birational to $\mathcal{M}_{12,2,1}$. 
In Proposition \ref{rational (11,3)&(12,2)} we proved that $\mathcal{M}_{12,2,1}$ is rational. 
Hence $\mathcal{M}_{12,10,1}$ is rational too.

\subsection{The rationality of $\mathcal{M}_{13,9,1}$}\label{ssec:(13,9)}

The anti-invariant lattice for $\mathcal{M}_{13,9,1}$ is isometric to $I_{2,7}(2)$. 
Scaling it by $1/2$, we see that $\mathcal{M}_{13,9,1}$ is birational to $\mathcal{F}({\Or}(I_{2,7}))$. 
By Example \ref{I_n&D_n}, the even part of $I_{2,7}$ is isometric to 
\begin{equation*}
M=U^2\oplus D_5, 
\end{equation*}
and the extension $M\subset I_{2,7}$ is given by the unique element of order $2$ in $D_M\simeq{\Z}/4{\Z}$. 
Therefore we have $\mathcal{F}({\Or}(I_{2,7}))\simeq\mathcal{F}({\Or}(M))$ by Proposition \ref{even part}. 

In order to study $\mathcal{F}({\Or}(M))$, 
we shall exploit the fact that the $D_5$-lattice is naturally 
the orthogonal complement of a $(-4)$-vector $l$ in the $D_6$-lattice 
(take $l=d_1+d_2+2\sum_{i=3}^{6}d_i$ where $d_i$ are the root basis as in \eqref{eqn: D_n root basis}). 
Hence $M$ can be embedded in the lattice 
\begin{equation*}
L=U^2\oplus D_6
\end{equation*}
as the orthogonal complement of a $(-4)$-vector. 
Then we want to realize $\mathcal{F}({\Or}(M))$ as a Heegner divisor of $\mathcal{F}({\Or}(L))$. 
For that we need the following lattice-theoretic properties. 

\begin{lemma}\label{structure as Heegner div}
Let $l\in L$ be a $(-4)$-vector with $l/2\in L^{\vee}$. 
Then its orthogonal complement $L'=l^{\perp}\cap L$ is isometric to $M$, 
and every isometry of $L'$ extends to that of $L$. 
Moreover, such $(-4)$-vectors are all equivalent under ${\Or}(L)$. 
\end{lemma}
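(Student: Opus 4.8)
The plan is to prove Lemma~\ref{structure as Heegner div} by a sequence of standard computations with discriminant forms, following the recipe in Nikulin~\cite{Ni1}. Write $L=U^2\oplus D_6$, with $D_6$ presented via the root basis $d_1,\dots,d_6$ as in \eqref{eqn: D_n root basis}. First I would exhibit an explicit $(-4)$-vector $l$ with $l/2\in L^{\vee}$: taking $l=d_1+d_2+2\sum_{i=3}^{6}d_i$ inside $D_6\subset L$, a direct check gives $(l,l)=-4$ and $l/2\in D_6^{\vee}\subset L^{\vee}$. Then I compute $L'=l^{\perp}\cap L=U^2\oplus(l^{\perp}\cap D_6)$ and identify $l^{\perp}\cap D_6$ with $D_5$ by noting that $d_1,d_3,d_4,d_5,d_6$ (or a suitable reordering) span it and carry a $D_5$ root system; hence $L'\simeq U^2\oplus D_5=M$.

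Next I would handle the extension statement. Since $L$ is even of signature $(2,6)$ and $U^2\subset L$ splits off a unimodular piece, $D_{L}\simeq D_{D_6}$ is 2-elementary of small length, and the embedding $\langle l\rangle\oplus L'\hookrightarrow L$ is primitive with finite index. Using the standard exact sequences relating $D_L$, $D_{\langle l\rangle}=\langle-1/4\rangle$ and $D_{L'}=D_M$, I would compute $D_M\simeq \langle-n/4\rangle$-type contributions coming from $D_5$; by Example~\ref{I_n&D_n}(1), $D_{D_5}\simeq{\Z}/4{\Z}$. The point is that the gluing between $\langle l\rangle$ and $L'$ inside $L$ is governed by a unique isotropic-type subgroup, so the induced map ${\Or}(L')\to{\Or}(D_{L'})$ lands in the stabilizer of the relevant glue vector, and since $U^2$ splits off, ${\Or}(L')\to{\Or}(D_{L'})$ is surjective (Nikulin); combining these, every isometry of $L'$ that one needs extends. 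Concretely, I would invoke Nikulin~\cite{Ni1} Corollary~1.5.2: an isometry $\gamma$ of $L'$ extends to $L$ (acting as $\pm 1$, or trivially, on $\langle l\rangle$) provided $\gamma$ acts compatibly on the glue group, which here is automatic because the glue group is cyclic and $\gamma$ acts on $D_{L'}$ through an element fixing the canonical generator modulo the action it induces on $\langle l\rangle^{\vee}/\langle l\rangle$.

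For the uniqueness of $(-4)$-vectors $l$ with $l/2\in L^{\vee}$ up to ${\Or}(L)$, I would argue as follows. Such a vector determines, and is determined up to the obvious data by, the pair consisting of its image $\bar l\in D_L$ (an element of order $2$ and norm $\equiv-1\bmod 2\Z$) together with the isometry class of $l^{\perp}\cap L$. Since $D_L\simeq D_{D_6}$ and by Example~\ref{I_n&D_n}(2) the elements of $D_{D_6}$ of norm $\equiv1$ form a single ${\Or}(D_L)$-orbit (the group $D_{D_6}$ has exactly three nonzero elements when $6/2=3$ is odd, all of norm $\equiv1$, permuted by ${\Or}(D_L)$), and since ${\Or}(L)\to{\Or}(D_L)$ is surjective because $U^2$ splits off, I reduce to fixing $\bar l$. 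With $\bar l$ fixed, the orthogonal complements $l^{\perp}\cap L$ all have the same signature and the same discriminant form (computed from $\bar l$), and $U^2\subset l^{\perp}\cap L$, so by Nikulin's uniqueness theorem they are all isometric to $M$; then a standard extension argument (again Nikulin~\cite{Ni1}, Corollary~1.5.2 applied to the primitive embedding $L'\hookrightarrow L$ whose orthogonal complement $\langle l\rangle$ has cyclic discriminant) shows any two such $l$ are ${\Or}(L)$-equivalent.

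I expect the main obstacle to be the bookkeeping in the extension/surjectivity step: verifying precisely which subgroup of ${\Or}(D_{L'})$ is hit by ${\Or}(L')$, which subgroup stabilizes the glue vector, and that these match so that \emph{every} isometry of $L'$ (not merely those in some finite-index subgroup) extends. This is where the specific structure $D_{D_5}\simeq{\Z}/4{\Z}$ and $D_{D_6}\simeq({\Z}/2)^2$ enters decisively, and one must be careful that the relevant glue element is indeed ${\Or}$-canonical. Once this is in place, the conclusion $\mathcal{F}({\Or}(M))\simeq\{\text{the Heegner divisor }l^{\perp}\subset\mathcal{F}({\Or}(L))\}$ follows formally, reducing the rationality of $\mathcal{M}_{13,9,1}$ to the geometry of $\mathcal{F}({\Or}(U^2\oplus D_6))\sim\mathcal{M}_{12,2,1}$ and a suitable divisor therein.
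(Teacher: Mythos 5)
Your route is workable and reaches all three assertions, but it is organized differently from the paper's proof. The paper works with an arbitrary $l$ from the outset: it identifies the glue group $G=L/({\Z}l\oplus L')$ with $\langle l/2\rangle\simeq{\Z}/2{\Z}$ and then, using only the $2$-elementariness of $D_L\simeq({\Z}/2{\Z})^2$ and the norms of its elements, forces $D_{L'}\simeq{\Z}/4{\Z}$ with generator of norm $3/4$ and shows the gluing is the unique isomorphism onto the index-$2$ subgroup; the isometry $L'\simeq M$, the extendability of isometries, and the transitivity on the $l$'s all fall out of that single computation. You instead compute $L'$ for one explicit $l$ and transfer to general $l$ via Nikulin's classification of primitive embeddings. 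That transfer is legitimate, but the step you flag as ``the main obstacle'' is exactly the content of the paper's Claim and must be carried out: one checks that the embedding data is forced, namely ${\rm div}(l)=2$ (so $H_S=\langle l/2\rangle$), the image $\bar l\in D_L$ is the \emph{unique} element of norm $\equiv1$, and the gluing isomorphism ${\Z}/2\to{\Z}/2$ is unique; only then is $q_{L'}$ pinned down independently of $l$. What your approach buys is that the value $q_{L'}\simeq\langle3/4\rangle$ comes for free from $D_{D_5}$ rather than from the paper's delicate norm bookkeeping.

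Three corrections are needed. First, your explicit vector is $d_1+d_2+2\sum_{i=3}^{6}d_i=2e_6$, and its complement in $D_6$ is generated by $d_1,\dots,d_5$, not by a list containing $d_6$ (which pairs nontrivially with $e_6$). Second, by Example \ref{I_n&D_n}(2) with $m=3$, the three nonzero elements of $D_{D_6}\simeq\langle-3/2\rangle^{\oplus2}$ do \emph{not} all have norm $\equiv1$: two have norm $\equiv1/2$ and exactly one has norm $\equiv1$. This makes your orbit claim on $D_L$ trivially true, but for the opposite reason you give. Third, you cannot assert $U^2\subset l^{\perp}\cap L$ for general $l$: such $l$ need not lie in the $D_6$ summand (e.g.\ $l=2e+2e_6$ with $e\in U$ isotropic is primitive with $l/2\in L^{\vee}$). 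Fortunately this hypothesis is unnecessary, since $L'$ has signature $(2,5)$ and $D_{L'}$ has length $1$, so Nikulin's uniqueness and surjectivity results apply directly; the extension statement then follows because ${\Or}(D_{L'})=\{\pm1\}$ acts trivially on the order-$2$ subgroup of ${\Z}/4{\Z}$, exactly as in the paper.
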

 
\begin{proof}
We first note that ${\Z}l\subset L$ is primitive. 
Let $G\subset D_{{\Z}l}$ (resp. $G'\subset D_{L'}$) be the subgroup corresponding to 
the image of the orthogonal projection $L\to({\Z}l)^{\vee}$ (resp. $L\to(L')^{\vee}$). 
Then we have canonical isomorphisms 
$G\simeq L/({\Z}l\oplus L') \simeq G'$. 
In other words, we have an anti-isometry $\lambda\colon G\to G'$ 
whose graph $G''\subset D_{{\Z}l}\oplus D_{L'}$ gives the overlattice $L\supset{\Z}l\oplus L'$ (cf. \cite{Ni1}). 
The discriminant form of $L$ is identified with the quadratic form on $(G'')^{\perp}/G''$. 

\begin{claim}
We have $G=\langle l/2 \rangle$, $D_{L'} = \langle x \rangle\simeq{\Z}/4{\Z}$ with $(x, x)\equiv3/4$, 
and $\lambda$ is the unique isomorphism between $G$ and the index $2$ subgroup of $D_{L'}$. 
\end{claim}

\begin{proof}
We have $(l/2, G)=(l/2, G'')=0$ by the assumption $l/2\in L^{\vee}$. 
Hence $G\subset\langle l/2 \rangle$. 
If $G=0$, then we would have $L={\Z}l\oplus L'$, which contradicts the 2-elementariness of $D_L$. 
Hence $G=\langle l/2 \rangle$. 
Since the discriminant form of $L'$ is non-degenerate, 
we may find an element $x\in D_{L'}$ with $(x, \lambda(l/2))=1/2$. 
Then $x+\frac{l}{4}$ is perpendicular to $G''$. 
Since $(G'')^{\perp}/G''$ is 2-elementary, we have $2x+\frac{l}{2}\in G''$, which means that $2x=\lambda(l/2)$. 
Therefore 
\begin{equation*}
4(x, x) \equiv -(\frac{l}{2}, \frac{l}{2}) \equiv 1  \mod 2{\Z},
\end{equation*}
so that $(x, x)\equiv k/4$ for some $k\in\{1, 3, 5, 7\}$. 
In particular, the quadratic form on $\langle x \rangle \simeq {\Z}/4{\Z}$ is non-degenerate. 
Hence we have the orthogonal splitting $D_{L'}=\langle x \rangle \oplus H$ for a subgroup $H\subset D_{L'}$. 
Since $D_L \simeq ({\Z}/2{\Z})^2$, we actually have $H=0$. 
Now $D_L=(G'')^{\perp}/G''$ is represented by the following elements: 
\begin{equation*}
0, \quad \frac{l}{2}, \quad x+\frac{l}{4}, \quad x-\frac{l}{4}. 
\end{equation*}
In view of the structure of $D_L\simeq D_{D_6}$ (Example \ref{I_n&D_n}), 
the elements $x\pm\frac{l}{4}$ should have norm $1/2$. 
Therefore $k$ is determined as $k=3$.  
\end{proof}

By this claim the discriminant form of $L'$ is uniquely determined, 
so that we may apply Nikulin's theory \cite{Ni1} to see that the isometry class of $L'$ is uniquely determined. 
Looking the discriminant form of $M$ at Example \ref{I_n&D_n}, we see that $L'\simeq M$. 
The claim also says that the extension ${\Z}l\oplus L'\subset L$ is uniquely determined inside $({\Z}l\oplus L')^{\vee}$. 
It follows that for any isometry $\gamma$ of $L'$, 
the isometry $({\rm id}, \gamma)$ of ${\Z}l\oplus L'$ preserves the overlattice $L$. 
Finally, the uniqueness of $L'$ and of the extension ${\Z}l\oplus L'\subset L$ imply our uniqueness assertion for $l$. 
\end{proof}

Let 
\begin{equation*}
\mathcal{H} \subset\mathcal{F}({\Or}(L))
\end{equation*}
be the Heegner divisor defined by $(-4)$-vectors as in Lemma \ref{structure as Heegner div}, 
that is, the image of the analytic divisor $\sum l^{\perp}\subset\Omega_L^+$ 
where $l$ range over $(-4)$-vectors in $L$ with $l/2\in L^{\vee}$. 
Noticing that ${\Or}(M)$ contains an element exchanging the two components of $\Omega_M$, 
we deduce from Lemma \ref{structure as Heegner div} that 
$\mathcal{H}$ is irreducible and birational to $\mathcal{F}({\Or}(M))$: 
more precisely, we have a natural projection $\mathcal{F}({\Or}(M))\to \mathcal{H}$ 
which gives the normalization of $\mathcal{H}$. 

Now $L$ is the anti-invariant lattice for $\mathcal{M}_{12,2,1}$, 
so that $\mathcal{F}({\Or}(L))$ is birational to $\mathcal{M}_{12,2,1}$. 
We shall study the Heegner divisor $\mathcal{H}\subset\mathcal{M}_{12,2,1}$ 
through the geometric description of $\mathcal{M}_{12,2,1}$ given in \S \ref{ssec:(11,3)&(12,2)}. 
This requires to refine the period map there, especially to extend it. 

Let $Q={\proj}^1\times{\proj}^1$ and $U\subset|{\OQ}(3, 3)|$ be the locus of smooth curves $C$ 
such that there exists exactly one $(1, 0)$-fiber $F_1$ tangent to $C$ with multiplicity $3$ 
(compare $U$ with the locus $U_5$ in \S \ref{ssec:(11,3)&(12,2)}). 
Let $F_2$ be the $(0, 1)$-fiber through $C\cap F_1$. 
Taking the right resolution of $C+F_1+F_2$, we obtain a period map 
$p\colon U\to\mathcal{M}_{12,2,1}$ as a morphism of varieties. 
In Proposition \ref{birational (11,3)&(12,2)}, we showed that $p$ descends to a birational map 
from a rational quotient of $U$ by ${\aut}(Q)_0={\PGL}_2\times{\PGL}_2$. 
We refine it as follows. 

\begin{proposition}\label{precise period map (13,9)}
The map $p$ descends to an open immersion 
$\mathcal{P}\colon U/{\aut}(Q)_0\to\mathcal{M}_{12,2,1}$ 
from a geometric quotient $U/{\aut}(Q)_0$ of $U$. 
\end{proposition}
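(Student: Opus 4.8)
The plan is to upgrade the birational period map of Proposition~\ref{birational (11,3)&(12,2)} to an honest open immersion by (i) producing a genuine geometric quotient $U/{\aut}(Q)_0$ and (ii) showing $p$ is injective on orbits with image an open subset of $\mathcal{M}_{12,2,1}$. For step~(i), the first thing I would do is check that ${\aut}(Q)_0$ acts on $U$ with trivial stabilizers and closed orbits: by Corollary~\ref{stab HE}-type reasoning (or more directly, since a general smooth $(3,3)$-curve has no automorphisms and the extra data of the distinguished fibers $F_1,F_2$ rigidifies further), a curve $C\in U$ together with its unique triple-tangent $(1,0)$-fiber has no nontrivial automorphism in ${\aut}(Q)_0$; moreover all orbits in $U$ have the same dimension $6=\dim{\aut}(Q)_0$, hence are closed in $U$. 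Then Luna's \'etale slice theorem (or \cite{GIT} Prop.~0.2, exactly as invoked in Proposition~\ref{HE moduli}) gives a geometric quotient $U/{\aut}(Q)_0$ with $U\to U/{\aut}(Q)_0$ a principal ${\aut}(Q)_0$-bundle.

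For step~(ii), I would show the $p$-fibers are precisely the ${\aut}(Q)_0$-orbits, so that $p$ factors through an injective morphism $\mathcal{P}\colon U/{\aut}(Q)_0\to\mathcal{M}_{12,2,1}$. This is essentially the argument already run in the proof of Proposition~\ref{birational (11,3)&(12,2)}: given $(X,\iota)=p(C)=p(C')$ one gets a Hodge isometry of the $K3$ lattices preserving ample cones (via Lemma~\ref{ample class}), hence an isomorphism $X\to X'$ by Torelli, which descends to an automorphism $\psi$ of $Q$ with $\psi\circ f=f'\circ\varphi$ via Lemma~\ref{covering map & LB} applied to $L_{1,0}$ on $Q$ (recall $Q\to|L|^{\vee}$ embeds $Q$); since $\psi$ must carry the ordered pair of rulings to itself (the two rulings are distinguished on $X$ by the classes $f^*{\OQ}(1,0), f^*{\OQ}(0,1)$), we get $\psi\in{\aut}(Q)_0$ and $\psi(C)=C'$. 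One must also check $p$ is unramified, equivalently that the induced map on tangent spaces is injective: this follows because the period map for the universal family is a local isomorphism onto its image by the infinitesimal Torelli theorem for $K3$ surfaces (the family $\mathfrak{X}\to U$ has surjective Kodaira--Spencer onto the relevant $H^1(T_X)^{\iota}$-piece, since varying $C$ fills out the deformations of $(X,\iota)$ that preserve the marking). Thus $\mathcal{P}$ is an injective immersion of smooth varieties of the same dimension $14-6=20-12=8$, hence an open immersion by invariance of domain (or: an injective \'etale morphism between irreducible varieties of the same dimension whose target is normal is an open immersion).

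The main obstacle I anticipate is the verification that $p$ is unramified / the orbits are \emph{exactly} the fibers on the nose rather than just generically. The Torelli-plus-Lemma~\ref{covering map & LB} argument is robust, but one has to be careful that for \emph{every} $C\in U$ (not just a general one) the right covering map $f\colon X\to Q$ is recovered from $f^*L_{1,0}$; for $Q={\proj}^1\times{\proj}^1$ this is clean since $\phi_{L_{1,0}}$ is an embedding (unlike the ${\F}_1$ case), so Lemma~\ref{covering map & LB} applies verbatim. The remaining subtlety is the stabilizer computation at non-generic $C\in U$: a curve in $U$ might have a symmetry exchanging its branches at the $D_8$-point, but the labeling implicit in "the unique triple-tangent fiber" plus the genericity built into the definition of $U$ (only one such $F_1$) rules this out. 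I would isolate this as a short lemma. Everything else — the identification of the image of $\mathcal{P}$ with an open subset, and openness of the immersion — is then formal.
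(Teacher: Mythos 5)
Your overall strategy (build the geometric quotient, then upgrade the known birational map) is the right one, and your observation that $C$ is recovered from $f^{\ast}{\OQ}(1,1)$ via Lemma \ref{covering map & LB} is exactly the key point; but both halves of the argument have genuine gaps as written.

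First, the existence of the geometric quotient. Finite stabilizers and closed orbits do not by themselves produce a geometric quotient as a separated variety: ${\C}^{\times}$ acting on ${\mathbb A}^2\setminus\{0\}$ by $t\cdot(x,y)=(tx,t^{-1}y)$ has trivial stabilizers and all orbits closed of the same dimension, yet the orbit space is the line with doubled origin. Moreover, Luna's slice theorem requires an affine $G$-variety, whereas $U$ is only locally closed in the affine locus $|{\OQ}(3,3)|_{sm}$, hence a priori merely quasi-affine. The paper's route is to import stability from the ambient space: since $|{\OQ}(3,3)|_{sm}/{\aut}(Q)$ exists as an open subset of $\mathcal{M}_4$, GIT ``Converse 1.12'' supplies an ${\aut}(Q)_0$-linearized line bundle for which every point of $|{\OQ}(3,3)|_{sm}$ is stable, and one then restricts to $U$ and applies GIT Theorem 1.10. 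Some input of this kind is needed; your slice/closed-orbit argument alone does not close the issue of separatedness.

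Second, the injectivity claim. If $p(C)=p(C')$, Theorem \ref{thm: moduli} gives an isomorphism $\varphi\colon X\to X'$ commuting with the involutions, but there is no reason that $\varphi^{\ast}(f'^{\ast}{\OQ}(1,1))=f^{\ast}{\OQ}(1,1)$: that compatibility is precisely what the lattice-marking buys in the recipe of \S\ref{ssec: recipe}, and it is available only for the lifted map $\lift$, not for $\mathcal{P}$ itself. Without it, Lemma \ref{covering map & LB} does not produce the automorphism $\psi$ of $Q$, and the fiber of $\mathcal{P}$ over $(X,\iota)$ is a priori indexed by the set of classes in $L_+(X,\iota)$ realizing such a contraction, modulo ${\aut}(X,\iota)$ --- this is exactly why period maps of this type can have degree $>1$. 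The paper's fix is weaker and cleaner: that index set is a subset of the countable group $L_+(X,\iota)$, so the fibers of the morphism $\mathcal{P}$ are countable, hence finite; combined with the birationality already proved in Proposition \ref{birational (11,3)&(12,2)} and the normality of $\mathcal{M}_{12,2,1}$, Zariski's Main Theorem gives the open immersion. In particular your unramifiedness/infinitesimal-Torelli step is unnecessary (quasi-finite plus birational plus normal target suffices), which is fortunate since the surjectivity of the Kodaira--Spencer map at every point of $U$ was only asserted, not proved.
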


\begin{proof}
We first check the existence of a geometric quotient $U/{\aut}(Q)_0$. 
Let $|{\OQ}(3, 3)|_{sm}$ be the affine open locus in $|{\OQ}(3, 3)|$ of smooth curves, 
of which $U$ is a locally closed subset. 
We have a geometric quotient of $|{\OQ}(3, 3)|_{sm}$ by ${\aut}(Q)$ as an open set of $\mathcal{M}_4$. 
By \cite{GIT} Converse 1.12, this assures the existence of an ${\aut}(Q)_0$-linearized line bundle $\mathcal{L}$ 
over $|{\OQ}(3, 3)|_{sm}$ with respect to which every point of $|{\OQ}(3, 3)|_{sm}$ is stable. 
Now we can restrict $\mathcal{L}$ to $U$ and apply \cite{GIT} Theorem 1.10. 

Next we show that the induced morphism $\mathcal{P}$ from the geometric quotient has finite fibers. 
Indeed, if $(X, \iota)=\mathcal{P}(C)$ for $C\in U$, 
then $C$ can be recovered from the natural projection $f\colon X\to Q$, 
which in turn from the line bundle $f^{\ast}{\OQ}(1, 1)$ by Lemma \ref{covering map & LB}. 
Thus we have a surjective map onto $\mathcal{P}^{-1}(X, \iota)$ from a subset of $L_+(X, \iota)$, 
so that $\mathcal{P}^{-1}(X, \iota)$ is countable. 
Then we can apply Zariski's Main Theorem to conclude that $\mathcal{P}$ is an open immersion. 
\end{proof}

Let $V\subset U$ be the codimension $1$ locus where the $(0, 1)$-fibers $F_2$ are tangent to $C$ 
(outside $C\cap F_1$). 

\begin{proposition}\label{describe Heegner div (13,9)}
The divisor $V/{\aut}(Q)_0$ of $U/{\aut}(Q)_0$ is mapped by $\mathcal{P}$ 
to an open set of the Heegner divisor $\mathcal{H}$. 
\end{proposition}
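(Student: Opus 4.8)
The statement to prove is that $\mathcal{P}(V/{\aut}(Q)_0)$ is an open subset of the Heegner divisor $\mathcal{H}\subset\mathcal{M}_{12,2,1}=\mathcal{F}({\Or}(L))$, where $L=U^2\oplus D_6$ and $\mathcal{H}$ is cut out by $(-4)$-vectors $l$ with $l/2\in L^\vee$. The guiding principle is that $V$ is exactly the locus where the branch curve $C+F_1+F_2$ acquires an \emph{extra} singularity (a second tangency of a ruling fiber), which forces an extra $(-2)$-curve on the $K3$ surface $X$, hence a class orthogonal to the transcendental lattice that has the right divisibility to lie in the $(-4)$-vector orbit of Lemma~\ref{structure as Heegner div}. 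Concretely, the plan is: first, for $C\in V$ with $F_2$ tangent to $C$ at a point $p\ne C\cap F_1$, analyze the right resolution of the DPN pair $(Q, C+F_1+F_2)$ and identify the singularities of the branch curve; second, use Lemma~\ref{L_+ and A-D-E} (and the computation of $(r,a,\delta)$ in \S\ref{ssec: right resol}) to pin down the class $v\in L_+(X,\iota)$ coming from the new $A_1$-point, and verify that the anti-invariant lattice of $(X,\iota)$ is now $L'=v^\perp\cap L$ with $L'\simeq M=U^2\oplus D_5$; third, invoke Lemma~\ref{structure as Heegner div} to conclude that the period point lies on $\mathcal{H}$, and check that $\mathcal{P}(V/{\aut}(Q)_0)$ is open in $\mathcal{H}$ by a dimension count plus the fact (Proposition~\ref{precise period map (13,9)}) that $\mathcal{P}$ is an open immersion.

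\textbf{Steps in detail.} First I would record the singularity analysis: for generic $C\in U$ the curve $C+F_1+F_2$ has one $D_8$-point at $C\cap F_1$ (the triple tangency of $F_1$) together with the two nodes $F_2\cap C\setminus(C\cap F_1)$ and $F_2\cap F_1$; when $C\in V$, two of the points $F_2\cap C$ collide, producing an extra $A_1$-point, while the $D_8$-point and the node $F_1\cap F_2$ persist. (One must be slightly careful about whether the new tangency point of $F_2$ could coincide with $C\cap F_1$ or with $F_1\cap F_2$ — generically on $V$ it does not, and it suffices to work over the open dense locus where it does not, which is all that the word ``open set of'' in the statement demands.) Taking the right resolution introduces, over this new node, an $\iota$-invariant $(-2)$-curve whose class $v$ lies in $L_+(X,\iota)$; since $(r,a,\delta)$ jumps from $(12,2,1)$ to $(13,?,\cdot)$ we see the rank of $L_+$ increases by $1$, so $L_-(X,\iota)$ for the member $C\in V$ has rank $20-r$ one less than for generic $C\in U$ — i.e. the period lands on a divisor. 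The key quantitative point is the divisibility of $v$: because $v$ is the class of a $(-2)$-curve disjoint from the other exceptional configurations over the $D_8$-point, its image in the discriminant group, and correspondingly the generator $l$ of ${\Z}l=v^\perp{}^\perp$ inside $L=L_-(X,\iota)|_{\text{generic}}$ scaled appropriately, has $l^2=-4$ and $l/2\in L^\vee$. This is precisely the hypothesis of Lemma~\ref{structure as Heegner div}, and that lemma then gives $v^\perp\cap L\simeq M$ and that all such $l$ are ${\Or}(L)$-equivalent, so the period point of $C\in V$ lies on the single irreducible Heegner divisor $\mathcal{H}$.

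\textbf{Concluding the openness.} Having shown $\mathcal{P}(V/{\aut}(Q)_0)\subset\mathcal{H}$, I would finish as follows. By Proposition~\ref{precise period map (13,9)}, $\mathcal{P}$ is an open immersion of $U/{\aut}(Q)_0$ into $\mathcal{M}_{12,2,1}$; its restriction to the closed subvariety $V/{\aut}(Q)_0$ is therefore a locally closed immersion, and since $\mathcal{H}$ is a divisor (codimension $1$) in $\mathcal{M}_{12,2,1}$ while $\dim(V/{\aut}(Q)_0)=\dim(U/{\aut}(Q)_0)-1=\dim\mathcal{M}_{12,2,1}-1$, the image is a locally closed subvariety of $\mathcal{H}$ of full dimension; as $\mathcal{H}$ is irreducible, $\mathcal{P}(V/{\aut}(Q)_0)$ is a (nonempty) open subset of $\mathcal{H}$, which is the assertion. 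The main obstacle I anticipate is the second step — computing the divisibility of the new $(-2)$-class $v$ precisely enough to match the ``$l/2\in L^\vee$'' condition of Lemma~\ref{structure as Heegner div}. This is a lattice bookkeeping argument using Lemma~\ref{L_+ and A-D-E}: one writes down an explicit basis of $L_+(X,\iota)$ for generic $C\in U$ (pullbacks of ${\OQ}(1,0),{\OQ}(0,1)$, the $D_8$-chain, the components of $X^\iota$, the node classes) together with the overlattice vectors, then adds $v$ and recomputes the discriminant form, confirming that the orthogonal complement of $v$ inside this lattice is $M$ and that $v$ meets the criterion. Everything else is either the geometric degeneration picture (routine) or a dimension count (immediate).
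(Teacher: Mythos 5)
The central geometric step of your argument is wrong, and the error propagates into the lattice theory. When $F_2$ becomes tangent to $C$ at a point $q$, the two transverse intersection points of $F_2$ with $C$ do not merge into ``an extra $A_1$-point'': locally the union $C+F_2$ is $\{y(y-x^2)=0\}$, i.e.\ $(y-x^2/2)^2=x^4/4$, which is a tacnode, an $A_3$-singularity. This distinction is not cosmetic. Over an $A_3$-point the right resolution produces (via Figure \ref{dual graph}) an $A_3$-chain of $(-2)$-curves $E_0+E_++E_-$ on $X$ with $\iota(E_0)=E_0$ and $\iota(E_\pm)=E_\mp$; the relevant new algebraic class is the \emph{anti-invariant} cycle $D=E_+-E_-\in L_-(X,\iota)$, which has $(D,D)=-4$ and satisfies $D/2\in L_-(X,\iota)^\vee$ because $(D,L_-)=(D+E_++E_-,L_-)=2(E_+,L_-)\subset2\Z$ (using $E_++E_-\in L_+$). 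That is exactly the $(-4)$-vector with $l/2\in L^\vee$ required by Lemma \ref{structure as Heegner div}, and it is why the period lands on $\mathcal{H}$. By contrast, your scenario — a single new $\iota$-invariant $(-2)$-curve whose class enlarges $L_+(X,\iota)$ to rank $13$ — would shrink $L_-(X,\iota)$ and place the period on the \emph{discriminant} divisor $\mathcal{D}$ defined by $(-2)$-vectors, which is removed from $\mathcal{M}_{12,2,1}=\mathcal{F}({\Or}(L))-\mathcal{D}$ altogether; it could not produce a point of $\mathcal{H}$. Note also that the main invariant of $(X,\iota)$ for $C\in V$ remains $(12,2,1)$ (an $A_3$ contributes $2$ to $\rho(Y')$, just as the two nodes it replaces did); what distinguishes $V$ is the extra anti-invariant $(-4)$-class, not a jump in $r$. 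Your step ``the rank of $L_+$ increases by $1$, so \dots\ the period lands on a divisor'' therefore conflates two different divisors and cannot be repaired within your framework.

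Two smaller points. Your singularity inventory for generic $C\in U$ lists $F_1\cap F_2$ as a separate node, but $F_2$ is by definition the $(0,1)$-fiber through $C\cap F_1$, so $F_1\cap F_2$ is the $D_8$-point itself; the generic configuration is one $D_8$ and exactly two nodes (consistent with $r=2+2+8=12$ via \eqref{compute r by A-D-E}). Your concluding openness argument — $\mathcal{P}(V/{\aut}(Q)_0)\subset\mathcal{H}$ together with irreducibility of $\mathcal{H}$, the open-immersion property of $\mathcal{P}$, and a dimension count — is correct and matches what the paper needs.
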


\begin{proof}
Since $\mathcal{H}$ is irreducible, it suffices to see that $p(V)\subset \mathcal{H}$. 
For $C\in V$, the tangent point $q$ of $C$ and $F_2$ is a tacnode of $C+F_1+F_2$. 
If $(X, \iota)=\mathcal{P}(C)$ and $f\colon X\to Q$ is the natural projection, 
then $f^{-1}(q)$ is an $A_3$-configuration of $(-2)$-curves. 
We write $f^{-1}(q)=E_0+E_++E_-$ such that $(E_0, E_{\pm})=1$ and $(E_+, E_-)=0$. 
By Figure \ref{dual graph}, $\iota$ acts on $f^{-1}(q)$ by $\iota(E_0)=E_0$ and $\iota(E_{\pm})=E_{\mp}$. 
Then we have the $\iota$-anti-invariant cycle 
\begin{equation*}
D = E_+ - E_- \in L_-(X, \iota)
\end{equation*}
as an algebraic cycle on $X$. 
This means that the period of $(X, \iota)$ is in the Heegner divisor defined by $D$. 
Since $(D, D)=-4$, it remains to check that $D/2\in L_-(X, \iota)^{\vee}$. 
This follows from 
\begin{equation*}
(D, L_-(X, \iota)) = (D+E_++E_-, L_-(X, \iota)) = 2(E_+, L_-(X, \iota)) \subset 2{\Z}, 
\end{equation*}
where the first equality holds because $E_++E_-\in L_+(X, \iota)$. 
\end{proof}

Summing up the results so far, we obtain the birational equivalences 
\begin{equation*}
\mathcal{M}_{13,9,1} \sim \mathcal{F}({\Or}(M)) \sim \mathcal{H} \sim V/{\aut}(Q)_0. 
\end{equation*}

\begin{proposition}\label{ratl (13,9)}
The quotient $V/{\aut}(Q)_0$ is rational. 
Hence $\mathcal{M}_{13,9,1}$ is rational. 
\end{proposition}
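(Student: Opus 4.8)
The plan is to prove rationality of $V/{\aut}(Q)_0$ by the same slice-method strategy used throughout \S\ref{sec:g=4}, adapted to the incidence variety $V\subset U\subset|{\OQ}(3,3)|$ of curves that are tangent to a $(1,0)$-fiber with multiplicity $3$ at a point and tangent to the $(0,1)$-fiber through that point. Concretely, I would first record the geometry of a generic member of $V$: there is a point $p\in Q$ such that the $(1,0)$-fiber $F_1$ through $p$ meets $C$ only at $p$ (with multiplicity $3$), and the $(0,1)$-fiber $F_2$ through $p$ is tangent to $C$ at some further point $q$. Sending $(C)\in V$ to the pair $(p,q)\in Q\times Q$ (equivalently to $(F_1,F_2)\in|{\OQ}(1,0)|\times|{\OQ}(0,1)|$ together with the tangency point $q$ on $F_2$) gives an ${\aut}(Q)_0$-equivariant dominant morphism
\begin{equation*}
\varphi: V \dashrightarrow W,
\end{equation*}
where $W$ is the locus of triples $(p,q)$ with $q$ on the $(0,1)$-fiber through $p$ (or one can simply take $W=Q\times Q$ if one prefers to encode $q$ freely, adjusting the fiber accordingly). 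The group ${\aut}(Q)_0={\PGL}_2\times{\PGL}_2$ acts on $W$ almost transitively, and the stabilizer $G$ of a general point is the product of the stabilizer in ${\PGL}_2$ of a point on one ruling and the stabilizer in ${\PGL}_2$ of two points on the other, which is $({\C}^\times\ltimes{\C})\times{\C}^\times$ — a connected solvable group. By the slice method (Proposition~\ref{slice}) we then have $V/{\aut}(Q)_0\sim \varphi^{-1}(w)/G$ for a general $w\in W$.

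The second step is to identify the generic fiber $\varphi^{-1}(w)$ as an open set of a linear subspace of $|{\OQ}(3,3)|$. After normalizing $F_1=\{x_0=0\}$, $F_2=\{y_0=0\}$, $p=([0{:}1],[0{:}1])$, and $q=([0{:}1],[1{:}0])$ (or whatever affine normalization is convenient on $Q$), the conditions ``$F_1$ meets $C$ at $p$ with multiplicity $3$'' and ``$F_2$ is tangent to $C$ at $q$'' are linear in the coefficients of the bidegree $(3,3)$ form defining $C$: the first says the restriction of $C$ to $F_1$ is a cube of the linear form cutting out $p$, i.e.\ two of its four coefficients vanish; the second is a single first-order tangency condition at $q$. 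Hence $\varphi^{-1}(w)$ is an open subset of ${\proj}V_0$ for a linear subspace $V_0\subset H^0({\OQ}(3,3))$. Since $L_{3,3}={\OQ}(3,3)$ is ${\aut}(Q)_0$-linearized (indeed $\frak{S}_2\ltimes({\SL}_2)^2$ acts on it, cf.\ \S\ref{ssec:(9,5,1)}), the group $G$ acts linearly on $V_0$, so $G$ acts linearly on ${\proj}V_0$ once we pass to a suitable affine chart; more precisely one applies Miyata's theorem (Theorem~\ref{Miyata}) to the $G$-representation $V_0$ (or to the cone over ${\proj}V_0$) to conclude that ${\proj}V_0/G$ is rational. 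Chaining the birational equivalences gives rationality of $V/{\aut}(Q)_0$, and hence of $\mathcal{M}_{13,9,1}$ via the chain $\mathcal{M}_{13,9,1}\sim\mathcal{F}({\Or}(M))\sim\mathcal{H}\sim V/{\aut}(Q)_0$ established in Propositions~\ref{describe Heegner div (13,9)} and before.

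One technical point I would be careful about is the linearization: although $L_{3,3}$ need not be ${\aut}(Q)$-linearized because ${\aut}(Q)$ swaps the rulings, here we only use the identity component ${\aut}(Q)_0$, for which $L_{3,3}$ is linearized, and moreover $G\subset{\aut}(Q)_0$ is connected solvable, so Miyata applies directly without needing the no-name trick of Proposition~\ref{no-name 2}. The main obstacle — though a mild one — is verifying that $\varphi$ is genuinely dominant with the claimed generic fiber, i.e.\ that the tangency conditions defining $V$ impose independent linear conditions on $|{\OQ}(3,3)|$ and that the residual linear system ${\proj}V_0$ has the expected dimension $\dim V-\dim W$; this is a direct coefficient computation in the affine normalization above, and the dimension count $\dim(V/{\aut}(Q)_0)=\dim\mathcal{M}_{13,9,1}=8$ (one less than $\dim\mathcal{M}_{12,2,1}=9$) serves as a consistency check. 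The proof then reads:

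\begin{proof}
We use the slice method for the ${\aut}(Q)_0$-equivariant rational map $\varphi\colon V\dashrightarrow W$ sending $C$ to the pair $(p,q)$, where $p=C\cap F_1$ and $q$ is the tangent point of $C$ with the $(0,1)$-fiber $F_2$ through $p$, and $W$ is the variety of such incident pairs. The group ${\aut}(Q)_0={\PGL}_2\times{\PGL}_2$ acts on $W$ almost transitively, and the stabilizer $G$ of a general point is isomorphic to $({\C}^{\times}\ltimes{\C})\times{\C}^{\times}$, which is connected and solvable. A general $\varphi$-fiber $\varphi^{-1}(p,q)$ is an open subset of a linear subspace ${\proj}V_0\subset|{\OQ}(3,3)|$, cut out by the linear conditions that the restriction of $C$ to $F_1$ be a perfect cube supported at $p$ and that $C$ be tangent to $F_2$ at $q$. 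Since $L_{3,3}={\OQ}(3,3)$ is ${\aut}(Q)_0$-linearized, $G$ acts linearly on $V_0$. By the slice method we have $V/{\aut}(Q)_0\sim{\proj}V_0/G$, and the latter is rational by Miyata's theorem applied to the $G$-representation $V_0$. Hence $\mathcal{M}_{13,9,1}$ is rational.
\end{proof}
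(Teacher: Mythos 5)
Your proof is correct and follows essentially the same route as the paper: the slice method applied to the ${\aut}(Q)_0$-equivariant map $C\mapsto(p,q)$ to the incidence variety $W$, with connected solvable stabilizer $({\C}^{\times}\ltimes{\C})\times{\C}^{\times}$ and generic fiber an open subset of a linear system, followed by Miyata's theorem. The only blemishes are numerical asides that do not affect the argument: the triple tangency along $F_1$ kills three (not two) of the four coefficients of $C|_{F_1}$ and the tangency at $q$ imposes two linear conditions, and the consistency check should read $\dim\mathcal{M}_{13,9,1}=7$ and $\dim\mathcal{M}_{12,2,1}=8$.
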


\begin{proof}
Let $W\subset Q\times Q$ be the locus of pairs $(p, q)$ such that $p$ and $q$ lie on a same $(0, 1)$-fiber. 
We have the ${\aut}(Q)_0$-equivariant map 
\begin{equation*}
\varphi : V\to W, \qquad C\mapsto(C\cap F_1, C\cap F_2\backslash F_1), 
\end{equation*}
where $F_1$ and $F_2$ are the ruling fibers defined from $C$ as above. 
The group $({\SL}_2)^2$ acts on $W$ almost transitively, 
with the stabilizer of a general point $(p, q)$ isomorphic to ${\C}^{\times}\times({\C}^{\times}\ltimes{\C})$. 
The fiber $\varphi^{-1}(p, q)$ is an open set of a sub-linear system of $|{\OQ}(3, 3)|$. 
Therefore $V/{\aut}(Q)_0$ is rational by the slice method for $\varphi$ and Miyata's theorem. 
\end{proof}

\subsection{The rationality of $\mathcal{M}_{14,8,1}$}\label{ssec:(14,8)}

As before, $\mathcal{M}_{14,8,1}$ is birational to $\mathcal{F}({\Or}(I_{2,6}))$. 
By Example \ref{I_n&D_n}, the even part $L$ of $I_{2,6}$ is isometric to $U^2\oplus D_4$, 
with the extension $L\subset I_{2,6}$ given by one of the three non-zero elements of $D_L\simeq D_{D_4}$, say $x$. 
Let $\Gamma\subset{\Or}(L)$ be the stabilizer of $x$. 
Then we have 
\begin{equation*}
\mathcal{F}({\Or}(I_{2,6})) \simeq \mathcal{F}_L(\Gamma) 
\end{equation*}
by Proposition \ref{even part}. 

Note that $L$ is the anti-invariant lattice for $\mathcal{M}_{14,2,0}$. 
Since $\widetilde{{\Or}}(L)\subset\Gamma\subset{\Or}(L)$, 
the modular variety $\mathcal{F}_L(\Gamma)$ is realized as an intermediate cover 
\begin{equation}\label{intermi cover (14,8)}
{\cove}_{14,2,0} \to \mathcal{F}_L(\Gamma) \to \mathcal{M}_{14,2,0}. 
\end{equation}
The Galois group of ${\cove}_{14,2,0}\to\mathcal{M}_{14,2,0}$ is ${\Or}(D_L)$, 
which is isomorphic to $\frak{S}_3$ 
by the permutation action on the three non-zero elements of $D_L$. 
If $G\subset\frak{S}_3$ is the stabilizer of any one, then we have 
\begin{equation*}\label{eqn: birat (14,2,0)/G}
\mathcal{F}_L(\Gamma) \simeq {\cove}_{14,2,0}/G. 
\end{equation*}

In Example \ref{ex:4}, we gave a geometric description of ${\cove}_{14,2,0}\to\mathcal{M}_{14,2,0}$ 
in terms of plane quartics. 
Below let us reuse the notation there. 
We considered a space $U$ parametrizing smooth plane quartics $C$ 
with a tangent line $L_1$ of multiplicity $4$ and with a line $L_2$ through $C\cap L_1$. 
We also considered an $\frak{S}_3$-covering $\widetilde{U}\to U$ 
whose fibers consist of the numberings of the three nodes $C\cap L_2\backslash L_1$. 
Then we obtained a birational lift ${\lift}\colon\widetilde{U}/{\PGL}_3\to{\cove}_{14,2,0}$ of 
the period map $U/{\PGL}_3\to\mathcal{M}_{14,2,0}$. 

\begin{lemma}\label{equivariance (14,2,0)}
The map ${\lift}$ is $\frak{S}_3$-equivariant. 
Here $\frak{S}_3$ acts on $\widetilde{U}$ by renumbering of the three nodes, 
and on ${\cove}_{14,2,0}$ by permutation of the non-zero elements of $D_L$. 
\end{lemma}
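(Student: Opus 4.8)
The plan is to deduce the equivariance formally from the birationality of ${\lift}$ established in Example \ref{ex:4}, together with the Galois structure of $\pi\colon{\cove}_{14,2,0}\to\mathcal{M}_{14,2,0}$ recalled in \S\ref{ssec: recipe}. First I would observe that the renumbering $\frak{S}_3$-action commutes with the ${\PGL}_3$-action on $\widetilde{U}$ and with the projection $\widetilde{U}\to U$, so it descends to an action on the rational quotient $\widetilde{U}/{\PGL}_3$ over $U/{\PGL}_3$; this induced action is faithful, since over a general point of $U$ the three nodes $C\cap L_2\setminus L_1$ are distinct and ${\PGL}_3$ acts almost freely, so $\frak{S}_3$ permutes the six points of a general fibre of $\widetilde{U}/{\PGL}_3\to U/{\PGL}_3$ freely. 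Next, the composite $q=\pi\circ{\lift}\colon\widetilde{U}/{\PGL}_3\dashrightarrow\mathcal{M}_{14,2,0}$ sends a point to the period of the sextic $C+L_1+L_2$, which depends only on $(C,L_1,L_2)$ and not on the chosen numbering; hence $q$ factors through $U/{\PGL}_3$ and is $\frak{S}_3$-invariant.

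For a fixed $\sigma\in\frak{S}_3$ this means that ${\lift}\circ\sigma$ and ${\lift}$ are two birational maps $\widetilde{U}/{\PGL}_3\dashrightarrow{\cove}_{14,2,0}$ lying over $q$. Therefore ${\lift}\circ\sigma\circ{\lift}^{-1}$ is a birational automorphism of ${\cove}_{14,2,0}$ over $\mathcal{M}_{14,2,0}$, i.e. an element $g_\sigma$ of the Galois group ${\Or}(D_L)$ of $\pi$ (recall that $L=U^2\oplus D_4$ is the anti-invariant lattice, $D_L\cong D_{D_4}$, and ${\Or}(D_L)\cong\frak{S}_3$ via its permutation action on the three non-zero elements of $D_L$); indeed every $\C(\mathcal{M}_{14,2,0})$-automorphism of $\C({\cove}_{14,2,0})$ lies in that Galois group. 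The assignment $\sigma\mapsto g_\sigma$ is a group homomorphism $\frak{S}_3\to{\Or}(D_L)$, and it is injective: if $g_\sigma=$ identity then ${\lift}\circ\sigma={\lift}$, so $\sigma$ acts trivially on $\widetilde{U}/{\PGL}_3$ (as ${\lift}$ is birational), whence $\sigma=1$ by the faithfulness above. Since $|\frak{S}_3|=|{\Or}(D_L)|=6$, this homomorphism is an isomorphism; because $\out(\frak{S}_3)$ is trivial it coincides, up to an inner automorphism of ${\Or}(D_L)\cong\frak{S}_3$, with the identification of ${\Or}(D_L)$ with the permutation group of the three non-zero elements of $D_L$. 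This inner ambiguity is harmless for the use in \S\ref{ssec:(14,8)}, where $G$ is taken to be the stabiliser of one (arbitrarily chosen) such element.

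Alternatively, and more explicitly, one can exhibit $g_\sigma$. A renumbering $\sigma$ of the labels of $p_1,p_2,p_3$ replaces the marking $j\colon L_+\to L_+(X,\iota)$ attached by the recipe to a point $\mathbf{u}\in\widetilde{U}$ by the marking $j_\sigma=j\circ\rho_\sigma$, where $\rho_\sigma\in{\Or}(L_+)$ permutes $e_1,e_2,e_3$ and fixes $h$, the $D_{10}$-root basis, and each of $f_0,f_1,f_2$ — the last because the expressions $2f_0=4h-\sum_{i=0}^{4}d_{2i+1}-\sum_{i=1}^{3}e_i$, $2f_1=h-d_2-\sum_{i=1}^{4}d_{2i+1}$, $2f_2=h-d_9-d_{10}-\sum_{i=1}^{3}e_i$ are symmetric in $e_1,e_2,e_3$; note that the two branches of $C+L_1+L_2$ at the $D_{10}$-point are distinguished by the irreducible decomposition, hence are untouched by $\sigma$. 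Since the Galois action on ${\cove}_{14,2,0}$ is precisely precomposition of markings with ${\Or}(L_+)$-lifts of elements of ${\Or}(D_L)$, this gives ${\lift}\circ\sigma=\bar\rho_\sigma\circ{\lift}$ with $\bar\rho_\sigma$ the image of $\rho_\sigma$ in ${\Or}(D_L)$, and one checks that $\sigma\mapsto\bar\rho_\sigma^{\pm1}$ (depending on sign conventions) is the stated identification. I expect the only slightly delicate point, common to both routes, to be keeping the description of the Galois action on ${\cove}_{14,2,0}$ exactly straight; everything else is routine bookkeeping.
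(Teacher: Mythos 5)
Your proposal is correct, and it in fact contains both of the arguments the paper has in mind. Your second, explicit route is essentially the paper's own proof: the paper records that the renumbering replaces $j$ by $j_\sigma$ with $j_\sigma^{-1}\circ j$ permuting $e_1,e_2,e_3$, verifies that the three non-zero classes of $D_{L_+}$ are represented by $y_1=\frac{1}{2}(e_2+e_3)$, $y_2=\frac{1}{2}(e_3+e_1)$, $y_3=\frac{1}{2}(e_1+e_2)$ (so they are permuted exactly as the labels are), and then uses Nikulin's theory to transport this through the anti-isometry $D_{L_+}\to D_L$ --- which is precisely the bookkeeping you defer to ``one checks.'' Your first, formal route is a fleshed-out version of the paper's opening remark that the lemma ``follows simply because any automorphism of $\frak{S}_3$ is inner''; note that on its own it only yields equivariance after composing ${\lift}$ with a covering transformation of ${\cove}_{14,2,0}$ (equivalently, equivariance up to an inner automorphism), which, as you correctly observe, is all that the application in \S\ref{ssec:(14,8)} requires, since the three non-zero elements of $D_L$ are ${\Or}(D_L)$-equivalent and one only quotients by the stabilizer of some one of them. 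The formal route buys a computation-free proof at the cost of that innocuous ambiguity; the explicit route pins the identification down on the nose.
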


\begin{proof}
This follows simply because any automorphism of $\frak{S}_3$ is inner, 
but here we shall provide geometric reasoning. 
For a point $(C, L_1,\cdots, p_3)$ of $\widetilde{U}$, 
let $(X, \iota)$ be the associated 2-elementary $K3$ surface with the lattice-marking $j\colon L_+\to L_+(X, \iota)$. 
If $f\colon X\to{\proj}^2$ is the natural projection, then $j(e_i)$ is the class of the $(-2)$-curve $f^{-1}(p_i)$. 
By Lemma \ref{L_+ and A-D-E}, the vectors 
\begin{equation*}
y_1 = \frac{1}{2}(e_2+e_3), \quad y_2 = \frac{1}{2}(e_3+e_1), \quad y_3 = \frac{1}{2}(e_1+e_2)
\end{equation*}
have integral pairing with $L_+$ and hence are contained in $L_+^{\vee}$. 
They represent the three non-zero elements of $D_{L_+}$. 
If $j_{\sigma}\colon L_+\to L_+(X, \iota)$ is the marking associated to the labelling 
$(p_{\sigma^{-1}(1)}, p_{\sigma^{-1}(2)}, p_{\sigma^{-1}(3)})$ for $\sigma\in\frak{S}_3$, 
then $j_{\sigma}^{-1}\circ j$ maps $y_i$ to $y_{\sigma(i)}$. 
Now choose isometries $\gamma, \gamma_{\sigma}\colon L\to L_-(X, \iota)$ such that 
$j\oplus\gamma$ and $j_{\sigma}\oplus\gamma_{\sigma}$ extend to $\Lambda_{K3}\to{\cohomology}$ respectively. 
If we denote by $x_i\in D_L$ the image of $y_i$ by the natural anti-isometry $D_{L_+}\to D_L$, 
then $\gamma_{\sigma}^{-1}\circ \gamma$ maps $x_i$ to $x_{\sigma(i)}$ by Nikulin \cite{Ni1}. 
In view of the definition of ${\lift}$, this proves our assertion. 
\end{proof}

By this lemma, we obtain 
\begin{equation*}
\mathcal{F}_L(\Gamma) \sim (\widetilde{U}/{\PGL}_3)/G. 
\end{equation*}
Let $U'\subset U\times{\proj}^2$ be the locus of those $(C, L_1, L_2, p)$ with $p\in C\cap L_2\backslash L_1$. 
Since $\widetilde{U}/G$ is naturally isomorphic to $U'$, we then have 
\begin{equation*}
\mathcal{F}_L(\Gamma) \sim U'/{\PGL}_3. 
\end{equation*}

\begin{proposition}
The quotient $U'/{\PGL}_3$ is rational. 
Hence $\mathcal{M}_{14,8,1}$ is rational. 
\end{proposition}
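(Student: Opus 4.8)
The plan is to prove that $U'/{\PGL}_3$ is rational by the same slice-method-plus-Miyata strategy used repeatedly in this paper (compare the proofs of Propositions~\ref{rational (8+k,8-k)}, \ref{rational (15,5,1)}, \ref{rational (16,4)&(17,3)}). The point is that $U'$ carries one more marked point than $U$, so a general member of $U'$ is a quintuple $(C, L_1, L_2, p)$ where $C$ is a smooth plane quartic, $L_1$ is a line tangent to $C$ at a single point $q$ with multiplicity $4$ (so $q$ is a hyperflex of $C$), $L_2$ is a line through $q$, and $p$ is one of the three residual intersection points of $L_2$ with $C$. The large group ${\PGL}_3$ should act almost transitively on the configuration of the linear data $(L_1, L_2, p, q)$, cutting the problem down to a linear system acted on by a connected solvable group.

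Concretely, first I would consider the ${\PGL}_3$-equivariant morphism
\begin{equation*}
\varphi : U' \to V, \qquad (C, L_1, L_2, p) \mapsto (q, p, L_1, L_2),
\end{equation*}
where $q = C\cap L_1$ (the hyperflex), and $V\subset ({\proj}^2)^2\times|{\Oplane}(1)|^2$ is the locally closed locus of quadruples $(q, p, L_1, L_2)$ with $q\in L_1\cap L_2$ and $p\in L_2\setminus L_1$. One checks that ${\PGL}_3$ (or ${\SL}_3$) acts on $V$ almost transitively: a point $q$, two distinct lines $L_1, L_2$ through $q$, and a further point $p$ on $L_2$ can be normalized to a standard position, and the stabilizer $G$ of a general such quadruple is isomorphic to ${\C}^{\times}\ltimes{\C}$ (or a product of such), in particular connected and solvable. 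Next I would identify a general fibre $\varphi^{-1}(q, p, L_1, L_2)$ with an open subset of the linear subsystem of $|{\Oplane}(4)|$ consisting of quartics $C$ that are tangent to $L_1$ at $q$ with multiplicity $4$ and pass through $p$; this is a sub-linear-system, hence the projectivization of a linear representation of $G$. By the slice method (Proposition~\ref{slice}) we get $U'/{\PGL}_3 \sim \varphi^{-1}(q, p, L_1, L_2)/G$, and since $G$ is connected solvable acting linearly on a vector space, Miyata's theorem (Theorem~\ref{Miyata}) gives rationality of this quotient.

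The main obstacle, and the only place requiring genuine care, is verifying the almost transitivity of the ${\PGL}_3$-action on $V$ together with the precise shape of the generic stabilizer $G$ — one must check that fixing $(q, p, L_1, L_2)$ in general position leaves exactly a connected solvable group, with no extra discrete part that would block the direct application of Miyata's theorem. This is a routine dimension count and explicit coordinate computation (normalize $q = [0,0,1]$, $L_1 = \{Y=0\}$, $L_2 = \{X=0\}$, $p = [0,1,0]$, say, and read off which elements of ${\PGL}_3$ fix all of this), entirely analogous to the stabilizer computations already carried out in Sections~\ref{sec:g=3}, \ref{sec:g=1 (2)}, \ref{sec:g=0}; I would present it briefly and then invoke Theorem~\ref{Miyata}. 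Combined with the chain of birational equivalences $\mathcal{M}_{14,8,1} \sim \mathcal{F}({\Or}(I_{2,6})) \sim \mathcal{F}_L(\Gamma) \sim (\widetilde{U}/{\PGL}_3)/G \sim U'/{\PGL}_3$ established above, this yields the rationality of $\mathcal{M}_{14,8,1}$.
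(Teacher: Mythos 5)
Your proposal is correct and follows essentially the same route as the paper: a slice along the linear configuration $(q,p,L_1,L_2)$ (the paper records only $(p,q,L_1)$ since $L_2=\overline{pq}$ is recoverable, but this is the same slice), identification of the general fibre with an open subset of a sub-linear system of $|{\Oplane}(4)|$, and then Miyata's theorem for the connected solvable stabilizer, which in your normalization is $({\C}^{\times})^2\ltimes{\C}$. No gaps.
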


\begin{proof}
Let $V\subset({\proj}^2)^2\times|{\Oplane}(1)|$ be the locus of triplets $(p, q, L)$ such that $q\in L$. 
We consider the ${\PGL}_3$-equivariant map 
\begin{equation*}
\varphi : U'\to V, \qquad (C, L_1, L_2, p)\mapsto(p, L_1\cap L_2, L_1). 
\end{equation*}
Since the line $L_2$ is recovered as $\overline{pq}$, 
the $\varphi$-fiber over a general $(p, q, L)$ is identified with an open set of a sub-linear system of $|{\Oplane}(4)|$. 
The group ${\SL}_3$ acts on $V$ almost transitively with connected and solvable stabilizer. 
Hence $U'/{\PGL}_3$ is rational by the slice method for $\varphi$ and Miyata's theorem. 
\end{proof}

\begin{remark}
The lines of argument for $\mathcal{M}_{12,10,1}$, $\mathcal{M}_{13,9,1}$, and $\mathcal{M}_{14,8,1}$ 
would apply more generally to $\mathcal{F}({\Or}(I_{2,n}))$ until $n\leq18$, 
implying that those modular varieties are rational for $n\leq16$ and unirational for $n=17, 18$: 
\begin{itemize}
\item when $n$ is odd, $\mathcal{F}({\Or}(I_{2,n}))$ is realized as 
a $(-4)$-Heegner divisor of $\mathcal{M}_{19-n,2,\delta}$, where $\delta\equiv(n-1)/2$. 
For $n<17$ we then study a locus of $-2K_Y$-curves that acquire tacnode as extra point of tangency 
(and finally use Miyata's theorem), while $\mathcal{F}({\Or}(I_{2,17}))$ gets birational to $|L_{4,0}|/{\aut}({\F}_2)$; 
\item when $n=14$, $\mathcal{F}({\Or}(I_{2,14}))$ is realized as a triple cover of $\mathcal{M}_{6,2,0}$; 
\item for other even $n$, $\mathcal{F}({\Or}(I_{2,n}))$ becomes birational to $\mathcal{M}_{20-n,2,\delta}$, 
$\delta\equiv(n-2)/2$. 
\end{itemize}
The case $n=9$ is studied by Dolgachev-Kond\=o \cite{D-K}, 
and the case $n=10$ is attributed to Kond\=o \cite{Ko} and Allcock \cite{Al}. 
\end{remark}

\subsection{The rationality of $\mathcal{M}_{10,10,1}$}\label{ssec:(10,10,1)}

The anti-invariant lattice $L_-$ for $\mathcal{M}_{10,10,1}$ is isometric to $U\oplus I_{1,1}(2) \oplus E_8(2)$. 
We consider the odd lattice 
$L_-^{\vee}(2) = U(2) \oplus I_{1,1} \oplus E_8$,  
for which we have 
\begin{equation*}
\mathcal{M}_{10,10,1} \sim \mathcal{F}({\Or}(L_-)) \simeq \mathcal{F}({\Or}(L_-^{\vee}(2))). 
\end{equation*}
The even part of $L_-^{\vee}(2)$ is  
\begin{equation*}
L = U(2) \oplus U(2) \oplus E_8, 
\end{equation*} 
with the extension  $L\subset L_-^{\vee}(2)$ given by an element $x\in D_L$ of norm $1$. 
By Proposition \ref{even part}, we have 
\begin{equation*}
\mathcal{M}_{10,10,1} \sim \mathcal{F}_L(\Gamma)  
\end{equation*}
for the stabilizer $\Gamma\subset{\Or}(L)$ of $x$.  

Since $L$ is the anti-invariant lattice for $\mathcal{M}_{10,4,0}$, 
we can realize $\mathcal{F}_L(\Gamma)$ as an intermediate cover 
\begin{equation*}
{\cove}_{10,4,0} \to \mathcal{F}_L(\Gamma) \to \mathcal{M}_{10,4,0} 
\end{equation*}
as in \eqref{intermi cover (14,8)}. 
Let us describe the Galois group ${\Or}(D_L)$ of ${\cove}_{10,4,0} \to \mathcal{M}_{10,4,0}$. 
Let $\{ u_+, v_+\}$ and $\{ u_-, v_-\}$ be natural basis of the two summands $U(2)\oplus U(2)$ of $L$ with 
$(u_{\pm}, u_{\pm})=(v_{\pm}, v_{\pm})=0$ and $(u_{\pm}, v_{\pm})=2$. 
We define norm $1$ elements of $D_L$ by 
\begin{equation*}
x_{1\pm} = \frac{1}{2}(u_{\mp}+v_{\mp}+u_{\pm}), \quad 
x_{2\pm} = \frac{1}{2}(u_{\mp}+v_{\mp}+v_{\pm}), \quad 
x_{3\pm} = \frac{1}{2}(u_{\pm}+v_{\pm}). 
\end{equation*} 
Then we have the orthogonal decomposition 
\begin{equation}\label{eqn: norm 1 basis (10,10)}
D_L = \langle x_{1+}, x_{2+} \rangle \oplus \langle x_{1-}, x_{2-} \rangle 
\simeq ({\Z}/2{\Z})^2\oplus({\Z}/2{\Z})^2 
\end{equation}
with $x_{3\pm}=x_{1\pm}+x_{2\pm}$. 
It follows that the six elements $\{ x_{i\pm} \}_{i=1}^{3}$ are all of the norm $1$ elements in $D_L$, 
and the decomposition \eqref{eqn: norm 1 basis (10,10)} is canonical. 
Therefore 

\begin{lemma}\label{O(D_L) (10,10)}
The group ${\Or}(D_L)$ is identified with the group of permutations of the six elements $\{ x_{i\pm} \}_{i=1}^{3}$ 
preserving the (unordered) partition $\{ x_{i+} \}_{i=1}^{3} \cup \{ x_{i-} \}_{i=1}^{3}$. 
In particular, we have ${\Or}(D_L)\simeq \frak{S}_2\ltimes(\frak{S}_3)^2$. 
\end{lemma}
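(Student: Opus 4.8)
The statement asserts that $\Or(D_L) \simeq \mathfrak{S}_2 \ltimes (\mathfrak{S}_3)^2$, realized as the stabilizer of the partition $\{x_{i+}\}_{i=1}^{3} \cup \{x_{i-}\}_{i=1}^{3}$ inside the symmetric group on the six norm-$1$ elements. The plan is to first pin down the set of norm-$1$ elements and the canonical orthogonal splitting \eqref{eqn: norm 1 basis (10,10)}, then observe that any isometry of $D_L$ permutes norm-$1$ elements and respects the splitting, hence lands in the claimed permutation group; finally exhibit enough explicit isometries to show surjectivity onto that group.

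First I would verify the enumeration of norm-$1$ elements. Since $D_L \simeq D_{U(2)}^{\oplus 2} \oplus D_{E_8}$ and $D_{E_8} = 0$, the discriminant form is that of $U(2)^{\oplus 2}$, a $2$-elementary form of length $4$. A direct listing: a nonzero element of $D_{U(2)} \simeq (\mathbb{Z}/2)^2$ is $\tfrac12 u$, $\tfrac12 v$, or $\tfrac12(u+v)$, with norms $0,0,1 \bmod 2\mathbb{Z}$ respectively (using $(u,u)=(v,v)=0$, $(u,v)=2$). On $D_L = D_{U(2)_+} \oplus D_{U(2)_-}$, an element has norm $1$ iff the two component norms sum to $1 \bmod 2$, i.e. exactly one component has norm $1$. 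Counting $(1 \text{ norm-}1 \text{ component}) \times (3 \text{ choices of norm-}0 \text{ or zero in the other})$ gives $2 \cdot 3 = 6$, and these are exactly $x_{1\pm}, x_{2\pm}, x_{3\pm}$ as written. Moreover $x_{1+} + x_{2+} = \tfrac12(u_-+v_-) = x_{3+}$ up to a norm-$0$ correction — here one must be slightly careful with the definitions, but the relations $x_{3\pm} = x_{1\pm} + x_{2\pm}$ hold as stated, so $\{x_{i+}\}$ spans a totally non-degenerate $(\mathbb{Z}/2)^2$-subgroup, as does $\{x_{i-}\}$, and $D_L$ is their orthogonal direct sum. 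This makes the decomposition \eqref{eqn: norm 1 basis (10,10)} intrinsic: the pair of $2$-dimensional subspaces is the unique way to write $D_L$ as an orthogonal sum of two non-degenerate planes each containing three of the six norm-$1$ vectors (the cross-pairings $(x_{i+}, x_{j-})$ all vanish, while within a plane $(x_{i\pm}, x_{j\pm}) = 1$ for $i \neq j$).

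Next, for the homomorphism $\Or(D_L) \to \mathfrak{S}_6$ to the permutation group of $\{x_{i\pm}\}$: any $\gamma \in \Or(D_L)$ preserves norms, hence permutes this six-element set, and since $\gamma$ is determined by its action on a generating set and the six vectors span $D_L$, the homomorphism is injective. To see the image lies in the partition-stabilizer $\mathfrak{S}_2 \ltimes (\mathfrak{S}_3)^2$: $\gamma$ carries the orthogonal splitting into another orthogonal splitting into two non-degenerate planes spanned by norm-$1$ vectors, and by the uniqueness just noted $\gamma$ either fixes the partition $\{x_{i+}\} \cup \{x_{i-}\}$ setwise with each block preserved, or swaps the two blocks. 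For surjectivity I would produce generators: the isometry of $L$ swapping $u_+ \leftrightarrow u_-$, $v_+ \leftrightarrow v_-$ induces the block-swap in $D_L$; isometries permuting a single $U(2)$ summand's hyperbolic basis together with suitable elements of $\Or(E_8)$ acting trivially on $D_L$ give the full $\mathfrak{S}_3$ on each block (using that $\mathfrak{S}_3 \simeq \GL_2(\mathbb{F}_2) \simeq \Or(D_{U(2)})$ and that $\Or(U(2)) \to \Or(D_{U(2)})$ is surjective, by Nikulin \cite{Ni1} or direct check). Composing, one realizes all of $\mathfrak{S}_2 \ltimes (\mathfrak{S}_3)^2$. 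Since $|\mathfrak{S}_2 \ltimes (\mathfrak{S}_3)^2| = 72$ matches the order count, the injection is an isomorphism onto the partition-stabilizer.

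\textbf{Main obstacle.} The genuinely fiddly part is the bookkeeping in \eqref{eqn: norm 1 basis (10,10)}: checking that the six listed elements exhaust the norm-$1$ locus, that $x_{3\pm} = x_{1\pm}+x_{2\pm}$, and — the crux — that the decomposition into the two planes $\langle x_{1+},x_{2+}\rangle$ and $\langle x_{1-},x_{2-}\rangle$ is canonical (so that $\Or(D_L)$ cannot mix the blocks arbitrarily). This last point rests on the cross-pairings $(x_{i+},x_{j-})$ vanishing while the within-block pairings do not, which is a short but essential computation with the form on $U(2)^{\oplus 2}$. Once that orthogonality pattern is recorded, the rest — injectivity, landing in the partition-stabilizer, and surjectivity via explicit lifts from $\Or(L)$ — is routine and can be dispatched quickly, so I would present the orthogonality computation carefully and then move briskly through the group-theoretic consequences.
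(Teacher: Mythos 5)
Your enumeration of the six norm-$1$ elements, the verification that the two planes $\langle x_{1\pm}, x_{2\pm}\rangle$ are mutually orthogonal while the within-block pairings are nonzero, and the resulting facts that $\Or(D_L)\hookrightarrow\frak{S}_6$ and that the image preserves the partition, are all correct and are essentially what the paper's (unwritten) proof relies on. But your surjectivity step contains a genuine error: $\Or(D_{U(2)})$ is \emph{not} $\GL_2(\F_2)\simeq\frak{S}_3$. The discriminant form of $U(2)$ is $u_1$, which has a \emph{unique} element of norm $1$ (namely $\tfrac12(u+v)$) and two nonzero isotropic elements; any isometry must fix the norm-$1$ element and can at most swap the other two, so $\Or(D_{U(2)})\simeq\Z/2$. (You conflated the automorphism group of the underlying $(\Z/2)^2$ with the isometry group of the quadratic form on it.) Consequently your proposed generators --- hyperbolic basis swaps inside each $U(2)$ summand together with the summand swap --- generate a group of order only $8$, far short of $72$. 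Worse, these isometries all preserve the splitting $D_{U(2)_+}\oplus D_{U(2)_-}$, which is \emph{not} the canonical block decomposition: e.g.\ $x_{1+}=\tfrac12(u_-+v_-+u_+)$ has nonzero components in both $U(2)$ summands, so no element of $\Or(D_{U(2)_+})\times\Or(D_{U(2)_-})$ can induce a $3$-cycle on $\{x_{1+},x_{2+},x_{3+}\}$.

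The fix is to run the surjectivity argument through the canonical decomposition you already established, not through $D_{U(2)}\oplus D_{U(2)}$. Each block $P_\pm=\langle x_{1\pm},x_{2\pm}\rangle$ is isometric to the discriminant form of $D_4$ (all three nonzero elements of norm $1$, pairwise pairing $\tfrac12$ --- note the pairing is $\tfrac12$ in $\Q/\Z$, not $1$ as you wrote), and for \emph{that} form every permutation of the three nonzero elements is an isometry, so $\Or(P_\pm)\simeq\frak{S}_3$. Then $\Or(P_+)\times\Or(P_-)$ together with the evident isometry exchanging $P_+$ and $P_-$ realizes all of $\frak{S}_2\ltimes(\frak{S}_3)^2$ inside $\Or(D_L)$, and your containment argument gives the reverse inclusion. (Alternatively, quote $|\Or(D_L)|=|\Or^+(4,2)|=72$ from Morrison--Sait\=o, as the paper does in Example \ref{ex:3} for the same lattice, and conclude by comparing orders.) One further small slip: $x_{1+}+x_{2+}\equiv\tfrac12(u_++v_+)=x_{3+}$ modulo $L$, not $\tfrac12(u_-+v_-)$; the relation you assert is right but your intermediate expression is not.
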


Since the six elements $\{ x_{i\pm} \}_{i=1}^{3}$ are all $\frak{S}_2\ltimes(\frak{S}_3)^2$-equivalent, 
if $G\subset\frak{S}_2\ltimes(\frak{S}_3)^2$ is the stabilizer of any one, 
then we have 
\begin{equation}\label{eqn: birat map to (10,4,0)}
\mathcal{F}_L(\Gamma) \simeq {\cove}_{10,4,0}/G. 
\end{equation}

In Example \ref{ex:3}, we studied the covering ${\cove}_{10,4,0} \to \mathcal{M}_{10,4,0}$ using curves on ${\F}_2$. 
Below we shall reuse the notation there. 
We considered an open locus $U\subset|L_{3,0}|\times|L_{0,2}|$, and 
an $\frak{S}_2\ltimes(\frak{S}_3)^2$-covering $\widetilde{U}\to U$ whose fiber over $(C, D)\in U$ consists of 
labellings of the six points $C\cap D$ that are compatible with the decomposition of $D$. 
Then we obtained a birational lift ${\lift}\colon\widetilde{U}/{\aut}({\F}_2)\to{\cove}_{10,4,0}$ of 
the period map $U/{\aut}({\F}_2)\to\mathcal{M}_{10,4,0}$. 

\begin{lemma}\label{equivariance (10,4,0)}
The map ${\lift}$ is $\frak{S}_2\ltimes(\frak{S}_3)^2$-equivariant. 
Here $\frak{S}_2\ltimes(\frak{S}_3)^2$ acts on $\widetilde{U}$ by relabelling of the six points, 
and on ${\cove}_{10,4,0}$ by permutation of the norm $1$ elements of $D_L$. 
\end{lemma}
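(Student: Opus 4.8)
\textbf{Proof proposal for Lemma \ref{equivariance (10,4,0)}.}

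The plan is to argue exactly as in the proof of Lemma \ref{equivariance (14,2,0)}, using the explicit description of the lattice marking associated to a point of $\widetilde{U}$ together with the identification of the norm $1$ elements of $D_L$ given in Lemma \ref{O(D_L) (10,10)}. First I would recall, from Example \ref{ex:3}, the reference lattice $L_+ = \langle U(2)\oplus A_1^8, f_1, f_2, f_+, f_-\rangle$ with $A_1^8 = A_1^4\oplus A_1^4$ spanned by $\{e_{i+}\}_{i=1}^4\cup\{e_{i-}\}_{i=1}^4$, and, for a point $(C, D, p_{1+},\dots, p_{3-})\in\widetilde{U}$ with associated $((X,\iota), j)$ and natural projection $f\colon X\to{\F}_2$, the fact that $j(e_{i\pm}) = [f^{-1}(p_{i\pm})]$ for $1\le i\le 4$ while the classes $[F_1], [F_2], [F_\pm]$ of the components of $X^\iota$ account for $j(f_1), j(f_2), j(f_\pm)$. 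By Lemma \ref{L_+ and A-D-E} the vectors obtained by halving appropriate sums of the $e_{i\pm}$ lie in $L_+^\vee$; concretely, inside $D_{L_+}$ the norm $1$ elements are represented by the six half-sums
\begin{equation*}
y_{i\pm} = \tfrac{1}{2}\bigl( e_{j\pm} + e_{k\pm} \bigr), \qquad \{i,j,k\}=\{1,2,3\},
\end{equation*}
(the index $4$ and the two $A_1^4$-blocks are pinned down by $f_\pm$ and $f_1, f_2$), and the canonical partition into the ``$+$'' triple and the ``$-$'' triple is exactly the one induced by the irreducible decomposition $D = D_+ + D_-$.

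Next I would track the effect of relabelling. For $\sigma\in\frak{S}_2\ltimes(\frak{S}_3)^2$, let $j_\sigma$ be the marking associated to the $\sigma$-permuted labelling of $(C,D)$; then $j_\sigma^{-1}\circ j$ is the isometry of $L_+$ permuting the $e_{i\pm}$ (hence the $y_{i\pm}$) according to $\sigma$, and fixing the distinguished classes that force the partition. Under the canonical anti-isometry $D_{L_+}\to D_{L_-}$ and the natural identification $D_{L_-}\simeq D_L$ (recalling $L = U(2)\oplus U(2)\oplus E_8$ is the anti-invariant lattice for $\mathcal{M}_{10,4,0}$), the $y_{i\pm}$ correspond to the six norm $1$ elements $x_{i\pm}$ of \eqref{eqn: norm 1 basis (10,10)}, matching the two triples. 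Choosing isometries $\gamma, \gamma_\sigma\colon L\to L_-(X,\iota)$ so that $j\oplus\gamma$ and $j_\sigma\oplus\gamma_\sigma$ each extend to $\Lambda_{K3}\to H^2(X,{\Z})$, Nikulin's uniqueness \cite{Ni1} forces $\gamma_\sigma^{-1}\circ\gamma$ to act on $D_L$ by the permutation $x_{i\pm}\mapsto x_{\sigma(i\pm)}$. By the definition of $\widetilde{\mathcal{P}}$ via the period of $((X,\iota), j)$ in ${\cove}_{10,4,0}$, this says precisely that $\widetilde{\mathcal{P}}$ intertwines the relabelling action on $\widetilde{U}$ (which descends to $\widetilde{U}/{\aut}({\F}_2)$ since ${\aut}({\F}_2)$ acts on the underlying curve only) with the $\frak{S}_2\ltimes(\frak{S}_3)^2$-action on ${\cove}_{10,4,0}$ by permutation of norm $1$ elements.

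The main thing to verify carefully — and the only real obstacle — is the bookkeeping that the partition of $\{y_{i\pm}\}$ into two triples really is the canonical one of Lemma \ref{O(D_L) (10,10)}, i.e.\ that the vectors $f_+, f_-$ (together with $f_1, f_2$) single out the two $A_1^4$-blocks in an $\frak{S}_2\ltimes(\frak{S}_3)^2$-equivariant way, so that no ``extra'' outer symmetry of $D_L$ is introduced. This is a direct computation with the explicit generators of $L_+$ in Example \ref{ex:3}: one checks that an isometry of $L_+$ permuting the $e_{i\pm}$ preserves the sublattice $\langle U(2)\oplus A_1^8, f_1, f_2, f_+, f_-\rangle$ if and only if the induced permutation respects the block structure, exactly as $\frak{S}_2\ltimes(\frak{S}_3)^2$ does. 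Everything else is formal, following Lemma \ref{equivariance (14,2,0)} verbatim. Once Lemma \ref{equivariance (10,4,0)} is in hand, combining it with \eqref{eqn: birat map to (10,4,0)} gives $\mathcal{F}_L(\Gamma)\sim(\widetilde{U}/{\aut}({\F}_2))/G$, and since $\widetilde{U}/G$ is naturally the locus $U'\subset U\times({\F}_2)$ of those $(C, D, p)$ with $p\in C\cap D$, one concludes $\mathcal{M}_{10,10,1}\sim U'/{\aut}({\F}_2)$, whose rationality is then a routine slice-method-plus-Miyata argument on the projection remembering $(D, p)$.
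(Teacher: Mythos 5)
Your argument is correct and follows essentially the same route as the paper's own proof: introduce the six half-sums $y_{i\pm}=\tfrac{1}{2}(e_{j\pm}+e_{k\pm})$, $\{i,j,k\}=\{1,2,3\}$, check they represent the norm $1$ elements of $D_{L_+}$ respecting the two-block partition, arrange the anti-isometry $D_{L_+}\to D_L$ so that $y_{i\pm}\mapsto x_{i\pm}$, and conclude via Nikulin's extension theorem as in Lemma \ref{equivariance (14,2,0)}. The extra bookkeeping you flag about the partition is exactly what the paper disposes of by the orthogonality $(y_{i\pm},y_{j\mp})\equiv 0$ and by choosing the isometry $L\simeq(L_+)^{\perp}\cap\Lambda_{K3}$ suitably in advance, so no gap remains.
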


\begin{proof}
This is similar to the proof of Lemma \ref{equivariance (14,2,0)}. 
For a point $(C, D, p_{1+},\cdots, p_{3-})$ of $\widetilde{U}$, 
let $(X, \iota)$ be the associated 2-elementary $K3$ surface with the lattice-marking $j\colon L_+\to L_+(X, \iota)$. 
If $f\colon X\to{\F}_2$ is the natural projection, the class of the $(-2)$-curve $f^{-1}(p_{i\pm})$ is given by $j(e_{i\pm})$. 
One checks that the six vectors 
\begin{equation*}
y_{1\pm} = \frac{1}{2}(e_{2\pm}+e_{3\pm}), \quad y_{2\pm} = \frac{1}{2}(e_{3\pm}+e_{1\pm}), 
\quad y_{3\pm} = \frac{1}{2}(e_{1\pm}+e_{2\pm})
\end{equation*}
are contained in $L_+^{\vee}$. 
They represent the six norm $1$ elements of $D_{L_+}$ with $(y_{i\pm}, y_{j\mp})\equiv0$. 
Choosing an isometry $L\simeq(L_+)^{\perp}\cap\Lambda_{K3}$ suitably in advance, 
we may assume that the image of $y_{i\pm}$ by the natural anti-isometry $D_{L_+}\to D_L$ is $x_{i\pm}$. 
If we transform the labelling $(p_{1+},\cdots, p_{3-})$ by an element $\sigma$ of $\frak{S}_2\ltimes(\frak{S}_3)^2$, 
then the new marking $j_{\sigma}\colon L_+\to L_+(X, \iota)$ satisfies 
$j_{\sigma}^{-1}\circ j(y_{i\pm})=y_{\sigma(i\pm)}$. 
As in the proof of Lemma \ref{equivariance (14,2,0)}, 
this shows the $\frak{S}_2\ltimes(\frak{S}_3)^2$-equivariance of ${\lift}$. 
\end{proof}

By \eqref{eqn: birat map to (10,4,0)} and Lemma \ref{equivariance (10,4,0)}, 
$\mathcal{F}_L(\Gamma)$ is birational to $(\widetilde{U}/{\aut}({\F}_2))/G$. 
If $U'\subset U\times{\F}_2$ is the locus of those $(C, D, p)$ with $p\in C\cap D$, then we have 
\begin{equation*}
(\widetilde{U}/{\aut}({\F}_2))/G \sim U'/{\aut}({\F}_2). 
\end{equation*}

\begin{proposition}\label{rational (10,10,1)}
The quotient $U'/{\aut}({\F}_2)$ is rational. 
Thus $\mathcal{M}_{10,10,1}$ is rational. 
\end{proposition}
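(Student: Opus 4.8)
The plan is to prove the rationality of $U'/{\aut}({\F}_2)$ by the slice method followed by Miyata's theorem, exactly parallel to the final steps in \S\ref{ssec:(13,9)}, \S\ref{ssec:(14,8)} and the proof of Proposition \ref{rational (15,5,1)}. Recall that $U'$ parametrizes quadruples $(C, D, p)$ with $C\in|L_{3,0}|$ smooth, $D\in|L_{0,2}|$ smooth and transverse to $C$, and $p\in C\cap D$; the group acting is ${\aut}({\F}_2)$, but as in the proof of Proposition \ref{rational (5+k,5-k)} we may instead use the group ${\SL}_2\ltimes R$ of \S\ref{Sec:Hirze}, which surjects onto ${\aut}({\F}_2)$ with a finite central kernel acting trivially, so that $U'/{\aut}({\F}_2)\sim U'/({\SL}_2\ltimes R)$; moreover $L_{3,0}$ and $L_{0,2}$ are ${\aut}({\F}_2)$-linearized by Proposition \ref{linearization Hirze} (and $L_{0,2}=\pi^\ast K_{{\proj}^1}^{-1}$ is even in the second index), so ${\SL}_2\ltimes R$ acts linearly on the relevant vector spaces.

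First I would introduce the ${\aut}({\F}_2)$-equivariant projection $\pi'\colon U'\to Z$, where $Z$ is the incidence locus $\{(p,F)\in{\F}_2\times|L_{0,1}| : p\in F\}$, sending $(C,D,p)$ to $(p,F)$ with $F$ the $\pi$-fiber through $p$. One checks that ${\aut}({\F}_2)$ acts on $Z$ almost transitively: it acts transitively on $|L_{0,1}|$ with connected solvable stabilizer by Proposition \ref{stabilizer of fiber}, and that stabilizer acts transitively on the chosen fiber $F\simeq{\proj}^1$ minus $F\cap\Sigma$. So the stabilizer $G\subset{\aut}({\F}_2)$ (or rather its preimage $\widetilde G\subset{\SL}_2\ltimes R$) of a general point $(p,F)\in Z$ is connected and solvable — this follows by the same exact-sequence bookkeeping as in Propositions \ref{stabilizer of fiber} and \ref{L_{1,1}}, since we are cutting a solvable group down by one more linear condition. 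The fiber $(\pi')^{-1}(p,F)$ is an open subset of a linear system: it consists of pairs $(C,D)$ with $D\in|L_{0,2}|$ containing $F$ (i.e.\ $D=F+F'$ for $F'\in|L_{0,1}|$, a ${\proj}^1$'s worth of data, which is linear) and $C\in|L_{3,0}|$ passing through $p$ (a hyperplane in $|L_{3,0}|$). Hence $(\pi')^{-1}(p,F)$ is an open set of the projectivization of a $\widetilde G$-representation $V$.

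By the slice method (Proposition \ref{slice}) we get $U'/{\aut}({\F}_2)\sim U'/({\SL}_2\ltimes R)\sim {\proj}V/\widetilde G$, and since $\widetilde G$ is connected solvable and acts linearly on $V$, Miyata's theorem (Theorem \ref{Miyata}) gives that ${\proj}V/\widetilde G$ is rational. Combined with the chain of birational equivalences
\begin{equation*}
\mathcal{M}_{10,10,1} \sim \mathcal{F}({\Or}(L_-)) \simeq \mathcal{F}({\Or}(L_-^\vee(2))) \simeq \mathcal{F}_L(\Gamma) \simeq {\cove}_{10,4,0}/G \sim (\widetilde U/{\aut}({\F}_2))/G \sim U'/{\aut}({\F}_2)
\end{equation*}
established above, this proves that $\mathcal{M}_{10,10,1}$ is rational. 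The only delicate point — the main obstacle — is verifying that the stabilizer $\widetilde G$ of a general point of $Z$ is genuinely connected and solvable and that it acts \emph{linearly} (not merely projective-linearly) on $V$; the linearity is handled by passing to ${\SL}_2\ltimes R$ and invoking Lemma \ref{cover linearization Hirze}, and the structure of $\widetilde G$ is extracted from the exact sequence \eqref{Aut(Hir)} restricted to $\widetilde G$ exactly as in the cited propositions, so no new idea is needed, only careful bookkeeping.
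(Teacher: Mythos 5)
Your overall strategy (slice method followed by Miyata's theorem) is the same as the paper's, but your choice of slice has a genuine defect. You project $U'$ to the incidence variety $Z=\{(p,F): p\in F\}$ by sending $(C,D,p)$ to $(p,F)$ with $F$ the $\pi$-fiber \emph{through} $p$. Since $D=F+F'$ with $F'\in|L_{0,1}|$ arbitrary, the fiber of your projection over a general $(p,F)$ is (an open subset of) the \emph{product} $|L_{0,1}|\times{\proj}W$, where $W\subset H^0(L_{3,0})$ is the hyperplane of sections vanishing at $p$ — a ${\proj}^1\times{\proj}^{N}$, not the projectivization of a single representation. Your assertion that this fiber ``is an open set of the projectivization of a $\widetilde G$-representation $V$'' is therefore false, and Theorem \ref{Miyata} cannot be applied to it as stated.

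The gap is easily repaired, and the paper's proof shows the cleanest way: record instead the component $F$ of $D$ \emph{not} passing through $p$, i.e.\ map $(C,D,p)\mapsto(p,F)\in{\F}_2\times|L_{0,1}|$ with $p\notin F$. Then the second component of $D$ is forced to be the fiber through $p$, so $D$ is determined by $(p,F)$, and the slice fiber becomes exactly the hyperplane ${\proj}V\subset|L_{3,0}|$ of curves through $p$ — a single linear system on which the stabilizer $H$ (an extension of ${\C}^{\times}$ by ${\C}^{\times}\ltimes{\C}^2$ via \eqref{Aut(Hir)} and \eqref{desrcibe R}, hence connected and solvable) acts linearly because $L_{3,0}$ is ${\aut}({\F}_2)$-linearized; Miyata then applies directly. (Alternatively you could salvage your version by one further application of the slice method to the ${\proj}^1$ factor of $F'$-choices, on which your stabilizer acts almost transitively; the resulting smaller stabilizer is still connected and solvable and you land on the same ${\proj}V$. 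Either way, a step is missing as written.) Your peripheral remarks — passing to ${\SL}_2\ltimes R$ for linearization — are harmless but unnecessary here, since $L_{3,0}=L_{3,0}$ has even second index and is already ${\aut}({\F}_2)$-linearized by Proposition \ref{linearization Hirze}.
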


\begin{proof}
For $(C, D, p)\in U'$, let $F$ be the component of $D$ \textit{not} through $p$. 
We have the ${\aut}({\F}_2)$-equivariant map 
\begin{equation*}
\varphi : U' \to {\F}_2\times|L_{0,1}|, \qquad (C, D, p)\mapsto(p, F). 
\end{equation*}
The $\varphi$-fiber over a general $(p, F)$ is identified with an open set of a hyperplane ${\proj}V$ of $|L_{3,0}|$. 
Since ${\aut}({\F}_2)$ acts on ${\F}_2\times|L_{0,1}|$ almost transitively, 
by the slice method we have 
\begin{equation*}
U'/{\aut}({\F}_2) \sim {\proj}V/H
\end{equation*}
for the stabilizer $H\subset{\aut}({\F}_2)$ of $(p, F)$. 
By \eqref{Aut(Hir)} and \eqref{desrcibe R}, $H$ is an extension of ${\C}^{\times}$ by ${\C}^{\times}\ltimes{\C}^2$ 
and hence connected and solvable. 
Since $L_{3,0}$ is ${\aut}({\F}_2)$-linearized, $H$ acts on $V$ linearly. 
Thus we may apply Miyata's theorem to see that ${\proj}V/H$ is rational. 
\end{proof}


\vspace{0.3cm}
\noindent
\textbf{Acknowledgements.}
I would like to express my gratitude to Professor Ken-Ichi Yoshikawa for introducing me to this rich subject. 
I am also indebted to Professors I. Dolgachev and S. Kond\=o, 
whose paper \cite{D-K} inspired me to resume the study after a period of abandonment. 
I received invaluable suggestions from the referee and Prof.~Kond\=o, 
especially the proof for $\mathcal{M}_{12,10,1}$ (p.\pageref{ssec:(12,10)}, from both) 
and the second construction of $\mathcal{M}_{12,8,1}\dashrightarrow\mathcal{M}_{12,4,1}$ 
(p.\pageref{2nd map}, from the referee). 
I should like to thank them for their generous permission to include those in this article. 
Finally, I am grateful to the editors for their help and patience.


\end{document}